\DeclareRobustCommand{\rchi}{{\mathpalette\irchi\relax}}
\newcommand{\irchi}[2]{\raisebox{\depth}{$#1\chi$}}
\definecolor{Granata}{rgb}{0.64,0,0} 
\newcommand{\mc}[1]{\mathcal{#1}}
\newcommand{\mb}[1]{\mathbb{#1}}
\newcommand{\R}{\mathbb{R}}
\newcommand{\poa}{{\rm PoA}}
\newcommand{\nashe}[1]{{\rm ne}({#1})}
\newcommand{\mnashe}[1]{{\rm mne}({#1})}
\newcommand{\cce}[1]{{\rm cce}({#1})}
\newcommand{\acce}[1]{{{\rm acce}^{\rm opt}}({#1})}
\newcommand{\be}{\begin{equation}}
\newcommand{\ee}{\end{equation}}
\renewcommand{\a}[1]{a^{#1}}
\newcommand{\aopt}{a^{\rm opt}}
\newcommand{\ami}{a_{-i}}
\newcommand{\intwithzero}[1]{[#1]_0}
\newtheorem{definition}{Definition}
\DeclareSymbolFont{bbold}{U}{bbold}{m}{n}
\DeclareSymbolFontAlphabet{\mathbbold}{bbold}
\newcommand{\vect}[1]{\mathbbold{#1}}
\newcommand{\ones}[1][]{\vect{1}_{#1}}
\tikzset{
    partial ellipse/.style args={#1:#2:#3}{
        insert path={+ (#1:#3) arc (#1:#2:#3)}
    }
}
\newtheorem{theorem}{Theorem}
\newtheorem{assumption}{Assumption}
\newtheorem{remark}{Remark}
\newtheorem{corollary}{Corollary}
\newtheorem{proposition}{Proposition}
\newtheorem{lemma}{Lemma}
\newtheoremstyle{example}{}{}{}{}{\bfseries}{.\smallskip}{3pt}{}
\theoremstyle{example}
\newtheorem{example}{Example}
\def\vspacesteps{1.5mm}
\def\myspace{1.5mm}
\newcommand{\cdotshort}{\!\cdot\!}
\newcommand{\fsv}{f_{\rm SV}}
\newcommand{\fmc}{f_{\rm MC}}
\newcommand{\fgar}{f_{\rm G}}
\newtheoremstyle{break}
  {\topsep}{\topsep}%
  {\itshape}{}%
  {\bfseries}{}%
  {\newline}{}%
\theoremstyle{break}
\newtheoremstyle{named}%
    {}{}{\itshape}{}{\bfseries}{.}{.5em}{\thmnote{#3}}
\theoremstyle{named}
\newtheorem*{namedtheorem}{Theorem}
\newcommand{\cut}[1]{}
\renewcommand{\cut}[1]{#1}
\newcommand{\poas}{{\rm SPoA}}
\newcommand{\Ir}{\mathcal{I}_R}
\DeclareMathOperator*{\argmax}{arg\,max}
\DeclareMathOperator*{\argmin}{arg\,min}
\DeclareMathOperator{\rank}{rank}
\newcommand{\pls}{\mc{PLS}}
\newcommand{\pclass}{\mc{P}}
\newcommand{\npclass}{\mc{NP}}
\newcommand{\ppad}{\mc{PPAD}}
\algnewcommand{\LeftComment}[1]{\Statex \(\triangleright\) #1} 
\newcommand{\cdue}{C_1}
\newcommand{\ctre}{C_2}
\newcommand{\cquattro}{C_3}
\renewcommand{\ae}{a^{\rm ne}}
\newcommand{\defeq}{\coloneqq}
\newcommand{\zeros}[1][]{\vect{0}_{#1}}
\newcommand\mi{^{-i}}
\newcommand\WE{_\textup{W}}
\newcommand{\VWE}[1]{\bar{#1}_\textup{W}}
\newcommand\NE{_\textup{N}}
\newcommand{\VNE}[1]{\bar{#1}_\textup{N}}%
\newcommand\SO{_\textup{S}}
\newcommand\s{\Sigma}
\newcommand{\di}{d}
\newcommand{\eval}{_{|{z=\sigma(x)}}}
\newcommand{\eqdef}{\eqqcolon}
\newcommand{\minn}[1]{\underset{#1}{\operatorname{min}}\,}
\newcommand{\maxx}[1]{\underset{#1}{\operatorname{max}}\,}
\newcommand{\Dx}{R}
\newcommand\zero{^0}
\newcommand{\GN}{\mc{G}_\N}
\newcommand{\GNS}{(\GN)_{\N=1}^\infty}
\newcommand{\emm}{_{_\N}}
\newcommand*{\boldnamedo}[1]{%
  \iffieldequalstr{hash}{#1}
    {\bfseries\listbreak}
    {}}
  \forlistloop{\boldnamedo}{\boldnames}}
\newcommand*{\boldnames}{}
\newcommand*{\makenamesbold}{%
  \xpretobibmacro{name:family}{\begingroup\usebibmacro{name:bold}}{}{}%
  \xpretobibmacro{name:given-family}{\begingroup\usebibmacro{name:bold}}{}{}%
  \xpretobibmacro{name:family-given}{\begingroup\usebibmacro{name:bold}}{}{}%
  \xpretobibmacro{name:delim}{\begingroup\normalfont}{}{}%
  \xapptobibmacro{name:family}{\endgroup}{}{}%
  \xapptobibmacro{name:given-family}{\endgroup}{}{}%
  \xapptobibmacro{name:family-given}{\endgroup}{}{}%
  \xapptobibmacro{name:delim}{\endgroup}{}{}%
}
\crefname{appendix}{Appendix}{Appendices} 
\crefname{figure}{Figure}{Figures} 
\crefname{line}{line}{lines} 
\crefname{claim}{Claim}{Claims} 
\crefname{equation}{}{} 
\crefname{problem}{Problem}{Problems}
\crefname{assumption}{Assumption}{Assumptions}
\crefname{fact}{Fact}{Facts}
\crefname{sas}{Standing Assumptions}{Standing Assumptions}
\renewcommand{\ae}{a^{\rm ne}}
\newcommand{\sigmane}{\sigma^{\rm mne} }
\newcommand{\sigmacce}{\sigma^{\rm cce} }
\newcommand{\sigmaacce}{\sigma^{\rm acce} }
\newcommand{\expected}[2]{\mathbb{E}_{#2}\left[#1\right]}
\newcommand{\poane}{{\rm PoA}^{\rm ne}}
\newcommand{\poamne}{{\rm PoA}^{\rm mne}}
\newcommand{\poacce}{{\rm PoA}^{\rm cce}}
\newcommand{\poaacce}{{\rm PoA}^{\rm acce}}
\DeclareAcronym{ne}{short=NE, long=Nash equilibrium, class=abbrev}
\DeclareAcronym{mne}{short=MNE, long=mixed Nash equilibrium, class=abbrev}
\DeclareAcronym{cce}{short=CCE, long=coarse correlated equilibrium, class=abbrev}
\DeclareAcronym{acce}{short=ACCE$^{\,\rm opt}$, long=average coarse correlated equilibrium, class=abbrev}
\DeclareAcronym{poa}{short=$\rm PoA$, long=price of anarchy, class=abbrev2}
\DeclareAcronym{poan}{short=PoA, long=price of anarchy, class=abbrev}
\DeclareAcronym{br}{short=BR, long=best-response dynamics, class=abbrev}
\DeclareAcronym{lp}{short=LP, long=linear program, class=abbrev}
\DeclareAcronym{kkt}{short=KKT, long=Karush-Kuhn-Tucker, class=abbrev}
\DeclareAcronym{mmc}{short=MMC, long=multiagent weighted maximum coverage, class=abbrev}
\DeclareAcronym{gmmc}{short=GMMC, long=general multiagent weighted maximum coverage, class=abbrev}
\newlist{acronyms}{description}{1}
 \setlist[acronyms]{
 labelwidth = 60pt,
 leftmargin = 61pt,
 itemsep= 5pt,
 itemindent = 5pt,}
\renewcommand{\vref}{\cref}
\begin{document}
\frontmatter
\begin{center}
\thispagestyle{empty}
\large
Dissertation ETH Zurich No. 25597

\vspace*{2.1cm}

\Large
\textbf{Distributed control and game design:\\[-0.3cm]
{\small From strategic agents to programmable machines}
}

\vspace*{2cm}

\large
A dissertation submitted to attain the degree of\\[0.4cm]
Doctor of Sciences of ETH Zurich\\[0.4cm]
(Dr. sc. ETH Zurich)

\vspace*{1.5cm}

presented by \\[1.2cm]
Dario Paccagnan \\[0.2cm]

Dott. Magistrale, Universit\`a degli studi di Padova, Italy \\
M.Sc. Technical University of Denmark, Denmark\\[0.7cm]
born 19.01.1990 in Treviso \\
citizen of Italy\\[1.2cm]

accepted on the recommendation of \\
Prof. Dr. John Lygeros, examiner \\
Prof. Dr. Andreas Krause, co-examiner \\
Prof. Dr. Jason R. Marden, co-examiner \\[1cm]
2018
\end{center}


\thispagestyle{empty}
\newpage
\thispagestyle{empty}
\vspace*{\fill}

ETH Zurich

IfA - Automatic Control Laboratory

ETL, Physikstrasse 3

8092 Zurich, Switzerland\\

\copyright\ Dario Paccagnan, 2018

All Rights Reserved\\

ISBN 978-3-906916-47-7

DOI\ \,\,10.3929/ethz-b-000314981

\chapter*{}
\thispagestyle{empty}
{\large 
\vspace{-2cm}
\emph{~~Alla mia famiglia}

\vspace{4.5cm}
\hfill{\emph{Fatti non foste a viver come bruti,~~}}\\
\vspace{-1mm}
\hfill{\emph{ma per seguir virtute e canoscenza.}}\\[-3mm]

\hspace{7.3cm}
{ -- Dante Alighieri}
}
\newpage
\thispagestyle{empty}

\chapter*{Acknowledgements}
\addcontentsline{toc}{chapter}{Acknowledgements}
\setcounter{page}{1}
First and foremost, I would like to express my gratitude to my PhD Advisor, Prof. John Lygeros. While this thesis would not be in its current shape without his guidance and support, his contribution is far broader than that. Most of all, I would like to thank him for his unconditional trust, freedom and balance. On one hand, he allowed me to pursue any research direction I found exciting and relevant, on the other he has always been present to counsel me on a range of different topics - technical and non. 
A special thanks goes for always believing in me and for helping me to set and pursue my goals. Looking back at when I moved my first steps at our institute, I can safely say that I have grown not only on the technical side, but all around as a researcher.  Let me keep it simple: thank you for everything!

It was an incredible luck to work with Prof. Jason Marden who agreed to host me in Santa Barbara, without ever meeting with me before. I still remember when I walked in your office for the first time and I told you ``I have ten ideas to discuss with you''. You listened to me extremely carefully for one hour. As a result, few days later we begun working on a completely different topic. I am not sure whether you simply did not like any of the ideas or maybe thought I should have listened to yours first.  Nevertheless, from that point onwards, it has been an amazing journey and our collaboration has been one of the most exciting I have ever had. I would like to thank you for all the time you dedicated me, for you sharp-thinking, for your patience, and, amongst everything, for the enthusiasm you instilled in me: I hope this is just the beginning. 

Thanks to Prof. Andreas Krause for agreeing to serve on my PhD committee and for meeting with me to discuss the content of this thesis, \mbox{regardless of the tight schedule.}
 
I feel truly blessed and honored to have met and interacted with Prof. Maria Elena Valcher. Not only has she given me continuous support during the years, but she has shown me what it means to be an unparalleled advisor. Energy, passion, crystal-clear reasoning, unrestricted support to her students are only a few of her everyday's ingredients. Most importantly, I would like to thank her for being the first person that believed in me, for introducing me to control theory and for allowing me to take off at DTU. Thank you for being a lighthouse.
  
I am deeply indebted with the new Professors of our institute Prof. Florian D\"orfler and Prof. Maryam Kamgarpour. Thank you Florian for ``taking me climbing'', for all the adventures in the mountains we had together, for the advices you gave me when I was only a newbie at IfA, and for putting me in contact with Prof. Marden. Maryam, a sincere thanks goes to you for your support and  collaboration during the early stages of my PhD. I remember vividly how much help and enthusiasm you put in teaching me how to write my ``first article''. If all of this is possible, \mbox{it is in good part also thanks to you.}
  
The atmosphere we breath at IfA is absolutely incredible: a great mix of hard-working, positive energy, and friendship. Sometimes, bent over our papers, we seem to forget how lucky we are. Fortunately (or not), as soon as we step out of the ETH domain, we get reminded of what a privilege this is. I would like to express my greatest gratitude to all those people that made this place what it is. In this regard, there are many colleagues I should thank, but I limit myself to the essential. First of all, I would like to thank Basilio and Francesca for sharing with me a good half of their PhDs, and countless hours on the board conjecturing results that would be proved wrong only a few hours later.  I wish both of you a brilliant career as you truly deserve.
A very special thanks is due to Nicol\`o for sharing innumerable moments inside and outside of the office, for the many jokes we have been laughing of, and for the unmissable extra slice of cake (Lucia's). 
A big thanks goes to the Italian Gang - Basilio, Francesca, Nicol\`o, Giampaolo, Marcello, Saverio - for the countless adventures, and for making me feel as IfA was the extension of my time in high school. Thank you Tony for being such a positive, cheerful and selfless person. Thank you Peyman for the numerous research discussions we had: I feel there is a lot to learn form you.  
Thanks to the dream team - Juan, Marcello, Paul - for all the mountains adventures we shared: I will hardly forget when we set foot on the Dammazwillinge for the first time. I am confident you will not forget either. A special thanks goes to Paul for being a terrific office mate, and for sharing the first years of your PhD with me, often out in the Alps. At this point, I feel I owe a big thanks to our secretaries, Sabrina and Tanja, for making everything run incredibly smooth and for being extremely helpful with really anything. ``If there is a solution, we will find it'' Sabrina told me once, and this pretty much says it all.

At a personal level, I would like to thank all my friends in Zurich and elsewhere, in particular Chiara for the  gazillions hours we spent climbing together, Federica for being a terrific housemate, Martina and Alessandro for hosting me after my return from the US. A line of his own is needed (and probably not enough) to thank Alessio: a very special thought goes to you for all the things we went through together, way too many to recall. I will never forget the 20-hours day on the Allighesi ferrata, our first climbing route in the dolomites, or simply the usual giro di Ca' Tron. Amongst everything, I would like to thank you for teaching me to be ambitious, to fight, to never give up and to endure pain. Last, I want to thank you for always finding time to meet with me (the usual run) when I come back from Zurich, regardless of our busy lives.

At this stage, it feels quite difficult to find the right words to thank those who contributed the most to all of this. Thanks to the bottom of my heart to my family and in particular to my parents and brother. Mum and Dad, you have been a guiding force throughout all these years. I would like to thank you for the curiosity you instilled in me, for giving me a great education and making me understand its importance. I am incredibly grateful for the unconditional support I received, for the freedom you gave me in pursuing my own path, and for teaching me that great things happen only if you dare trying.  
You have set the bar quite high, and if I'll ever be able to be half as good as you have been, I'll consider myself satisfied. A warm thanks goes to my brother for showing me the way, for all the opportunities you gave me, and for being an example of what one can achieve with a ``little'' amount of constant effort. Perhaps I should thank you for those initial english lesson you gave me when I was only a newbie, but I am confident the idea of showing me your beloved ``green english textbook" already made up for that.

Finally, I owe a huge, sincere and heartfelt thanks to Erisa. First and foremost for teaching me (slowly but surely) the meaning of \emph{we}, for your love and patience, and for bearing with me ``doing science'', probably the hardest amongst all. On the latter topic, I am indebted with you for reading through my papers many times, for teaching me that ``things need to get done'' and, more in general, for being a great counsellor. Quite a few things have changed since the first time you ``forced'' me to buy the Zurich-Copenhagen ticket, but I am very excited about all that come next, \emph{together}.
\\
\begin{flushright}
	Dario Paccagnan\\
	Zurich, November 2018
\end{flushright}
\chapter*{Ringraziamenti}
\addcontentsline{toc}{chapter}{Ringraziamenti}
Innanzitutto, vorrei esprimere la mia gratitudine al mio PhD Advisor, il Prof. John Lygeros. Anche se questa tesi non sarebbe nella sua forma attuale senza la sua guida, il suo contributo non si limita certamente a questo. Vorrei ringraziarlo, in particolare, per la fiducia datami, la libert\`a e l'equilibrio. Da un lato, mi ha permesso di intraprendere qualsiasi direzione di ricerca trovassi interessante, dall'altro \`e sempre stato presente per consigliarmi su una serie di argomenti diversi - tecnici e non. 
Devo un ringraziamento speciale al Prof. Lygeros per aver sempre creduto in me e per avermi aiutato a fissare e raggiungere i miei obiettivi. Rivolgendo lo sguardo a quando ho mosso i primi passi nel nostro istituto, posso dire di essere cresciuto non solo dal punto di vista tecnico, ma anche come ricercatore: grazie di tutto!

E' stata una fortuna incredibile lavorare con il Prof. Jason Marden, il quale ha accettato di ospitarmi a Santa Barbara senza avermi incontrato prima. Ricordo bene quando sono entrato nel tuo ufficio per la prima volta dicendo ``Ho dieci idee che vorrei discutere con te''. Mi hai ascoltato con estrema attenzione per pi\`u di un'ora. Pochi giorni dopo, stavamo lavorando su un argomento completamente diverso. Forse non ti sono piaciute le mie idee, o probabilmente avrei dovuto ascoltare prima le tue.  Tuttavia, da quel momento in poi, \`e cominciato un bellissimo viaggio e la nostra collaborazione \`e stata una delle pi\`u entusiasmanti che abbia mai avuto. Vorrei ringraziarti per tutto il tempo che mi hai dedicato, per la tua pazienza e, sopratutto, per l'entusiasmo che mi hai infuso: spero questo sia solo l'inizio. 

Vorrei ringraziare il Prof. Andreas Krause per aver accettato il ruolo di esaminatore esterno e per avermi incontrato per discutere il contenuto di questa tesi.

Sono davvero onorato di aver incontrato e interagito con la Prof. Maria Elena Valcher. Non solo mi ha dato un sostegno continuo nel corso degli anni, ma mi ha anche mostrato cosa significa essere un Advisor senza pari. Energia, passione, ragionamento cristallino, sostegno ai suoi studenti, sono solo alcuni degli ingredienti della sua vita quotidiana. Vorrei ringraziarla, in particolare, per essere la prima persona che ha creduto in me, per avermi introdotto alla teoria del controllo e per avermi spinto ad \mbox{andare al DTU.}
  
Sono profondamente in debito con i nuovi professori del nostro istituto, il Prof. Florian D\"orfler e la Prof. Maryam Kamgarpour. Grazie Florian per "portarmi a scalare", per tutte le avventure in montagna che abbiamo avuto insieme, per i consigli che mi hai dato quando ero solo un novizio all'IfA, e per avermi messo in contatto con il Prof. Marden. Maryam, un sincero ringraziamento va a te per il tuo sostegno e la tua collaborazione durante le prime fasi del mio dottorato. Ricordo molto bene quanto entusiasmo ed energia hai messo nell'insegnarmi a scrivere il mio ``primo articolo''. Se tutto questo \`e possibile, \`e in buona parte anche grazie a te.

L'atmosfera che respiriamo all'IfA \`e assolutamente incredibile: un perfetto mix di lavoro, energia positiva e amicizia. A volte, piegati sui nostri articoli, finiamo per dimenticarci di quanto siamo coccolati. Fortunatamente (o forse no), non appena usciamo dal nostro ufficio, ci accorgiamo immediatamente di che privilegio sia questo. Vorrei esprimere la mia pi\`u grande gratitudine a tutte le persone che hanno reso questo luogo quello che \`e. A questo proposito, mentre dovrei ringraziare molti colleghi, mi limiter\`o solamente all'essenziale. Prima di tutto, vorrei ringraziare Basilio e Francesca per aver condiviso con me una buona met\`a dei loro dottorati, e innumerevoli ore alla lavagna a congetturare risultati che si sarebbero rivelati errati solo poche ore dopo.  Auguro ad entrambi una brillante carriera.
Un ringraziamento speciale va a Nicol\`o per aver condiviso moltissimi momenti dentro e fuori dall'ufficio, per le tante battute di cui abbiamo riso e per l'immancabile fetta di torta in pi\`u (confezionata da Lucia).
Un sentito ringraziamento va all'Italian Gang - Basilio, Francesca, Nicol\`o, Giampaolo, Marcello, Saverio - per le innumerevoli avventure, e per avermi fatto sentire come se IfA fosse la continuazione del liceo. Grazie Tony per essere una persona cos\`\i\ positiva, allegra ed altruista. Grazie Peyman per le numerose discussioni che abbiamo avuto: ho l'impressione che ci sia molto da imparare da te.  
Grazie al Dream Team - Juan, Marcello, Paul - per tutte le avventure in montagna che abbiamo vissuto insieme: non dimenticher\`o mai quando mettemmo piede sul Dammazwillinge per la prima volta. Sono convinto non lo dimenticherete neanche voi. Un ringraziamento speciale va a Paul per essere stato un fantastico compagno di ufficio e per aver condiviso con me, spesso sulle Alpi, i primi anni in IfA. A questo punto, \`e d'obbligo un ringraziamento alle nostre segretarie, Sabrina e Tanja, per aver fatto funzionare tutto in maniera impeccabile: ``se c'\`e una soluzione, la troveremo" ha detto Sabrina pi\`u di una volta.

A livello personale, vorrei ringraziare tutti i miei amici di Zurigo e non solo, in particolare Chiara per le innumerevoli ore passate assieme arrampicando, Federica per essere stata una fantastica compagna di casa, Martina e Alessandro per avermi ospitato dopo il ritorno dagli Stati Uniti. Devo un pensiero molto speciale (e comunque non sufficiente) ad Alessio per tutte le cose che abbiamo vissuto insieme, troppe per ricordarle tutte. Non dimenticher\`o mai la giornata di 20 ore sulla ferrata Allighesi, la nostra prima via d'arrampicata nelle Dolomiti, o semplicemente il solito giro di Ca' Tron. Vorrei ringraziarti sopratutto per avermi insegnato ad essere ambizioso, a combattere, a non arrendermi mai e a sopportare la fatica. Infine, ti devo ringraziare per aver sempre trovato il tempo di incontrarmi al mio ritorno da Zurigo (la solita corsa), indipendentemente dagli innumerevoli impegni.

Giunto a questo punto, risulta difficile trovare le parole giuste per ringraziare coloro che hanno contribuito maggiormente a tutto questo. Grazie di cuore alla mia famiglia, in particolare ai miei genitori ed a mio fratello. Mamma e pap\`a, siete stati una forza trainante in tutti questi anni. Vorrei ringraziarvi per la curiosit\`a che mi avete instillato, per avermi dato un'educazione invidiabile e per avermene fatto capire l'importanza. Sono incredibilmente grato per il sostegno incondizionato che mi avete dato in tutti questi anni, per la libert\`a che ho ricevuto nel seguire la mia strada, e per avermi insegnato che le cose pi\`u belle accadono solo a chi ha il coraggio di provare. Avete posto l'asticella molto in alto, e se mai riuscir\`o a raggiungerne la met\`a, mi considerer\`o soddisfatto. Un caloroso ringraziamento va a Diego per avermi indicato la strada, per tutte le opportunit\`a che mi hai offerto, e per essere un esempio di ci\`o che si pu\`o ottenere con un ``piccolo'' sforzo quotidiano. Forse dovrei ringraziarti per quelle prime lezioni di inglese che mi hai dato quando ero solo un principiante, ma sono sicuro che il piacere di mostrarmi il tuo amato ``libro verde'' ti ha gi\`a ripagato.

Infine, devo un enorme, sincero e sentito grazie ad Erisa. Innanzitutto per avermi insegnato il significato della parola \emph{noi}, per il tuo amore e pazienza, e per sopportarmi quando sono ``impegnato con la scienza'', probabilmente la pi\`u difficile tra tutte le prove. Ti devo un ringraziamento speciale per aver riletto pi\`u e pi\`u volte i miei articoli, per avermi insegnato che ``\`e inutile perdere tempo a cincischiare, bisogna finire il lavoro'' e, pi\`u in generale, per essere un grande consigliere. Molte cose sono cambiate dalla prima volta che mi hai ``costretto'' a comprare il biglietto Zurigo-Copenhagen, e sono elettrizzato per tutto ci\`o che ci aspetta, \emph{insieme}.
\\
\begin{flushright}
	Dario Paccagnan\\
	Zurigo, Novembre 2018
\end{flushright}

\renewcommand\i{^i}
\renewcommand\j{^j}
\chapter*{Abstract}
\addcontentsline{toc}{chapter}{Abstract}

Large scale systems are forecasted to greatly impact our future lives thanks to their wide ranging applications including cooperative robotics, mobility on demand, resource and task allocation, supply chain management, and many more. 
While technological developments have paved the way for the realization of such futuristic systems, we have a limited grasp on how to coordinate the behavior of their individual components to achieve the desired global objective.

With the objective of advancing our understanding, this thesis focus on the analysis and coordination of large scale systems without the need of a centralized authority. 
At a high level, we distinguish these systems depending on wether they are composed of \emph{cooperative} or \emph{non-cooperative} subsystems. 
In regard to the first class, a key challenge is the design of local decision rules for the individual components to guarantee that the collective behavior is desirable with respect to a global objective. Non-cooperative systems, on the other hand, require a more careful thinking in that the designer needs to take into account the self-interested nature of the agents.
 In both cases, the need for distributed protocols stems from the observation that centralized decision making is prohibited due to the scale and privacy requirement associated with typical systems.

In the first part of this thesis, we focus on the coordination of a large number of \emph{non-cooperative} agents. More specifically, we consider strategic decision making problems where each agent's objective is a function of the aggregate behavior of the population. Examples are ubiquitous and include social and traffic networks, demand-response markets, vaccination campaigns, to name just a few. We present two cohesive contributions.
First, we compare the performance of an equilibrium allocation with that of an optimal allocation, that is an allocation where a common welfare function is maximized.  We propose conditions under which all Nash equilibrium allocations are efficient, i.e., are desirable from a macroscopic standpoint. In the journey towards this goal, we prove a novel result bounding the distance between the strategies at a Nash and at a Wardrop equilibrium that might be of independent interest.  
Second, we show how to derive scalable algorithms that guide agents towards an equilibrium allocation, i.e., a stable configuration where no agent has any incentive to deviate. When the corresponding equilibria are efficient, these algorithms attain the global objective and respect the agents' selfish nature.   

In the second part of this thesis, we focus on the coordination of \emph{cooperative} agents. We consider large-scale resource allocation problems, where a number of agents need to be allocated to a set of resources, with the goal of jointly maximizing a given submodular or supermodular set function. Applications include sensor allocation problems, distributed caching, data summarization, and many more.
Since this class of problems is computationally intractable, we aim at deriving tractable algorithms for attaining approximate solutions, ideally with the best possible approximation ratio. We approach the problem from a game-theoretic perspective and ask the following question: how should we design agents' utilities so that any equilibrium configuration recovers a large fraction of the optimum welfare? 
In order to answer this question, we introduce a novel framework providing a tight expression for the worst-case performance (price of anarchy) as a function of the chosen utilities. Leveraging this result, we show how to design utility functions so as to optimize the price of anarchy by means of a tractable linear program. The upshot of our contribution is the design of algorithms that are distributed, efficient, and whose performance is certified to be on par or \mbox{better than that of existing (and centralized) schemes.}

\chapter*{Sommario}
\addcontentsline{toc}{chapter}{Sommario}
I sistemi tecnologici su larga scala promettono di migliorare sensibilmente la qualit\`a della nostra vita futura grazie alle loro numerose applicazioni, tra cui la robotica cooperativa, la mobilit\`a su richiesta, l'allocazione di risorse, la gestione della supply chain.
Nonostante gli sviluppi tecnologici abbiano aperto la strada alla realizzazione di questi sistemi futuristici, abbiamo una conoscenza limitata su come coordinare i singoli componenti per ottenere l'obiettivo macroscopico desiderato.

Questa tesi si concentra sull'analisi e il coordinamento di sistemi su larga scala privi di un'autorit\`a centralizzata, con l'obiettivo di migliorarne la comprensione ed il funzionamento. Ad alto livello, distinguiamo questi sistemi a seconda che essi siano cooperativi o meno. 
Una sfida chiave in relazione ai sistemi cooperativi \`e la progettazione di algoritmi di controllo per le singole componenti che garantiscano il raggiungimento di un predeterminato obiettivo globale. I sistemi non cooperativi, d'altra parte, richiedono una maggiore attenzione in quanto \`e necessario tenere in considerazione la natura egoistica degli agenti. In entrambi i casi, l'utilizzo di protocolli distribuiti \`e reso necessario dalle dimensioni di tali sistemi e dai requisiti di privacy che vi sono associati.
 
 Nella prima parte di questa tesi, ci concentriamo sul coordinamento di sistemi non cooperativi. Pi\`u specificamente, consideriamo problemi strategici in cui l'obiettivo di ciascun agente \`e influenzato del comportamento aggregato della popolazione. Esempi di tali sistemi comprendono i social networks, le reti stradali, i mercati azionari. Nel seguito presentiamo due risultati coesivi.
In primo luogo, confrontiamo la performance di un'allocazione di equilibrio con la performance di un'allocazione ottimale, cio\`e di un'allocazione in cui viene massimizzata una funzione obiettivo comune.  Proponiamo poi condizioni che garantiscono l'efficienza di tutte le allocazioni di equilibrio. Nel percorso verso questo obiettivo, otteniamo un risultato che delimita la distanza tra gli equilibri di Nash e Wardrop e che potrebbe essere di interesse indipendente.  
In secondo luogo, progettiamo algoritmi scalabili che guidano gli agenti verso un'allocazione di equilibrio, cio\`e una configurazione stabile in cui nessun agente ha alcun incentivo a deviare. Quando tali equilibri sono efficienti, questi algoritmi raggiungono l'obiettivo globale e rispettano la natura individualistica degli agenti.

Nella seconda parte di questa tesi, ci concentriamo sul controllo di sistemi cooperativi. In particolare, consideriamo problemi di allocazione delle risorse su larga scala, dove un insieme di risorse deve essere assegnato ad un fissato numero di agenti, con l'obiettivo di massimizzare una funzione obiettivo globale, submodulare o supermodulare. Le applicazioni includono problemi di allocazione dei sensori, caching distribuito, data summarization e molto altro ancora.
Poich\'e questa classe di problemi \`e intrattabile dal punto di vista computazionale, ci prefiggiamo di ricavare soluzioni approssimate con algoritmi efficienti, idealmente con il miglior rapporto di approssimazione possibile. Formuliamo questo problema con il linguaggio della teoria dei giochi e ci poniamo la seguente domanda: come progettare le funzioni obiettivo da assegnare agli agenti in modo che ogni configurazione di equilibrio produca la massima frazione del valore ottimo? 
Per rispondere a questa domanda, introduciamo un nuovo metodo per calcolare in maniera esatta 
la qualit\`a di un equilibrio in relazione alle funzioni obiettivo scelte (price of anarchy). Sfruttando questo risultato, mostriamo come costruire tali funzioni obiettivo in modo da massimizzare la performance dei corrispondenti equilibri grazie ad un programma lineare ausiliario. Il risultato finale \`e la progettazione di algoritmi distribuiti ed efficienti, il cui rapporto di approssimazione \`e alla pari o superiore a quello di molti schemi (centralizzati) comunemente usati.


\tableofcontents

\chapter*{Notation}
\addcontentsline{toc}{chapter}{Notation}
\printacronyms[include-classes=abbrev]

\subsection*{Symbols}
\vspace*{-7mm}
\begin{longtable}[l]{l l}
$\defeq$ & equal by definition\\
$\mb{N}$, $\mb{N}_0$ & set of natural numbers, set of natural numbers including zero\\
$\mb{R}$, $\mb{R}_{>0}$, $\mb{R}_{\ge0}$ & set of real, positive real, non negative real numbers\\
$[p]$, $[p]_0$ & set of integers $\{1,\dots,p\}$, set of integers v$\{0,1,\dots,p\}$\\
$[a,b]$ & interval of real numbers $x\in\mb{R}$ with $a\le x\le b$\\
$\ones[n]$, $\zeros[n]$, $e_i \in\R^n$ & 
 vector of unit entries, vector of zero entries, $i^\text{th}$ canonical vector\\
 $I_n$ & identity matrix $I_n\in\mb{R}^{n\times n}$\\
 $A\succ 0$ $(\succeq0)$ & positive definite (semi-) $A\in\mb{R}^{n\times n}$, i.e., $x^\top A x>0~(\ge0),$ $\forall x\neq 0$\\
 $\|x\|$ & 2-norm of $x\in\mb{R}^n$ \\
 $\|A\|$ & induced 2-norm of $A\in\mb{R}^{n\times n}$,  $\|A\|:=\sup_{x\neq 0} \frac{\|Ax\|}{\|x\|}$ \\ 
 $\lambda_{\textup{min}}(A)$, $\lambda_{\textup{max}}(A)$ & minimum, maximum eigenvalue of the symmetric matrix $A\in\mb{R}^{n\times n}$\\
 $[A]_{ij}=A_{ij},$ & element in position $(i,j)$ of the matrix $A$\\
 $A\otimes B$ & Kronecker product of the matrices $A,B$\\
 $[x^i]_{i=1}^m$ & stacked vector $[x^i]_{i=1}^m\defeq[(x^1)^\top,\ldots ,(x^m)^\top]^\top=[x^1;\ldots;x^m],$ $x^i\in\mb{R}^{n\times 1}$\\
$\Pi_{\mc{X}}(y)$ & metric projection of $y\in\mb{R}^n$ onto $\mc{X}\subseteq \mb{R}^n$, see \cref{def:proj}\\
$f(x)=\mc{O}(g(x))$ & big O notation: $\lim_{x\rightarrow \infty} \frac{f(x)}{g(x)}=0$\\
$f'(x)$& derivative of differentiable $f:\mb{R}\rightarrow \mb{R}$\\
$\nabla_x f(x) \in \R^{n\times m}$ & Jacobian of differentiable $f:\R^n\rightarrow \R^m$,  i.e., $[\nabla_x f(x)]_{ij}\coloneqq \frac{\partial g_j(x)}{\partial x\i}$
\\
$\sum_{i=1}^n \mc{X}^i$ & Minkowski sum of the sets $\{\mc{X}_i\}_{i=1}^n$\\
VI$(\mc{X},F)$ & variational inequality with set $\mc{X}$ and operator $F$, see \cref{def:vi}\\
$\mathcal{U}[a, b]$ & uniform distribution on the real interval $[a, b]$\\
$\ones[\{f(x)\ge0\}]$ & indicator function of the set $\{x\in\mb{R}^n~\text{s.t.}~f(x)\ge0\}$, $f:\mb{R}^n\rightarrow\mb{R}$\\
$|S|$ & cardinality of the (finite) set $S$
\end{longtable}
\vspace*{-2mm}
\subsection*{Reserved symbols}
\vspace*{-1mm}
Part I
\begin{longtable}[l]{l l}
$M$ \hspace*{25mm}& number of players\\
$n$ & dimension of players' strategy vectors\\
$x\i\in\mb{R}^n$ & strategy vector of player $i\in[M]$\\
$x^{-i}\in\mb{R}^{n(M-1)}$ & strategy vector of all players but $i\in[M]$\\
$\mc{X}\i\subseteq\R^n$ & local constraint set  of player $i\in[M]$\\
$\mc{X}$ & product of local constraint sets $\mc{X}\defeq\mc{X}^1\times \dots \times \mc{X}^M$\\
$\mc{C}\subseteq\R^{Mn}$ & coupling constraint set\\
$\sigma(x)\in\R^n$ & average of strategies $\sigma(x)\defeq \frac{1}{M}\sum_{i=1}^M x\i$\\
$J\i(x\i,\sigma(x))$ & cost function of player $i\in[M]$, $J\i:\mb{R}^n\times \mb{R}^n\rightarrow \R$\\
$\mc{G}$ & aggregative game defined in \eqref{eq:GNEP}\\
$x\NE$, $x\WE$ & Nash, Wardrop equilibrium according to \cref{def:NE,def:WE}\\
$J_S(\sigma(x))$ & social cost function, $J_S:\mb{R}^n\rightarrow \R$, see \cref{def:socopt}\\[-2mm]
\end{longtable}
\noindent Part II
\begin{longtable}[l]{l l}
$e$ \hspace*{27mm}& Euler's number\\
$\mc{R}$ & set of resources $\mc{R}=\{r_1,\dots,r_m\}$\\
$v_r$ & value of resource $r\in\mc{R}$, $v_r\in\mb{R}_{\ge 0}$\\
$n$ & number of agents\\
$a_i$ & strategy of player $i\in [n]$\\
$\mc{A}_i$ & strategy set of player $i\in [n]$\\
$\mc{A}$ & product of agents' strategy sets $\mc{A}\defeq \mc{A}_1\times\dots\times\mc{A}_n$\\
$W(a)$& welfare function $W:2^{\mc{R}}\times\dots\times2^{\mc{R}}\rightarrow\mb{R}_{\ge0}$\\
$|a|_r$ & number of agents selecting resource $r$ in allocation $a$\\
$w(j)$ & welfare basis function $w:[n]\rightarrow \R_{\ge0}$\\
$u_i(a)$ & utility function of agent $i\in[n]$, $u_i:\mc{A}\rightarrow\mb{R}$\\
$f(j)$ & distribution rule $f:[n]\rightarrow \R_{\ge0}$\\
$\ae$& pure Nash equilibrium strategy according to \cref{def:nashequilibrium}\\
$\fgar$, $\fsv$, $\fmc$ & distribution rules introduced in \eqref{eq:fgar} and \cref{def:svmc}
\end{longtable}
\subsection*{Complexity classes}
\begin{longtable}[l]{l l}
$\pclass$ \hspace*{26mm}& deterministic polynomial time class\\
$\npclass$ & nondeterministic polynomial time class\\
$\pls$ & polynomial local search class\\
$\ppad$ & polynomial parity arguments on directed graphs class
\end{longtable}

\mainmatter 
\chapter{Overview}
\label{ch:overview}

 Large scale systems have enormous potential for solving many of the current societal challenges. Robotic networks can operate in post-disaster environments and reduce the impact of nuclear, industrial or natural calamities \cite{kuntze2012seneka, kitano1999robocup}. Fleets of autonomous cars are forecasted to revolutionize the future mobility and to reduce traffic congestion as well as pollutant emissions \cite{spieser2014toward}.
 Demand-response schemes have the potential to allow for the integration of a large share of renewable resources \cite{motalleb2016providing}.
On a smaller scale, swarms of ``microbots'' promise groundbreaking results in medicine by means of local drug delivery \cite{servant2015controlled} or microsurgery \cite{ishiyama2002magnetic}. 

While all the above-mentioned systems (and many more) can be thought of as a collection of multiple \emph{subsystems} or \emph{agents}, we distinguish them in two categories depending on wether the corresponding subsystems \emph{are} or \emph{are not} cooperative. An example of cooperative system is that of a drones swarm performing a rescue mission. On the other hand, privately-owned self driving cars are non-cooperative, since each car's objective is that of reaching its destination as swiftly as possible, while respecting the traffic rules. Another example of non-cooperative large scale system is the electricity reserve market, where generators sell their ability to increase or decrease their electricity production to the system operator, whose ultimate objective is that of \mbox{balancing production and consumption.}

\begin{wrapfigure}[12]{r}{0.35\textwidth}
\centering
\vspace*{-0.4cm}
\includegraphics[scale=0.24]{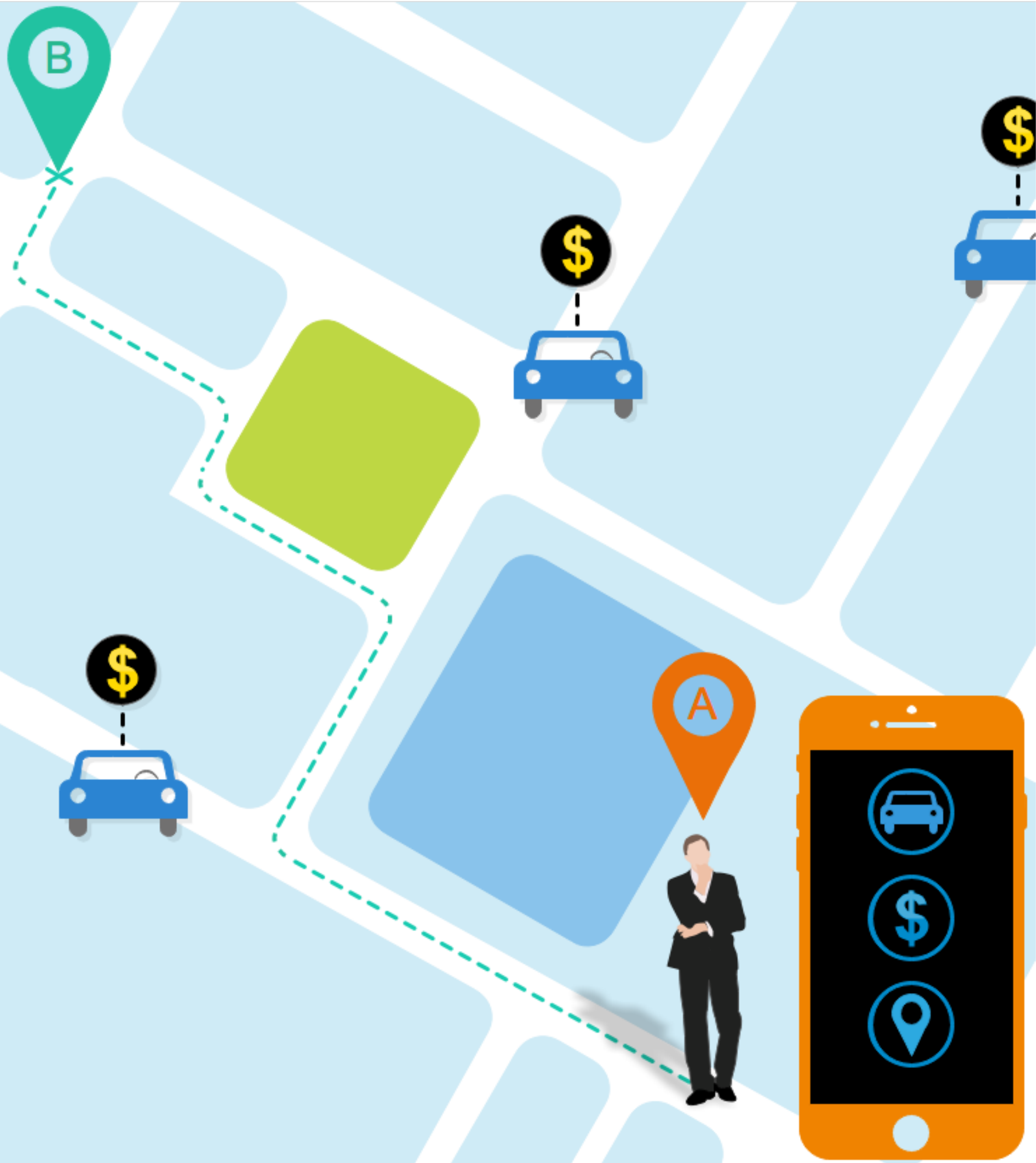}
\vspace*{-1mm}
\caption{}
\label{fig:uber} 
\end{wrapfigure}

One of the \emph{main challenges} in the operation of both types of systems is the design of \emph{local decision rules} for the individual subsystems to guarantee that the collective behavior is desirable with respect to a \emph{global objective} \cite{li2013designing}. With this respect, non-cooperative systems pose an additional layer of difficulty, in that preexisting local objectives might not be aligned with the system-level goal. As a concrete example consider on-demand ridesharing platforms such as Uber, Lyft or Didi, where agents are represented by human drivers hovering around different city neighbors (\cref{fig:uber}). 
While drivers' might position themselves in neighbors that maximize their own profit, the system-operator might have a different goal, e.g., to guarantee a minimum coverage of the city. For this class of systems, it is not sufficient to design local decision rules to be executed from each subsystem, but it is equally important to \emph{incentivize} their adoption.

The spatial distribution, privacy requirements, scale, and quantity of information associated with typical systems do not allow for centralized communication and decision making, but require instead the use of \emph{distributed protocols}.
In addition to the above requirements, designing such protocols is non-trivial due to the presence of heterogenous decision makers and informational constraints. To complete the overview, we note that the quality of a control architecture is usually gauged by several metrics including the satisfaction of the global objective, the robustness to external disturbances, as well as the amount of information propagated through the communication network.

\vspace*{5mm}
\noindent \fbox{\parbox{0.985\textwidth}{
The {\bf goal} of this thesis is to address the challenges previously discussed, with particular attention to the design of local decision rules in relation to their corresponding system-level performance. While in the first part of the thesis we focus on large scale \emph{non-cooperative} system, we exploit some of the insight obtained therein to address, in the second part of the thesis, large scale \emph{cooperative} system and propose novel efficient distributed algorithms.}}

\section{Outline and contributions}

\subsection{Part I: strategic agents}
In the first part of the thesis we focus on large scale systems composed of \emph{strategic} agents. Specifically, we consider the framework of \emph{average aggregative games}, where each agent aims at minimizing a cost function that depends both on his decision and on the average population strategy. Our objective is twofold. First, we wish to understand to what extent selfish decision making reduces the system performance. We do this using the notion of price of anarchy. Second, we aim at the design of scalable and decentralized algorithms that provably converge to a Nash or a Wardrop equilibrium. We achieve this leveraging the theory of variational inequalities. 

\vspace*{\myspace}
{ \noindent \bf Outline.}
\cref{ch:p1introduction} provides an informal introduction to the framework of aggregative games and describes how such games can be used to model applications pertaining to various fields. At the end of the chapter we review the existing literature and connect our work with it. 
In \cref{ch:p1mathpreliminaries} we review the mathematical tools needed throughout \cref{part:1} of this thesis. 
In \cref{ch:p1equilibriaVIandSMON} we formalize the notions of Nash and Wardrop equilibria in the presence of coupling constraints, and use the language of variational inequalities to reformulate these problems. We conclude the chapter studying the monotonicity properties of the variational inequality operator associated with the corresponding equilibrium problems. 
In the first part of \cref{ch:p1distanceWENEandPOA} we study the relation between Nash and Wardrop equilibria with particular attention to the distance between the corresponding strategies. In the second part of this chapter, we leverage these results to bound the performance degradation incurred when moving from a centralized solution to strategic decision making.
In \cref{ch:p1distribtuedalgorithms} we present a best-response algorithm and a gradient-based algorithm that provably converge to a Nash or Wardrop equilibrium.
We conclude \cref{part:1} with \cref{ch:p1applications}, where we demonstrate the results previously obtained to two large scale applications. 

\vspace*{\myspace}
 {\noindent \bf Contributions.}
 The main contributions of \cref{part:1} of this thesis are contained in \cref{ch:p1equilibriaVIandSMON,,ch:p1distanceWENEandPOA,,ch:p1distribtuedalgorithms,,ch:p1applications}, and are detailed in the following.
 \begin{enumerate}
 \item In \cref{ch:p1equilibriaVIandSMON} we introduce the notion of Wardrop equilibrium as a condition on the agents' strategies, rather than a condition on the aggregate behaviour. This allows to address a larger class of equilibrium problems, compared to the existing literature. We then study the relation between Nash and Wardrop equilibrium strategies and show that, in a game with $M$ players, their euclidean distance is upper bounded by $\mathcal{O}({1}/{\sqrt{M}})$ when one of the corresponding variational inequality is strongly monotone (\cref{thm:conv_strategies}). This allows us to provide guarantees on the efficiency of Nash equilibria by studying the efficiency of the corresponding Wardrop equilibria (\cref{thm:lin,,thm:polypoa,,thm:routing}).
 \item In \cref{ch:p1distribtuedalgorithms} we present a best-response and a gradient-based algorithm that allow to compute a Nash or a Wardrop equilibrium in the presence of coupling constraints (\cref{thm:conv_two,,thm:convergence_asp}).
 \item In \cref{ch:p1applications} we apply the theoretical results previously derived to i) coordinate the charging profile of a population of electric vehicles, and ii) to predict the travel time distribution for a road traffic network. The results we obtain both in terms of equilibrium efficiency and algorithmic convergence are novel.
 \end{enumerate}

\subsection{Part II: programmable machines}
In the second part of the thesis we focus on the control of large scale systems composed of multiple \emph{cooperative} subsystems.
We assume that each subsystem (agent) is endowed with computation and communication capabilities, and we aim at achieving a global objective through local coordination of the agents. 
More specifically, we consider a class of combinatorial allocation problems, where each agent selects a subset of resources with the goal of jointly maximizing a given welfare function, additive over the resources. 
Since this class of problems is intractable, we seek distributed algorithms that run in polynomial time and give provable approximation guarantees. 
Rather than directly specifying a decision making process, we adopt the \emph{game design} approach, and assign to each agent a local utility function. The fundamental question we seek to answer in this part of the thesis is how to design local utility functions so that their selfish maximization recovers a large fraction of the optimal welfare.
  
\vspace*{\myspace}
{ \noindent \bf Outline.}
  In \cref{ch:p2introduction} we introduce the problem considered, discuss potential applications as well as related works. 
  In \cref{ch:p2mathpreliminaries} we review the mathematical tools needed for the development of our work.
 In \cref{ch:p2utilitydesign} we formulate the utility design question and tackle it in two steps. First, we provide performance certificates for a given set of utility functions; second, we show how to design utilities that maximize the corresponding worst-case performance.
 In \cref{ch:p2subsupercov} we specialize the results to a class of submodular, supermodular and maximum coverage problems. 
 Finally, in \cref{ch:p2applications} we present two applications: the vehicle-target assignment problem and a coverage problem arising in distributed caching for mobile networks.
 
 \vspace*{\myspace}
 {\noindent \bf Contributions.}
 The main contributions of \cref{part:2} of this thesis are contained in \cref{ch:p2utilitydesign,,ch:p2subsupercov,,ch:p2applications} and are detailed in the following.
 \begin{enumerate}
 \item In \cref{ch:p2utilitydesign} we pose the utility design problem and adopt the notion of price of anarchy as the worst-case performance metric. We show that traditional approaches used to quantify such performance metric are rather conservative and are not suited for the design problem considered here (\cref{thm:smoothnottight}). Motivated by this shortcoming, we propose a novel framework to \emph{compute} (\cref{thm:primalpoa,,thm:dualpoa}) and \emph{optimize} (\cref{thm:optimizepoa}) the price of anarchy as a function of the given utilities. In particular, we show that the utility design problem can be reformulated as a tractable linear program.
The upshot of this contributions is the possibility to apply the game design procedure to a broad class of problems. To the best of our knowledge, this is the first approach that allows to systematically compute and optimize the price of anarchy.

\item In \cref{ch:p2subsupercov} we specialize the previous results to the case of submodular, maximum coverage, and supermodular problems. 
Relative to the submodular case, we obtain a novel and fully explicit expression for the price of anarchy (\cref{thm:wconcavefwdecreas}). We further apply this result to determine the exact price of anarchy for the Shapley value and marginal contribution design methodologies (\cref{cor:SVandMC}). These results are compared with previous (non tight and fragmented) results from the literature, and are placed in the larger context of submodular maximization subject to matroid constraints. Relative to the class of problems considered, we show how optimally-designed utilities provide an approximation ratio superior to the best known ratio $1-c/e$ of \cite{sviridenko2017optimal}.\footnote{The result of \cite{sviridenko2017optimal} improves on the $(1-e^{-c})/c$ of \cite{conforti1984submodular}, where $c$ is the (total) curvature of the welfare function \cite{conforti1984submodular} and $e$ the Euler's number.} Relative to the case of multiagent maximum coverage problems, we obtain a novel analytical expression for the price of anarchy (\cref{thm:poageneralcovering}), and subsume previous results in \cite{gairing2009covering, paccagnan2017arxiv}. Optimally designed utilities achieve a $1-1/e$ approximation, the best possible \cite{feige1998threshold}. We conclude the chapter providing a tight expression for the price of anarchy in the case of supermodular welfare function (\cref{thm:convex}), and show that our result complements \cite{jensen2018, phillips2017design}. Limitedly to this case, we observe that optimally-designed utility functions provide a rather poor approximation ratio.

\item In \cref{ch:p2applications} we test the performance of the proposed algorithms on a task-allocation problem, and on a coverage problem arising in distributed mobile networks. We provide thorough simulation results and show the theoretical and numerical advantages of our approach.
 \end{enumerate}

\section{Publications}
\nocite{part1Paccagnan2018,part2Paccagnan2018,paccagnan18efficiency,gentile2017nash,paccagnan2017arxiv2,paccagnan2017arxiv,gentile2017novel,paccagnan2017risks,burger2017guarantees,paccagnan2017coupl,paccagnan2016aggregative,paccagnan2015ontherange,Paccagnan17book,jorgensen2014IMU}

This thesis contains a subset of the results derived during the author's studies as PhD student at ETH Zurich, all of which have already been published or submitted for publication. The corresponding articles on which this thesis is based are listed below.

\subsection{Part I: strategic agents}
The relations between Nash and Wardrop equilibria presented in \cref{ch:p1distanceWENEandPOA}, the algorithms developed in \cref{ch:p1distribtuedalgorithms} as well as the numerical simulations included in \cref{ch:p1applications} were developed in collaboration with B. Gentile, F. Parise, M. Kamgarpour and J. Lygeros. The results on the equilibrium efficiency featured in \cref{ch:p1distanceWENEandPOA} were derived with the help of F. Parise and J. Lygeros.
\begin{refcontext}[sorting=ydnt]
\makenamesbold
\defbibfilter{aggregative}{ keyword=aggregative }
\printbibliography[filter=aggregative,heading=none,title=none]
\end{refcontext}
%
\subsection{Part II: programmable machines}
The utility design approach presented in \cref{ch:p2introduction}, the characterization and optimization of the price of anarchy presented in \cref{ch:p2utilitydesign,,ch:p2subsupercov} were developed in collaboration with J.R. Marden, with the additional help of R. Chandan limitedly to \cref{thm:smoothnottight}. The approach, the theoretical findings as well as the numerical studies (\cref{ch:p2applications}) are published in the following papers. 
\begin{refcontext}[sorting=ydnt]
\makenamesbold
\defbibfilter{gamedesign}{ keyword=gamedesign }
\printbibliography[filter=gamedesign,heading=none,title=none]
\end{refcontext}
%
\subsection{Other publications}
The following papers were published by the author during his doctoral studies, but are not included in this dissertation:
\subsubsection*{Aggregative games and applications}
\begin{refcontext}[sorting=ydnt]
\makenamesbold
\defbibfilter{others_aggregative}{ keyword=others_aggregative }
\printbibliography[filter=others_aggregative,heading=none,title=none]
\end{refcontext}
\subsubsection*{Utility Design}
\begin{refcontext}[sorting=ydnt]
\makenamesbold
\defbibfilter{others_gamedesign}{ keyword=others_gamedesign }
\printbibliography[filter=others_gamedesign,heading=none,title=none]
\end{refcontext}
\subsubsection*{Others}
\begin{refcontext}[sorting=ydnt]
\makenamesbold
\defbibfilter{others}{ keyword=others }
\printbibliography[filter=others,heading=none,title=none]
\end{refcontext}

\newlength\figureheight 
\newlength\figurewidth 

\newcommand{\N}{M}
\part{Strategic agents: aggregative games}
\label{part:1}
\chapter{Introduction}
\label{ch:p1introduction}
In the first part of the thesis we consider large scale systems composed of \emph{mutual influencing} and \emph{strategic} agents. We use the term ``mutual influencing'' to describe the fact that agents' actions have influence on one another, while the term ``strategic'' captures the self-interested nature of the agents. As an example, consider that of traders in the stock exchange market. In a simplistic setup, each trader's goal is to maximize his profit by carefully buying and selling various financial products. At the same time, the value of one such product depends on what action the other traders take, making the final outcome difficult to predict. While this is only one example, similar scenarios arise in a number of real-life applications ranging from road traffic network to opinion dynamics and even missile defense or racing cars.
A setup in which multiple agents behave strategically and influence each others' objectives is typically referred to as a game, and the corresponding field of study termed \emph{game theory}.\footnote{The terminology derives from the fact that chess, poker, go, and many other board games are prototypical examples of strategic decision making.}
 
Game theory originated as a set of tools to model the interaction of selfish decision makers and has been given formal recognition as an independent research area thanks to the pioneering work of Von Neumann \cite{neumann1928theorie} and to the celebrated existence result of Nash \cite{Nash01011950}. With the modern terminology of game theory, a game is fully specified by four elements:
\begin{itemize}
\item[-]{\bf players or agents:} these are the decision makers, e.g., the traders in the stock exchange market. 
In the following we identify each agent with an index $i\in\{1,\dots,\N\}$.
\item[-]{\bf strategy or action sets}: these are the actions available to each agent, e.g., which financial products a trader can buy/sell, in what amount,  and when. 
In the following we denote with $\mc{X}\i$ the set containing all the possible actions available to agent $i\in\{1,\dots,\N\}$.
\item[-]{\bf utilities or cost functions}: a measure that quantifies whether the goal of each agent has been satisfied and to what extent. This is typically captured through a function mapping an element of the joint action space $\mc{X}\defeq\mc{X}^1\times\dots\times \mc{X}^\N$ to a real number. In the following we concentrate on cost minimization games and thus introduce the function $J\i:\mc{X}\rightarrow \R$ representing the cost incurred by player $i\in\{1,\dots,\N\}$, e.g., the negative profit incurred by each trader in the stock market.\footnote{Observe that any cost minimization game can be transformed in a utility maximization game upon reversing the sign of the cost functions.} 
\item[-]{\bf equilibrium concept}: while player $i$ aims at minimizing his cost function $J^i$, this function depends on both $x^i\in\mc{X}^i$ and the choices of all the other agents, typically referred to as $x^{-i}$.
Thus, we need to define what is a descriptive outcome of the game. This concept is captured by the notion of equilibrium, the most celebrated of which is known as Nash equilibrium. Informally, a joint strategy $x\NE\in\mc{X}$ is a (pure) Nash equilibrium, if no agent can lower his cost by unilaterally changing his action. 
\end{itemize}
Building on these foundation, we wish to introduce an additional dimension to the problem and to address games with a large number of players. This is motivated by the observation that a relevant number of applications are indeed large scale, e.g., stock exchange markets, road traffic networks, online advertising, and many more.
The first difficulty that one is faced with, when thinking about these \emph{large system}, is that of complexity or more formally \emph{tractability}. In order to alleviate this issue, in the remainder of \cref{part:1} of this thesis, we will consider \emph{aggregative games}. Aggregative games are games where the cost function of each agent does not directly depend on the choice of all the other players, but instead is a function of the \emph{aggregate} players' behaviour. As a purely conceptual example consider the following.

\begin{example}[Guess 2/3 of the average]
\label{ex:2/3}
\emph{During the first lecture of the course in algorithmic game theory each student is asked to pick an integer number in the interval $\{0,\dots,100\}$. The student(s) that selects the number closest to $2/3$ of the average wins. What number should you pick?}	

This puzzle can be modeled as a game where the players are the students, and the action set of each player is $\{0,\dots,100\}$. Further, each player's cost function is captured by the distance from his selection to the $2/3$ of the average. According to the previous definition, this game is aggregative in that the cost function of every player \emph{does not} depend on which number each player selected, but only on an aggregate measure, i.e., the average in this case.\footnote{The answer to this puzzle is more subtle than what it might appear at first, and is more of an exercise in behavioral psychology than a question related to game theory. Indeed, it immediate to observe that the only pure Nash equilibrium of the game consists in all players selecting the number $0$. Nevertheless, the fundamental question we need to answer is different: is the notion of Nash equilibrium an appropriate equilibrium concept for the given setup? Real world experiments show that this is not the case, as the average of the players' actions is usually much higher than 0.}
\end{example}
Besides \cref{ex:2/3}, the aggregative structure arises in various real world applications: in a stock exchange market, the price of a product depends on the total demand and supply, but not on the specific choice of each trader. Similarly, in a road traffic network, the travel time on each link depends (ideally) only on the total number of vehicles on that link.  
\section{Equilibrium efficiency and algorithms}
The notion of Nash equilibrium describes a strong stability condition, requiring no agent to be capable of improving by means of unilateral deviations.\footnote{In order to make sense of the following discussion, we assume existence of a Nash equilibrium. We will formally tackle the existence question in \cref{ch:p1equilibriaVIandSMON}.} 
On the other hand, the quality of an allocation is often measured at the system level with a single scalar cost function $J\SO:\mc{X}\rightarrow \R$.\footnote{In some cases the function $J\SO$ is simply the sum of each agents' cost function. This need not be the general case.} As an example, consider that of a road traffic network, where agents move from origin to destination with the goal of minimizing their own travel time. In this scenario, each agent's cost function captures the time spent on the the road. Nevertheless, a system-level measure describing how well the infrastructure is used is the sum of all users' travel time. 
Thus, of great interest from a system's perspective is to further understand to what extent equilibrium strategies are efficient. Formally, given a game and a social cost function $J\SO$, the efficiency of a Nash equilibrium $x\NE$ is measured by the ratio between $J\SO(x\NE)$ and the minimum possible social cost, i.e., $\min_{x\in \mc{X}}J\SO(s)$. The worst-case (best-case) efficiency over all possible equilibria is known as price of anarchy (price of stability). While non uniqueness of the equilibrium set means that these quantities can be quite different, the notion of price of anarchy has received greater attention. Indeed, knowledge of the price of anarchy can be used to bound the efficiency of any possible Nash equilibrium. Additionally, the system regulator can exploit knowledge of the price of anarchy to influence or design better-performing systems. For example, in relation to the road traffic network mentioned previously, the system operator could impose tolls on specific streets or dynamically modify the speed limit so as to improve the efficiency of the overall system. Following this research direction, the first objective of \cref{part:1} of this thesis is to study the price of anarchy relative to a class of aggregative games.  

Once the equilibrium efficiency problem has been settled (and measures have been taken in case of non-satisfactory performance), of fundamental importance is the problem of coordinating the agents towards an equilibrium of the underlying game. With this respect, we are particularly interested in the use of decentralized algorithms. The advantage in using this class of algorithms includes privacy-preserving features and  computational tractability. In this spirit, the second objective we pursue in  \cref{part:1} of this thesis is the development of decentralized algorithms for a class of aggregative games.

\vspace*{5mm}
\noindent \fbox{\parbox{0.985\textwidth}{
\label{problem:utilitydesign}
 To summarize, the {\bf goal} of \cref{part:1} of this thesis is twofold.
 \begin{itemize}
 \item[-] First, we wish to provide guarantees on the efficiency of Nash and Wardrop equilibria as formally defined in \cref{ch:p1equilibriaVIandSMON}.
 \item[-] Second, we want to devise decentralized algorithms to coordinate the agents toward one such equilibrium.
 \end{itemize}}}
\section{Related works}
In this section, we limit ourselves to connect our work and the aggregative game framework with other research threads and models.
In particular, we do \emph{not} provide a comparison between the contributions presented in \cref{part:1} of this thesis and the existing literature. On the contrary, we postpone this task after the presentation of the results in each of the \cref{ch:p1distanceWENEandPOA,,ch:p1distribtuedalgorithms,,ch:p1applications}. \mbox{This allows us to provide a sharper literature comparison.}

\subsection*{Aggregative games}
While well-known models studied in game theory belong to the class of \emph{aggregative games} (e.g., the classical Cournot model of competition \cite{cournot1838recherches}, or the traffic user equilibrium of Wardrop \cite{wardrop1952some}), a systematic study of this class was initiated only at the turn of the last century, with significant effort coming from the economic literature \cite{corchon1994comparative,dubey2006strategic}. Early studies have been devoted to proving existence of the equilibria, and to the analysis of parametric equilibrium problems. A particular class of which is that of comparative statics, where the goal is to predict how the modification of a parameter in the game would alter the set of equilibria \cite{acemoglu2013aggregate}. Additional results include convergence analysis for best-response like algorithms, but their scope is generally limited to scalar valued aggregate functions \cite{Jensen2010,cornes2012fully}. 
Within the engineering and control community there has been a recent surge of interest in the class of aggregative games, in particular because of their potential applications to road traffic dispatch, wireless network routing, and demand-response schemes \cite{gentile2017nash,scutari2012monotone,ma2013decentralized}. Under technical assumptions, gradient-based algorithms have been proposed to coordinate the agents towards a Nash equilibrium, for example in \cite{koshal2016distributed,chen2014autonomous}.

\subsection*{Mean field games}
Mean field games are a class of continuous-time dynamic games, where the evolution of each agent's trajectory is governed by a stochastic controlled differential equation. In the simplest setup, agents are coupled purely through the cost function, which is assumed to depend \emph{only} on the average state of the agents \cite{huang2007large}. The analysis is carried out in the limiting regime of large populations, since the problem ``simplifies'' to a system comprising a Hamilton-Jacobi equation (backward in time, capturing the optimality condition) and a Fokker-Planck equation (forward in time, capturing the distribution of the agents in the state space) \cite{lasry2007mean}.
While there are some elements of contact between mean field and aggregative games (e.g., the dependence of each agent's cost on the average), some fundamental differences prevent from deeming one class of problems a subset of the other. In particular, the presence of input constraints in aggregative games does not allow for a reformulation in terms of mean field games. The converse is also true, for example due to the fundamental role played by \mbox{stochasticity in the realm of mean field games.} 
\subsection*{Population games}
A population game consists of a game played by a splittable unit mass of players. To facilitate the comprehension, one can think of this model as a game with infinitely many identical agents. By choosing an action from a finite and common set, each agent receives a payoff that depends on the chosen action and on the total mass of agents selecting the same strategy \cite{sandholm2010population}. 
We note that this class of games differs from that of aggregative games for at least two reasons. First, aggregative games are a modeling language capable of describing games with any number of agents, in contrast to population games. Additionally, the result available for aggregative games are not confined to the limiting case of infinite number of players, but the analysis is possible without restoration to the limit. Second, in aggregative games the strategy sets are typically thought of as continuous sets, while this is not the case for population games. Classical results in population games include, amongst others, convergence analysis of evolutionary dynamics including the replicator dynamics and extension thereof \cite{bomze1983lotka,cressman2014replicator}.

\chapter{Mathematical preliminaries}
\label{ch:p1mathpreliminaries}
In this chapter we introduce the mathematical tools required for the development of the first part of this thesis. We begin discussing and connecting useful properties of finite dimensional operators. We then turn our attention to variational inequalities, discuss existence and uniqueness of the solution and present two classical algorithms. While all the material is already available in the literature, we redirect the reader to \cite{facchinei2007finite} for a comprehensive treatment. 
 
\section{Operator properties}
\label{sec:operatorsprop}
In this section we introduce some useful properties of finite dimensional operators. Our interest stems from the key role they play in the study of \mbox{variational inequalities.}
\begin{definition}[Lipschitz, nonexpansive, contractive]
\label{def:nonexpansive}
The operator $F: \mc{X} \subseteq \R^n \rightarrow \R^n$ is Lipschitz with Lipschitz constant $L>0$ if
\be
\label{eq:lipschitz}
||F(x)-F(y)||\le L ||x-y||\qquad \forall{x,y\in\mc{X}}.
\ee
The operator $F$ is non-expansive if \eqref{eq:lipschitz} holds with $L=1$. The operator $F$ is contractive if \eqref{eq:lipschitz} holds with $L<1$.
\end{definition}
\begin{definition}[Monotone and strongly monotone~\cite{facchinei2007finite}]
\label{def:SMON}
The operator $F: \mc{X} \subseteq \R^n \rightarrow \R^n$ is strongly monotone with monotonicity constant $\alpha>0$ if 
\begin{equation}
(F(x)-F(y))^\top(x-y) \ge \alpha \|x-y\|^2\qquad \forall x,y \in \mc{X}.
\label{eq:def_strongly monotone}
\end{equation}
The operator $F$ is monotone if~\eqref{eq:def_strongly monotone} holds for $\alpha=0$.
\hfill{$\square$}
\end{definition}
An example of monotone operator is that of the gradient of a convex function, as detailed in the next proposition.
\begin{proposition}[Convex functions have monotone gradients 
\label{prop:convgrad}
\textup{\cite[Prop. 17.10]{bauschke2011convex}}]
	Let $\mc{X}\subseteq \R^n$ be convex, and consider $f:\mc{X}\rightarrow \R$ a continuously differentiable and (strongly) convex function. The operator $F:\mc{X} \rightarrow \R^n$ defined by $F(x)=\nabla_xf(x)$ is (strongly) monotone.
\end{proposition}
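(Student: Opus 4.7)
The plan is to prove both statements simultaneously by leveraging the first-order characterization of (strong) convexity for continuously differentiable functions. Recall that if $f:\mc{X}\rightarrow\R$ is continuously differentiable and convex on the convex set $\mc{X}$, then for all $x,y\in\mc{X}$ one has $f(y)\ge f(x)+\nabla_x f(x)^\top(y-x)$; in the strongly convex case with modulus $\alpha>0$, this tightens to $f(y)\ge f(x)+\nabla_x f(x)^\top(y-x)+\tfrac{\alpha}{2}\|y-x\|^2$. I would state these characterizations up front, treating the constant $\alpha\ge 0$ uniformly so that the convex case corresponds to $\alpha=0$ and the strongly convex case to $\alpha>0$.

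Next I would apply the inequality twice, once with the roles of $x$ and $y$ as written, and once with $x$ and $y$ swapped, obtaining
\begin{equation*}
f(y)\ge f(x)+\nabla_x f(x)^\top(y-x)+\tfrac{\alpha}{2}\|y-x\|^2,
\end{equation*}
\begin{equation*}
f(x)\ge f(y)+\nabla_x f(y)^\top(x-y)+\tfrac{\alpha}{2}\|x-y\|^2.
\end{equation*}
Adding these two inequalities, the function values $f(x)$ and $f(y)$ cancel and I am left with $0\ge (\nabla_x f(x)-\nabla_x f(y))^\top(y-x)+\alpha\|x-y\|^2$, which after rearranging becomes $(\nabla_x f(x)-\nabla_x f(y))^\top(x-y)\ge \alpha\|x-y\|^2$ for all $x,y\in\mc{X}$. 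This is exactly the definition of (strong) monotonicity of $F=\nabla_x f$ given in \cref{def:SMON}, with the convex case recovered by setting $\alpha=0$.

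There is no real obstacle here: the argument is a short two-line computation once the first-order characterization of (strong) convexity is quoted. The only item worth being careful about is justifying that characterization itself, which follows from the mean value theorem applied along the segment $[x,y]\subseteq\mc{X}$ (convexity of $\mc{X}$ ensures the segment lies in the domain, so $\nabla_x f$ is defined along it). I would either invoke this as a standard fact from convex analysis with a citation, or include a one-line justification via the identity $f(y)-f(x)=\int_0^1 \nabla_x f(x+t(y-x))^\top(y-x)\,dt$ together with the (strong) convexity-based bound on the integrand. Either way, the whole proof fits comfortably in a few lines.
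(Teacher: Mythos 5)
Your proof is correct: the two-sided application of the first-order characterization of (strong) convexity, followed by summation, yields exactly the (strong) monotonicity inequality of \cref{def:SMON}, with the convex case as $\alpha=0$. The paper itself gives no proof of this proposition — it is quoted directly from \cite[Prop.~17.10]{bauschke2011convex} — and your argument is the standard one used there, so there is nothing further to reconcile.
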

\begin{definition}[Co-coercive]
\label{def:cocercive} 
The operator $F: \mc{X} \subseteq \R^n \rightarrow \R^n$ is co-coercive with constant $\eta>0$ if 
\[
(F(x)-F(y))^\top(x-y)\ge\eta ||F(x)-F(y)||^2 \qquad \forall{x,y\in\mc{X}}.
\]
\end{definition}
The notion of co-coercivity sits in between that of strong monotonicity and monotonicity. In particular, for a given Lipschitz continuous operator it \mbox{is possible to show that}
\be
\text{strong monotonicity} \implies \text{co-coercivity} \implies \text{monotonicity}.
\label{eq:smonimpliedcoc}
\ee
These results follow directly from the corresponding definitions and can be found in \cite[p. 164]{facchinei2007finite}.
The following figure is typically employed to give a visual interpretation of the properties just defined.
\begin{figure}[H]
\begin{center}
\begin{tikzpicture}[scale=1]
 
\draw[thick,->] (-3,0) -- (6.3,0) node[anchor=north east]{};
\draw[thick,->] (0,-3) -- (0,3) node[anchor=north east]{};

\filldraw[fill=black] (0,0) circle (0.08);
\filldraw[fill=black] (-1.5,0) circle (0.08);
\filldraw[fill=black] (1.5,0) circle (0.08);
\filldraw[fill=black] (2,0) circle (0.08);
\filldraw[fill=black] (5,0) circle (0.08);

\filldraw[%
          fill=black,
          opacity=0.2
         ] (0,0) circle (1.5);
\draw (0,0) circle (1.5);

\filldraw[%
          fill=black,
          opacity=0.2
         ] (2.5,0) circle (2.5);
\draw (2.5,0) circle (2.5);

\filldraw[draw=teal,%
          fill=teal,
          opacity=0.3,
         ]
               (2.0,-3)
             --(2.0,3)
             --(6.3,3)  
             --(6.3,-3)
             -- cycle
             ;
\draw (2,-2.5) -- (2,2.5);
\draw[dashed] (2,-2.5) -- (2,-3);         
\draw[dashed] (2,2.5) -- (2,3);

\draw (0.6,-0.3) node[anchor=west]{\footnotesize $(1,0)$};  
\draw (-0.9,-0.3) node[anchor=west]{\footnotesize $(0,0)$};    
\draw (-2.6,-0.3) node[anchor=west]{\footnotesize $(-1,0)$};
\draw (2,-0.3) node[anchor=west]{\footnotesize $(\alpha,0)$};  
\draw (4.9,-0.3) node[anchor=west]{\footnotesize $(1/\eta,0)$}; 
\draw (-1.2,0.5) node[anchor=west]{\footnotesize NE};
\draw (0.55,-1.6) node[anchor=west]{\footnotesize $\eta-$COC};
\draw (2.1,-2.8) node[anchor=west]{\footnotesize $\alpha-$SMON};
\end{tikzpicture}
\end{center}
\caption{Two dimensional representation of nonexpansive operator (NE), co-coercive operator with constant $\eta$ ($\eta-$COC), and strongly monotone operator with constant $\alpha$ ($\alpha-$SMON). For each of these properties, the corresponding colored region represents the locus of points where $F(1,0)$ must lie, under the assumption that $\zeros[2]$ is a fixed point of $F$, i.e., that $F(\zeros[2])=\zeros[2]$. The regions can be easily derived from the corresponding definitions.}
\end{figure}
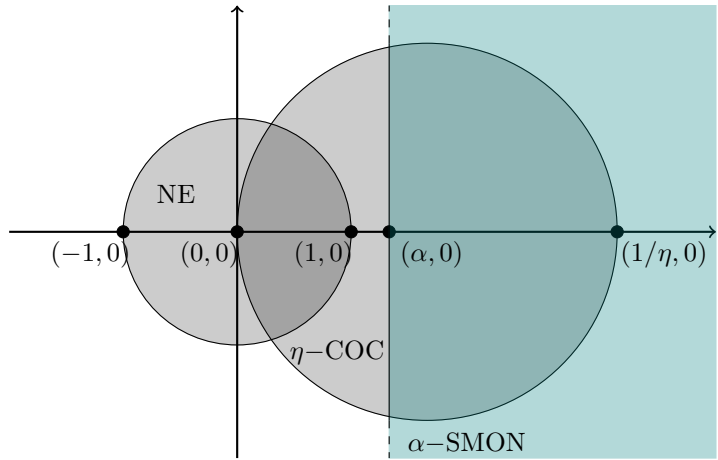
\section{Variational inequalities}
Variational inequalities are fundamental mathematical tools that lend their power from their chameleonic nature. Indeed, surprisingly different problems can be formulated and studied using the language of variational inequality. Examples include systems of equations, optimization problems, Nash equilibrium problems, contact problem in mechanics, options pricing. While the term ``variational inequality'' was coined by Stampacchia in relation to partial differential equation \cite{hartman1966some}, in the following we focus on the treatment of finite dimensional variational inequalities, as defined next. 
\begin{definition}[Variational inequality]
\label{def:vi}
Consider a set $\mc{X}\subseteq \R^n$ and an operator $F:\mc{X} \rightarrow \R^n$. A point $\bar x\in\mc{X}$ is a solution of the variational inequality VI$(\mc{X},F)$ if 
\begin{equation}
F(\bar x)^\top (x-\bar x)\ge 0 \quad \forall x\in\mc{X}. 
\label{eq:videf}
\end{equation}
\end{definition}

\begin{proposition}[Existence and uniqueness \textup{\cite[Cor. 2.2.5, Thm. 2.3.3]{facchinei2007finite}}]
\label{prop:exun}
Consider the variational inequality VI$(\mc{X},F)$, where $\mc{X}$ is compact convex and $F$ continuous.
\begin{enumerate}
	\item The solution set of VI$(\mc{X},F)$ is nonempty and compact.
	\item If the operator $F$ is strongly monotone, the solution of VI$(\mc{X},F)$ is unique.  
\end{enumerate}
\end{proposition}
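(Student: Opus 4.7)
My strategy is to prove part (a) by reducing VI$(\mc{X},F)$ to a fixed-point problem for a continuous self-map of $\mc{X}$ and then applying Brouwer's fixed-point theorem; part (b) will follow directly by testing the variational inequality against itself and invoking \cref{def:SMON}.

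The first step for existence is the standard \emph{projected-gradient reformulation}: for any $\gamma>0$, a point $\bar x\in\mc{X}$ solves VI$(\mc{X},F)$ if and only if $\bar x=\Pi_{\mc{X}}(\bar x-\gamma F(\bar x))$. This is a direct consequence of the variational characterization of the metric projection onto a convex set, namely $z=\Pi_{\mc{X}}(y)$ iff $z\in\mc{X}$ and $(y-z)^\top(x-z)\le 0$ for every $x\in\mc{X}$; applied with $y=\bar x-\gamma F(\bar x)$ and $z=\bar x$, this condition reads exactly \eqref{eq:videf}. The second step is to observe that the map $T_\gamma:\mc{X}\to\mc{X}$, $T_\gamma(x)\defeq\Pi_{\mc{X}}(x-\gamma F(x))$, is well-defined (since $\mc{X}$ is convex) and continuous, being the composition of the continuous operator $x\mapsto x-\gamma F(x)$ with the nonexpansive projection $\Pi_{\mc{X}}$. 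Since $\mc{X}$ is compact and convex, Brouwer's theorem furnishes a fixed point of $T_\gamma$, which by the reformulation solves VI$(\mc{X},F)$, proving nonemptiness. For compactness it suffices to show the solution set $S$ is closed in $\mc{X}$: for any sequence $\{x_n\}\subseteq S$ with $x_n\to x^*\in\mc{X}$ and any fixed $y\in\mc{X}$, continuity of $F$ allows passing to the limit in $F(x_n)^\top(y-x_n)\ge 0$, yielding $F(x^*)^\top(y-x^*)\ge 0$, so $x^*\in S$; thus $S$ is a closed subset of the compact set $\mc{X}$.

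For part (b), suppose $x_1,x_2\in\mc{X}$ are both solutions. Testing the VI at $x_1$ with $x=x_2$ and the VI at $x_2$ with $x=x_1$ and summing gives
\[
(F(x_2)-F(x_1))^\top(x_1-x_2)\ge 0,
\]
equivalently $(F(x_1)-F(x_2))^\top(x_1-x_2)\le 0$. By strong monotonicity of $F$, the left-hand side is at least $\alpha\|x_1-x_2\|^2$, so $\alpha\|x_1-x_2\|^2\le 0$; since $\alpha>0$, this forces $x_1=x_2$. The only mildly subtle step in the whole argument is the equivalence between solving the VI and being a fixed point of $T_\gamma$; this is a routine application of the projection theorem but is the conceptual cornerstone that makes the Brouwer-based existence proof possible.
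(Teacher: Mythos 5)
Your proof is correct: the paper does not prove this proposition itself but cites it from Facchinei and Pang, and your argument — the natural-map/projection fixed-point reformulation combined with Brouwer's theorem for existence, closedness inside the compact set $\mc{X}$ for compactness, and the summed-inequality argument under strong monotonicity for uniqueness — is exactly the standard proof given in that reference. No gaps beyond the implicit (and harmless) assumption that $\mc{X}$ is nonempty, which the statement already presupposes.
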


\subsection*{Connection to convex optimization}
The variational inequality problem is tightly connected with that of mathematical programming. In a mathematical program we are given a set $\mc{X}\subseteq \R^n$ and a real valued function $f:\R^n\rightarrow \R$. Our goal is to select an element of $\mc{X}$ that minimizes $f$ over such set. The next proposition makes this connection clear.
\begin{proposition}[Minimum principle \textup{\cite[Prop 3.1]{bertsekas1989parallel}}]
\label{prop:minimumprinc}
Given $\mc{X}\subseteq\R^n$ closed convex and $f:\mc{X}\rightarrow \R$ continuously differentiable, consider the problem of minimizing \mbox{$f$ over $\mc{X}$.}\footnote{In the following, we say that $f$ is continuously differentiable in a \emph{closed} set $\mc{X}$ if there exists an \emph{open} set $\mc{Y}\supset \mc{X}$ where $f$ is continuously differentiable.}
\begin{enumerate}
\item If $\bar{x}\in\mc{X}$ is a local minimizer of $f$, then $\bar{x}$ solves VI$(\mc{X},\nabla_x f)$
\item If $f$ is convex on $\mc{X}$, then any solution to VI$(\mc{X},\nabla_x f)$ is a global minimizer of $f$.
\end{enumerate}
\end{proposition}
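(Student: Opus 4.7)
The plan is to prove both implications using a standard first-order argument, exploiting the convexity of $\mc{X}$ in one direction and the first-order characterization of convex functions in the other.

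For part (a), I would start from the definition of a local minimizer: there exists a neighborhood of $\bar{x}$ within which $f$ attains its minimum at $\bar{x}$. To extract the variational inequality, I would use the convexity of $\mc{X}$ to construct a feasible perturbation: for any $x\in\mc{X}$ and $t\in(0,1)$, the convex combination $\bar{x}+t(x-\bar{x})=(1-t)\bar{x}+tx$ lies in $\mc{X}$. For $t$ small enough, this point falls in the neighborhood on which $\bar{x}$ is a minimizer, so
\[
\frac{f(\bar{x}+t(x-\bar{x}))-f(\bar{x})}{t}\ge 0.
\]
Taking $t\to 0^+$, continuous differentiability identifies the limit with the directional derivative $\nabla_x f(\bar{x})^\top(x-\bar{x})$, which is therefore nonnegative. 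Since $x\in\mc{X}$ was arbitrary, this is precisely \eqref{eq:videf} for the operator $F=\nabla_x f$, so $\bar{x}$ solves VI$(\mc{X},\nabla_x f)$.

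For part (b), I would invoke the first-order characterization of convexity: a continuously differentiable $f$ is convex on the convex set $\mc{X}$ if and only if $f(x)\ge f(y)+\nabla_x f(y)^\top(x-y)$ for all $x,y\in\mc{X}$. Applying this with $y=\bar{x}$, where $\bar{x}$ solves VI$(\mc{X},\nabla_x f)$, yields
\[
f(x)\ge f(\bar{x})+\nabla_x f(\bar{x})^\top(x-\bar{x})\ge f(\bar{x})\qquad\forall x\in\mc{X},
\]
where the second inequality is exactly the variational inequality \eqref{eq:videf}. Hence $\bar{x}$ is a global minimizer of $f$ on $\mc{X}$.

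Neither direction is particularly subtle; the main care point is ensuring that the perturbation $\bar{x}+t(x-\bar{x})$ is admissible, which follows from convexity of $\mc{X}$, and that the limit as $t\to 0^+$ is justified by continuous differentiability. For the latter, I would rely on the convention clarified in the footnote of the excerpt, namely that continuous differentiability on the closed set $\mc{X}$ is understood as continuous differentiability on an open neighborhood of $\mc{X}$, so that $\nabla_x f(\bar{x})$ is well defined and the standard first-order expansion applies.
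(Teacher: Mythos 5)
Your proof is correct. The paper does not prove this proposition itself but cites it from Bertsekas and Tsitsiklis, and your argument — the feasible-direction/difference-quotient limit for the necessity part and the first-order convexity inequality combined with \eqref{eq:videf} for the sufficiency part — is exactly the standard argument given in that reference, with the footnote's convention correctly invoked to make $\nabla_x f(\bar{x})$ well defined on the closed set $\mc{X}$.
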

In a nutshell, a convex optimization problem is equivalent to a variational inequality where the operator $F$ represents the gradient of the original function and the set captures the constraint set $\mc{X}$. It is important to observe that the converse does \emph{not} hold. Indeed, there are variational inequalities that do not represent the first order condition for any optimization problem. To convince ourselves of this, it suffices to observe that not all operators $F:\R^n\rightarrow\R^n$ can be written as the gradient of some underlying function.
We also note that the gradient of a strongly convex function is strongly monotone (see \cref{prop:convgrad}), so that existence and uniqueness of the solution is already guaranteed by the corresponding result on variational inequalities presented in \cref{prop:exun}.
A geometric interpretation of condition \eqref{eq:videf} and the corresponding interpretation in terms of mathematical program is illustrated in the following figure.
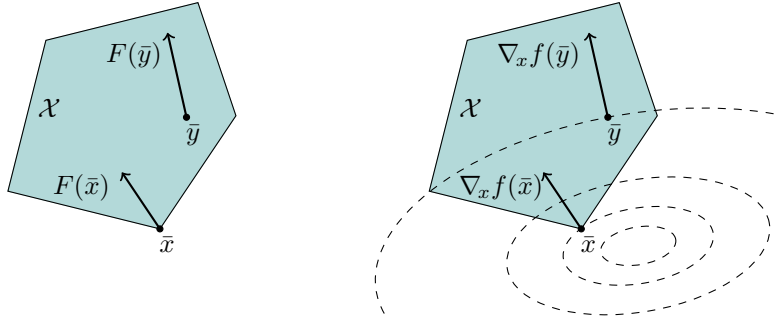
\begin{figure}
\begin{center}
\begin{tikzpicture}[scale=0.5]
\filldraw[draw=teal,%
          fill=teal,
          opacity=0.3,
         ]
               (1,-3)
             --(2.0,1)
             --(6,2)  
             --(7,-1)
             --(5,-4)
             -- cycle
             ;
\draw (1,-3)
             --(2.0,1)
             --(6,2)  
             --(7,-1)
             --(5,-4)
             -- cycle
             ;
\draw[rotate around={10:(6.5,-4.45)},dashed,opacity=0](6.5,-4.45) ellipse (2 and 1);
\draw[rotate around={10:(6.5,-4.45)},dashed,opacity=0](6.5,-4.45) ellipse (1 and 0.5);
\draw[rotate around={10:(6.5,-4.45)},dashed,opacity=0](6.5,-4.45) ellipse (3.5 and 1.75);
\draw[rotate around={10:(6.5,-4.45)},dashed,opacity=0](6.5,-4.45) [partial ellipse=54:190:7 and 3.5];
\draw[thick,->] (5,-4)--(4,-2.5);
\draw[thick,->] (5.7,-1.04)--(5.2,1.2);
\draw (1.5,-0.8) node[anchor=west]{\footnotesize$\mathcal{X}$}; 
\draw (1.9,-2.9) node[anchor=west]{\footnotesize$F(\bar{x})$}; 
\draw (3.3,0.6) node[anchor=west]{\footnotesize$F(\bar{y})$};
\draw (4.7,-4.4) node[anchor=west]{\footnotesize$\bar x$}; 
\draw (5.4,-1.5) node[anchor=west]{\footnotesize$\bar y$};
\filldraw[fill=black] (5,-4) circle (0.08);          
\filldraw[fill=black] (5.7,-1.04) circle (0.08);   
\end{tikzpicture}
\begin{tikzpicture}[scale=0.5]
\filldraw[draw=teal,%
          fill=teal,
          opacity=0.3,
         ]
               (1,-3)
             --(2.0,1)
             --(6,2)  
             --(7,-1)
             --(5,-4)
             -- cycle
             ;
\draw (1,-3)
             --(2.0,1)
             --(6,2)  
             --(7,-1)
             --(5,-4)
             -- cycle
             ;
\draw[rotate around={10:(6.5,-4.45)},dashed](6.5,-4.45) ellipse (2 and 1);
\draw[rotate around={10:(6.5,-4.45)},dashed](6.5,-4.45) ellipse (1 and 0.5);
\draw[rotate around={10:(6.5,-4.45)},dashed](6.5,-4.45) ellipse (3.5 and 1.75);
\draw[rotate around={10:(6.5,-4.45)},dashed](6.5,-4.45) [partial ellipse=54:190:7 and 3.5];
\draw[thick,->] (5,-4)--(4,-2.5);
\draw[thick,->] (5.7,-1.04)--(5.2,1.2);
\draw (1.5,-0.8) node[anchor=west]{\footnotesize$\mathcal{X}$}; 
\draw (1.5,-2.9) node[anchor=west]{\footnotesize$\nabla_{\!x} f(\bar{x})$}; 
\draw (2.5,0.6) node[anchor=west]{\footnotesize$\nabla_{\!x} f(\bar{y})$};
\draw (4.7,-4.4) node[anchor=west]{\footnotesize$\bar x$}; 
\draw (5.4,-1.5) node[anchor=west]{\footnotesize$\bar y$};
\filldraw[fill=black] (5,-4) circle (0.08);          
\filldraw[fill=black] (5.7,-1.04) circle (0.08);  
\end{tikzpicture}
\end{center}	
\caption{On the left: illustration of the condition \eqref{eq:videf} for a general variational inequality. The point $\bar{x}$ is a solution of VI$(\mc{X},F)$ since the scalar product of $F(\bar{x})$ with any other vector attached to $\bar{x}$ and pointing inside the set $\mc{X}$ is non-negative. With a similar reasoning, it is immediate to note that the point $\bar{y}$ is \emph{not} a solution of VI$(\mc{X},F)$. On the right: the special case of variational inequality VI$(\mc{X},\nabla_x f)$ corresponding to the convex optimization program $\min_{x\in\mc{X}}f(x)$. Similarly to the case on the left, $\bar{x}$ is a solution of VI$(\mc{X},\nabla_x f)$ and thus a global minimizer of $f$ (see \cref{prop:minimumprinc}), \mbox{while $\bar{y}$ is not.}}
\end{figure}

\subsection*{Projection based algorithms}
In the following we introduce two classical algorithms for the solution of variational inequalities with a strongly monotone (\cref{alg:gradproj}) or monotone (\cref{alg:extragrad}) operator. Before doing so, we recall the definition of metric projection \mbox{of a point onto a convex set.}

\begin{definition}[Metric projection]
\label{def:proj}
Given $\mc{X}\subseteq \R^n$, we define the metric projection of $x$ onto $\mc{X}$ as the map $\Pi_{\mc{X}} : \R^n\rightarrow \R^n$ with
\be
\label{eq:proj}
\Pi_{\mc{X}}(x)=\argmin_{y\in\mc{X}} ||y-x||.
\ee
\end{definition}
Informally, the projection of $x$ onto the convex set $\mc{X}$ is the closest point in $\mc{X}$ to $x$. From the computational point of view, computing the projection of a point onto a convex set amounts to solving the program in \eqref{eq:proj}. We observe that the program \eqref{eq:proj} reduces to a quadratic program if $\mc{X}$ is a polytope. Since quadratic programs can be solved efficiently, \eqref{eq:proj} can be used as subroutine in the following algorithms.

\begin{algorithm}[h!]
\caption{(Projection algorithm)}
\label{alg:gradproj}
\begin{algorithmic}[1]
\State {\bf Initialise} $k =0$, $\tau>0$, $x_{(0)} \in\R^n$
\While {not converged}
\State $x_{(k+1)} = \Pi_{\mc{X}}(x_{(k)}-\tau F(x_{(k)}))$
\State $k\gets k+1$
\EndWhile
\end{algorithmic}
\end{algorithm}

\begin{proposition}[\textup{\cite[Thm. 12.1.8]{facchinei2007finite}}]
Let $\mc{X}\subseteq \R^n$ be compact convex and $F:\mb{R}^n\rightarrow \R^n$ be co-coercive with constant $\eta$.
Then \cref{alg:gradproj} converges to a solution of VI$(\mc{X},F)$ for any choice of $\tau< 2\eta$ and $x_{(0)}$. 	
\end{proposition}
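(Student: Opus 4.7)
The plan is to recast the iteration as a Picard iteration of the operator $T:\R^n\to\mc{X}$, $T(x)\defeq\Pi_{\mc{X}}(x-\tau F(x))$. The variational characterization of the projection---$y=\Pi_{\mc{X}}(z)$ iff $(z-y)^\top(w-y)\le 0$ for all $w\in\mc{X}$---shows that $\bar x\in\mc{X}$ is a fixed point of $T$ exactly when $F(\bar x)^\top(w-\bar x)\ge 0$ for all $w\in\mc{X}$, i.e.\ when $\bar x$ solves VI$(\mc{X},F)$. Co-coercivity with constant $\eta$ implies that $F$ is Lipschitz with constant $1/\eta$ (by Cauchy--Schwarz), hence continuous; combined with compactness of $\mc{X}$, \cref{prop:exun} then guarantees that the fixed-point set of $T$ is non-empty, so the algorithm is targeting a well-defined object.

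Next I would establish a Fej\'er-type descent inequality. For any solution $\bar x$, nonexpansiveness of $\Pi_{\mc{X}}$ yields
\be
\|x_{(k+1)}-\bar x\|^2 \le \|x_{(k)}-\bar x-\tau(F(x_{(k)})-F(\bar x))\|^2.
\ee
Expanding the right-hand side and applying co-coercivity to the cross term produces
\be
\|x_{(k+1)}-\bar x\|^2 \le \|x_{(k)}-\bar x\|^2 - \tau(2\eta-\tau)\,\|F(x_{(k)})-F(\bar x)\|^2.
\ee
The step-size condition $\tau<2\eta$ renders the last coefficient strictly positive, so $\{\|x_{(k)}-\bar x\|\}$ is non-increasing and therefore convergent, and summation gives $\sum_{k=0}^{\infty}\|F(x_{(k)})-F(\bar x)\|^2<\infty$, hence $F(x_{(k)})\to F(\bar x)$.

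The delicate final step is to identify a cluster point as a solution and then to upgrade subsequential convergence to full convergence. The main obstacle is that the inequality above controls $F$-increments, not state increments, so by itself it only yields a subsequential limit $x^\star$ with $F(x^\star)=F(\bar x)$, which is \emph{not} sufficient for $x^\star\in\mathrm{Fix}(T)$. I would close this gap by replacing nonexpansiveness of $\Pi_{\mc{X}}$ with its firm-nonexpansiveness counterpart $\|\Pi u-\Pi v\|^2\le(u-v)^\top(\Pi u-\Pi v)$; combined with a polarization identity and co-coercivity, this delivers the strengthened bound
\be
\|x_{(k+1)}-\bar x\|^2\le\|x_{(k)}-\bar x\|^2 - c\,\|x_{(k+1)}-x_{(k)}\|^2
\ee
for some $c>0$ depending only on $\tau$ and $\eta$. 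Summing forces $\|x_{(k+1)}-x_{(k)}\|\to 0$. Compactness of $\mc{X}$ then furnishes a subsequence $x_{(k_j)}\to x^\star\in\mc{X}$; continuity of $T$ gives $T(x_{(k_j)})\to T(x^\star)$, while $\|x_{(k_j+1)}-x_{(k_j)}\|\to 0$ identifies $T(x^\star)=x^\star$, so $x^\star$ solves VI$(\mc{X},F)$. Applying the Fej\'er bound of the previous paragraph with $\bar x$ replaced by $x^\star$ makes $\|x_{(k)}-x^\star\|$ monotone non-increasing with a zero subsequential limit, forcing the entire sequence to converge to $x^\star$.
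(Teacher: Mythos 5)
Your argument is correct. Note that the thesis itself gives no proof of this proposition — it is imported verbatim from Facchinei and Pang (Thm.\ 12.1.8), with the preliminaries chapter explicitly deferring all proofs to that reference — so there is no in-paper argument to compare against; what you have written is essentially the standard proof behind the cited theorem: the fixed-point reformulation of VI$(\mc{X},F)$, Fej\'er monotonicity from nonexpansiveness of $\Pi_{\mc{X}}$ plus co-coercivity, asymptotic regularity, a subsequential limit by compactness, and then full convergence (this is the averaged-operator, Krasnosel'skii--Mann route). The only step you assert rather than derive is the strengthened inequality $\|x_{(k+1)}-\bar x\|^2\le\|x_{(k)}-\bar x\|^2-c\,\|x_{(k+1)}-x_{(k)}\|^2$, and it does hold as you indicate: writing $z_k=x_{(k)}-\tau F(x_{(k)})$ and $\bar z=\bar x-\tau F(\bar x)$, firm nonexpansiveness gives $\|x_{(k+1)}-\bar x\|^2\le\|z_k-\bar z\|^2-\|(z_k-x_{(k+1)})-(\bar z-\bar x)\|^2$, and combining this with your co-coercivity estimate $\|z_k-\bar z\|^2\le\|x_{(k)}-\bar x\|^2-\tau(2\eta-\tau)\|F(x_{(k)})-F(\bar x)\|^2$ and Young's inequality with parameter $\tau/(2\eta)<1$ yields the bound with $c=1-\tau/(2\eta)>0$. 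You were also right to flag the gap you flagged: a cluster point $x^\star$ with $F(x^\star)=F(\bar x)$ is not by itself a fixed point, so either your asymptotic-regularity upgrade or an equality-case argument in the projection inequality (using that $\lim_k\|x_{(k)}-\bar x\|$ exists) is genuinely needed to close the proof.
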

Since any strongly monotone and Lipschitz operator is also co-coercive as seen in \eqref{eq:smonimpliedcoc}, the previous proposition applies in particular to the special case of strongly monotone operators. Observe that strongly convex optimization problems are equivalent to strongly monotone variational inequalities with the corresponding gradient as operator as discussed in \cref{prop:minimumprinc}. Thus, the previous proposition gives an alternative proof for the convergence of the well-known gradient projection algorithm for strongly convex programs.

If the operator $F$ is not strongly monotone, \cref{alg:gradproj} might not converge in general (it does if we restrict ourselves to variational inequalities representing convex optimization problems). A counterexample is provided in \cite[Ex. 12.1.3]{facchinei2007finite}. It is possible to recover convergence of the algorithm at the price of one extra projection per each iteration, as detailed next.
\begin{proposition}[\textup{\cite[Thm. 12.1.11]{facchinei2007finite}}]
Let $\mc{X}\subseteq \R^n$ be compact convex and $F:\mb{R}^n\rightarrow \R^n$ be monotone and Lipschitz with constant $L$.
Then \cref{alg:extragrad} converges to a solution of VI$(\mc{X},F)$ for any choice of $\tau< 1/L $ and $x_{(0)}$.	
\end{proposition}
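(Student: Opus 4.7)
My plan is to establish Fejér monotonicity of $\{x_{(k)}\}$ with respect to the (nonempty, by \cref{prop:exun}) solution set of VI$(\mc{X},F)$, then use compactness to extract a convergent subsequence and identify its limit as a solution, and finally upgrade subsequential convergence to full convergence via the Fejér property. The two workhorse facts I would rely on are (i) the variational characterization of the projection, namely that $z = \Pi_{\mc{X}}(w)$ iff $(w-z)^\top(x-z)\le 0$ for every $x\in\mc{X}$, and (ii) the non-expansiveness of $\Pi_{\mc{X}}$.

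First I would pick any solution $x^\star$ and apply the projection characterization to the outer step $x_{(k+1)}=\Pi_{\mc{X}}(x_{(k)}-\tau F(y_{(k)}))$ with test point $x^\star\in\mc{X}$, and to the inner step $y_{(k)}=\Pi_{\mc{X}}(x_{(k)}-\tau F(x_{(k)}))$ with test point $x_{(k+1)}\in\mc{X}$. Combining these two inequalities with the polarization identity $2a^\top b = \|a\|^2+\|b\|^2-\|a-b\|^2$, then splitting $\tau F(y_{(k)})^\top(x_{(k+1)}-x^\star)$ as $\tau F(y_{(k)})^\top(x_{(k+1)}-y_{(k)})+\tau F(y_{(k)})^\top(y_{(k)}-x^\star)$, and bounding the second summand from below by $\tau F(x^\star)^\top(y_{(k)}-x^\star)\ge 0$ using monotonicity of $F$ together with the fact that $x^\star$ solves VI$(\mc{X},F)$, I expect to arrive at the key descent inequality
\begin{equation*}
\|x_{(k+1)}-x^\star\|^2 \le \|x_{(k)}-x^\star\|^2 - \bigl(1-\tau^2 L^2\bigr)\,\|x_{(k)}-y_{(k)}\|^2.
\end{equation*}
The cross term generated along the way is controlled via Cauchy--Schwarz and the Lipschitz hypothesis $\|F(y_{(k)})-F(x_{(k)})\|\le L\|y_{(k)}-x_{(k)}\|$; this is precisely where the step-size condition $\tau<1/L$ enters to make the coefficient $(1-\tau^2 L^2)$ strictly positive.

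Given the descent inequality, the sequence $\{\|x_{(k)}-x^\star\|\}$ is non-increasing, hence $\{x_{(k)}\}$ is bounded, and the telescoping sum yields $\sum_{k}\|x_{(k)}-y_{(k)}\|^2<\infty$, so $x_{(k)}-y_{(k)}\to 0$. By compactness of $\mc{X}$ (and continuity of $F$ and $\Pi_{\mc{X}}$), I can extract a subsequence along which $x_{(k_j)}\to\bar{x}\in\mc{X}$ and therefore $y_{(k_j)}\to\bar{x}$ as well. Passing to the limit in the projection characterization of $y_{(k_j)}$ with an arbitrary test point $x\in\mc{X}$ produces $\tau F(\bar{x})^\top(x-\bar{x})\ge 0$, i.e.\ $\bar{x}$ solves VI$(\mc{X},F)$. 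Finally, applying the descent inequality once more with $x^\star=\bar{x}$ forces the entire sequence $\{\|x_{(k)}-\bar{x}\|\}$ to be non-increasing, and since it has a subsequence converging to zero it must converge to zero, giving $x_{(k)}\to\bar{x}$.

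The step I expect to be most delicate is the derivation of the descent inequality: one has to juggle the two projection inequalities, the monotonicity inequality for $F$ at $x^\star$, and the Lipschitz bound in such a way that \emph{only} $\|x_{(k)}-y_{(k)}\|^2$ survives on the right-hand side with a coefficient $(1-\tau^2L^2)$. A naive bookkeeping leaves spurious $\|y_{(k)}-x_{(k+1)}\|^2$ terms, so the pairing of Cauchy--Schwarz with Young's inequality (or an equivalent completion-of-squares step) must be chosen carefully to absorb them, since it is exactly this absorption that produces the tight step-size threshold $\tau<1/L$ rather than a more conservative one.
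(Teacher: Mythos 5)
Your proposal is correct: the descent inequality $\|x_{(k+1)}-x^\star\|^2\le\|x_{(k)}-x^\star\|^2-(1-\tau^2L^2)\|x_{(k)}-y_{(k)}\|^2$, obtained from the two projection inequalities, monotonicity at a solution, and the Lipschitz/Young absorption of the $\|y_{(k)}-x_{(k+1)}\|^2$ term, together with the Fejér-monotonicity and subsequence argument, is exactly the classical Korpelevich extragradient proof and all steps go through as you describe. The paper does not reprove this statement — it is quoted from \cite[Thm.~12.1.11]{facchinei2007finite} — and your argument is essentially the standard one given in that reference, so there is nothing to flag.
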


\begin{algorithm}[h!]
\caption{(Extragradient algorithm)}
\label{alg:extragrad}
\begin{algorithmic}[1]
\State {\bf Initialize} $k =0$, $\tau>0$, $x_{(0)} \in\R^n$
\While {not converged}
\State $y_{(k+1)} = \Pi_{\mc{X}}(x_{(k)}-\tau F(x_{(k)}))$
\State $x_{(k+1)} = \Pi_{\mc{X}}(x_{(k)}-\tau F(y_{(k+1)}))$
\State $k\gets k+1$
\EndWhile
\end{algorithmic}
\end{algorithm}
\chapter{Nash and Wardrop equilibria \mbox{in aggregative games}}
\label{ch:p1equilibriaVIandSMON}
In the first section of this chapter we introduce the class of average aggregative games as well as the notions of Nash and Wardrop equilibrium. In \cref{sec:connection_VI} we show how these can be reformulated as variational inequalities. We conclude the chapter discussing the monotonicity properties of the operators associated to the Nash and Wardrop problems in \cref{sec:SMON}. All the proofs are reported in the Appendix (\cref{sec:proofsp1-part1}). The formulation presented in this chapter has been published in \cite{gentile2017nash}.
\section{Equilibria with coupling constraints}
\label{sec:game}
We consider a population of $\N$ agents, where each agent $i\in\{1,\dots,\N\}$ can choose a strategy $x\i$ in his individual constraint set $\mc{X}\i\subset\mb{R}^n$. 
In addition to the constraint $x\i\in\mc{X}\i$, each agent's strategy has to satisfy a coupling constraint, which involves the decision variables of other agents. Upon stacking together the strategies of all players as in $x\defeq[x^1;\ldots;x^\N] \in \R^{Mn}$, the coupling constraint takes the form
\begin{equation}
x\in \mc{C} \defeq \{x\in\mb{R}^{\N n}\,\vert\,g(x)\le \zeros[m]\} \subset\mb{R}^{\N n},\qquad g: \R^{\N n} \to \R^m.
\label{eq:coupling_constraints_general}
\end{equation}

We assume that the cost function of agent $i$ depends on his own strategy $x\i$ and on the strategies of the other agents via the average population strategy $\sigma(x)\coloneqq\frac{1}{\N}\sum_{j=1}^{\N}x\j$, as typical of aggregative games~\cite{Jensen2010}. The cost function of agent $i$ is identified with $J\i:\R^n\times\R^n\rightarrow \R$ and takes the form 
\begin{equation}
J\i(x\i,\sigma(x)).
\label{eq:costs_generic}
\end{equation}
The cost and constraints introduced above give rise to the game $\mc{G}$ identified with
\begin{equation}
 \mc{G} \defeq \left\{ 
 \begin{aligned}
& \textup{ agents}  \; && \{1,\dots,\N\}\\
& \textup{ cost of agent } i\quad &&J^i(x\i,\sigma(x)) \\
&\textup{ individual constraint}  &&\mc{X}^i\\
& \textup{ coupling constraint}   &&\mc{C}
\end{aligned}\right. ,
\label{eq:GNEP}
\end{equation}
which is the focus of \cref{part:1} of this thesis. We denote for convenience $\mc{X} \defeq \mc{X}^1\times\ldots\times\mc{X}^\N$ and define
\begin{equation}\label{eq:Q}
\mc{Q}\i(x\mi)\coloneqq\{x\i\in\mc{X}\i\, \vert \, g(x) \le \zeros[m]\}, \quad \quad \mc{Q} \defeq \mc{X}\cap\mc{C}.
\end{equation}
Note that $\mc{Q}\i(x\mi)$ represents the feasible set of player $i$, given that the other players have selected the strategy $x\mi$, while $\mc{Q}$ represents the feasible set for the stacked strategy profile $x$. 
We consider two notions of equilibrium for the game $\mc{G}$ in~\eqref{eq:GNEP}. The first  is a generalization of the celebrated Nash equilibrium concept \cite{Nash01011950} to games with coupling constraints~\cite{arrow1954existence, rosen1965existence}.

\begin{definition}[Nash Equilibrium]\label{def:NE}
A set of strategies $x\NE = [x^1\NE; \dots; x^\N\NE] \in \R^{\N n}$ is an $\varepsilon$-Nash equilibrium of the game $\mathcal{G}$,  if $x\NE\in\mc{Q}$ and for all $ i\in\{1,\dots,\N\}$ and all $ x\i  \in  \mc{Q}\i(x\mi\NE)$ 
\begin{align}
 J\i(x\i\NE,\sigma(x\NE))  \le \textstyle  J\i\left(  x\i,\frac 1\N x\i  +  \frac 1\N \sum_{j \neq i} x\j\NE \right) + \varepsilon\,. 
\label{eq:def_GNE}
\end{align}
If~\eqref{eq:def_GNE} holds with $\varepsilon = 0$ then  $x\NE$ is a Nash equilibrium. 
\hfill $\square$
\end{definition}
\noindent Intuitively, a feasible set of strategies $\left\{ x\i\NE \right\}_{i=1}^\N$ is a Nash equilibrium if no agent can lower his cost by unilaterally deviating from his strategy, assuming that the strategies of the other agents are fixed.
If no coupling constraint is present, i.e., if $\mc{C}=\R^{\N n}$, the previous definition reduces to the well known notion of Nash equilibrium introduced in \cite{Nash01011950}. 
In order to differentiate the two definitions, a Nash equilibrium for a game with coupling constraints is usually referred to in the literature as \emph{generalized Nash equilibrium}~\cite{facchinei2007generalized}. Nevertheless, in \cref{part:1} of this thesis we refer to one such equilibrium simply as a Nash equilibrium. 

Note that on the right-hand side of~\eqref{eq:def_GNE} the decision variable $x^i$ appears in both arguments of $J^i(\cdot,\cdot)$. However, as the number of agents grows the contribution of agent $i$ to $\sigma(x)$ decreases. This motivates the definition of Wardrop equilibrium.
\begin{definition}[Wardrop Equilibrium]\label{def:WE}
A set of strategies $x\WE = [x^1\WE; \dots; x^\N\WE] \in \R^{\N n}$ is a Wardrop equilibrium of the game $\mathcal{G}$  if $x\WE\in\mc{Q}$ and for all $i\in\{1,\dots,\N\}$ and all $x\i  \in  \mc{Q}\i(x\mi\WE)$
\begin{equation}
 \hspace{0.7cm}J\i(x\i\WE,\sigma(x\WE)) \le J\i ( x\i,\sigma(x\WE) ). \tag*{\qed} 
\end{equation}
\end{definition}
\noindent 
Intuitively, a feasible set of strategies $\left\{ x\i\WE \right\}_{i=1}^{\N}$ is a Wardrop equilibrium if no agent can lower his cost by unilaterally deviating from his strategy, assuming that the average strategy is fixed (i.e., he does not influence the average $\sigma(x\WE)$). Similarly to the terminology introduced to indicate a Nash equilibrium, in the following we will refer to a \emph{generalized Wardrop Equilibrium} simply as a Wardrop equilibrium.
The term ``Wardrop equilibrium'' originates from the fact that \cref{def:WE} can be used to model, amongst others, the equilibrium concept introduced in \cite{wardrop1952some} in relation to the study of road traffic networks and often referred to as \emph{traffic user equilibrium} or \emph{Wardrop equilibrium}.
\begin{remark}[On the definition of Wardrop equilibrium]
	Even though the notion of Wardrop equilibrium is thought of as a classical concept, the existing literature defines a Wardrop equilibrium only in terms of the aggregate behaviour $\sigma(x)$ \cite{wardrop1952some, altman2002nash, altman2004equilibrium, marcotte1995convergence, Dafermos87}, while \cref{def:WE} is presented in terms of the agents' strategies $\{x\i\}_{i=1}^\N$. 
	It is important to observe that \cref{def:WE} can be reformulated as a condition on the aggregate $\sigma(x)$ \emph{only} in specific cases (e.g., for applications in transportation networks \cite{wardrop1952some} or competitive markets \cite{Dafermos87}), while there are games for which one such aggregate reformulation is just not possible. Thus, \cref{def:WE} is not a mere revisitation of the classical notion of Wardrop equilibrium, but instead can be used to address a larger class of equilibrium problems.
	Additionally, all the aforementioned works define a Wardrop equilibrium in relation to a specific application, and thus restrict themselves to specific constraint sets or cost functions. On the other hand \cref{def:WE} does not pose any such limitation.

To the best of our knowledge, the first formulation of a Wardrop equilibrium in terms of agents' strategies appears in \cite{ma2013decentralized,grammatico:parise:colombino:lygeros:14}, where however it is not recognized as an equilibrium concept on its own, but rather characterized as an $\varepsilon$-Nash equilibrium for an appropriate value of $\varepsilon$.
\end{remark}

\section{Variational reformulations}
\label{sec:connection_VI} 
In this section we show how Nash and Wardrop equilibria introduced in~\cref{def:NE,,def:WE} can be obtained by solving a corresponding variational inequality. The connection we will draw between these equilibrium notions and the theory of variational inequalities is \emph{fundamental} for the development of \cref{part:1} of this thesis. As a matter of fact, most of the results we will derive in relation to the concepts of Nash and Wardrop equilibria are based on the analysis of their corresponding variational inequalities.

Recall from \cref{def:vi} that a variational inequality is fully specified by its constraint set $\mathcal{X}$ and operator $F$ (see \cref{ch:p1mathpreliminaries} for a brief introduction to the theory of variational inequalities). Towards this goal, we introduce the operators $F\NE,~F\WE : \mc{X} \rightarrow \mathbb{R}^{\N n}$, where
\begin{subequations}
\label{eq:F}
\begin{align}
\label{eq:F_N}
F\NE(x)&\defeq[ \nabla_{x\i} J^i(x\i,\sigma(x)) ]_{i=1}^\N\,,\\
F\WE(x)&\defeq [ \nabla_{x\i} J^i(x\i,z)_{|{z=\sigma(x)}} ]_{i=1}^\N \,. \label{eq:F_W}
\end{align}
\end{subequations}
The operator $F\NE$ is obtained by stacking together the gradients of each agent's cost with respect to his decision variable. $F\WE$ is obtained similarly, but considering $\sigma(x)$ as fixed when differentiating. The following proposition provides a sufficient characterization of the Nash and Wardrop equilibria introduced in~\cref{def:NE,,def:WE} as solutions of two variational inequalities. Both variational inequalities feature the same constraint set $\mathcal{Q}$, defined in~\eqref{eq:Q}, but different operators $F\NE$ and $F\WE$, defined in~\eqref{eq:F_N} and \eqref{eq:F_W}. 
\begin{assumption}
\label{A1}
For all $i\in\{1,\dots,\N\}$, the constraint set $\mathcal{X}\i$ is closed and convex.
The set $\mc{Q}$ in \eqref{eq:Q} is non-empty.
The cost functions $J\i(x\i,\sigma(x))$ are convex in $x\i$ for any fixed $x^j\in\mc{X}^{j}$, ${j\neq i}$.
The cost functions $J^i(x\i,z)$ are convex in $x\i$ for any $z\in \frac{1}{\N} \sum_{j=1}^\N \mc{X}^j$.
The cost functions $J^i(z_1,z_2)$ are continuously differentiable in $[z_1;z_2]$ for any $z_1 \in \mathcal{X}^i$ and $z_2\in \frac{1}{\N} \sum_{j=1}^\N \mc{X}^j$.
The function $g$ in~\eqref{eq:coupling_constraints_general} is convex.
\end{assumption}
\begin{proposition}\label{prop:vi_ref}
Under \cref{A1}, the following hold.
\begin{enumerate}
\item Any solution $\VNE{x}$ of VI$(\mathcal{Q},F\NE)$ is a Nash equilibrium of the game $\mc{G}$ in~\eqref{eq:GNEP}.
\item Any solution  $\VWE{x}$ of VI$(\mathcal{Q},F\WE)$  is a Wardrop equilibrium of the game $\mc{G}$ in~\eqref{eq:GNEP}. 
\end{enumerate}
\end{proposition}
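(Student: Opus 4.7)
The strategy is to reduce each equilibrium condition to a per-agent convex optimization problem and invoke the minimum principle (Proposition~3 on $\mathrm{VI}(\mathcal{X},\nabla_x f)$). For either case, fix an agent $i$ and the other agents' equilibrium strategies; then the defining inequality in Definition~1 or Definition~2 becomes a statement that $\bar{x}^i$ solves a convex program in $x^i$ alone, whose feasible set is $\mathcal{Q}^i(\bar{x}^{-i})$. I will obtain the first-order condition for that program by evaluating the overall VI on test vectors that differ from $\bar{x}$ in the $i$-th block only.

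\textbf{Nash case.} Let $\bar{x}\NE$ solve $\mathrm{VI}(\mathcal{Q},F\NE)$ and fix $i\in\{1,\ldots,M\}$. First I would check that the feasible set $\mathcal{Q}^i(\bar{x}\NE^{-i})=\{x^i\in\mathcal{X}^i : g(x^i,\bar{x}\NE^{-i})\le \zeros[m]\}$ is closed and convex, which follows from the closedness and convexity of $\mathcal{X}^i$ and the convexity of $g$ in Assumption~1. Then, for any $x^i\in\mathcal{Q}^i(\bar{x}\NE^{-i})$, define $\tilde x\defeq[\bar{x}\NE^1;\ldots;x^i;\ldots;\bar{x}\NE^M]\in\mathcal{Q}$. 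Substituting $\tilde x$ into $F\NE(\bar{x}\NE)^\top(\tilde x-\bar{x}\NE)\ge 0$ annihilates every block of $\tilde x-\bar{x}\NE$ except the $i$-th, leaving
\[
\nabla_{x^i} J^i(\bar x\NE^i,\sigma(\bar x\NE))^\top (x^i-\bar x\NE^i)\ge 0.
\]
Now consider the map $h^i:x^i\mapsto J^i\!\bigl(x^i,\tfrac{1}{M}x^i+\tfrac{1}{M}\sum_{j\ne i}\bar x\NE^j\bigr)$. Its gradient at $\bar x\NE^i$ coincides precisely with the $i$-th block of $F\NE(\bar x\NE)$ as defined in~\eqref{eq:F_N}, because that block is the \emph{total} derivative that accounts for the contribution of $x^i$ to $\sigma(x)$. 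By Assumption~1 the map $h^i$ is convex, so the minimum principle (Proposition~3, item~(b)) yields that $\bar x\NE^i$ globally minimizes $h^i$ over $\mathcal{Q}^i(\bar x\NE^{-i})$, which is exactly the Nash inequality~\eqref{eq:def_GNE} with $\varepsilon=0$.

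\textbf{Wardrop case.} The argument is fully parallel. With $\bar x\WE$ a solution of $\mathrm{VI}(\mathcal{Q},F\WE)$ and the same choice of test vector $\tilde x\in\mathcal{Q}$ differing from $\bar x\WE$ only in the $i$-th block, the VI condition reduces to
\[
\nabla_{x^i} J^i(x^i,z)_{|z=\sigma(\bar x\WE)}\bigr|_{x^i=\bar x\WE^i}\!\!{}^\top (x^i-\bar x\WE^i)\ge 0\quad \forall x^i\in\mathcal{Q}^i(\bar x\WE^{-i}).
\]
This is the first-order optimality condition for the per-agent program $\min_{x^i\in\mathcal{Q}^i(\bar x\WE^{-i})} J^i(x^i,\sigma(\bar x\WE))$, in which $\sigma(\bar x\WE)$ is treated as a constant. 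Convexity of this program follows from the second convexity hypothesis in Assumption~1 ($J^i(x^i,z)$ convex in $x^i$ for any fixed $z$ in the Minkowski-scaled set), so Proposition~3(b) again promotes the stationarity statement to global optimality, giving exactly Definition~2.

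\textbf{Main obstacle.} There is no deep difficulty, only a careful bookkeeping point: one must distinguish the two operators in~\eqref{eq:F}, making sure that $F\NE$ corresponds to the total derivative (used when the agent acknowledges his own effect on the aggregate) and $F\WE$ to the partial derivative in the first argument (used when the aggregate is frozen). The notation $(\cdot)_{|z=\sigma(x)}$ in~\eqref{eq:F_W} is precisely the formal device that enforces this distinction, and matching each operator with the appropriate per-agent program is the only subtle step of the argument.
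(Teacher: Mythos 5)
Your proposal is correct and follows essentially the same route as the paper's proof: restrict the variational inequality to test vectors differing only in the $i$-th block to obtain the per-agent first-order condition, then invoke the minimum principle together with the convexity hypotheses of \cref{A1} (convexity of $x^i\mapsto J^i(x^i,\sigma(x))$ for the Nash case, of $J^i(\cdot,z)$ with $z$ frozen at $\sigma(\bar x\WE)$ for the Wardrop case) to upgrade stationarity to global optimality over $\mc{Q}^i(\bar x^{-i})$. The bookkeeping point you flag — matching $F\NE$ with the total derivative and $F\WE$ with the partial derivative at frozen aggregate — is exactly how the paper handles it.
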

\Cref{prop:vi_ref} states that any solution of the variational inequality VI$(\mathcal{Q},F\NE)$ is a Nash equilibrium and, similarly, any solution of VI$(\mathcal{Q},F\WE)$ is a Wardrop equilibrium. The converse \emph{does not hold} in general, in that there might be strategy profiles that are Nash equilibria but do not satisfy the corresponding variational inequality. This is due to the presence of the coupling constraint $\mc{C}$. Indeed, if $\mathcal{C}=\R^{\N n}$, then $\mathcal Q = \mathcal X$ and one can show that $x\NE$ solves the VI$(\mathcal{X},F\NE)$ \emph{if and only if} it is a Nash equilibrium of $\mc{G}$~\cite[Cor. 1]{facchinei2007generalized}. A similar result holds in the case of Wardrop equilibrium. 
The equilibria that can be obtained as solution of the corresponding variational inequality are called \textit{variational equilibria}~\cite[Def. 3]{facchinei2007generalized} and are here denoted with $\VNE{x},\VWE{x}$ instead of $x\NE,x\WE$ (indicating any equilibrium satisfying ~\cref{def:NE} or \cref{def:WE}). 
We next provide sufficient conditions for the existence and uniqueness of variational equilibria by exploiting two well-known results in the theory of variational inequalities.
\begin{lemma}\textup{\cite[Cor. 2.2.5, Thm. 2.3.3]{facchinei2007finite}}
\label{lem:exun}
Let~\cref{A1} hold.
\begin{enumerate}
\item 
If $\mc{Q}$ is bounded, then there exist a variational Nash equilibrium and a variational Wardrop equilibrium.\footnote{The convexity of the cost functions required by~\cref{A1} is not needed for the first statement of~\cref{lem:exun}, continuity is enough.}
\item If $F\NE$ is strongly monotone on $\mc{Q}$, then the variational Nash equilibrium is unique. If $F\WE$ is strongly monotone on $\mc{Q}$ then the variational Wardrop equilibrium is unique.
\end{enumerate}
\end{lemma}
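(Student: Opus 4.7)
The plan is to reduce both statements to the general existence/uniqueness results for finite-dimensional variational inequalities already recalled in \cref{prop:exun}, combined with the variational reformulation \cref{prop:vi_ref}. Essentially all the substance of the lemma is contained in verifying that the hypotheses of \cref{prop:exun} are met for the two VIs $\text{VI}(\mc{Q},F\NE)$ and $\text{VI}(\mc{Q},F\WE)$.

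First I would establish the structural properties of the constraint set $\mc{Q} = \mc{X}\cap \mc{C}$. Under \cref{A1}, each $\mc{X}^i$ is closed and convex, hence the product $\mc{X}=\mc{X}^1\times\dots\times\mc{X}^\N$ is closed and convex. The set $\mc{C}=\{x\in\R^{\N n}\,:\,g(x)\le\zeros[m]\}$ is convex because $g$ is convex, and closed because $g$ is continuous (being convex on $\R^{\N n}$). Intersection preserves both properties, so $\mc{Q}$ is closed and convex, and is nonempty by assumption. When $\mc{Q}$ is additionally bounded, it is therefore compact and convex, as required by \cref{prop:exun}.

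Second, I would check continuity of the two operators. Since each $J^i(z_1,z_2)$ is continuously differentiable in $(z_1,z_2)$ on $\mc{X}^i\times\frac{1}{\N}\sum_j\mc{X}^j$ by \cref{A1}, both
\[
x \mapsto \nabla_{x^i} J^i(x^i,\sigma(x)) \quad\text{and}\quad x \mapsto \nabla_{x^i} J^i(x^i,z)\eval
\]
depend continuously on $x\in\mc{X}$, so that $F\NE$ and $F\WE$ defined in \eqref{eq:F} are continuous on $\mc{X}$, and in particular on $\mc{Q}$.

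With these ingredients the proof is then immediate: for part (a), applying \cref{prop:exun}(a) to $\text{VI}(\mc{Q},F\NE)$ and $\text{VI}(\mc{Q},F\WE)$ yields nonempty solution sets for both VIs, and \cref{prop:vi_ref} turns these solutions into a variational Nash equilibrium and a variational Wardrop equilibrium of $\mc{G}$. For part (b), strong monotonicity of $F\NE$ (resp.\ $F\WE$) on $\mc{Q}$ feeds directly into \cref{prop:exun}(b) and delivers uniqueness of the solution of the associated VI, hence uniqueness of the corresponding variational equilibrium via \cref{prop:vi_ref}. There is no real obstacle: the only mild care required is verifying closedness/convexity of $\mc{Q}$ and continuity of $F\NE,F\WE$ from the assumptions, after which the lemma follows from the cited VI results.
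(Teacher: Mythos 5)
Your proposal is correct and follows essentially the same route as the paper: the paper likewise reduces both claims to the classical VI results (existence on a compact convex set with a continuous operator, uniqueness under strong monotonicity, i.e.\ \cref{prop:exun}), after noting that \cref{A1} gives $\mc{Q}$ closed, convex and nonempty and $F\NE$, $F\WE$ continuous, and then invokes \cref{prop:vi_ref} to pass from VI solutions to variational equilibria. Nothing is missing.
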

In light of \cref{prop:vi_ref}, the proof of the first statement in \cref{lem:exun} amounts to showing that \cref{A1} ensures the existence of a solution to VI$(\mathcal{Q},F\NE)$ and VI$(\mathcal{Q},F\WE)$. This is guaranteed if the constraint set $\mathcal{Q}$ is compact and convex, and the operator is continuous \cite[Cor. 2.2.5]{facchinei2007finite}. Such conditions follow immediately form \cref{A1}. 
 Similarly, the proof of the second statement relies on the fact that the solution of a variational inequality is unique if the constraint set $\mathcal{Q}$ is compact and convex, and the operator is continuous and strongly monotone \cite[Thm. 2.3.3]{facchinei2007finite}. The proofs are not reported here, but can be found in the above-mentioned references.

Since any variational Nash equilibrium is a Nash equilibrium, the first claim in \cref{lem:exun} guarantees the existence of a Nash equilibrium. A similar conclusion hold for the existence of a Wardrop equilibrium.

\subsection*{A hierarchy of equilibria: variational  and normalized equilibria}
\label{sec:norm}
The notion of games with coupling constraints  has been introduced in the seminal works \cite{arrow1954existence,rosen1965existence}. In \cite{rosen1965existence} the author defines the concept of \textit{normalized equilibria} to describe the fact that one should expect a manifold of equilibria when the agents are subject to a coupling constraint, even under
strong monotonicity conditions. 
Formally, the strategy profile $x\NE$ is a normalized Nash equilibrium if there exists a vector of weights $r\in\R^M_{>0}$, such that $x\NE$ solves the VI$(\mathcal{Q},F\NE^r )$ where $F\NE^r(x)\defeq[ r_i\nabla_{x\i} J^i(x\i,\sigma(x)) ]_{i=1}^\N$.
It is proven that any normalized Nash equilibrium is a Nash equilibrium in the sense of \cref{def:NE}. Additionally, \cite{rosen1965existence} shows that different choices of $r$ correspond to a different division of the burden of satisfying the coupling constraints $\mc{C}$ among the agents.
In the context of aggregative games, however, each agent contributes equally to the average. Therefore it is typically assumed that the burden of satisfying the coupling constraint should also be split equally among the agents by selecting $r=\ones[M]$, see~\cite{facchinei2007generalized,pan2009games,facchinei2007generalized_2}. It is immediate to see that the subclass of normalized equilibria for which this property holds is the class of \textit{variational equilibria} introduced in the previous section. Nonetheless we note that our results could be easily extended to normalized equilibria by using the operator $F\NE^r$ instead of $F\NE$. We conclude observing that the set of Nash equilibria, normalized Nash equilibria and variational Nash equilibria are all nested as in \cref{fig:nested_equilibria}. A similar result holds for Wardrop equilibria.
\begin{figure}[h!]
\centering
\includegraphics[scale=0.5]{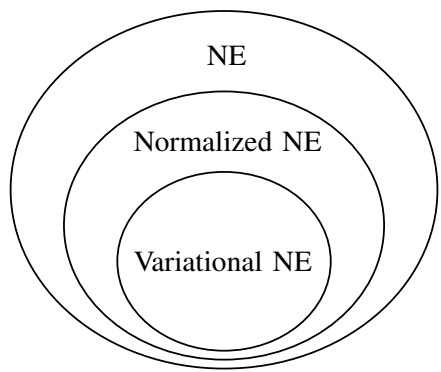}
\caption{The set of Nash equilibria (NE), normalized NE and variational NE are all nested.}
\label{fig:nested_equilibria} 
\end{figure}

In the following we exemplify how the presence of the coupling constraint $\mc{C}$ is typically associated with a manifold of equilibria, regardless of the monotonicity properties of the operators $F\NE$ or $F\WE$.
\begin{example}[Coupling constraints and manifold of equilibria]
Consider the aggregative game $\mc{G}$ defined as in \eqref{eq:GNEP} where there are only two players, and

\be
\label{eq:examplegame}
\begin{split}
\mc{X}^1 &= \{x^1\in\R\,|\, 0\le x^1\le 1\},\\
\mc{X}^2 &= \{x^2\in\R\,|\, 0\le x^2\le 1\},\\
J^1(x^1,\sigma(x))&=\frac{3}{2}(x^1)^2-2\sigma(x)x^1,\\ 
J^2(x^2,\sigma(x))&=2\sigma(x)x^2.
\end{split}
\ee 
We first study the case where there is no coupling constraint, i.e., $\mc{C}=\R^2$, and observe that for such game \cref{A1} is satisfied. 
Thus, any Nash equilibrium is a solution of VI$(\mc{X},F\NE)$ and \emph{vice versa} as discussed immediately after \cref{prop:vi_ref}.
The operator $F\NE$ and the corresponding $\nabla_x F\NE(x)$ are obtained from \eqref{eq:F_N} as 
\[
F\NE(x^1,x^2)=
\begin{bmatrix}
	x^1-x^2\\
	x^1+2x^2\\
\end{bmatrix}
\qquad
\nabla_x F\NE(x^1,x^2) = 
\begin{bmatrix}
	1 & 1 \\ -1 & 2
\end{bmatrix}.
\] 
\cref{lemma:pd} in \cref{sec:SMON} ensures that $F\NE$ is strongly monotone since $\nabla_x F\NE(x^1,x^2)+\nabla_x F\NE(x^1,x^2)^\top \succ 0$. Thus, the solution of the variational inequality VI$(\mc{X},F\NE)$ is unique (thanks to \cref{lem:exun}), and so is the Nash equilibrium. It is immediate to verify that the unique Nash equilibrium is given by $(x^1,x^2)=(0,0)$.

Let us now consider the same game defined in \eqref{eq:examplegame} and introduce the additional coupling constraint
\[
\mc{C}= \{(x^1,x^2)\in\R^2\,|\, x^1+x^2\ge 1\}.
\]  
\cref{A1} is still satisfied so that any solution of the variational inequality VI$(\mc{Q},F\NE)$ is a Nash equilibrium, but the reverse \emph{does not hold} in this case, due to the presence of $\mc{C}$. As a matter of fact, the solution of VI$(\mc{Q},F\NE)$ (i.e., the variational equilibrium) is unique thanks to the strong monotonicity of $F\NE$. On the contrary, it can be verified that any point in the set $\{x\in\R^2\,|\,x^1+x^2=1,~x^1\ge 1/2\}$ is a Nash equilibrium as no player can improve by means of unilateral deviations. 

\end{example}

\section{Sufficient conditions for monotonicity}
\label{sec:SMON}
In this section we derive sufficient conditions that guarantee the monotonicity or strong monotonicity of the operators $F\NE$, $F\WE$ associated with the Nash and Wardrop equilibrium problems.
The importance in assessing whether these operators posses any monotonicity property stems from the following three observations.
\begin{enumerate}
\item[i)] Uniqueness of the variational equilibrium is guaranteed by the strong monotonicity of the corresponding operator, as already discussed in \cref{lem:exun}.
\item[ii)] Strong monotonicity is crucial to control the behaviour of the variational equilibria and their corresponding efficiency in large populations regimes (\cref{ch:p1distanceWENEandPOA}).
\item[iii)] Monotonicity of $F\NE$, $F\WE$ allows to compute the corresponding equilibria using tractable algorithms and to bound their distance (\cref{ch:p1distribtuedalgorithms}).
\end{enumerate}

To verify whether $F\NE$, $F\WE$  are monotone or strongly monotone one can exploit the following equivalent characterizations.
\begin{lemma}
\label{lemma:pd}
\textup{\cite[Prop. 2.3.2]{facchinei2007finite}}
A continuously differentiable operator $F: \mc{K} \subseteq \R^\di \to \R^\di$ is strongly monotone with monotonicity constant $\alpha$ (resp. monotone) if and only if $\nabla_x F(x)\succeq \alpha I$ (resp. $\nabla_x F(x)\succeq 0$) for all $x \in \mc{K}$. Moreover, if $\mc{K}$ is compact, there exists $\alpha>0$ such that $\nabla_x F(x)\succeq \alpha I$ for all $x \in \mc{K}$ if and only if $\nabla_x F(x)\succ 0$ for all $x \in \mc{K}$.
\end{lemma}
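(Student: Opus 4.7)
My plan is to prove the three claims separately, relying on the fundamental theorem of calculus along line segments (so I implicitly assume $\mc{K}$ is convex, which is the standard setting for variational inequalities here) and on a compactness-plus-continuity argument for the last part.

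For the ``if'' direction of the first equivalence, assume $\nabla_x F(x) \succeq \alpha I$ on $\mc{K}$ (with $\alpha = 0$ in the merely monotone case). Pick any $x,y \in \mc{K}$, set $x_t \defeq x + t(y-x)$, and write
\begin{equation*}
F(y) - F(x) = \int_0^1 \nabla_x F(x_t)\,(y-x)\, dt.
\end{equation*}
Taking the inner product with $y-x$ and using $v^\top A v = v^\top \tfrac{A+A^\top}{2} v \ge \alpha \|v\|^2$ pointwise under the hypothesis yields
\begin{equation*}
(F(y)-F(x))^\top (y-x) \;=\; \int_0^1 (y-x)^\top \nabla_x F(x_t) (y-x)\, dt \;\ge\; \alpha \|y-x\|^2,
\end{equation*}
which is exactly (strong) monotonicity per \cref{def:SMON}.

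For the ``only if'' direction, assume (strong) monotonicity. Fix $x$ in the interior of $\mc{K}$ and an arbitrary direction $v\in\R^\di$; for $t>0$ small enough, $x+tv \in \mc{K}$, and applying \cref{def:SMON} with $y = x+tv$ gives
\begin{equation*}
(F(x+tv) - F(x))^\top v \;\ge\; \alpha t \|v\|^2.
\end{equation*}
Dividing by $t$ and letting $t \to 0^+$, continuous differentiability of $F$ yields $v^\top \nabla_x F(x)\,v \ge \alpha \|v\|^2$. Since $v$ is arbitrary, $\nabla_x F(x) \succeq \alpha I$ at every interior point, and continuity of $\nabla_x F$ extends the bound to the closure of $\mc{K}$.

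For the final statement, suppose $\mc{K}$ is compact and $\nabla_x F(x) \succ 0$ for every $x\in\mc{K}$. Define
\begin{equation*}
\mu(x) \defeq \lambda_{\min}\!\left(\tfrac{\nabla_x F(x) + \nabla_x F(x)^\top}{2}\right),
\end{equation*}
which is continuous in $x$ because the symmetric-part map is continuous in $x$ and the minimum eigenvalue is continuous in the entries of a symmetric matrix. By hypothesis $\mu(x)>0$ for every $x\in\mc{K}$, so compactness of $\mc{K}$ ensures $\alpha \defeq \min_{x\in\mc{K}} \mu(x) > 0$, giving the uniform bound $\nabla_x F(x) \succeq \alpha I$ on $\mc{K}$. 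The converse implication is immediate. I expect the main subtlety to be the rigorous handling of the interior-versus-boundary distinction in the ``only if'' step, which is handled cleanly by the convexity of $\mc{K}$ together with the continuity of $\nabla_x F$.
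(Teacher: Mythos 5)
The paper never proves this lemma; it is imported verbatim with the citation \cite[Prop. 2.3.2]{facchinei2007finite}, and your argument is precisely the standard textbook proof of that result: the integral of the Jacobian along a segment for the ``if'' direction, the difference-quotient limit for the ``only if'' direction, and continuity of $\lambda_{\min}$ of the symmetric part on a compact set for the final claim — all of which are sound. One caveat worth noting: both your segment argument and your interior-point argument need $\mc{K}$ convex with nonempty interior (in the cited reference the set is in fact open and convex), and this is not a removable technicality, since on a convex set of empty interior the ``only if'' implication can genuinely fail — e.g. $F(x)=(x_1,-x_2)$ is monotone on the segment $\{x\in\R^2 : x_2=0\}$ while $\nabla_x F$ is indefinite — so your proof is as strong as the statement admits, with the hypothesis the lemma leaves implicit made explicit.
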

In the following we specialize this result to the case when the cost functions~\eqref{eq:costs_generic} reduce to 
\begin{equation} J\i(x\i,\sigma(x)) \defeq v\i(x\i) +p(\sigma(x))^\top x\i.
\label{eq:costs_specific}
\end{equation}

The cost functions in~\eqref{eq:costs_specific} can describe, for example, applications where $x^i$ denotes the usage level of a certain commodity, whose negative utility is modeled by $v^i: \mc{X}\i \to \R$ and whose per-unit cost $p : \frac{1}{\N} \sum_{i=1}^\N \mc{X}^i \to \R^n$ depends on the average usage level of the entire population. Cost functions of the form~\eqref{eq:costs_specific} are widely used in the applications, see \cite{chen2014autonomous,ma2013decentralized}. We refer to $p$ in the following as to the \emph{price function}.
The operators in~\eqref{eq:F}  become
\begin{subequations}
\label{eq:F_decomp_specific}
\begin{align}
F\WE(x) &= [\nabla_{x\i} v\i(x\i)]_{i=1}^\N + [p(\sigma(x))]_{i=1}^\N, \label{eq:F_W_decomp} \\
F\NE(x) &= \textstyle F\WE(x) + \frac1 \N [  \nabla_z p(z)_{|{z=\sigma(x)}} {x\i}  ]_{i=1}^\N. \label{eq:F_N_decomp}
\end{align}
\end{subequations}
\begin{lemma}[Sufficient conditions for strong monotonicity of \eqref{eq:F_decomp_specific}]
\label{lem:FNstrongly monotone}
\leavevmode
\begin{enumerate}
\item Suppose that for each agent $i\in\{1,\dots,\N\}$ the function $v^i$ in~\eqref{eq:costs_specific} is convex and that $p$ is monotone; then $F\WE$ is monotone. Under the further assumption that $p$ is affine and strongly monotone, $F\NE$ is strongly monotone.
\item Suppose that for each agent $i\in\{1,\dots,\N\}$ the function $v^i$ in~\eqref{eq:costs_specific} is strongly convex and that $p$ is monotone. Then $F\WE$ is strongly monotone. \qed
\end{enumerate}
\end{lemma}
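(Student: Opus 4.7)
The unifying strategy is to invoke \cref{lemma:pd} and verify positive (semi-)definiteness of the Jacobians $\nabla_x F\WE(x)$ and $\nabla_x F\NE(x)$ directly, since $F\WE,F\NE$ are assembled from the smooth building blocks $\{v^i\}$ and $p$ via \eqref{eq:F_decomp_specific}. Because the definition of $A\succeq \alpha I$ only tests $y^\top A y$, throughout the argument we are free to replace $A$ by its symmetric part $(A+A^\top)/2$.

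First I would compute $\nabla_x F\WE(x)$ in block form: its $(i,j)$ block equals $\delta_{ij}\nabla^2 v^i(x^i) + \frac{1}{\N}\nabla_z p(\sigma(x))$, so that
\[
\nabla_x F\WE(x) = \text{blkdiag}\bigl(\nabla^2 v^i(x^i)\bigr)_{i=1}^\N + \tfrac{1}{\N}\bigl(\ones[\N]\ones[\N]^\top\bigr)\otimes \nabla_z p(\sigma(x)).
\]
Convexity of $v^i$ makes the block-diagonal term positive semidefinite (\cref{prop:convgrad} applied coordinatewise), while monotonicity of $p$ together with \cref{lemma:pd} gives $\nabla_z p(\sigma(x)) + \nabla_z p(\sigma(x))^\top \succeq 0$. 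Since $\ones[\N]\ones[\N]^\top\succeq 0$, the Kronecker product of two symmetric PSD matrices is PSD, and summing PSD matrices preserves PSD; this yields monotonicity of $F\WE$. Part (b) follows from the same decomposition: strong convexity of each $v^i$ (with constant $\alpha_i>0$) upgrades the block-diagonal term to $\succeq \min_i\alpha_i\, I_{\N n}$, so the whole symmetric part is $\succeq \min_i\alpha_i\, I_{\N n}$, i.e., $F\WE$ is strongly monotone.

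For the second claim of part (a), write $\nabla_x F\NE(x) = \nabla_x F\WE(x) + \frac{1}{\N} \nabla_x[\nabla_z p(\sigma(x))\,x^i]_{i=1}^\N$. The key simplification comes from affinity of $p$: writing $p(y)=Py+q$, the map $\nabla_z p(\sigma(x))=P$ is constant, so the extra Jacobian equals $\frac{1}{\N}(I_\N\otimes P)$. Collecting terms,
\[
\nabla_x F\NE(x) = \text{blkdiag}\bigl(\nabla^2 v^i(x^i)\bigr)_{i=1}^\N + \tfrac{1}{\N}\bigl(\ones[\N]\ones[\N]^\top + I_\N\bigr)\otimes P.
\]
Since $v^i$ is convex the block-diagonal term is PSD. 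For the second term, strong monotonicity of $p$ implies $P+P^\top\succeq 2\alpha I_n$ for some $\alpha>0$, and the matrix $\ones[\N]\ones[\N]^\top + I_\N$ has eigenvalues $\N+1$ and $1$, hence $\succeq I_\N$. Taking symmetric parts and using that the Kronecker product of two symmetric PSD matrices with positive lower bounds stays positive definite, the second term is $\succeq \frac{\alpha}{\N} I_{\N n}$, which gives strong monotonicity of $F\NE$ with constant $\alpha/\N$.

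\textbf{Main obstacle.} The only delicate point is the handling of the possibly non-symmetric $\nabla_z p(\sigma(x))$ in \eqref{eq:F_W_decomp}: one must remember that \cref{def:SMON} probes only the quadratic form, so the appropriate object to compare to $\alpha I$ is the symmetric part of the Jacobian, which is exactly what monotonicity/strong monotonicity of $p$ (via \cref{lemma:pd} applied to $p$ on its own domain) provides. The rest is routine Kronecker-product algebra, and no result beyond \cref{lemma:pd} and \cref{prop:convgrad} is invoked.
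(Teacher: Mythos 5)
Your strategy differs from the paper's: you run everything through the Jacobians $\nabla_x F\WE$, $\nabla_x F\NE$ and \cref{lemma:pd}, whereas the paper proves the $F\WE$ claims directly at the level of the monotonicity definitions — convexity (resp.\ strong convexity) of $v^i$ is equivalent to monotonicity (resp.\ strong monotonicity) of $\nabla_{x^i}v^i$, and the stacked price term is handled by the identity $\bigl([p(\sigma(x_1))]_{i=1}^\N-[p(\sigma(x_2))]_{i=1}^\N\bigr)^\top(x_1-x_2)=\N\,\bigl(p(\sigma(x_1))-p(\sigma(x_2))\bigr)^\top\bigl(\sigma(x_1)-\sigma(x_2)\bigr)\ge 0$ — reserving the Jacobian argument solely for the affine correction term in $F\NE$. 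Your Kronecker algebra for that term (symmetric part of $\tfrac{1}{\N}(\ones[\N]\ones[\N]^\top+I_\N)\otimes C$ bounded below by $\tfrac{\alpha}{\N}I$) is correct and yields the same monotonicity constant as the paper.

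The genuine gap is regularity. The lemma assumes only that $v^i$ is convex (or strongly convex) and $p$ is monotone; combined with the continuous differentiability of the costs this provides $\nabla_{x^i}v^i$, but it does not provide second derivatives of $v^i$, nor (for the statements involving only $F\WE$) differentiability of $p$. Your block formula for $\nabla_x F\WE(x)$, whose entries are $\nabla^2 v^i(x^i)$ and $\nabla_z p(\sigma(x))$, therefore need not exist — a convex $C^1$ function such as $v^i(x)=\|x\|^{3/2}$ has no Hessian at the origin — and \cref{lemma:pd} is stated only for continuously differentiable operators, so it cannot be applied to $F\WE$ in the stated generality. (Note also that \cref{prop:convgrad} asserts monotonicity of the gradient, not positive semidefiniteness of a Hessian, so citing it for the block-diagonal term does not close this hole.) The repair is exactly the paper's route: establish (strong) monotonicity of $F\WE$ first-order, i.e.\ $[\nabla_{x^i}v^i(x^i)]_{i=1}^\N$ is (strongly) monotone because each $v^i$ is (strongly) convex, the price term is monotone by the displayed identity, and a sum of monotone operators one of which is strongly monotone is strongly monotone; keep your Jacobian computation only for the term $\tfrac{1}{\N}(I_\N\otimes C^\top)x$ coming from the affine $p$ in the $F\NE$ claim, where the Jacobian is the constant matrix $C$ and the required smoothness is automatic.
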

\subsection{Linear price function}
In the following we refine the sufficient conditions of \cref{lem:FNstrongly monotone} to the important class of aggregative games with cost functions of the following form
\begin{equation}
J\i(x\i,\sigma(x)) \defeq \frac{1}{2} (x\i)^\top Q x\i +(C\sigma(x)+c\i)^\top x\i\,,
\label{eq:costs_quad}
\end{equation}
where $Q \in \R^{n\times n}$ is symmetric, $C\in\R^{n\times n}$ (not necessarily symmetric), $c\i \in\R^n$. We observe that \eqref{eq:costs_quad} is a special case of \eqref{eq:costs_specific}, obtained setting $v\i(x\i)=(x\i)^\top Q x\i+(c\i)^\top x\i$ and $p(\sigma(x))=C\sigma(x)$. We refer to this case as to the case of \emph{linear price function}.
The cost functions in  \eqref{eq:costs_quad} have been used for example in~\cite{huang2007large,grammatico:parise:colombino:lygeros:14,bauso:pesenti:13}. Since the operators $F\NE,F\WE$ defined in \eqref{eq:F} are obtained by differentiating quadratic functions, their expression is affine, and given by
\begin{subequations}
\label{eq:F_quad}
\begin{align}
\label{eq:F_quad_W}
F\WE(x)&= \textstyle \left(I_\N\otimes Q + \frac 1 \N \mathbbold{1}_\N\mathbbold{1}_\N^\top \otimes C \right) x+  c, \\
F\NE(x)&= \textstyle F\WE(x)+  \frac{1}{\N} (I_\N \otimes C^\top)  x,
\label{eq:F_quad_N}
\end{align}
\end{subequations}
where $c=[c^1;\ldots; c^\N]$.
The following lemma exploits the structure in~\eqref{eq:F_quad} to derive sufficient conditions for strong monotonicity of $F\WE$, $F\NE$.
\begin{lemma}[Sufficient conditions for strong monotonicity of \eqref{eq:F_quad}]
\label{lem:quadratic}
\begin{enumerate}
\item[]
\item If $Q\succ 0$, $C\succeq 0$ then $F\WE$ in~\eqref{eq:F_quad_W} is strongly monotone.
\item If $Q\succ 0$, $Q - C^\top Q^{-1} C \succ 0$ then $F\WE$ in~\eqref{eq:F_quad_W} is strongly monotone.
\item If $Q\succ 0$, $C\succeq 0$ or if $Q\succeq 0$, $C\succ 0$ then $F\NE$ in~\eqref{eq:F_quad_N} is strongly monotone.
\end{enumerate}
\end{lemma}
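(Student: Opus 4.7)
My plan is to apply \cref{lemma:pd} directly to the (constant) Jacobians of the affine operators in \eqref{eq:F_quad}. For an affine map, strong monotonicity is equivalent to positive definiteness of the symmetric part of its Jacobian. Using that $I_\N,\mathbbold{1}_\N\mathbbold{1}_\N^\top$ and $Q$ are symmetric, those symmetric parts are
\[
S\WE \defeq I_\N\otimes Q + \tfrac{1}{2\N}\mathbbold{1}_\N\mathbbold{1}_\N^\top\otimes (C+C^\top),\qquad S\NE \defeq S\WE + \tfrac{1}{2\N}I_\N\otimes (C+C^\top),
\]
so the task reduces to showing $S\WE\succ 0$ in (a)--(b) and $S\NE\succ 0$ in (c).

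The core step I would use to simplify both matrices is to diagonalize the player-index direction. Picking any orthogonal $P\in\R^{\N\times\N}$ whose first column is $\mathbbold{1}_\N/\sqrt{\N}$ gives $\tfrac{1}{\N}P^\top\mathbbold{1}_\N\mathbbold{1}_\N^\top P = \operatorname{diag}(1,0,\dots,0)$. Conjugating $S\WE$ by the orthogonal matrix $P\otimes I_n$ and using $(A\otimes B)(C\otimes D)=AC\otimes BD$ yields a block-diagonal matrix whose first diagonal block is $Q + \tfrac{1}{2}(C+C^\top)$ and whose remaining $\N-1$ blocks are all equal to $Q$. Analogously $S\NE$ becomes block-diagonal with first block $Q + \tfrac{\N+1}{2\N}(C+C^\top)$ and remaining blocks $Q + \tfrac{1}{2\N}(C+C^\top)$. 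Verifying positive definiteness of $S\WE$ and $S\NE$ thus reduces to checking these explicit $n\times n$ blocks.

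Parts (a) and (c) then follow essentially by inspection. Recalling the paper's convention $C\succeq 0\iff C+C^\top\succeq 0$, under (a) every block of the transformed $S\WE$ equals $Q$ plus a positive semidefinite term and is therefore $\succ 0$ because $Q\succ 0$. In (c), both sub-cases make each block of the transformed $S\NE$ a sum of a positive (semi)definite matrix with a nonnegative multiple of $C+C^\top$, strictly positive definite in every block thanks to the strict positivity of at least one summand.

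The main obstacle will be part (b), where the hypothesis is the Schur-type condition $Q - C^\top Q^{-1}C\succ 0$ rather than a direct sign condition on $C+C^\top$. To bridge the two, I would introduce the normalized matrix $\widetilde C\defeq Q^{-1/2}CQ^{-1/2}$, for which a direct computation gives
\[
Q - C^\top Q^{-1}C \;=\; Q^{1/2}\bigl(I - \widetilde C^{\top}\widetilde C\bigr)Q^{1/2},
\]
so that the hypothesis is equivalent to $\|\widetilde C\|<1$. Since for any matrix the eigenvalues of its symmetric part are bounded in absolute value by its operator norm (immediate from $|v^\top\widetilde Cv|\le\|\widetilde Cv\|\le\|\widetilde C\|$ on unit vectors $v$), I obtain $I + \tfrac{1}{2}(\widetilde C+\widetilde C^\top)\succ 0$; conjugating back by $Q^{1/2}$ gives $Q + \tfrac{1}{2}(C+C^\top)\succ 0$, which is exactly the nontrivial block of the transformed $S\WE$ and closes the argument for (b).
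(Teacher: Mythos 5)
Your proposal is correct. It diverges from the paper's proof in two respects, mainly in part (b). The paper handles parts (a) and (c) by direct inspection of the quadratic form (the rank-one term $\frac{1}{\N}\mathbbold{1}_\N\mathbbold{1}_\N^\top\otimes C$ contributes $\frac{1}{\N}(\sum_i x^i)^\top C(\sum_i x^i)\ge 0$), and for part (b) it first invokes the Schur complement to get $\begin{bmatrix}Q & C^\top\\ C & Q\end{bmatrix}\succ 0$, restricts the quadratic form to vectors of the form $[x;x]$ to conclude $2Q+C+C^\top\succ 0$, and then runs an eigenvector argument — splitting according to whether $\sum_i v^i_j$ vanishes — to show every eigenvalue of $I_\N\otimes 2Q+\frac{1}{\N}\mathbbold{1}_\N\mathbbold{1}_\N^\top\otimes(C+C^\top)$ is an eigenvalue of either $2Q+C+C^\top$ or $2Q$. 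Your orthogonal conjugation by $P\otimes I_n$ is exactly that spectral split made explicit as a block-diagonalization, and you apply it uniformly to all three parts, which is tidier and also hands you the monotonicity constants as minimal eigenvalues of explicit $n\times n$ blocks. Where you genuinely depart is the bridge in part (b): instead of the Schur complement, you whiten via $\widetilde C=Q^{-1/2}CQ^{-1/2}$, observe the hypothesis is precisely $\|\widetilde C\|<1$, and bound the symmetric part of a contraction to get $Q+\tfrac12(C+C^\top)\succ 0$ — the same intermediate fact the paper reaches (stated there as $Q+C\succ 0$ in the paper's quadratic-form convention). Both bridges are sound; the paper's is shorter and avoids matrix square roots, while yours makes transparent what the condition $Q-C^\top Q^{-1}C\succ 0$ really says about $C$ relative to $Q$, and incidentally tightens the paper's slightly loose "$\ge 0$, from which $Q+C\succ 0$" step.
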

\subsection{Diagonal price function}
In the following we consider the case when the price function $p(\sigma)$ has diagonal structure, i.e., the $t$-th component of $p$ depends only on the corresponding component of the average. Formally, we assume that $p(\sigma(x))$ can be decomposed as $p(\sigma(x))=[p_t(\sigma_t(x_t))]_{t=1}^n$, with $p_t:\R\rightarrow \R$ for all $t$, $\sigma_t(x_t) = \frac{1}{N}\sum_{i=1}^\N x_t^i$ and $x_t\defeq[x^1_t,\dots,x^n_t]$. This corresponds to cost functions of the following form 
\be
J\i(x\i,\sigma(x))=v\i(x\i)+\sum_{t=1}^np_t(\sigma_t(x_t))x\i_t\,.
\label{eq:diagonalprice}
\ee
Cost functions as in \eqref{eq:diagonalprice} are typically used in the literature to describe congestion costs of road traffic networks (\cite{wardrop1952some, correa2004selfish} and \cref{sec:traffic}) or the charging of electric vehicles (\cite{ma2013decentralized,grammatico:parise:colombino:lygeros:14} and \cref{sec:PEVs}).
We refer to this case as to the case of \emph{diagonal price function}. A sufficient condition ensuring the monotonicity or strong monotonicity of $F\WE$ can be obtained directly exploiting the structure of \eqref{eq:diagonalprice} and the result in \cref{lemma:pd}. The situation is more complicated when we turn our attention to $F\NE$ due to the presence of the additional term $[  \nabla_z p(z)_{|{z=\sigma(x)}} {x\i}  ]_{i=1}^\N$ in \eqref{eq:F_decomp_specific}. The following lemma provides a sufficient condition.
\begin{lemma}
\label{lem:diagSMON}
Let $\mc{X}$ be closed and convex. Assume that $v\i(x\i)$ in \eqref{eq:diagonalprice} is convex for each agent $i\in[\N]$ and that $p_t$	 is continuously differentiable and strictly increasing for all $t\in[n]$. Further, suppose that $\mc{X}\i\subseteq [0,x\zero]^n$ for each $i\in[\N]$. If 
\[\minn{\substack{t \in \{1,\dots,n\}\\z \in [0,x^0]}}{\left(p'_t(z) - \frac{\tilde x^0 p''_t(z)}{8}\right)} > 0,
\label{eq:bound_PEV_smonfirst}
\]
then the operator $F\NE$ is strongly monotone.
\end{lemma}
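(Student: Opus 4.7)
The plan is to invoke Lemma~\ref{lemma:pd}. Since each $\mc{X}^i$ is closed, convex and contained in $[0,x^0]^n$, the product $\mc{X}$ is compact, so it suffices to prove that $\nabla_x F\NE(x) \succ 0$ at every $x \in \mc{X}$; the uniform constant $\alpha > 0$ is then automatic by continuity on a compact set. Exploiting the diagonal structure of $p$ in the decomposition~\eqref{eq:F_decomp_specific}, I would compute the Jacobian entrywise. The block coming from $\nabla v^i(x^i)$ is positive semidefinite by convexity of each $v^i$, and a direct calculation shows that, for every direction $y = [y^1; \dots; y^\N] \in \R^{\N n}$,
\begin{equation*}
y^\top \nabla_x F\NE(x)\, y \;\ge\; \sum_{t=1}^n \frac{1}{\N} \left[ p'_t(\sigma_t)\bigl(s_t^2 + A_t\bigr) + \frac{p''_t(\sigma_t)\, s_t B_t}{\N} \right],
\end{equation*}
where $\sigma_t \defeq \frac{1}{\N}\sum_i x^i_t$, $s_t \defeq \sum_i y^i_t$, $A_t \defeq \sum_i (y^i_t)^2$ and $B_t \defeq \sum_i y^i_t x^i_t$. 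The first contribution is non-negative because $p'_t > 0$; the sign of the second is a priori arbitrary and must be controlled by the hypothesis.

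The heart of the argument is therefore an upper bound on $|s_t B_t|$ that takes advantage of $x^i_t \in [0, x^0]$. Decomposing $y^i_t = \bar y_t + \tilde y^i_t$ and $x^i_t = \bar x_t + \tilde x^i_t$ with $\sum_i \tilde y^i_t = 0$, one gets $s_t B_t = s_t^2 \bar x_t + s_t \sum_i \tilde y^i_t \tilde x^i_t$. I would bound the first summand using $\bar x_t \le x^0$, and the second by applying Cauchy--Schwarz together with the classical maximum-variance estimate $\sum_i (\tilde x^i_t)^2 \le \N (x^0)^2/4$ (attained by placing half of the $x^i_t$ at each endpoint of $[0,x^0]$) and the AM--GM inequality $|s_t|\sqrt{\N A_t - s_t^2} \le \N A_t / 2$. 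This yields $|s_t B_t| \le x^0 s_t^2 + \tfrac{x^0}{4} \N A_t$, whence
\begin{equation*}
y^\top \nabla_x F\NE(x)\, y \;\ge\; \sum_{t=1}^n \frac{A_t}{\N} \left(p'_t(\sigma_t) - \frac{x^0 |p''_t(\sigma_t)|}{4}\right) + \sum_{t=1}^n \frac{s_t^2}{\N} \left(p'_t(\sigma_t) - \frac{x^0 |p''_t(\sigma_t)|}{\N}\right).
\end{equation*}
Reading the statement's $\tilde x^0$ as $2x^0$ so that $\tilde x^0/8 = x^0/4$, the hypothesis forces both parentheses to be strictly positive uniformly in $x$, and since $\sum_t A_t = \|y\|^2$ we obtain the desired strict positivity of $\nabla_x F\NE(x)$; Lemma~\ref{lemma:pd} then delivers strong monotonicity.

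The main technical obstacle is getting the constant right. A direct Cauchy--Schwarz, namely $|B_t| \le \sqrt{A_t \sum_i (x^i_t)^2} \le x^0 \sqrt{\N A_t}$ combined with $|s_t| \le \sqrt{\N A_t}$, only gives $|s_t B_t| \le \N x^0 A_t$, which in turn yields the too-loose sufficient condition $p'_t > |p''_t| x^0$. The sharper factor $1/4$ (matching $\tilde x^0/8$ in the statement) is exactly what one gains by first subtracting the means from both $y^i_t$ and $x^i_t$ and then exploiting that entries confined to $[0,x^0]$ have variance at most $(x^0)^2/4$; carrying out this bookkeeping, and treating separately the easier $p''_t \le 0$ case where the problematic term has the favorable sign, is where the bulk of the work lies.
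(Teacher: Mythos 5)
Your skeleton coincides with the paper's: reduce strong monotonicity to positive definiteness of the Jacobian via \cref{lemma:pd}, use the diagonal structure of $p$ to decouple the coordinates $t$, and control the indefinite term $\tfrac{1}{M}p''_t(\sigma_t)\,x_t\ones[M]^\top$ by exploiting $x_t\in[0,\tilde x^0]^M$; your quadratic-form identity and the estimate $|s_tB_t|\le x^0 s_t^2+\tfrac{x^0}{4}MA_t$ are correct. Where you diverge is the quantitative step. The paper bounds the symmetrized rank-one term through its \emph{minimum eigenvalue}, $\lambda_{\min}\bigl(x_t\ones[M]^\top+\ones[M]x_t^\top\bigr)\ge-\tilde x^0 M/4$ (\cref{lem:min_eigenval}), which in your notation is the sharp inequality $s_tB_t\ge-\tfrac{\tilde x^0 M}{8}A_t$; this is exactly what produces the constant $8$ in the hypothesis. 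Your Cauchy--Schwarz/maximum-variance/AM--GM chain is lossier by a factor of $2$ (at $s_t=0$ it still charges $\tfrac{x^0M}{4}A_t$ even though $s_tB_t=0$ there) and it forces the extra side condition $p'_t>x^0|p''_t|/M$ when $M<4$. Reading $\tilde x^0$ as $2x^0$ is not a legitimate repair: in the paper's proof and in \cref{cor:pev}, $\tilde x^0$ \emph{is} the entrywise upper bound on the strategies (the $x^0$ versus $\tilde x^0$ mismatch in the statement is only notational), so as written your argument proves a strictly weaker sufficient condition than the one claimed.

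The more serious issue is the sign of $p''_t$. Your closing claim that $p''_t\le0$ is ``the easier case where the problematic term has the favorable sign'' is backwards: for $p''_t\le 0$ the contribution $p''_t\,\bar x_t s_t^2$ is the unfavorable one, your displayed bound then needs $p'_t>x^0|p''_t|/4$, and the stated hypothesis supplies nothing of the sort (for $p''_t<0$ it reduces to $p'_t>0$). No bookkeeping can close this case, because the conclusion itself fails there: take $n=1$, $M=2$, $v^i\equiv0$, $\mc{X}^i=[0,1]$ and $p(z)=1.01z-z^2/2$, which is strictly increasing on $[0,1]$ and satisfies the hypothesis trivially since $p''\equiv-1<0$; then $F\NE^i(x)=p(\sigma)+\tfrac12 p'(\sigma)x^i$ gives $F\NE^i(1,1)=0.515<0.572=F\NE^i(0.8,0.8)$, so $\bigl(F\NE(a)-F\NE(b)\bigr)^\top(a-b)<0$ for $a=(1,1)$, $b=(0.8,0.8)$, and $F\NE$ is not even monotone. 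The paper's own proof shares this gap: multiplying by $p''_t<0$ makes the \emph{maximum} eigenvalue of $x_t\ones[M]^\top+\ones[M]x_t^\top$ the relevant one, which \cref{lem:min_eigenval} does not control. In short, your argument is sound only for $p''_t\ge0$, where it reproduces the paper's mechanism with a worse constant; the $p''_t<0$ regime should be flagged as a defect of the statement (and of the paper's proof), not waved through as easy.
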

We note that the positivity requirement on the agent strategies is satisfied in many applications such as those studied in \cref{ch:p1applications}. Nevertheless, the previous lemma can be extended adjusting the condition \eqref{eq:bound_PEV_smonfirst} to the case where $\mc{X}\i\subseteq [-x\zero,x\zero]^n$, see \cite{gentilethesis}.

An immediate consequence of the previous lemma is that, when $p_t$ is continuously differentiable, strictly increasing and \emph{concave} for all $t$, the operator $F\NE$ is strongly monotone.
It is worth noting that~\cite{yin2011nash} considers a similar setup to what studied in this section. In~\cite[Lem. 3]{yin2011nash} the authors exploit the structure in~\eqref{eq:diagonalprice} and give conditions for $\nabla_x F\NE(x)$ to be a $P$-matrix, which in turn guarantees uniqueness of the Nash equilibrium in the absence of coupling constraints. This is, to the best of our knowledge, the only work providing sufficient conditions for equilibrium uniqueness and convergence of the algorithms. It is interesting to note that uniqueness in~\cite{yin2011nash} holds assuming $p'_t > 0, p''_t > 0$, whereas our result holds if the opposite condition is satisfied, namely if $p'_t > 0, p''_t < 0$.
\section{Appendix}
\label{sec:proofsp1-part1}
\subsection{Proofs of the results presented in \cref{sec:connection_VI,,sec:SMON}}
\subsubsection*{Proof of \cref{prop:vi_ref}}
\begin{proof}
\begin{enumerate}
\item[]
\item The proof of the first statement can be also found in~\cite[Thm. 2.1]{facchinei2007generalized_2}.\\
By definition $\VNE{x}$ is a solution of VI$(\mc{Q},F\NE)$, that is
\be
F\NE(\VNE{x})^\top(x-\VNE{x})\ge0,\qquad \forall x \in \mc{Q}\,.
\label{eq:proof_NEintermediate}
\ee
In the following we fix the strategies of all the players but $i$ to $x\mi = \VNE{x}\mi$, so that all the summands in \eqref{eq:proof_NEintermediate} vanish, except for the $i$-th term
\[
\nabla_{x\i}J\i(\VNE{x}\i,\sigma(\VNE{x}))^\top(x\i-\VNE{x}\i)\ge0, \qquad\forall x\i\in\mc{Q}(\VNE{x}\mi)\,.
\]
Consider the function $x\i\mapsto J\i(x\i,\frac{1}{\N}x\i+\frac{1}{\N}\sum_{j\neq i} \VNE{x}\j)$ and observe that $J\i:\mc{Q}(\VNE{x}\mi)\rightarrow\R$ is convex by assumption. Since $\mc{Q}(\VNE{x}\mi)$ is also convex by assumption, it follows from~\cite[Prop. 3.1]{bertsekas1989parallel} that $\VNE{x}\i$ must be a minimizer of $J\i:\mc{Q}(\VNE{x}\mi)\rightarrow\R$, i.e., that 
\[
J\i(\VNE{x}\i,\VNE{x}\mi)\le  J\i\left(x\i,\frac{1}{\N}x\i+\frac{1}{\N}\sum_{j\neq i} \VNE{x}\j\right),\qquad \forall x\i\in\mc{Q}(\VNE{x}\mi)\,.
\]
Since this holds for all $i\in \{1,\dots,\N\}$ and since $\VNE{x}\in\mathcal{Q}$ by definition of variational inequality, it follows that  $\VNE{x}$ is a Nash equilibrium of $\mc{G}$.
\item We rewrite the operator $F\WE(x)$ as $\tilde F\WE(x,\sigma(x))$, where $\tilde F\WE(x,z)\defeq [ \nabla_{x\i} J^i(x\i,z) ]_{i=1}^\N.$ Fix $ \bar{z}=\sigma(\VWE{x})$. By definition, if $\VWE{x}$ solves VI$(\mathcal{Q},F\WE)$ then $F\WE(\VWE{x} )^\top(x-\VWE{x} )\ge 0$ for all $x\in\mc{Q}$, i.e.,
\begin{equation}
\tilde F\WE(\VWE{x}, \bar{z} )^\top(x-\VWE{x} )\ge 0,\qquad \forall x\in\mc{Q}.
\label{eq:proof_intermediate}
\end{equation}
Consider $i \in \{1,\dots,\N\}$, set $x\mi=\VWE{x}\mi$ in~\eqref{eq:proof_intermediate} and consider an arbitrary $x^{i} \in \mc{Q}^i(\VWE{x}^{-i})$; then all the summands in~\eqref{eq:proof_intermediate} vanish except the $i^{\textup{th}}$ one and~\eqref{eq:proof_intermediate} reads
\begin{equation}
\nabla_{x\i} J\i(\VWE{x}^i,\bar{z})^\top (x\i-\VWE{x}\i)\ge 0, \qquad \forall  x\i\in\mc{Q}\i(\VWE{x}\mi).
\label{eq:minimum_principle}
\end{equation}  
Consider the convex function $J\i(\cdot,\bar{z}):\mc{Q}\i(\VWE{x}\mi) \rightarrow \R$. Since $\mc{Q}\i(\VWE{x}\mi)$ is a convex set, by~\eqref{eq:minimum_principle} and~\cite[Prop. 3.1]{bertsekas1989parallel} we have that $\VWE{x}^i\in \arg  \min_{x^i\in\mc{Q}\i(\VWE{x}\mi) } J\i\left(x\i,\bar{z} \right)$.
Substituting $\bar{z}=\sigma(\VWE{x})$, one has $J\i\left(\VWE{x}^i,\sigma(\VWE{x} )\right) \le J\i\left(x\i,\sigma(\VWE{x} )\right)$ for all $x\i\in\mc{Q}\i(\VWE{x}\mi)$. Since this holds for all $i\in \{1,\dots,\N\}$ and since $\VWE{x}\in\mathcal{Q}$, it follows that  $\VWE{x}$ is a Wardrop equilibrium of $\mc{G}$.
\end{enumerate}
\end{proof}
\subsubsection*{Proof of \cref{lem:FNstrongly monotone}}
\begin{proof}
\begin{enumerate} 
\item[]
\item  Let us first show that $F\WE$ is monotone.
Since $v\i$ is convex, then $\nabla_{x\i} v\i(x\i)$ is monotone in $x\i$ by~\cite[Sec. 4.2.2]{scutari2012monotone}.
Hence $[\nabla_{x\i} v\i(x\i)]_{i=1}^\N$ is monotone.
Moreover, for any $x_1,x_2$
\begin{equation}
\begin{aligned}
& ([p(\sigma(x_1))]_{i=1}^\N - [p(\sigma(x_2))]_{i=1}^M)^\top (x_1 - x_2) \\
&= \N ( p(\sigma(x_1)) - p(\sigma(x_2)) )^\top ( \sigma(x_1) - \sigma(x_2) ) \ge 0,
\label{eq:p_mon}
\end{aligned}
\end{equation}
where the last inequality follows from the fact that $p$ is monotone.
By~\eqref{eq:F_W_decomp} and the fact that the sum of two monotone operators is monotone, one can conclude that $F\WE$ is monotone. 

To show that $F\NE$ is strongly monotone, we write the affine expression of $p$ as $p(x) = C x + c$, where there exists $\alpha > 0$ such that $C \succ \alpha I_n$ by~\cref{lemma:pd}.
Then the term $\frac1 \N [  \nabla_z p(z)_{|{z=\sigma(x)}} {x\i}]_{i=1}^\N$ in~\eqref{eq:F_N_decomp} equals
$\frac{1}{\N} (I_\N \otimes C^\top) x$.
Since $\nabla_x (\frac{1}{\N} (I_\N \otimes C^\top) x) \succ \frac{\alpha}{\N} I_{\N n}$,
then $\frac1 \N [  \nabla_z p(z)_{|{z=\sigma(x)}} {x\i}]_{i=1}^\N$ is strongly monotone by~\cref{lemma:pd}.
Having already shown that $F\WE$ is monotone, the proof is concluded upon noting that the sum of a monotone operator and a strongly monotone operator is strongly monotone.

\item Strong convexity of $v^i$ is equivalent to strong monotonicity of $\nabla_{x\i}v\i(x\i)$ in $x\i$~\cite[Sec. 4.2.2]{scutari2012monotone}.
Then $[\nabla_{x\i} v\i(x\i)]_{i=1}^\N$ is strongly monotone.
Monotonicity of $[p(\sigma(x))]_{i=1}^\N$ in~\eqref{eq:F_W_decomp} can be shown as in~\eqref{eq:p_mon}.
\end{enumerate}	
\end{proof}

\subsubsection*{Proof of \cref{lem:quadratic}}
\begin{proof}
\begin{enumerate}
\item[]
\item By \cref{lemma:pd}, strong monotonicity of $F\WE$ in~\eqref{eq:F_quad_W} is equivalent to $\nabla_x F\WE(x) = \left(I_\N\otimes Q + \frac{1}{\N} \mathbbold{1}_\N\mathbbold{1}_\N^\top \otimes C \right)^\top \succ 0$, which is independent from $x$.
If $Q\succ 0$ and $C\succeq0$, it holds $\left(I_\N\otimes Q + \frac{1}{\N} \mathbbold{1}_\N\mathbbold{1}_\N^\top \otimes C \right)^\top \succ 0$, proving the statement.
\item Since $Q$ is symmetric, $Q\succ0$, and $Q-C^\top Q^{-1}C \succ 0$, by Schur's Complement we have
\[
\begin{bmatrix}
	Q & C^\top \\
	C & Q
\end{bmatrix}\succ 0\,.
\]
It follows that 
\[
\begin{bmatrix}
x\\
x	
\end{bmatrix}^\top
\begin{bmatrix}
	Q & C^\top \\
	C & Q
\end{bmatrix}
\begin{bmatrix}
x\\
x	
\end{bmatrix}
=x^\top\left(2Q + C+C^\top \right) x \ge 0\,, 
\]
from which it must be $Q+C\succ 0$. We conclude the proof by showing that $Q\succ0$ symmetric and $Q+C\succ 0$ imply $\nabla_xF\WE(x) = \left(I_\N\otimes Q + \frac{1}{\N} \mathbbold{1}_\N\mathbbold{1}_\N^\top \otimes C \right)^\top \succ 0$. Recall that $\nabla_xF\WE(x)\succ 0$ is equivalent to showing positive definiteness of 
\be
\label{eq:proofschur}
I_\N\otimes 2Q + \frac{1}{\N} \mathbbold{1}_\N\mathbbold{1}_\N^\top \otimes {(C+C^\top)}.
\ee
 To prove the latter inequality, let us consider $\lambda_j$ an eigenvalue of the matrix appearing in \eqref{eq:proofschur} with corresponding eigenvector $v_j\neq \zeros[\N n]$. It must be 
 \be
 \left(
 I_\N\otimes 2Q + \frac{1}{\N} \mathbbold{1}_\N\mathbbold{1}_\N^\top \otimes {(C+C^\top)}
 \right)v_j = \lambda_j v_j \iff 2Qv^i_j+\frac{C+C^\top}{\N}\sum_{i=1}^{\N}v_j^i = \lambda_j v_j^i,
 \label{eq:partialeigen}
 \ee
for all $i\in[\N]$. Summing the previous expressions over $i$ gives
 \[
 \left(
 2Q+C+C^\top
 \right)
 \sum_{i=1}^{\N}{v_j^i}
 =
 \lambda_j
 \sum_{i=1}^{\N}{v_j^i}\,.
 \]
 Thus, if $\sum_{i=1}^{\N}{v_j^i}\neq \zeros[n]$, $\lambda_j$ is also an eigenvalue of $2Q+C+C^\top$ and it must be $\lambda_j>0$ since $Q+C\succ 0$. If, on the contrary, $\sum_{i=1}^{\N}{v_j^i}= \zeros[n]$, it follows from \eqref{eq:partialeigen} that $2Qv_j = \lambda_j v_j$, i.e., $\lambda_j$ is also an eigenvalue of $2Q$ and it must be $\lambda_j> 0$ since $Q\succ 0$ and symmetric. We conclude, as required, that the matrix appearing in \eqref{eq:proofschur} is positive definite, since all its eigenvalues are strictly positive.
\item 
Similarly to the first point, strong monotonicity of $F\NE$ in~\eqref{eq:F_quad_N} is equivalent by \cref{lemma:pd} to $\left(I_\N\otimes Q + \frac{1}{\N} \mathbbold{1}_\N\mathbbold{1}_\N^\top \otimes C \right)^\top +  \frac{1}{\N} (I_\N \otimes C^\top)^\top \succ 0$. 
If $Q\succ 0$ and $C\succeq0$ or if $Q\succeq0$ and $C\succ 0$, it follows that $\left(I_\N\otimes Q + \frac{1}{\N} \mathbbold{1}_\N\mathbbold{1}_\N^\top \otimes C \right)^\top +  \frac{1}{\N} (I_\N \otimes C^\top)^\top \succ 0$, completing the proof. 
\end{enumerate}
\end{proof}

\subsubsection*{Proof of \cref{lem:diagSMON}}
\begin{proof}
First, observe that the operator $p:\R^{n}\rightarrow \R^n$ is monotone. Indeed, since $p_t$ is strictly increasing it holds for all $y$, $z$ that
\[
(p(y)-p(z))^\top (y-z) = \sum_{t=1}^n(p_t(y_t)-p_t(z_t))(y_t-z_t)>0\,.
\]	
Thanks to \cref{lem:FNstrongly monotone}, we conclude that $F\WE$ is also monotone.
According to~\eqref{eq:F_N_decomp}, to show strong monotonicity of $F\NE$ it is sufficient to show that the term $[\nabla_z p(z)_{|z=\sigma(x)} {x\i}  ]_{i=1}^\N$ is strongly monotone for all $x \in \mc{X}$. The latter is equivalent to proving $\nabla_x [\nabla_z p(z)_{|z=\sigma(x)} {x\i}  ]_{i=1}^\N \succ 0$  for all $x \in \mc{X}$ by~\cref{lemma:pd}.
We have
\begin{equation}
\nabla_x [\nabla_z p(z)_{|z=\sigma(x)} x\i]_{i=1}^\N =  I_\N \otimes \nabla_z p(z)_{|z=\sigma(x)} + \frac{1}{\N} \ones[\N] \otimes \left([ \text{diag} \{p''_t(\sigma_t) x^i_t \}_{t=1}^n ]_{i=1}^{\N}\right)^\top,
\label{eq:PEV_proof_intermediatefirst}
\end{equation}
where $\text{diag} \{p''_t(\sigma_t) x^i_t \}_{t=1}^n$ is the diagonal matrix whose entry in position $(t,t)$ is $p''_t(\sigma_t) x^i_t$. The permutation matrix $P = [[{\bf e}_{t+(i-1)n}^\top]_{i=1}^\N]_{t=1}^n$ (${\bf e}_i$ denotes the $i$-th vector of the canonical basis) permutes~\eqref{eq:PEV_proof_intermediatefirst} into block-diagonal form

\begin{align}
&P \nabla_x [\nabla_z p(z)_{|z=\sigma(x)} x\i]_{i=1}^\N P^\top = \label{eq:PEV_gradient_FN} \\[3mm]
&
\begin{bmatrix}
p_1'(\sigma_1) I_{ \N}  & & \\
&  \ddots  & \\
& &  p_n'(\sigma_n) I_{ \N} 
\end{bmatrix}
 +  \frac{1}{\N} 	
\begin{bmatrix}
p_1''(\sigma_1) x_1 \ones[\N]^\top  & & \\
&  \ddots  & \\
& &  p_n''(\sigma_n) x_n \ones[\N]^\top
\end{bmatrix}
\end{align}
\noindent where $x_t  =  [x\i_t]_{i=1}^\N$. To conclude, it suffices to show that $p_t'(\sigma_t) I_\N + \frac{1}{\N} p''_t(\sigma_t) x_t \ones[\N]^\top \succ 0$ for all $t$. \cref{lem:min_eigenval} (reported at the end of this proof) guarantees that  $\lambda_\text{min} \left( x_t \ones[\N]^\top + \ones[\N] x_t^\top \right)/2 \ge -\frac{\tilde x^0 \N}{8}$, which terminates the proof.
\end{proof}

\begin{lemma}
\label{lem:min_eigenval}
For all $\N\in\mb{N}$, it holds  
\begin{equation}
\minn{y \in [0,1]^\N} \lambda_\textup{min} \left( y \ones[\N]^\top + \ones[\N] y^\top \right) \ge -\frac{\N}{4}.
\label{eq:min_eigenval_statement}
\end{equation}

\end{lemma}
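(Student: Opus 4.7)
The matrix $A(y) \defeq y\ones[M]^\top + \ones[M] y^\top$ has rank at most two, since its column space is contained in $\mathrm{span}\{y,\ones[M]\}$. My first step will be to diagonalize it on this subspace: looking for eigenvectors of the form $\alpha y + \beta \ones[M]$, a direct calculation reduces the eigenvalue problem to the $2\times 2$ system
\[
\begin{bmatrix} s & M \\ \|y\|^2 & s \end{bmatrix} \begin{bmatrix}\alpha\\ \beta\end{bmatrix} = \lambda \begin{bmatrix}\alpha\\ \beta\end{bmatrix},
\qquad s \defeq \ones[M]^\top y,
\]
whose eigenvalues are $\lambda_{\pm} = s \pm \sqrt{M\|y\|^2}$. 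All remaining eigenvalues of $A(y)$ are zero, so
\[
\lambda_{\min}(A(y)) \;=\; s - \sqrt{M\|y\|^2}.
\]

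The second step exploits the box constraint $y\in[0,1]^M$: since $y_i^2\le y_i$ componentwise, we have $\|y\|^2 \le s$, hence
\[
\lambda_{\min}(A(y)) \;\ge\; s - \sqrt{Ms}.
\]
The final step is to bound the scalar function $s\mapsto s - \sqrt{Ms}$ over $s\in[0,M]$ from below. Setting its derivative $1 - \tfrac{1}{2}\sqrt{M/s}$ to zero yields the unique minimizer $s^\star = M/4$, at which the value is $M/4 - \sqrt{M\cdot M/4} = -M/4$. This gives $\lambda_{\min}(A(y)) \ge -M/4$ for every $y\in[0,1]^M$, which is exactly~\eqref{eq:min_eigenval_statement}.

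\textbf{Main obstacle.} There is no serious obstacle: the argument is essentially a clean rank-2 eigenvalue computation plus an elementary one-variable optimization. The only point that deserves care is the reduction to the $2\times 2$ system, which is clean only when $y$ and $\ones[M]$ are linearly independent; in the degenerate case $y=c\ones[M]$ for some $c\in[0,1]$ one can either treat it separately (then $A(y)=2c\ones[M]\ones[M]^\top\succeq 0$, so the bound is trivial) or simply observe that the closed-form expression $s - \sqrt{M\|y\|^2}$ is continuous in $y$ and therefore extends by continuity to the entire cube $[0,1]^M$.
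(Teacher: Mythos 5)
Your argument is correct, but it follows a genuinely different route from the paper's. The paper writes the minimum eigenvalue as $\min_{\|v\|=1} 2(v^\top y)(\ones[M]^\top v)$, fixes a minimizing pair $(y^\star,v^\star)$, and argues from the sign of $\ones[M]^\top v^\star$ that the minimizer can be taken with $y^\star\in\{0,1\}^M$; it then computes the spectrum of the resulting two-block matrix explicitly (minimum eigenvalue $p-\sqrt{Mp}$, where $p$ is the number of unit entries of $y^\star$) and minimizes $p-\sqrt{Mp}$ over the reals to obtain $-M/4$. You instead compute the full spectrum of $y\ones[M]^\top+\ones[M]y^\top$ for an \emph{arbitrary} $y$ via the rank-two reduction, getting the closed form $\lambda_{\min}=\ones[M]^\top y-\sqrt{M}\,\|y\|$, and then combine the cube inequality $\|y\|^2\le \ones[M]^\top y$ with a one-variable minimization of $s-\sqrt{Ms}$ — which is the same scalar function the paper minimizes, with the real variable $s$ in place of the integer $p$. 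Your route avoids the extremal-point/case analysis entirely and yields an exact eigenvalue formula valid for every $y$, which makes the bound transparent; the paper's route, by pinning the worst case to $0/1$ vectors, additionally identifies when $-M/4$ is attained (only if $M$ is a multiple of $4$), a refinement your argument does not address but which the lemma does not require. One sentence worth adding: the identity $\lambda_{\min}=\ones[M]^\top y-\sqrt{M}\|y\|$ implicitly uses that this quantity is nonpositive (for $M\ge 3$ zero is also an eigenvalue of the rank-two matrix), which follows from Cauchy--Schwarz, $\ones[M]^\top y\le\sqrt{M}\|y\|$; even without this remark your bound still applies to $\min\{0,\ \ones[M]^\top y-\sqrt{M}\|y\|\}$, so nothing breaks.
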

\begin{proof}
The statement is trivially true for $\N = 1$.
For $\N > 1$, the left hand side of \eqref{eq:min_eigenval_statement} is equivalent to
\begin{equation}
\minn{\substack{y \in [0,1]^\N \\ \| v \| = 1}}{v^\top  \left( y \ones[\N]^\top + \ones[\N] y^\top \right) v} = 
\minn{\substack{y \in [0,1]^\N \\ \| v \| = 1}}{2\left( v^\top y \right) \left( \ones[\N]^\top v \right)}.
\label{eq:min_eigenval_expanded}
\end{equation}
Let us consider a pair $y^\star, v^\star$ minimizing~\eqref{eq:min_eigenval_expanded}. If $\ones[\N]^\top v^\star = 0$, the bound \eqref{eq:min_eigenval_expanded} is trivially satisfied. We are left with two cases, $\ones[\N]^\top v^\star > 0$ and $\ones[\N]^\top v^\star < 0$.
Let us start from the case of $\ones[\N]^\top v^\star > 0$.
To minimize $2\left( v^\top y \right) \left( \ones[\N]^\top v \right)$, it must be
\begin{equation}
y^\star_i = \begin{cases} 0 \quad &\text{if} \; v^\star_i > 0  \\ 
1 \quad & \text{if} \; v^\star_i < 0,
\end{cases} \;\; \text{for all} \; i \in \{1,\dots,\N\}.
\label{eq:min_eigenval_y_01}
\end{equation}
Without loss of generality, we can assume $y_i^\star \in \{0,1\}$ if $v^\star_i = 0$.
Hence we conclude that $y^\star \in \{0,1\}^\N$ and~\eqref{eq:min_eigenval_statement} reduces to
\begin{equation}
\minn{p \in \{0,\dots,\N\}}{\lambda_\textup{min} \left[
\begin{array}{c|c}
2 (\ones[p] \ones[p]^\top) & \ones[p] \ones[(\N-p)]^\top \\[0.1cm]
\hline  \\[-0.3cm]
\ones[(\N-p)] \ones[p]^\top & \zeros[(\N-p)] \zeros[(\N-p)]^\top
\end{array}
\right]},
\label{eq:min_eigenval_matrix_01}
\end{equation}
where without loss of generality we assumed the first $p$ components of $y^\star$ to be $1$ and the remaining to be $0$.
Note that the matrix in~\eqref{eq:min_eigenval_matrix_01} features $p$ identical rows followed by $\N-p$ other identical rows.
Hence any of its eigenvectors must have $p$ identical components followed by $\N-p$ other identical components.
With this observation and the definition of eigenvalue, algebraic calculations show that
the matrix in~\eqref{eq:min_eigenval_matrix_01} has only two distinct eigenvalues, the minimum of the two being $p-\sqrt{\N p}$.
The function $p-\sqrt{\N p}$ is minimized over the reals for $p=\N/4$ with corresponding minimum $\lambda_\textup{min} = -\N/4$,
as it can be seen by using the change of variables $p = q^2$ and minimizing the quadratic function $q^2 - \sqrt{\N} q$.
Since $p \in \{0,\dots,\N\}$ in~\eqref{eq:min_eigenval_matrix_01},
the value $-\N/4$ is a lower bound for the minimum eigenvalue, and it is attained only if $\N$ is a multiple of $4$.
We conclude by noting that the derivation for the case $\ones[\N]^\top v^\star < 0$ is identical to the derivation for the case $\ones[\N]^\top v^\star > 0$ just shown, upon switching $0$ and $1$ in~\eqref{eq:min_eigenval_y_01}.
\end{proof}
\chapter{Equilibria and efficiency in large populations}
\label{ch:p1distanceWENEandPOA}
Many real world applications where agents behave strategically feature the interaction of a large population of individuals. As an example, consider that of drivers moving on the road network of a city, with the objective of reaching their destination as swiftly as possible. As an alternative example consider that of traders in a stock market. Motivated by this observation, the current chapter is dedicated to the study of aggregative games with a large number of players. The chapter is divided in two parts. In \cref{sec:Wardrop_Nash} we provide bounds on the distance between Wardrop and Nash equilibria, while in \cref{sec:eqilibriumefficiency} we study the efficiency of these equilibria, i.e., we study how much selfish behaviour degrades the performance of a centrally controlled system.
All the proofs are reported in the Appendix (\cref{sec:proofsp1-part2}). The results presented in this chapter have been published in \cite{gentile2017nash,paccagnan18efficiency}.
 
Specifically, we consider a sequence of games $\GNS$.
For fixed $\N$, the game $\GN$ is played among $\N$ agents and is defined as in~\eqref{eq:GNEP} with an arbitrary coupling constraint $\mc{C}$,  arbitrary costs $\{J\i(x\i,\sigma(x))\}_{i=1}^\N$ and arbitrary local constraints $\{\mc X\i\}_{i=1}^\N$. For the sake of readability, we avoid the explicit dependence on $\N$ in denoting these quantities and in denoting $x\NE$, $x\WE$, $F\NE$, $F\WE$.
\section{Distance between Nash and Wardrop equilibria}
\label{sec:Wardrop_Nash}
As we have learnt from the previous chapter, the monotonicity properties of the operators $F\NE$ and $F\WE$ might not coincide. For example, $F\NE$ might be strongly monotone for a given game, while for the same game $F\WE$ might not be. Unfortunately, if the operator associated to a variational inequality is not monotone, determining the corresponding solution is in general an  intractable problem.
Motivated by this shortcoming, in this section we provide bounds on the distance between $\VNE{x}$ and $\VWE{x}$, so that, should one of the two equilibria  be difficult to compute (e.g., due to the lack of monotonicity), we might be able to compute the other and still be able to learn something about the former.

\begin{assumption}
\label{A2}
There exists a convex, compact set $\mathcal{X}\zero\subset\R^n$ such that $\cup_{i=1}^\N \mathcal{X}^i\subseteq{\mathcal{X}\zero}$ for each $\GN$ in the sequence $\GNS$. Let $ \Dx \defeq  \max_{y \in{\mathcal{X}\zero} } \{\|y \|\}$. For each $\N$ and $i \in \{1,\dots,\N\}$, the function $J\i(z_1,z_2)$ is Lipschitz with respect to $z_2$ in $\mc{X}^0$ with Lipschitz constant $L_2$ independent from $\N$, $i$ and $z_1 \in \mc{X}^i$.
\end{assumption}
We note that~\cref{A2} implies that $\sigma(x) \in \mc{X}^0$ for any $\N$ and any $x \in \mc{X}^1 \times \dots \times \mc{X}^\N$. Furthermore, if the cost function~\eqref{eq:costs_generic} takes the specific form~\eqref{eq:costs_specific}, then $p$ being Lipschitz in $\mc{X}^0$ with constant $L_p$ implies $J\i(z_1,z_2)$ being Lipschitz with respect to $z_2$ in $\mc{X}^0$ with constant $L_2 = \Dx L_p$, as by the Cauchy-Schwartz inequality
\begin{equation}
\begin{aligned}
&\| J\i(z_1,z_2) - J\i(z_1,z_2') \| = \| (p(z_2) - p(z_2'))^\top z_1 \| \\
&\le \| p(z_2) - p(z_2') \| \| z_1 \| \le \Dx L_p \| z_2 - z_2' \|.
\end{aligned}
\label{eq:Lipschitz_implies_Lipschitz}
\end{equation}

The next proposition shows that every Wardrop equilibrium is an $\varepsilon$-Nash equilibrium, with $\varepsilon$ vanishing as $\N$ grows.
\begin{proposition}~\label{prop:conv_cost}
Let the sequence of games $\GNS$ satisfy~\cref{A2}. For each $\GN$, every Wardrop equilibrium is an $\varepsilon$-Nash equilibrium, with $\varepsilon=\frac{2\Dx L_2}{\N}$.
\end{proposition}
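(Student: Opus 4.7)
The plan is to reduce the $\varepsilon$-Nash condition directly to the Wardrop condition by exploiting the fact that, in an $\N$-player aggregative game, a single-player deviation perturbs the empirical average by $\mc{O}(1/\N)$. Concretely, fix any Wardrop equilibrium $x\WE$ of $\GN$, any player $i\in\{1,\dots,\N\}$, and any feasible deviation $x\i\in\mc{Q}\i(x\WE\mi)$. \Cref{def:WE} gives
\[
J\i(x\i\WE,\sigma(x\WE))\le J\i(x\i,\sigma(x\WE)),
\]
so it suffices to control the gap between $J\i(x\i,\sigma(x\WE))$ and $J\i\!\left(x\i,\tfrac{1}{\N}x\i+\tfrac{1}{\N}\sum_{j\neq i}x\j\WE\right)$, which is the quantity appearing in the $\varepsilon$-Nash inequality \eqref{eq:def_GNE}.

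The key computation is that the two second arguments differ by exactly $\frac{1}{\N}(x\i\WE-x\i)$, because $\sigma(x\WE)=\tfrac{1}{\N}x\i\WE+\tfrac{1}{\N}\sum_{j\neq i}x\j\WE$. By \cref{A2}, both $x\i\WE$ and $x\i$ lie in $\mc{X}\zero$, so the triangle inequality yields
\[
\left\|\sigma(x\WE)-\Big(\tfrac{1}{\N}x\i+\tfrac{1}{\N}\sum_{j\neq i}x\j\WE\Big)\right\|=\tfrac{1}{\N}\|x\i\WE-x\i\|\le \tfrac{2\Dx}{\N}.
\]
Applying the Lipschitz hypothesis on $J\i(z_1,\cdot)$ with constant $L_2$ (uniform in $\N$, $i$ and $z_1\in\mc{X}\i$) then bounds the cost gap by $\tfrac{2\Dx L_2}{\N}$.

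Combining the two displays gives the $\varepsilon$-Nash inequality with $\varepsilon=\tfrac{2\Dx L_2}{\N}$, and since this is independent of the chosen $i$ and $x\i$, the conclusion follows. I do not anticipate any real obstacle here: the argument is essentially bookkeeping on top of \cref{A2}, the only nontrivial observation being that the uniformity of $L_2$ in $\N$ is what makes the $\mc{O}(1/\N)$ scaling meaningful. Note also that this proof does not require the convexity part of \cref{A1} nor strong monotonicity of any operator, so it applies to every Wardrop equilibrium (variational or not).
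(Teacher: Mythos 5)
Your proposal is correct and follows essentially the same argument as the paper: split the $\varepsilon$-Nash gap into the Wardrop inequality plus the Lipschitz term coming from the $\frac{1}{\N}\|x\i\WE-x\i\|\le \frac{2\Dx}{\N}$ perturbation of the average, exactly as in the paper's proof of \cref{prop:conv_cost}. Your closing remarks (uniformity of $L_2$ in $\N$, no need for \cref{A1} or monotonicity, validity for non-variational Wardrop equilibria) are also consistent with the paper.
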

\Cref{prop:conv_cost} is a strong result as it guarantees that for relatively large $\N$ a Wardrop equilibrium is \emph{almost} stable in the sense of the Nash equilibrium definition. In particular, at any given Wardrop equilibrium no player can improve upon its cost by more than an additive factor $\varepsilon$, considering the strategies of the others fixed. 
Unfortunately, \cref{prop:conv_cost} provides no information on the distance between the set of strategies constituting a Nash and a Wardrop equilibrium. This question is addressed in the following theorem.
\begin{theorem}
\label{thm:conv_strategies}
Let the sequence of games $\GNS$ satisfy~\cref{A2}, and each $\GN$ satisfy~\cref{A1}. Then:
\begin{enumerate}
\item
If the operator $F\NE$ relative to $\GN$ is strongly monotone on $\mc{Q}$ with monotonicity constant $\alpha\emm>0$, then there exists a unique variational Nash equilibrium $\VNE{x}$ of $\GN$. Moreover, for any variational Wardrop equilibrium $\VWE{x}$
\begin{align}
\| \VNE{x} - \VWE{x} \| \le \frac{L_2}{\alpha\emm{\sqrt \N}}.
\label{eq:convergence_strategies_2}
\end{align}
As a consequence, if $\alpha\emm{\sqrt \N} \to \infty$ as $\N \to \infty$, then $\| \VNE{x} - \VWE{x} \| \to 0$ as $\N \to \infty$.
\item If the operator $F\WE$ relative to $\GN$ is strongly monotone on $\mc{Q}$ with monotonicity constant $\alpha\emm>0$, then there exists a unique variational Wardrop equilibrium $\VWE{x}$ of $\GN$. Moreover, for any variational Nash equilibrium $\VNE{x}$
\begin{align}
\| \VNE{x} - \VWE{x} \| \le \frac{L_2}{\alpha\emm{\sqrt \N}}.
\label{eq:convergence_strategies}
\end{align}
As a consequence, if $\alpha\emm{\sqrt \N} \to \infty$ as $\N \to \infty$,
then $\| \VNE{x} - \VWE{x} \| \to 0$ as $\N \to \infty$.
\item If in each game $\GN$ the cost function $J^i(x^i,\sigma(x))$ takes the form~\eqref{eq:costs_specific}, with $v^i = 0$ and $p$ being strongly monotone on $\mc{X}^0$ with monotonicity constant $\alpha$, then there exists a unique $\bar \sigma$ such that $\sigma(\VWE{x})=\bar \sigma$ for any variational Wardrop equilibrium $\VWE{x}$ of $\GN$. Moreover, for any variational Nash equilibrium $\VNE{x}$ of $\GN$ and for any variational Wardrop equilibrium $\VWE{x}$ of $\GN$
\begin{equation}
\| \sigma(\VNE{x}) - \sigma(\VWE{x}) \| \le  \sqrt{\frac{2\Dx L_2}{\alpha \N}}.
\label{eq:convergence_sigma}
\end{equation}
Hence, $\| \sigma(\bar x\NE) - \sigma(\bar x\WE) \| \to 0$ as $\N \to \infty$.\footnote{If $p$ is Lipschitz with constant $L_p$, then in~\eqref{eq:convergence_sigma} $L_2$ can be replaced by $R L_p$, as by~\eqref{eq:Lipschitz_implies_Lipschitz}. This is used in the application in~\cref{sec:PEVs,,sec:traffic}.}
\end{enumerate}
\end{theorem}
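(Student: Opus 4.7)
The whole argument pivots on one simple identity. Since $F\NE$ is the total gradient of $J^i$ in $x^i$ while $F\WE$ holds the aggregate frozen, the chain rule applied to $\sigma(x)=\frac1\N\sum_j x^j$ yields
\[
F\NE(x)-F\WE(x) \;=\; \frac{1}{\N}\bigl[\nabla_z J^i(x^i,z)_{|z=\sigma(x)}\bigr]_{i=1}^{\N}.
\]
Under \cref{A2} each $J^i(x^i,\cdot)$ is $L_2$-Lipschitz on $\mathcal{X}^0$; since $J^i$ is $C^1$ this forces $\|\nabla_z J^i(x^i,\sigma(x))\|\le L_2$ for every $i$. Stacking and taking norms gives the crucial pointwise bound
\[
\|F\NE(x)-F\WE(x)\| \;\le\; \frac{L_2}{\sqrt{\N}} \quad \text{for every } x\in\mathcal{X}.
\]
Every subsequent step just plugs this estimate into a standard strongly-monotone VI comparison.

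\textbf{Parts (a) and (b).} Existence and uniqueness of $\VNE{x}$ (resp.\ $\VWE{x}$) follow directly from \cref{lem:exun} once the relevant operator is strongly monotone. For (a), I would start from $\alpha\emm\|\VNE{x}-\VWE{x}\|^2\le (F\NE(\VNE{x})-F\NE(\VWE{x}))^\top(\VNE{x}-\VWE{x})$ and add to the right-hand side the two non-positive quantities $F\NE(\VNE{x})^\top(\VWE{x}-\VNE{x})\le 0$ (from the VI at $\VNE{x}$ with $x=\VWE{x}\in\mathcal{Q}$) and $F\WE(\VWE{x})^\top(\VNE{x}-\VWE{x})\le 0$ (from the VI at $\VWE{x}$ with $x=\VNE{x}\in\mathcal{Q}$). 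After cancellation, only the cross term survives:
\[
\alpha\emm\|\VNE{x}-\VWE{x}\|^2 \;\le\; -\bigl(F\NE(\VWE{x})-F\WE(\VWE{x})\bigr)^{\!\top}(\VNE{x}-\VWE{x}).
\]
Cauchy--Schwarz together with the pointwise bound $\|F\NE-F\WE\|\le L_2/\sqrt{\N}$ delivers \eqref{eq:convergence_strategies_2}. Part (b) is symmetric: one subtracts and adds $F\NE(\VNE{x})$ instead of $F\WE(\VWE{x})$, so that the surviving cross term is $(F\NE(\VNE{x})-F\WE(\VNE{x}))^\top(\VNE{x}-\VWE{x})$, and the same Cauchy--Schwarz step gives \eqref{eq:convergence_strategies}.

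\textbf{Part (c).} With $v^i=0$ the Wardrop operator simplifies to $F\WE(x)=[p(\sigma(x))]_{i=1}^{\N}$, so for any $x,y$
\[
(F\WE(x)-F\WE(y))^\top(x-y) \;=\; \N\,(p(\sigma(x))-p(\sigma(y)))^\top(\sigma(x)-\sigma(y)) \;\ge\; \N\alpha\|\sigma(x)-\sigma(y)\|^2,
\]
by strong monotonicity of $p$. Applying the same VI trick as in (a)/(b), the sum of the two VI inequalities at $\VNE{x},\VWE{x}$ yields $(F\NE(\VNE{x})-F\WE(\VWE{x}))^\top(\VNE{x}-\VWE{x})\le 0$; splitting $F\NE(\VNE{x})-F\WE(\VWE{x}) = (F\WE(\VNE{x})-F\WE(\VWE{x}))+(F\NE(\VNE{x})-F\WE(\VNE{x}))$ and using the monotonicity lower bound on the first summand gives
\[
\N\alpha\|\sigma(\VNE{x})-\sigma(\VWE{x})\|^2 \;\le\; \|F\NE(\VNE{x})-F\WE(\VNE{x})\|\cdot\|\VNE{x}-\VWE{x}\|.
\]
Bounding the first factor by $L_2/\sqrt{\N}$ and the second by the crude diameter estimate $\|\VNE{x}-\VWE{x}\|\le 2\Dx\sqrt{\N}$ (a consequence of \cref{A2}, since each $x^i\in\mathcal{X}^0$ has norm at most $\Dx$) gives $\N\alpha\|\sigma(\VNE{x})-\sigma(\VWE{x})\|^2\le 2\Dx L_2$, which is exactly \eqref{eq:convergence_sigma}. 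Uniqueness of $\bar\sigma$ over Wardrop equilibria follows by the same monotonicity argument applied to two variational Wardrop equilibria in place of $\VNE{x},\VWE{x}$.

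\textbf{Main obstacle.} Parts (a) and (b) are textbook perturbation arguments once the $1/\sqrt{\N}$ estimate on $\|F\NE-F\WE\|$ is in hand. The delicate piece is part (c): here $F\WE$ is only \emph{monotone in $x$} (strong monotonicity is available solely along the $\sigma$-direction), so I cannot close the loop by absorbing $\|\VNE{x}-\VWE{x}\|$ into the left-hand side. The key idea is to accept that $\|\VNE{x}-\VWE{x}\|$ may grow like $\sqrt{\N}$ and to compensate by combining it with the $L_2/\sqrt{\N}$ estimate, which makes the product stay bounded and therefore forces $\|\sigma(\VNE{x})-\sigma(\VWE{x})\|^2$ to decay as $1/\N$. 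The crude diameter bound $2\Dx\sqrt{\N}$ is precisely what is needed for this compensation, and using it is what costs a square root in the final rate $\mathcal{O}(1/\sqrt{\N})$ of (c) compared with the sharper $\mathcal{O}(1/(\alpha\emm\sqrt{\N}))$ obtained in (a) and (b).
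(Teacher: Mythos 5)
Your proposal is correct and follows essentially the same route as the paper: the identical pointwise estimate $\|F\NE(x)-F\WE(x)\|\le L_2/\sqrt{M}$, followed for parts (a)--(b) by the standard strongly-monotone perturbation argument (you re-derive from the two variational inequalities exactly the sensitivity bound that the paper imports from the cited reference), and for part (c) by strong monotonicity of $F\WE$ along the aggregate, where your bookkeeping (operator gap $L_2/\sqrt{M}$ times the diameter bound $2R\sqrt{M}$) reproduces the paper's constant $2RL_2$ exactly. One small correction for (a)/(b): the two inner products you invoke, $F\NE(\VNE{x})^\top(\VWE{x}-\VNE{x})$ and $F\WE(\VWE{x})^\top(\VNE{x}-\VWE{x})$, are non-negative by the respective variational inequalities (not non-positive as you label them), and it is precisely their non-negativity that licenses adding them to the right-hand side; with the signs read correctly, your displayed inequality and the remainder of the argument stand unchanged.
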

We point out that ~\eqref{eq:convergence_strategies_2} and~\eqref{eq:convergence_strategies} can be used to derive a bound on the average strategies similar to~\eqref{eq:convergence_sigma}.
\subsubsection*{Related Works}
\Cref{prop:conv_cost} ensures that, under minimal assumptions, any Wardrop equilibrium is an $\varepsilon$-Nash equilibrium. Such result follows directly from the aggregative structure of the game, and from the Lipschitz continuity of the cost functions. A similar idea is used to prove analogous results in various previous contributions. For example, the case of  potential games is investigated in  \cite{altman2004equilibrium,altman2006survey},  routing games are considered in \cite{altman2011routing},  flow control and routing in communication networks are discussed in \cite{altman2002nash}, while a similar argument is used in \cite{grammatico:parise:colombino:lygeros:14} for the case of average aggregative games with no coupling constraints. \Cref{prop:conv_cost} is a direct extension of these works to  generic aggregative games with coupling constraints.

\cref{thm:conv_strategies} show that it is possible to derive bounds on the Euclidean distance between Nash and Wardrop equilibria at the price of introducing further assumptions.  
More precisely, strong monotonicity of either the Nash or Wardrop operator ensures that the actual strategies $\VNE{x}$ and $\VWE{x}$ converge to each other as $\N$ grows large. A weaker requirement, i.e., the strong monotonicity of $p$ ensures instead convergence in the aggregate. To the best of our knowledge, the only result bounding the Euclidean distance between the two equilibria is obtained in \cite{haurie1985relationship}. Therein a similar bound to~\eqref{eq:convergence_sigma} is derived limitedly to routing/congestion games. However, \cite{haurie1985relationship} requires the population to increase by means of identical replicas of the agents. We here prove that a similar argument can be used to address the case of generic new agents. In addition, the first two results of \cref{thm:conv_strategies} address a more general class of aggregative games (i.e., not necessarily congestion games) by employing a new  type of argument, based on a sensitivity analysis result for variational inequalities with perturbed  strongly  monotone operators \cite[Thm. 1.14]{nagurney2013network}. We note that the works~\cite{Dafermos87,altman2004equilibrium,altman2006survey} guarantee convergence of Nash to Wardrop in terms of Euclidean distance, but do not provide a bound on the convergence rate.

Finally, we observe that our results are derived in relation to variational equilibria. Nevertheless, if there is no coupling constraint as in all the above-mentioned works, then any equilibrium is a variational equilibrium. Hence our results subsume the previous. 
\section{Equilibrium efficiency: the price of anarchy}
\label{sec:eqilibriumefficiency}

In this section we study the efficiency of Nash and Wardrop equilibria by means of the concept of price of anarchy. The notion of equilibrium efficiency was first formalized in \cite{Koutsoupias} and is used to describe the performance degradation incurred when moving from a centralized solution to distributed and strategic decision making. The motivations that lead us to the study of the price of anarchy are essentially two. The first is analytical: given an optimization problem and the corresponding competitive counterpart, we wish to know how inefficient an equilibrium might be. The second stems from the possibility to engineer the behaviour of a large population of strategic thinkers. For example, in the application considered in \cref{sec:PEVs}, the system operator has the freedom to select the price function $p$. In these cases we wish to understand how to modify the game so as to make it as efficient as possible. 

Similarly to previous section, we consider a sequence of games $\GNS$, where each game $\GN$ is defined as in~\eqref{eq:GNEP} with arbitrary constraint sets $\{\mc X\i\}_{i=1}^\N$, and cost functions of the following form
\begin{equation} J\i(x\i,\sigma(x)) \defeq p(\sigma(x)+d)^\top x\i,\qquad d\in\R^n.
\label{eq:costs_specific_POA}
\end{equation}
In order to simplify the exposition, throughout this section we consider the case when no coupling constraint is present, i.e., $\mc{C}=\R^{\N n}$.\footnote{Most of the results hold with minor adaptations in the presence of coupling constraints too.}
We observe that the cost functions in \eqref{eq:costs_specific_POA} have a similar structure to those in \eqref{eq:costs_specific}.
More precisely, it is possible to reduce \eqref{eq:costs_specific_POA} to \eqref{eq:costs_specific}, upon setting $v\i(x\i)=0$ in the latter equation and introducing an additional player whose constraint set is given by $\{x\in\R^n\mid x=d\cdot \N\}$. 
Since we are interested in the case of large population, we do not purse this approach because the unboundedness of this set (as $\N\rightarrow\infty$) will complicate the analysis. 
The costs in~\eqref{eq:costs_specific_POA} can be used to describe applications where $x^i$ denotes the usage level of a certain commodity, whose per-unit cost $p$ depends on the average usage level plus some inflexible normalized usage level $d$~\cite{ma2013decentralized,chen2014autonomous}.
As the notion of equilibrium efficiency relates the behaviour of an equilibrium allocation with that of a socially optimal one, we begin with the following definition.

\begin{definition}[Social optimizer]
\label{def:socopt}
A set of actions $x\SO = [x^1\SO; \dots; x^\N\SO] \in \R^{\N n}$ is a social optimizer of $\GN$  if $x\SO\in\mc{X}$ and it minimizes the cost 
$
J\SO(\sigma(x))\coloneqq p(\sigma(x)+d)^\top(\sigma(x)+d).
$
\end{definition}
\noindent Note that the cost $J\SO$ is the sum of all the players costs, divided by $\N$, and the additional term $p(\sigma(x)+d)^\top d$. The reason why the latter term is included is that we want to compute the  total cost of buying the commodity for both the flexible ($\sigma(x)$) and inflexible ($d$) users.
 This cost was first introduced in \cite{ma2013decentralized} and successively used in \cite{Gonz2015, deori2016nash, deconvergence}.
 For a given a game $\mc{G}_\N$, we quantify the efficiency of equilibrium allocations using the notion of price of anarchy~\cite{Koutsoupias} 
\begin{equation}\label{eq:poa}
\poa_\N \coloneqq \frac{\max_{x_N\in \textup{NE}_\N}J\SO(\sigma(x_N)) }{J\SO(\sigma(x\SO))}\,,
\end{equation}
 where $\textup{NE}_\N\subseteq \mc{X}$ is the set of Nash equilibria of $\mc{G}_\N$ and $x\SO$ is a social optimizer of $\mc{G}_\N$. The price of anarchy captures the ratio between the cost at the worst Nash equilibrium and the optimal cost; by definition $\poa_\N\ge1$. 
In the following we study the behavior of $\poa_\N$, for three different classes of admissible price functions $p$.

\subsection{Linear price function}
Throughout this subsection we consider cost functions of the form \eqref{eq:costs_specific_POA}, where the price functions $p$ is linear as detailed in \cref{ass:lin}. Linear price functions have been used in \cite{Gonz2015,deori2016nash} to model, e.g., the competitive charging of electric vehicles.
\begin{assumption}
\label{ass:lin}
The cost functions are as in \eqref{eq:costs_specific_POA}, where the price function $p$ takes the form $p(z+d)=C(z+d)$, with $C=C^\top\in\R^{n\times n}$, $C\succ0$.
\end{assumption}
Under \cref{ass:lin}, let $L_s$, $L_p$ be the Lipschitz constants of $J\SO$, $p$, and $\alpha$ the monotonicity constant of $p$.\footnote{The function $p(z+d)=C(z+d)$ is strongly monotone since $C\succ0$ with monotonicity constant given by the smallest eigenvalue of $C$.}
The following theorem shows that, under minimal assumptions, any Wardrop equilibrium is also socially optimum irrespective of the population size $\N$. This is no longer the case for Nash equilibria, which nevertheless recover this property when the population size grows.

\begin{theorem}[$\poa_\N$ bound and convergence to 1]
\label{thm:lin}
Let \cref{ass:lin} hold.
\begin{enumerate}
	\item{
	Let each of the constraint set $\{\mathcal{X}\i\}_{i=1}^\N$ be closed, convex, non empty.
	Then, for any fixed game $\mc{G}_\N$ in the sequence $\GNS$, every Wardrop equilibrium $x\WE$ is a social optimizer, i.e., $J\SO(\sigma(x\WE))\le J\SO(\sigma(x)),~\forall x\in \mc{X}$.
	} 
	\item{Assume, in addition, that there exists a convex, compact set $\mathcal{X}\zero\subset\R^n$ such that $\cup_{i=1}^\N \mathcal{X}^i\subseteq{\mathcal{X}\zero}$ for each $\GN$ in $\GNS$.
	Define the constant $c=RL\SO\sqrt{2L_p\alpha^{-1}}$, where $ \Dx = \max_{y \in{\mathcal{X}\zero} } \{\|y \|\}$. Then,
	\begin{equation}
	\textstyle
	J\SO(\sigma(x\SO))\le J\SO(\sigma(x\NE))\le J\SO(\sigma(x\SO))+c/{\sqrt{M}}\,,
	\label{eq:boundjlin}
	\end{equation}
	for any fixed game $\mc{G}_\N$ in the sequence.
	{Thus, if there exists $\hat J\ge 0$ s.t. $J\SO(\sigma(x\SO))>\hat J$ for every game in the sequence $(\mc{G}_\N)_{\N=1}^\infty$, one has}
	{\[
	1\le \poa_\N\le 1+c/\bigl(\hat J\sqrt{\N}\bigl)~~~\text{and}~~
	\lim_{\N\to\infty}\poa_\N=1\,.\]}}
\end{enumerate}
\end{theorem}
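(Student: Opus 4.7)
The plan splits along the two parts of the statement. For Part~1, I would align the first-order optimality condition of the convex program $\min_{x\in\mc{X}} J\SO(\sigma(x))$ with the variational characterization of a Wardrop equilibrium. Under \cref{ass:lin}, the Wardrop operator from \eqref{eq:F_W} reduces to $F\WE(x) = [C(\sigma(x)+d)]_{i=1}^\N$, while $J\SO(\sigma(x)) = (\sigma(x)+d)^\top C(\sigma(x)+d)$ is convex in $x$ (since $C\succ 0$) with gradient w.r.t.\ $x\i$ equal to $\tfrac{2}{\N}C(\sigma(x)+d)$. Hence the Wardrop inequality and the first-order condition for the social program at $x\WE$ coincide up to a positive scalar, and by convexity this condition is also sufficient, so $\sigma(x\WE)$ minimizes $J\SO(\sigma(\cdot))$ over $\mc{X}$. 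Since $\mc{C}=\R^{\N n}$ here, every Wardrop equilibrium is a variational Wardrop equilibrium (as observed right after \cref{prop:vi_ref}), so the conclusion holds for every $x\WE$.

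For Part~2, the lower bound $J\SO(\sigma(x\SO))\le J\SO(\sigma(x\NE))$ is immediate from the definition of social optimizer. For the upper bound I would invoke Part~3 of \cref{thm:conv_strategies} after rewriting the cost \eqref{eq:costs_specific_POA} as $\tilde p(\sigma(x))^\top x\i$ with $\tilde p(\sigma)\defeq C\sigma+Cd$, which is affine and strongly monotone with constant $\alpha$ (the smallest eigenvalue of $C$). Using the Lipschitz refinement $L_2\le R L_p$ from \eqref{eq:Lipschitz_implies_Lipschitz}, that theorem yields
\begin{equation*}
\|\sigma(x\NE)-\sigma(x\WE)\| \;\le\; R\sqrt{\frac{2L_p}{\alpha\,\N}}.
\end{equation*}
Applying $L\SO$-Lipschitz continuity of $J\SO$, combined with Part~1 which gives $J\SO(\sigma(x\WE))=J\SO(\sigma(x\SO))$, delivers \eqref{eq:boundjlin}. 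Dividing by $J\SO(\sigma(x\SO))>\hat J$ yields the bound on $\poa_\N$, and letting $\N\to\infty$ gives $\lim_{\N\to\infty}\poa_\N=1$.

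\emph{Main obstacle.} The key subtlety is the constant shift $d$ in the cost \eqref{eq:costs_specific_POA}: \cref{thm:conv_strategies} is stated for a price depending on $\sigma(x)$ alone, with $v\i=0$. Absorbing $d$ into the affine price $\tilde p$ preserves both the strong monotonicity constant $\alpha$ and the Lipschitz constant $L_p$, so the transfer is routine but must be written out explicitly to justify invoking Part~3 of \cref{thm:conv_strategies}. A secondary check is that, in the coupling-free regime considered here, every Nash (and every Wardrop) equilibrium is variational, which ensures the distance bound applies to \emph{every} Nash equilibrium appearing in the supremum defining $\poa_\N$, not merely to variational ones.
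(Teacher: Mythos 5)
Your proposal is correct and follows essentially the same route as the paper: Part~1 rests on the observation that under \cref{ass:lin} the gradient of the social cost is a positive multiple of the Wardrop operator (the paper packages this through \cref{lemma:averageVI} on the aggregate set $\s$ with $F\SO=2F\WE$, while you apply the minimum principle directly in $\mc{X}$ — the same proportionality argument), and Part~2 invokes Part~3 of \cref{thm:conv_strategies} with $L_2$ replaced by $\Dx L_p$, Lipschitzness of $J\SO$, and Part~1, exactly as in the paper. Your two flagged subtleties — absorbing the shift $d$ into an affine, strongly monotone price, and using the absence of coupling constraints to ensure every Nash/Wardrop equilibrium is variational — are precisely the points the paper handles implicitly, so nothing is missing.
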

\begin{remark}
The previous theorem extends the results of \cite{ma2013decentralized,Gonz2015,deori2016nash,deconvergence}  simultaneously allowing for arbitrary convex constraints, finite populations, and non diagonal price function. Note that the condition $J\SO(\sigma(x\SO))>\hat J\ge0$ is merely technical and required to properly define $\poa_\N$. This condition is trivially satisfied in the most of the applications considered, see, e.g., \cref{sec:PEVs}. Even if the latter condition does not hold, the cost at any Nash equilibrium converges to the minimum cost as $\N\to\infty$, see~\eqref{eq:boundjlin}.
\end{remark}

\subsection{Diagonal price function}
In the following we study the efficiency of Nash and Wardrop equilibria when the cost functions take the form \eqref{eq:costs_specific_POA} and the price function $p(z+d)$ has diagonal structure, i.e., the $t$-th component of $p$ depends only on the corresponding component of the average. We distinguish two cases depending on wether $p_t$ has the same structure for different values of $t$, or not. Towards this goal, we first introduce two useful assumptions.
\begin{assumption}
\label{A1bis}
For $i\in\{1,\dots,\N\}$, the constraint set $\mathcal{X}\i$ is closed, convex, non empty. For $z\in \frac{1}{\N} \sum_{i=1}^\N \mc{X}^i $, the function $z\mapsto p(z+d)$ is continuously differentiable and strongly monotone while $z\mapsto p(z+d)^\top(z+d)$ is strongly convex. Let $L\SO$, $L_p$ be the Lipschitz constant of $J\SO$, $p$, and $\alpha$ be the monotonicity constant of $p$.
\end{assumption}
 
\begin{assumption}
\label{ass:sequence}
There exists a convex, compact set $\mathcal{X}_0\subset\R^n$ s.t. $\cup_{i=1}^\N \mathcal{X}^i\subseteq{\mathcal{X}_0}$ for each game $\mc{G}_\N$ in $(\mc{G}_\N)_{\N=1}^\infty$. Moreover, $J^i(x^i,\sigma(x))$ is convex in $x^i\in\mathcal{X}^i$ for all fixed $x^{-i}\in\mathcal{X}^{-i}$, for all $i\in\{1,\dots,\N\}$. We let $R= \max_{y\in\mc{X}_0}||y||$. 
\end{assumption}
\subsubsection*{Homogeneous price function}
In this section we consider $p(z+d)$ to be a nonlinear function, and assume its $t$-th component to depend only on the $t$-th component $z_t+d_t$, for all $t\in\{1,\dots,n\}$. Additionally, we assume that the functions $p_t$ have the same structure for all the values of $t$. This describes, for example, electricity markets where the unit cost of electricity at every instant of time is captured by a time invariant function depending on the total consumption at that same instant.
\begin{assumption}
\label{ass:nonlin}
The price function $p$ takes the form 
\[
p(z+d)=
\begin{bmatrix}
f(z_1+d_1),\hdots,
f(z_n+d_n)
\end{bmatrix}^\top,
\]
with $f(y):\R_{>0}\rightarrow\R_{>0}$.
Further $\mc{X}\i\subseteq \R^n_{\ge0}$ and $d\in\R^n_{>0}$\,.
\end{assumption}
If $f(y)$ is not linear, a simple check shows that, in general, $\nabla_{x^j}(\nabla_{x^i} J^i(x^i,\sigma(x)))\neq\nabla_{x^i}(\nabla_{x^j} J^j(x^j,\sigma(x)))$ when $i\neq j$. Consequently,  the game is not potential, \cite[Theorem 1.3.1]{facchinei2007finite}. Hence methods to bound the $\poa$ based on the existence of an underlying potential function \cite{Gonz2015, deori2016nash}, can not be used here. The following theorem provides a necessary and sufficient condition on the structure of $f$ that ensures the efficiency of the resulting equilibria.
\begin{theorem}[$\poa_\N$ convergence and counterexample]
\label{thm:polypoa}
Suppose that \cref{A1bis,,ass:sequence,,ass:nonlin} hold. Further assume that $J\SO(\sigma(x\SO))$ $>$$\hat J$ for some $\hat J \ge 0$, for every game in $(\mc{G}_\N)_{\N=1}^\infty$. 
\begin{enumerate}
\item If $f(y)=\alpha y^k$ with $\alpha>0$ and $k>0$, it holds 
\[
	1\le \poa_\N\le 1+c/\bigl(\hat J\sqrt{\N}\bigl)~~~\text{and}~~
	\lim_{\N\to\infty}\poa_\N=1\,,
	\]
	with $c = RL\SO\sqrt{2 L_p \alpha^{-1}}$ constant.
\item For $n\ge 2$, if $f(y)$ satisfies the assumptions, but does not take the form $\alpha y^k$ for some $\alpha>0$ and $k>0$, it is possible to construct a sequence of games $(\mc{G}_\N)_{\N=1}^\infty$ for which $\lim_{\N\to\infty}\poa_\N>1$.
\end{enumerate}
\end{theorem}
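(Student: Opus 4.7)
For statement (a), my plan is to first show that, when $f(y) = \alpha y^k$, every variational Wardrop equilibrium is already a social optimizer, and then use the sensitivity bound of \cref{thm:conv_strategies} to transfer efficiency to the Nash side. The key algebraic fact is that $h(y) := (yf(y))' = (k+1)\alpha y^k = (k+1) f(y)$ under the power law, so that the componentwise formulas $F\WE(x) = [[f(\sigma_t(x)+d_t)]_{t=1}^n]_{i=1}^M$ and $\nabla_x J\SO(x) = \tfrac{1}{M}[[h(\sigma_t(x)+d_t)]_{t=1}^n]_{i=1}^M$ yield the pointwise identity $\nabla_x J\SO(x) = \tfrac{k+1}{M}F\WE(x)$. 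Consequently $\textup{VI}(\mc{X}, F\WE)$ and $\textup{VI}(\mc{X}, \nabla_x J\SO)$ have the same solution set; convexity of $J\SO$ (from \cref{A1bis}) and the minimum principle (\cref{prop:minimumprinc}) then imply $J\SO(\sigma(\VWE{x})) = J\SO(\sigma(x\SO))$.

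The Nash bound in (a) then follows by combining this equality with the sensitivity estimate. By the third statement of \cref{thm:conv_strategies}, $\|\sigma(\VNE{x}) - \sigma(\VWE{x})\| \le \sqrt{2 R L_2/(\alpha M)}$; plugging in $L_2 \le R L_p$ (cf.~\eqref{eq:Lipschitz_implies_Lipschitz}) and applying Lipschitz continuity of $J\SO$ yields $J\SO(\sigma(\VNE{x})) - J\SO(\sigma(x\SO)) \le L\SO R\sqrt{2 L_p/(\alpha M)} = c/\sqrt{M}$. Dividing by $J\SO(\sigma(x\SO)) \ge \hat J$ gives the claimed $\poa_M \le 1 + c/(\hat J \sqrt{M})$, with the lower bound and convergence immediate.

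For statement (b), my plan rests on the observation that $h(y)/f(y) = 1 + y f'(y)/f(y)$ is constant in $y$ if and only if $f(y) = \alpha y^k$. Choose $n = 2$, some convex $f$ satisfying the assumptions but not of power form (for concreteness $f(y) = e^y$), and, for each $M$, take a game of $M$ identical agents with shared constraint $\mc{X}^i = \{x \in \R^2_{\ge 0}: a_1 x_1 + a_2 x_2 = B\}$, selecting $a_1 \ne a_2$ and $B, d > 0$ so that both the Wardrop and the social-optimum aggregates lie in the interior of the reachable set. The associated Lagrangian KKT systems then read $f(\bar\sigma_1+d_1)/a_1 = f(\bar\sigma_2+d_2)/a_2$ for Wardrop and $h(\bar\sigma_1+d_1)/a_1 = h(\bar\sigma_2+d_2)/a_2$ for the social optimum; non-constancy of $h/f$ yields $\sigma(\VWE{x}) \ne \sigma(x\SO)$ and hence $J\SO(\sigma(\VWE{x})) > J\SO(\sigma(x\SO))$, with both quantities independent of $M$. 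By \cref{prop:conv_cost} together with the third statement of \cref{thm:conv_strategies}, $\sigma(\VNE{x}) \to \sigma(\VWE{x})$, so continuity of $J\SO$ gives $\lim_{M\to\infty}\poa_M \ge J\SO(\sigma(\VWE{x}))/J\SO(\sigma(x\SO)) > 1$.

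The hard part will be the construction in (b): for a completely arbitrary $f$ not of power form, one must select $(a_1, a_2, d, B)$ so that the two KKT systems above each admit a unique interior solution, those solutions are genuinely distinct, and all standing convexity assumptions (including convexity of $J^i$ in $x^i$) hold. Distinctness is the substantive point; I would establish it by starting at the symmetric case $a_1 = a_2$ (where both systems coincide on $\sigma_1 + d_1 = \sigma_2 + d_2$) and invoking the implicit function theorem: a small perturbation of $a_1/a_2$ away from $1$ produces two smooth branches whose leading-order derivatives in the perturbation parameter differ precisely because $h/f$ is non-constant, forcing the aggregates apart while preserving interiority.
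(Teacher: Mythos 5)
Part (a) of your plan is correct and is essentially the paper's own argument: the paper proves through \cref{lemma:averageVI} that the aggregate-space operators satisfy $F\SO(z)=(k+1)F\WE(z)$ under the power law, so the (unique) Wardrop aggregate also solves the social-optimum variational inequality, and then transfers the bound to Nash equilibria exactly as in the second part of \cref{thm:lin} via \cref{thm:conv_strategies}(3); your pointwise identity $\nabla_x J\SO(x)=\tfrac{k+1}{M}F\WE(x)$ together with \cref{prop:minimumprinc} is the same idea carried out in the full strategy space rather than in the aggregate space, and the constant $c$ you obtain matches.

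Part (b) is where you genuinely depart from the paper, and where the gap is. The paper's route is: \cref{lemma:notaligned} produces a point $\bar z\in\R^n_{>0}$ at which $F\WE(\bar z)$ and $F\SO(\bar z)$ are not parallel (this is exactly your ``$h/f$ constant iff power law'' observation, proved by the same functional equation $yf'(y)=(b-1)f(y)$), and then the constraint set is taken to be a small parallelogram $\bar{\mc{X}}$ spanned by $\bar F\WE$ and the component of $\bar F\SO$ orthogonal to it, so that $\bar z$ solves $\textup{VI}(\Sigma,F\WE)$ but provably not $\textup{VI}(\Sigma,F\SO)$ --- no interior KKT points, no uniqueness of branches, and only the $C^1$ regularity that \cref{A1bis} actually grants. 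Your budget-line construction can be made to work (and your choice $a_1\ne a_2$ is essential, consistent with the remark that unweighted simplexes give $\poa_\N\to1$), but not by the perturbation argument you sketch: (i) applying the implicit function theorem to the optimality system $h(y_1)/a_1=h(y_2)/a_2$ requires $h=(yf(y))'$ to be continuously differentiable, i.e.\ $f\in C^2$, which \cref{A1bis} does not give; (ii) even granting smoothness, you need $(h/f)'\neq 0$ at the particular symmetric value $\bar y$, and global non-constancy of $h/f$ does not supply such a point unless you also argue that $\bar y$ can be relocated by tuning $B$ and $d$. A direct repair within your own gadget avoids both issues: since $h/f$ is non-constant, pick $y_a\neq y_b$ with $(h/f)(y_a)\neq(h/f)(y_b)$, set $a_1/a_2=f(y_a)/f(y_b)$, choose $0<d_t<y_t$ and $B=a_1(y_a-d_1)+a_2(y_b-d_2)$; then $\sigma=(y_a-d_1,\,y_b-d_2)$ lies in the relative interior of the segment, satisfies the Wardrop proportionality (hence is the unique Wardrop aggregate by strong monotonicity of $p$), yet violates stationarity of $J\SO$ along the segment, so $J\SO(\sigma(x\WE))>J\SO(\sigma(x\SO))$ with a gap independent of $\N$, and the Nash conclusion follows from \cref{thm:conv_strategies}(3) as you say (the appeal to \cref{prop:conv_cost} is unnecessary). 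Finally, the theorem is stated for an arbitrary fixed dimension $n\ge2$, so your two-dimensional construction must be padded, e.g.\ by forcing $x^i_t=0$ for $t\ge3$, which only adds the common constant $\sum_{t\ge3}f(d_t)d_t$ to both costs and preserves $\lim_{\N\to\infty}\poa_\N>1$.
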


The counterexample relative to the second claim is constructed using $\mc{X}\i=\bar{\mc{X}}$. In other words our  impossibility result holds also for the case of 
homogeneous  populations. This is not in contrast with the result in \cite{ma2013decentralized} or \cite{deconvergence}, because therein the sets $\bar{\mc{X}}$ were assumed to be simplexes with upper bounds constraints. Here we claim that there exists a convex set $\bar{\mc{X}}$ (not a simplex with upper bounds) such that $\poa_\N$ does not converge to $1$.
\begin{remark}
	The previous theorem is of fundamental importance in applications where the system operator has the possibility to freely set the price function. In these cases,  \cref{thm:polypoa} suggests the use of monomial price functions to guarantee the highest achievable efficiency (all Nash equilibria become social optimizers for large $\N$). If different price functions are chosen, it is always possible to construct a problem instance such that the worst Nash equilibrium is \emph{not} a social optimizer.
\end{remark}

\subsubsection*{Heterogeneous price function}
In the previous subsection we showed that if the price function is not a monomial, then  $\poa_\N$ may not converge to one. In this section we derive upper bounds for $\poa_\N$ when the price function belongs to a general class of functions, as formalized next.
 \begin{assumption}
\label{ass:nonlin2}
The price function $p$ takes the form 
\[
p(z+d)=
\begin{bmatrix}
l_1(z_1+d_1),
\hdots,
l_n(z_n+d_n)
\end{bmatrix}^\top,
\]
where $l_t(y):\R_{\ge 0}\rightarrow\R_{\ge0}$, $ l_t\in \mathcal{L}$ for all $t$ and $\mathcal{L}$ is a given set of continuous and nondecreasing price functions.  Further let $\mc{X}\i\subseteq \R^n_{\ge0}$ be non empty, closed and convex.
\end{assumption}
Note that \cref{ass:nonlin2} is \emph{less restrictive} than \cref{ass:nonlin} as we let the price $l_t$ depend on the time instant $t$. The key idea in this case is to show that  standard results derived for Wardrop equilibria in routing games \cite{roughgarden2003price}, \cite{correa2004selfish} can be applied to the setup studied here. The resulting bounds on $\poa_\N$ can then be derived using the convergence result in \cref{thm:conv_strategies}.
Formally, given a game $\mc{G}_{\N}$ with cost functions as in \eqref{eq:costs_specific_POA}, we consider an equivalent nonatomic routing game over a parallel network with a number of links equal to $n$, the dimension of decision variables $x\i$. To present our next result, we first introduce the quantity
\[\beta(\mathcal{L}):=\sup_{l\in\mathcal{L}}  \sup_{v\ge 0} \left( \frac{1}{vl(v)}\max_{w\ge 0} [ (l(v)-l(w))w] \right).\]
defined in \cite[Eq 3.8]{correa2004selfish}. Therein, the authors show that $\beta(\mathcal{L})\le 1$ and   $[ 1- \beta(\mathcal{L}) ]^{-1}=\alpha(\mathcal{L})$. The quantity $\alpha(\mathcal{L})$ describes, essentially, the worst-case price of anarchy over all possible cost functions in the set $\mc{L}$, as detailed in the following theorem. The key is to show that the games considered here are $(1,\beta(\mathcal{L}))$-smooth, as defined in \cite[Def. 1.1]{roughgarden2009intrinsic}.

\begin{theorem}[$\poa_\N$ for heterogeneous price function]\label{thm:routing}
~
\begin{enumerate}
	\item 
Suppose that \cref{ass:nonlin2} holds. Then for any fixed game $\mathcal{G}_M$ and any Wardrop equilibrium $x_W$ it holds
\begin{equation}\label{eq:stepThm3}
J_S(\sigma(x_W))\le J_S(\sigma(x_S))\alpha(\mathcal{L})
\end{equation}
\item Further suppose \cref{A1bis,,ass:sequence} hold, and there exists $\hat J\ge 0$ s.t. $J\SO(\sigma(x\SO))>\hat J$ for every game in $(\mc{G}_\N)_{\N=1}^\infty$. 
  Then, for any game $\mathcal{G}_M$ in the sequence
\[
J_S(\sigma(x_S))\le J_S(\sigma(x_N))\le J_S(\sigma(x_S))\alpha(\mathcal{L})+c/\sqrt{M},
\] 
and $1\le\poa_M\le \alpha(\mathcal{L})+c/\bigl(\hat J\sqrt{\N}\bigl),$ thus implying 
$\lim_{M\rightarrow \infty} \poa_M\le \alpha(\mathcal{L}),$
with $c = RL_s\sqrt{2L_p\alpha^{-1}}$.
\end{enumerate}
\end{theorem}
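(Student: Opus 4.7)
The approach is to prove part (a) directly by combining the variational characterization of the Wardrop equilibrium with the defining inequality of $\beta(\mathcal{L})$ applied componentwise, mirroring the classical argument of Correa-Schulz-Stier-Moses for nonatomic routing games. Part (b) then follows by transferring the bound from the Wardrop to the Nash equilibrium via the convergence estimate of \cref{thm:conv_strategies}, together with Lipschitz continuity of $J_S$.

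For part (a), I would first invoke \cref{prop:vi_ref}: since $J^i(x^i,\sigma(x)) = p(\sigma(x)+d)^\top x^i$ and no coupling constraint is present, the Wardrop operator reduces to $F_W(x) = [p(\sigma(x)+d)]_{i=1}^M$, so the variational inequality evaluated at the social optimizer $x_S$ gives
\[
p(\sigma(x_W)+d)^\top \sigma(x_W) \;\le\; p(\sigma(x_W)+d)^\top \sigma(x_S),
\]
after dividing by $M$. Adding the constant term $p(\sigma(x_W)+d)^\top d$ to both sides produces
\[
J_S(\sigma(x_W)) \;\le\; p(\sigma(x_W)+d)^\top (\sigma(x_S)+d).
\]
Next, exploiting the diagonal structure of $p$ under \cref{ass:nonlin2}, I would apply the defining inequality $l(v)w \le l(w)w + \beta(\mathcal{L})\,v\,l(v)$ with $v = \sigma_t(x_W)+d_t$ and $w = \sigma_t(x_S)+d_t$ and sum over $t$, obtaining
\[
p(\sigma(x_W)+d)^\top (\sigma(x_S)+d) \;\le\; J_S(\sigma(x_S)) + \beta(\mathcal{L})\, J_S(\sigma(x_W)).
\]
Chaining these two inequalities and recalling $\alpha(\mathcal{L}) = [1-\beta(\mathcal{L})]^{-1}$ yields \eqref{eq:stepThm3}. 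This is precisely the verification that the game is $(1,\beta(\mathcal{L}))$-smooth in the sense of \cite[Def. 1.1]{roughgarden2009intrinsic}.

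For part (b), I would invoke \cref{thm:conv_strategies}(3): under \cref{A1bis} the price $p$ is strongly monotone with constant $\alpha$ and the cost fits the form \eqref{eq:costs_specific} with $v^i = 0$, so
\[
\| \sigma(x_N) - \sigma(x_W) \| \;\le\; R\sqrt{2L_p/(\alpha M)},
\]
after using $L_2 = R L_p$ from \eqref{eq:Lipschitz_implies_Lipschitz}. Lipschitz continuity of $J_S$ with constant $L_s$ then gives $|J_S(\sigma(x_N)) - J_S(\sigma(x_W))| \le c/\sqrt{M}$ with $c = R L_s \sqrt{2L_p/\alpha}$. Combining this estimate with part (a) and the optimality $J_S(\sigma(x_S)) \le J_S(\sigma(x_N))$ produces the two-sided inequality; dividing by $J_S(\sigma(x_S)) > \hat{J}$ yields the $\poa_M$ bound, and the asymptotic limit follows upon letting $M \to \infty$.

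The main obstacle is the first step of part (a): recognizing that the Wardrop variational inequality, together with the additive correction $d$ absorbed into the aggregate variable, reproduces exactly the inequality exploited in the classical routing-game smoothness framework, and that \cref{ass:nonlin2} supplies precisely the hypotheses needed to invoke the $\beta(\mathcal{L})$ bound componentwise. Once this translation is made, part (a) reduces to a two-line calculation and part (b) is essentially a mechanical application of the previously established convergence result.
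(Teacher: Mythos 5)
Your proposal is correct and follows essentially the same route as the paper: part (a) is the same variational-inequality-plus-$\beta(\mathcal{L})$ rearrangement (the paper passes through the aggregated VI of \cref{lemma:averageVI}, while you derive the identical inequality $J_S(\sigma(x_W))\le p(\sigma(x_W)+d)^\top(\sigma(x_S)+d)$ by summing the full VI and dividing by $M$, a purely cosmetic difference), and part (b) is the same transfer via \cref{thm:conv_strategies}(3) and the Lipschitz constant of $J_S$, exactly as the paper does by mimicking the proof of \cref{thm:lin}.
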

\begin{remark}
In \cite[Table 1]{roughgarden2003price},  $\alpha(\mathcal{L})$ is computed for classes of functions such as affine, quadratic, polynomials. 
If $\mathcal{L}$ contains constant functions, then  \eqref{eq:stepThm3} is tight (see \cite{roughgarden2003price} and the application discussed in \cref{sec:PEVs}).
This is not a contradiction of \cref{thm:lin,,thm:polypoa} because therein either constant functions are not allowed or the price function $p_t$ is assumed to be independent of $t$. \Cref{thm:lin,,thm:polypoa} can be seen as refinements of \cref{thm:routing} and guarantee that $\lim_{M\rightarrow \infty} \poa_\N=1$ by restricting the admissible class of price functions.
\end{remark}

\section{Appendix}
\label{sec:proofsp1-part2}
\subsection{Proofs of the results presented in \cref{sec:Wardrop_Nash}}
\subsubsection*{Proof of \cref{prop:conv_cost}}
\begin{proof}
Consider any Wardrop equilibrium $x\WE$ of $\GN$ (not necessarily a variational one). By~\cref{def:WE}, $x\WE\in\mc{Q}$ and for each agent $i$ 
\[J\i(x\WE^i, \sigma(x\WE)) \le J\i(x^i, \sigma(x\WE)), \qquad \forall x^i\in \mc{Q}\i(x\WE\mi).\] 
It follows that for each agent $i$ and for all  $x^i\in \mc{Q}\i(x\WE\mi)$
\begin{align}
& J\i(x\WE^i, \sigma(x\WE)) - J\i\left(x^i, \frac{1}{\N}\left(x\i + \sum_{j\neq i }x\WE^j\right)\right) \\
&= \underbrace{J\i(x\WE^i, \sigma(x\WE)) - J\i(x^i, \sigma(x\WE))}_{\le 0}+ J\i(x^i, \sigma(x\WE)) - J\i\left(x^i, \frac{1}{\N}\left(x\i + \sum_{j\neq i }x\WE^j\right)\right) \\
& \le  L_2 \left\| \sigma(x\WE) - \left(\frac{1}{\N}\left(x\i + \sum_{j\neq i }x\WE^j\right)\right) \right\| = \frac{L_2}{\N} \left\|\left(x\i\WE + \sum_{j\neq i }x\WE^j\right) - \left(x\i + \sum_{j\neq i }x\WE^j\right) \right\| \\
&=\frac{L_2}{\N} \| x\i\WE - x\i  \| \le \frac{2\Dx L_2}{\N}.
\end{align}
Hence $x\WE$ is an $\varepsilon$-Nash equilibrium of $\GN$ with $\varepsilon = \frac{2\Dx L_2}{\N}$. 
\end{proof}
\subsubsection*{Proof of \cref{thm:conv_strategies}}
\begin{proof}
\begin{enumerate}
\item[]
\item 
We first bound the distance between the operators $F\NE$ and $F\WE$ in terms of $\N$. By~\eqref{eq:F} it holds
\[\begin{split}
\|F\NE(x)-F\WE(x)\|^2 &=\left\| [ \nabla_{x\i} J^i(x\i,\sigma(x)) ]_{i=1}^\N - [\nabla_{x\i} J^i(x\i,z)_{|{z=\sigma(x)}} ]_{i=1}^\N \right\|^2 \\
&= \sum_{i=1}^\N \left\| \frac{1}{\N} \nabla_{z} J\i(x\i,z)_{|{z=\sigma(x)}} \right\|^2 \le  \frac{1}{\N^2} \sum_{i=1}^\N L_2^2 = \frac{L_2^2}{\N},
\end{split}\]
where the inequality follows from the fact that $J^i(z_1,z_2)$ is Lipschitz in $z_2$ on ${\mc{X}\zero}$ with constant $L_2$ by~\cref{A2} and hence the term $\| \nabla_{z} J\i(x\i,z)_{|{z=\sigma(x)}} \|$ is bounded by $L_2$ by definition of derivative.
Taking the square root, it follows that
\begin{equation}
\|F\NE(x)-F\WE(x)\|\le\frac{L_2}{\sqrt{M}}.
\label{eq:distop}
\end{equation}
for all $x \in \mc{X}^0$.
We exploit~\eqref{eq:distop} to bound the distance between Nash and Wardrop strategies. Since $F\NE$ is strongly monotone on $\mc{Q}$ by assumption, $\textup{VI}(\mathcal{Q},F\NE)$ has a unique solution $\VNE{x}$ by~\cref{lem:exun}. Moreover, the distance between the solutions of two variational inequalities differing in the operator used can be bounded using~\cite{nagurney2013network}. Formally, for all solutions $\VWE{x} $ of $\textup{VI}(\mc{Q},F\WE)$ \cite[Thm. 1.14]{nagurney2013network} shows that
\begin{equation}
 \|\VNE{x}-\VWE{x}\| \le\frac{1}{\alpha\emm}\|F\NE(\VWE{x})-F\WE(\VWE{x})\|.
\end{equation}
Combining this with equation \eqref{eq:distop} yields the result.
\item As in the above, with Nash in place of Wardrop and vice versa.
\item Any solution $\VWE{x}$ to the $\textup{VI}(Q,F\WE)$ satisfies
\begin{equation}
\begin{aligned}
&  F\WE(\VWE{x})^\top(x-\VWE{x}) \ge 0, \; \forall x \in Q \Leftrightarrow \\
&\sum_{i=1}^\N p(\sigma(\VWE{x}))^\top(x\i-\VWE{x}^i) \ge 0, \; \forall x \in Q \Leftrightarrow \\
&p(\sigma(\VWE{x}))^\top(\sigma(x) - \sigma(\VWE{x})) \ge 0, \; \forall x \in Q.
\label{eq:VI_WE_sigma}
\end{aligned}
\end{equation}
Any solution $\VNE{x}$ to the $\textup{VI}(Q,F\NE)$ satisfies
\begin{equation}
\begin{aligned}
& F\NE(\VNE{x})^\top(x-\VNE{x}) \ge 0, \; \forall x \in Q \Leftrightarrow \\
& p(\sigma(\VNE{x}))^\top (\sigma(x) - \sigma(\VNE{x}))  +
\frac{1}{\N^2} \sum_{i=1}^\N (\nabla_z p(z)_{|z=\sigma(\VNE{x})} \VNE{x}\i)^\top (x\i - \bar x\i\NE) \ge 0, \; \forall x \in Q .
\label{eq:VI_VNE_sigma}
\end{aligned}
\end{equation}
Exploiting the strong monotonicity of $p$ on $\mc{X}^0$, one has \\
\[\begin{aligned}
 \alpha \| \sigma(\VWE{x}) - \sigma(\VNE{x}) \|^2  &\le ( p(\sigma(\VWE{x}))-p(\sigma(\VNE{x})) )^\top (\sigma(\VWE{x})-\sigma(\VNE{x}) ) \\[2mm]
&={ p(\sigma(\VWE{x}))^\top  (\sigma(\VWE{x})  -  \sigma(\VNE{x}) )}
 -  p(\sigma(\VNE{x}))^\top  (\sigma(\VWE{x})  -  \sigma(\VNE{x})) \\[2mm]
&\hspace*{-3.5mm}\underset{\text{by}~\eqref{eq:VI_WE_sigma}}{\le}
 -  p(\sigma(\VNE{x}))^\top (\sigma(\VWE{x})-\sigma(\VNE{x})) \\
 &\hspace*{-3.5mm}\underset{\text{by~\eqref{eq:VI_VNE_sigma}}}{\le}  \frac{1}{\N^2} \sum_{i=1}^\N (\VNE{x}\i)^\top (\nabla_z p(z)_{|z=\sigma(\VNE{x})})^\top(\VWE{x}^i - \bar x\i\NE) \\
 & = \frac{1}{\N^2} \sum_{i=1}^\N (\VNE{x}\i)^\top (\nabla_z J\i(\VWE{x}\i,z)_{|z=\sigma(\VNE{x})} - \nabla_z J\i(\VNE{x}\i,z)_{|z=\sigma(\VNE{x})})
\\
&  \le  \frac{1}{\N^2} \sum_{i=1}^\N \|\VNE{x}\i\| (\| \nabla_z J\i(\VWE{x}\i,z)_{|z=\sigma(\VNE{x})} \| + \| \nabla_z J\i(\VNE{x}\i,z)_{|z=\sigma(\VNE{x})} \|) \\
&{\le   \frac{2 L_2}{\N^2} \sum_{i=1}^\N \|\VNE{x}\i\| \le \frac{2L_2}{\N^2} \sum_{i=1}^\N \Dx \le \frac 1\N 2 \Dx L_2,}
\end{aligned}\]
where we have used the Chauchy-Schwartz inequality, the triangular inequality, and the Lipschitzianity of $J\i(z_1,z_2)$ in addition to \eqref{eq:VI_WE_sigma} and \eqref{eq:VI_VNE_sigma}.
\noindent We  conclude that $ \| \sigma(\VWE{x}) - \sigma(\VNE{x}) \| \le \sqrt{\frac{2\Dx L_2}{\alpha \N}}.$
\end{enumerate}
\end{proof}

\subsection{Proofs of the results presented in \cref{sec:eqilibriumefficiency}}
Before proving any of the claims in \cref{sec:eqilibriumefficiency}, we provide a lemma that will be useful in the forthcoming analysis. Throughout the following proofs, we denote with 
$\s\coloneqq \frac{1}{\N} \sum_{i=1}^\N \mc{X}^i$.
\begin{lemma}[Equivalent characterizations of $x\WE$, $x\SO$]
 \label{lemma:averageVI}
 Let the cost functions be given as in \eqref{eq:costs_specific_POA}, and each of the constraint set $\{\mathcal{X}\i\}_{i=1}^\N$ be closed, convex, non empty. Additionally, assume that the function $z\mapsto p(z+d)$ is continuously differentiable and strongly monotone while $z\mapsto p(z+d)^\top(z+d)$ is strongly convex, for all $z\in \s$. The following holds.
 \begin{enumerate} 
 \item
 Given $x\WE$ a Wardrop equilibrium, its average $\sigma(x\WE)$ solves $\textup{VI}(\s,F\WE)$, with $F\WE:\mathbb{R}^n\rightarrow\mathbb{R}^n$, 
$F\WE(z) \coloneqq p(z+d)$.
The  $\textup{VI}(\s,F\WE)$ admits a unique solution $\sigma\WE$. Let us define $\mc{X}\WE\coloneqq\{x\in\mc{X}~\text{s.t.}~\frac{1}{\N}\sum_{j=1}^\N x\j=\sigma\WE\}$. Then any vector of strategies $x\WE\in\mc{X}\WE$ is a Wardrop equilibrium.
\item
Given $x\SO$ a social optimizer, its average $\sigma(x\SO)$ solves $\textup{VI}(\s,F\SO)$, with $F\SO:\mathbb{R}^n\rightarrow\mathbb{R}^n$, 
$F\SO(z)\coloneqq p(z+d)+[\nabla_z p(z+d)](z+d).$ The $\textup{VI}(\s,F\SO)$ admits a unique solution $\sigma\SO$. Define $\mc{X}\SO\coloneqq\{x\in\mc{X}~\text{s.t.}~\frac{1}{\N}\sum_{j=1}^\N x\j=\sigma\SO\}$. Then any vector of strategies $x\SO\in\mc{X}\SO$ is a social optimizer.
\end{enumerate}
\end{lemma}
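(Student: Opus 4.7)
My plan is to handle the Wardrop and social optimum statements in parallel; both follow from the observation that under the affine-in-$x^i$ cost \eqref{eq:costs_specific_POA} the only quantity that matters is the aggregate $\sigma(x)$.

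First I would tackle part (1). Starting from a Wardrop equilibrium $x\WE$, Definition~\ref{def:WE} specialized to $J^i(x^i,\sigma(x))=p(\sigma(x)+d)^\top x^i$ gives, for each $i$, $p(\sigma(x\WE)+d)^\top x\WE^i \le p(\sigma(x\WE)+d)^\top x^i$ for all $x^i\in\mc{X}^i$. Summing these $\N$ inequalities and dividing by $\N$, and noting that as the $x^i$ range over $\mc{X}^i$ their average ranges over $\s$, yields $p(\sigma(x\WE)+d)^\top(\sigma-\sigma(x\WE))\ge 0$ for every $\sigma\in\s$. This is precisely $\textup{VI}(\s,F\WE)$ evaluated at $\sigma(x\WE)$. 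Strong monotonicity of $z\mapsto p(z+d)$ on $\s$, together with convexity and non-emptiness of $\s$, gives uniqueness of the VI solution via Proposition~\ref{prop:exun}; denote it $\sigma\WE$.

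The converse in part (1) is the main subtlety. Pick any $x'\in\mc{X}\WE$; I must show that each $(x')^i$ minimizes $p(\sigma\WE+d)^\top x^i$ over $\mc{X}^i$. Observe that $x\WE$ itself lies in $\mc{X}\WE$ by the forward direction, so for each $i$ we have $p(\sigma\WE+d)^\top x\WE^i \le p(\sigma\WE+d)^\top (x')^i$. Summing over $i$ gives $p(\sigma\WE+d)^\top\,\N\sigma(x\WE) \le p(\sigma\WE+d)^\top\,\N\sigma(x')$, but both sides equal $p(\sigma\WE+d)^\top\,\N\sigma\WE$ since $\sigma(x\WE)=\sigma(x')=\sigma\WE$. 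Therefore equality must hold termwise, i.e.\ $p(\sigma\WE+d)^\top (x')^i = p(\sigma\WE+d)^\top x\WE^i = \min_{x^i\in\mc{X}^i} p(\sigma\WE+d)^\top x^i$, so $x'$ is a Wardrop equilibrium. (Existence of some $x\WE$ can alternatively be argued by picking, for each $i$, any $\hat x^i$ achieving $\min_{x^i\in\mc{X}^i} p(\sigma\WE+d)^\top x^i$, which is attained because the sum of these $\N$ minima equals $\N p(\sigma\WE+d)^\top \sigma\WE<\infty$ by the VI property.)

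For part (2) the argument is more direct. The social cost $J\SO(\sigma(x))=p(\sigma(x)+d)^\top(\sigma(x)+d)$ depends on $x$ only through $\sigma(x)$, so minimizing $J\SO$ over $x\in\mc{X}$ is equivalent to minimizing $\phi(z)\defeq p(z+d)^\top(z+d)$ over $z\in\s$. The set $\s$ is closed, convex, non-empty (as a scaled Minkowski sum of the $\mc{X}^i$), and $\phi$ is strongly convex by hypothesis, so a unique minimizer $\sigma\SO$ exists. Applying Proposition~\ref{prop:minimumprinc} to $\phi$ on $\s$ gives that $\sigma\SO$ solves $\textup{VI}(\s,F\SO)$ with $F\SO=\nabla_z\phi = p(z+d)+[\nabla_z p(z+d)](z+d)$, and conversely any VI solution is the (unique) global minimizer of $\phi$. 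Hence $\sigma(x\SO)=\sigma\SO$ for every social optimizer. For the reverse direction, if $x\in\mc{X}\SO$ then $J\SO(\sigma(x))=\phi(\sigma\SO)=\min_{z\in\s}\phi(z)=\min_{y\in\mc{X}}J\SO(\sigma(y))$, so $x$ is a social optimizer. The main obstacle is the reverse direction of part~(1); the rest is essentially bookkeeping once one recognizes the reduction from the high-dimensional space $\mc{X}$ to the aggregate space $\s$.
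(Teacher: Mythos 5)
Your proof is correct and follows the paper's overall strategy — compress everything onto the aggregate space $\s$ — but carries out the key steps by different means. For part (1) the paper routes both directions through \cref{prop:vi_ref}: it writes the stacked VI$(\mc{X},F\WE)$ with operator $\ones[\N]\otimes p(\sigma(x\WE)+d)$, divides by $\N$ to obtain the aggregate VI, and proves the converse by lifting the aggregate inequality back to the stacked VI and invoking \cref{prop:vi_ref} once more; you instead work directly from \cref{def:WE} (linearity of $J^i$ in $x^i$) and handle the converse with a reference-equilibrium, termwise-equality argument. Your converse as written presupposes an existing Wardrop equilibrium, which matches the lemma's ``Given $x\WE$'' phrasing, and your parenthetical decomposition argument supplies the standalone version — though note that attainment of each per-player minimum follows from the termwise equality (any decomposition $\sigma\WE=\frac{1}{\N}\sum_i \hat x^i$ with $\hat x^i\in\mc{X}^i$ forces each $\hat x^i$ to achieve the infimum), not from mere finiteness of the sum as your wording suggests. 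For part (2) you apply \cref{prop:minimumprinc} to $\phi(z)=p(z+d)^\top(z+d)$ on $\s$, whereas the paper applies it on $\mc{X}$ and then compresses the gradient; the two are equivalent, and yours is arguably cleaner. One small caveat: \cref{prop:exun} as stated assumes compactness, while $\s$ need not be bounded; uniqueness needs only strong monotonicity, and for existence the paper cites the Facchinei--Pang theorem for strongly monotone operators on closed convex sets (both your proof and the paper's take closedness of $\s$ for granted).
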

\begin{proof}
~
\begin{enumerate}
	\item 
 The sets $\mc{X}\i$ are convex and closed by assumption; further, for fixed $z\in \s$, the functions $J\i(x\i,z)$ are linear and thus convex in $x\i\in\mc{X}\i$ for all $i\in\{1,\dots,\N\}$. It follows by \cref{prop:vi_ref} that a Wardrop equilibrium $x\WE$ satisfies\footnote{\Cref{prop:vi_ref} goes in both directions here as there is no coupling constraint, see the discussion in  \cref{sec:connection_VI}.}
  \begin{equation}
  [\ones[\N]\otimes \,p(\sigma(x\WE)+d)]^\top (x-x\WE)\ge0,~~\forall x\in\mc{X}.
  	\label{eq:bigvi}
  \end{equation}
Rearranging and dividing by $\N$  we get
$
p(\sigma(x\WE)+d)^\top(\frac{1}{\N}\sum_{j=1}^\N x\i-\frac{1}{\N}\sum_{j=1}^\N x\WE\i)\ge 0,	
$
for all $x\in\mathcal{X}$, or equivalently
$p(\sigma(x\WE)+d)^\top(z-\sigma(x\WE)  )\ge 0,~\forall z\in\s,$
  that is, $\sigma(x\WE)$ solves $\textup{VI}(\s,F\WE)$.
  
  \indent  By assumption $F\WE(z)=p(z+d)$ is strongly monotone and $\s$ is closed, convex (since the sets $\mathcal{X}^i$ are closed, convex), hence by \cite[Thm. 2.3.3]{facchinei2007finite}
   $\textup{VI}(\s,F\WE)$ has a unique solution $\sigma\WE$. By definition of variational inequality, for any $z\in \s$ it holds $p(\sigma\WE+d)^\top(z-\sigma\WE)\ge0$. By definition of $x\WE\in\mc{X}\WE$, we have $\sigma(x\WE)=\sigma\WE$. It follows that $p(\sigma(x\WE)+d)^\top(z-\sigma(x\WE))\ge0$ for any $z\in \s$.  By definition of $\s$, we conclude that \eqref{eq:bigvi} holds for all $x\in\mc{X}$. By \cref{prop:vi_ref}, we conclude that $x\WE$ is a Wardrop equilibrium.
\item
By assumption the set $\mc{X}$ is convex and closed and $J\SO(\sigma(x))$ is  convex. Hence, any social optimizer $x\SO$ satisfies the first order condition in \cref{prop:minimumprinc}
 \begin{equation}
  \label{eq:bigvi2}
\nabla_x[p(\sigma(x)+d)(\sigma(x)+d)]_{|x={x\SO}}^\top (x-x\SO)\ge0
  ~~\forall x\in\mc{X}\,.
  \end{equation}
 Note that $M \nabla_{x^i} (p(\sigma(x)+d)^\top(\sigma(x)+d)) =p(\sigma(x\SO)+d)+[\nabla_z p(\sigma(x\SO)+d)](\sigma(x\SO)+d)$ for all $i\in\{1,\ldots,M\}$. Consequently, \eqref{eq:bigvi2} is equivalent to
$
  [ p(\sigma(x\SO)+d)+\nabla_z p(\sigma(x\SO)+d)(\sigma(x\SO)+d)]^\top 
  (\sigma(x)-\sigma(x\SO))\ge0\,
$. Thus $\sigma(x\SO)$ solves $\textup{VI}(\s,F\SO)$. The remaining claims are shown similarly to those for $x\WE$.
\end{enumerate}
 \end{proof}

\subsubsection*{Proof of \cref{thm:lin}}
\begin{proof}
~
\begin{enumerate}
	\item 
Note that \cref{ass:lin} implies  strong monotonicity of $z\mapsto p(z+d)$, and strong convexity of $z\mapsto p(z+d)^\top(z+d)$. Thus the assumptions of \cref{lemma:averageVI} are satisfied. 
 Let $x\WE$ be a Wardrop equilibrium. By \cref{lemma:averageVI} part 1, $\sigma(x\WE)$ solves $\textup{VI}(\s,F\WE)$. Thanks to \cref{ass:lin}, $F\SO(z)=C(z+d)+C^\top(z+d)=2C(z+d)=2F\WE(z)$. Since the two operators $F\WE(z)$ and $F\SO(z)$ are parallel for each $z\in\s$, it follows from the definition of variational inequality that  $\sigma(x\WE)$ must solve $\textup{VI}(\s,F\SO)$ too. Using \cref{lemma:averageVI} part 2 we conclude that $x\WE$ must be a social optimizer.
\item
By definition $J\SO(\sigma(x\SO))\le J\SO(\sigma(x\NE))$ and so $1\le \poa_\N$.
Observe that the assumptions on the sets $\{\mathcal{X}\i\}_{i=1}^\N$ together with \cref{ass:lin} imply \cref{A1} and ensures that $J\i(z_1,z_2)$ is Lipschitz with respect to $z_2$ in $\mc{X}\zero$. Thus, the assumptions of \cref{thm:conv_strategies} part 3 are satisfied. It follows that for any Nash equilibrium $x\NE$ and Wardrop equilibrium $x\WE$ of the game $\mc{G}_\N$, it holds  
$
||\sigma(x\WE)-\sigma(x\NE)||\le \sqrt{2 R^2L_p \alpha^{-1}{\N}^{-1}}.$ Thus, using the Lipschitz property of $J\SO$ one has that $|J\SO(\sigma(x\NE))- J\SO(\sigma(x\WE))|\le L\SO R\sqrt{2 L_p \alpha^{-1}{\N}^{-1}} = c\sqrt{M^{-1}}.$ Since every Wardrop equilibrium is socially optimum (previous point of this proof), one has $|J\SO(\sigma(x\NE))- J\SO(\sigma(x\SO))|\le c\sqrt{M^{-1}}$ and thus
$J\SO(\sigma(x\NE))\le J\SO(\sigma(x\SO))+c\sqrt{M^{-1}}$. The final result regarding the price of anarchy follows from the latter inequality upon dividing both sides by $J\SO(\sigma(x\SO))>\hat J\ge 0$.
\end{enumerate}
\end{proof}

\subsubsection*{Proof of \cref{thm:polypoa}}
\begin{proof}
~
\begin{enumerate}
\item
We first show that any Wardrop equilibrium is a social optimizer.
To do so, observe that the function $f(y)=\alpha y^k$ satisfies all the assumptions required by \cref{lemma:averageVI} (see \cref{lem:ass} in the Appendix).
Let $x\WE$ be a Wardrop equilibrium of $\mc{G}_\N$. By \cref{lemma:averageVI}, $\sigma(x\WE)$ solves $\textup{VI}(\s,F\WE)$. Thanks to \cref{ass:nonlin} and the choice of $f(y)$,
\[F\SO(z)=(k+1)
[
\alpha(z_1+d_1)^k,
\hdots,
\alpha(z_n+d_n)^k
]^\top
=(k+1)F\WE(z)\,.
\]
Hence $\sigma(x\WE)$ solves $\textup{VI}(\s,F\SO)$ too. Using \cref{lemma:averageVI} we conclude that $x\WE$ must be a social optimizer. 
The proof is now identical to the proof of the second part of \cref{thm:lin}.

\item
  If $f(y)$ does not take the form $\alpha y^k$ for some $\alpha>0$ and $k>0$, by \cref{lemma:notaligned} there exists a point $\bar z\in\R^n_{>0}$ for which $F\WE(\bar z)$ and $F\SO(\bar z)$ are not aligned, i.e., for which $F\SO(\bar z)\neq h F\WE(\bar z)$ for all $h\in\R$. We intend to construct a sequence of games $\mc{G}_\N$ so that for every $\mc{G}_\N$ in the sequence the unique average at the Wardrop equilibrium is exactly $\bar z$, that is  $\bar z$ solves $\textup{VI}(\s,F\WE)$, but $\bar z$ does not solve $\textup{VI}(\s,F\SO)$. This fact indeed proves, by \cref{lemma:averageVI}, that for any game $\mc{G}_\N$  the Wardrop equilibria of $\mc{G}_\N$ are not social minimizers. By \cref{thm:conv_strategies}, $\sigma(x\NE)\to\sigma(x\WE)$ as $\N\to\infty$. Thus, $\poa$ cannot converge to~$1$.

In the following we construct a sequence of games with the above mentioned properties. To this end let us define $\mc{X}\i\coloneqq\bar{\mc{X}}\subseteq \R^n$, so that $\s=\bar{\mc{X}}$ with $\bar{\mc{X}}\coloneqq\{\bar z+\alpha v_1  +\beta v_2~~ \alpha,\beta\in[0~1]\}\cap\R^n_{\ge0},$
  where $v_1\coloneqq\bar F\WE$, $v_2\coloneqq(\bar F\WE^\top\bar F\SO )\bar F\WE- (\bar F\WE^\top \bar F\WE)\bar F\SO $ and $\bar F\WE\coloneqq F\WE(\bar z)$, $\bar F\SO\coloneqq F\SO(\bar z)$; see \cref{fig:setX}. The intuition is that $-v_2$ is the component of $\bar F_S$ that lives in the same plane as $\bar F_S$ and $\bar F_W$ and is orthogonal to $\bar F_W$, so that $\bar F_W^\top v_2=0$. Observe that $\s=\bar{\mc{X}}$ is the intersection of a bounded and convex set with the positive orthant and thus satisfies Assumptions \ref{A1}, \ref{ass:sequence} and \ref{ass:nonlin}.
  It is easy to verify that $\bar z \in \bar{\mc{X}}$ and that $F\WE(\bar z)^\top(z-\bar z)=\alpha ||F\WE(\bar z)||^2\ge0$ for all $z\in \s=\bar{\mc{X}}$, so that $\bar z$ solves $\textup{VI}(\s,F\WE)$. Let us pick $\hat z=\bar z+ \beta v_2$. Note that since $\bar z>0$, for $\beta$ small enough  $\hat z$ belongs to $\R^n_{>0}$ as well and thus to $\bar{\mc{X}}$. Then $F\SO(\bar z)^\top(\hat z-\bar z)=\beta (\bar F\SO^\top\bar F\WE)^2-\beta ||\bar F\SO||^2||\bar F\WE||^2< 0$. The inequality is strict because $\bar F\WE$, $\bar F\SO$ are neither parallel nor zero (\cref{lemma:notaligned}). Thus, $\bar z$ does not solve $\textup{VI}(\s,F\SO)$.
    \begin{figure}[h!]
  		\vspace*{-3mm}
        \centering
        \includegraphics[scale=1]{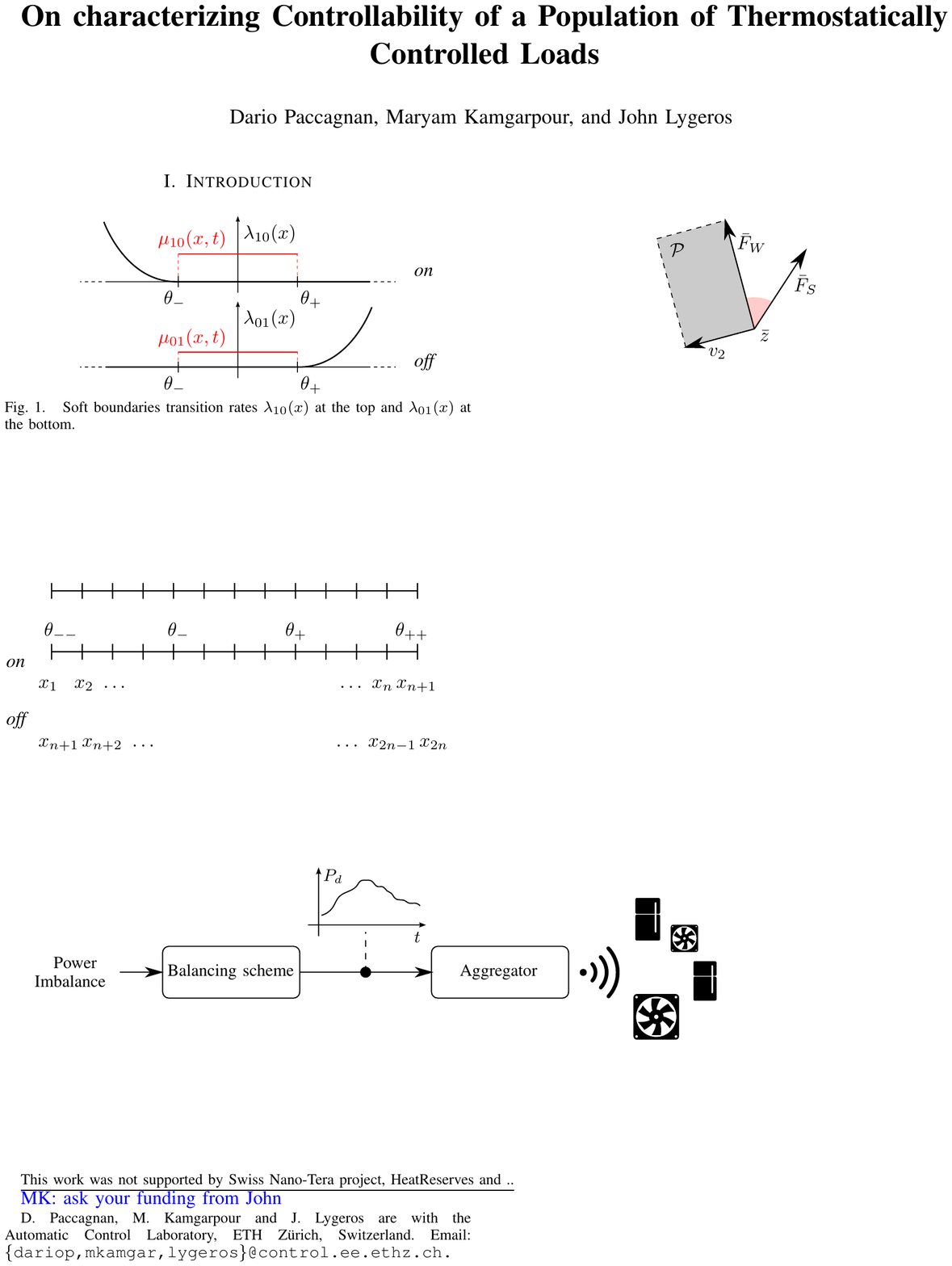}
        \vspace*{-2mm}
        \caption{Construction of the set $\bar{\mc{X}}$.}
        \label{fig:setX}
        \vspace*{-5mm}
   \end{figure}
\end{enumerate}
\end{proof}
\begin{lemma}
\label{lemma:notaligned}
For $n\ge 2$, if $f(y)$ satisfies \cref{A1,,ass:sequence,,ass:nonlin}, but does not take the form $\alpha y^k$ for some $\alpha>0$ and $k>0$, then there exists $\bar z \in\R^n_{>0}$ such that $F\SO(\bar z)\neq h F\WE(\bar z)$, $\forall h\in\R$. Moreover, $F\SO(\bar z)\neq 0$, $F\WE(\bar z)\neq 0$. 
\end{lemma}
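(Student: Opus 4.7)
The plan is to reduce the parallelism condition $F\SO(\bar z) = h\, F\WE(\bar z)$ to a statement about the \emph{elasticity} of $f$, and then to exploit the fact that constant elasticity characterises monomials.

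First I would unpack the two operators using \cref{ass:nonlin}. Setting $y_t := \bar z_t + d_t$ for $\bar z \in \R^n_{>0}$, one has $[F\WE(\bar z)]_t = f(y_t)$, while $\nabla_z p(z+d)$ is diagonal with entries $f'(y_t)$, so $[F\SO(\bar z)]_t = f(y_t) + y_t f'(y_t)$. Since $f > 0$ on $\R_{>0}$, the vector $F\WE(\bar z)$ has strictly positive components, hence $F\WE(\bar z) \neq 0$. Strong monotonicity of $p$ (from \cref{A1bis}) forces $f$ to be nondecreasing, so $f' \ge 0$; then each component of $F\SO(\bar z)$ is bounded below by $f(y_t) > 0$, giving $F\SO(\bar z) \neq 0$ as well.

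Second, dividing componentwise shows that $F\SO(\bar z) = h\, F\WE(\bar z)$ for some scalar $h$ if and only if the elasticity $g(y) := y f'(y)/f(y)$ takes a common value at $y_1,\ldots,y_n$ (necessarily equal to $h-1$). Thus a suitable $\bar z \in \R^n_{>0}$ exists precisely when there are indices $s,t$ and values $y_s > d_s$, $y_t > d_t$ with $g(y_s) \neq g(y_t)$.

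Third and crucially I would argue by contrapositive. Suppose no such $\bar z$ exists. Fixing the other coordinates and varying $\bar z_t \in \R_{>0}$ forces $g$ to be constant on $(d_t, \infty)$ for every $t$; using $n \geq 2$ and matching the constants on the overlap $(\max_t d_t, \infty)$ produces a single scalar $k$ with $g \equiv k$ on $(\min_t d_t, \infty)$. Solving the ODE $f'(y)/f(y) = k/y$ yields $f(y) = \alpha y^k$ on this interval with $\alpha > 0$, contradicting the hypothesis on $f$. The main subtlety is that this argument strictly only rules out $f$ being a monomial on $(\min_t d_t, \infty)$ rather than on all of $\R_{>0}$; the hypothesis is therefore best read as ``$f$ is not a monomial on the range actually reached by $\sigma(x)+d$'', under which reading the contradiction is immediate.
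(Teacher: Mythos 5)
Your core argument is the same as the paper's: reduce alignment of $F\SO(\bar z)$ and $F\WE(\bar z)$ to the elasticity $y f'(y)/f(y)$ taking a common value across coordinates, pass to the contrapositive to force the elasticity to be constant, and solve $y f'(y)=(h-1)f(y)$ to conclude that $f$ is a monomial. The difference --- and the genuine gap you flag yourself --- is the domain on which the monomial conclusion is obtained. Because you keep $d$ fixed, $y_t=z_t+d_t$ only sweeps $(d_t,\infty)$, so you conclude only that $f$ is a monomial on $(\min_t d_t,\infty)$; your proposed remedy of ``reading'' the hypothesis as non-monomiality on the reachable range amounts to changing the statement rather than proving it. The paper closes this differently: the contradiction hypothesis is imposed for all $z\in\R^n_{>0}$ \emph{and} all $d\in\R^n_{>0}$ (consistent with how \cref{lemma:notaligned} is used inside the counterexample construction of \cref{thm:polypoa}, where the game data, $d$ included, are at the constructor's disposal and \cref{ass:nonlin} only requires $d\in\R^n_{>0}$). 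Then $z_t+d_t$ ranges over all of $\R_{>0}$, the identity $f'(y)y=(b-1)f(y)$ holds on the whole half-line, and $f(y)=a\,y^{b-1}$ globally, contradicting the hypothesis as literally stated.

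A second, smaller omission: the hypothesis only excludes monomials $\alpha y^k$ with $\alpha>0$ \emph{and} $k>0$, so after solving the ODE you must rule out a nonpositive exponent (a constant $f$, for instance, has constant elasticity $0$ and would contradict nothing). The paper does this explicitly by noting that $a\le 0$ or $b\le 1$ is incompatible with the assumptions, leaving only $a>0$, $k=b-1>0$. In your write-up the needed ingredient is on the table --- strong monotonicity of $p$ (your appeal to \cref{A1bis}) gives $f'>0$, hence strictly positive elasticity --- but you only recorded $f'\ge 0$ and never invoked it at this step; say so to complete the contradiction. Your argument that $F\WE(\bar z)\neq 0$ matches the paper's, and your direct argument that $F\SO(\bar z)\neq 0$ via $f'\ge 0$ is a fine alternative to the paper's shortcut of taking $h=0$ in the non-alignment claim.
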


\begin{proof}
Let us consider the first statement.
By contradiction, assume there exists $\beta(z):\R^n_{>0}\rightarrow\R$ such that $F\SO(z)= \beta(z) F\WE(z)$ for all $z \in\R^n_{>0}$. This implies
\begin{equation}
\label{eq:alphas}
f'(z_t+d_t)(z_t+d_t)=(\beta(z_1,\dots,z_n)-1)f(z_t+d_t)\,,
\end{equation}

for all $t\in\{1,\dots,n\}$ and for all $z \in\R^n_{>0}$, $d\in\R^n_{>0}$. By \cref{ass:nonlin}, $f(z_t+d_t)>0$. Hence one can divide \eqref{eq:alphas} for $f(z_t+d_t)$ without loss of generality, and conclude that $\beta(z_1,\dots,z_n)=\beta_1(z_1)=\dots=\beta_n(z_n)$ with $\beta_i:\R\rightarrow\R$ for all $z\in\R^n_{>0}$. For $n\ge2$ the last condition implies $\beta(z_1,\dots,z_n)=b$ constant. Equation \eqref{eq:alphas} reads as $f'(y)y=(b-1)f(y)~~\forall y>0$, whose continuously differentiable solutions are all and only $f(y)=a y^{b-1}$. Note that if $a\le0$ or $b\le 1$, \cref{A1} is not satisfied, while if $a>0$ and $b>1$ we contradicted the assumption that $f(y)$ did not take the form $\alpha y^k $ for some $\alpha>0$ and $k>0$. Setting $h=0$ in the previous claim gives $F\SO(\bar z)\ne0$. Since $f:\R_{>0}\rightarrow\R_{>0}$, one has $F\WE(\bar z):=[ f(\bar z_t+d_t)]_{t=1}^n\neq0$.
\end{proof}

\begin{lemma}\label{lem:ass}
Suppose that the price function $p$ is as in \cref{ass:nonlin} with $f(y)=\alpha y^k$, $\alpha>0,k>0$. Then $p$ satisfies \cref{A1bis,,ass:sequence}.
\end{lemma}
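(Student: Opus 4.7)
The plan is to verify, one by one, each condition on $p$ (and on $J^i$) that appears in \cref{A1bis,ass:sequence}, exploiting the fact that on the aggregate set $\s \defeq \frac{1}{M}\sum_{i=1}^M \mc{X}^i$ the quantity $z+d$ is uniformly bounded away from zero and from infinity.

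First I would observe that, under \cref{ass:sequence,ass:nonlin}, the set $\s$ is compact and contained in $\R^n_{\ge 0}$, and $d\in\R^n_{>0}$. Hence there exist constants $0<m_t \le M_t<\infty$ such that $z_t+d_t\in[m_t,M_t]$ for every $z\in\s$ and every coordinate $t\in\{1,\dots,n\}$. This uniform positivity is the only nontrivial ingredient; everything else follows by direct differentiation.

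Next I would check the conditions on $p$ in \cref{A1bis}. Since $p(z+d)=[\alpha(z_t+d_t)^k]_{t=1}^n$, its Jacobian is diagonal with entries $\alpha k(z_t+d_t)^{k-1}$, which is continuous on the open set $\{z+d\in\R^n_{>0}\}\supset\s$, so $p$ is continuously differentiable on $\s$. By the uniform positivity above, each diagonal entry is bounded below by a strictly positive constant on $\s$, so by \cref{lemma:pd} the operator $p$ is strongly monotone on $\s$; Lipschitz continuity of $p$ with some constant $L_p$ is automatic as $p\in C^1$ on the compact set $\s$. For the strong convexity of $\phi(z)\defeq p(z+d)^\top(z+d)=\sum_{t=1}^n \alpha(z_t+d_t)^{k+1}$, its Hessian is diagonal with entries $\alpha k(k+1)(z_t+d_t)^{k-1}$, again uniformly bounded below by a positive constant on $\s$, yielding strong convexity. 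Continuous differentiability of $\phi$ on compact $\s$ gives the Lipschitz constant $L_S$ of $J_S$.

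Finally I would verify the convexity requirement in \cref{ass:sequence}, namely that $J^i(x^i,\sigma(x))$ is convex in $x^i$ for fixed $x^{-i}$. Writing $c_t\defeq \frac{1}{M}\sum_{j\neq i}x_t^j + d_t>0$, we have $J^i(x^i,\sigma(x))=\sum_{t=1}^n \alpha\, x^i_t\bigl(\tfrac{1}{M}x^i_t + c_t\bigr)^k$, so it suffices to check that $h(x)\defeq x(\tfrac{x}{M}+c)^k$ is convex on $x\ge 0$ for any $c>0$. A direct calculation gives $h''(x) = (k/M)(\tfrac{x}{M}+c)^{k-2}\bigl[(k+1)\tfrac{x}{M}+2c\bigr]$, which is nonnegative for $x\ge 0$ since $k>0$ and $c>0$. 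Hence $J^i$ is convex in $x^i$, completing the verification of \cref{A1bis,ass:sequence}.

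The hard part will be, honestly, nothing more than remembering to use compactness of $\s$ \emph{and} $d\in\R^n_{>0}$ simultaneously; without the former, the factors $(z_t+d_t)^{k-1}$ are not bounded below when $k>1$, and without the latter they blow up near the boundary when $k<1$. No other subtlety arises.
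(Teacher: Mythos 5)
Your proposal is correct and follows essentially the same route as the paper: the paper likewise verifies \cref{A1bis,,ass:sequence} by noting that $\nabla_z p(z+d)$ is diagonal with entries $f'(z_t+d_t)=\alpha k(z_t+d_t)^{k-1}>0$ (using $z_t+d_t>0$) and then asserting that the Hessians of $p(z+d)^\top(z+d)$ and of $x^i\mapsto J^i(x^i,\sigma(x))$ are positive definite, invoking \cref{lemma:pd}. Your additional details --- the uniform bounds $z_t+d_t\in[m_t,M_t]$ from compactness of $\s$ together with $d\in\R^n_{>0}$, and the explicit computation of $h''(x)$ for the convexity of $J^i$ in $x^i$ --- simply spell out what the paper dispatches with ``similarly, one can show,'' so there is no substantive difference.
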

\begin{proof}
Note that $\nabla_z p(z+d)$ is a diagonal matrix with entry $f'(z_t+d_t)$ in position $(t,t)$.  Since $f'(y)=\alpha k y^{k-1}>0$ for all $y>0$ and since $z_t+d_t$ is positive by assumption for all $t$, we get that $p(z+d)$ is continuously differentiable and that $\nabla_z p(z+d)\succ 0$, i.e., that $z\mapsto p(z+d)$ is strongly monotone. Similarly, one can show that the Hessian of $p(z+d)^\top (z+d)$ and the Hessian of $J^i(x^i,\sigma(x))$ with respect to $x\i$ are positive definite. Thus, $z\mapsto p(z+d)^\top (z+d)$ and $x^i \mapsto J^i(x^i,\sigma(x))$ are strongly convex.
\end{proof}
\subsubsection*{Proof of \cref{thm:routing}}
\begin{proof} 
We prove only the first claim as the second can be shown as in \cref{thm:lin}. To do so, we define
$C^{\sigma_1}(\sigma_2):=p(\sigma_1+d)^\top(\sigma_2+d)$ so that $J_S(\sigma)=C^{\sigma}(\sigma)$. Let $x_W$ be any Wardrop equilibrium. Then, the average $\bar \sigma:=\sigma_W$ solves VI$(\Sigma,F_W)$, i.e., $F_W(\bar \sigma)^\top(\sigma-\bar \sigma)\ge 0,~ \forall \sigma\in\Sigma.$ This can be seen following the proof of \cref{lemma:averageVI}, and observing that only convexity and closedness of $\mc{X}^i$ are required. Equivalently, $J_S(\bar \sigma) \le C^{\bar \sigma}(\sigma),~\forall \sigma\in\Sigma.$ However,
\[
\begin{split}
C^{\bar \sigma}(\sigma)&=\sum_{t} l_t(\bar \sigma_t+d_t) (\sigma_t+d_t)\\
&=J_S(\sigma)+ \sum_{t} [l_t(\bar \sigma_t+d_t) - l_t( \sigma_t+d_t)] (\sigma_t+d_t) \\
&=J_S(\sigma)+ \sum_{t} \frac{[l_t(v_t) - l_t(w_t)]w_t  }{l_t(v_t)v_t}   l_t(v_t) v_t \\
&\le J_S(\sigma)+ \sum_{t} \beta(\mathcal{L})   l_t(v_t) v_t\\ 
&=  J_S(\sigma)+  \beta(\mathcal{L}) J_S(\bar\sigma)
\end{split}
\] where we used $v_t:=\bar \sigma_t+d_t\ge d_t$, $w_t:= \sigma_t+d_t\ge d_t$ and $d_t\ge 0$. The previous relation holds for all $\sigma\in\Sigma$. Selecting $\sigma=\sigma_S$ (the optimum average), we get $ J_S(\bar \sigma) \le J_S(\sigma_S)+  \beta(\mathcal{L}) J_S(\bar\sigma).$ Rearranging we obtain \eqref{eq:stepThm3}.
\end{proof}

\chapter{Decentralized algorithms}
\label{ch:p1distribtuedalgorithms}
\label{sec:algorithms}

In this chapter we are interested in the design of algorithms that converge to a Nash or a Wardrop equilibrium of a given game $\GN$, formally defined in \eqref{eq:GNEP}. 
All the proofs are reported in the Appendix (\cref{sec:proofsp1-part3}). The results presented in this chapter have been published in \cite{paccagnan2017coupl,gentile2017nash}. 
Throughout the following sections we assume that no agent $i$ wishes to disclose information about his cost function $J\i$ or individual constraint set $\mathcal{X}\i$, to other agents, or to a central operator. Thus, we turn our attention to \emph{decentralized algorithms}. The advantage in using such algorithms is not limited to privacy-preserving issues, but decentralized algorithms are generally preferred when dealing with large scale systems for various reasons, including that of computational tractability. For a comprehensive list of advantages and shortcomings in the use of distributed computing, we redirect the reader to the monograph \cite{bertsekas1989parallel}.
In the following we assume the presence of a central operator able to measure only aggregate quantities, such as the population average $\sigma(x)$, and to broadcast aggregate signals to the agents. \cref{fig:gatherbroadcast} describes the setup more clearly, in relation to \cref{alg:asp}. Based on this information structure, we focus on the design of decentralized algorithms to obtain a solution of either VI$(\mathcal Q, F\NE)$ or VI$(\mathcal Q, F\WE)$. As the techniques are the same for Nash and Wardrop equilibrium, we consider the general problem VI$(\mathcal Q, F)$, where $F$ can be replaced with $F\NE$ or $F\WE$.

Throughout this chapter we assume linearity of the coupling constraints as by \cref{ass:lin}, and observe that this property arises in a range of applications, as detailed, e.g., in~\cite[p. 188]{facchinei2007generalized} and \cite{yi2017distributed}.
\begin{assumption}
\label{ass:lin}
The coupling constraint in~\eqref{eq:coupling_constraints_general} is of the form
\begin{equation}
x\in \mc{C} \defeq \{x\in\mb{R}^{\N n}\,\vert\,Ax\le b\} \subset\mb{R}^{\N n},
\label{eq:coupling_constraints_affine}
\end{equation}
with $A \defeq [A_{(:,1)},\ldots,A_{(:,\N)}] \in\R^{m\times{\N n}}$, $A_{(:,i)} \in\R^{m\times{n}}$ for all $i\in \{1,\dots,\N\}$, $b\in\R^m$. Moreover, for all $i \in \{1,\dots,\N\}$, the set $\mc{X}\i$ can be expressed as $\mc{X}\i = \{x\i \in \R^n \vert g\i(x\i) \le 0 \}$, where $g\i: \R^n \to \R^{p_i}$ is continuously differentiable. The set $Q$, which can thus be expressed as $\mc{Q} = \{x \in \R^{Mn} \vert g\i(x\i) \le 0, \: \forall i,\: Ax \le b \}$, satisfies Slater's constraint qualification~\cite[Eq. (5.27)]{boyd2004convex}. Each agent $i$ has information on the sub-matrix $A_{(:,i)}$ in~\eqref{eq:coupling_constraints_affine}, i.e., he is aware of his influence on the coupling constraint.
\label{A3}
\end{assumption}
If the operator $F$ associated with the variational inequality VI$(\mc{Q},F)$ is integrable\footnote{A necessary and sufficient condition for the integrability of the operator $F$ is that $\nabla_x F(x)=\nabla_x F(x)^\top$ for all $x \in \mc{Q}$~\cite[Thm. 1.3.1]{facchinei2007finite}.} and monotone on $\mc{Q}$, that is, if there exists a convex function $E(x): \R^{\N n}\rightarrow \R$ such that $F(x)=\nabla_x E(x)$  for all $x\in\mc{Q}$, then VI$(\mathcal Q, F)$ is equivalent to the convex optimization problem~\cite[Sec. 1.3.1]{facchinei2007finite}
\begin{equation}
\argmin_{x\in\mathcal{Q}} E(x).
\label{eq:opt}
\end{equation}
Therefore a solution of VI$(\mathcal Q, F)$ and thus a variational equilibrium can be found by applying any of the decentralized optimization algorithms available in the literature of convex optimization \cite{bertsekas1989parallel}; the decentralized structure arises because each agent can evaluate $\nabla_{x^i} E(x)$ by knowing only his strategy $x\i$ and $\sigma(x)$. Since the integrability assumption guarantees that $\mc{G}$ is a \textit{potential game} with potential function $E(x)$ \cite{monderer1996potential}, decentralized convergence tools for potential games \mbox{such as~\cite{dubey2006strategic,MardenCoopControl} can also be employed.}

In light of this observation, our objective is to determine a solution of VI$(\mathcal Q, F)$ when $F$ is not necessarily integrable, so that the previous methods do not apply. In oder to construct a decentralized scheme, we begin by reformulating VI$(\mathcal Q, F)$ in an extended space $[x;\lambda]$ following the spirit of primal-dual methods used in optimization.  The variable $\lambda$ represents the Lagrange multipliers associated to the coupling constraint $\mc C$. 
The following two reformulations will be used to propose two corresponding decentralized algorithms. Formally, for any given $\lambda\in\R^m_{\ge0}$, we define the $\lambda$-dependent game as
\begin{equation}
\mc{G}(\lambda)  \defeq  \left\{ 
\begin{aligned}
&\textup{agents}& &\{1,\dots,M\}\\
&\textup{cost of agent } i& &J^i(x^i,\sigma(x)) + \lambda^\top A(:,i)x^i \\
&\textup{individual constraint}& &\mc{X}^i\\
&\textup{coupling constraint}& &\R^{\N n}
\end{aligned}\right.
\label{eq:GNEP_ext}
,
\end{equation}
and introduce the extended VI$(\mc{Y},T)$, where
\begin{equation}
\label{eq:T}
\begin{split}
\mathcal{Y} \defeq \mc{X}\times \R^m_{\ge0}\,,\quad 
T(x,\lambda) \defeq
\begin{bmatrix}
F(x)+ A^\top\lambda\\
 -(Ax-b)
\end{bmatrix}\,.
\end{split}
\end{equation}
The following proposition draws a connection between VI$(\mathcal Q, F)$, the game $\mc{G}(\lambda)$ and VI$(\mc{Y},T)$.
\begin{proposition}\textup{\cite[Sec. 4.3.2]{scutari2012monotone}}
\label{prop:ext_vi}
Let~\cref{A1,,A3} hold.
The following statements are equivalent.
\begin{enumerate}
\item The vector $\bar x$ is a solution of VI$(\mc{Q},F)$.
\item There exists $\bar \lambda \in \R^m_{\ge 0}$ such that $\bar x$ is a variational equilibrium of $\mc{G}(\bar \lambda)$ and $0\le \bar \lambda \perp b-A\bar x\ge 0$.
\item There exists $\bar \lambda \in \R^m_{\ge 0}$ such that the vector $[\bar x; \bar \lambda]$ is a solution of VI$(\mathcal{Y},T)$. \hfill{$\square$}
\end{enumerate}
\end{proposition}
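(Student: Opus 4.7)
The plan is to establish the three-way equivalence via the chain (1) $\Leftrightarrow$ (2) $\Leftrightarrow$ (3), using the KKT conditions for variational inequalities as the bridge between (1) and (2), and a direct algebraic manipulation of the defining inequality of VI$(\mc{Y},T)$ for (2) $\Leftrightarrow$ (3). The reason KKT can be used is that, under \cref{A3}, $\mc{Q} = \{x \,|\, g\i(x\i) \le 0, \forall i, \; Ax \le b\}$ is a convex set described by differentiable constraints satisfying Slater's condition, so the KKT system of VI$(\mc{Q},F)$ is both necessary and sufficient \cite[Prop.~1.3.4]{facchinei2007finite}.

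For (1) $\Leftrightarrow$ (2), I would write down the KKT system for VI$(\mc{Q},F)$: there exist multipliers $\mu\i \ge 0$ and $\lambda \ge 0$ such that $F(\bar x) + \sum_i \nabla g\i(\bar x\i)^\top \mu\i + A^\top \lambda = 0$, together with $g\i(\bar x\i) \le 0$, $(\mu\i)^\top g\i(\bar x\i) = 0$, $A\bar x \le b$, $\lambda^\top(b - A\bar x) = 0$. I would then recognize that, for fixed $\bar \lambda = \lambda$, the first line plus the local-constraint conditions is precisely the KKT system for VI$(\mc{X}, F + A^\top \bar\lambda)$, which by the same KKT theorem (now applied only to the local constraints, whose Slater condition is inherited from \cref{A3}) is equivalent to $\bar x$ being a variational equilibrium of $\mc{G}(\bar\lambda)$. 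The remaining complementarity $0 \le \bar\lambda \perp b - A\bar x \ge 0$ is exactly what appears in (2), closing this equivalence.

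For (2) $\Leftrightarrow$ (3), I would unfold the defining inequality of VI$(\mc{Y}, T)$ with $\mc{Y} = \mc{X} \times \R^m_{\ge 0}$ and $T$ as in~\eqref{eq:T}, obtaining
\begin{equation*}
(F(\bar x) + A^\top \bar\lambda)^\top (x - \bar x) - (A\bar x - b)^\top (\lambda - \bar\lambda) \ge 0 \quad \forall x \in \mc{X}, \; \forall \lambda \in \R^m_{\ge 0}.
\end{equation*}
Freezing $x = \bar x$ and letting $\lambda$ vary in $\R^m_{\ge 0}$ yields, by standard arguments for VIs over cones, that $b - A\bar x \ge 0$ together with $\bar\lambda^\top (b - A\bar x) = 0$; freezing $\lambda = \bar\lambda$ and letting $x$ vary in $\mc{X}$ yields that $\bar x$ solves VI$(\mc{X}, F + A^\top \bar\lambda)$, i.e., is a variational equilibrium of $\mc{G}(\bar\lambda)$. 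Conversely, adding these two inequalities recovers the joint inequality above, so (2) and (3) are equivalent.

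The main subtlety, rather than an obstacle, is checking that Slater's condition transfers correctly at both levels (to the joint set $\mc{Q}$ when invoking KKT for VI$(\mc{Q},F)$, and to each $\mc{X}\i$ when characterizing the variational equilibria of $\mc{G}(\bar\lambda)$); both follow from \cref{A3}. A second minor point worth being explicit about is the equivalence between the componentwise complementarity $0 \le \bar\lambda \perp b - A\bar x \ge 0$ in (2) and the single scalar condition $\bar\lambda^\top (b - A\bar x) = 0$ obtained in the (2) $\Leftrightarrow$ (3) step, which holds because $\bar\lambda \ge 0$ and $b - A\bar x \ge 0$ force each product $\bar\lambda_j (b - A\bar x)_j$ to vanish individually.
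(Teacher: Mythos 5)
Your proof is correct and rests on the same engine as the paper's: the equivalence, under Slater's condition (Assumption \cref{A3}), between a variational inequality and its KKT system. The paper's (sketched) argument writes down the KKT systems of all three objects --- VI$(\mc{Q},F)$, the agents' problems in $\mc{G}(\bar\lambda)$ together with the complementarity condition, and VI$(\mc{Y},T)$ --- and observes by inspection that they coincide. You follow this route for (1) $\Leftrightarrow$ (2), correctly splitting the joint KKT system of VI$(\mc{Q},F)$ into the KKT system of VI$(\mc{X}, F + A^\top\bar\lambda)$ (i.e., the variational equilibrium condition for $\mc{G}(\bar\lambda)$) plus the coupling complementarity $0\le\bar\lambda\perp b-A\bar x\ge0$. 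Where you deviate is (2) $\Leftrightarrow$ (3): instead of invoking the KKT system of VI$(\mathcal{Y},T)$, you decompose the defining inequality of VI$(\mathcal{Y},T)$ over the product set $\mc{X}\times\R^m_{\ge0}$ directly, freezing one block at a time and adding the two resulting inequalities for the converse. This is a valid and slightly more self-contained argument for that leg: it does not require any constraint qualification or differentiability structure on $\mc{Y}$, only the elementary fact that a VI over the nonnegative orthant with a constant operator encodes nonnegativity plus complementarity. Your closing remarks on the transfer of Slater's condition to the sets $\mc{X}\i$ and on the equivalence between componentwise and scalar complementarity are exactly the points the paper glosses over, and they are handled correctly.
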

While the proof is an adaptation of \cite[Sec. 4.3.2]{scutari2012monotone}, we provide a sketch of it for completeness at the end of this chapter. In the following~\cref{subsec:fully,subsec:boundedly} we exploit the equivalence between the statements in \cref{prop:ext_vi} to propose two algorithms that converge to a Wardrop or Nash equilibrium. A numerical comparison of their performance can be found in \cref{ch:p2applications}.
We summarize in \cref{tb:summary} the main conditions that guarantee their convergence.
\begin{table}[h]
\begin{center}
\begin{tabular}{ccc}\hline
& Nash & Wardrop\\ \hline
Best-response& \multirow{2}{*}{ -} &  $F\WE$ strongly monotone \\
(\cref{alg:two})&  &  and \cref{A4} \\ \hline
Gradient-based & \multirow{2}{*}{$F\NE$ strongly monotone} & \multirow{2}{*}{$F\WE$ strongly monotone} \\
(\cref{alg:asp})& & \\ \hline
\end{tabular}
\end{center}
\caption{Range of applicability of the presented algorithms, under~\cref{A1} and~\cref{A2}.}
\label{tb:summary}
\end{table}%

\section{Best-response algorithm for Wardrop equilibrium}
\label{subsec:fully}
Based on the equivalence between the first two statements of~\cref{prop:ext_vi}, we introduce \cref{alg:two}. The algorithm features i) an outer loop, in which the central operator updates and broadcasts to the agents the dual variables $\lambda_{(k)}$ based on the current constraint violation, and ii) an inner loop, in which the agents update their strategies to reach a Wardrop equilibrium of the game $\mc{G}(\lambda_{(k)})$.
Since $\mc{G}(\lambda_{(k)})$ is a game \textit{without} coupling constraints, the Wardrop equilibrium can be found, e.g., via the iterative algorithm proposed in~\cite[Alg. 1]{grammatico:parise:colombino:lygeros:14}.
In order to ease the forthcoming notation, we define for each agent $i\in\{1,\dots,\N\}$ the best-response map to $z\in \frac{1}{\N} \sum_{i=1}^\N \mc{X}\i$ and dual variables $\lambda\in\R^m_{\ge0}$ as
\begin{equation}
x^i_{\textup{br}}(z,\lambda) \defeq \argmin_{x^i \in\mc{X}^i} \; J^i(x^i, z )+\lambda^\top A(:,i)x^i.
\label{eq:or}
\end{equation}
\begin{assumption} \label{A4}
For all $i\in\{1,\dots,\N\}$ and $\lambda\in\R^m_{\ge0}$, the mapping $z\mapsto x^i_{\textup{br}}(z,\lambda)$ is single valued and Lipschitz with constant $L$. Moreover, one of the following holds.
\begin{enumerate}
\item For each $i\in\{1,\dots,\N\}$ and $\lambda \in \R_{\ge 0}^m$, the mapping $z\mapsto x^i_{\textup{br}}(z,\lambda)$ is non-expansive (see \cref{ch:p1mathpreliminaries}).
\item For each $i\in\{1,\dots,\N\}$ and $\lambda \in \R_{\ge 0}^m$, the mapping $z\mapsto z-x^i_{\textup{br}}(z,\lambda)$ is strongly monotone. \qed
\end{enumerate}
\end{assumption}

\begin{algorithm}[h!]
\caption{(Best-response algorithm for Wardrop equilibrium)}
\label{alg:two}
\begin{algorithmic}[1]
\State {\bf Initialise} $k =0$, $\tau>0$, $x_{(0)} \in\R^{n\N}$, $\lambda_{(0)}\in\R^m_{\ge0}$
\While {not converged}
\State $h =0$, $ \tilde x^i_{(0)}=x^i_{(k)}$, $z_{(0)}\in\R^n$.
\vspace*{-1mm}
\Statex \hspace*{6mm}\rule{0.5\textwidth}{.4pt}
\vspace*{-1.8mm}
\While {not converged}
\State $\tilde  x^i_{(h+1)}= x^i_{\textup{br}}(z_{(h)},\lambda_{(k)}) \quad\forall i\in\{1,\dots,\N\}$ \label{alg:eqinf_inner_a} \vspace*{1mm}
\State $\tilde \sigma_{(h+1)} = \frac{1}{\N}\sum_{j=1}^\N \tilde x^j_{(h+1)} $\label{alg:eqinf_inner_b} 
\vspace*{1mm}
\State $z_{(h+1)} = (1-\frac 1 h) z_{(h)}+\frac 1 h \tilde \sigma_{(h+1)} $\label{alg:eqinf_inner_c} 
\vspace*{1mm}
\State $h \leftarrow h+1$
\EndWhile
\vspace*{-4mm}
\Statex \hspace*{6mm}\rule{0.5\textwidth}{.4pt}
\State $x_{(k+1)}=\tilde x_{(h)}$
\State $\lambda_{(k+1)} = \Pi_{\mathbb{R}^{m}_{\ge0}}\left(\lambda_{(k)}-\tau (b-Ax_{(k+1)})\right)$ \label{alg:eqouter}
\State $k\gets k+1$
\EndWhile
\end{algorithmic}
\end{algorithm}

Convergence of the inner loop to a Wardrop equilibrium of the game $\mc{G}(\lambda_{(k)})$ is guaranteed by~\cref{A4} in \cite[Thm. 3 and Cor. 1]{grammatico:parise:colombino:lygeros:14}. 
Additionally, \cite{grammatico:parise:colombino:lygeros:14} provides sufficient conditions for~\cref{A4} to hold, relative to cost functions of the form \eqref{eq:costs_quad}. More precisely, it is shown that $Q\succ 0$ and $C=C^\top\succ0$  or $Q\succ 0$ and $Q-C^\top Q^{-1}C\succ 0$ imply \cref{A4} \cite[Thm.2 ]{grammatico:parise:colombino:lygeros:14}.

\begin{theorem}[Convergence of \cref{alg:two}]
\label{thm:conv_two}
Suppose that the operator $F\WE$ in~\eqref{eq:F_W} is strongly monotone on $\mc{X}$ with constant $\alpha$, that~\cref{A1,,A3,,A4} hold, and that $\mc{X}\i$ is bounded for all $i \in \{1,\dots,\N\}$. If $\tau<\frac{2\alpha}{\|A\|^2}$, then $x_{(k)}$ in \cref{alg:two} converges to a variational Wardrop equilibrium of $\mc{G}$.
\end{theorem}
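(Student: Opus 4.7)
The plan is to reformulate the outer loop as a projection algorithm on the dual variable only, and then appeal to the convergence theory for co-coercive operators developed in \cref{ch:p1mathpreliminaries}.

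The starting point is \cref{prop:ext_vi}: any variational Wardrop equilibrium of $\mc{G}$ corresponds to a pair $(\bar{x},\bar{\lambda})$ with $\bar{\lambda}\ge 0$, $\bar{x}$ a variational Wardrop equilibrium of $\mc{G}(\bar{\lambda})$, and $0\le\bar{\lambda}\perp b-A\bar{x}\ge 0$. Since the operator of $\mc{G}(\lambda)$ is $F\WE(x)+A^\top\lambda$, which inherits the strong monotonicity constant $\alpha$ of $F\WE$, \cref{lem:exun} guarantees that for each $\lambda\in\R^m_{\ge 0}$ the game $\mc{G}(\lambda)$ admits a unique variational Wardrop equilibrium $x(\lambda)$. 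The inner loop of \cref{alg:two} computes precisely $x(\lambda_{(k)})$: convergence is exactly \cite[Thm.~3 and Cor.~1]{grammatico:parise:colombino:lygeros:14}, whose hypotheses are ensured by \cref{A4} together with the strong monotonicity of $F\WE$. Defining $\Phi(\lambda)\defeq Ax(\lambda)-b$, the outer loop reads $\lambda_{(k+1)}=\Pi_{\R^m_{\ge 0}}(\lambda_{(k)}-\tau\Phi(\lambda_{(k)}))$, that is, it is exactly \cref{alg:gradproj} applied to $\textup{VI}(\R^m_{\ge 0},\Phi)$, whose solutions correspond by \cref{prop:ext_vi} to variational Wardrop equilibria of $\mc{G}$.

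The crux is therefore to show that $\Phi$ is co-coercive. For any $\lambda_1,\lambda_2\in\R^m_{\ge 0}$, the variational characterization of $x(\lambda_1)$ and $x(\lambda_2)$ on $\mc{X}$ gives
\begin{align*}
(F\WE(x(\lambda_1))+A^\top\lambda_1)^\top(x(\lambda_2)-x(\lambda_1))&\ge 0,\\
(F\WE(x(\lambda_2))+A^\top\lambda_2)^\top(x(\lambda_1)-x(\lambda_2))&\ge 0.
\end{align*}
Adding these two inequalities and invoking strong monotonicity of $F\WE$ yields
\[
(\lambda_1-\lambda_2)^\top A\bigl(x(\lambda_1)-x(\lambda_2)\bigr)\ge \alpha\,\|x(\lambda_1)-x(\lambda_2)\|^2.
\]
Since the left-hand side equals $(\Phi(\lambda_1)-\Phi(\lambda_2))^\top(\lambda_1-\lambda_2)$ and since $\|\Phi(\lambda_1)-\Phi(\lambda_2)\|^2\le\|A\|^2\|x(\lambda_1)-x(\lambda_2)\|^2$, one obtains
\[
(\Phi(\lambda_1)-\Phi(\lambda_2))^\top(\lambda_1-\lambda_2)\ge \frac{\alpha}{\|A\|^2}\,\|\Phi(\lambda_1)-\Phi(\lambda_2)\|^2,
\]
i.e.\ $\Phi$ is co-coercive with constant $\eta=\alpha/\|A\|^2$. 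The stepsize condition $\tau<2\alpha/\|A\|^2=2\eta$ of the theorem is then exactly the one required to apply the projection algorithm to $\textup{VI}(\R^m_{\ge 0},\Phi)$.

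The remaining step is the convergence of the projection iteration itself. The standard proof proceeds via Fejér monotonicity: for any solution $\bar{\lambda}$ of $\textup{VI}(\R^m_{\ge 0},\Phi)$, expanding the nonexpansiveness of $\Pi_{\R^m_{\ge 0}}$ and using the co-coercivity inequality above gives
\[
\|\lambda_{(k+1)}-\bar{\lambda}\|^2\le \|\lambda_{(k)}-\bar{\lambda}\|^2-\tau(2\eta-\tau)\,\|\Phi(\lambda_{(k)})-\Phi(\bar{\lambda})\|^2,
\]
so $\{\lambda_{(k)}\}$ is bounded, $\Phi(\lambda_{(k)})\to\Phi(\bar{\lambda})$, and by co-coercivity $x(\lambda_{(k)})\to x(\bar{\lambda})$; a standard limit-point argument then identifies the limit as a solution. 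Note that boundedness of $\mc{X}^i$ assumed in the statement guarantees existence of a solution to the reduced VI (via existence of a variational equilibrium of $\mc{G}$ from \cref{lem:exun}), so this argument is not vacuous. The main obstacle is precisely this last step: unlike the convergence result in \cref{ch:p1mathpreliminaries}, the set $\R^m_{\ge 0}$ is unbounded, so one cannot directly invoke that proposition, and the Fejér-monotonicity argument above must be carried out explicitly, leaning on the boundedness of $\mc X$ to ensure that the co-coercivity inequality is meaningful along the trajectory.
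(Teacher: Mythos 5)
Your overall route is the same as the paper's: handle the inner loop via \cite[Thm.~3 and Cor.~1]{grammatico:parise:colombino:lygeros:14}, interpret the outer loop as the projection algorithm applied to a variational inequality on $\R^m_{\ge0}$ for the dual map obtained by composing the unique variational Wardrop equilibrium of $\mc{G}(\lambda)$ with the constraint residual, prove co-coercivity of that map with constant $\alpha/\|A\|^2$ by adding the two equilibrium VIs and using strong monotonicity of $F\WE$, and conclude through the complementarity equivalence of \cref{prop:ext_vi}. Two points need repair. First, a sign slip: with your definition $\Phi(\lambda)\defeq Ax(\lambda)-b$, the outer update of \cref{alg:two} is $\lambda_{(k+1)}=\Pi_{\R^m_{\ge0}}\bigl(\lambda_{(k)}+\tau\Phi(\lambda_{(k)})\bigr)$, not the projection step you state; moreover, adding the two VIs actually yields $(\lambda_1-\lambda_2)^\top A\bigl(x(\lambda_2)-x(\lambda_1)\bigr)\ge\alpha\|x(\lambda_1)-x(\lambda_2)\|^2$, i.e.\ the opposite sign of your displayed inequality. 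Both issues are cured simultaneously by setting $\Phi(\lambda)\defeq b-Ax(\lambda)$ (as the paper does); the corrected inequality then gives co-coercivity of this $\Phi$ with constant $\alpha/\|A\|^2$, and the solutions of VI$(\R^m_{\ge0},\Phi)$ are exactly the multipliers satisfying $0\le\bar\lambda\perp b-Ax(\bar\lambda)\ge0$, which is what \cref{prop:ext_vi} requires.

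Second, the inner-loop step deserves a couple of extra lines: the cited result only certifies convergence to a fixed point $\bar x$ of the aggregate best-response map and qualifies it as an $\varepsilon$-Nash equilibrium of $\mc{G}(\lambda_{(k)})$; to claim the inner loop returns precisely $x(\lambda_{(k)})$ one should verify, using the definition of the best response in \eqref{eq:or} together with $\bar z=\sigma(\bar x)$, that $\bar x$ satisfies \cref{def:WE} for $\mc{G}(\lambda_{(k)})$ — the paper includes exactly this short argument. Finally, your replacement of the compact-set projection result of \cref{ch:p1mathpreliminaries} by an explicit Fej\'er-monotonicity argument is sound and self-contained; the paper instead invokes \cite[Thm.~12.1.8]{facchinei2007finite}, which needs only existence of a solution of the dual VI, guaranteed (as you also note) through \cref{prop:ext_vi} and \cref{lem:exun}.
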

Two observations on \cref{thm:conv_two} follow. First, we note that the convergence result of \cref{thm:conv_two} holds in the ideal case when, for every fixed $\lambda_{(k)}$, the inner loop converges to the \emph{exact} Wardrop equilibrium. Since this assumption is hardly satisfied due to the finite precision offered by traditional computers, one would like to obtain a guarantee on the convergence of the overall algorithm even if the internal loop provides only an approximate solution. We do not further pursue this direction and instead leave this as a future work.
Second, we observe that the convergence speed of \cref{alg:two} is, to the best of our knowledge, an open question. Nevertheless, it is possible to characterize the convergence rate in each of the two levels separately. In particular, if in \cref{A4} it holds that $z\mapsto z-x^i_{\textup{br}}(z,\lambda)$ is strongly monotone, then it is possible to modify \cref{alg:eqinf_inner_c} with $z_{(h+1)} \leftarrow (1-\frac1\mu)z_{(h)}+\frac{1}{\mu}\tilde \sigma_{(h+1)}$ and guarantee geometric convergence for $\mu\in[0,1]$ small enough, see \cite[Thm. 3.6 (iii)]{berinde}. The outer loop on the other hand has geometric convergence under the additional assumption that the mapping $\Phi$ as defined in the proof of~\cref{thm:conv_two} is not only co-coercive but also strongly monotone.

To the best of our knowledge \cref{alg:two} is the first algorithm that guarantees convergence to a Wardrop equilibrium in games with coupling constraints using a best-response algorithm. We note that, for the case of specific costs \eqref{eq:costs_quad}, \cite{grammatico2017dynamic} proposes a  best-response algorithm that converges to a pair $(\bar x,\bar \lambda)$ such that $\bar x$ is a Wardrop equilibrium of the game $\mc{G}(\bar \lambda)$ satisfying the coupling constraint $\mc{C}$. However such point is not a Wardrop equilibrium because the complementarity condition $0\le \bar \lambda \perp b-A\bar x\ge 0$ is not guaranteed. A \textit{gradient-step} algorithm based on two nested loops for Nash equilibrium with coupling constraints has been proposed in~\cite[Alg. 2]{pang2010design} and in~\cite[Sec. 4]{pavel2007extension}.
\section[Gradient-based algorithm for Nash and Wardrop equilibria]{Gradient-based algorithm for Nash and\\ Wardrop equilibria}
\label{subsec:boundedly}
In this section we devise a decentralized algorithm to achieve a Nash or a Wardrop equilibrium using the reformulation of VI$(\mathcal Q, F)$ as a variational inequality in the the extended space $\mathcal{Y}$, see ~\cref{prop:ext_vi}. 

\cref{alg:asp} proceeds as in the following. After an initialization phase, the agents communicate their current decision variables to the central operator, which in turn broadcasts the initial average and dual variable $\sigma_{(0)}$, $\lambda_{(0)}$ to all agents. At every subsequent iteration the agents update their decision variable and communicate their updated strategy to the central operator, which in turn updates the dual variable to $\lambda_{(k+1)}$ and broadcasts $\sigma_{(k+1)}$, $\lambda_{(k+1)}$ to the agents. \cref{fig:gatherbroadcast} describes the flow of information for \cref{alg:asp}.
\\
\begin{algorithm}[h!]
\caption{(Gradient-based algorithm for Nash equilibrium)}
\label{alg:asp}
\begin{algorithmic}[1]
\State {\bf Initialise} $k =0$, $\tau>0$, $x_{(0)} \in\R^{n\N}$, $\lambda_{(0)}\in\R^m_{\ge0}$
\While {not converged}
\vspace*{2mm}
\State \hspace*{3.8mm}$\sigma_{(k)} =\frac{1}{\N}\sum_{i=1}^\N x^{i }_{(k)}$ \label{alg:eqapa_inner_a}
\vspace*{2mm}
\State $x^{i}_{(k+1)} =\Pi_{\mathcal{X}^i}\left(x^{i }_{(k)} -  \tau \left(\nabla_{ x\i} J\i(x\i_{(k)},\sigma(x_{(k)})) +  {A}_{(:,i)}^\top\lambda_{(k)} \right)\right) \quad  \forall i \in\{1,\dots,\N\}$  \label{alg:eqapa_inner_b}
\vspace*{2mm}
\State $\lambda_{(k+1)}  = \Pi_{\mathbb{R}^{m}_{\ge0}}\left(\lambda_{(k)}-\tau (b-2Ax_{(k+1)}+Ax_{(k)})\right)$alg:eqouter
\label{alg:eqapa_inner_c}
\vspace*{2mm}
\State $k\gets k+1$
\EndWhile
\end{algorithmic}
\end{algorithm}
\\
\begin{remark}
While \Cref{alg:asp} is presented here for the computation of a Nash equilibrium, the same algorithm can be used to compute a Wardrop equilibrium upon replacing $\nabla_{x\i} J\i(x\i_{(k)},\sigma(x_{(k)}))$ with $\nabla_{x\i} J\i(x\i_{(k)},z)\eval$ in \cref{alg:eqapa_inner_b}.
\end{remark}
\begin{figure}[ht]
\centering
\includegraphics[scale=1]{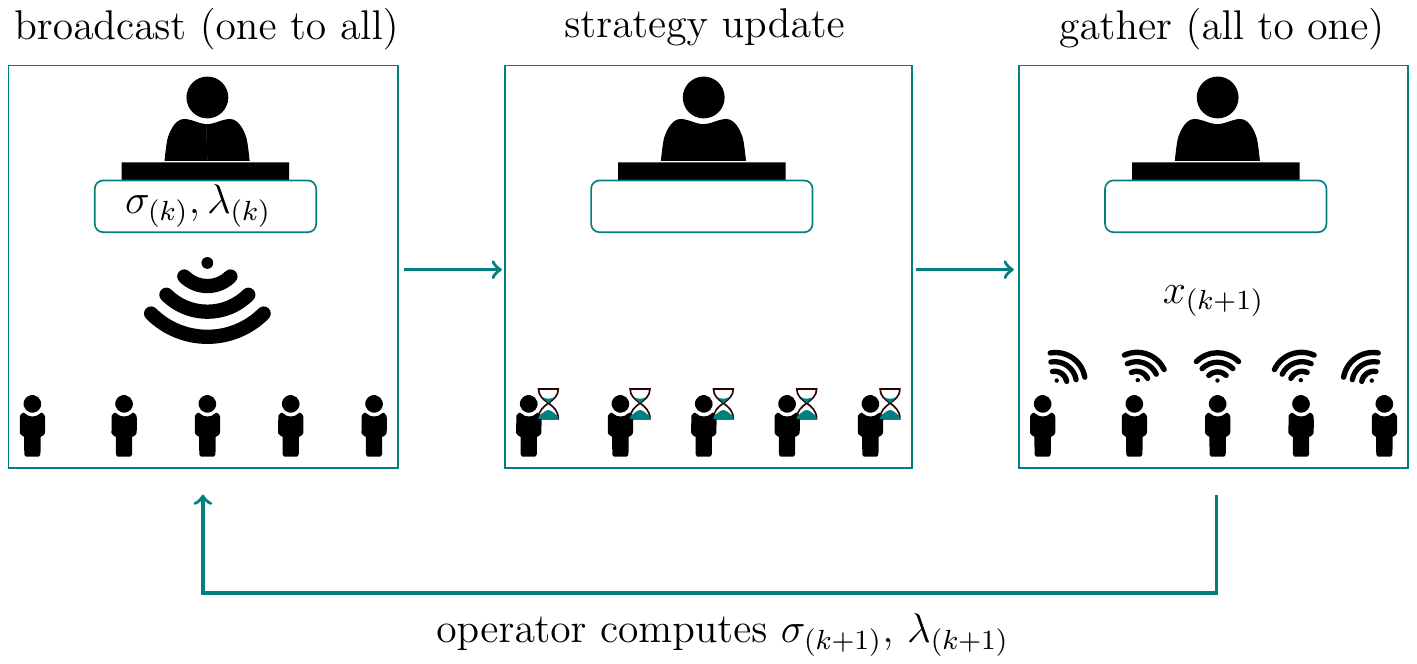}
\caption{Information flow for \cref{alg:asp}}
\label{fig:gatherbroadcast}	
\end{figure}
The fundamental ingredient that guarantees the convergence of \cref{alg:two} is the strong monotonicity of the operator associated to the corresponding variational inequality, as formalized next.
\begin{theorem}
\label{thm:convergence_asp}
Let~\cref{A1} and~\cref{A3} hold. Then
\begin{enumerate}
\item Let $F\NE$ in~\eqref{eq:F_N} be strongly monotone on $\mc{X}$ with constant $\alpha$ and Lipschitz on $\mc{X}$ with constant $L_F$. Let $\tau>0$ s.t.
\begin{equation}
\textstyle \tau <\frac{-L_F^2+\sqrt{L_F^4+4\alpha^2\|A\|^2}}{2\alpha \|A\|^2 }\ . \label{eq:condition_tau_APA}
\end{equation}
Then $x_{(k)}$ in~\cref{alg:asp} converges to a variational Nash equilibrium of $\mc{G}$ in~\eqref{eq:GNEP}.
\item Let $F\WE$ in~\eqref{eq:F_W} be strongly monotone and  Lipschitz on $\mc{X}$ with constants $\alpha$, $L_F$, respectively. Let $\tau$ satisfies~\eqref{eq:condition_tau_APA}. Then~\cref{alg:asp} with $\nabla_{x\i} J\i(x\i_{(k)},z)\eval$ in place of $\nabla_{x\i} J\i(x\i_{(k)},\sigma(x_{(k)}))$ in \cref{alg:eqapa_inner_b} converges to a variational \mbox{Wardrop equilibrium.}
\end{enumerate}
\end{theorem}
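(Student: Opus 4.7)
The first step is to unify both claims by working with a generic strongly monotone, Lipschitz operator $F:\mc{X}\to\R^{\N n}$ (which will be instantiated as $F\NE$ or $F\WE$) and rewriting Algorithm~\ref{alg:asp} in the extended variable $z=[x;\lambda]\in\mc{Y}=\mc{X}\times\R^m_{\ge 0}$. Observing that $b-2Ax_{(k+1)}+Ax_{(k)}=-(A(2x_{(k+1)}-x_{(k)})-b)$, the iteration becomes a primal--dual scheme with dual extrapolation
\begin{align*}
x_{(k+1)}&=\Pi_{\mc{X}}\!\bigl(x_{(k)}-\tau(F(x_{(k)})+A^\top\lambda_{(k)})\bigr),\\
\lambda_{(k+1)}&=\Pi_{\R^m_{\ge 0}}\!\bigl(\lambda_{(k)}+\tau(A(2x_{(k+1)}-x_{(k)})-b)\bigr).
\end{align*}
Any fixed point $(x^\star,\lambda^\star)$ of this iteration satisfies the KKT system in statement~(b) of \cref{prop:ext_vi}, hence $x^\star$ is a variational equilibrium of $\mc{G}$; existence of such a pair is guaranteed by \cref{lem:exun} together with Slater's condition in \cref{A3}. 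The goal is thus to show that $(x_{(k)},\lambda_{(k)})\to(x^\star,\lambda^\star)$.

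For the descent analysis I would pick a fixed variational KKT pair $(x^\star,\lambda^\star)$ and use the characterization of projection $(\Pi_{\mc{Y}}(y)-y)^\top(z-\Pi_{\mc{Y}}(y))\ge 0$ for all $z\in\mc{Y}$, applied to both updates with the testing point $(x^\star,\lambda^\star)$. Combining the two resulting inequalities and adding and subtracting $F(x^\star)+A^\top\lambda^\star$ in the primal one, the strong monotonicity of $F$ contributes a term $\tau\alpha\|x_{(k+1)}-x^\star\|^2$ (after splitting $(F(x_{(k)})-F(x^\star))^\top(x_{(k+1)}-x^\star)$ into a monotone part at $x_{(k+1)}$ plus a cross term controlled by $L_F\|x_{(k+1)}-x_{(k)}\|\|x_{(k+1)}-x^\star\|$). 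The extrapolation $2x_{(k+1)}-x_{(k)}$ in the dual update is the key device: it produces a cross term $\tau(\lambda_{(k+1)}-\lambda^\star)^\top A(x_{(k+1)}-x_{(k)})$ whose skew-symmetric counterpart in the primal inequality cancels exactly, leaving only a residual of the form $\tau\|A\|\|x_{(k+1)}-x_{(k)}\|\|\lambda_{(k+1)}-\lambda_{(k)}\|$. Arranging these inequalities would yield
\begin{equation*}
\|x_{(k+1)}-x^\star\|^2+\|\lambda_{(k+1)}-\lambda^\star\|^2
\le \|x_{(k)}-x^\star\|^2+\|\lambda_{(k)}-\lambda^\star\|^2
-c_\tau\bigl(\|x_{(k+1)}-x_{(k)}\|^2+\|\lambda_{(k+1)}-\lambda_{(k)}\|^2\bigr),
\end{equation*}
where $c_\tau>0$ precisely when $\tau$ satisfies~\eqref{eq:condition_tau_APA}; this is the step where the specific bound on $\tau$ is extracted, by applying Young's inequality with weights tuned so that the quadratic form in $\tau$ has a positive leading coefficient.

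Once the Fej\'er-type inequality above is in place, the remainder is standard: the sequence $(x_{(k)},\lambda_{(k)})$ is bounded, $\|x_{(k+1)}-x_{(k)}\|\to 0$ and $\|\lambda_{(k+1)}-\lambda_{(k)}\|\to 0$, so every cluster point is a fixed point of the iteration by continuity of the projection and of $F$, and hence a KKT pair; uniqueness of $x^\star$ (from strong monotonicity of $F$, \cref{lem:exun}) together with Fej\'er monotonicity then upgrades subsequential convergence to convergence of the full sequence. The second statement of the theorem follows verbatim, since the argument used nothing about $F$ beyond strong monotonicity and Lipschitz continuity, which are precisely the hypotheses imposed on $F\WE$. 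The main technical obstacle I anticipate is the bookkeeping in the combined primal--dual inequality: the cross terms must be paired correctly to cancel the non-monotone skew-symmetric coupling introduced by $A$, and the tuning of the Young's-inequality weights (which ultimately dictates the exact form of~\eqref{eq:condition_tau_APA}) requires care to avoid a weaker step-size bound.
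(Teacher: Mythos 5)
Your overall architecture (lift to the extended variable $z=[x;\lambda]$, identify fixed points with KKT pairs via \cref{prop:ext_vi}, then a Fej\'er/descent argument) is viable and is essentially a "by hand" version of what the paper delegates to the asymmetric projection algorithm machinery of Facchinei--Pang: the paper shows \cref{alg:asp} is the APA for VI$(\mathcal{Y},T)$ with $D=\bigl[\begin{smallmatrix}\frac{1}{\tau}I & 0\\ -2A & \frac{1}{\tau}I\end{smallmatrix}\bigr]$ and then verifies the co-coercivity condition of \cite[Prop.~12.5.2]{facchinei2007finite}, which produces exactly \eqref{eq:condition_tau_APA}. However, there is a genuine gap in your quantitative step. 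When you combine the two projection inequalities tested at $(x^\star,\lambda^\star)$, the coupling terms do \emph{not} cancel down to a residual $\tau\|A\|\,\|x_{(k+1)}-x_{(k)}\|\,\|\lambda_{(k+1)}-\lambda_{(k)}\|$. What actually remains is
\begin{equation}
-\tau\,(\lambda_{(k+1)}-\lambda_{(k)})^\top A\,(x_{(k+1)}-x^\star)\;-\;\tau\,(\lambda_{(k+1)}-\lambda^\star)^\top A\,(x_{(k+1)}-x_{(k)}),
\end{equation}
i.e.\ one term pairing the dual \emph{successive difference} with the primal \emph{distance to the solution}, and one pairing the dual \emph{distance to the solution} with the primal successive difference. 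The second term is fatal for a Fej\'er inequality in the plain Euclidean norm: the operator $T$ has no strong monotonicity in $\lambda$, so there is nothing to absorb $\|\lambda_{(k+1)}-\lambda^\star\|^2$ into, and any Young-type splitting $\tau\|A\|\,\|\lambda_{(k+1)}-\lambda^\star\|\,\|x_{(k+1)}-x_{(k)}\|\le \tfrac{\varepsilon}{2}\|\lambda_{(k+1)}-\lambda^\star\|^2+\tfrac{\tau^2\|A\|^2}{2\varepsilon}\|x_{(k+1)}-x_{(k)}\|^2$ pushes the coefficient of $\|\lambda_{(k+1)}-\lambda^\star\|^2$ strictly below the $\tfrac12$ supplied by the three-point identity, so the claimed inequality does not follow and the iterates are not shown to be Fej\'er monotone in the Euclidean metric.

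The standard (and here necessary) fix is to measure distances in the norm induced by the symmetric part of the preconditioner, $D_s=\bigl[\begin{smallmatrix}\frac{1}{\tau}I & -A^\top\\ -A & \frac{1}{\tau}I\end{smallmatrix}\bigr]$, which is positive definite precisely because \eqref{eq:condition_tau_APA} implies $\tau<1/\|A\|$. In the $D_s$-inner product the offending cross term $(\lambda_{(k+1)}-\lambda^\star)^\top A(x_{(k+1)}-x_{(k)})$ is exactly the off-diagonal piece of $\langle z_{(k+1)}-z^\star,\,z_{(k+1)}-z_{(k)}\rangle_{D_s}$ and is absorbed by the three-point identity rather than estimated; the only quantity left to control is the explicit forward evaluation error $F(x_{(k)})-F(x_{(k+1)})$, and strong monotonicity plus Lipschitz continuity then give a decrease provided $\alpha-\tau L_F^2/(1-\tau^2\|A\|^2)>0$, which is exactly \eqref{eq:condition_tau_APA}. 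This weighted-norm analysis is what the paper's verification of co-coercivity of $G(y)=D_s^{-1/2}T(D_s^{-1/2}y)-D_s^{-1/2}(D-D_s)D_s^{-1/2}y$ accomplishes (note there the skew part of $T$ cancels against $D-D_s$, which is presumably the cancellation you had in mind, but it happens at the level of the transformed operator, not of the Euclidean distance recursion). Once you repair the argument by working in the $D_s$-metric, the rest of your plan (boundedness, vanishing successive differences, cluster points are KKT pairs, uniqueness of the primal solution, identical treatment of $F\NE$ and $F\WE$) goes through as you describe. Note also that even if your residual claim were granted, the tuning you sketch would yield a step-size restriction of a different form (roughly $\tau(L_F^2/(2\alpha)+\|A\|)<1$) rather than \eqref{eq:condition_tau_APA}, so the specific bound in the theorem cannot be recovered along the route you propose.
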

\begin{remark}[Convergence rate]
If the operator $F$ is not only monotone but also affine, and the set $\mathcal{X}$ is a polyhedron, then~\cref{alg:asp} converges $R$-linearly for $\tau$ sufficiently small, i.e., $\lim\sup_{k \rightarrow \infty} (\|y_{(k)}-\bar y\|)^{\frac1k}<1$, \cite[Prop. 1]{paccagnan2017coupl}.
\end{remark}

We conclude this section observing that, while there are other gradient-based algorithms that allow to solve VI$(\mathcal{Y},T)$ in a decentralized fashion, they typically require a higher number of gradient steps in each iteration. For example, the extragradient algorithm \cite[Alg. 12.1.9]{facchinei2007finite} requires two updates for both $x$ and  $\lambda$ at each iteration.
  \newenvironment{sma}
  {\left[\begin{smallmatrix}}
  {\end{smallmatrix}\right]}
\section{Appendix}
\label{sec:proofsp1-part3}
\subsection{Proofs of the results presented in \cref{subsec:fully,,subsec:boundedly}}
\subsubsection*{Proof of \cref{prop:ext_vi}}
\begin{proof}
	Under \cref{A1,,A3} the set $\mc{Q}$, and consequently the sets $\{\mc{X}^i\}_{i=1}^\N$, $\mc{X}$ and $\mc{Y}$, are convex and satisfy Slater's constraint qualification.
The VI$(\mathcal{Q},F)$ is therefore equivalent to its KKT system~\cite[Prop. 1.3.4]{facchinei2007finite}.
Moreover, since $\mc{X}^i$ satisfies Slater's constraint qualification, the optimization problem of agent $i$ in the game~\eqref{eq:GNEP_ext} is equivalent to its KKT system, for each $i$.
Finally, by~\cite[Prop. 1.3.4]{facchinei2007finite}, the VI$(\mathcal{Y},T)$ is equivalent to its KKT system.
We do not report the three KKT systems here,
but it can be seen by direct inspection that they are identical~\cite[Section 4.3.2]{scutari2012monotone}.
\end{proof}

\subsubsection*{Proof of \cref{thm:conv_two}}
\begin{proof}
We split the proof of the theorem into two parts. First we show convergence of the inner loop and then of the outer loop.

\textit{Inner loop}.
Using the same approach of \cite[Thm. 3 and Cor. 1]{grammatico:parise:colombino:lygeros:14}, it is possible to show that under~\cref{A4} for any $\lambda_{(k)}\in\R^m_{\ge0}$ the sequences of $z_{(h)}$ and of $\tilde x(h)$ converge respectively to $\bar z$ and to $\bar{{x}}$ such that $\bar z=\frac{1}{\N}\sum_{i=1}^\N x^i_{\textup{or}} (\bar z,\lambda_{(k)})\eqdef\frac{1}{\N}\sum_{i=1}^\N \bar{{x}}^i=\sigma(\bar{{x}})$. In~\cite[Thm. 1]{grammatico:parise:colombino:lygeros:14} it is shown that the set $\{\bar{{x}}^i\}_{i=1}^\N$ is an $\varepsilon$-Nash equilibrium for the game $\mc{G}(\lambda_{(k)})$, with $\varepsilon=\mc{O}(\frac{1}{\N})$. In the following, we show that $\{\bar{{x}}^i\}_{i=1}^\N$ is actually a Wardrop equilibrium of $\mc{G}(\lambda_{(k)})$.
Indeed, for each agent $i$, by the definition of optimal response in~\eqref{eq:or}, one has
\[J^i(\bar x^i,\bar z) +\lambda_{(k)}^\top A_{(:,i)} \bar  x^i \le J^i(x^i,\bar z) +\lambda_{(k)}^\top A_{(:,i)}  x^i, \forall x^i\in\mc{X}^i\,.
\]
Using the fact that $\bar z=\sigma(\bar x)$, we get
\[
J^i(\bar x^i,\sigma(\bar x)) +\lambda_{(k)}^\top A_{(:,i)} \bar x^i \le J^i(x^i,\sigma(\bar x)) +\lambda_{(k)}^\top A_{(:,i)}  x^i, 
\]
for all $x^i\in\mc{X}^i$ and for all $i\in\{1,\dots,\N\}$. Thus $\{\bar x^i\}_{i=1}^\N$ is a Wardrop equilibrium of $\mc{G}(\lambda_{(k)})$ by~\cref{def:WE}.

\textit{Outer loop.}
We follow the steps of the proof of~\cite[Proposition 8]{pang2010design}. For each $\lambda\in\R^m_{\ge0}$ define $F\WE(x;\lambda) \defeq F\WE(x)+A^\top\lambda$. Such operator is strongly monotone in $x$ on $\mc{Q}$ with the same constant $\alpha$ as $F\WE(x)$. It follows by \cref{lem:exun},  that $\mc{G(\lambda)}$ has a unique variational Wardrop equilibrium which we denote by $\bar x\WE(\lambda)$. Note that the outer loop update can be written as 
\[\lambda_{(k+1)}=\Pi_{\R^m_{\ge0}}[\lambda_{(k)}-\tau(b-A\bar x\WE(\lambda_{(k)}))],\]
which is a step of the projection algorithm~\cite[Alg. 12.1.4]{facchinei2007finite} applied to VI$(\R^m_{\ge0}, \Phi)$, with $\Phi(\lambda) \defeq b-A\bar x\WE(\lambda)$.  To conclude, it suffices to show that $\lambda_{(k)}$ converges to a solution $\bar \lambda$ of such VI, because by~\cite[Prop. 1.1.3]{facchinei2007finite}, $\bar \lambda$ solves VI$(\R^m_{\ge0}, \Phi)$ if and only if $0\le \bar \lambda \perp (b-A \bar x\WE(\bar \lambda)) \ge 0$. Having already proved convergence of the inner loop, the conclusion then follows from the second statement of \cref{prop:ext_vi}.

To show that the sequence $\lambda_{(k)}$ converges to a solution of the $\textup{VI}(\R^m_{\ge0},\Phi)$, we prove that the mapping $\Phi$ is co-coercive (see \cref{ch:p1mathpreliminaries}) with co-coercitivity constant $c_\Phi = \alpha/\|A\|^2$ and apply \cite[Thm. 12.1.8]{facchinei2007finite} to conclude the proof. Note that~\cite[Thm. 12.1.8]{facchinei2007finite} requires $\textup{VI}(\R^m_{\ge0},\Phi)$ to have at least a solution; this is guaranteed by the equivalence between the first two statements in~\cref{prop:ext_vi} upon noting that a solution of VI($Q,F$) exists by~\cref{lem:exun}.

To show co-coercitivity of $\Phi$, consider $\lambda_1 , \lambda_2 \in \R^m_{\ge0}$ and the corresponding unique solutions $x_1 \defeq \bar x\WE(\lambda_1)$ of VI($\mc{X}$,$F\WE+A^\top\lambda_1)$ and $x_2 \defeq \bar x\WE(\lambda_2)$ of VI($\mc{X}$,$F\WE+A^\top\lambda_2)$. By definition
\begin{subequations}
\begin{align}
&(x_2-x_1)^\top(F\WE(x_1)+A^\top\lambda_1) \ge 0 \label{eq:VI_1}\,, \\
&(x_1-x_2)^\top(F\WE(x_2)+A^\top\lambda_2) \ge 0 \label{eq:VI_2} \,.
\end{align}
\end{subequations}
Adding~\eqref{eq:VI_1} and~\eqref{eq:VI_2} we obtain $(x_2-x_1)^\top(F\WE(x_1)-F\WE(x_2) + A^\top (\lambda_1-\lambda_2)) \ge 0 $, i.e., $(x_2-x_1)^\top A^\top(\lambda_1-\lambda_2) \ge (x_2-x_1)^\top(F\WE(x_2)-F\WE(x_1))$.
Since $F\WE$ is strongly monotone, it follows from the last inequality that
\begin{equation}
(Ax_2-Ax_1)^\top (\lambda_1-\lambda_2) \ge \alpha \| x_2 - x_1 \|^2 \,.
\label{eq:proof_2_intermed_2}
\end{equation}
Since by definition $\|A(x_2-x_1)\| \le \| A \| \|x_2-x_1 \|$, then
\begin{equation}
\| x_2 - x_1 \|^2 \ge \frac{\| A(x_2-x_1)\|^2}{\| A \|^2} \,.
\label{eq:norm_def_ineq}
\end{equation}
Combining~\eqref{eq:proof_2_intermed_2},~\eqref{eq:norm_def_ineq}, and adding and subtracting $b$, we obtain
\begin{equation}
(b-Ax_2 - (b-Ax_1))^\top   (\lambda_2 - \lambda_1) \ge \frac{\alpha}{\| A \|^2} \| b-Ax_2 - (b-Ax_1) \|^2   ,
\end{equation}
hence $\Phi$ is co-coercive in $\lambda$ with constant $c_\Phi =\alpha/\|A\|^2$.
\end{proof}

\subsubsection*{Proof of \cref{thm:convergence_asp}}
\begin{proof}
We give the proof for a strongly monotone operator $F$, which is to be interpreted as $F\NE$ in the first statement and $F\WE$ in the second statement. We divide the proof into two parts: i) we prove that~\cref{alg:asp} is a particular case of a class of algorithms known as asymmetric projection algorithms (APA) \cite[Alg. 12.5.1]{facchinei2007finite} applied to VI$(\mathcal{Y},T)$; ii) we prove that our algorithm satisfies a convergence condition for APA. It can be shown that if $\tau$ satisfies~\eqref{eq:condition_tau_APA} then also $\tau<1/\|A\|$ holds. 
\\
i) The APA are parametrized by the choice of a matrix $D\succ0$. For a fixed $D$ a step of the APA  for VI$(\mathcal{Y},T)$ is
\begin{equation}
\label{eq:APA}
y_{(k+1)}=\textup{solution of VI}(\mathcal{Y}, T^k_D),
\end{equation}
where $y_{(k)}$ is the state at iteration $k$ and $T^k_{D}(y)\defeq T(y_{(k)})+D(y-y_{(k)})$. Every step of the APA requires the solution of a different variational inequality that depends on the operator $T$, on a fixed matrix $D$ and on the previous strategies' vector $y_{(k)}$. We choose
\begin{equation}
D\coloneqq \left[\begin{array}{cc}\frac{1}{\tau} I_{\N n} & 0 \\-2A & \frac{1}{\tau} I_m\end{array}\right],
\label{eq:choice_D_apa}
\end{equation}
which by using the Schur complement condition can be shown to positive definite because $\tau<1/\|A\|$. It is shown in~\cite[Sec. 12.5.1]{facchinei2007finite} that with the choice~\eqref{eq:choice_D_apa} the update~\eqref{eq:APA} coincides with the steps of \cref{alg:asp}.
\\
\noindent ii) As illustrated in the previous point,~\cref{alg:asp} is the specific APA associated with the choice of $D$ given in \eqref{eq:choice_D_apa}. According to \cite[Prop. 12.5.2]{facchinei2007finite}, this algorithm converges if the mapping $G(y)=D_s^{-1/2}  T(D_s^{-1/2} y) -D_s^{-1/2}  (D-D_s) D_s^{-1/2} y$ is co-coercive with constant $1$, where $D_s = (D+D^\top)/2$ and $D_s^{-1/2}$ denotes the principal square root of the symmetric positive definite matrix $D_s^{-1}$ and is therefore symmetric positive definite. Let us rename $L \defeq D_s^{-1/2}$ and $Ly=\begin{bmatrix} v\\w\end{bmatrix}$ and simplify the expression of $G(y)$\\
\be
\begin{split}
\label{eq:express_G}
G(y)&=L  T(L y) -L  (D-D_s) L y\\
&= L \left( 
\begin{bmatrix} F(v) \\ 0 
\end{bmatrix} +
\begin{bmatrix} 0 & A^\top \\ -A & 0 
\end{bmatrix}Ly +
\begin{bmatrix} 0 \\ b
\end{bmatrix} \right)
-L  
\begin{bmatrix} 0 & A^\top \\ -A & 0 
\end{bmatrix} L y\\
&= L \left( \begin{bmatrix} F(v) \\ 0 \end{bmatrix} +\begin{bmatrix} 0 \\ b\end{bmatrix} \right).
\end{split}
\ee

We now prove that $G(y)$ is co-coercive with constant $1$, i.e., that
\begin{equation}
(y_1-y_2)^\top(G(y_1)-G(y_2)) -  \|G(y_1)-G(y_2)\|^2\ge0.
\label{eq:cocerc_repeated}
\end{equation}
Let us substitute~\eqref{eq:express_G} in the left-hand side of~\eqref{eq:cocerc_repeated}
\be
\begin{split}
&(y_1   -y_2)^\top   (G(y_1)-G(y_2)) -  \|G(y_1)-G(y_2)\|^2 \\
&=(y_1   -y_2)^\top   (L  \begin{bmatrix} F(v_1) \\ 0 \end{bmatrix}   -L  \begin{bmatrix} F(v_2) \\ 0 \end{bmatrix} )    -    \|L  \begin{bmatrix} F(v_1) \\ 0 \end{bmatrix}    -L  \begin{bmatrix} F(v_2) \\ 0 \end{bmatrix} \|^2 \\
&=(Ly_1-Ly_2)^\top(  \begin{bmatrix} F(v_1) -F(v_2)\\ 0 \end{bmatrix}  ) -  \| L  \begin{bmatrix} F(v_1) -F(v_2)\\ 0 \end{bmatrix}  \|^2 \\
&=(\begin{bmatrix} v_1-v_2 \\ w_1-w_2\end{bmatrix})^\top   (  \begin{bmatrix} F(v_1) -F(v_2)\\ 0 \end{bmatrix}  )   -   \begin{bmatrix} F(v_1) -F(v_2)\\ 0 \end{bmatrix} ^\top     L^2 \begin{bmatrix} F(v_1) -F(v_2)\\ 0 \end{bmatrix}   \\
&=  ( F(v_1)  - F(v_2) )^\top [(v_1  -  v_2 )- [L^2]_{11} (F(v_1)  -  F(v_2))] \\
& \ge \alpha \|v_1-v_2\|^2 -  \|[L^2]_{11}\|\| F(v_1) -F(v_2)) \|^2 \\
& \ge \left(\alpha -  \|[L^2]_{11}\|L_F^2 \right)  \|v_1-v_2\|^2 \eqdef K \|v_1-v_2\|^2, 
\end{split}
\ee 
The proof is concluded if $K\ge0.$ Let us compute $[L^2]_{11} = [D_s^{-1}]_{11}$. By inverting the block matrix $D_s$ we get
\begin{equation}\label{eq:l2}
[L^2]_{11} =\tau(I-\tau^2 A^\top A)^{-1} \succ 0. 
\end{equation}
Since $\tau^2 A^\top A$ is symmetric positive semidefinite, $\lambda_\text{max}(\tau^2 A^\top A) = \tau^2 \|A\|^2 <1$ because $\tau<1/\|A\|$ and  $\rho(\tau^2 A^\top A)<1$, i.e., the matrix is convergent. Hence, the Neumann series $\sum_{k=0}^\infty (\tau^2 A^\top A)^k$ converges to $(I-\tau^2 A^\top A)^{-1}$. Substituting in \eqref{eq:l2} yields 
\[[L^2]_{11} = \tau \sum_{k=0}^\infty (\tau^2 A^\top A)^k\succeq 0 \quad\text{and}\quad \|[L^2]_{11}\|\le \tau \sum_{k=0}^\infty (\tau^2\|A\|^2)^k= \frac{\tau}{1-\tau^2\|A\|^2},\]
where we used the fact that the geometric series converges since $\tau^2 \|A\|^2 <1$. Therefore $K\ge \alpha -  \frac{\tau}{1-\tau^2\|A\|^2} L_F^2$. By condition \eqref{eq:condition_tau_APA} we get 
$\alpha \tau^2 \|A\|^2+\tau L_F^2 <\alpha$ and thus
\[
K\ge \frac{\alpha - \alpha \tau^2 \|A\|^2 - \tau L_F^2}{1-\tau^2 \|A\|^2 } >0.
\]
\end{proof}

\chapter{Applications}
\label{ch:p1applications}
In this chapter we verify the theoretical results derived in the previous two chapters. In particular, we consider a coordination problem arising in the charging of electric vehicles, and a selfish routing model used in road traffic network.
All the proofs are reported in the Appendix (\cref{sec:proofsp1-part4}). The results presented in this chapter have been published in \cite{paccagnan2016aggregative,paccagnan2017coupl,gentile2017nash}.
\section{Charging of electric vehicles}
\label{sec:PEVs}
Electric-vehicles (EV) are foreseen to significantly penetrate the market in the coming years~\cite{nemry2010plug}, therefore coordinating their charging schedules can provide useful services for the operation of the grid, e.g., peak shaving, ancillary services~\cite{gan2013optimal}. In the following we model this  problem as a game, where vehicles owners wish to minimize their total electricity bill, while requiring a sufficient final state of charge. By assuming that the electricity price depends on the aggregate consumption,~\cite{ma2013decentralized,grammatico:parise:colombino:lygeros:14} formulate the EV charging problem as an aggregative game and propose decentralized schemes, in the absence of coupling constraints. In this section, we show how the results derived in the previous chapters can be used to study this problem. In particular, our formulation extends the existing literature by introducing coupling constraints and by relaxing the assumptions required for the convergence of the corresponding algorithms.\footnote{Coupling constraints model limits on the aggregate peak consumption or on the local consumption of EVs connected to the same transformer.} In addition, we study the performance degradation of an equilibrium configuration, when compared to the centralized optimal solution. Finally, we establish uniqueness of the dual variables associated to the violation \mbox{of the coupling constraints.}

In the remainder of this section, we consider a population of $\N$ electric vehicles and identify with agent $i$ the corresponding vehicle $i \in \{1,\dots,\N\}$. Additionally, we identify with $s\i_t$ the state of charge of vehicle $i$ at time $t$.  The time evolution of $s^i_t$ is specified by the discrete-time system $s\i_{t+1} = s\i_t + b\i x\i_t \,,  t = 1, \dots, n$, where $x\i_t$ is the charging input and the parameter $b\i > 0$ captures the charging efficiency.
\subsection*{Constraints}
 We assume that the charging input cannot take negative values and that at time $t$ it cannot exceed $\tilde x^i_t \ge 0$. The final state of charge is constrained to $s_{n+1}^i\ge\eta\i$, where $\eta\i \ge 0$ is the desired state of charge of agent $i$. Denoting with $x\i =[x\i_1, \dots, x^i_n]^\top \in \R^n$, the individual constraint of agent $i$ can be expressed as
\begin{equation}
\label{eq:vehicle_constraint}
x\i \in \mc{X}\i \vcentcolon= \left\{ x\i \in \mathbb{R}^n  \left|
\begin{array}{l}
0 \le x\i_t \le \tilde x\i_t, \;\; \forall \, t=1,\dots,n \\ 
\sum_{t=1}^{n} x\i_t \ge \theta\i
\end{array}
\right.
\right\},
\end{equation}
where $\theta\i \coloneqq {(b\i)}^{-1} (\eta\i - s\i_1)$, with $s\i_1 \ge 0$ the state of charge at the beginning of the time horizon. Besides the individual constraints $x\i \in \mc{X}\i$, we introduce \mbox{the coupling constraint}
\begin{equation}
x\in\mc{C}\defeq\left\{x\in\R^{\N n}\left|\, \frac{1}{\N}\sum_{i=1}^\N x\i_t \le K_t,  \, \forall \, t=1,\dots,n\right.\right\},
\label{eq:coupling_global_PEVs}
\end{equation}
indicating that  at time $t$ the grid cannot deliver more than $\N K_t$ units of power to the vehicles. In compact form~\eqref{eq:coupling_global_PEVs} reads as
$(\ones[\N]^\top \otimes I_n) x \le\N K \,,$
where $K \defeq [K_1, \dots, K_n]^\top$.
\subsection*{Cost function}
The cost function of each vehicle represents its electricity bill, which we model as
\begin{equation}
 J\i(x\i,\sigma(x))=\sum_{t=1}^n p_t \left( \frac{d_t + \sigma_t(x)}{\kappa_t}  \right) x\i_t \eqdef p(\sigma(x))^\top x^i,
\label{eq:PEV_energy_bill}
\end{equation}
where we have assumed that the energy price for each time interval $p_t:\R_{\ge0}\rightarrow \R_{>0}$ depends on the ratio between total consumption and total capacity $(d_t + \sigma_t(x))/ \kappa_t$, where $d_t$ and $\sigma_t(x)\defeq\frac{1}{\N}\sum_{i=1}^\N x^i_t$ are the non-EV and EV demand at time $t$ divided by $\N$ and $\kappa_t$ is the total production capacity divided by $\N$ as in~\cite[Eq. (6)]{ma2013decentralized}. The quantity $\kappa_t$ is in general not related to $K_t$.
\subsection{Theoretical guarantees}
We  define the  game $\mc{G}^\text{EV}_\N$ as in~\eqref{eq:GNEP}, with $\mathcal{X}\i$, $\mathcal{C}$ and $J\i(x\i,\sigma(x))$ as in \eqref{eq:vehicle_constraint}, \eqref{eq:coupling_global_PEVs} and \eqref{eq:PEV_energy_bill} respectively.
In the following corollary we refine the main results of ~\cref{ch:p1equilibriaVIandSMON}, \ref{ch:p1distanceWENEandPOA} and \cref{ch:p1distribtuedalgorithms} for the EV application.
\begin{corollary}
\label{cor:pev} 
Consider a sequence of games $(\mc{G}^\textup{EV}_M)_{M=1}^\infty$. Assume that there exists $\tilde x^0$ such that $\tilde x^i_t \le \tilde x^0$ for all $t \in \{1,\dots,n\},i \in \{1,\dots,\N\}$ and for each game $\mc{G}^\textup{EV}_M$. Moreover, assume that for each game $\mc{G}^\textup{EV}_M$ the set $\mc{Q}=\mc{C}\cap\mc{X}$ is non-empty and that for each $t$ the price function $p_t$ in \eqref{eq:PEV_energy_bill} is twice continuously differentiable, strictly increasing and Lipschitz in $[0,\tilde x^0]$ with constant $L_p$.
Then:
\begin{enumerate}
\item
A Wardrop and a Nash equilibrium exist for each game $\mathcal{G}^\textup{EV}_M$ of the sequence. Furthermore, every Wardrop equilibrium is an $\varepsilon$-Nash equilibrium with $\varepsilon = \frac{2n (\tilde{x}^0)^2 L_p}{\N}$.
\item The function $p$ is strongly monotone, hence for each game $\mathcal{G}^\textup{EV}_M$ there exists a unique $\bar \sigma$ such that $\sigma(\VWE{x})=\bar \sigma$ for any variational Wardrop equilibrium $\VWE{x}$ of $\mathcal{G}^\textup{EV}_M$. Moreover for any variational Nash equilibrium $\VNE{x}$ of $\mathcal{G}^\textup{EV}_M$, $\| \sigma(\VNE{x}) - \sigma(\VWE{x}) \| \le \tilde x^0 \sqrt{\frac{2 n L_p}{\alpha M}}$, where $\alpha$ is the monotonicity constant of $p$. 
\item Assume that there is no coupling constraint, i.e., $\mc{C}=\R^{\N n}$, that $d_t>0$ for all $t$, and that $\sum_{i=1}^{\N} \theta\i >0$. If $p_t\left(\frac{d_t+\sigma_t(x)}{\kappa_t}\right)=\alpha\left(\frac{d_t+\sigma_t(x)}{\kappa_t}\right)^k$ with $\alpha>0$, $k>0$, then 
\[
	1\le \poa_\N\le 1+\mathcal{O}\left(1/\sqrt{\N}\right)~~~\text{and}~~
	\lim_{\N\to\infty}\poa_\N=1\,.
\]
\item Assume that
\begin{equation}
\minn{\substack{t \in \{1,\dots,n\}\\z \in [0,\tilde x^0]}}{\left(p'_t(z) - \frac{\tilde x^0 p''_t(z)}{8}\right)} > 0.
\label{eq:bound_PEV_smon2}
\end{equation}
For each game $\mathcal{G}^\textup{EV}_\N$ the operator $F\NE$ is strongly monotone. Hence, if \cref{ass:lin} holds, \cref{alg:asp} converges to a variational Nash equilibrium of $\mathcal{G}^\textup{EV}_M$. 
\end{enumerate}
\end{corollary}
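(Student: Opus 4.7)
The plan is to establish each of the four claims by specialising the general results of the preceding chapters to the concrete EV game $\mc{G}^{\text{EV}}_\N$. First I would verify the standing assumptions: each $\mc{X}^i$ in~\eqref{eq:vehicle_constraint} is closed, convex, non-empty and contained in the compact box $\mc{X}\zero\coloneqq[0,\tilde x\zero]^n$, giving $R=\sqrt{n}\,\tilde x\zero$. The coupling set $\mc{C}$ is affine and $\mc{Q}=\mc{X}\cap\mc{C}$ is non-empty by hypothesis, hence compact and convex. The cost $J^i(x^i,\sigma(x))=p(\sigma(x))^\top x^i$ is linear in $x^i$ (thus convex) and continuously differentiable, because each $p_t$ is twice continuously differentiable. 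Combined with Lipschitzness of $p$, this yields~\cref{A1} together with~\cref{A2} with $L_2=RL_p$ via the estimate~\eqref{eq:Lipschitz_implies_Lipschitz}.

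For item~(a), existence of a variational Nash and a variational Wardrop equilibrium follows from \cref{lem:exun}(a) applied to $F\NE$ and $F\WE$ on the compact convex set $\mc{Q}$; by \cref{prop:vi_ref} each is a Nash/Wardrop equilibrium in the sense of \cref{def:NE,def:WE}. The $\varepsilon$-Nash bound is a direct application of \cref{prop:conv_cost} with $L_2=\sqrt{n}\,\tilde x\zero L_p$, giving $\varepsilon=2RL_2/\N=2n(\tilde x\zero)^2 L_p/\N$. For item~(b), the diagonal structure of $p$ together with each $p_t$ being continuously differentiable and strictly increasing on the compact interval in question yields $\nabla_z p(z)\succeq\alpha I_n$ for some $\alpha>0$ and hence strong monotonicity. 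Uniqueness of the aggregate Wardrop equilibrium and the claimed bound then follow from \cref{thm:conv_strategies}(c) (applied with $v^i\equiv 0$), which gives $\|\sigma(\VNE{x})-\sigma(\VWE{x})\|\le\sqrt{2RL_2/(\alpha\N)}=\tilde x\zero\sqrt{2nL_p/(\alpha\N)}$.

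For item~(c), the monomial choice $p_t(y)=\alpha y^k$ combined with the $\kappa_t$-scaling in~\eqref{eq:PEV_energy_bill} yields an effective componentwise price $\tilde p_t(z_t+d_t)=(\alpha/\kappa_t^k)(z_t+d_t)^k$. A one-line computation gives $\nabla_z\tilde p(z+d)(z+d)=k\,\tilde p(z+d)$, so $F\SO=(1+k)F\WE$; this is precisely the parallelism that powers the proof of \cref{thm:polypoa}(a). The hypothesis $d_t>0$ together with $\sum_i\theta^i>0$ and positivity of the $p_t$ guarantee $J\SO(\sigma(x\SO))>\hat J$ for some $\hat J>0$, so $\poa_\N$ is well defined and \cref{thm:polypoa}(a) applies, delivering $1\le\poa_\N\le 1+\mc{O}(1/\sqrt{\N})$ and $\lim_{\N\to\infty}\poa_\N=1$. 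For item~(d), \cref{lem:diagSMON} applies because $v^i\equiv 0$ is trivially convex, $p_t$ is continuously differentiable and strictly increasing, and $\mc{X}^i\subseteq[0,\tilde x\zero]^n$; the condition~\eqref{eq:bound_PEV_smon2} is precisely the hypothesis required in that lemma (up to the benign $\kappa_t$-rescaling absorbed in $p_t$), and therefore $F\NE$ is strongly monotone. Since \cref{ass:lin} is assumed, \cref{thm:convergence_asp}(a) yields convergence of \cref{alg:asp} to a variational Nash equilibrium.

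The main obstacle is the $\kappa_t$-rescaling in~\eqref{eq:PEV_energy_bill}, which prevents the price components from literally matching the common form $f(z_t+d_t)$ required by \cref{ass:nonlin}. The resolution is to observe that both the parallelism $F\SO=(1+k)F\WE$ used in part~(c) and the componentwise derivative bound used in part~(d) are unaffected by a positive multiplicative constant attached to each component, so the reduction to the chapter-level theorems is clean. Beyond this bookkeeping the argument is a mechanical specialisation; the one remaining technical point, the extraction of a uniform positive lower bound on each $p_t'$ from strict monotonicity and continuous differentiability on the compact domain, is an immediate consequence of compactness.
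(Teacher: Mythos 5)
Your proposal follows the paper's proof route essentially step for step: verify \cref{A1} and \cref{A2} for the EV game (with $R=\sqrt{n}\,\tilde x^0$, $L_2=RL_p$), then invoke \cref{lem:exun} and \cref{prop:conv_cost} for part (a), the third statement of \cref{thm:conv_strategies} for part (b), \cref{thm:polypoa} for part (c), and \cref{lem:diagSMON} together with \cref{thm:convergence_asp} for part (d). Your explicit treatment of the $\kappa_t$-rescaling (re-deriving the parallelism $F\SO=(k+1)F\WE$ componentwise rather than literally matching \cref{ass:nonlin}) is, if anything, more careful than the paper, which simply asserts the assumptions hold.

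There is, however, one step that fails as written: your verification of \cref{A1} claims that $J^i(x^i,\sigma(x))=p(\sigma(x))^\top x^i$ is ``linear in $x^i$ (thus convex)''. It is linear in $x^i$ only with the second argument frozen at a fixed $z$; since $\sigma(x)$ contains the term $x^i/M$, the map $x^i\mapsto J^i(x^i,\sigma(x))$ with $x^{-i}$ fixed reads $\sum_t p_t\bigl((d_t+c_t+x^i_t/M)/\kappa_t\bigr)x^i_t$ with $c_t=\tfrac{1}{M}\sum_{j\neq i}x^j_t$, and its second derivative in $x^i_t$, namely $\tfrac{2}{M\kappa_t}p_t'+\tfrac{x^i_t}{(M\kappa_t)^2}p_t''$, can be negative when $p_t''<0$. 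This convexity is precisely what \cref{prop:vi_ref} needs to convert a solution of VI$(\mc{Q},F\NE)$ into a Nash equilibrium, so it underpins the Nash-existence claim in part (a); the Wardrop existence and the $\varepsilon$-Nash bound only use fixed-$z$ linearity and \cref{A2}, so they are unaffected. The paper closes this step by a different argument: it deduces convexity of $J^i(x^i,\sigma(x))$ in $x^i$ from $\nabla_{x^i}\bigl(\nabla_{x^i}J^i\bigr)\succ0$, which it extracts from the strong-monotonicity analysis of $F\NE$ carried out for the last statement via \cref{lem:diagSMON}. A further, minor caveat — shared with the paper — is that strict monotonicity plus continuous differentiability on a compact interval does not by itself yield a uniform positive lower bound on $p_t'$ (consider $p_t(z)=z^3$ near $0$), so the strong-monotonicity claim in part (b) implicitly assumes $p_t'>0$ on the whole interval; since the paper makes the same leap, I only flag it.
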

We note that the previous corollary provides guarantees on the equilibrium efficiency for the case of polynomial price functions. Nevertheless, different results can be obtained in the case of affine or diagonal price function by applying the bounds derived in \cref{thm:lin,,thm:routing}. In this respect, the third statement of \cref{cor:pev} is purely exemplificative.

\subsection*{Uniqueness of dual variables}
\cref{cor:pev} shows that under condition~\eqref{eq:bound_PEV_smon2} the operator $F\NE$ of $\mathcal{G}^\textup{EV}_\N$ is strongly monotone, hence the game $\mathcal{G}^\textup{EV}_\N$ admits a unique variational Nash equilibrium (\cref{lem:exun}). We study here the uniqueness of the associated dual variables $\bar \lambda\NE$ introduced in~\cref{prop:ext_vi}. Guaranteeing unique dual variables is important to convince the vehicles owners to participate in the proposed scheme, as $\bar \lambda\NE$ represent the penalty price associated to the coupling constraint. 
Define $R^\text{tight} \subseteq \{1,\dots,n\}$ as the set of instants in which $\mc{C}$ is active. We provide a sufficient condition for uniqueness of the dual variables which relies on a modification of the linear-independence constraint qualification~\cite{Wachsmuth2013}.
\\
\begin{lemma}
\label{lem:uniqueness_for_PEVs}
Assume that condition~\eqref{eq:bound_PEV_smon2} holds and consider the unique variational Nash equilibrium $\VNE{x}$ of $\mc{G}^\textup{EV}_M$. If there exists a vehicle $i$ such that 
$\bar x_{\textup{N},t}^i \notin \{0,\tilde x\i_t \}$ for all $t \in R^\textup{tight}$ and
$\bar x_{\textup{N},t'}^i \notin \{0,\tilde x\i_{t'} \}$ for some $t'\notin R^\textup{tight}$,
then the dual variables $\VNE{\lambda}$ associated to the coupling constraint~\eqref{eq:coupling_global_PEVs} are unique. 
\end{lemma}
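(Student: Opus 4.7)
The plan is to exploit the KKT characterization of the variational Nash equilibrium given by \cref{prop:ext_vi}. Because condition~\eqref{eq:bound_PEV_smon2} renders $F\NE$ strongly monotone, the primal vector $\VNE{x}$ is unique by \cref{lem:exun}, so establishing uniqueness of $\VNE{\lambda}$ reduces to a purely algebraic question of recovering the dual multipliers from the stationarity equations evaluated at the fixed point $\VNE{x}$. Slater's constraint qualification assumed in \cref{A3} guarantees that these KKT conditions are both necessary and sufficient.

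First I would write the stationarity condition associated with the special vehicle $i$ of the hypothesis. Representing $\mc{X}\i$ through the inequalities in~\eqref{eq:vehicle_constraint}, I attach multipliers $\mu^i_{0,t},\mu^i_{1,t}\ge 0$ to the box constraints $x^i_t\ge 0$ and $x^i_t\le \tilde x^i_t$, and $\nu^i \ge 0$ to the total-demand constraint $\sum_t x^i_t \ge \theta\i$. Since $A_{(:,i)}=I_n$ in this application, the component at time $t$ of the stationarity equation for player $i$ reads
\[
[\nabla_{x\i} J\i(\bar x^i_{\textup{N}},\sigma(\VNE{x}))]_t + \bar\lambda_{\textup{N},t} - \mu^i_{0,t} + \mu^i_{1,t} - \nu^i = 0. \qquad (\star)
\]

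Next I would apply complementary slackness in three cases. For $t\notin R^\textup{tight}$ the coupling constraint is slack, so $\bar\lambda_{\textup{N},t}=0$ uniquely. For $t\in R^\textup{tight}$ the hypothesis $\bar x^i_{\textup{N},t}\notin\{0,\tilde x\i_t\}$ forces $\mu^i_{0,t}=\mu^i_{1,t}=0$, and $(\star)$ becomes $\bar\lambda_{\textup{N},t}=\nu^i-[\nabla_{x\i} J\i(\bar x^i_{\textup{N}},\sigma(\VNE{x}))]_t$. Finally, at the distinguished $t'\notin R^\textup{tight}$ for which $\bar x^i_{\textup{N},t'}\notin\{0,\tilde x\i_{t'}\}$, both $\bar\lambda_{\textup{N},t'}=0$ and $\mu^i_{0,t'}=\mu^i_{1,t'}=0$ hold simultaneously, so $(\star)$ collapses to $\nu^i=[\nabla_{x\i} J\i(\bar x^i_{\textup{N}},\sigma(\VNE{x}))]_{t'}$. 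This identity pins down the single scalar $\nu^i$; substituting its value back into the previous expression yields a unique $\bar\lambda_{\textup{N},t}$ for every $t\in R^\textup{tight}$, and therefore uniqueness of $\VNE{\lambda}$.

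The main obstacle will be the careful justification of the equivalence between the VI KKT system from \cref{prop:ext_vi} and the per-player stationarity condition $(\star)$, in particular verifying that Slater's CQ is inherited at $\VNE{x}$ and that the sign conventions of the local multipliers are consistent with the way~\eqref{eq:vehicle_constraint} is written. Once this bookkeeping is in place, the uniqueness statement reduces to the elementary complementary-slackness cascade sketched above, and one sees precisely how the ``modified linear-independence'' condition in the lemma serves to single out $\nu^i$ through a single slack time~$t'$.
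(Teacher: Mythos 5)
Your proof is correct, and it rests on the same foundation as the paper's: the KKT characterization of the variational equilibrium from \cref{prop:ext_vi}, uniqueness of the primal point $\VNE{x}$ from strong monotonicity, and the observation that the strictly interior components of the special vehicle $i$ kill the box-constraint multipliers by complementary slackness. The execution differs in a worthwhile way. The paper stacks the reduced KKT system over \emph{all} vehicles as $\tilde\Gamma_1^\top\tilde\mu+\tilde\Gamma_2^\top\tilde\lambda=-F\NE(\VNE{x})$ and invokes an abstract linear-algebra lemma (\cref{lem:sufficient_for_uniqueness_subvector}) stating that the subvector $\tilde\lambda$ is unique whenever the homogeneous implication $\tilde\Gamma_2^\top\tilde\lambda+\tilde\Gamma_1^\top\tilde\mu=0\Rightarrow\tilde\lambda=0$ holds, which it then verifies by contradiction using the hypothesis on vehicle $i$. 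You instead work only with vehicle $i$'s block of the stationarity equation and run the complementary-slackness cascade explicitly: $\bar\lambda_{\textup{N},t}=0$ off $R^\textup{tight}$, the spare interior time $t'$ pins down $\nu^i=[F\NE(\VNE{x})]_{(i,t')}$, and back-substitution yields the closed form $\bar\lambda_{\textup{N},t}=[F\NE(\VNE{x})]_{(i,t')}-[F\NE(\VNE{x})]_{(i,t)}$ for $t\in R^\textup{tight}$. Since this determines $\lambda$ for \emph{every} KKT multiplier tuple compatible with the unique primal, uniqueness follows; your version is more elementary, avoids the auxiliary lemma, and is even constructive, giving an explicit formula for the congestion prices. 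Two minor points of bookkeeping: the scaling of $A_{(:,i)}$ (whether $I_n$ or $\tfrac{1}{\N}I_n$, depending on which normalization of \eqref{eq:coupling_global_PEVs} one uses) only rescales $\lambda$ and does not affect the argument, and the sign conventions in your equation $(\star)$ are consistent with writing \eqref{eq:vehicle_constraint} in the form $\Gamma x\le\gamma$ as the paper does, so the ``main obstacle'' you flag is indeed only bookkeeping already covered by \cref{A3} and \cref{prop:ext_vi}.
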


We note that the sufficient condition of~\cref{lem:uniqueness_for_PEVs} has to be verified a-posteriori as it depends on the primal solution $\VNE{x}$. In the numerical analysis presented in the following such sufficient condition always holds. Uniqueness of the dual variables associated to the coupling constraint of an aggregative game has been studied also in~\cite[Thm. 4]{yin2011nash}, where the conditions in the bullets of~\cref{lem:uniqueness_for_PEVs} are not required, but $p$ is restricted to be affine.
\subsection{Numerical analysis}\label{sec:pev_num}
The numerical study is conducted on a heterogeneous population of agents. We set the price function to  $p_t(z_t)=0.15 \sqrt{z_t}$ and $n=24$. The agents differ in $\theta\i$, randomly chosen according to $\mathcal{U}[0.5,1.5]$; they also differ in $\tilde x\i_t$, which is chosen such that the charge is allowed in a connected interval, with left and right endpoints uniformly randomly chosen. Within this interval, $\tilde x\i_t$ is constant and randomly chosen for each agent according to $\mathcal{U}[1,5]$, while outside this interval $\tilde x\i_t = 0$.
The demand $d_t$ is taken as the typical (non-EV) base demand over a summer day in the United States~\cite[Fig. 1]{ma2013decentralized}; $\kappa_t=12$ kW for all $t$, and the upper bound $K_t=0.55$ kW is chosen such that the coupling constraint~\eqref{eq:coupling_global_PEVs} is active in the middle of the night.
Note that with these choices all the assumptions of~\cref{cor:pev} are met. In particular, for the given choice of $p$ condition~\eqref{eq:bound_PEV_smon2} holds because $p''_t(z) < 0$ for all $z$ and all $t$. \Cref{fig:primal_variables} presents the aggregate consumption at the Nash equilibrium found by~\cref{alg:asp}, with stopping criterion $\|(x_{(k+1)},\lambda_{(k+1)})-(x_{(k)},\lambda_{(k)})\|_{\infty} \le 10^{-4}$. Note that without the coupling constraint the quantity $\bar \sigma + d$ would be constant overnight, as shown in~\cite{ma2013decentralized}.
\vspace*{5mm} 
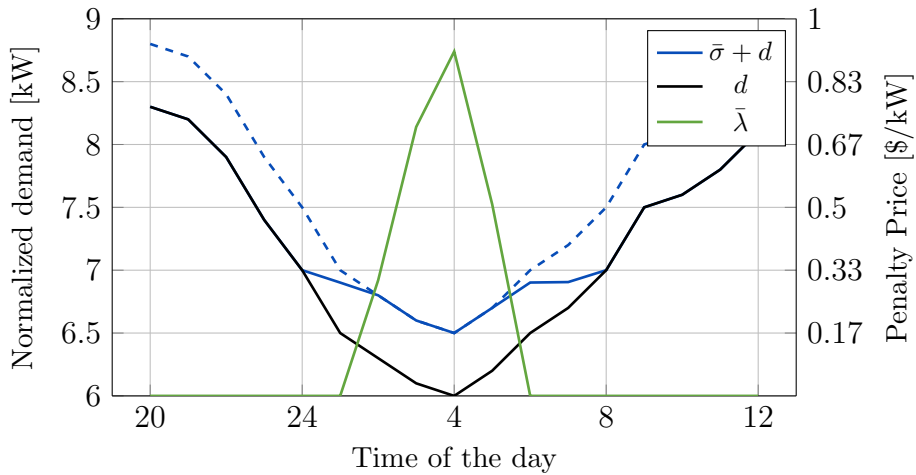
\begin{figure}[H] 
\begin{center}
\setlength\figureheight{5cm} 
\setlength\figurewidth{9cm} 
%
%
\definecolor{mycolor1}{rgb}{0.03,0.3,0.72}%
\definecolor{PortlandGreen}{RGB}{99,166,63}%
\begin{tikzpicture}

\begin{axis}[%
width=\figurewidth,
height=\figureheight,
axis y line* = left,
scale only axis,
xmin=7, xmax=25, xtick={8,12,16,20,24}, xticklabels={{20},{24},{4},{8},{12}}, xlabel={\small Time of the day}, xmajorgrids,
ymin=6, ymax=9, ytick={6,6.5,7,7.5,8,8.5,9}, ylabel={\small Normalized demand [kW]}, ylabel near ticks, ymajorgrids,
]
\addplot [color=mycolor1,solid,line width=1.0pt]
  table[row sep=crcr]{%
8	8.30000000390642\\
9	8.20000000478831\\
10	7.9000000074229\\
11	7.40000002443321\\
12	7.00003584722159\\
13	6.90146739363718\\
14	6.80006156578355\\
15	6.60004473212242\\
16	6.50001520367497\\
17	6.70001392123971\\
18	6.90188130551423\\
19	6.90551484838831\\
20	7.0000246029291\\
21	7.50000000931836\\
22	7.60000000381587\\
23	7.80000000115141\\
24	8.1\\
};\label{plot_one}

\addplot [color=black,solid,line width=1.0pt]
  table[row sep=crcr]{%
8	8.3\\
9	8.2\\
10	7.9\\
11	7.4\\
12	7\\
13	6.5\\
14	6.3\\
15	6.1\\
16	6\\
17	6.2\\
18	6.5\\
19	6.7\\
20	7\\
21	7.5\\
22	7.6\\
23	7.8\\
24	8.1\\
};\label{plot_two}

\addplot [color=mycolor1,dashed,line width=1.0pt]
  table[row sep=crcr]{%
8	8.8\\
9	8.7\\
10	8.4\\
11	7.9\\
12	7.5\\
13	7\\
14	6.8\\
15	6.6\\
16	6.5\\
17	6.7\\
18	7\\
19	7.2\\
20	7.5\\
21	8\\
22	8.1\\
23	8.3\\
24	8.6\\
};
\end{axis}

\begin{axis}[
width=\figurewidth,
height=\figureheight,
axis y line*=right,
ymin = 0, ymax = 1, ytick={0.16666666,0.3333333,0.5,0.666666,0.83333333,1}, yticklabels={0.17,0.33,0.5,0.67,0.83,1}, ylabel={\small Penalty Price [${\$}$/{kW}]}, ylabel near ticks,
xmin=7, xmax=25, axis x line=none,
scale only axis,
legend style={at={(0.985,0.97)},font=\footnotesize}
     ]
\addplot [color=PortlandGreen,solid,line width=1.0pt,forget plot]
  table[row sep=crcr]{%
8	0\\
9	0\\
10	0\\
11	0\\
12	0\\
13	0\\
14	0.306979585957893\\
15	0.713221133089155\\
16	0.913221133089153\\
17	0.508096714131238\\
18	0\\
19	0\\
20	0\\
21	0\\
22	0\\
23	0\\
24	0\\
};\label{plot_three}
\addlegendimage{/pgfplots/refstyle=plot_one}\addlegendentry{$\bar{\sigma}+d$}
\addlegendimage{/pgfplots/refstyle=plot_two}\addlegendentry{$d$}
\addlegendimage{/pgfplots/refstyle=plot_three}\addlegendentry{$\bar{\lambda}$}
\end{axis}

\end{tikzpicture}%
\caption{ Aggregate EV demand $\sigma(\VNE{x})$ and dual variables $\bar \lambda\NE$ for $\N=100$, subject to $\sigma(x) \le 0.55$ kW. The region below the dashed line satisfies $\sigma(x)+d \le  0.55$ kW$+d$.}
\label{fig:primal_variables}
\end{center}
\end{figure}
\Cref{fig:distance_aggregates} illustrates the bound $\| \sigma(\VNE{x}) - \sigma(\VWE{x}) \| \le \tilde x^0 \sqrt{\frac{2 n L_p}{\alpha M}}$ of the second statement of~\cref{cor:pev}. The Wardrop equilibrium is computed with the extragradient algorithm with stopping criterion $\|(x_{(k+1)},\lambda_{(k+1)})-(x_{(k)},\lambda_{(k)})\|_{\infty} \le 10^{-4}$.
The framework introduced above can also be used to enforce local coupling constraints, i.e., constraints on a subset of all the vehicles. These can for instance be used to model capacity limits for local substations as we discuss in \cite[Fig. 4]{paccagnan2017coupl}.
\begin{figure} [H]
\begin{center}
\setlength\figureheight{5cm} 
\setlength\figurewidth{9.cm} 
%
%
\definecolor{mycolor1}{rgb}{0.00000,0.54118,0.90196}%
\begin{tikzpicture}

\begin{axis}[%
width=\figurewidth,
height=\figureheight,
scale only axis,
xmin=0,
xmax=800,
ymin=0,
ymax=0.15,
xtick={0,100,200,300, 400, 500, 600, 700, 800},
tick label style={/pgf/number format/fixed},
ytick={0,0.03,0.06,0.09,0.12,0.15},
xmajorgrids,
ymajorgrids,
xlabel ={Population size $M$},
legend style={at={(0.97,0.95)},legend cell align=left,align=left,draw=white!15!black,font=\footnotesize}
]
\definecolor{mycolor1}{rgb}{0.00000,0.54118,0.90196}

\addplot [color=black,solid,line width=1.0pt]
  table[row sep=crcr]{%
50	0.0961423497667815\\
100	0.0549965612736588\\
150	0.0368506226735783\\
200	0.0281709955720355\\
250	0.0233737077229369\\
300	0.0218642951846739\\
350	0.0180020163292226\\
400	0.0162783969936725\\
450	0.01454678132398\\
500	0.0119194105921108\\
550	0.0115542608715654\\
600	0.0101954546561331\\
650	0.0101114713636592\\
700	0.00906486857544332\\
750	0.00816693178529436\\
800	0.00792344623874586\\
};
\addlegendentry{\footnotesize $\|\sigma(\bar x_{_{N}})-\sigma(\bar x_{_{W}})\|$}

\addplot [dashed,line width=1.0pt]
  table[row sep=crcr]{%
50	0.14142135623731\\
100	0.1\\
150	0.0816496580927726\\
200	0.0707106781186548\\
250	0.0632455532033676\\
300	0.0577350269189626\\
350	0.0534522483824849\\
400	0.05\\
450	0.0471404520791032\\
500	0.0447213595499958\\
550	0.0426401432711221\\
600	0.0408248290463863\\
650	0.0392232270276368\\
700	0.0377964473009227\\
750	0.0365148371670111\\
800	0.0353553390593274\\
};
\addlegendentry{\footnotesize $1/\sqrt{M}$}

\end{axis}
\end{tikzpicture}%
\caption{Distance between the aggregates at the Nash and Wardrop equilibrium (solid line). \Cref{cor:pev} ensures that such distance is upper bounded by  $\tilde x^0 \sqrt{2 n L_p\alpha^{-1}/\N}$. The dotted line shows $1/\sqrt{M}$  proving that our bound captures the correct trend.}
\label{fig:distance_aggregates}
\end{center}
\end{figure}
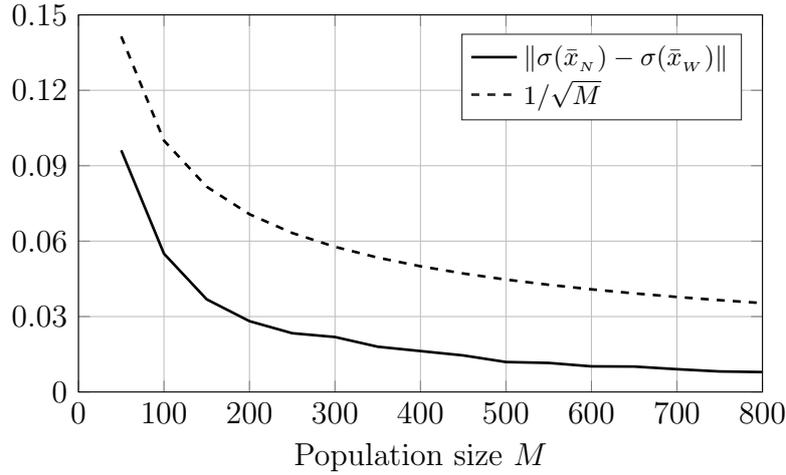
\vspace*{-5mm}
\subsubsection*{The case of linear price function}
Different works in the EV literature~\cite{grammatico:parise:colombino:lygeros:14,kristoffersen2011optimal} use the cost~\eqref{eq:costs_quad}, with $Q \succ 0$ and $C\succ 0$, diagonal. Existence of a Nash and of a Wardrop equilibrium is guaranteed by~\cref{lem:exun}, while~\cref{prop:conv_cost} gives the $\varepsilon$-Nash property.  Further,~\cref{lem:quadratic} shows that the resulting operators $F\NE$ and $F\WE$ are strongly monotone with monotonicity constant independent from $\N$. \Cref{thm:conv_strategies} ensures then that $\| \VNE{x} - \VWE{x} \| \le L_2/(\alpha \sqrt{\N})$, with $L_2 = R \cdot \lambda_\textup{M}$, where $\lambda_\textup{M}$ represents the largest eigenvalue of $C$.
A Nash equilibrium can be found using \cref{alg:asp}, while a Wardrop equilibrium can be achieved using both \cref{alg:two,,alg:asp}. \Cref{fig:iterations_x} presents a comparison between the two algorithms in terms of iteration count, where $Q=0.1 I_n$, $C=I_n$, $c^i=d\; \text{for all} \; i$. \Cref{fig:iterations_x} (top) represents the number of strategy updates required to converge, i.e., the number of times \cref{alg:eqinf_inner_a} in \cref{alg:two} or \cref{alg:eqapa_inner_b} in \cref{alg:asp} is used.  
\Cref{fig:iterations_x} (bottom) depicts the number of dual variables updates, i.e., the number of times \cref{alg:eqouter} in \cref{alg:two} or \cref{alg:eqapa_inner_c} in \cref{alg:asp}  
is used. For both algorithms the number of iterations does not seem to increase  with the population size.  \Cref{alg:asp} requires fewer primal iterations, while~\cref{alg:two} needs much fewer dual iterations.
\begin{figure}[ht!]
\begin{center}
\setlength\figureheight{5cm} 
\setlength\figurewidth{9cm}  
%
%
\begin{tikzpicture}
\definecolor{mycolor1}{rgb}{0.03,0.3,0.72}%
\definecolor{mycolor2}{rgb}{0.93333,0.69804,0.00000}%
\begin{axis}[%
width=\figurewidth,
height=\figureheight,
scale only axis,
xmin=45,
xmax=205,
xmajorgrids,
ymin=0,
ymax=450,
ymajorgrids,
xticklabels={,,},
ylabel = {\small Primal updates},
legend pos = outer north east,
legend image post style={scale=0.3},
]
\addplot[color=mycolor1,line width=1.0pt]
 plot [error bars/.cd, y dir = both, y explicit, error bar style={line width=1pt},   error mark options={
      rotate=90,
      line width=1pt}]
 table[row sep=crcr, y error plus index=2, y error minus index=3]{%
50	366.9	33.8214429023955	33.8214429023955\\
60	345.6	19.4020617461135	19.4020617461135\\
70	363.7	35.8637700193385	35.8637700193385\\
80	343.9	31.1077160845987	31.1077160845987\\
90	349.7	22.1858062733812	22.1858062733812\\
100	347.6	22.4686448189471	22.4686448189471\\
110	338.5	22.9314194937863	22.9314194937863\\
120	333.6	6.78527818147495	6.78527818147495\\
130	343.6	23.5592869162036	23.5592869162036\\
140	350.7	40.2369233416274	40.2369233416274\\
150	334.5	6.72681202353685	6.72681202353685\\
160	336.9	10.8761206319165	10.8761206319165\\
170	345.1	24.2670558576849	24.2670558576849\\
180	334.3	5.93380147965872	5.93380147965872\\
190	335.1	4.92848861214064	4.92848861214064\\
200	346.7	20.6060670677352	20.6060670677352\\
};
\addlegendentry{\cref{alg:two}};

\addplot [solid,line width=1.0pt]
 plot [error bars/.cd, y dir = both, y explicit, error bar style={line width=1pt},   error mark options={
      rotate=90,
      line width=1pt}]
 table[row sep=crcr, y error plus index=2, y error minus index=3]{%
50	38.8	11.7456374880208	11.7456374880208\\
60	60.3	42.2470117286418	42.2470117286418\\
70	59.4	29.8167738026769	29.8167738026769\\
80	75	47.6969600708473	47.6969600708473\\
90	69.6	32.3425416440947	32.3425416440947\\
100	50.9	19.423954283307	19.423954283307\\
110	63.7	21.8771570365073	21.8771570365073\\
120	77.4	43.1490440218552	43.1490440218552\\
130	63.5	22.477766792989	22.477766792989\\
140	94.4	50.5711380136931	50.5711380136931\\
150	74.7	20.0800896412342	20.0800896412342\\
160	77.5	24.2002066106883	24.2002066106883\\
170	90.3	25.6009765438743	25.6009765438743\\
180	71.2	18.4325798519903	18.4325798519903\\
190	93	38.5201246103903	38.5201246103903\\
200	79.6	24.654411369976	24.654411369976\\
};
\addlegendentry{\cref{alg:asp}};
\end{axis}
\end{tikzpicture}%
\\[0.5cm]
%
%
\begin{tikzpicture}
\definecolor{mycolor1}{rgb}{0.03,0.3,0.72}%
\definecolor{mycolor2}{rgb}{0.93333,0.69804,0.00000}%
\begin{axis}[%
width=\figurewidth,
height=\figureheight,
scale only axis,
xmin=45,
xmax=205,
xmajorgrids,
ymin=0,
ymax=150,
ymajorgrids,
ylabel = {\small Dual updates},
xlabel = {\small Population size $M$},
legend pos = outer north east,
legend image post style={scale=0.3},
]
\addplot [color=mycolor1,solid,line width=1.0pt]
 plot [error bars/.cd, y dir = both, y explicit, error bar style={line width=1pt},   error mark options={
      rotate=90,
      line width=1pt}]
 table[row sep=crcr, y error plus index=2, y error minus index=3]{%
50	29	3.03315017762062	3.03315017762062\\
60	26.5	0.670820393249937	0.670820393249937\\
70	29.1	2.77308492477241	2.77308492477241\\
80	26.5	1.62788205960997	1.62788205960997\\
90	28.1	1.92093727122985	1.92093727122985\\
100	27.3	2.00249843945008	2.00249843945008\\
110	27.2	1.72046505340853	1.72046505340853\\
120	26.8	0.4	0.4\\
130	27.4	1.85472369909914	1.85472369909914\\
140	27.7	1.552417469626	1.552417469626\\
150	27.2	0.6	0.6\\
160	26.8	1.16619037896906	1.16619037896906\\
170	26.9	1.04403065089105	1.04403065089105\\
180	26.8	0.4	0.4\\
190	26.9	0.3	0.3\\
200	27.3	1.73493515728975	1.73493515728975\\
};
\addlegendentry{\cref{alg:two}};

\addplot[solid,line width=1.0pt]
 plot [error bars/.cd, y dir = both, y explicit, error bar style={line width=1pt},   error mark options={
      rotate=90,
      line width=1pt}]
 table[row sep=crcr, y error plus index=2, y error minus index=3]{%
50	38.8	11.7456374880208	11.7456374880208\\
60	60.3	42.2470117286418	42.2470117286418\\
70	59.4	29.8167738026769	29.8167738026769\\
80	75	47.6969600708473	47.6969600708473\\
90	69.6	32.3425416440947	32.3425416440947\\
100	50.9	19.423954283307	19.423954283307\\
110	63.7	21.8771570365073	21.8771570365073\\
120	77.4	43.1490440218552	43.1490440218552\\
130	63.5	22.477766792989	22.477766792989\\
140	94.4	50.5711380136931	50.5711380136931\\
150	74.7	20.0800896412342	20.0800896412342\\
160	77.5	24.2002066106883	24.2002066106883\\
170	90.3	25.6009765438743	25.6009765438743\\
180	71.2	18.4325798519903	18.4325798519903\\
190	93	38.5201246103903	38.5201246103903\\
200	79.6	24.654411369976	24.654411369976\\
};
\addlegendentry{\cref{alg:asp}};
\end{axis}
\end{tikzpicture}
\caption{ Primal (top) and dual (bottom) updates required to converge; mean and standard deviation for $10$ repetitions. As \cref{alg:asp} performs one primal and one dual update in each iteration, the black lines appearing in the two figures coincide.}
\label{fig:iterations_x}
\end{center}
\end{figure}
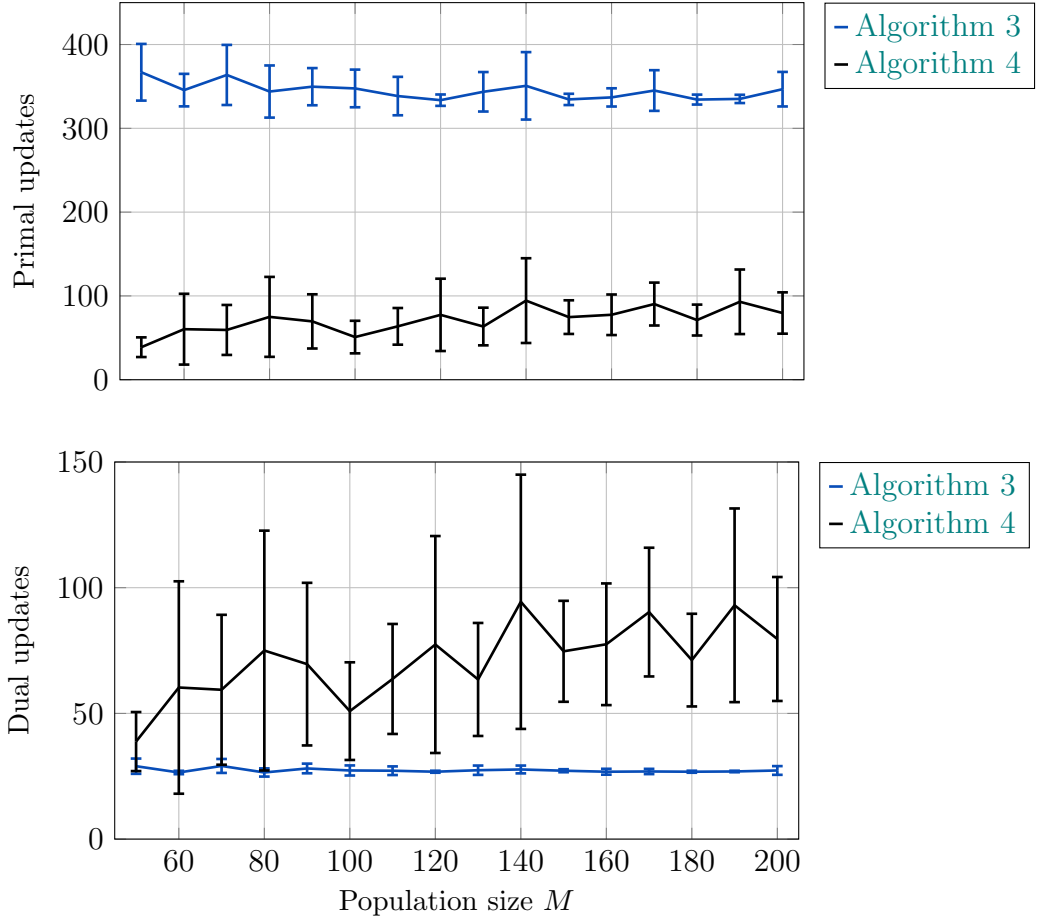

\subsubsection*{Equilibrium efficiency}
In this section we verify the theoretical results on the efficiency of equilibria obtained in \cref{cor:pev}, by means of numerical simulations. We consider four cases as follows.
\begin{itemize}[leftmargin=23mm]
	\item[{\bf Case 1.}] 
	We set $p_t(y)=0.15y^3$ and choose $\tilde x^i_t$ to allow charging in $[t^i_{\textup{min}},t^i_{\textup{max}}]$, with $t^i_{\textup{min}},t^i_{\textup{max}}$ uniformly randomly
distributed between 5pm and 10am; 
$\theta^i\sim\mathcal{U}[5, 15]$ and $d_t$ as in \cite[Fig. 1]{ma2013decentralized}.
\item[{\bf Cases $2$-$4$.}] 
We set $p_t(y)=0.15$ from 5pm to 1am and $p_t(y)=0.15y$ from 2am to 10am. For all vehicles, we choose $\tilde x^i_t$ to allow charging from 5pm to 10am. Cases 2-4 differ in $\theta^i$, $d_t$ as in the following table.
\begin{table}[H]
\centering
\begin{tabular}{|c|c|c|}
\hline
 Case& $\theta^i$ & $d_t$ \\ \hline
2 & $9$ & $\zeros[n]$\\
3 &$9$ & as in \cite[Fig. 1]{ma2013decentralized} \\
4 &$\mathcal{U}[5,13]$ & $\zeros[n]$ \\ \hline
\end{tabular}
\end{table}
\end{itemize}

\noindent For each case, we report the (numerical) price of anarchy as a function of $\N$ in \cref{fig:poa_and_diff} (top).
Observe that case $1$ and $4$ feature heterogenous charging needs. For these cases, we have randomly extracted $100$ games $\mc{G}^{\text{EV}}_\N$ (for any fixed $\N$) and report the worst $\poa$ amongst the $100$ realization.
In order to plot the price of anarchy, we computed the ratio between \emph{one} (instead of the \emph{worst}) Nash equilibrium of $\mc{G}^\text{EV}_\N$ and the social optimum. This choice is imposed by the fact that computing all Nash equilibria of $\mc{G}^\text{EV}_\N$ is in general a hard problem.\footnote{This is due to the fact that the operator associated with the variational inequality of the Nash problem is not guaranteed to be strongly monotone since condition \eqref{eq:bound_PEV_smon2} does not hold due to the choices of $p(t)$ in Cases 1-4.
To compute a Nash equilibrium we applied the extragradient  algorithm \cite{facchinei2007finite}, which is though not guaranteed to converge. We thus verified a posteriori that the point where the algorithm stopped was a Nash equilibrium.}
In \cref{fig:poa_and_diff} (bottom) we plot the difference between the cost at the Nash and at the social optimizer, relative to case~1.
\setlength\figureheight{4cm} 
\setlength\figurewidth{8cm} 
\begin{figure}[H]
\begin{center}
\begin{tikzpicture}

\begin{axis}[%
width=\figurewidth,
height=\figureheight,
at={(1.011111in,0.641667in)},
scale only axis,
xmin=0,
xmax=153,
xmajorgrids,
xtick={3, 10, 20, 30, 40 , 50, 60, 70, 80, 90, 100, 120, 150},
ymin=0.99,
ymax=1.35,
ytick={1, 1.05, 1.10, 1.15, 1.20, 1.25, 1.30, 1.35},
ymajorgrids,
tick label style={font=\small},
ylabel={$\poa_\N$},
legend style={at={(0.98,0.87)},anchor=north east, row sep=-3pt}
]

\addplot [color=black,solid,mark=o,mark options={solid},line width = 0.8pt]
  table[row sep=crcr]{%
3	1.146440320202752\\
5	1.084144206845967\\
7	1.060048849309918\\
10	1.024232150863788\\
15	1.016372686491602\\
20	1.014257536279855\\
30	1.004045564995405\\
40	1.002948190905122\\
50	1.003581851865993\\
60  1.002083234902703\\
70	1.003119089620615\\
80  1.001584796583014\\
90	1.001117731557795\\
100	1.001180347871256\\
120 1.000782030475284\\
150 1.000777440384723\\ 
};
\addlegendentry{\footnotesize Case 1}

\addplot [color=red,solid,mark=triangle,mark options={solid},line width = 0.8pt]
  table[row sep=crcr]{%
3	1.083333271041991\\
5	1.148147949310946\\
7	1.187499715409593\\
10	1.223140306615917\\
15	1.255208121274737\\
20	1.272864593882444\\
30	1.291709937733426\\
40	1.301606129192305\\
50	1.307702065977234\\
60 	1.311833603676443\\
70	1.314818263630529\\
80  1.317075457595903\\
90  1.318841734023247\\
100	1.320261976515319\\
120 1.322404157939918\\
150 1.324560203528079\\ 
};
\addlegendentry{\footnotesize Case 2}

\addplot [color=blue,solid,mark=diamond,mark options={solid},line width = 0.8pt]
  table[row sep=crcr]{
3	1.051328159113605\\
5	1.059896867679024\\
7	1.064396255666115\\
10	1.068183912171345\\
15	1.071412416249527\\
20	1.073131774140623\\
30	1.074924501049531\\
40	1.075850149130869\\
50	1.076415196445974\\
60  1.076795962142782\\
70	1.077069926099951\\
80  1.077268560416244\\
90  1.077437745707750\\
100	1.077567147972555\\
120 1.077749962830918\\
150 1.077942306748157\\ 
};
\addlegendentry{\footnotesize Case 3}

\addplot [color=green,solid,mark=square,mark options={solid},line width = 0.8pt]
  table[row sep=crcr]{
3	1.116195356881952\\
5	1.176091225026618\\
7   1.207709846007565 \\
10	1.235579439042558\\
15	1.260437393215178\\
20	1.277438692108011\\
30	1.295324853591825\\
40	1.303891968957424\\
50	1.309541572633434\\
60  1.312817040389996\\
70	1.315639950431473\\
80  1.317885457093774\\
90  1.319246690488960\\
100	1.320741186315272\\
120 1.322749824875607\\
150 1.324731164796894\\
};
\addlegendentry{\footnotesize Case 4}

\end{axis}
\end{tikzpicture}%
\\[0.5cm]
 \input{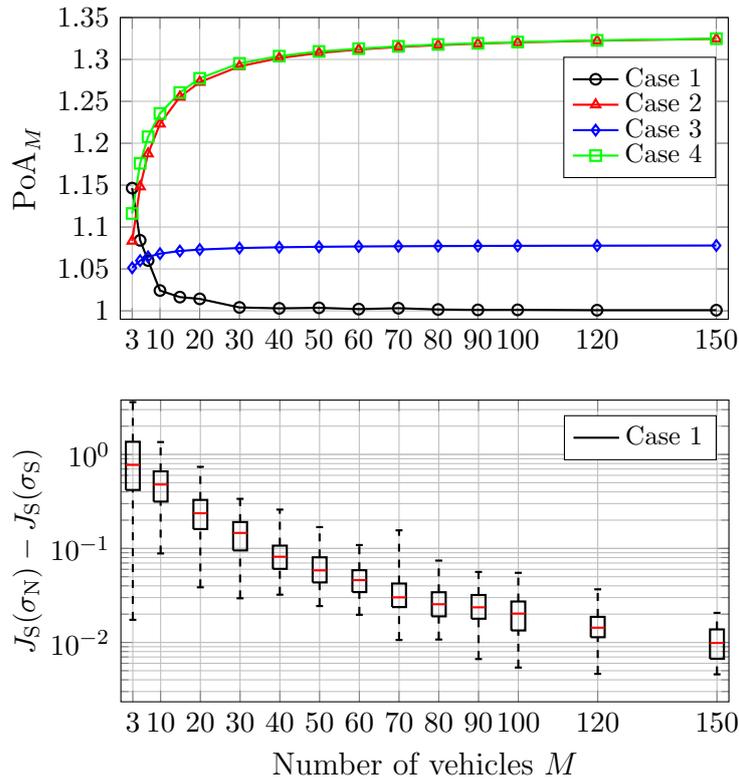}
\caption{Price of anarchy (top), and cost difference between Nash and social optimum (bottom) as a function of $\N$.}
\label{fig:poa_and_diff}
\end{center}
\end{figure}

Thanks to the choice of parameters and price function, the third statement in \cref{cor:pev} guarantees that $\lim_{M\rightarrow \infty} \poa_M=1$. The numerical results reported in~\cref{fig:poa_and_diff} (top, black line) are consistent with it: the ratio between the cost at the Nash and the cost at the social optimum converges to one. In addition to this, \cref{fig:poa_and_diff} (bottom) shows that also the difference between these costs converges to zero, as guaranteed in the proof of \cref{thm:polypoa} by the boundedness of $\mc{X}_0$.
Case 2 has been constructed so that the corresponding Wardrop equilibrium features the worst possible asymptotic price of anarchy within the class of affine cost functions (for which $\alpha(\mathcal{L})=4/3$, see \cite{roughgarden2003price}). The numerics of \cref{fig:poa_and_diff} (top, red line) show that $\poa_\N$ (i.e., the efficiency of Nash equilibria) converges to $1.33\approx 4/3=\alpha(\mathcal{L})$. Cases 3 and 4 are a modification of case 2. While the presence of base demand (case 3) helps in lowering the price of anarchy, the impact of heterogeneity (case 4) on the asymptotic price of anarchy is minor (blue and green plots in \cref{fig:poa_and_diff}).
\vspace*{-3mm}
\section{Route choice in a road network}
\label{sec:traffic}
As second application we consider that of traffic routing in a road network.
Traffic congestion is a well-recognized issue in densely populated cities, and the corresponding economic costs are significant~\cite{arnott1994economics}. Since every driver seeks his own interest (e.g., minimizing the travel time) and is affected by the others' choices via congestion, a classic approach is to model the traffic problem as a game~\cite{dafermos1980traffic}. In the following we focus on a stationary model that aims at capturing the basic interactions among the vehicles flow during rush hours. Building upon our theoretical findings, we derive results specific for the route choice game. Moreover, we perform a realistic numerical analysis based on the data set of the city of Oldenburg in Germany~\cite{OldenburgDataSet}. Specifically, we investigate via simulation the effect of road access limitations, expressed as coupling constraints~\cite{sandmo1975pigovian}.

We consider a strongly-connected directed graph $(\mc{V},\mc{E})$ with vertex set $\mc{V} = \{1,\dots,V\}$, representing geographical locations, and directed edge set $\mc{E} = \{1,\dots,E\} \subseteq \mc{V} \times \mc{V}$, representing roads connecting the locations. Each agent $i \in \{1,\dots,\N\}$ represents a driver who wants to drive from his origin $o^i \in \mc{V}$ to his destination $d^i \in \mc{V}$.
\vspace*{-2mm}
\subsection*{Constraints}
Let us introduce the vector $x^i \in [0,1]^E$ to describe the strategy (route choice) of agent $i$, with $[x^i]_e$ representing the probability that agent $i$ transits on edge $e$~\cite{de2005route}. To guarantee that agent $i$ leaves his origin and reaches his destination with probability 1, the strategy $x\i$ has to satisfy
\begin{equation}
\sum_{e \in \text{in}(v)} [x\i]_e - \sum_{e \in \text{out}(v)} [x\i]_e =
\begin{cases}
-1 &\text{if} \quad v = o^i \\
1 &\text{if} \quad v = d^i \\
0 &\text{otherwise},
\end{cases} \qquad \forall \; v \in \mc{V},
\label{eq:traffic_constraints}
\end{equation}
where $\text{in}(v)$ and $\text{out}(v)$ represent the set of in-edges and the set of out-edges of node $v$. We denote  the graph incidence matrix by $B \in \mb{R}^{V \times E}$,  so that $[B]_{ve} = 1$ if edge $e$ points to vertex $v$, $[B]_{ve} = -1$ if edge $e$ exits vertex $v$ and $[B]_{ve} = 0$ otherwise. The individual constraint set of agent $i$ is then
\begin{equation}
\mc{X}^i \defeq \{x \in [0,1]^E \,|\, Bx = b\i \},
\label{eq:local_traffic}
\end{equation}
where $b^i \in \mb{R}^V$ is such that $[b^i]_v = -1$ if $v = o^i$,  $[b^i]_v = 1$ if $v = d^i$ and $[b^i]_v = 0$ otherwise. We introduce the constraint
\begin{equation}
x\in\mc{C}\defeq\{x\in\R^{\N E}\mid \textstyle \frac{1}{\N}\sum_{i=1}^\N x\i_e \le K_e,  \, \forall \, e=1,\dots,E\},
\label{eq:coupling_global_traffic}
\end{equation}
expressing the fact that the number of vehicles on edge $e$ cannot exceed $\N K_e$. Such coupling constraint can be imposed by authorities to decrease the congestion in a specific road or neighborhood, with the goal of reducing noise or pollution.
\subsection*{Cost function}
We assume that each driver $i\in\{1,\dots,\N\}$ wants to minimize his travel time and, at the same time, does not want to deviate too much from a preferred route $\tilde x^i \in \mc{X}\i$. We model this objective with the following cost function 
\begin{equation}
J^i(x\i,\sigma(x)) = \frac{\gamma^i}{2} \|x^i-\tilde x^i\|^2 + \sum_{e=1}^E t_e(\sigma_e(x_e)) x\i_e,
\label{eq:cost_traffic}
\end{equation}
with $\gamma^i\ge0$ a weighting factor, $x_e \defeq [x^1_e,\ldots,x^\N_e]^\top$, $\sigma_e(x_e) = \frac{1}{\N} \sum_{i=1}^{\N} x\i_e$ and $t_e(\sigma_e(x_e))$ the travel time on edge $e$.
\subsection*{Travel time}
This subsection is devoted to the derivation of the analytical expression of the travel time $t_e(\sigma_e(x_e))$. The reader not interested in the technical details of the derivation can proceed to the expression of $t_e(\sigma_e(x_e))$ in~\eqref{eq:travel_time_smoothed}, which is illustrated in~\cref{fig:t_cont_diff}.

In the following, we introduce the quantity $D_e(x_e) = \sum_{i=1}^\N x\i_e$ to describe the total demand on edge $e$ and consider a rush-hour interval $[0,h]$. We assume that the instantaneous demand equals ${D_e(x_e)}/{h}$ at any time $t\in[0,h]$ and zero for $t > h$. Additionally, we assume that edge $e$ can support a maximum flow $F_e$ (vehicles per unit of time) and features a free-flow travel time $t_{e,\text{free}}$. As we are interested in comparing populations of different sizes, we further assume that the peak hour duration $h$ is independent from the population size $\N$ and that  the road maximum capacity flow $F_e$ scales linearly with the population size, i.e., $F_e(\N) = f_e \cdot \N$, with $f_e$ constant in $\N$. The consideration underpinning this last assumption is that the road infrastructure scales with the number of vehicles to accommodate the increasing demand, similarly as what assumed in~\cite{ma2013decentralized} for the energy infrastructure.

If $D_e(x_e)/h\le F_e$ then every car has instantaneous access to edge $e$ and no queue accumulates, hence the travel time equals $t_{e,\text{free}}$. We focus in the rest of this paragraph on the case $D_e(x_e) / h> F_e$. An increasing queue forms in the interval $[0,h]$ and decreases at rate $F_e$ for $t>h$. The number of vehicles $q_e(t)$ queuing on edge $e$ at time $t$ obeys then the dynamics
\begin{equation}
\dot q_e(t) =  \begin{cases} \frac{D_e(x_e)}{h}\cdot \boldsymbol{1}_{[0,h]}(t) - F_e & \text{if} \; q_e(t) \ge 0 \\
\; 0  & \text{otherwise}, \end{cases} \quad q_e(0) = 0,
\label{eq:queue_dynamics}
\end{equation}
where $\boldsymbol{1}_{[0,h]}$ is the indicator function of $[0,h]$. The solution $q_e(t)$ to~\eqref{eq:queue_dynamics} is hence
\begin{equation}
q_e(t) =  \begin{cases} \left(\frac{D_e(x_e)-F_e h}{h}\right) t \quad &\text{if} \; 0 \le t \le h \\
D_e(x_e) - F_e \, t &\text{if} \; h \le  t \le D_e(x_e) / F_e \\
\; 0  & \text{if} \; t \ge D_e(x_e) / F_e. \end{cases}
\label{eq:queue_expression}
\end{equation}
As a consequence, the total queuing time at edge $e$ (i.e, the queuing times summed over all vehicles) is the integral of $q_e(t)$, which equals $D_e(x_e) (D_e(x_e)-F_e h) / (2F_e)$; the queuing time is then $(D_e(x_e)-F_e h)/(2F_e)$.

Since $\sigma_e(x_e)   =   \frac 1\N \sum_{i=1}^\N x\i_e   =   \frac 1\N D_e(x_e)$, the travel time is
\begin{equation}
t_e^\textup{PWA}(\sigma_e(x_e)) =  \begin{cases} t_{e,\text{free}} & \text{if} \; \sigma_e(x_e) \le f_e h \\
t_{e,\text{free}} + \frac{\sigma_e(x_e)-f_e h}{2f_e}  & \text{otherwise,} \end{cases}
\label{eq:travel_time}
\end{equation}
and is reported in \cref{fig:t_cont_diff}. Note that $t_e^\textup{PWA}$ is a continuous and piece-wise affine function of $\sigma_e(x_e)$, but it is not continuously differentiable, hence~\cref{A1} would not hold. Therefore, we define $t_e$ appearing in~\eqref{eq:cost_traffic} as the smoothed version of $t_e^\textup{PWA}$
\begin{equation}
t_e(\sigma_e(x_e))   =   \begin{cases} t_{e,\text{free}} & \text{if} \; \sigma_e(x_e) \le f_e h - \Delta_e \\
t_{e,\text{free}} + \frac{\sigma_e(x_e)-f_e h}{2f_e} & \text{if} \; \sigma_e(x_e) \ge f_e h + \Delta_e \\
a \sigma_e(x_e)^2   +   b \sigma_e(x_e)   +   c    & \text{otherwise,} \end{cases}
\label{eq:travel_time_smoothed}
\end{equation}
where the values of $\Delta_e$, $a$, $b$, $c$ are such that $t_e$ is continuously differentiable\footnote{The values are $\Delta_e = 0.5(\sqrt{(f_e h)^2+4f_e h} - f_e h)$, $a = 1/(8f_e\Delta_e)$, $b=1/(4f_e)-h/(4 \Delta_e)$, $c = t_\textup{e,free} + (f_e h)^2/(8f_e \Delta_e) - h/4 - (\Delta_e)/(8f_e)$.}, as illustrated in~\cref{fig:t_cont_diff}.
\begin{figure}[h]
\setlength\figureheight{4cm} 
\setlength\figurewidth{8cm} 
\begin{center}
%
%
\begin{tikzpicture}

\begin{axis}[%
width=\figurewidth,
height=\figureheight,
at={(1.011in,0.642in)},
scale only axis,
xmin=0.56,
xmax=3.46,
xtick={1.2679,2,2.7321},
xticklabels={{$f_e h-\Delta_e$},{$f_e h$},{$f_e h+\Delta_e$}},
xmajorgrids,
ymajorgrids,
xminorgrids,
yminorgrids,
ymin=1.75,
ymax=4.25,
ytick={2,2.5,3,3.5,4},
yticklabels={{$t_{\text{free}}$}},
axis background/.style={fill=white},
legend style={at={(0.43,0.92)}, font=\footnotesize, legend cell align=left, align=left, draw=white!15!black}
]

\addplot [color=black, dashed, line width=1.5pt]
  table[row sep=crcr]{%
0.535898384862246	2\\
0.565476195269069	2\\
0.595054005675892	2\\
0.624631816082716	2\\
0.654209626489539	2\\
0.683787436896362	2\\
0.713365247303186	2\\
0.742943057710009	2\\
0.772520868116832	2\\
0.802098678523655	2\\
0.831676488930479	2\\
0.861254299337302	2\\
0.890832109744125	2\\
0.920409920150949	2\\
0.949987730557772	2\\
0.979565540964595	2\\
1.00914335137142	2\\
1.03872116177824	2\\
1.06829897218507	2\\
1.09787678259189	2\\
1.12745459299871	2\\
1.15703240340554	2\\
1.18661021381236	2\\
1.21618802421918	2\\
1.24576583462601	2\\
1.27534364503283	2\\
1.30492145543965	2\\
1.33449926584648	2\\
1.3640770762533	2\\
1.39365488666012	2\\
1.42323269706695	2\\
1.45281050747377	2\\
1.48238831788059	2\\
1.51196612828742	2\\
1.54154393869424	2\\
1.57112174910106	2\\
1.60069955950789	2\\
1.63027736991471	2\\
1.65985518032153	2\\
1.68943299072836	2\\
1.71901080113518	2\\
1.748588611542	2\\
1.77816642194883	2\\
1.80774423235565	2\\
1.83732204276247	2\\
1.8668998531693	2\\
1.89647766357612	2\\
1.92605547398294	2\\
1.95563328438977	2\\
1.98521109479659	2\\
2.01478890520341	2.02957781040682\\
2.04436671561024	2.08873343122047\\
2.07394452601706	2.14788905203412\\
2.10352233642388	2.20704467284776\\
2.1331001468307	2.26620029366141\\
2.16267795723753	2.32535591447506\\
2.19225576764435	2.3845115352887\\
2.22183357805118	2.44366715610235\\
2.251411388458	2.502822776916\\
2.28098919886482	2.56197839772964\\
2.31056700927164	2.62113401854329\\
2.34014481967847	2.68028963935694\\
2.36972263008529	2.73944526017058\\
2.39930044049211	2.79860088098423\\
2.42887825089894	2.85775650179788\\
2.45845606130576	2.91691212261152\\
2.48803387171258	2.97606774342517\\
2.51761168211941	3.03522336423882\\
2.54718949252623	3.09437898505246\\
2.57676730293305	3.15353460586611\\
2.60634511333988	3.21269022667976\\
2.6359229237467	3.2718458474934\\
2.66550073415352	3.33100146830705\\
2.69507854456035	3.3901570891207\\
2.72465635496717	3.44931270993434\\
2.75423416537399	3.50846833074799\\
2.78381197578082	3.56762395156164\\
2.81338978618764	3.62677957237528\\
2.84296759659446	3.68593519318893\\
2.87254540700129	3.74509081400258\\
2.90212321740811	3.80424643481622\\
2.93170102781493	3.86340205562987\\
2.96127883822176	3.92255767644352\\
2.99085664862858	3.98171329725716\\
3.0204344590354	4.04086891807081\\
3.05001226944223	4.10002453888446\\
3.07959007984905	4.1591801596981\\
3.10916789025587	4.21833578051175\\
3.1387457006627	4.2774914013254\\
3.16832351106952	4.33664702213904\\
3.19790132147634	4.39580264295269\\
3.22747913188317	4.45495826376634\\
3.25705694228999	4.51411388457998\\
3.28663475269681	4.57326950539363\\
3.31621256310364	4.63242512620728\\
3.34579037351046	4.69158074702092\\
3.37536818391728	4.75073636783457\\
3.40494599432411	4.80989198864822\\
3.43452380473093	4.86904760946186\\
3.46410161513775	4.92820323027551\\
};
\addlegendentry{$t_e^\textup{PWA} (\sigma_e(x_e))$}

\addplot [color=black, line width=1.5pt]
  table[row sep=crcr]{%
0.535898384862246	2\\
0.565476195269069	2\\
0.595054005675892	2\\
0.624631816082716	2\\
0.654209626489539	2\\
0.683787436896362	2\\
0.713365247303186	2\\
0.742943057710009	2\\
0.772520868116832	2\\
0.802098678523655	2\\
0.831676488930479	2\\
0.861254299337302	2\\
0.890832109744125	2\\
0.920409920150949	2\\
0.949987730557772	2\\
0.979565540964595	2\\
1.00914335137142	2\\
1.03872116177824	2\\
1.06829897218507	2\\
1.09787678259189	2\\
1.12745459299871	2\\
1.15703240340554	2\\
1.18661021381236	2\\
1.21618802421918	2\\
1.24576583462601	2\\
1.27534364503283	2.00003734572021\\
1.30492145543965	2.00093364300527\\
1.33449926584648	2.00302500333706\\
1.3640770762533	2.0063114267156\\
1.39365488666012	2.01079291314087\\
1.42323269706695	2.01646946261289\\
1.45281050747377	2.02334107513165\\
1.48238831788059	2.03140775069714\\
1.51196612828742	2.04066948930938\\
1.54154393869424	2.05112629096836\\
1.57112174910106	2.06277815567408\\
1.60069955950789	2.07562508342654\\
1.63027736991471	2.08966707422574\\
1.65985518032153	2.10490412807167\\
1.68943299072836	2.12133624496435\\
1.71901080113518	2.13896342490377\\
1.748588611542	2.15778566788993\\
1.77816642194883	2.17780297392284\\
1.80774423235565	2.19901534300248\\
1.83732204276247	2.22142277512886\\
1.8668998531693	2.24502527030198\\
1.89647766357612	2.26982282852184\\
1.92605547398294	2.29581544978844\\
1.95563328438977	2.32300313410179\\
1.98521109479659	2.35138588146187\\
2.01478890520341	2.38096369186869\\
2.04436671561024	2.41173656532226\\
2.07394452601706	2.44370450182256\\
2.10352233642388	2.4768675013696\\
2.1331001468307	2.51122556396339\\
2.16267795723753	2.54677868960391\\
2.19225576764435	2.58352687829118\\
2.22183357805118	2.62147013002519\\
2.251411388458	2.66060844480593\\
2.28098919886482	2.70094182263342\\
2.31056700927164	2.74247026350764\\
2.34014481967847	2.78519376742861\\
2.36972263008529	2.82911233439632\\
2.39930044049211	2.87422596441077\\
2.42887825089894	2.92053465747195\\
2.45845606130576	2.96803841357988\\
2.48803387171258	3.01673723273455\\
2.51761168211941	3.06663111493596\\
2.54718949252623	3.11772006018411\\
2.57676730293305	3.170004068479\\
2.60634511333988	3.22348313982063\\
2.6359229237467	3.278157274209\\
2.66550073415352	3.33402647164411\\
2.69507854456035	3.39109073212596\\
2.72465635496717	3.44935005565455\\
2.75423416537399	3.50846833074799\\
2.78381197578082	3.56762395156164\\
2.81338978618764	3.62677957237528\\
2.84296759659446	3.68593519318893\\
2.87254540700129	3.74509081400258\\
2.90212321740811	3.80424643481622\\
2.93170102781493	3.86340205562987\\
2.96127883822176	3.92255767644352\\
2.99085664862858	3.98171329725716\\
3.0204344590354	4.04086891807081\\
3.05001226944223	4.10002453888446\\
3.07959007984905	4.1591801596981\\
3.10916789025587	4.21833578051175\\
3.1387457006627	4.2774914013254\\
3.16832351106952	4.33664702213904\\
3.19790132147634	4.39580264295269\\
3.22747913188317	4.45495826376634\\
3.25705694228999	4.51411388457998\\
3.28663475269681	4.57326950539363\\
3.31621256310364	4.63242512620728\\
3.34579037351046	4.69158074702092\\
3.37536818391728	4.75073636783457\\
3.40494599432411	4.80989198864822\\
3.43452380473093	4.86904760946186\\
3.46410161513775	4.92820323027551\\
};
\addlegendentry{$t_e (\sigma_e(x_e))$}

\end{axis}
\end{tikzpicture}%
\end{center}
\caption{Piece-wise affine travel time $t_e^\textup{PWA}(\sigma_e(x_e))$ and its smooth approximation $t_e(\sigma_e(x_e))$ as functions of $\sigma_e(x_e)$.}
\label{fig:t_cont_diff}
\end{figure}
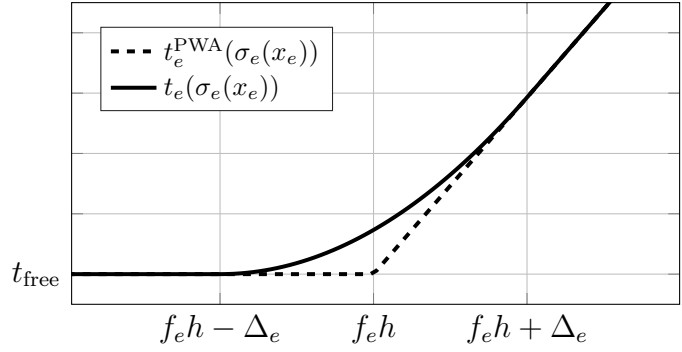
We note that the function $t_e(\sigma_e(x_e))$ is used within a stationary traffic model but includes the average queuing time which is based on the dynamic function~\eqref{eq:queue_expression}. A thorough analysis of a dynamic traffic model is subject of future work.

Finally, we remark that a travel time with similar monotonicity properties can be derived from the piecewise affine fundamental diagram of traffic~\cite[Fig. 7]{FDLi}, but $t_e(\sigma_e(x_e))$ would present a vertical asymptote which is absent here.
\subsection{Theoretical guarantees}
We  define the route-choice game $\mc{G}^\text{RC}_\N$ as in~\eqref{eq:GNEP}, with $\mathcal{X}\i$~as in \eqref{eq:local_traffic}, $\mathcal{C}$ as in~\eqref{eq:coupling_global_traffic} and $J\i(x\i,\sigma(x))$ as in~\eqref{eq:cost_traffic},~\eqref{eq:travel_time_smoothed}. In the following we summarize the main results from the previous chapters.
\begin{corollary}
\label{cor:traffic1}
Consider the sequence of games $(\mc{G}^\textup{RC}_M)_{M=1}^\infty$.
Assume that for each game $\mc{G}^\textup{RC}_M$ the set $\mc{Q}=\mc{C}\cap\mc{X}$ is non-empty, that $h > 0$ and $t_\textup{e,free}, f_e > 0$ for each $e \in \mc{E}$. Moreover, assume that there exists $\hat \gamma > 0$ such that $\gamma\i \ge \hat \gamma$ for all $i \in \{1,\dots,\N\}$, for all $\N$.
Then:
\begin{enumerate}
\item The operator $F\WE$ is strongly monotone, hence each game $\mathcal{G}^\textup{RC}_M$ admits a unique variational Wardrop equilibrium. For every $\N$ satisfying
\begin{equation}
M > \maxx{e \in \mc{E}} \frac{1}{32 f_e \Delta_e \hat \gamma}
\label{eq:bound_M_traffic}
\end{equation}
the operator $F\NE$ is strongly monotone, hence each game $\mathcal{G}^\textup{RC}_M$ admits a unique variational Nash equilibrium. Every Wardrop equilibrium is an $\varepsilon$-Nash equilibrium with $\varepsilon=\frac{E}{\N f_\text{min}}$, where $f_\textup{min} = \min_{e \in \mc{E}} f_e$.
\item  For any variational Nash equilibrium $\VNE{x}$ of $\mathcal{G}^\textup{RC}_M$, the unique variational Wardrop equilibrium $\VWE{x}$ of $\mathcal{G}^\textup{RC}_M$ satisfies
\begin{align}
\| \VNE{x} - \VWE{x} \| \le \frac{\sqrt{E}}{2 f_\textup{min} \hat \gamma {\sqrt \N}}.
\label{eq:convergence_strategies_traffic}
\end{align}
\item For any $\N$,~\cref{alg:asp} with operator $F\WE$ converges to a variational Wardrop equilibrium of $\mathcal{G}^\textup{RC}_\N$. For $\N$ satisfying~\eqref{eq:bound_M_traffic},~\cref{alg:asp} with operator $F\NE$ converges to a variational Nash equilibrium of $\mathcal{G}^\textup{RC}_\N$.
\qed
\end{enumerate}
\end{corollary}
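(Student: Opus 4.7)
The plan is to recognize $\mc{G}^{\mathrm{RC}}_M$ as a special case of the aggregative games studied in the preceding chapters, so that each claim becomes an instantiation of a general result. The cost \eqref{eq:cost_traffic} has the form \eqref{eq:costs_specific} with $v^i(x^i)=\tfrac{\gamma^i}{2}\|x^i-\tilde x^i\|^2$, strongly convex with modulus $\gamma^i\ge\hat\gamma$, and diagonal price $p(z)=[t_e(z_e)]_{e=1}^{E}$ which is monotone because each $t_e$ is non-decreasing by the construction in \eqref{eq:travel_time_smoothed}. Since $\mc{X}^i$ in \eqref{eq:local_traffic} is closed, convex, bounded, and contained in $[0,1]^E\eqdef\mc{X}\zero$ for every $M$, \cref{A1,A2} are satisfied with $R\le\sqrt{E}$.

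The first half of claim~(i) is then a direct application of \cref{lem:FNstrongly monotone}, item 2, yielding strong monotonicity of $F\WE$ with constant $\hat\gamma$. The $\varepsilon$-Nash assertion follows from \cref{prop:conv_cost}: a short computation gives $\nabla_z J^i(x^i,z)=[t_e'(z_e)\,x^i_e]_{e=1}^{E}$, and combining $t_e'(z)\le 1/(2f_e)$ with $x^i_e\in[0,1]$ yields $L_2\le \sqrt{E}/(2f_{\min})$, whence $\varepsilon=2RL_2/M\le E/(M f_{\min})$. Claim~(ii) is then immediate from \cref{thm:conv_strategies}, item 2, applied with $\alpha_M=\hat\gamma$ and the same $L_2$.

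The main obstacle is the second half of~(i), the strong monotonicity of $F\NE$: \cref{lem:diagSMON} does not apply because $t_e$ is not strictly increasing (it is flat on $[0,\,f_eh-\Delta_e]$). I would redo the Hessian calculation directly for our $J^i$. After permuting coordinates by edges, $\nabla_x F\NE(x)$ becomes block-diagonal with $E$ blocks of size $M\times M$; the symmetric part of the $e$-th block equals
\[
\mathrm{diag}(\gamma^i)_{i=1}^M \;+\; \frac{t_e'(\sigma_e)}{M}\ones[M]\ones[M]^\top \;+\; \frac{t_e'(\sigma_e)}{M}\,I_M \;+\; \frac{t_e''(\sigma_e)}{2M^2}\bigl(x_e\ones[M]^\top+\ones[M]\, x_e^\top\bigr).
\]
The first three summands are positive semidefinite with smallest eigenvalue at least $\hat\gamma$, since $t_e'\ge 0$ and $\gamma^i\ge\hat\gamma$. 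Only the last summand can be indefinite; since $t_e''\ge 0$, \cref{lem:min_eigenval} applied with $y=x_e\in[0,1]^M$ bounds its minimum eigenvalue below by $-t_e''(\sigma_e)/(8M)$. Reading off $t_e''\in\{0,\,1/(4f_e\Delta_e)\}$ from \eqref{eq:travel_time_smoothed}, the smallest eigenvalue of the full block is at least $\hat\gamma-1/(32 Mf_e\Delta_e)$, which is strictly positive precisely under \eqref{eq:bound_M_traffic}. Strong monotonicity of $F\NE$ then follows from \cref{lemma:pd}.

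For claim~(iii), \cref{alg:asp} converges as soon as the driving operator is strongly monotone and Lipschitz on $\mc{X}$, by \cref{thm:convergence_asp}. Strong monotonicity has just been established unconditionally for $F\WE$ and under \eqref{eq:bound_M_traffic} for $F\NE$; Lipschitz continuity is inherited from the boundedness of $t_e'$ and $t_e''$ on the compact set $\mc{X}\zero$, together with the (per-game) finiteness of the weights $\gamma^i$. Choosing $\tau$ to fulfil \eqref{eq:condition_tau_APA} closes the argument, and I do not anticipate further subtleties beyond the Hessian computation above.
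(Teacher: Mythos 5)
Your proposal follows essentially the same route as the paper's proof: strong monotonicity of $F\WE$ with constant $\hat\gamma$ from the sum of the monotone travel-time operator and the strongly convex quadratic terms, the $\varepsilon$-Nash and distance bounds via \cref{prop:conv_cost} and \cref{thm:conv_strategies} with $R=\sqrt{E}$ and $L_2=\sqrt{E}/(2f_{\textup{min}})$, the edge-wise block-diagonal Jacobian computation combined with \cref{lem:min_eigenval} to obtain exactly the threshold \eqref{eq:bound_M_traffic}, and \cref{thm:convergence_asp} for the algorithmic claim. The only caveat is your final appeal to \cref{lemma:pd}, which as stated requires a continuously differentiable operator, whereas $F\NE$ is only piecewise so (because $t_e''$ jumps at $f_e h\pm\Delta_e$); the paper patches this in a footnote by invoking a generalization of \cref{lemma:pd} and verifying $\nabla_x F\NE(x)\succ 0$ separately on each of the three regions of \eqref{eq:travel_time_smoothed}, which is precisely what your region-wise eigenvalue bound already delivers, so the fix is purely a matter of citing the right version of the lemma.
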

\subsection{Numerical analysis}
For the numerical analysis we use the data set of the city of Oldenburg~\cite{OldenburgDataSet}, whose graph features 175 nodes, 213 undirected edges, and is reported in~\cref{fig:queuing_time_portion}.\footnote{The graph in the original data set features 6105 vertexes and 7035 undirected edges. We reduce it by excluding all the nodes that are outside the rectangle $[3619,4081] \times [3542,4158]$ and all the edges that do not connect two nodes in the rectangle. The resulting graph is strongly connected.}
For each agent $i$ the origin $o\i$ and the destination $d\i$ are chosen uniformly at random. Regarding the cost~\eqref{eq:cost_traffic}, $t_{e,\text{free}}$ is computed as the ratio between the road length, which is provided in the data set, and the free-flow speed.
Based on the road topology, we divide the roads into main roads, where the free-flow speed is $50$ km/h, and secondary roads, where the free-flow speed is $30$ km/h. Moreover, we assume a peak hour duration $h$ of $2$ hours, and for all $e \in \mc{E}$, we set $f_e = 4\cdot10^{-3}$ vehicles per second, which corresponds to 1 vehicle every 4 seconds for a population of $\N = 60$ vehicles. Finally, the parameter $\gamma\i$ is picked uniformly at random in $[0.5,3.5]$ and $\tilde x\i$ is such that $\tilde x\i_e = 1$ if $e$ belongs to the shortest path from $o\i$ to $d\i$, while $\tilde x\i_e = 0$ otherwise. The shortest path is computed based on $\{t_{e,\text{free}}\}_{e=1}^E$. Note that with the above values the bound~\eqref{eq:bound_M_traffic} becomes $\N > 16.14$, which is satisfied for relatively small-size populations.

We compute the Wardrop equilibrium with~\cref{alg:asp} relatively to a population of $\N = 60$ drivers without coupling constraint, i.e., with $K_e = 1$ for all $e \in \mc{E}$. We report in~\cref{fig:queuing_time_portion} the corresponding queuing time $t_e(\sigma_e(x_e)) - t_{e,\text{free}}$ as by~\eqref{eq:travel_time_smoothed}.
\begin{figure}[H]
\begin{center}
\includegraphics[width = 0.75\columnwidth]{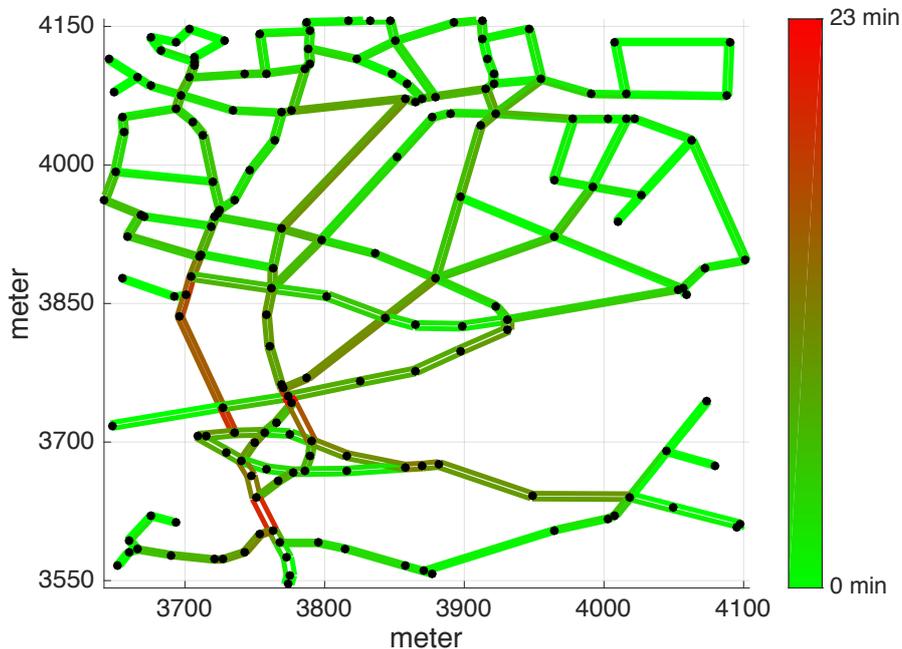}
\end{center}
\caption{ The queuing time reported in green-red color scale.
Note that this pattern changes if one modifies the pairs origin-destination.}
\label{fig:queuing_time_portion}
\end{figure}
We illustrate in~\cref{fig:comparison_coupling} the change in the queuing time of an entire neighborhood when introducing a coupling constraint that upper bounds the total number of cars on a single edge, relatively to a Wardrop equilibrium with $\N = 60$. Finally, we illustrate the second statement of~\cref{cor:traffic1} by reporting in~\cref{fig:traffic_conv_x} the distance between the unique variational Wardrop equilibrium and the variational Nash equilibrium found by~\cref{alg:asp}. 
\begin{figure}[H]
\begin{center}
\includegraphics[width = 1\columnwidth]{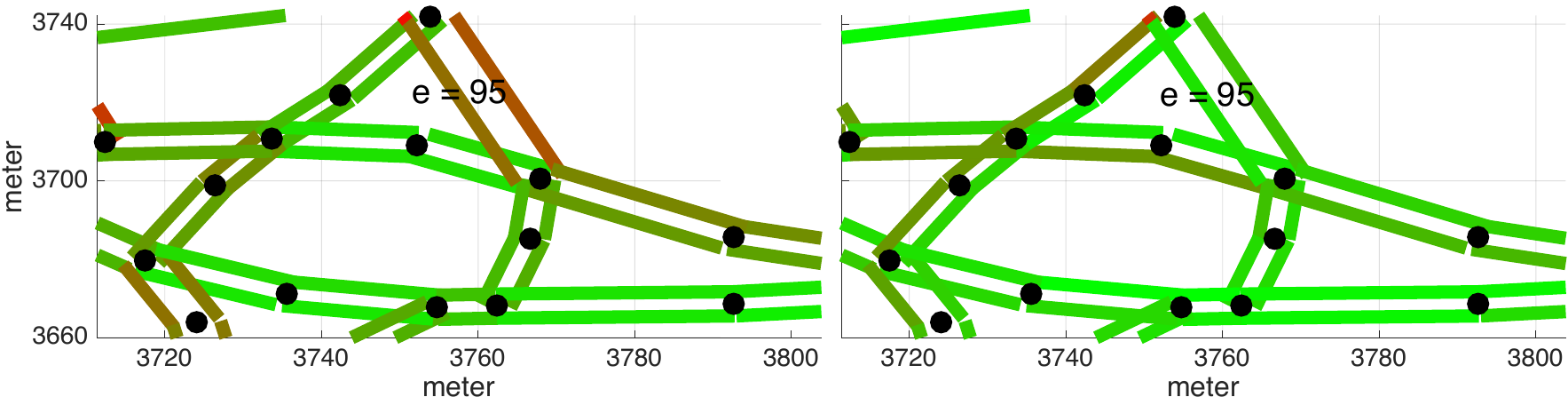}
\end{center}
\caption{On the left, the queuing time in a neighborhood without any coupling constraints; 10\% of the population transits on edge 95, and the queuing time is 7.28 minutes. On the right, the queuing time in presence of a coupling constraint allowing at most 3\% of the entire population on edge 95; the queuing time is reduced to 1.42 minutes, but it visibly increases on the edges of the alternative route.}
\label{fig:comparison_coupling}
\end{figure}

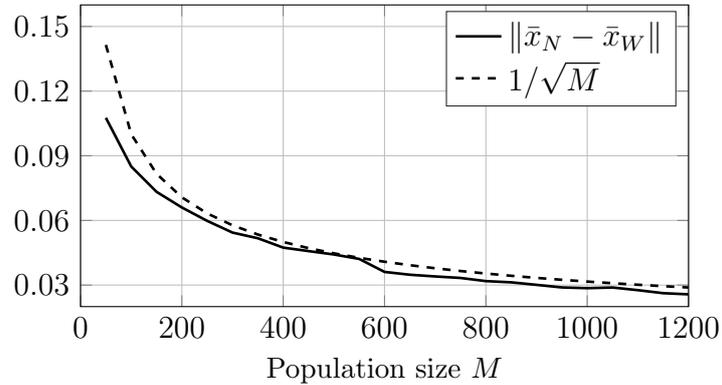
\begin{figure} [h!]
\begin{center}
\setlength\figureheight{4cm} 
\setlength\figurewidth{8cm} 
%
%
\begin{tikzpicture}

\begin{axis}[%
width=\figurewidth,
height=\figureheight,
at={(1.011in,0.642in)},
scale only axis,
xmin=0,
xmax=1200,
tick label style={/pgf/number format/.cd,1000 sep={}},
xtick={0, 200, 400, 600, 800, 1000, 1200},
ymin=0.02,
ymax=0.16,
tick label style={/pgf/number format/fixed},
ytick={0,0.03,0.06,0.09,0.12,0.15},
axis background/.style={fill=white},
xmajorgrids,
ymajorgrids,
xlabel ={\small Population size $M$},
legend style={legend cell align=left, align=left, draw=white!15!black}
]
\addplot [color=black, line width=1.0pt]
  table[row sep=crcr]{%
50	0.107597186907606\\
100	0.0850078973215346\\
150	0.0732622497727821\\
200	0.0660039393423034\\
250	0.0597843249632177\\
300	0.0543514624193683\\
350	0.0517201549776713\\
400	0.047375072456806\\
450	0.0457663821274737\\
500	0.0441920556441806\\
550	0.0421236118876949\\
600	0.0361\\
650	0.0348\\
700	0.034\\
750	0.0333\\
800	0.0318333333333333\\
850	0.0312666666666667\\
900	0.0301\\
950	0.0289\\
1000	0.0285666666666667\\
1050	0.0288666666666667\\
1100	0.0276333333333333\\
1150	0.0262333333333333\\
1200	0.0257\\
};
\addlegendentry{$\| \bar x_N - \bar x_W \|$}

\addplot [color=black, dashed, line width=1.0pt]
  table[row sep=crcr]{%
50	0.14142135623731\\
100	0.1\\
150	0.0816496580927726\\
200	0.0707106781186548\\
250	0.0632455532033676\\
300	0.0577350269189626\\
350	0.0534522483824849\\
400	0.05\\
450	0.0471404520791032\\
500	0.0447213595499958\\
550	0.0426401432711221\\
600	0.0408248290463863\\
650	0.0392232270276368\\
700	0.0377964473009227\\
750	0.0365148371670111\\
800	0.0353553390593274\\
850	0.0342997170285018\\
900	0.0333333333333333\\
950	0.0324442842261525\\
1000	0.0316227766016838\\
1050	0.0308606699924184\\
1100	0.0301511344577764\\
1150	0.0294883912309794\\
1200	0.0288675134594813\\
};
\addlegendentry{$1/\sqrt{M}$}

\end{axis}
\end{tikzpicture}%
\caption{ Distance between Nash and Wardrop variational equilibria. The quantity $1/\sqrt{M}$ illustrates the trend of the bound in~\cref{cor:traffic1} and not the \mbox{specific constant.}}
\label{fig:traffic_conv_x}
\end{center}
\end{figure}

\section{Appendix}
\label{sec:proofsp1-part4}
\subsection{Proofs of the results presented in \cref{sec:PEVs,,sec:traffic}}
\subsubsection*{Proof of \cref{cor:pev}}
\begin{proof}
~
\begin{enumerate}
\item 
First, we show that ~\cref{A1} holds. Indeed the sets $\mc{X}^i$ in \eqref{eq:vehicle_constraint} are convex and compact, the function $g$ in~\eqref{eq:coupling_constraints_general} is affine and hence convex, and $\mc{Q}$ is non-empty by assumption.
For each $z$ fixed, the function $J^i(x^i,z)$ is linear hence convex in $x^i$. We prove in the last statement that $F\NE$ is strongly monotone. This is equivalent to $\nabla_x F\NE(x) \succ 0$ by~\cref{lemma:pd}, which by definition of $F\NE(x)$ implies $\nabla_{x\i} (\nabla_{x\i} J\i(x\i,\sigma(x))) \succ 0$, which implies convexity of $J\i(x\i,\sigma(x))$. Finally, $J^i(z_1,z_2)$ is continuously differentiable in $[z_1;z_2]$ because $p_t$ is twice continuously differentiable.
Having verified ~\cref{A1}, \cref{lem:exun} guarantees the existence of a Nash and of a Wardrop equilibrium. The $\varepsilon$-Nash property is guaranteed by~\cref{prop:conv_cost} upon verifying~\cref{A2}. This holds because: i) $\cup_{i=1}^\N \mathcal{X}^i\subseteq{ [0,\tilde x^0]^n}$, ii)  $J^i(z_1,z_2)$ is Lipschitz in $z_2$ on $[0,\tilde x^0]^n$ with Lipschitz constant $L_2 = R L_p$, iii) \eqref{eq:Lipschitz_implies_Lipschitz} holds and iv) $p_t$ is assumed Lipschitz in $[0,\tilde x^0]$ with Lipschitz constant $L_p$ for all $t$. We conclude by noting that $R = \tilde x^0 \sqrt{n}$.
\item
 The fact that each $p_t$ is strictly increasing in $[0,\tilde x^0]$ implies that $\nabla_z p(z) \succ 0$ in $[0,\tilde x^0]^n$, where $p(z)\defeq\left[p_1(\frac{d_1 + z_1}{\kappa}),\ldots,p_n(\frac{d_n + z_n}{\kappa})\right]^\top$. In turn $\nabla_z p(z) \succ 0$ guarantees strong monotonicity of $p$ in $[0,\tilde x^0]^n$ by~\cref{lemma:pd}. This, together with \cref{A1} and \cref{A2} verified above, allows us to use the third result in \cref{thm:conv_strategies}.
\item
Given the special form of the sets $\{\mc{X}\i\}_{i=1}^\N$ and the price $p_t\left(\frac{d_t+\sigma_t(x)}{\kappa_t} \right)$, \cref{A1bis,,ass:sequence,,ass:nonlin} are satisfied. In addition since $\sum_{i=1}^\N \theta\i >0$, it must be that $J\SO(\sigma(x\SO))>\hat J$ for some $\hat J \ge 0$. Thus, the assumptions of \cref{thm:polypoa}.
\item
The strong monotonicity of $F\NE$ follows immediately thanks to \cref{lem:diagSMON}.
Additionally, \cref{A1} holds as previously shown. Since \cref{ass:lin} holds, we can directly employ~\cref{thm:convergence_asp} and conclude the proof. 
\end{enumerate}
\end{proof}

\subsubsection*{Proof of \cref{lem:uniqueness_for_PEVs}}
\begin{proof}
 The constraints in \eqref{eq:vehicle_constraint}, \eqref{eq:coupling_global_PEVs} can be expressed as $\Gamma x \le \gamma$ with
\begin{equation}
\Gamma = \begin{bmatrix}
 I_{\N  n} \\
 -I_{\N  n}  \\
  -I_\N \otimes \ones[n]^\top \\
  \ones[\N]^\top \otimes I_n
\end{bmatrix}
,\quad \gamma=\begin{bmatrix}
\tilde x  \\
 0\\
 -\theta \\
 \N K
\end{bmatrix}
\,,
\end{equation}
where $\theta = [\theta^1,\dots,\theta^\N]^\top$, and $\tilde x = [[\tilde x^i_t]_{t=1}^n]_{i=1}^\N$.
Let us partition the constraint matrix $\Gamma$ into its individual part $\Gamma_1$ and coupling part $\Gamma_2$
\begin{equation}\label{eq:H_PEVs_partitioned}
\Gamma = \begin{bmatrix} \Gamma_1 \\ \Gamma_2 \end{bmatrix},\;
 \Gamma_1 = \begin{bmatrix}
 I_{\N  n} \\
 -I_{\N  n}  \\
  -I_\N \otimes \ones[n]^\top \\
\end{bmatrix},\;
\Gamma_2 = \begin{bmatrix}
  \ones[\N]^\top \otimes I_n
\end{bmatrix}
\end{equation}
and $\gamma = [\gamma_1^\top, \gamma_2^\top]^\top$ accordingly.
The KKT conditions for VI$(\mc{Q},F\NE)$ at the primal solution $\VNE{x}$ are~\cite[Prop. 1.3.4]{facchinei2007finite}
\begin{subequations}
\label{eq:KKT_PEVs}
\begin{align}
&F\NE(\VNE{x}) + \Gamma_1^\top \mu + \Gamma_2^\top \lambda = 0, \label{eq:KKT_PEVs_stat} \\
&0 \le \mu \perp \gamma_1 - \Gamma_1 \VNE{x} \ge 0, \label{eq:KKT_PEVs_compl_individ} \\
&0 \le \lambda \perp \gamma_2 - \Gamma_2 \VNE{x} \ge 0 .\label{eq:KKT_PEVs_stat_compl_coupl}
\end{align}
\end{subequations}
Define $\tilde \mu$ and $\tilde \lambda$ as the dual variables corresponding to the active constraints (the other dual variables must be zero due to~\eqref{eq:KKT_PEVs_compl_individ} and~\eqref{eq:KKT_PEVs_stat_compl_coupl}).
The KKT system~\eqref{eq:KKT_PEVs}  in $\tilde \mu, \tilde \lambda$ only reads
\begin{equation}
\begin{aligned}
&\tilde \Gamma_1^\top \tilde \mu + \tilde \Gamma_2^\top \tilde \lambda = -F\NE(\VNE{x}) ,\\
&\tilde \mu, \tilde \lambda \ge 0 \,,
\end{aligned}
\label{eq:KKT_PEVs_reduced}
\end{equation}
where $\tilde \Gamma_1, \tilde \Gamma_2$ contain the subset of rows of $\Gamma_1, \Gamma_2$ corresponding to active constraints.
To conclude the proof we need to show that \eqref{eq:KKT_PEVs_reduced} has a unique solution $\tilde \lambda$.
To this end we apply the subsequent~\cref{lem:sufficient_for_uniqueness_subvector}.
To verify its assumption, we note that  its negation is equivalent, given the expressions of $\tilde \Gamma_1, \tilde \Gamma_2$ in~\eqref{eq:H_PEVs_partitioned}, to
the existence of $R' \subseteq R^\text{tight}$ such that for each vehicle $i$ it holds $\bar x_{\textup{N},t}^i  \in \{0,\tilde x^i_r\}$ for all $t \in R'$ or $\bar x_{\textup{N},t}^i  \in \{0,\tilde x^i_t\}$ for  $t\in\{1,\dots,n \}\setminus R'$ and  such $R'$ cannot exist by assumption.	
\end{proof}
\begin{lemma}
\label{lem:sufficient_for_uniqueness_subvector}
Consider $A_1 \in \R^{m \times n_1}$, $A_2 \in \R^{m \times n_2}$, $b \in \R^m$.
If the implication $A_1 x_1 + A_2 x_2 = 0 \; \Rightarrow \; x_1 = 0$ holds, then
the linear system of equations $A_1 x_1 + A_2 x_2 = b$
has at most one solution in $x_1$.
\end{lemma}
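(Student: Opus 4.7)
The plan is to argue by contradiction-by-linearity: suppose two solutions exist in the full variable $(x_1,x_2)$, subtract them, and use the hypothesis to force the first components to coincide. This is the standard linear-algebra template for ``uniqueness in a subvector'' statements, and the structure of the assumption makes it immediately applicable.

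In detail, I would begin by letting $(x_1, x_2)$ and $(x_1', x_2')$ be any two solutions of $A_1 x_1 + A_2 x_2 = b$. Subtracting the two equalities yields
\[
A_1(x_1 - x_1') + A_2(x_2 - x_2') = 0.
\]
This is precisely a homogeneous equation of the form addressed by the hypothesis. Applying the implication $A_1 y_1 + A_2 y_2 = 0 \Rightarrow y_1 = 0$ with $y_1 := x_1 - x_1'$ and $y_2 := x_2 - x_2'$ gives $x_1 - x_1' = 0$, i.e.\ $x_1 = x_1'$, which is exactly the at-most-one-solution property in the $x_1$-component.

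There is essentially no obstacle here: the argument is one subtraction plus one invocation of the hypothesis, and nothing about the dimensions, the rank of $A_2$, or the existence of solutions needs to be controlled (the claim is only that $x_1$ is unique, not that $x_2$ is, nor that a solution exists). The only mild point worth flagging in the write-up is that the conclusion constrains only the $x_1$-part: different solutions may well differ in $x_2$, which is perfectly consistent with the statement and with the way the lemma is used in the proof of \cref{lem:uniqueness_for_PEVs} (where $\tilde\lambda$ plays the role of $x_1$ and $\tilde\mu$ the role of $x_2$).
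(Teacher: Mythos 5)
Your proof is correct and is essentially identical to the paper's own argument: both take two solutions, subtract the equations to get $A_1(x_1-x_1') + A_2(x_2-x_2') = 0$, and invoke the hypothesis to conclude $x_1 = x_1'$. Nothing is missing.
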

\begin{proof}
Assume $A \tilde x = b$ and $A \hat x = b$, then $A_1 \tilde{x}_1 + A_2 \tilde{x}_2 = b$ and $A_1 \hat{x}_1 + A_2 \hat{x}_2 = b$ imply 
$A_1(\hat{x}_1 - \tilde{x}_1) + A_2 (\hat{x}_2 - \tilde{x}_2) = 0$, which by assumption implies $\hat{x}_1 = \tilde{x}_1$.
\end{proof}

\subsubsection*{Proof of \cref{cor:traffic1}}
\begin{proof}
~
\begin{enumerate}
	\item Satisfaction of \Cref{A1} and the consequent existence of a variational Nash and of a variational Wardrop equilibrium for any $\N$ can be shown as in~\cref{cor:pev}. The operator $F\WE$ for the cost~\eqref{eq:cost_traffic} reads
\begin{equation}
F\WE(x)=[\gamma^i(x^i-\hat x^i) + t(\sigma(x))]_{i=1}^\N.
\end{equation}
where $t(\sigma(x))\defeq[t_e(\sigma_e(x_e))]_{e=1}^E$. Since $t_e(\sigma_e(x_e))$ in~\eqref{eq:travel_time_smoothed} is a monotone function of $\sigma_e(x_e)$, the operator $t(\sigma(x))$ is monotone. Then $F\WE$ is strongly monotone with constant $\hat \gamma$ because it is the sum of a monotone and a strongly monotone operator with constant $\hat \gamma$. As a consequence, each $\mathcal{G}^\textup{RC}_\N$ admits a unique variational Wardrop equilibrium.

To prove strong monotonicity of $F\NE$ we use the result of~\cref{lemma:pd}.\footnote{\Cref{lemma:pd} requires $F\NE$ to be continuously differentiable, which is not the case here. The more general result~\cite[Prop. 2.1]{schaible1996generalized} extends the statement of~\cref{lemma:pd} to operators which are not continuously differentiable. It then suffices to show $\nabla_x F\NE(x) \succ 0$ for $\sigma(x)$ in each of the three intervals defined by~\eqref{eq:travel_time_smoothed}, because in each of them $F\NE$ is continuously differentiable.} We first note that each $t_e$ only depends on the corresponding $\sigma_e$, hence $\nabla_x F\NE(x)$ can be permuted into diagonal form similarly to what done in~\eqref{eq:PEV_gradient_FN}. It then suffices to show $\hat \gamma I_\N + \frac{1}{\N} t_e'(\sigma_e) I_\N + \frac{1}{\N^2} t''_e(\sigma_e) x_e \ones[\N]^\top \succ 0$ for all $\sigma_e$ and for all $e$. This matrix is indeed positive definite if $\sigma_e(x_e) \notin [f_e h - \Delta_e, f_e h + \Delta_e]$, because then $t'_e(\sigma_e) \ge 0$ and $t''_e(\sigma_e) = 0$ by~\eqref{eq:travel_time_smoothed}. For $\sigma_e(x_e) \in [f_e h - \Delta_e, f_e h + \Delta_e]$ it suffices to show $\hat \gamma I_\N + \frac{1}{\N^2 4f_e \Delta_e} x_e \ones[\N]^\top \succ 0$, because $t_e'(\sigma_e) \ge 0$ and $t''_e(\sigma_e) = \frac{1}{4f_e \Delta_e}$. By~\cref{lem:min_eigenval}, $\lambda_\text{min} \left( x_e \ones[\N]^\top + \ones[\N] x_e^\top \right)/2 \ge -\frac{\N}{8}$, which proves strong monotonicity of $F\NE$ under~\eqref{eq:bound_M_traffic}.
Consequently, if $M$ satisfies~\eqref{eq:bound_M_traffic} then $\mathcal{G}^\textup{RC}_\N$ admits a unique variational Nash equilibrium.
Finally, we verify~\cref{A2} in order to use~\cref{prop:conv_cost}. We have $\mc{X}^0 = [0,1]^E$ and $t$ is continuously differentiable and hence Lipschitz in $\mc{X}^0$, with constant $L_p = 1/(2f_\textup{min})$. Moreover, $  \Dx   \defeq   \max_{y \in{\mathcal{X}\zero} } \{\|y \|\} = \sqrt{E}$. Using~\eqref{eq:Lipschitz_implies_Lipschitz} concludes the proof.

\item Since all the assumptions of~\cref{thm:conv_strategies} have just been verified, it is a direct consequence of its second statement.

\item As~\cref{A3} holds trivially (the others have already been verified), we apply~\cref{thm:convergence_asp} and conclude the proof.
\end{enumerate}
\end{proof}


\renewcommand{\N}{N}
\part[Programmable machines: game design]{Programmable machines:\\ game design}
\label{part:2}
\chapter{Introduction}
\label{ch:p2introduction}
In this part of the thesis we focus on large scale cooperative systems composed of programmable machines, which we refer as multiagent systems. As discussed in the overview of \cref{ch:overview}, one of the main challenges in the control of large scale cooperative systems rests in the design of control algorithms that achieve a given global objective by relying solely on local information. 
The problem of designing local control algorithms is typically posed as an optimization problem (finite or infinite dimensional), where the system-level objective is captured by an \emph{objective function} (functional), while physical laws and informational availability are incorporated as \emph{constraints} on the decision variables. The design is complete once a distributed decision making algorithm has been found, satisfying the constraints and maximizing the objective function \cite{Raffo, CortesBullo}.
A well-established approach to tackle this problem consists in the design of a centralized maximization algorithm, that is later distributed by leveraging the structure of the problem considered. Examples in continuous optimization include algorithms such as distributed gradient ascent, primal-dual and Newton's method \cite{bertsekas1989parallel,nedic2009distributed, wei2013distributed}. While the existing approaches,  including the above-mentioned one, have produced a variety of algorithms for the control of distributed systems, the design question has not been entirely solved. A perspective article recently appearing in Science Robotics summarizes the difficulties:``{\it There are currently no systematic approaches for designing such multidimensional feedback loops}'' \cite[p.\,6]{yang2018grand}.
\section{The game-design framework}
A promising approach, termed \emph{game design}, has recently emerged as a tool to complement the partial understanding offered by more traditional techniques \cite{shamma2007cooperative}. The game design approach is tightly connected with the notion of equilibrium in game theory, and its origin stems from a novel engineering perspective on this field. While game theory has originated as a set of tools to model the interaction of multiple decision makers (players) \cite{von2007theory}, its relevance to distributed control stems from the observation that players of a game are required to take local decisions based on partial information of the entire system. Motivated by this consideration, the seminal works of \cite{Mannor, arslan2007autonomous} proposed the use of game theoretic tools to tackle distributed optimization problems arising in the area of multi-agent systems. Rather than using game theory to describe existing interactions, \cite{arslan2007autonomous} suggested a \emph{paradigm shift} and proposed the use of {game theory to \emph{design}} control architectures with the aim of meeting a given system level objective.

In lieu of directly specifying a decision-making process, the game design approach consists in assigning local utility functions to the agents, so that their selfish maximization translates in the achievement of the system level objective. The potential of this technique stems from the possibility to inherit a pool of algorithms from the literature of learning in games \cite{blume1993statistical, fudenberg1998theory, marden2012revisiting, yi2017distributed} that are \emph{distributed} by nature, \emph{asynchronous}, and \emph{resilient} to external disturbance \cite{arslan2007autonomous}.

Given an optimization problem we wish to solve distributedly, the {\it game design} procedure proposed in \cite{shamma2007cooperative,marden2013distributed} is summarized in \cref{fig:gamedesign} and consists in the following steps:\footnote{In the following, we identify the agents of the optimization problem and their local constraint sets with the players of the game and their action sets.}

\begin{itemize}
	\item[1)] {\bf Utility design}: assign utility functions to each agent and an equilibrium concept for the corresponding game.
	\item[2)] {\bf Algorithm design}: devise a distributed algorithm to guide agents to the chosen equilibrium concept. 
\end{itemize}
\noindent

\begin{figure}[h!]
\centering
\includegraphics[scale=.9]{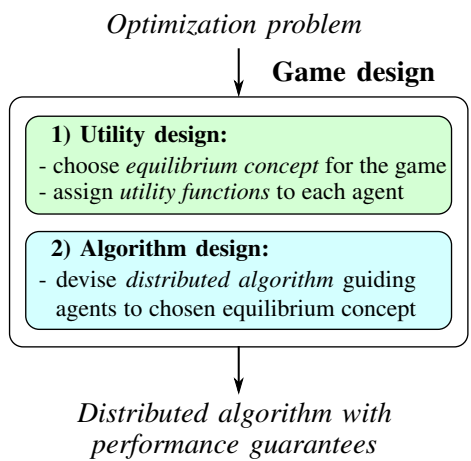}
\caption{Game theoretic approach for the design of distributed control systems. }
\label{fig:gamedesign} 
\end{figure}

The objective of the game design procedure is to obtain an efficient and distributed algorithm for the solution of the original optimization problem. While the introduction of an auxiliary equilibrium problem might seem artificial at first, this approach has recently produced a host of new results \cite{gairing2009covering, MardenCoopControl, paccagnan2017arxiv, gebraad2016wind}. Observe that, in order for the game design procedure to be relevant to the original optimization problem, the utility functions need to be carefully designed so that the chosen equilibrium (equilibria) coincide with the global optimizer(s) of the original problem, or is provably close to. 
  
 Within the boundaries of the game design procedure discussed above, it is important to highlight that agents are \emph{not} modeled as competing units, but the system operator is rather designing their utilities to distribute the global objective. For this purpose, agents are considered as purely programmable machines endowed with computational and communication capabilities. Game theory represents, in this context, a mere set of tools that can be exploited to derive distributed algorithms with provable performance certificates, and \emph{not} a modeling language describing the behaviour of egoistic agents.

  While the field of learning in games offers \emph{readily available algorithms} to coordinate agents towards an equilibrium in a distributed fashion (i.e., it addresses the second step of \cref{fig:gamedesign}), the \emph{utility design} problem is much less tracked.

\vspace*{5mm}
\noindent \fbox{\parbox{0.985\textwidth}{
\label{problem:utilitydesign}
 The {\bf goal} of \cref{part:2} of this thesis is to provide a framework to compute the equilibrium efficiency as a function of the given utility functions, and to optimally select utilities so as to maximize such efficiency. 
}}

\section{The general multiagent maximum coverage}
\label{sec:problemstatement}
In this section we introduce the problem considered in \cref{part:2} of this thesis.

Consider $\mc{R}=\{r_1,\dots,r_m\}$ a finite set of resources, where each resource $r\in\mc{R}$ is associated with a value $v_r\ge 0$ describing its importance. Further let $N=\{1,\dots,n\}$ be a finite set of agents. Every agent $i\in N$ selects $a_i$, a subset of the resources, from the given collection $\mc{A}_i\subseteq 2^{\mc{R}}$, i.e., $a_i\in\mc{A}_i$. The welfare of an allocation $a = (a_1,\dots ,a_n)\in \mc{A} \defeq \mc{A}_1 \times \dots \times \mc{A}_n$ is given by 
\be
W(a)\coloneqq\sum_{r\in\cup_{i\in N} a_i}v_r w(|a|_r),
\label{eq:welfaredef}
\ee
where $W:2^{\mc{R}}\times\dots\times2^{\mc{R}}\rightarrow\mb{R}$, $|a|_r=|\{i\in\N\,\text{s.t.}\,r\in a_i\}|$ captures the number of agents selecting resource $r$ in allocation $a$, and $w:[n]\rightarrow \mb{R}_{\ge 0}$ is called the welfare basis function. Informally, $w$ scales the value of each resource depending on how many agents selected it. The goal is to find a feasible allocation maximizing the welfare, i.e.,
\be
\aopt\in\argmax_{a\in\mc{A}}W(a)\,.
\label{eq:problem_findaopt}
\ee

We refer to the above problem as to the \ac{gmmc} problem, due to its connections with coverage problems as discussed in the forthcoming \cref{subsec:related_coverage}.

Observe that we have not posed any constraint on the structure of the sets $\{\mc{A}_i\}_{i\in\N}$ in the sense that they are not required to represent matroid constraints, knapsack constraints, etc. At this stage, they are a mere collection of subsets of $\mc{R}$.

Since the problem in \eqref{eq:problem_findaopt} is $\npclass$-hard (see the discussion  in \cref{subsec:related_coverage}), we seek an \emph{efficient} algorithm (or a class of algorithms) to determine an approximate solution, ideally with the best possible approximation ratio. Additionally, we request the algorithm to be distributed as detailed in \cref{sec:poaperformancemetric}. We pursue this goal by means of the game theoretic approach previously introduced.

\subsection{Applications}
In the following we discuss two classes of problems that can be solved by using the techniques discussed in the second part of this thesis. \\

\subsubsection*{Multiagent task assignment problems}

In a multiagent task assignment problem we are given a list of tasks to be performed, as well as a list of agents. The goal is to match agents and tasks so as to maximize a given welfare function representing the quality of the matching. Such function is typically additive over the tasks and some of the tasks may require a minimum number of agents to be completed. It is typically assumed that the more agents participate in the execution of a task, the higher the welfare generated from that task, and that the problem exhibits diminishing returns. Practical examples of problems belonging to this class include vehicle-target assignment \cite{murphey2000target, arslan2007autonomous}, sensor deployment \cite{cassandras2005sensor, MardenCoopControl}, satellite assignment \cite{qu2015distributed} problems. 

\subsubsection*{Distributed maximum coverage}
In a distributed maximum coverage problem we are given a list of resources  with their respective value and a list of agents. Each agent has access to a collection of subsets of the resources, while different agents typically have access to different collections (due to, e.g., geographical or other limitations). The goal is to allocate the agents so as to maximize the total value of covered resources. A large number of problems can be cast into this framework. Examples include staff scheduling \cite{ernst2004staff}, facility location \cite{farahani2012covering} and wireless scheduling \cite{cohen2008generalized} (see \cite{hochbaum96} for an overview of the applications).
More recent applications include, among others, distributed caching in wireless networks \cite{goemans2006market, de2017competitive}, multi-topic searches \cite{saha2009maximum}, influence maximization \cite{karimi2017}, vehicle scheduling in mobility-on-demand platforms \cite{sayarshad2017non, agatz2012optimization}.
\section{Related work}
\label{sec:p2relatedwork}
The work presented in \cref{part:2} of this thesis is multidisciplinary in that it sits at the interface between approximation theory, distributed optimization and game theory. In the followings we present the most relevant connections to each of these areas and corresponding related works. 

\subsection*{Maximum coverage and approximation guarantees}
\label{subsec:related_coverage}
The general multiagent maximum coverage (\ac{gmmc}) problem defined in \cref{sec:problemstatement} and studied in \cref{part:2} of this thesis is tightly connected with the maximum coverage problem defined in \cite{feige1998threshold}.

In a maximum coverage problem we are given a ground set of elements, and a collection of subsets of the ground set. The objective is to select $n$ subsets from the collection, so as to maximize the total number of covered elements. The greedy algorithm achieves a $1-1/e$ approximation in polynomial time, and no polynomial algorithm can approximate the solution within any ratio better than $1-1/e+\epsilon$ (for all $\epsilon>0$) unless $\pclass=\npclass$, \cite{feige1998threshold}. This inapproximability result applies to all extensions discussed next (including the \ac{gmmc} problem), since they hold the maximum coverage problem as a special case.

 A generalization of the maximum coverage problem is the weighted maximum coverage problem. In a weighted maximum coverage we are given a ground set of elements, and a collection of subsets of the ground set. Every element in the ground set is given a weight. The goal is to select $n$ subsets from the collection in order to maximize the total weight of covered elements. The greedy algorithm gives the best possible polynomial approximation ratio of $1-1/e$. The proof is no longer based on the result of \cite{feige1998threshold}, but on the more general result in submodular maximization subject to cardinality constraints~\cite{nemhauser1978analysis}.

Algorithms based on a continuous relaxation of the previous problems are also available. In particular the result in \cite{calinescu2011maximizing} applies to the problem of monotone submodular maximization subject to matroid constraints, and thus provides a solution for the weighted maximum coverage. The algorithm of \cite{calinescu2011maximizing} computes a non integer solution, which is then rounded using the pipage algorithm producing a $1-1/e$ approximation.
Relative to the problem of monotone submodular maximization over a matroid constraint, a more refined result is available when the objective function has known (total) curvature $c$. The notion of curvature has been introduced in \cite{conforti1984submodular} and describes how far a given function is from being modular.
In this case,
\cite{sviridenko2017optimal} has recently provided a $1-c/e$ approximation and has showed that no polynomial time algorithm can give a better approximation. 
The latter work improves upon the $(1-e^{-c})/c$ of \cite{conforti1984submodular}.
Observe that the maximum coverage problem has $c=1$, so that \cite{sviridenko2017optimal} matches \cite{feige1998threshold}. 

Multiagent versions of the weighted maximum covering problem have been introduced independently in \cite{chekuri2004maximum} with the name of \emph{maximum coverage problem with group budget constraints} and in \cite{gairing2009covering} with the name of \emph{general covering problems}. This class of problems subsumes the previous ones; we refer to it as to the class of \ac{mmc} problems. In a \ac{mmc} problem we are given not one, but $n$ collections of subsets. The objective is to select one set from each collection so as to maximize the total weight of covered elements. Relative to \ac{mmc} problems, the greedy algorithm provides a  $1/2$ approximation \cite{chekuri2004maximum}, and the local search algorithm proposed in \cite{gairing2009covering} achieves the optimal $1-1/e$, under technical assumptions. 

The \ac{gmmc} problem studied in \cref{part:2} of this thesis is a generalization of the \ac{mmc} problem in that we allow for a function $w$ to rescale the weight of each element depending on how many agents cover such element. 
Any \ac{mmc} problem can be recovered by the corresponding \ac{gmmc} problem by setting $w(j)=1$ in \eqref{eq:welfaredef}. Any weighted maximum coverage problem can be recovered from a \ac{gmmc} problem, upon setting $w(j)=1$ in \eqref{eq:welfaredef} and $\mc{A}_i=\mc{A}_j$ for all $i,j$. Further classes of problems such as the multiple-choice knapsack problem or the standard knapsack problem \cite{pisinger1995} can be obtained from the \ac{mmc} problem (and thus from the \ac{gmmc} problem). The former problem can be recovered assuming $\mc{A}_i$ to represent knapsack constraints. The latter problem is obtained by additionally imposing $\mc{A}_i=\mc{A}_j$ for all $i,j$. Observe that when the welfare basis $w$ is increasing and concave (in the discrete sense), the welfare function $W$ defined in \eqref{eq:welfaredef} is monotone submodular. Submodular functions are subject of intense study due to their ability to model engineering problems that feature diminishing returns. Similarly, if $w$ is increasing and convex, $W$ is monotone supermodular. \cref{fig:problem_hierarchy} summarizes the main classes of problems discussed.
\begin{figure}[h!]
\centering
\includegraphics[scale=0.5]{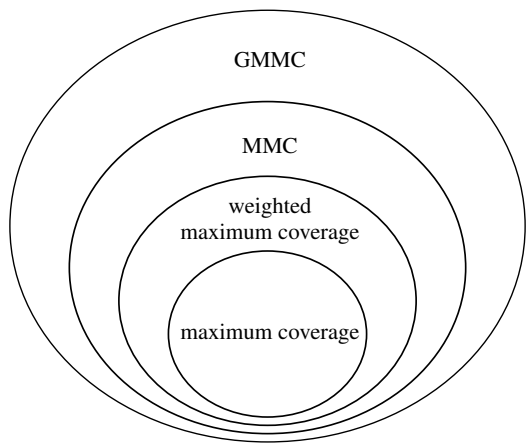}
\caption{Classes of problems discussed in \cref{sec:p2relatedwork}.}
\label{fig:problem_hierarchy} 
\end{figure}

\subsection*{Distributed combinatorial optimization}

While distributed algorithms have been studied since the early nineties in the context of continuous (and convex) optimization  \cite{bertsekas1989parallel}, the interest in their combinatorial counterpart is more recent.

Particular attention has been devoted to the problem of maximizing a submodular function subject to various form of constraints such as cardinality, matroid or knapsack constraints. This is due to the potential applications of submodular maximization in different fields featuring ``large-scale'' systems. A non-exhaustive list includes sensor allocation \cite{summers2016submodularity,krause2008near}, data summarization \cite{mirzasoleiman2016distributed}, task-assignment problems \cite{qu2015distributed}.
While centralized algorithms are available to produce good approximations (e.g., the greedy algorithm and its variations \cite{feige1998threshold}), their sequential implementation makes them unsuited for parallel and distributed execution.
In this respect, there has been recent effort in distributing such algorithms using the so called \emph{MapReduce} programming approach \cite{dean2008mapreduce}.
In \cite{mirzasoleiman2016distributed, barbosa2015power} and references therein, the authors propose to divide the original optimization problem into smaller parts and to solve each of them on a different machine. The solution is determined by patching together the partial results and is certified to achieve a competitive approximation ratio. Nevertheless, the approach still requires a central coordinator. 

Other classes of combinatorial problems for which distributed algorithms have been recently proposed include graph coloring, maximum coverage, and multiple-choice knapsack \cite{barenboim2013distributed, gairing2009covering, murthy2017distributed}. Finally, \cite{mokhtari2018decentralized} provides distributed algorithms for submodular maximization problems, but admissible objective functions are required to be the sum of agents' individual contributions (unlike here).

\subsection*{Game design and utility design approach} 
The problem of designing local utility functions so as to maximize the efficiency of the emerging equilibria find its roots in the economic literature relative to the design of optimal taxations \cite{ramsey1927contribution}. The approach has been applied to the design of engineering systems only recently. More in details, the use of game theoretic learning algorithms for the distributed solution of optimization problems has been proposed in \cite{arslan2007autonomous}, and since then a number of works have followed a similar approach \cite{soto2009distributed, gairing2009covering, chapman2011unifying, song2011optimal}. We redirect the reader to \cite{Marden2018} for a general overview on equilibrium learning algorithms in distributed control.
What has been less understood so far, is how to provide performance certificates for a given set of utility functions, and more fundamental how to select utility functions so as to maximize such performance certificates.

The performance degradation of an equilibrium allocation compared to the optimal solution has been subject of intense research in the field of algorithmic game theory (through the notions of price of anarchy, price of stability \cite{Koutsoupias,schulz2003performance}). Nevertheless, the results available therein are not helpful for the design problem studied here. The widely used smoothness framework proposed in \cite{roughgarden2009intrinsic} has brought a number of different results under a common language and has produced tightness guarantees for different problems \cite{roughgarden2009intrinsic,roughgarden2017price}. Unfortunately the latter framework requires the sum of the utility functions to be equal (or less equal) to the welfare function (budget-balance condition). While this assumption is well justified for a number of problems modeled through game theory (e.g., cost sharing games \cite{moulin2001strategyproof}), it has little bearing on the design of local utility functions studied~here.

 The utility design problem considered here has been addressed limitedly to specific applications, e.g., concave cost sharing, reverse carpooling problems \cite{Philips17, marden2012price} or confined to particular design methodologies such as the Shapley value or marginal contribution \cite{marden2014generalized, philips2016importance}.
\chapter{Mathematical preliminaries}
In this chapter we introduce the mathematical tools required to move forward and present the results of \cref{ch:p2utilitydesign} and \cref{ch:p2subsupercov}.
\label{ch:p2mathpreliminaries}
\section{Strategic-form games and equilibrium concepts}
\label{sec:equilibria}
\begin{definition}[Strategic-form game]
\label{def:strategicformgame}
A strategic form game $G=(\N,\{\mc{A}_i\}_{i=1}^\N,\{u_i\}_{i=1}^\N)$ is a tuple where $\N=\{1,\dots,n\}$ is a finite set of players, $\mc{A}_i$ is the action set of player $i\in\N$, and $u_i:\mc{A}\rightarrow \mb{R}$ is the utility function of player $i\in \N$, where $\mc{A}\coloneqq \mc{A}_1\times\dots\times \mc{A}_n$. A strategic-form game is called finite if the set $\mc{A}$ is finite.
\end{definition}
Informally, a game is fully specified in its strategic form if every player is given an action set and a utility function depending on the choice of all the players. We refer to $a\coloneqq (a_1,\dots, a_n)\in\mc{A}$ as to an allocation. We will often represent an allocation as $a=(a_i,\ami)$, where $\ami\coloneqq(a_1,\dots,a_{i-1},a_{i+1},\dots,a_n)$ denotes the allocations of all players but $i\in\N$.

In the following we consider strategic-form games only. We do not repeat this in the forthcoming statements for ease of presentation. \begin{definition}[\ac{ne}, \cite{Nash01011950}]
\label{def:nashequilibrium}
A feasible allocation $\ae\in\mc{A}$ is a pure Nash equilibrium for the game $G$, if no player can increase his utility function by unilaterally deviating from his equilibrium allocation, i.e., if 
\be
u_i(\ae)\ge u_i(a_i,\ae_{-i})\qquad\forall a_i \in \mc{A}_i,\quad \forall i\in \N.
\ee
We denote with $\nashe{G}$ the set of pure Nash equilibria of $G$.
\end{definition}

In the remaining of this thesis we will refer to a pure Nash equilibrium as just a Nash equilibrium, if no confusion arises. 
It is not difficult to show that Nash equilibria may not exist. 
This and many other reasons motivate the definition of mixed Nash equilibria. Towards this goal, we first introduce the concept of \emph{mixed strategy}.  A mixed strategy $\sigma_i$ is a probability distribution over the action space of player $i$, i.e., $\sigma_i \in \Delta(\mc{A}_i) $. A mixed strategy profile $\sigma\coloneqq(\sigma_1,\dots,\sigma_n)$ is a distribution $\sigma \in\Sigma\coloneqq \times_{i\in\N}\Delta(\mc{A}_i)$.
 
\begin{definition}[\Ac{mne}, \cite{Nash01011950}]
A mixed strategy profile $\sigmane\in\Sigma$ is a mixed Nash equilibrium for the game $G$ if no player can increase his expected utility by deviating to a pure strategy, i.e., if 
\be
\expected{u_i(a)}{a\sim \sigmane} \ge \expected{u_i(a_i',\ami)}{a\sim \sigmane} \qquad\forall a_i' \in \mc{A}_i,\quad \forall i\in \N.
\ee
We denote with $\mnashe{G}$ the set of mixed Nash equilibria of $G$.
\end{definition}
In the previous definition player $i$ compares $\expected{u_i(a)}{a\sim \sigmane}$ with the expected value of his utility when he deviates and selects the \emph{pure} strategy $a'_i$.
It is possible to show that this is equivalent to requiring player $i$ not to improve even if selecting a \emph{mixed strategy} $\sigma_i\in\Delta(\mc{A}_i)$. Thus, an equivalent definition could be given with respect to deviations in mixed strategies. Additionally, observe that the set of mixed Nash equilibria contains the set of pure Nash equilibria. 

Mixed Nash equilibria are guaranteed to exist in any game where the actions sets are finite, as shown in the celebrated paper by John Nash, \cite{Nash01011950}.
\begin{proposition}[Existence of \ac{mne}, \cite{Nash01011950}]
	Any finite game admits a \ac{mne}. 
\end{proposition}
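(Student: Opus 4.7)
The plan is to invoke Kakutani's fixed point theorem applied to the best-response correspondence on the simplex of mixed strategy profiles. First I would set up the ambient space $\Sigma = \times_{i \in N} \Delta(\mc{A}_i)$: since each $\mc{A}_i$ is finite, $\Delta(\mc{A}_i)$ is a compact, convex, nonempty subset of a Euclidean space, and hence so is $\Sigma$ as a finite product. The expected utility $U_i(\sigma) \coloneqq \mathbb{E}_{a \sim \sigma}[u_i(a)]$ extends $u_i$ multilinearly from $\mc{A}$ to $\Sigma$, and in particular is continuous in $\sigma$ and affine (thus quasi-concave) in the component $\sigma_i$ for every fixed $\sigma_{-i}$.

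Next I would introduce the best-response correspondence $BR \colon \Sigma \rightrightarrows \Sigma$ defined componentwise by
\[
BR_i(\sigma_{-i}) \coloneqq \argmax_{\sigma_i \in \Delta(\mc{A}_i)} U_i(\sigma_i, \sigma_{-i}), \qquad BR(\sigma) \coloneqq \prod_{i \in N} BR_i(\sigma_{-i}).
\]
The key step is to verify the hypotheses of Kakutani's theorem. Nonemptiness of $BR_i(\sigma_{-i})$ follows from continuity of $U_i$ together with compactness of $\Delta(\mc{A}_i)$. Convexity of the image $BR_i(\sigma_{-i})$ follows because $\sigma_i \mapsto U_i(\sigma_i, \sigma_{-i})$ is affine, so its argmax is the convex hull of the maximizing pure actions. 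Upper hemicontinuity (equivalently, the graph of $BR$ being closed in $\Sigma \times \Sigma$) is a standard consequence of Berge's maximum theorem applied to the continuous objective $U_i$ on the compact constant-valued constraint $\Delta(\mc{A}_i)$. Since these three properties hold for each component, they carry over to $BR$.

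Kakutani's theorem then yields $\sigmane \in \Sigma$ with $\sigmane \in BR(\sigmane)$. To close the argument, I would show this fixed point is a mixed Nash equilibrium in the sense of the definition, namely that no player can improve by deviating to a \emph{pure} strategy. By definition of $BR_i$, for any $a_i' \in \mc{A}_i$ the degenerate distribution $\delta_{a_i'} \in \Delta(\mc{A}_i)$ satisfies
\[
U_i(\sigmane_i, \sigmane_{-i}) \ge U_i(\delta_{a_i'}, \sigmane_{-i}) = \mathbb{E}_{a \sim \sigmane}\bigl[u_i(a_i', a_{-i})\bigr],
\]
which is precisely the required inequality.

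The main obstacle, and the only place where genuine care is needed, is verifying upper hemicontinuity of $BR$; the convexity and nonemptiness parts are almost immediate from the multilinear structure of $U_i$. If one prefers to avoid invoking Kakutani, a self-contained alternative is to build the explicit continuous self-map of Nash on $\Sigma$ given by normalized positive-part gains, apply Brouwer's theorem, and check that its fixed points coincide with equilibria; however, the Kakutani route keeps the calculations to a minimum and is the shortest path to the statement.
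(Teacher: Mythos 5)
Your proof is correct: the Kakutani fixed-point argument on the best-response correspondence, with nonemptiness from Weierstrass, convex-valuedness from linearity of the expected utility in $\sigma_i$, upper hemicontinuity from Berge, and the observation that pure deviations suffice under the paper's definition, is complete and sound. The thesis itself states this proposition without proof, citing Nash (1950), and the argument given in that reference is exactly the route you take (with the Brouwer-based map you mention as an alternative being Nash's 1951 variant), so there is nothing to reconcile between your proof and the paper.
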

Despite the fact that existence of mixed Nash equilibria is guaranteed, the problem of computing a \ac{mne} is, in general, intractable \cite{Daskalakis_conference}. For this reason, we consider the following enlarged class of equilibria.
\begin{definition}[\Ac{cce}, \cite{moulin1978strategically}]
\label{def:cce}
A probability distribution $\sigmacce\in\Delta(\mc{A})$ is a coarse correlated equilibrium for the game $G$ if no player can increase his expected utility by deviating to a pure strategy, i.e., if 
\be
\expected{u_i(a)}{a\sim \sigmacce} \ge \expected{u_i(a_i',\ami)}{a\sim \sigmacce} \qquad\forall a_i' \in \mc{A}_i,\quad \forall i\in \N.
\label{eq:ccedef}
\ee
We denote with $\cce{G}$ the set of \ac{cce} of $G$.
\end{definition}
The only difference in the definitions of mixed Nash equilibrium and corse correlated equilibrium is in that $\sigmane$ is required to be a product distribution $\sigmane\in \times_{i\in\N}\Delta(\mc{A}_i)$, while $\sigmacce\in\Delta(\mc{A})$ is not. It follows that the set of coarse correlated equilibria is a superset of the set of mixed Nash equilibria. The interest in \ac{cce} stems from the fact that, unlike \ac{mne} and \ac{ne}, they are computationally tractable \cite{littlestone1994weighted, nisan2007algorithmic}. We will return to this in \cref{sec:complexity_equil}.

We conclude introducing the last equilibrium concept. To do so, we first consider a welfare function $W:\mc{A}\rightarrow \mb{R}_{\ge0}$ and define the allocation $\aopt \in\mc{A}$ as an allocation such that $W(\aopt)\ge W(a)$ for all $a\in\mc{A}$.
\begin{definition}[\Ac{acce},\cite{nadav2010limits}]
Given a game $G$ and a function $W:\mc{A}\rightarrow \mb{R}_{\ge0}$, a probability distribution $\sigmaacce\in\Delta(\mc{A})$ is an average coarse correlated equilibrium with respect to the allocation $\aopt\in\mc{A}$ if 
\be
\expected{\sum_i u_i(a)}{a\sim \sigmaacce}
\ge
\expected{\sum_i u_i(\aopt _i,a_{-i})}{a\sim \sigmaacce}.
\ee	
We denote with $\acce{G}$ the set of \ac{acce} of $G$.
\end{definition}

Average coarse correlated equilibria are a superset of coarse correlated equilibria. This is because the previous condition can be obtained from \cref{def:cce} by summing the condition \eqref{eq:ccedef} over all players, and selecting $a'=\aopt$. 

It follows that the equilibrium sets previously defined are all nested 
\be
\nashe{G}\subseteq \mnashe{G} \subseteq \cce{G} \subseteq \acce{G}.
\ee

While \ac{ne}, \ac{mne}, \ac{cce} are well studied and regularly used equilibrium concepts, the notion of \ac{acce} is rather novel. The latter equilibrium concept will be used here as a purely conceptual tool in connection with the study of  equilibrium efficiency, see \cref{subsec:tightness}.

\section{Potential games and congestion games}
\label{sec:potentialgames}
In the previous section we have introduced three fundamental equilibrium concepts. Additionally, we have commented on their existence and on their tractability (or lack thereof). In this section we refine the analysis to potential games and congestion games.
\subsection*{Potential games}
\begin{definition}[Potential game, \cite{monderer1996potential}]
	A strategic-form game is a potential game if there exists a function $\varphi:\mc{A}\rightarrow \mb{R}$ such that
	\be
	u_i(a_i,\ami)-u_i(a'_i,\ami) = \varphi(a_i,\ami)-\varphi(a'_i,\ami)\qquad\forall a\in\mc{A},\quad \forall a'_i\in\mc{A}_i,\quad\forall i\in\N .
	\ee
	The function $\varphi$ is called potential function.
\end{definition}
Informally, a game is potential if the variation in each player's utility experienced when deviating from $a_i$ to $a_i'$ can be captured by the function $\varphi$, and such function is the \emph{same} for all the players $i\in\N$. The condition is reminiscent of the notion of conservative force and corresponding potential field taken form Physics. Indeed, for games with continuous action space, the two notions coincide.

An immediate consequence of the previous definition is the existence of a pure \ac{ne}.
\begin{proposition}[Existence of pure \ac{ne} in potential games, \cite{monderer1996potential}]
\label{prop:existence_potential}
Any finite potential game admits a pure Nash equilibrium.	
\end{proposition}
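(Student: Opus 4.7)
My plan is to argue directly by maximizing the potential function over the (finite) joint action space and then transferring the maximality to a unilateral-deviation inequality via the defining property of $\varphi$. The key observation is that the potential $\varphi$ turns the question about the existence of a joint equilibrium into a single-agent optimization problem over $\mc{A}$, a problem that has a solution trivially because $\mc{A}$ is finite.

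Concretely, I would first invoke finiteness of the game, i.e., of $\mc{A} = \mc{A}_1 \times \dots \times \mc{A}_n$, to conclude that the set $\{\varphi(a) : a \in \mc{A}\}$ is a finite subset of $\mb{R}$ and therefore attains its maximum. Let $\ae \in \argmax_{a \in \mc{A}} \varphi(a)$; I claim that $\ae$ is a pure Nash equilibrium. To verify this, I would fix an arbitrary player $i \in \N$ and an arbitrary deviation $a_i' \in \mc{A}_i$, note that $(a_i', \ae_{-i}) \in \mc{A}$, and hence $\varphi(\ae) \ge \varphi(a_i', \ae_{-i})$ by maximality. Applying the definition of a potential game to the pair of profiles $(\ae_i, \ae_{-i})$ and $(a_i', \ae_{-i})$ then gives
\[
u_i(\ae) - u_i(a_i', \ae_{-i}) \;=\; \varphi(\ae) - \varphi(a_i', \ae_{-i}) \;\ge\; 0,
\]
so that $u_i(\ae) \ge u_i(a_i', \ae_{-i})$. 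Since $i$ and $a_i'$ were arbitrary, $\ae$ satisfies the defining inequality of a pure Nash equilibrium in \cref{def:nashequilibrium}, which concludes the argument.

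There is no real obstacle in this proof: the whole content lies in recognizing that the potential reduces the equilibrium existence question to an unconstrained optimization over a finite set. The only (minor) subtlety worth flagging is that the argument uses the maximum of $\varphi$, not of any individual $u_i$ — a maximizer of a single $u_i$ need not be a Nash equilibrium, whereas a maximizer of the common potential must be. It is also worth remarking that finiteness of $\mc{A}$ can be relaxed (e.g., to compactness of $\mc{A}$ together with upper semicontinuity of $\varphi$) without changing the argument, but the stated finite setting is all that is needed here.
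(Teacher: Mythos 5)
Your proof is correct and follows essentially the same route as the paper: take a global maximizer of the potential $\varphi$ over the finite set $\mc{A}$ (which exists by finiteness) and use the defining identity of the potential game to convert maximality into the no-unilateral-improvement inequality. The remarks on why one must maximize $\varphi$ rather than an individual $u_i$, and on the possible relaxation to compactness plus upper semicontinuity, are accurate but not needed beyond the paper's argument.
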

\begin{proof}
Consider $a^\star\in\mc{A}$ a global maximizer of the potential. Since the actions sets are finite, $a^\star$ is guaranteed to exist. By definition of maximizer and of potential game, it is
\be
u_i(a_i^\star,\ami^\star)-u_i(a'_i,\ami^\star) =
\varphi(a_i^\star,\ami^\star)-\varphi(a'_i,\ami^\star)\ge 0\qquad\forall a_i'\in\mc{A}_i,\quad\forall i\in N\,.
\ee
Thus, $a^\star$ is a pure Nash equilibrium.
\end{proof}
It can be similarly shown that any local maximizer of the potential function $\varphi$ is a pure Nash equilibrium.
\begin{definition}(Local maximizer)
\label{def:localmax}
Given a function $\varphi:\mc{A}\rightarrow\mb{R}$ with $\mc{A}=\mc{A}_1\times\dots\times\mc{A}_n$, an allocation $a^\star\in\mc{A}$ is a local maximizer of $\varphi$ if $\varphi(a^\star)\ge \varphi(a_i',a^\star_{-i})$ for all $a_i'\in\mc{A}_i$ and for all $i\in\N$.
\end{definition}

 The previous observation builds a \emph{fundamental bridge} between optimization problems and equilibrium problems as it suggests a seemingly simple technique to compute a \ac{ne} for the class of potential games: determine a local maximizer of the potential function. Additionally, it suggests a natural dynamics to compute one such equilibrium. 
\begin{definition}[\Ac{br}]
	Let $t\in\mb{N}_0$ indicate the time step of the algorithm and $a^t\in\mc{A}$ the corresponding allocation. The best-response dynamics is presented in \cref{alg:BR}. Ties are broken according to a pre-specified rule (any rule).
\end{definition}
In the best-response dynamics, players take ordered turns and update their choice by selecting their best action, given the current actions of the others. While the \ac{br} dynamics is not guaranteed to converge for a general game, this is the case if we restrict to the class of potential games.
\newlength\algbrake
\setlength\algbrake{2mm}
\begin{algorithm}[h!]
\caption{Best-response dynamics (round-robin)}\label{alg:BR}
\begin{algorithmic}[1]
\State Initialise $a^0\in\mc{A}$;\quad $t\gets 0$
\vspace*{\algbrake}
\While {not converged}
\LeftComment{Best response}
\State $i\gets (t \mod n)+1$
\State $a^{t+1}_i\gets \argmax_{a_i\in\mc{A}_i} u_i(a_i,\ami^t)$
\State  $a^{t+1}\gets (a^{t+1}_i,\ami^t)$
\State $t\gets t+1$
\EndWhile
\end{algorithmic}
\end{algorithm}

\begin{proposition}[Convergence of the \ac{br} dynamics, \cite{monderer1996potential}]	
\label{prop:brconverges}
The best-response dynamics converges, for any initial condition $a^0\in\mc{A}$, to a \ac{ne} in a finite number of steps, for any potential and finite game.
\end{proposition}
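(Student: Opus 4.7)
The plan is to use the potential function $\varphi$ as a monotone Lyapunov-like quantity along the trajectories generated by \cref{alg:BR}, and then exploit the finiteness of $\mc{A}$ to conclude in a finite number of steps.

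First I would examine a single iteration. At step $t$, only player $i=(t\bmod n)+1$ updates his action via $a_i^{t+1}\in\argmax_{a_i\in\mc{A}_i}u_i(a_i,\ami^t)$, while $\ami^{t+1}=\ami^t$. By definition of $\argmax$, one has $u_i(a^{t+1})\ge u_i(a^{t})$. Invoking the potential property of the game, this immediately gives
\[
\varphi(a^{t+1})-\varphi(a^t) = u_i(a^{t+1})-u_i(a^t)\ge 0,
\]
so $\{\varphi(a^t)\}_{t\ge 0}$ is nondecreasing. Because $\mc{A}$ is finite, $\varphi$ attains only finitely many values on $\mc{A}$, and therefore the number of steps at which the inequality above is \emph{strict} is bounded by $|\varphi(\mc{A})|-1<\infty$.

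Second, I would relate the ``no strict improvement'' property to the Nash equilibrium condition. Call a step $t$ \emph{improving} if $\varphi(a^{t+1})>\varphi(a^t)$, and \emph{idle} otherwise. By the previous step, only finitely many $t$ are improving; hence there exists some $T\ge 0$ such that every step $t\ge T$ is idle. Consider any $n$ consecutive idle steps $t,t+1,\dots,t+n-1$ with $t\ge T$: each player of $\mc{N}$ is selected exactly once during this window (by the round-robin rule), and for each such selection the updating player $i$ must have had $u_i(a_i^{t+k+1},\ami^{t+k})=\max_{a_i\in\mc{A}_i}u_i(a_i,\ami^{t+k})$. Since idleness means $a^{t+k+1}=a^{t+k}$ (assuming the tie-breaking rule selects $a_i^{t+k}$ whenever it is itself a maximizer, which is a standard and harmless convention; otherwise one just tracks the joint action after the window and applies the same argument), the action profile is constant on the window and equals some $\bar a\in\mc{A}$. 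For every $i\in\mc{N}$ we then have $u_i(\bar a)\ge u_i(a_i,\ami)$ for all $a_i\in\mc{A}_i$, which by \cref{def:nashequilibrium} means $\bar a\in\nashe{G}$.

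The main obstacle in executing the plan is the bookkeeping around idle steps and tie-breaking. If during an idle step the player $i$ has multiple maximizers and the argmax returns a different element than $a_i^t$, the potential does not change but the joint action does, so idleness in the sense of ``$\varphi$ stays constant'' is weaker than ``$a^t$ stays constant''. To deal with this cleanly, I would either (i) assume the argmax tie-breaking rule keeps the current action whenever it is a best response, which is the standard formulation; or (ii) argue on the level of potentials only, observing that after the finite number of strict increases the potential is constant on any sliding window of length $n$, and that this constancy together with the round-robin order still certifies that the final profile satisfies the Nash inequalities for every player. Either route leads to the conclusion that the algorithm halts at a Nash equilibrium after at most $n\cdot(|\varphi(\mc{A})|-1)+n$ iterations, which is finite.
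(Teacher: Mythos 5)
Your proposal is correct and follows essentially the same route as the paper: the potential $\varphi$ is used as a Lyapunov function that is nondecreasing along best responses, finiteness of $\mc{A}$ bounds the number of strict increases, and the absence of further improvement certifies a Nash equilibrium. The only difference is that you explicitly handle the tie-breaking/idle-step bookkeeping, which the paper's (informal) proof glosses over; your convention (i) — keeping the current action whenever it is itself a best response — is the cleanest way to close that detail.
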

\begin{proof}
The proof is based on the use of the potential function as a Lyapunov function. After every round of the \ac{br} dynamics either no player improved his utility, in which case we are at a Nash equilibrium, or at least the utility of one player has increased. In the latter case, $\varphi$ has increased too. Since $\varphi$ is upper bounded by its maximum value, the \ac{br} dynamics must converge. Additionally, since $\varphi$ strictly increases in every round, the best-response dynamics can not return to an allocation visited in the past. Thus, convergence in a finite number of steps follows by the finiteness of $\mc{A}$. 
\end{proof}
Three important comments follow. First, we considered here a round-robin best response algorithm, i.e., an algorithm where the players revise their decision in a given order. Similar statements to those in \cref{prop:brconverges} can be made \emph{almost surely} if the players updating their decision are uniformly randomly selected. This will produce a totally asynchronous algorithm.
Second, note that the claim in \cref{prop:brconverges} holds even if the players were to update their actions using a better-response dynamics, instead of a best-response dynamics. In the better-response dynamics, players update their previous choice by selecting an action that improves their utility, but need not be the best.
Third, observe that the best-response dynamics (better-response dynamics) might be slow to converge, in that it could visit \emph{all} the allocations in $\mc{A}$ before settling to a \ac{ne}. Additionally, the task of finding a best-response (line $4$ in \cref{alg:BR}) might also be intractable. We return to this in \cref{sec:complexity_equil}.

\subsection*{Congestion games}
Congestion games are defined as follows.
\begin{definition}[Congestion game, \cite{rosenthal1973class}]
\label{def:congestiongame}
Consider $\mc{R}$ a finite set of resources and for every resource $r\in\mc{R}$ a function $w_r:\mb{N}\rightarrow\mb{R}_{\ge0}$. A congestion game is a normal-form finite game where $\N=\{1,\dots,n\}$ is the set of players, $\mc{A}_i\subseteq 2^{\mc{R}}$ and $u_i(a)=\sum_{r\in a_i} w_r(|a|_r)$ are the action set and utility function of player $i$, respectively. The quantity $|a|_r$ represents the number of players selecting resource $r$ in allocation $a$,
\be
|a|_r\coloneqq\{i\in\N\,\text{s.t.}\,r\in a_i\}
\ee
\end{definition}
The next proposition shows that congestion games are a subclass of potential games. Thus, existence of a pure Nash equilibrium is guaranteed as well as convergence of the best-response dynamics, see \cref{prop:existence_potential} and \vref{prop:brconverges}. 
\begin{proposition}[Congestion games are potential, \cite{rosenthal1973class}]
\label{prop:congestionsarepotential}
	Congestion games are potential games with potential function $\varphi$ given by 
	\be
	\varphi(a)=\sum_{
	\substack{r\in\mc{R}\\[0.5mm]|a|_r\ge1}}\sum_{j=1}^{|a|_r} w_r(|a|_r).
	\label{eq:rosenthalpotential}
	\ee
	Thus, a pure Nash equilibrium is guaranteed to exist.
\end{proposition}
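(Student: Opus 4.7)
The plan is to verify the potential game identity directly from the definitions and then invoke Proposition~\ref{prop:existence_potential}. Fix a player $i\in\N$, an allocation $a=(a_i,\ami)\in\mc{A}$, and an alternative action $a_i'\in\mc{A}_i$; write $a'=(a_i',\ami)$. I would show that
\[
u_i(a)-u_i(a') \;=\; \varphi(a)-\varphi(a'),
\]
which by Definition~\ref{def:congestiongame} is precisely the potential game condition, and then existence of a pure Nash equilibrium follows immediately from Proposition~\ref{prop:existence_potential} and the fact that every congestion game is finite.

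The first step is to localise the change of congestion induced by $i$'s unilateral deviation. Split $\mc{R}$ into four disjoint pieces: $a_i\cap a_i'$, $a_i\setminus a_i'$, $a_i'\setminus a_i$, and $\mc{R}\setminus(a_i\cup a_i')$. For any resource $r$ in the first or the fourth piece one has $|a|_r=|a'|_r$, because player $i$ either selects $r$ in both allocations or in neither. On $a_i\setminus a_i'$ one has $|a|_r=|a'|_r+1$, and on $a_i'\setminus a_i$ one has $|a'|_r=|a|_r+1$.

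The second step is to exploit this localisation in both differences. For the utility, expanding $u_i$ and cancelling the terms on $a_i\cap a_i'$ (where the congestion agrees) gives
\[
u_i(a)-u_i(a') \;=\; \sum_{r\in a_i\setminus a_i'} w_r(|a|_r) \;-\; \sum_{r\in a_i'\setminus a_i} w_r(|a'|_r).
\]
For the potential, the inner sum $\sum_{j=1}^{|a|_r} w_r(j)$ is identical at $a$ and $a'$ for every $r\in (a_i\cap a_i')\cup(\mc{R}\setminus(a_i\cup a_i'))$, so all those terms cancel. On $a_i\setminus a_i'$ the count increases by one in passing from $a'$ to $a$, contributing exactly $w_r(|a|_r)$ to $\varphi(a)-\varphi(a')$; symmetrically, on $a_i'\setminus a_i$ the contribution is $-w_r(|a'|_r)$. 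Adding these yields the same expression as for $u_i(a)-u_i(a')$, establishing the identity.

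The proof is almost entirely bookkeeping once the partition of $\mc{R}$ is in place, so I do not anticipate a genuine obstacle. The only subtlety worth flagging explicitly is the reading of the inner sum in \eqref{eq:rosenthalpotential}: the argument above uses the standard Rosenthal convention $\sum_{j=1}^{|a|_r} w_r(j)$, so I would state this convention at the outset to avoid ambiguity with the written summand. After proving the identity I would close by noting that, since $\mc{A}$ is finite, Proposition~\ref{prop:existence_potential} guarantees a pure Nash equilibrium (obtained, for instance, as a global maximiser of $\varphi$).
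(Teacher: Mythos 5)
Your proof is correct and is exactly the standard Rosenthal telescoping argument that the paper relies on by citation (the paper itself gives no proof, deferring to \cite{rosenthal1973class}), and it matches the potential-difference computation the paper later uses implicitly in its Appendix. You are also right to flag that the inner summand in \eqref{eq:rosenthalpotential} should be read as $w_r(j)$ rather than the literal $w_r(|a|_r)$ — with the literal reading the displayed function is the utilitarian sum and the identity would fail — so stating that convention up front, as you do, is the correct fix.
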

The potential function in \eqref{eq:rosenthalpotential} is often referred to as \emph{Rosenthal's potential}.

\section{Price of anarchy and smoothness}
\label{sec:poa_smooth}
The notions of price of anarchy and price of stability have been introduced to quantify the efficiency of the equilibrium allocations with respect to centralized optimal allocations \cite{Koutsoupias, schulz2003performance}. Let us consider a strategic-form game $G=(\N,\{\mc{A}_i\}_{i=1}^\N,\{u_i\}_{i=1}^\N)$ and a corresponding
 welfare function $W:\mc{A}\rightarrow \mb{R}_{\ge0}$. The function $W$ measures the quality of a given allocation, and can be used to model the achievement of a global objective. The price of anarchy represents the ratio between the welfare at the worst performing equilibrium and the optimal welfare. Consequently, it provides a bound on the efficiency for \emph{all} the equilibria. In the following we assume that $W(\aopt)>0$ so that the notion of price of anarchy is well posed.
\begin{definition}[\Ac{poa}, \cite{Koutsoupias}]
\label{def:poa}
Consider the strategic-form game $G$ and the welfare function $W:\mc{A}\rightarrow \mb{R}_{\ge0}$.	
\begin{enumerate}
	\item The price of anarchy for the class of \ac{ne} is defined as
\be
\poane \coloneqq \min_{a\in\nashe{G}}\frac{W(a)}{W(\aopt)}\,.
\ee
\item The price of anarchy for the class of \ac{mne} is defined as
\be
\poamne \coloneqq \min_{\sigma \in\mnashe{G}}\frac{\expected{W(a)}{a\sim\sigma}}{W(\aopt)}\,.
\ee
Replacing the set $\mnashe{G}$ with $\cce{G}$ or $\acce{G}$, one obtains the corresponding definitions for $\poacce$ and $\poaacce$.
\end{enumerate}

\end{definition}
Observe that the expression $W(\aopt)$ also depends on the game instance $G$ considered 
but we do not indicate it explicitly, for ease of presentation.
By definition, the price of anarchy is bounded between zero and one. The higher the price of anarchy, the more efficient the worst performing equilibrium. Since $\nashe{G}\subseteq \mnashe{G} \subseteq \cce{G} \subseteq \acce{G}$ as seen in \cref{sec:equilibria}, it follows that
\[
\poane\ge \poamne \ge \poacce \ge \poaacce\,,
\]
i.e., the efficiency degrades as we move to a richer class of equilibria.

As a prototypical example to clarify the concept of price of anarchy, consider that of a road traffic network where a large number of drivers traveling from a certain origin to their corresponding destination. If each driver was to minimize his own travel time, this will result in an equilibrium configuration such as the \ac{ne}. Instead, if the system operator was to instruct the drivers on which route to take, he will try to minimize the total travel time, i.e., the sum over all the drivers' individual travel time. The price of anarchy precisely capture the ratio between these two quantities.

While we present results relative to welfare maximization problems, analogous definitions and claims are available in case of cost minimization.
\begin{definition}[Smooth game, \cite{roughgarden2009intrinsic}]
\label{def:smoothgame}
	Consider the strategic-form game $G$ and the welfare function $W:\mc{A}\rightarrow \mb{R}_{\ge0}$. The pair $(G,W)$ is $(\lambda,\mu)$-smooth if for some $\lambda,\mu\ge0$ it holds
	\be
	\label{eq:smoothdef}
	\sum_{i\in\N} u_i(a_i',a_{-i})\ge \lambda W(a')-\mu W(a)\,, \quad\forall a,a'\in\mc{A}.
	\ee
\end{definition}
 
 	The following proposition provides a lower bound on the ratio between the expected welfare at any \ac{cce} and the optimum, i.e., it gives a bound on the price of anarchy relative to the specific instance $G$ considered.
\begin{proposition}[$\poa$ bound,\cite{roughgarden2009intrinsic}]
\label{prop:poasmooth}
	Consider a $(\lambda,\mu)$-smooth game with $ \sum_{i\in\N} u_i(a)\le W(a)$ for all $a\in\mc{A}$. Then, for any coarse correlated equilibrium $\sigmacce$ of $G$ it holds
	\be
		\frac{\expected{W(a)}{a\sim\sigmacce}}{W(\aopt)}\ge \frac{\lambda}{1+\mu}\,. 	
		\label{eq:lambdamubound}
	\ee
\end{proposition}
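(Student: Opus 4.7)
The plan is to chain together three ingredients: the CCE deviation inequality applied to the optimal allocation as the deviation, the smoothness inequality applied to the same pair $(a, a^{\rm opt})$, and the budget-balance-type hypothesis $\sum_i u_i(a) \le W(a)$. First I would fix an arbitrary CCE $\sigma^{\rm cce}$ and, for each player $i$, instantiate the defining inequality \eqref{eq:ccedef} with the unilateral deviation $a_i' = a_i^{\rm opt}$, yielding
\[
\mathbb{E}_{a\sim\sigma^{\rm cce}}[u_i(a)] \ge \mathbb{E}_{a\sim\sigma^{\rm cce}}[u_i(a_i^{\rm opt}, a_{-i})], \qquad \forall i \in \N.
\]
Summing over $i \in \N$ gives
\[
\mathbb{E}_{a\sim\sigma^{\rm cce}}\!\left[\sum_{i\in\N} u_i(a)\right] \ge \mathbb{E}_{a\sim\sigma^{\rm cce}}\!\left[\sum_{i\in\N} u_i(a_i^{\rm opt}, a_{-i})\right].
\]

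Next I would bound the two sides. On the left, the hypothesis $\sum_i u_i(a) \le W(a)$ yields
\[
\mathbb{E}_{a\sim\sigma^{\rm cce}}\!\left[\sum_{i\in\N} u_i(a)\right] \le \mathbb{E}_{a\sim\sigma^{\rm cce}}[W(a)].
\]
On the right, applying the $(\lambda,\mu)$-smoothness condition \eqref{eq:smoothdef} pointwise with $a' = a^{\rm opt}$ and then taking expectation gives
\[
\mathbb{E}_{a\sim\sigma^{\rm cce}}\!\left[\sum_{i\in\N} u_i(a_i^{\rm opt}, a_{-i})\right] \ge \lambda\, W(a^{\rm opt}) - \mu\, \mathbb{E}_{a\sim\sigma^{\rm cce}}[W(a)].
\]

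Combining the three displays produces
\[
\mathbb{E}_{a\sim\sigma^{\rm cce}}[W(a)] \ge \lambda\, W(a^{\rm opt}) - \mu\, \mathbb{E}_{a\sim\sigma^{\rm cce}}[W(a)],
\]
and rearranging (using $\mu \ge 0$) yields the desired bound $\mathbb{E}[W(a)]/W(a^{\rm opt}) \ge \lambda/(1+\mu)$. There is no real obstacle here: the only subtlety is being careful that the CCE deviation inequality is valid for the specific \emph{pure} deviation $a_i^{\rm opt}$ (which it is, since \eqref{eq:ccedef} quantifies over all $a_i' \in \mc{A}_i$), and that the smoothness inequality holds pointwise in $a$ so that it can be integrated against the distribution $\sigma^{\rm cce}$ without further assumptions.
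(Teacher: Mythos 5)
Your proposal is correct and follows essentially the same argument as the paper: instantiate the CCE inequality with the pure deviation $a_i^{\rm opt}$, sum over players, then apply the sub budget-balance assumption on one side and the pointwise smoothness inequality (integrated against $\sigma^{\rm cce}$) on the other, and rearrange. No gaps; the only (minor) difference is organizational, as the paper chains the bounds in a single display rather than stating them separately.
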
 

\begin{proof}
Consider $\sigmacce$ any \ac{cce} of $G$. Setting $a'_i=\aopt_i$ in
\cref{def:smoothgame} it is 
\[
0\le \expected{u_i(a)}{a\sim \sigmacce} - \expected{u_i(\aopt,\ami)}{a\sim \sigmacce} 
\qquad \forall i\in \N.
\] 
Summing over the agents one obtains

\[
\begin{split}
0&\le \expected{\sum_i u_i(a)}{a\sim \sigmacce}
-
\expected{\sum_i u_i(\aopt_i,a_{-i})}{a\sim \sigmacce}
\\
&\le 
\expected{\sum_i u_i(a)}{a\sim \sigmacce}
-\lambda W(\aopt) + \mu \expected{W(a)}{a\sim \sigmacce}\\
&\le -\lambda W(\aopt) + (1+\mu)\expected{W(a)}{a\sim \sigmacce}\,,
\end{split}
\]	
where we used the linearity of the expectation, the definition of $(\lambda,\mu)$-smooth game, and the assumption for which $\sum_{i\in\N}u_i(a)\le W(a)$. 
\mbox{The claim follows from $W(a)\ge0$}
\[
\frac{\mb{E}_{a\sim\sigmacce}[W(a)]}{W(\aopt)}\ge \frac{\lambda}{1+\mu}\,.
\]
\end{proof}
 The smoothness framework has proved useful in bringing a number of different results under a common language and has produced tight bounds on the price of anarchy for different problems \cite{roughgarden2009intrinsic, roughgarden2017price}. 
Its strength amongst others, lies in the recipe it provides to obtain performance bounds for a large class of equilibria.
Indeed, as seen in the previous section, pure \ac{ne} (and \ac{mne}) are a subclass of \ac{cce}. Thus, for any pure Nash equilibrium $\ae$ of $G$ it also holds 
\be
\frac{W(\ae)}{W(\aopt)}\ge \frac{\lambda}{1+\mu}\,.
\ee

The proof presented in \cref{prop:poasmooth} shows that once a game has been shown to be $(\lambda,\mu)$-smooth, the corresponding bound on the price of anarchy follows easily (only linearity of the expectation is additionally used). Thus, the main difficulty in proving bounds on the price of anarchy using a smoothness argument resides in proving the smoothness property itself, i.e., in selecting $\lambda$ and $\mu$ so that \eqref{eq:smoothdef} holds for all $a,a^\star\in\mc{A}$. These parameters have been determined for certain classes of games. A non-comprehensive list include scheduling games \cite{cohen2012smooth}, location and valid utility games \cite{vetta2002nash}, affine congestion games \cite{roughgarden2009intrinsic}, first-price auctions \cite{kaplan2012asymmetric, syrgkanis2013composable}, second-price auctions \cite{christodoulou2008bayesian}.

\subsection{The question of tightness}
\label{subsec:tightness}
An important question we discuss in this section is the capability of the smoothness framework to give good (ideally tight) bounds on the price of anarchy. Given a game $G$, we define the best bound on the price of anarchy attainable via a smoothness argument~as
\[
\begin{split}
\poas\coloneqq &\sup_{\lambda,\mu \ge0}\frac{\lambda}{1+\mu}\\
&\text{s.t.}~
(\lambda,\mu)~\text{satisfy \eqref{eq:smoothdef}}\,.
\end{split} 
\]
 Observe that $\poas\le \poa$ as \cref{prop:poasmooth} provides only a bound on the equilibrium efficiency.
The next proposition shows that $\poas$ is tight for the class of \ac{acce}. 

\begin{proposition}[Smoothness is tight for \ac{acce}, \cite{nadav2010limits}]
For any given game $G$~it~is
\be
\poas = \poaacce = \min_{\sigma\in\acce{G}}\frac{\expected{W(a)}{a\sim\sigma}}{W(\aopt)}\,.
\ee
\end{proposition}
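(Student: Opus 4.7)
The plan is to establish the equality by proving the two inequalities $\poaacce \ge \poas$ and $\poaacce \le \poas$ separately. For the former, I would point out that the proof of \cref{prop:poasmooth} relies solely on the ACCE condition: the summation over $i$ in that proof, combined with the choice $a_i' = \aopt_i$, collapses the CCE inequality \eqref{eq:ccedef} into precisely the constraint defining an ACCE. Consequently, for every pair $(\lambda,\mu)$ feasible in \eqref{eq:smoothdef} and every $\sigma \in \acce{G}$, the same chain of inequalities leading to \eqref{eq:lambdamubound} yields $\expected{W(a)}{a\sim\sigma}/W(\aopt) \ge \lambda/(1+\mu)$. Infimizing over $\sigma$ and supremizing over feasible $(\lambda,\mu)$ then gives $\poaacce \ge \poas$.

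For the reverse inequality, the plan is to invoke LP duality. Normalizing $W(\aopt) = 1$ without loss of generality (by rescaling the welfare), I would cast the computation of $\poaacce$ as the finite linear program in variables $\sigma \in \R^{|\mc{A}|}$:
\[
\min_\sigma \sum_{a \in \mc{A}} W(a)\, \sigma(a) \quad \text{s.t.} \quad \sum_a \sigma(a) = 1, \;\; \sigma(a) \ge 0, \;\; \sum_a \sigma(a) \sum_{i \in \N}\bigl[u_i(a) - u_i(\aopt_i, a_{-i})\bigr] \ge 0.
\]
The LP is feasible (take $\sigma$ concentrated on $\aopt$) and bounded (the objective lies in $[0,1]$), so strong LP duality applies. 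Introducing a free multiplier $\lambda$ for the probability constraint and a non-negative multiplier $\mu$ for the ACCE constraint, the dual LP reads
\[
\max_{\lambda \in \R,\, \mu \ge 0}\; \lambda \quad \text{s.t.}\quad \lambda \le W(a) + \mu \sum_{i \in \N}\bigl[u_i(a) - u_i(\aopt_i, a_{-i})\bigr] \quad \forall a \in \mc{A},
\]
and its optimum equals $\poaacce$ by strong duality.

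The main obstacle I foresee is identifying this dual feasible set with the set of smoothness parameters $(\lambda,\mu)$. The dual constraints feature the term $\mu \sum_i u_i(a)$, which does not appear in the smoothness condition \eqref{eq:smoothdef} where $W(a)$ is used instead. My plan is to reparameterize by $\hat\lambda \defeq \lambda/(1+\mu)$ and exploit two structural observations: first, throughout the framework of \cref{prop:poasmooth} the utilities satisfy the budget-balance inequality $\sum_{i} u_i(a) \le W(a)$ for every $a$; second, the easy direction above only invokes \eqref{eq:smoothdef} at $a' = \aopt$, so the binding instance of smoothness may be assumed to be at the welfare maximizer. Under these reductions, the dual feasibility transforms into exactly the smoothness condition on $(\hat\lambda(1+\mu),\mu)$, and the dual objective $\lambda = \hat\lambda(1+\mu)$ yields the ratio $\hat\lambda$ that is being maximized in the definition of $\poas$. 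Combining this identification with the easy direction delivers $\poas = \poaacce$.
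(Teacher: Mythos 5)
Your first direction is sound: the proof of \cref{prop:poasmooth} uses the equilibrium hypothesis only through the summed inequality $\expected{\sum_i u_i(a)}{a\sim\sigma}\ge\expected{\sum_i u_i(\aopt_i,a_{-i})}{a\sim\sigma}$, which is exactly the ACCE condition, so (under the standing hypothesis $\sum_{i\in\N}u_i(a)\le W(a)$ of that proposition) every smooth pair bounds every ACCE and $\poaacce\ge\poas$ follows. Note also that the thesis does not prove this proposition itself -- it is imported from \cite{nadav2010limits} -- so your argument must stand on its own.

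The reverse direction is where the content lies, and as written it has genuine gaps. First, a sign slip: with a multiplier $\mu\ge0$ on the constraint $\sum_a\sigma(a)\sum_i\bigl[u_i(a)-u_i(\aopt_i,a_{-i})\bigr]\ge0$, the dual constraint is $\lambda\le W(a)+\mu\sum_i\bigl[u_i(\aopt_i,a_{-i})-u_i(a)\bigr]$ for all $a$, i.e.\ the bracket is the negative of what you wrote; with your sign the dual value can strictly exceed $\poaacce$ (a one-player instance with $W(b)=u(b)=0.6$, $W(\aopt)=1$, $u(\aopt)=0.5$ has $\poaacce=0.6$, while your dual evaluates to $1$). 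Second, and more importantly, the step you defer -- identifying the dual feasible set with smoothness parameters -- is precisely where the theorem is, and the two reductions you propose do not deliver it. From the correct dual constraint with $\mu>0$ one gets $\sum_i u_i(\aopt_i,a_{-i})\ge \lambda/\mu-(1/\mu)W(a)+\sum_i u_i(a)$; the sub-budget-balance inequality $\sum_i u_i(a)\le W(a)$ points the wrong way here (you would need a \emph{lower} bound on $\sum_i u_i(a)$ in terms of $W(a)$ to absorb that term), and simply dropping the nonnegative term produces the pair $(\lambda/\mu,\,1/\mu)$, whose smoothness ratio is $\lambda/(1+\mu)$, strictly below the dual value $\lambda$. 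The clean correspondence needs budget balance $\sum_i u_i(a)=W(a)$, under which $(\lambda/\mu,\,(1-\mu)/\mu)$ has ratio exactly $\lambda$ for $0<\mu\le1$, and the cases $\mu=0$ and $\mu>1$ must then be treated separately. Finally, your claim that the binding instance of \eqref{eq:smoothdef} ``may be assumed to be at the welfare maximizer'' is not innocuous: $\poas$ as defined in the paper quantifies \eqref{eq:smoothdef} over \emph{all} pairs $(a,a')$, whereas the LP dual only controls deviations to $\aopt$, and with merely sub-budget-balanced utilities the all-pairs requirement can strictly lower the achievable ratio. For instance, a single player with actions $\{b,\aopt\}$, $W(b)=1/2$, $W(\aopt)=1$, $u(b)=0$, $u(\aopt)=1$ has $\poaacce=1$ (the only ACCE is the point mass on $\aopt$), yet no $(\lambda,\mu)$ satisfying \eqref{eq:smoothdef} for all pairs achieves a ratio above $2/3$. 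So the identification must be carried out either under the budget-balance equality or with a version of $\poas$ in which deviations are restricted to $\aopt$ (which is how the cited reference sets things up); as sketched, the proof does not close.
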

The previous proposition provides a positive result, in that it shows that $\poas$ matches the ``true'' price of anarchy for the class of \ac{acce}. Nevertheless, this result is rather weak. Indeed, it has been shown by means of counterexamples that the best bound on the price of anarchy achievable using a smoothness argument is \emph{not tight} in the class of \ac{cce}, \cite{nadav2010limits, part1Paccagnan2018}. That is, there are instances $G$ where $\poas < \poacce$ and $\poas$ provides a rather weak bound on $\poacce$, \cite{part1Paccagnan2018}. It follows that the best smoothness bound can not be tight for any of the subclasses of \ac{cce} including~\ac{mne}~and~\ac{ne}.

\section{Complexity of computing equilibria}
\label{sec:complexity_equil}
The goal of this section is to present an overview on the complexity issues related to the equilibrium computation problem. We do not delve in the details of different complexity classes, but simply try to highlight which equilibrium concepts are ``hard'' to compute and which are ``easy''.

We divide the presentation in four parts. First, we present an intractability result for pure and mixed Nash equilibria. Second, we restrict our attention to congestion games and show that the best-response dynamics converges in polynomial time under structural assumptions on the actions sets $\{\mc{A}_i\}_{i\in\N}$. Third, we show that coarse correlated equilibria are tractable in general. We conclude discussing the tradeoff between equilibrium efficiency and computational tractability.

\subsubsection*{Pure and mixed Nash equilibria are intractable}
We begin with a negative result showing that the problem of computing a pure \ac{ne} is intractable, even if we restrict to the class of congestion games. In the following $\pls$ represents the complexity class known as \emph{polynomial local search}. Loosely speaking the $\pls$ class models the difficulty of finding a local optimum solution in the sense of \cref{def:localmax}. 
The $\pls$ class lives in between the classes $\pclass$ and $\npclass$ and there is strong evidence suggesting that $\pls\not\subseteq \pclass$, where $\pclass$ is the class of problems that can be solved polynomially. As a matter of fact, many concrete problems including the local Max-Cut problem are in the $\pls$ class and no efficient algorithm is available. We redirect the reader to \cite{roughgarden2016twenty} for an introduction to the polynomial local search class.

\begin{proposition}[Computing a pure \ac{ne} is $\pls$-complete, \cite{fabrikant2004complexity}]
\label{prop:neispls}
The problem of computing a pure \ac{ne} in a congestion game is $\pls$-complete.
\end{proposition}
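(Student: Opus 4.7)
The proof proceeds in the two standard steps for establishing $\pls$-completeness: first membership in $\pls$, then $\pls$-hardness.

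For \textbf{membership}, my plan is to exploit the potential-game structure established in \cref{prop:congestionsarepotential}. Recall that a congestion game admits Rosenthal's potential $\varphi(a)=\sum_{r\in\mc{R},\,|a|_r\ge 1}\sum_{j=1}^{|a|_r} w_r(j)$. I would cast the problem into the $\pls$ framework by specifying the three required ingredients: (i) the set of feasible solutions is $\mc{A}=\mc{A}_1\times\cdots\times\mc{A}_n$, which is polynomial-time recognizable given a succinct encoding of each $\mc{A}_i$; (ii) the objective to be locally maximized is $\varphi$, which can be evaluated in polynomial time since it is a sum over at most $|\mc{R}|\cdot n$ terms; (iii) the neighborhood $\mathcal{N}(a)$ consists of all allocations obtained from $a$ by a unilateral deviation of a single player, which is polynomial-size and polynomial-time searchable for an improving move. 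By \cref{prop:congestionsarepotential} and the proof of \cref{prop:existence_potential}, the local maxima of $\varphi$ under this neighborhood are precisely the pure Nash equilibria, so the standard local search problem associated with $(\varphi,\mathcal{N})$ is exactly the problem of computing a pure \ac{ne}. This places the problem in $\pls$.

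For \textbf{hardness}, I would reduce from a known $\pls$-complete problem, for instance \textsc{PosNae3Flip} or more conveniently the circuit-local-optimum problem \textsc{CircuitFlip}, whose $\pls$-completeness follows from the generic Schäffer-Yannakakis machinery. The plan is to construct, given an instance of the source problem, a congestion game whose pure Nash equilibria are in polynomial-time computable correspondence with the local optima of the source. Concretely, I would design resources and agent strategy sets such that each gate (or constraint) of the circuit is encoded by a small ``gadget'' of resources whose congestion-dependent payoffs $w_r(\cdot)$ simulate the local improvement rule of the source problem; a unilateral deviation in the congestion game must then mirror a single-bit flip (or an atomic local move) in the source instance, while strictly increasing Rosenthal's potential iff the corresponding source move strictly improves the source objective. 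Extra ``synchronization'' resources with steeply increasing $w_r$ would be used to prevent spurious deviations that do not correspond to legitimate flips, and to enforce the consistency of variable-assignment encodings across gate gadgets.

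The main obstacle, and the reason this direction is substantially harder than the membership part, is the design and verification of the gadget-based reduction. One must show two things simultaneously: \emph{completeness}, that every local optimum of the source instance lifts to a pure Nash equilibrium of the constructed congestion game; and \emph{soundness}, that from every pure \ac{ne} of the congestion game one can extract, in polynomial time, a local optimum of the source instance. Soundness is the delicate part, because the congestion game may in principle admit ``spurious'' equilibria that do not directly encode any feasible assignment of the source problem, and the $w_r$ schedules must be chosen (typically with exponentially separated values so that gadget-internal deviations dominate any inter-gadget considerations) to rule out such spurious equilibria. Once the gadget construction is shown to be polynomial in the size of the source instance and both completeness and soundness are verified, combining this with the membership result yields $\pls$-completeness.
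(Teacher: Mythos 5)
The paper offers no proof of this proposition: it is stated as an imported result, credited to Fabrikant, Papadimitriou and Talwar \cite{fabrikant2004complexity}, so there is no in-paper argument to compare yours against. Judged on its own terms, your membership argument is correct and essentially complete: Rosenthal's potential $\varphi$ is polynomial-time computable, the unilateral-deviation neighborhood is polynomially searchable, and by the exact-potential property (\cref{prop:congestionsarepotential}) the local maxima of $\varphi$ in the sense of \cref{def:localmax} coincide with the pure Nash equilibria, which is precisely the $\pls$ template. (The only caveat is that membership presumes the strategy sets are given explicitly or admit a polynomial-time best-response oracle; for succinctly encoded $\mc{A}_i$ this needs to be said.)

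The genuine gap is in the hardness half, which is where all the content of the theorem lives. What you provide is a plan — ``reduce from \textsc{PosNae3Flip} or \textsc{CircuitFlip}, build gadgets whose congestion schedules simulate local moves, add synchronization resources, then verify completeness and soundness'' — but no reduction is actually constructed, and you yourself flag gadget design and the exclusion of spurious equilibria as the main obstacle rather than resolving it. A proof of $\pls$-hardness must exhibit the reduction concretely: in the cited work, for general congestion games this is a direct reduction from \textsc{PosNae3Flip} in which each clause contributes a pair of resources, each variable is a player with two strategies (one per truth value) consisting of the clause resources it appears in, and the delay/weight schedule $w_r(\cdot)$ is chosen so that a player's utility change under a flip equals the change in the \textsc{Nae} objective; the soundness/completeness correspondence then falls out of the potential identity rather than from exponentially separated penalty values (those devices are needed only for the harder network-congestion variants). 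Without such an explicit construction and its verification, the hardness direction — and hence $\pls$-completeness — remains unproven in your proposal.
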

It follows that for a general strategic form and finite game (not necessarily a congestion game), computing a pure Nash equilibrium is as hard as the hardest problem in the $\pls$ class.
Any modification of the original problem (e.g., determining if a game $G$ has a pure \ac{ne}, determining the \ac{ne} that maximizes a given welfare function) makes it $\npclass$-complete \cite{gottlob2005pure,conitzer2002complexity}.

Computing a mixed Nash equilibrium is also an intractable problem. Its complexity has been settled in \cite{Daskalakis_conference, chen2006settling} with the introduction of the $\ppad$ complexity class.
Similarly to the $\pls$ class, the $\ppad$ class lives in between the classes $\pclass$ and $\npclass$ and despite the great interest in the topic, there are currently no known efficient algorithms to tackle these problems \cite{Daskalakis_conference}. 

\begin{proposition}[Computing a \ac{mne} is $\ppad$-complete, \cite{Daskalakis_conference, chen2006settling}]
The problem of computing a \ac{mne} in a strategic-form finite game is $\ppad$-complete.	
\end{proposition}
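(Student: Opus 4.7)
The plan is to establish the two inclusions $\ppad$-membership and $\ppad$-hardness separately, following the strategy of~\cite{Daskalakis_conference, chen2006settling}. For membership, I would start from Nash's original existence proof, which is inherently constructive through Brouwer's fixed-point theorem: define, for each player $i$ and each action $a_i\in\mc{A}_i$, the continuous ``gain'' function $g_{i,a_i}(\sigma)\defeq\max\{0,\,\mb{E}_{a\sim\sigma}[u_i(a_i,a_{-i})]-\mb{E}_{a\sim\sigma}[u_i(a)]\}$, and build the Nash map $f:\Sigma\rightarrow\Sigma$ whose $i$-th block is $f_i(\sigma)_{a_i} = (\sigma_{i,a_i}+g_{i,a_i}(\sigma))/(1+\sum_{a_i'}g_{i,a_i'}(\sigma))$. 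The fixed points of $f$ are exactly the mixed Nash equilibria. Since $f$ is polynomial-time computable, coordinate-wise, and Lipschitz with polynomial constant in the description size, computing an $\varepsilon$-approximate fixed point reduces to the canonical \textsc{End-of-a-Line} problem via a standard Sperner/triangulation argument (discretize the simplex $\Sigma$ at a sufficiently fine scale and orient simplices by the direction of $f(\sigma)-\sigma$). This places the approximate Nash problem in $\ppad$.

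For $\ppad$-hardness, the plan is to reduce \textsc{End-of-a-Line} (the canonical $\ppad$-complete problem) to \textsc{Nash}. The route I would take is to pass through \emph{graphical games}, where each player's utility depends only on a bounded-degree neighborhood, and design ``arithmetic gadgets'': small graphical games whose equilibria encode the operations $z=x+y$, $z=\alpha x$, $z = \max(x,y)$, comparison, and logical gates, with real values represented by the probability that a designated player plays a designated action. The key technical point is that each gadget must be robust to small errors so that gadgets can be composed without amplifying inaccuracies. Once such a gate library exists, one wires the gates into a circuit that simulates the successor/predecessor functions of a given \textsc{End-of-a-Line} instance, so that equilibria of the resulting graphical game correspond to fixed points of the line graph, i.e., to starting or terminal endpoints. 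A final step reduces graphical games to ordinary strategic-form games on a constant number of players via the construction of~\cite{Daskalakis_conference} (four players), later sharpened in~\cite{chen2006settling} to two players.

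The main obstacle is the hardness direction, specifically the design and composition of the arithmetic gadgets. Each gadget is itself a small game and its correctness relies on all players being in equilibrium simultaneously; moreover one must argue that an \emph{approximate} Nash equilibrium of the composed game still yields an approximate witness for \textsc{End-of-a-Line}, which requires careful accounting of error propagation through the simulated circuit. Packaging many gadgets into a bounded-degree graphical game while preserving both the PPAD-style combinatorial structure and the error guarantees is the delicate part; everything else (PPAD membership, the reduction from graphical games to few-player games) is comparatively mechanical once the gadget library is in place.
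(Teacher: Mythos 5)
Your sketch is accurate: the paper offers no proof of this proposition, merely citing \cite{Daskalakis_conference, chen2006settling}, and your outline faithfully reproduces the argument of those references (PPAD membership via the Nash map, Brouwer/Sperner discretization, and PPAD-hardness via \textsc{End-of-a-Line}, arithmetic gadgets in graphical games, and the reduction to few-player strategic-form games, sharpened to two players by Chen--Deng). Nothing further is needed beyond the acknowledged gadget-level details, which are exactly the technical core of the cited works.
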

\label{subsec:puremixed_complex}
\subsubsection*{Nash equilibria are tractable in matroid congestion games}
\label{subsec:pure_matroid}
While computing a (pure) Nash equilibrium is intractable
even if restricting to the class  of congestion games, it is possible to obtain a more positive result by imposing structural constraints on the actions sets.
\begin{definition}[Matroid, \cite{welsh2010matroid}]
\label{def:matroid}
A tuple $\mc{M}=(\mc{R},\mc{I})$ is a matroid if $\mc{R}$ is a finite set, $\mc{I}\subseteq 2^\mc{R}$ is a collection of subsets of $\mc{R}$, and the following two properties hold:
\begin{itemize}
	\item[-] If $B\in\mc{I}$ and $A\subseteq B$, then $A\in\mc{I}$;
	\item[-] If $A\in\mc{I}$, $B\in\mc{I}$ and $|B|>|A|$, then there exists an element $r\in B\setminus A$ s.t. $A\cup \{r\}\in\mc{I}$.
\end{itemize}	
\end{definition}
\begin{definition}[Basis of a matroid, \cite{welsh2010matroid}]
	A set $S\in\mc{I}$ such that for all $r\in\mc{R}\setminus S$, $(S\cup{r})\notin \mc{I}$ is called a basis of the matroid. 
\end{definition}
It can be shown that all basis have the same number of elements, which is known as the rank of the matroid and indicated with $\rank(\mc{M})$, \cite{welsh2010matroid}. An example of matroid is that of uniform matroid defined as follows. 
\begin{definition}[Uniform matroid, \cite{welsh2010matroid}]
	Given a finite set $\mc{R}$ with $|\mc{R}|=m$, let $\mc{I}\subseteq 2^\mc{R}$ be the collection of all subsets with a number of elements $k\le m$. $\mc{M}=(\mc{R},\mc{I})$ is a matroid, $\rank(\mc{M})=k$ and $\mc{M}$ is called the uniform matroid of rank $k$.
\end{definition}
The following proposition provides sufficient conditions under which the best-response dynamics of \vref{alg:BR} has polynomial running time for the class of congestion games. The main assumption amounts to requiring each of the player's allocation set to coincide with the set of bases of some matroid.
\begin{proposition}
\label{prop:poly}\cite[Thm. 2.5]{ackermann2008impact}
Consider a congestion game $G$ and assume the action sets $\mc{A}_i$ are the set of bases for a matroid $\mc{M}_i=(\mc{R},\mc{I}_i)$ over the set $\mc{R}$. Then, players reach a (pure) Nash equilibrium\mbox{ after at most $n^2 m\,\max_{i\in N}\text{rank}(\mc{M}_i)$ best responses.}	
\end{proposition}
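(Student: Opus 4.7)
The plan is to exploit the rich exchange structure of matroids in order to reduce an arbitrary best response by player $i$ to a sequence of single‑element swaps, and then to bound the total number of such elementary swaps by a careful charging argument against the Rosenthal potential.

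\textbf{Step 1 (matroid decomposition of a best response).} First I would establish the following lemma: if $a_i \in \mc{A}_i$ is the current basis of player $i$ and $a_i^{\mathrm{br}} \in \mc{A}_i$ is a best response to $\ami$, then there exists an ordering of the symmetric difference $a_i \triangle a_i^{\mathrm{br}} = \{r_1,\dots,r_k\} \cup \{r'_1,\dots,r'_k\}$, with $k \le \operatorname{rank}(\mc M_i)$, such that the sequence of single-element swaps $r_j \leftrightarrow r'_j$ produces a chain of \emph{bases} of $\mc M_i$ and each intermediate swap is itself strictly profitable. The existence of a feasible bijective exchange sequence is the strong basis exchange property of matroids (see, e.g., \cite{welsh2010matroid}). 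The profitability along this sequence follows by re‑indexing the swaps by decreasing individual gain: since the total gain is positive and all intermediate allocations are feasible, one can always put a profitable swap first and proceed greedily.

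\textbf{Step 2 (lazy best responses increase the potential).} Call a single-element swap of the above kind a \emph{lazy best response}. Since the game is a congestion game it is potential with Rosenthal's potential $\varphi$ (\cref{prop:congestionsarepotential}), and for single-resource swaps the change in $u_i$ equals the change in $\varphi$. Hence every lazy best response strictly increases $\varphi$. By Step 1, each ordinary best response by player $i$ can be replaced by at most $\operatorname{rank}(\mc M_i)\le \max_{j\in N}\operatorname{rank}(\mc M_j)$ consecutive lazy best responses, each of which strictly increases $\varphi$. It therefore suffices to bound the number of lazy best responses executed in any run of the round-robin dynamics.

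\textbf{Step 3 (bounding lazy best responses).} The core of the argument is to bound by $n^2 m$ the number of lazy best responses performed in total. For each pair (player $i$, resource $r \in \mc R$) I would track the number of times $r$ leaves the basis of $i$. The key observation is the following monotonicity: if player $i$ removes $r$ from his basis by swapping it for some $r'$, then at the moment of the swap $w_{r}(|a|_{r}) > w_{r'}(|a|_{r'}+1)$, and a careful inspection of how the congestion of $r$ and $r'$ can change between two consecutive removals of $r$ by the same player $i$ shows that in between some other player must have modified his basis on one of these two resources. Formalizing this gives that each player $i$ can remove a fixed resource $r$ at most $n\cdot m$ times during the whole run; summing over the $n$ players and $m$ resources yields a bound of order $n^2 m$ on the total number of lazy best responses. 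Combined with Step 2 this yields at most $n^{2} m \cdot \max_{i\in N}\operatorname{rank}(\mc M_i)$ ordinary best responses, as claimed.

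\textbf{Main obstacle.} The matroid decomposition in Step 1 is standard and Step 2 is immediate from Rosenthal's potential, but Step 3 is the technically delicate part: one has to set up the right charging scheme to show that, despite the fact that congestions can fluctuate arbitrarily, the number of times a single resource can be cycled in and out of a given player's basis is polynomially bounded. This is exactly where the matroid structure (through Step 1) is essential, because without it a single best response could require an exponential number of elementary exchanges and the charging would break down.
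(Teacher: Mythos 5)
The paper does not prove this proposition at all: it is quoted as a known result and attributed to \cite[Thm.~2.5]{ackermann2008impact}, so your sketch has to be measured against the argument in that reference rather than against anything in the thesis. Your Steps 1 and 2 are in the right spirit (decompose a best response into single-element exchanges and use Rosenthal's potential, \cref{prop:congestionsarepotential}), but already Step 1 is not justified as written: fixing the bijective exchange between $a_i$ and $a_i^{\mathrm{br}}$ given by the basis-exchange property and then ``ordering the swaps by decreasing gain'' does not make each intermediate swap profitable — along a fixed bijection individual swaps can be losing even when the total gain is positive. The correct route is the local-versus-global optimality property of matroids: with the weights $w_r(|a_{-i}|_r+1)$ fixed, a basis is optimal iff no single exchange improves it, so from a non-optimal basis one can always take \emph{some} strictly improving swap and iterate, reaching a best response in at most $\operatorname{rank}(\mc{M}_i)$ steps without ever committing to a prescribed bijection.

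The genuine gap is Step 3. Your charging scheme (``each player removes a fixed resource at most $nm$ times because some other player must have touched one of the two resources in between'') is asserted, not proved, and it is not clear it can be made to work: other players may legitimately touch those resources many times, so the local bookkeeping does not close. The missing idea in \cite{ackermann2008impact} is a global one: a single-element swap only ever compares two of the at most $nm$ values $w_r(k)$, $r\in\mc{R}$, $k\in[n]$, so one may replace every value by its rank in the sorted list of these $nm$ numbers without changing which swaps are improving. After this substitution Rosenthal's potential is an integer with at most $\sum_i |a_i|\le n\max_i\operatorname{rank}(\mc{M}_i)$ summands, each of size at most $nm$, hence bounded by $n^2 m\,\max_i\operatorname{rank}(\mc{M}_i)$; every best response contains at least one improving lazy swap and therefore moves this integer potential by at least one, which gives the stated bound directly. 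Note also that your final accounting runs in the wrong direction: decomposing one best response into at most $\operatorname{rank}(\mc{M}_i)$ lazy swaps means the number of best responses is at most the number of lazy swaps, so you should not multiply a bound on lazy swaps by the rank — the rank factor in the theorem comes from the size of the potential, not from the decomposition multiplicity.
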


\begin{example}\label{example:matroids}
The case when $\mc{A}_i$ contains only sets with a single element (singletons) does satisfy the assumptions of the previous proposition, even if a player does not have access to all the possible resources. One such example is the following: $\mc{R}=\{r_1,\dots,r_m\}$, $m>2$, $\mc{A}_i=\{\{r_1\},\{r_2\}\}$. Define $\mc{I}_i=\{\emptyset,\{r_1\},\{r_2\}\}$. We have that $\mc{M}_i\coloneqq(\mc{R},\mc{I}_i)$ is a matroid of rank $1$ and that $\mc{A}_i$ is a set of bases for $\mc{M}_i$.

On the negative side, a few examples that \emph{do not} satisfy the requirements are presented next. Consider $\mc{R}=\{r_1,\dots,r_m\}$, $m\ge3$ and $\mc{A}_i=\{\{r_1\},\{r_2,r_3\}\}$. The set $\mc{A}_i$ can not form the set of bases for any matroid $\mc{M}_i$, as all bases must have the same number of elements  while $\{r_1\}$ and $\{r_2,r_3\}$ do not have this property. A more involved example that does not satisfy the requirements is the following: $\mc{R}=\{r_1,\dots,r_m\}$, $m\ge4$ and $\mc{A}_i=\{\{r_1,r_2\},\{r_3,r_4\}\}$. For the given $\mc{A}_i$ to be the set of bases of a matroid $\mc{M}_i=(\mc{R},\mc{I}_i)$, it must be that $\{r_1,r_2\}\in\mc{I}_i$ and $\{r_3,r_4\}\in\mc{I}_i$. But due to definition of matroid, it must also be $\{r_1\}\in\mc{I}_i$ (Definition \cref{def:matroid}, first property), so that also $\{r_1,r_3,r_4\}\in\mc{I}_i$ (\cref{def:matroid}, second property). Thus $\mc{A}_i$ can not be the set of bases for a matroid $\mc{M}_i=(\mc{R},\mc{I}_i)$, as any possible choice of $\mc{I}_i$ will contain at least one set with more elements than $\{r_1,r_2\}\in\mc{A}_i$.	
\end{example}

\begin{remark}
 The previous theorem gives conditions under which the maximum number of best responses required to converge to a Nash equilibrium is polynomially bounded in the number of players and resources. If it is possible to compute a single best response polynomially in the number of resources, then it is possible to compute a \ac{ne} in polynomial time using the best-response algorithm.
\end{remark}

\subsubsection*{Coarse correlated equilibria}
\label{subsec:cce_complex}
Contrary to \ac{ne} and \ac{mne}, (approximate) coarse correlated equilibria can be computed in polynomial time.  We limit ourself to report this result in the following proposition.

Formally, an $\varepsilon$-\ac{cce} is defined as a distribution $\sigma\in\Delta(\mc{A})$ such that the equilibrium condition in \cref{def:cce} holds up to an additive $\varepsilon\ge 0$ term, i.e., 
\be
\expected{u_i(a)}{a\sim \sigma} +\varepsilon \ge \expected{u_i(a_i',\ami)}{a\sim \sigma} \qquad\forall a_i' \in \mc{A}_i,\quad \forall i\in \N.
\ee

\begin{proposition}[$\varepsilon$-\ac{cce} can be computed efficiently, \cite{littlestone1994weighted, roughgarden2016twenty}]
	For every $\varepsilon>0$, an $\varepsilon$-\ac{cce} can be computed polynomially using the multiplicative-weight algorithm.
\end{proposition}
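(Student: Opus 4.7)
The plan is to construct an $\varepsilon$-\ac{cce} as the empirical joint distribution produced by having each player independently run a no-regret learning algorithm, specifically the multiplicative weights (Hedge) algorithm, over a polynomial number of rounds. The core idea is the well-known equivalence between vanishing external regret and \ac{cce}: if every player's time-averaged regret against every fixed action is at most $\varepsilon$, then the empirical distribution over joint action profiles is an $\varepsilon$-\ac{cce} by definition (see~\cref{def:cce}, with the additive slack $\varepsilon$).

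First, I would formally describe the algorithm. For each player $i\in\N$, maintain a weight $w_{i,t}(a_i)$ on every action $a_i\in\mc{A}_i$ initialized to $w_{i,1}(a_i)=1$. At round $t$, player $i$ samples $a_{i,t}\sim p_{i,t}$ where $p_{i,t}(a_i)\propto w_{i,t}(a_i)$, and after observing the utility $u_i(\cdot,\ami_{,t})$ induced by the joint play $a_{-i,t}$, updates $w_{i,t+1}(a_i)=w_{i,t}(a_i)\cdot(1+\eta)^{u_i(a_i,\ami_{,t})}$ for a learning rate $\eta>0$. Let $\sigma_T\in\Delta(\mc{A})$ denote the empirical joint distribution of $\{a_t\}_{t=1}^T$.

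Second, I would invoke the standard regret bound for multiplicative weights (assuming utilities normalized to $[0,1]$, which we can do since the game is finite): for every player $i$ and every fixed $a_i'\in\mc{A}_i$,
\[
\frac{1}{T}\sum_{t=1}^T u_i(a_i',\ami_{,t}) - \frac{1}{T}\sum_{t=1}^T u_i(a_t) \;\le\; \eta + \frac{\log|\mc{A}_i|}{\eta T}.
\]
Choosing $\eta=\sqrt{\log|\mc{A}_i|/T}$ makes the right-hand side $O(\sqrt{\log|\mc{A}_i|/T})$. Thus, picking $T = \mathcal{O}(\log|\mc{A}_i|/\varepsilon^2)$ guarantees this gap is at most $\varepsilon$ for every player.

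Third, I would translate this into the \ac{cce} condition. Rewriting the averages as expectations under $\sigma_T$ yields
\[
\mb{E}_{a\sim\sigma_T}[u_i(a_i',\ami)] - \mb{E}_{a\sim\sigma_T}[u_i(a)] \le \varepsilon \qquad \forall a_i'\in\mc{A}_i,\;\forall i\in\N,
\]
which is exactly the $\varepsilon$-\ac{cce} condition. Finally, I would observe that each round requires polynomial work per player (sampling from $p_{i,t}$, evaluating utilities, multiplicative updates over $|\mc{A}_i|$ weights), and the total number of rounds is polynomial in $1/\varepsilon$ and in $\log|\mc{A}_i|$, hence polynomial in the description size of the game. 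The main subtlety to handle carefully is the utility normalization (rescaling the $u_i$ so they take values in $[0,1]$, which preserves the equilibrium condition up to the same rescaling of $\varepsilon$); beyond that, the argument is essentially a combination of a textbook regret bound with the no-regret/\ac{cce} correspondence.
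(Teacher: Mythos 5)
Your proposal is correct and is essentially the standard argument behind the result the paper cites without proof (it is the no-regret/\ac{cce} correspondence combined with the multiplicative-weights regret bound, as in the cited lecture notes). One small refinement is needed in your second and third steps: the multiplicative-weights guarantee bounds the \emph{expected} utility under the mixed strategies $p_{i,t}$, so with sampled plays $a_t$ the displayed regret inequality, and hence the $\varepsilon$-\ac{cce} property of the empirical distribution of samples, holds only in expectation (or with high probability after an additional Hoeffding--Azuma concentration step costing polynomially many extra rounds). The cleaner deterministic variant is to have each player play the mixed strategy $p_{i,t}$ itself, feed back the expected utilities, and output $\sigma_T=\frac{1}{T}\sum_{t=1}^{T}\bigl(p_{1,t}\times\dots\times p_{n,t}\bigr)$, i.e., the time average of the product distributions, which is representable in polynomial size as a mixture of $T$ product distributions and satisfies the $\varepsilon$-\ac{cce} condition exactly; with that adjustment (and your normalization remark) the argument is complete.
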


\subsubsection*{The tradeoff between tractability and efficiency}
\label{subsec:tradeoff_complexeffic}
This section connects the efficiency result presented in \cref{sec:poa_smooth} with the complexity results presented in \cref{sec:complexity_equil}.
In the former section we have seen that $\poane\ge \poamne \ge \poacce$ and thus the equilibrium efficiency (\ac{poa}) degrades by moving from \ac{ne} to \ac{mne} and from \ac{mne} to \ac{cce}. In the latter section we have seen that \ac{ne} and \ac{mne} are tractable in limited cases, while \ac{cce} are tractable in general. This shows a fundamental tradeoff between equilibrium efficiency and corresponding tractability: the larger the class of equilibria we consider, the easier to compute one, but the lower the corresponding efficiency. This is depicted in \cref{fig:hierarchy}.

\begin{figure}[h!]
\centering
\includegraphics[scale=0.5]{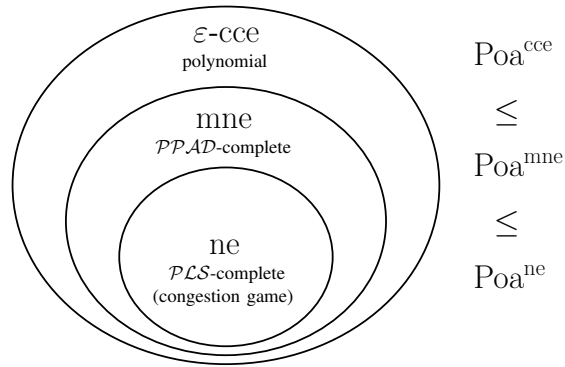}
\caption{Hierarchy of equilibria, corresponding complexity and efficiency.}
\label{fig:hierarchy} 
\end{figure}
\chapter[Tight price of anarchy and utility design: a linear program approach]{Tight price of anarchy and utility design:\\ a linear program approach}
\label{ch:p2utilitydesign}
We seek approximation algorithms for the solution of \ac{gmmc} problems defined in \cref{sec:problemstatement}. Towards this goal, we adopt the game design approach discussed in \cref{ch:p2introduction} and consisting of two separate steps: utility design and algorithm design. In this chapter we formulate and solve the utility design problem. More precisely, in \cref{sec:poaperformancemetric} we pose the utility design problem and introduce the game-theoretic notion of \emph{price of anarchy}. We observe that any algorithm capable of computing an equilibrium, will inherit an approximation ratio matching the price of anarchy. Thus, in a quest to construct good approximating algorithms, we turn our attention to \emph{quantifying} the price of anarchy. In \cref{sec:poa_smooth} we show that standard approaches used to characterize the price of anarchy are rather conservative and not suited for the design problems we are interested in (\cref{thm:smoothnottight}). Motivated by this observation, in \cref{sec:tightpoa} we provide a novel technique based on a linear programming reformulation to characterize the price of anarchy (\cref{thm:primalpoa,,thm:dualpoa}) as a function of the utility functions assigned to the agents. This result is provably tight. We conclude the chapter by addressing the utility design question in \cref{sec:optimalutilitydesign}. In particular, we show how the problem of designing utility functions so as to \emph{optimize} the price of anarchy can be posed as a tractable linear program in $n+1$ variables (\cref{thm:optimizepoa}).

All the proofs are reported in the Appendix (\cref{sec:proofsp2-part1}). The results presented in this chapter have been published in \cite{part1Paccagnan2018,,part2Paccagnan2018}.
\section{The price of anarchy as performance metric}
\label{sec:poaperformancemetric}
Within the combinatorial framework considered, finding a solution to the \ac{gmmc} problem, i.e., determining a feasible allocation that maximizes the welfare function 
\be
W(a)=\sum_{r\in\cup_{i\in N} a_i}v_r w(|a|_r),
\ee
defined in \eqref{eq:welfaredef} is an $\npclass$-hard problem.
Based on such observation, we focus on deriving \emph{efficient} and \emph{distributed algorithms} for attaining approximate solutions to the maximization of $W$, ideally with the best possible ratio. In the following, each agent is assumed to have information only regarding the resources that he can select, i.e., regarding the resources $r\in\mc{A}_i\subseteq \mc{R}$. Agents are requested to make independent choices in response to this local piece of information. 
Rather than directly specifying a decision-making process, we adopt the \emph{game design} approach discussed in \cref{ch:p2introduction}  and depicted in \vref{fig:gamedesign}. The idea is to carefully define an auxiliary problem, namely an equilibrium problem, which will guide the search and serve as a proxy for the original maximization of $W$. The motivations and advantages of this approach have been discussed in \cref{ch:p2introduction}.

In the following we focus on the first component of the game design approach: the \emph{utility design} problem. 

\vspace*{3mm}
\noindent \fbox{\parbox{0.985\textwidth}{
\label{problem:utilitydesign}
The {\bf utility design problem} amounts to the choice of local utility functions that adhere to the above mentioned informational constraints, and whose corresponding equilibria offer the highest achievable performance.
}}
\vspace*{1mm}

We naturally identify the agents of the original optimization problem and their local constraint sets $\{\mc{A}_i\}_{i\in\N}$ with the players of the game and their action sets. In the following we will use the terms agents and players interchangeably.
 
In order to tackle the utility design problem, each agent is assigned a local utility function $u_i:\mc{A}\rightarrow \mb{R}_{\ge0}$ of the form
\be
u_i(a) \coloneqq \sum_{r\in a_i}v_r w(|a|_r)f(|a|_r)\,,
\label{eq:utilities}
\ee
where $f:[n]\rightarrow \mb{R}_{\ge 0}$ describes the fractional benefit that each agent receives by selecting resource $r$ in allocation $a$. The function $f$ constitutes our \emph{design choice}; we refer to it as to the \emph{distribution rule} or simply the \emph{distribution}.
Observe that each utility function in \eqref{eq:utilities} satisfies the required informational constraints in that it only depends on the value of the resources that the agent selected, the distribution rule $f$ and the number of agents that selected the very same resource. 
\begin{remark}[On the choice of the utility functions]
In principle one needs not to restrict himself to utility functions of the form \eqref{eq:utilities}. The reasons for choosing utilities as in \eqref{eq:utilities} are as follows.
First, the utility functions \eqref{eq:utilities} satisfy the required informational constraints, as just discussed. 
Second, restricting ourselves to the above mentioned utilities reduces the design problem to a hopefully tractable problem. Indeed, the utilities \eqref{eq:utilities} are fully determined if the distribution rule $f$ is so. 
While designing the distribution rule $f$ amounts to choosing $n$ real numbers, the problem in its full generality consists in choosing the value of $u_i(a)$ for all $a\in\mc{A}$ and for all $i\in N$, clearly a large number of decision variables (exponential in the worst case in both the number of agents and in the number of resources since $\mc{A}=\mc{A}_1\times\dots\times\mc{A}_n$ and $\mc{A}_i\subseteq 2^\mc{R}$). 
Third, utilities of the form \eqref{eq:utilities} will ensure equilibrium existence and convergence of the best-response dynamics, as explained after this remark.
Fourth, even when restricting to this special class of utilities, we will  obtain performance certificates that are competitive with the state of the art approximation algorithms. We will return to this in the next chapter in \cref{subsec:approxcomparison} and \cref{rmk:1-1/ecomparison}.
Finally, we observe that a different and apparently less restrictive choice of utilities might entail assigning different distribution rules $f_i$ to different players $i\in\N$. However, it is possible to show that working in this larger set of admissible utility functions will not improve the best achievable performance.\footnote{While we do not provide a proof of this statement, a similar conclusions was found in \cite{gairing2009covering}.} 
For all these reasons, in the followings we focus on utility functions of the form \eqref{eq:utilities}.
\end{remark}

The game introduced above and identified with the agents set $\N$, the actions sets $\{\mc{A}_i\}_{i\in N}$ and the utilities $\{u_i\}_{i\in N}$ in \eqref{eq:utilities} is a normal-form finite game, according to \cref{def:strategicformgame}. Additionally, such game belongs to the class of congestion games due to the special structure of the actions sets and utilities, see \cref{def:congestiongame}. Thus, a pure Nash equilibrium is \emph{guaranteed to exist} for any choice of $f$ thanks to \cref{prop:congestionsarepotential}. 

In the forthcoming analysis we focus on the solution concept of pure Nash equilibrium, which we will refer to as just an equilibrium.
	Recall that an allocation $\ae \in \mc{A}$ is a pure Nash equilibrium if $u_i(\ae)\ge u_i(a_i,\ae_{-i})$ for all alternative allocations $a_i\in \mc{A}_i$ and for all agents $i\in N$ (see \vref{def:nashequilibrium}).
We identify one instance of the game introduced above with the tuple 
\begin{equation}	
G=(\mc{R}, \{v_r\}_{r\in \mc{R}},  N, \{\mc{A}_i\}_{i\in N},f)\,,
\label{eq:gameG}
\end{equation}
and for ease of notation remove the subscripts of the above sets, e.g., use $\{\mc{A}_i\}$ instead of $\{\mc{A}_i\}_{i\in N}$. 

In the following we require a system operator to \emph{robustly} design a distribution rule, that is to design $f$ \emph{without} any prior information regarding the resource set $\mc{R}$, the value of the resources $\{v_r\}$ or the action sets of the agents $\{\mc{A}_i\}$.  The \emph{only} datum available to the system designer is an upper bound on the number of players in the game, i.e., $|N|\le n$. This request stems from the observation that the previous pieces of information may be unreliable, or unavailable to the system designer due to, e.g., communication restrictions or privacy concerns. Formally, given a distribution rule $f$, we introduce the following family of games
\[
\mc{G}_f\coloneqq\{(\mc{R},\{v_r\},N,\{\mc{A}_i\},f)~\text{s.t.}~|N|\le n\}\,,
\]
containing all possible games $G$ where the number of agents is bounded by $n$.
In the forthcoming analysis, we restrict our attention to the class of games where the number of players is \emph{exactly} $n$. This is without loss of generality. Indeed the latter class of games and the class of games where the number of players is upper bounded by $n$ have the same price of anarchy. To see this, note that the price of anarchy of any game with $l$ players $1<l<n$ can be obtained as the price of anarchy of a game with $n$ players where we add a resource valued $v_0=0$ and set $\mc{A}_i=\{v_0\}$ for the additional $n-l$ players.

We measure the performance of a distribution rule $f$ adapting the concept of \emph{price of anarchy} introduced in \cite{Koutsoupias} and reported in  \cref{def:poa} as
\be
\poa(f) \coloneqq \inf_{G\in \mc{G}_f}\biggl(\frac{\min_{a\in \nashe{G}} W(a)}{\max_{a\in\mc{A}} W(a)}\biggr)\,,
\label{eq:poadef}
\ee
where $\nashe{G}$ denotes the set of Nash equilibria of $G$.
While the optimal value at the denominator of \eqref{eq:poadef} also depends on the instance $G$ considered, we do not indicate it explicitly, for ease of presentation.
The quantity $\poa(f)$ characterizes the efficiency of the worst-performing Nash equilibrium relative to the corresponding optimal allocation over all instances in the class $\mc{G}_f$. According to the previous definition, $0\le\poa(f)\le 1$ and the higher the price of anarchy, the better performance certificates we can offer. 

It is important to highlight that whenever an algorithm is available to compute one such equilibrium, the price of anarchy also represents the \emph{approximation ratio} of the corresponding algorithm over all instances $G\in\mc{G}_f$.
For this reason, the price of anarchy defined in \eqref{eq:poadef} will serve as the \emph{performance metric} in all the forthcoming analysis.

\begin{remark}[On the choice of pure \ac{ne} as equilibrium concept]
The choice of pure \ac{ne} as equilibrium concept has the benefit of providing us with potentially better performance guarantees compared to that offered by, e.g., mixed Nash equilibria or coarse correlated equilibria, as $\poane\ge \poamne \ge \poacce$, see \cref{sec:poa_smooth}.
The drawback of this choice is the general intractability of pure Nash equilibria. 
Indeed, computing a pure Nash equilibrium is hard ($\pls$-complete, as discussed in \vref{prop:neispls}) even when limited to the class of games considered here (congestion games). 
Nevertheless we have seen that under structural assumptions on the sets $\{\mc{A}_i\}$ similar to those used in combinatorial optimization, computing a pure \ac{ne} is a polynomial task (\vref{prop:poly}).
Finally, the approximation guarantees offered by $\poane$ are deterministic, while the bounds provided by $\poamne$ and $\poacce$ are in expected value.
An antipodal choice might entail using the notion of \ac{cce} instead of \ac{ne} as computing one such equilibrium is known to be tractable in general. The price to pay for this is a potentially worsened performance certificate since $ \poacce \le \poane$.	
\end{remark}

The utility design problem can be decomposed in two tasks: 
\begin{enumerate}
\item[i)] providing a bound (or ideally an exact characterization) of the price of anarchy as a function of $f$;
\item[ii)] optimizing this expression over the admissible distribution rules.
\end{enumerate}
In \cref{sec:tightpoa} we address i), while in \cref{sec:optimalutilitydesign} we tackle ii).

\section{The limitations of the smoothness framework}
\label{sec:smooth}
In this section we recall the definition of smooth games introduced in \cref{sec:poa_smooth}, and show that the corresponding best achievable bounds on the price of anarchy are not tight, but rather conservative when applied to utility design problems. 

Before delving in the details of the smoothness framework, we introduce the notion of budget-balanced and sub budget-balanced utility functions.
\begin{definition}[Budget-balanced utility functions]
	Consider a strategic-form game with actions sets $\{\mc{A}_i\}$, utilities $\{u_i\}$, and a welfare function $W:\mc{A}\rightarrow \mb{R}_{\ge0}$. The utility functions are budget-balanced if for all $a\in\mc{A}$
	\[\sum_{i\in\N}u_i(a)=W(a).\]
	The utility functions are sub budget-balanced if 
	$
	\sum_{i\in\N}u_i(a)\le W(a)$~for~all~$a\in\mc{A}$.
\end{definition}

The notion of smooth game has been introduced in \cite{roughgarden2009intrinsic} and has been successively employed to obtain tight bounds on the price of anarchy for different classes of games. Recall from \vref{def:smoothgame} that the game \eqref{eq:gameG} together with the welfare function \eqref{eq:welfaredef} are $(\lambda,\mu)$-smooth if for some $\lambda,\mu\ge0$ it holds
	\be
	\sum_{i\in\N} u_i(a_i',a_{-i})\ge \lambda W(a')-\mu W(a),\quad\forall a',a\in\mc{A}.
	\label{eq:smoothcondition2}
	\ee
	\vref{prop:poasmooth} showed that 
	the price of anarchy of a $(\lambda,\mu)$-smooth game $G$ is bounded. More precisely, given $G$ a $(\lambda,\mu)$-smooth game with $\sum_{i\in\N} u_i(a)\le W(a)$ $\forall a\in\mc{A}$, the ratio between the total welfare at any coarse correlated equilibrium and the optimum is lower bounded by

	\begin{equation}
		\frac{\expected{W(a)}{a\sim \sigmacce}}{W(\aopt)}\ge \frac{\lambda}{1+\mu}\,,\qquad\forall \sigmacce\in\cce{G}. 	
	\end{equation}
 Since $\nashe{G}\subseteq \cce{G}$, it follows immediately  \[
 \frac{W(\ae)}{W(\aopt)}\ge \frac{\lambda}{1+\mu}\,,\qquad\forall\ae \in\nashe{G}.
 \]
 Note that the smoothness framework forces us to restrict the attention to utilities satisfying $\sum_{i\in\N} u_i(a)\le W(a)$, else no guarantee is provided by \cref{prop:poasmooth}. This corresponds to requesting $f(j)\le 1/j$.
 Thus, in the remaining of this section \emph{only}, we consider utilities satisfying this constraint.

The next lemma shows that when we are allowed to freely choose the players' utilities (i.e., if we are interested in design problems), the best achievable smoothness guarantee is obtained when the assigned utilities are budget-balanced.
\begin{lemma}
\label{lem:neveradvantageous}
Suppose $\sum_{i\in\N} u_i(a)=W(a)$.  Consider a different set of utilities $\tilde{u}_i(a)$ such that $\sum_{i\in \N}\tilde{u}_i(a) \leq \sum_{i\in \N}u_i(a)$ for all $a \in \mc{A}$.  If the game with utilities $\tilde{u}_i(a)$ is $(\lambda, \mu)$-smooth, then the game with utilities $u_i(a)$ is also $(\lambda, \mu)$-smooth.  
\end{lemma}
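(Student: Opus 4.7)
The plan is to show, in the utility-design framework where both $u_i$ and $\tilde u_i$ are of the form \eqref{eq:utilities} for distribution rules $f$ and $\tilde f$ respectively, that the smoothness inequality for $u_i$ reduces to the one satisfied by $\tilde u_i$ after a term-by-term comparison.

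First, I would translate the budget-balance and sub-budget-balance hypotheses into a pointwise relation between $f$ and $\tilde f$. Substituting \eqref{eq:utilities} and swapping the order of summation over players and resources gives
\[
\sum_{i\in\N} u_i(a) \;=\; \sum_{r\in\mc{R}} v_r\, w(|a|_r)\, |a|_r\, f(|a|_r),
\]
and the analogous identity for $\tilde u$. The budget balance $\sum_i u_i(a) = W(a)$ then forces $j\cdot f(j) = 1$ (equivalently $f(j) = 1/j$) at every congestion level $j$ that can arise, and the assumption $\sum_i \tilde u_i(a) \le W(a)$ yields $j\cdot \tilde f(j) \le 1$, i.e., $\tilde f(j) \le f(j)$, at each such $j$.

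Next, using the pointwise comparison $\tilde f \le f$, for any allocations $a, a' \in \mc{A}$ and any player $i$ one obtains
\[
u_i(a'_i, a_{-i}) - \tilde u_i(a'_i, a_{-i}) \;=\; \sum_{r\in a'_i} v_r\, w(k_r)\bigl[f(k_r) - \tilde f(k_r)\bigr] \;\ge\; 0,
\]
where $k_r \coloneqq |(a'_i, a_{-i})|_r$ and the inequality uses $v_r \ge 0$ and $w \ge 0$. Summing over $i$ and invoking the $(\lambda,\mu)$-smoothness of the game with utilities $\tilde u_i$ gives
\[
\sum_{i\in\N} u_i(a'_i, a_{-i}) \;\ge\; \sum_{i\in\N} \tilde u_i(a'_i, a_{-i}) \;\ge\; \lambda\, W(a') - \mu\, W(a),
\]
so that the $u$-game is also $(\lambda,\mu)$-smooth, as desired.

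The main obstacle will be the first step: arguing carefully that the aggregate condition $\sum_i \tilde u_i(a) \le \sum_i u_i(a)$ really yields the pointwise inequality $\tilde f(j) \le 1/j$ at each relevant congestion level $j$. In the design interpretation -- where the same $f$ is required to work across the family $\mc{G}_f$ -- this is immediate upon selecting instances in which a single resource is shared by exactly $j$ players, thereby isolating a single level at a time. Once this pointwise relation is secured, the remainder of the argument is essentially a term-by-term comparison.
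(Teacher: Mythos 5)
Your proposal is correct in the setting where it applies and it ends with the same chain of inequalities as the paper, but it gets there by a genuinely different route. The paper's proof never touches the distribution rules: from the aggregate hypothesis $\sum_{i\in\N}\tilde u_i(a)\le\sum_{i\in\N}u_i(a)$ it directly writes $\sum_{i\in\N} u_i(a_i',a_{-i})\ge \sum_{i\in\N}\tilde u_i(a_i',a_{-i})\ge \lambda W(a')-\mu W(a)$, and the remark following the lemma stresses that the statement is meant to be independent of the specific form of the utilities and welfare. You instead specialize to utilities of the form \eqref{eq:utilities}, pin down $f(j)=1/j$ from budget balance and $\tilde f(j)\le 1/j$ from sub-budget balance (isolating each congestion level via single-resource instances, which is legitimate in the design reading where the hypothesis is imposed across the whole family $\mc{G}_f$), and only then deduce the per-player, per-allocation domination $\tilde u_i(b)\le u_i(b)$ before summing and invoking smoothness of the $\tilde u$-game. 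What this costs is scope: the lemma as stated allows an arbitrary set of utilities $\tilde u_i$, which your argument does not cover, and the detour through $f$ and $\tilde f$ is machinery the paper's two-line proof avoids entirely. What it buys is rigor on precisely the step the paper passes over quickly: in the deviation sum $\sum_{i\in\N}u_i(a_i',a_{-i})$ each summand is evaluated at a \emph{different} allocation $(a_i',a_{-i})$, so the per-allocation aggregate inequality is not literally an instance of what is being invoked there; your pointwise domination is exactly what licenses the term-by-term comparison, and it is available because both utility sets are generated by distribution rules. In short, the paper argues abstractly at the level of aggregate utilities, while you argue structurally at the level of distribution rules, trading the lemma's stated generality for an airtight comparison in the setting in which the lemma is actually used in \cref{sec:smooth}.
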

\begin{proof}
By assumption the game with utilities $\tilde{u}_i(a)$	is $(\lambda,\mu)$-smooth and $\sum_{i} u_i(a)\ge \sum_{i}\tilde{u}_i(a)$, so that for all $a,a'\in\mc{A}$
\[
\sum_{i\in\N} u_i(a_i',a_{-i})\ge \sum_{i\in\N} \tilde u_i(a_i',a_{-i})\ge \lambda W(a')-\mu W(a).
\]
Thus, the game with utilities ${u}_i(a)$ is $(\lambda,\mu)$-smooth too.
\end{proof}
Observe that the statement of \cref{lem:neveradvantageous} holds true in general and does not depend on the specific form of the utility functions or of the welfare considered here.

\cref{lem:neveradvantageous} suggests to design utilities that are budget-balanced, as sub budget-balanced utilities can never be advantageous with regards to the performance guarantees associated with smoothness.
 This observation turns out to be misleading, in that there are utility functions that are sub budget-balanced, but give a better performance certificate compared to what the smoothness argument can offer, as shown next.

Consider $f$ a distribution rule satisfying $f(j)\le 1/j$ for all $j\in[n]$, the best bound on the price of anarchy \eqref{eq:poadef} that can be obtained via smoothness, is given by the solution to the following  program
\[
\begin{split}
\poas(f)\coloneqq &\sup_{\lambda,\mu \ge0}\frac{\lambda}{1+\mu}\\
&\text{s.t.}~
(\lambda,\mu)~\text{satisfy \eqref{eq:smoothcondition2} for all } G\in\mc{G}_f.
\end{split} 
\]
Observe that $\poas(f)\le \poa(f)$ as \cref{prop:poasmooth} provides only a bound on the equilibrium efficiency. In the following we show that the best smoothness bound captured by $\poas(f)$ is not representative of the ``true'' price of anarchy $\poa(f)$ defined in \eqref{eq:poadef}. To do so, we illustrate the \emph{gap} between these two quantities in the special case of multiagent weighted maximum coverage (\ac{mmc}) problems (see \cref{sec:problemstatement}). \ac{mmc} problems are a special class of the resource allocation problems considered here. They are obtained  setting $w(j)=1$ for all $j\in[n]$. Before stating the result, we introduce the distribution rule 
\begin{equation}
\fgar(j)=(j-1)!\frac
{\frac{1}{(n-1)(n-1)!} +\sum_{i=j}^{n-1}\frac{1}{i!}}
{\frac{1}{(n-1)(n-1)!} +\sum_{i=1}^{n-1}\frac{1}{i!}},\quad j\in[n]\,.
\label{eq:fgar}	
\end{equation}
as originally defined in \cite[Eq. (5)]{gairing2009covering}.
\begin{theorem}[\bf{Limitations of the smoothness framework}]
\label{thm:smoothnottight}
Consider the class of \ac{mmc} problems, i.e., fix $w(j)=1$ for all $j\in[n]$.
\begin{enumerate}
\item For any choice of $f$, the best bound on the price of anarchy that can be achieved using a smoothness argument~is 
\[
\poas(f)\le \frac{1}{2-1/n} \coloneqq b(n)~~\xrightarrow[]{n \to \infty}~  \frac{1}{2}\,.
\] 
\item The distribution \eqref{eq:fgar} satisfies $\fgar(j)\le 1/j$ and achieves
\be
\poa(\fgar)=1-\frac{1}{\frac{1}{(n-1)(n-1)!}+\sum_{i=0}^n \frac{1}{i!}}~~\xrightarrow[]{n \to \infty}~ 1-\frac{1}{e}\,,
\label{eq:poagairing}
\ee
where $e$ is Euler's number.
\item For all $n>2$, $\poas(\fgar)<\poa(\fgar)$\,.
\end{enumerate}
\end{theorem}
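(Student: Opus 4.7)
The proof splits into three claims, which I would address in sequence, with the hardest being Part 1.

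\textbf{Part 1 (smoothness upper bound).} My plan is to bound $\poas(f)$ by exhibiting explicit problem instances in $\mc{G}_f$ and extracting linear constraints on the smoothness parameters $(\lambda,\mu)$ from \eqref{eq:smoothcondition2}. Since $\poas(f)$ is defined as the optimum of a linear program in $(\lambda,\mu)$, producing an appropriate feasible dual (equivalently, a convex combination of instance-derived inequalities) whose right-hand side evaluates to $1/(2-1/n)$ suffices. A canonical instance to use is the symmetric ``rotational'' covering game with $n$ agents, $n$ unit-valued resources $r_1,\ldots,r_n$ and action sets $\mc{A}_i=\{\{r_i\},\{r_{i+1 \bmod n}\}\}$. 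Taking $a,a'$ to be the two natural allocations (both of welfare $n$) and exploiting the hypothesis $f(j)\le 1/j$ (necessarily imposed so that $\sum_i u_i\le W$ and \cref{prop:poasmooth} can apply) yields the constraint $\lambda-\mu \le f(2)\le 1/2$. Combining this with a complementary instance where a single resource is concentrated on all $n$ agents (giving a constraint of the form $(1-1/n)\lambda\le \mu+\tfrac1n$) and eliminating $\mu$ produces $(2-1/n)\lambda \le 1+\mu$, which is exactly the required bound $\lambda/(1+\mu)\le 1/(2-1/n)$.

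\textbf{Part 2 (properties of $\fgar$).} The inequality $\fgar(j)\le 1/j$ reduces to $j\,\fgar(j)\le 1$ and follows by a direct algebraic comparison between the numerator $j!\bigl(\tfrac{1}{(n-1)(n-1)!}+\sum_{i=j}^{n-1}\tfrac{1}{i!}\bigr)$ and the denominator $\tfrac{1}{(n-1)(n-1)!}+\sum_{i=1}^{n-1}\tfrac{1}{i!}$: bounding $j!\!\cdot\!i!^{-1}$ term-by-term for $i=j,\ldots,n-1$ and telescoping yields the claim. The exact expression for $\poa(\fgar)$ in \eqref{eq:poagairing} is the main result of \cite{gairing2009covering}, where $\fgar$ was expressly designed to maximize the price of anarchy over the class of MMC problems. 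Alternatively, since Part 1 already relies on the general LP machinery we will develop in \cref{sec:tightpoa}, one can rederive \eqref{eq:poagairing} as a direct corollary of the tight characterization furnished by \cref{thm:primalpoa,thm:dualpoa}.

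\textbf{Part 3 (strict gap).} The strict inequality $\poas(\fgar)<\poa(\fgar)$ will follow by combining the previous two parts numerically. At $n=3$, Part~1 gives $\poas(\fgar)\le 1/(2-1/3)=3/5=0.6$, while Part~2 yields $\poa(\fgar) = 1-\bigl(\tfrac{1}{4}+\sum_{i=0}^{3}\tfrac{1}{i!}\bigr)^{-1} \approx 0.657$, so the strict inequality holds for $n=3$. For larger $n$, $\poas(\fgar)\le 1/(2-1/n)\to 1/2$ while $\poa(\fgar)\to 1-1/e\approx 0.632$; a straightforward monotonicity check in $n$ of both sides extends the strict separation to every $n>2$. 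The genuine obstacle in this proof is Part~1: identifying the ``right'' pair of instances whose derived constraints combine to give the bound $1/(2-1/n)$ and verifying LP-feasibility of the resulting dual certificate; all remaining steps amount to bookkeeping.
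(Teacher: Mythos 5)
Your Parts 2 and 3 are essentially what the paper does (Part 2 is deferred to Gairing's result, or can indeed be recovered from the later LP machinery, and Part 3 is the same direct comparison of $b(n)$ with \eqref{eq:poagairing}), so the issue is Part 1, where your argument has a genuine gap. First, the constraint you attribute to the ``single resource concentrated on all $n$ agents'' cannot come from that instance: with one resource every allocation has welfare in $\{0,v_r\}$, so \eqref{eq:smoothcondition2} can only yield constraints such as $\lambda-\mu\le n f(n)$ or bounds on $\lambda$ alone, never the asymmetric inequality $(1-1/n)\lambda\le\mu+1/n$. Second, and more importantly, even granting both of your stated constraints, $\lambda-\mu\le f(2)\le 1/2$ and $(1-1/n)\lambda-\mu\le 1/n$, they do not imply $(2-1/n)\lambda\le 1+\mu$: the point $\lambda=\mu+\tfrac12$ with $\mu\to\infty$ satisfies both while $\lambda/(1+\mu)\to 1$, so your two-instance certificate only proves $\poas(f)\le 1$. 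The ``eliminating $\mu$'' step (whose conclusion still contains $\mu$) is not a valid LP manipulation, and this is precisely the step you flagged as the crux.

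The strategy is repairable with a single, different instance: one common resource of value $1$ selectable by all $n$ agents, plus $n-1$ private resources of value $1/n$, one for each of agents $2,\dots,n$. Take $a$ to be the all-on-the-common-resource allocation, so $W(a)=1$, and $a'$ the allocation where one agent stays on the common resource and the others move to their private resources, so $W(a')=2-1/n$. Then \eqref{eq:smoothcondition2} gives $(2-1/n)\lambda-\mu\le f(n)+\tfrac{n-1}{n}f(1)\le 1$, using $f(n)\le 1/n$ and $f(1)\le 1$ (both forced by the sub budget-balance requirement you correctly identified), which is exactly $\lambda/(1+\mu)\le b(n)$ for every admissible $f$; your rotational instance is then superfluous. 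The paper takes a different, two-step route: it first pins down $\poas(\fsv)=b(n)$ exactly, combining the $(1,1-1/n)$-smoothness of $\fsv$ from Gairing with a problem instance whose price of anarchy equals $1/(2-1/n)$ (so $\poas(\fsv)\le\poa(\fsv)\le b(n)$), and then uses a domination argument: for any $f$ with $f(j)\le\fsv(j)$ the left-hand side of \eqref{eq:smoothcondition2} only decreases, so every $(\lambda,\mu)$ feasible for $f$ is feasible for $\fsv$, whence $\poas(f)\le\poas(\fsv)=b(n)$. Either route works, and the corrected single-instance argument is arguably the more direct of the two, but as written your certificate does not establish the bound.
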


\begin{remark}[The limitations of smoothness are structural]
While the previous theorem compares the performance guarantees offered by $\poas(f)$ and $\poa(f)$ we recall that $\poas(f)$ bounds the equilibrium efficiency for any coarse correlated equilibrium, while 
$\poa(f)$ provides a certificate limitedly to pure \ac{ne}. Thus, one might think that the result of the previous theorem is simply an artifact due to this observation and to the fact that $\nashe{G} \subseteq \cce{G}$. This is not the case and the limitations of the smoothness framework are structural. Indeed, it can be shown that $\fgar$ has the same price of anarchy of \eqref{eq:poagairing} even in the larger set of \ac{cce}.\footnote{While \cite[Thm. 3]{gairing2009covering} provides a proof limitedly to mixed Nash equilibria, it is not difficult to extend such proof to \ac{cce}.}
\end{remark}

The quantity $b(n)$ bounding the best possible performance certificate offered by the smoothness framework, and the guarantee offered by the ``true'' price of anarchy for $\fgar$ are presented in \cref{fig:smoothness} (left). Additionally, the distribution rules $\fgar(j)$ and $1/j$ are depicted in \cref{fig:smoothness} (right).
\begin{figure}[!h]
    \centering
    \setlength\figureheight{6.2cm} 
	\setlength\figurewidth{6.2cm} 
%
%
\definecolor{mycolor1}{rgb}{0.85000,0.32500,0.09800}%
\begin{tikzpicture}
\begin{axis}[%
width=\figurewidth,
height=\figureheight,
scale only axis,
xmin=1,
xmax=20,
xtick={1,5,10,15,20},
xlabel={$n$\textcolor{white}{j}},
ymin=0.48,
ymax=1,
ytick={.5,.6,.7,.8,.9,1},
ylabel style = {rotate=-90},
]
\addplot [dashed, line width=0.7pt]
  table[row sep=crcr]{%
1   1.000000000000000\\
2   0.666666666666667\\
3   0.600000000000000\\
4   0.571428571428571\\
5   0.555555555555556\\
6   0.545454545454546\\
7   0.538461538461538\\
8   0.533333333333333\\
9   0.529411764705882\\
10   0.526315789473684\\
11   0.523809523809524\\
12   0.521739130434783\\
13   0.520000000000000\\
14   0.518518518518518\\
15   0.517241379310345\\
16   0.516129032258065\\
17   0.515151515151515\\
18   0.514285714285714\\
19   0.513513513513513\\
20   0.512820512820513\\
};
\addlegendentry{\footnotesize $b(n)$};
\addplot [color = blue, line width=0.7pt]
  table[row sep=crcr]{
1   1.000000000000000\\
2   0.666666666666667\\
3   0.636363636363636\\
4   0.632653061224490\\
5   0.632183908045977\\
6   0.632127529123237\\
7   0.632121263731585\\
8   0.632120624393906\\
9   0.632120564455885\\
10   0.632120559276055\\
11   0.632120558861669\\
12   0.632120558830847\\
13   0.632120558828706\\
14   0.632120558828567\\
15   0.632120558828558\\
16   0.632120558828558\\
17   0.632120558828558\\
18   0.632120558828558\\
19   0.632120558828558\\
20   0.632120558828558\\
   };
\addlegendentry{\footnotesize $\poa(\fgar)$};
%
%
\end{axis}
\end{tikzpicture}%
%
%
\definecolor{mycolor1}{rgb}{0.85000,0.32500,0.09800}%
\begin{tikzpicture}
\begin{axis}[%
width=\figurewidth,
height=\figureheight,
scale only axis,
xmin=1,
xmax=10,
xtick={1,2,3,4,5,6,7,8,9,10},
xlabel={$j$\textcolor{white}{n}},
ymin=0,
ymax=1,
ylabel style = {rotate=-90},
]
\addplot [only marks,color=black,line width=1.0pt,mark=x,mark options={solid}, mark size=3pt]
  table[row sep=crcr]{
1   1.000000000000000\\
2   0.418023294250603\\
3   0.254069882751809\\
4   0.180232942506029\\
5   0.138955064274718\\
6   0.112798615624194\\
7   0.094814987995768\\
8   0.081728210220980\\
9   0.071848976018444\\
10  0.064664078416600\\
   };
\addlegendentry{\footnotesize $\fgar(j)$};
\addplot [only marks, dashed,line width=0.5pt,mark=o,mark options={solid}]
  table[row sep=crcr]{%
1  1.000000000000000\\
2   0.500000000000000\\
3   0.333333333333333\\
4   0.250000000000000\\
5   0.200000000000000\\
6   0.166666666666667\\
7   0.142857142857143\\
8   0.125000000000000\\
9   0.111111111111111\\
10   0.100000000000000\\
};
\addlegendentry{\footnotesize $1/j$};
%
%
\end{axis}
\end{tikzpicture}%
    \caption{Left: best achievable bound $b(n)$ on the price of anarchy using a smoothness argument, and actual price of anarchy $\poa(\fgar)$ for the distribution $\fgar$ in \eqref{eq:fgar}. Right: distribution rule $\fgar$ and $1/j$ for $n=10$.}
    \label{fig:smoothness}
  \end{figure}
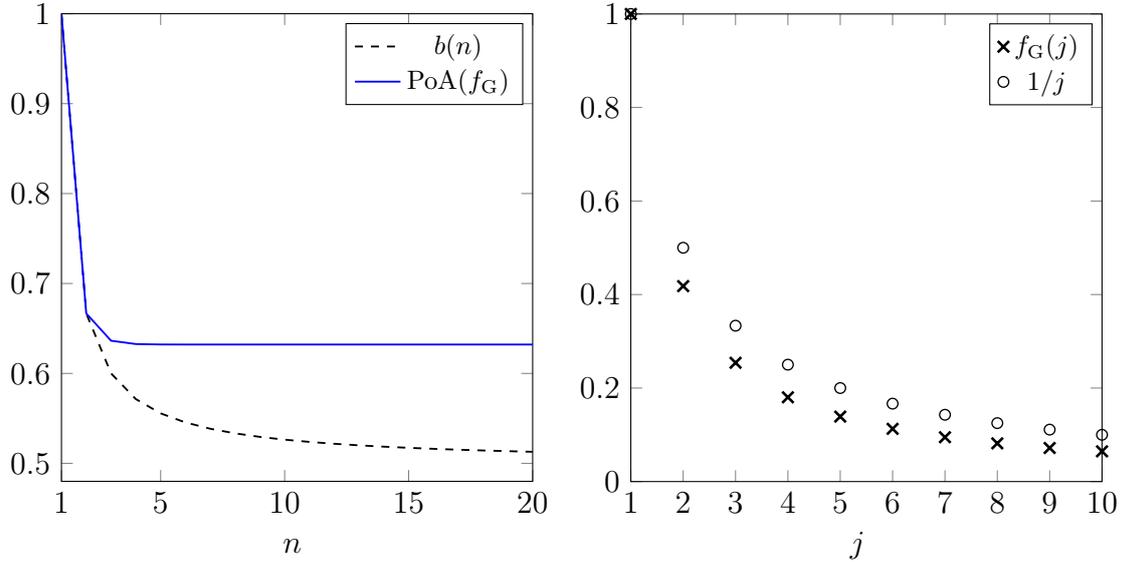
The gap between $b(n)$ and $\poa(\fgar)$ is significant: for a system with, e.g., $n=20$ agents, $\poa(\fgar)$ produces a performance certificate that is at least $25\%$ higher than what $\poas(\fgar)$ can offer.
Thus, the smoothness framework is not the right tool to study the utility design problems considered here. First, it restricts the set of admissible distribution rule to $f(j)\le 1/j$. Second, even for distribution rules satisfying this assumption, it provides performance certificates that are too conservative.
Finally, we observe that the notion of local smoothness (a refinement of the original notion introduced in \cite{roughgarden2015local}) will not be useful here in improving $\poas$.

\section{A tight price of anarchy}
\label{sec:tightpoa}
In the previous section we have highlighted the limitations of the smoothness framework when applied to utility design problems.
In this section we propose a novel approach for the exact characterization of $\poa(f)$ as defined in \eqref{eq:poadef}. More precisely, we reformulate the problem of \emph{computing} the price of anarchy as a \emph{tractable} \ac{lp} involving the components of $w$ and of $f$ (\cref{thm:primalpoa,,thm:dualpoa}). This section is dedicated to the problem of characterizing the price of anarchy in its full generality, while in \cref{ch:p2subsupercov} we specialize the results to a class of submodular and supermodular problems.

In all the forthcoming analysis we make the following regularity assumptions on admissible welfare basis functions and distribution rules. 
\begin{namedtheorem}[\hypertarget{sas}{Standing Assumptions}]
{The sets $\mc{A}_i\subseteq 2^\mc{R}$ are nonempty and $\mc{A}_i\setminus\emptyset \neq \emptyset$ for all $i\in\N$.
Further, $\exists r\in\mc{R}$ s.t. $v_r>0$ and $r\in a_i\in\mc{A}_i$ for some $i\in\N$.}
The welfare basis function $w:[n]\rightarrow \mb{R}_{> 0}$ satisfies $w(j)>0$ for all $j\in[n]$.
A distribution rule $f:[n]\rightarrow \mb{R}_{\ge0}$ satisfies $f(1)\ge1$, $f(j)\ge 0$ for all $j\in[n]$. The latter is equivalent to $f\in\mc{F}$,  with
\[
\mc{F}\coloneqq\left\{f:[n]\rightarrow \mb{R}_{\ge0}~\text{s.t.}~f(1)\ge1,~f(j)\ge 0~\forall j\in[n]\right\}.
\] 
\end{namedtheorem}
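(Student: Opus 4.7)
The displayed block is not a proposition with mathematical content to establish; it is a Standing Assumptions paragraph that fixes notation and the regularity conditions on $\{\mc{A}_i\}$, $w$, and $f$ to be used throughout the chapter. Consequently there is nothing to ``prove'' in the usual sense. The only claim that looks like an assertion is the parenthetical ``The latter is equivalent to $f\in\mc{F}$,'' which is true by the very definition of $\mc{F}$ that immediately follows: the defining conditions of $\mc{F}$ are literally $f(1)\ge 1$ and $f(j)\ge 0$ for all $j\in[n]$, so the equivalence is a tautology and requires no argument.

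What a proposal can usefully do, then, is to record \emph{why} each clause is imposed, so the reader can check that the assumptions are non-restrictive for the problem class under study. The conditions $\mc{A}_i\neq\emptyset$ and $\mc{A}_i\setminus\{\emptyset\}\neq\emptyset$ together with the existence of some $r\in\mc{R}$ with $v_r>0$ lying in a feasible $a_i$ rule out degenerate instances where $W\equiv 0$; without this, the denominator $\max_{a\in\mc{A}}W(a)$ in the definition \eqref{eq:poadef} of $\poa(f)$ would vanish and the price of anarchy would be ill-posed. The positivity requirement $w(j)>0$ ensures that each covered resource contributes strictly to the welfare, which is what makes the welfare function~\eqref{eq:welfaredef} a meaningful objective and, in particular, prevents the trivial case where $W$ depends only on a subset of the coverage levels.

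The constraints $f(1)\ge 1$ and $f(j)\ge 0$ on the distribution rule are equally natural. Non-negativity guarantees that utilities $u_i(a)$ in~\eqref{eq:utilities} are non-negative, so that the game is well-defined and the best-response dynamics operates on a meaningful ordering of actions. The normalization $f(1)\ge 1$ ensures that a lone agent on a resource receives at least the full marginal welfare contribution, which is the minimal requirement to align individual incentives with the welfare along unilateral deviations that ``open up'' a new resource; weaker normalizations would artificially deflate $\poa(f)$ and exclude the naturally interesting design choices (for example $\fgar$ of \eqref{eq:fgar} satisfies $\fgar(1)=1$).

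In summary, the proper treatment of this block is simply to adopt it: no derivation is needed, the only equivalence asserted is syntactic, and the three clauses serve only to avoid degeneracies and to pin down the feasible design space $\mc{F}$ over which $\poa(f)$ will be maximized in \cref{sec:optimalutilitydesign}.
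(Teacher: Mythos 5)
You are right that this is an assumptions block with no mathematical content to prove; the paper likewise provides no proof, only a short paragraph noting that the conditions rule out degenerate instances and that $f(1)\ge 1$ is without loss of generality (the paper's stated reason being that one can rescale the resource values whenever $f(1)>0$). Your treatment is correct and matches the paper's.
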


{The non-emptiness of $\mc{A}_i$ ensures feasibility of the welfare maximization introduced in \cref{sec:problemstatement}.
The assumptions $\mc{A}_i\setminus\emptyset \neq \emptyset$ for all $i\in\N$ and $\exists r \in\mc{R}$ s.t. $v_r>0$ ensure that the problem is non degenerate, in that every agent has the possibility to select at least one resource, and not all the resources have a value of zero.
Finally, observe that the assumption $f(1)\ge1$ is without loss of generality for all distributions with $f(1)>0$. Indeed, If $f(1) \neq 1$, but $f(1) > 0$, it is possible to scale the value of the resources and reduce to the case $f(1)=1$.}

\subsection{Primal formulation}
\subsubsection*{An informal introduction}
\label{subsec:informal}
 While \eqref{eq:poadef} corresponds to the {\it definition} of price of anarchy, it also describes a (seemingly difficult) \emph{optimization problem}. 
 The aim of this section, is to transform the definition of price of anarchy into a finite dimensional \ac{lp} that can be efficiently solved. Towards this goal, we provide here an informal introduction (based on four steps), as we believe the reader will benefit from it. The formal derivation and justification of each of these steps is \emph{postponed to \cref{thm:primalpoa}} and its proof.\\[\vspacesteps]
  \noindent 
{\bf \emph{Step 1:}} we observe that the price of anarchy computed over the family of games $G\in\mc{G}_f$ is the same of the price of anarchy over the reduced family of games $\hat{\mc{G}}_f$, where the feasible set of every player only contains two allocations: (worst) equilibrium and optimal allocation, that is $\hat{\mc{A}}_i=\{\ae_i,\aopt_i\}$, $\hat{\mc{G}}_f\coloneqq\{(\mc{R},\{v_r\},N,\{\hat{\mc{A}}_i\},f)\}$. Thus, definition \eqref{eq:poadef} reduces to
 \[
 \begin{split}
\poa(f)=&\inf_{G\in \hat{\mc{G}}_f}\biggl(\frac{W(\ae)}
{W(\aopt)}
\biggr)\,,\\
&\quad \text{s.t.}\quad  u_i(\ae)\ge u_i(\aopt_i,\ae_{-i})\quad \forall i\in\N\,,
\end{split}
 \]
 where we have constrained $\ae$ to be an equilibrium. We do not include the additional constraints requiring $\ae$ to be the \emph{worst} equilibrium and $\aopt$ to provide the \emph{highest} welfare. This is because the infimum over $\hat{\mc{G}}_f$ and the parametrization we will introduce to describe an instance $G$ (in step 4) will implicitly
 ensure this. 
\vspace*{\vspacesteps}

\noindent  {\bf \emph{Step 2:}} we assume without loss of generality that $W(\ae)=1$ and get 
\be
\label{eq:poastep2informal}
\begin{split}
\poa(f)=&\inf_{G\in \hat{\mc{G}}_f}\frac{1}{W(\aopt)}\,,\\
&\quad \text{s.t.}\quad  u_i(\ae)\ge u_i(\aopt_i,\ae_{-i})\quad \forall i\in\N\,,\\
&\qquad\quad W(\ae)=1\,.
\end{split}
\ee

\noindent{\bf \emph{Step 3:}} we relax the previous program as in the following 
 \be
 \label{eq:informalpoa}
\begin{split}
\poa(f)=&\inf_{G\in \hat{\mc{G}}_f}\frac{1}{W(\aopt)}\,,\\
&\quad \text{s.t.}\quad  \sum_{i\in\N}u_i(\ae)- u_i(\aopt_i,\ae_{-i})\ge0\,,\\
&\qquad~\quad W(\ae)=1\,,
\end{split}
\ee 
where the $n$ equilibrium constraints (one per each player) have been substituted by their sum. We show that the relaxation gives the same price of anarchy of \eqref{eq:poastep2informal}.
\vspace*{\vspacesteps}\\
{\bf \emph{Step 4:}} for a given instance  in the reduced family $\hat{\mc{G}}_f$, computing the efficiency amounts to identifying an optimal allocation and the corresponding worst Nash equilibrium. The additional difficulty appearing in \eqref{eq:informalpoa} is in how to describe a generic instance $G\in\hat{\mc{G}}_f$ and on how to compute the infimum over all such (infinite) instances.
To do so, we introduce an efficient parametrization that fully describes the objective function and the decision variables of the previous problem. This allows to reduce \eqref{eq:informalpoa} and obtain the result in the following \cref{thm:primalpoa}.
\subsubsection*{The linear program}
\label{subsec:primallp}
The following theorem makes the reasoning presented in \cref{subsec:informal} formal and constitutes the second result of this manuscript.

In order to capture all the instances in $\hat{\mc{G}}_f$, we use a parametrization inspired by \cite{ward2012oblivious} and introduce the variables $\theta(a,x,b)\in\mb{R}$ defined for any tuple of integers $(a,x,b)\in\mc{I}$, where 
\[
	\begin{split}
		\mc{I}&\coloneqq \{(a,x,b)\in\mb{N}_{\ge0}^3~\text{s.t.}~1\le a+x+b\le n\}\,,\\
		\Ir &\coloneqq\{(a,x,b)\in\mc{I}~\text{s.t.}~a\cdotshort x\cdotshort b=0~\text{or}~a+x+b=n\}\,.
	\end{split}
\]
Note that $\Ir$ contains all the integer points on the planes $a=0$, $b=0$, $x=0$, $a+x+b=n$ bounding $\mc{I}$. The set $\mc{I}$ is depicted in \cref{fig:pyramid_setI_cut} for the case of $n=3$.

\begin{figure}[h!]
\centering
\includegraphics[scale=1.5]{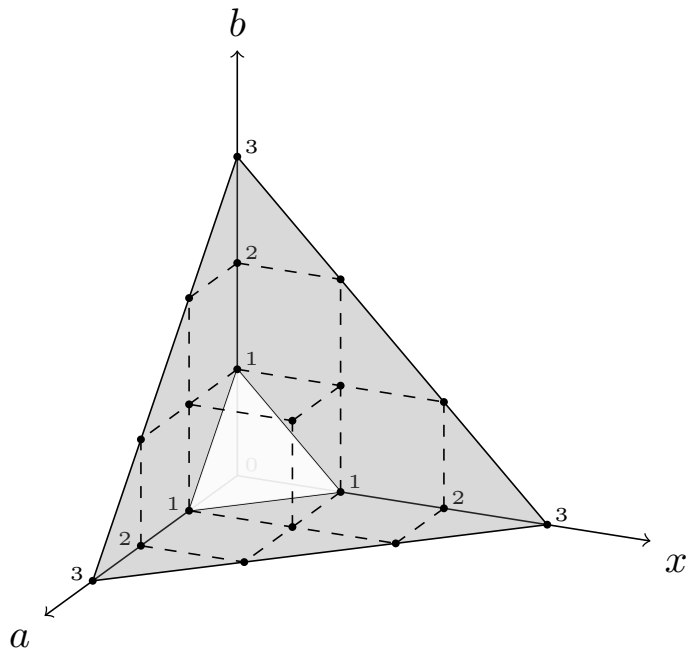}
\caption{The black circles represent all the points belonging to $\mc{I}$, $n=3$.}
\label{fig:pyramid_setI_cut} 
\end{figure}

In the remainder we write $\sum_{a,x,b}$ instead of $\sum_{(a,x,b)\in\mc{I}}$, for readability. 
Additionally, given a distribution rule $f:[n]\rightarrow \mb{R}_{\ge 0}$, and a welfare basis function $w:[n]\rightarrow \mb{R}_{>0}$, we extend their definition, with slight abuse of notation, to $f:\intwithzero{n+1}\rightarrow \mb{R}_{\ge 0}$ and $w:\intwithzero{n+1}\rightarrow \mb{R}_{\ge 0}$, where we set the first and last components to be identically zero, i.e., $f(0)=w(0)=0$, $f(n+1)=w(n+1)=0$.\footnote{This adjustment does not play any role, but is required to avoid the use of cumbersome notation in the forthcoming expressions. Else, e.g., $f(a+x+1)$ and $w(a+x+1)$ in \eqref{eq:primalvalue} will not be defined for $a+x=n$.}

\begin{theorem}[$\poa$ as a linear program]
\label{thm:primalpoa}
Given $f\in\mc{F}$, the price of anarchy \eqref{eq:poadef} is 
\[
\poa(f) = \frac{1}{W^\star}\,,
\]	
where $W^\star$ is the value of the following (primal) linear program in the unknowns $\theta(a,x,b)\in\mathbb{R}_{\ge0}$, $(a,x,b)\in\mc{I}$
\be
\begin{split}
\label{eq:primalvalue}
W^\star &= \max_{\theta(a,x,b)}\sum_{a,x,b}\ones[\{b+x\ge 1\}]w(b+x)\theta(a,x,b)\\
\text{s.t.}&\sum_{a,x,b}[af(a+x)w(a+x)-bf(a+x+1)w(a+x+1)]\theta(a,x,b)\ge0 \\
		   & \sum_{a,x,b}\ones[\{a+x\ge 1\}]w(a+x)\theta(a,x,b)=1\\
		   & \theta(a,x,b)\ge 0\quad\forall (a,x,b)\in\mc{I}\,.
\end{split}
\ee
\end{theorem}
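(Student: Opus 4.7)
The plan is to follow the four-step reduction outlined in Subsection \ref{subsec:informal}, showing at each step that no loss is incurred in the price of anarchy, and to conclude by reparametrizing the resulting infimum as the finite-dimensional linear program in \eqref{eq:primalvalue}.

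First, I would prove that restricting to the subfamily $\hat{\mc{G}}_f$ in which every player has exactly two actions $\hat{\mc{A}}_i=\{\ae_i,\aopt_i\}$ leaves $\poa(f)$ unchanged. One direction is immediate since $\hat{\mc{G}}_f\subseteq \mc{G}_f$. For the other direction, for any instance $G\in\mc{G}_f$ with a worst equilibrium $\ae$ and optimal allocation $\aopt$, I would construct a reduced instance $\hat G\in\hat{\mc{G}}_f$ with the same $\mc{R}$, $\{v_r\}$, $f$, but $\hat{\mc{A}}_i=\{\ae_i,\aopt_i\}$; $\ae$ remains a Nash equilibrium in $\hat G$ (the deviations available are a subset of those in $G$) and $\aopt$ is still feasible, hence $W(\ae)/W(\aopt)$ in $\hat G$ is no larger than in $G$. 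The normalization $W(\ae)=1$ in Step 2 is harmless because multiplying all $v_r$ by a positive constant rescales both numerator and denominator equally.

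Next, I would show that replacing the $n$ equilibrium inequalities $u_i(\ae)\ge u_i(\aopt_i,\ae_{-i})$ by their single aggregate $\sum_i[u_i(\ae)-u_i(\aopt_i,\ae_{-i})]\ge 0$ preserves the infimum. The relaxation can only make $W^\star$ larger, so $\poa(f)\ge 1/W^\star$. For the reverse direction one must exhibit, for any feasible point of the relaxed problem achieving value close to $W^\star$, a genuine game instance in $\hat{\mc{G}}_f$ attaining (arbitrarily close to) the same ratio. The construction proceeds by partitioning the resource set into groups indexed by triples $(a,x,b)\in\mc{I}$: a resource in the $(a,x,b)$-group is selected by exactly $a$ players only at $\ae$, by $x$ players at both $\ae$ and $\aopt$, and by $b$ players only at $\aopt$. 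The identity of which specific players are assigned each role can be chosen cyclically across groups so that, when the aggregate constraint is satisfied, each individual player's equilibrium constraint is satisfied as well (this is the classical symmetrization trick used in congestion-game analyses and is the technical heart of the argument; it is where I expect the main difficulty to lie).

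Finally, once Step 3 is in place, the parametrization in Step 4 becomes bookkeeping. For each triple $(a,x,b)\in\mc{I}$, let $\theta(a,x,b)$ denote the total value of resources in the $(a,x,b)$-group. Since each resource $r$ in this group contributes $v_r w(a+x)$ to $W(\ae)$ (because $a+x$ players select it at $\ae$) and $v_r w(b+x)$ to $W(\aopt)$, we obtain
\[
W(\ae)=\sum_{a,x,b}\ones[\{a+x\ge 1\}]w(a+x)\theta(a,x,b),\quad W(\aopt)=\sum_{a,x,b}\ones[\{b+x\ge 1\}]w(b+x)\theta(a,x,b).
\]
Similarly, summing $u_i(\ae)-u_i(\aopt_i,\ae_{-i})$ over $i$, the contribution from each resource in the $(a,x,b)$-group is $v_r[a f(a+x)w(a+x) - b f(a+x+1)w(a+x+1)]$, since $a$ players lose the term $f(a+x)w(a+x)$ per resource when deviating away, while $b$ players, upon deviating to $\aopt_i$, gain the term $f(a+x+1)w(a+x+1)$. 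Substituting these expressions into the program from Step 3 and absorbing the normalization $W(\ae)=1$ yields exactly \eqref{eq:primalvalue}, so $\poa(f)=1/W^\star$. It remains to note that the LP is always feasible (by the Standing Assumptions a valid configuration exists) and has a finite maximum (the feasible region is bounded by the normalization constraint together with the positivity of $w$), closing the argument.
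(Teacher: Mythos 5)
Your plan is correct and follows essentially the same route as the paper: reduction to the two-action games $\hat{\mc{G}}_f$, normalization $W(\ae)=1$, relaxation of the $n$ equilibrium constraints to their sum, and the $\theta(a,x,b)$ parametrization, with the "symmetrization" step you defer being exactly the paper's key lemma (a cyclic assignment of $n$ copies of each resource type, where the congestion-game potential shows every player's deviation gain equals $1/n$ of the aggregate constraint, so the summed constraint implies each individual one). Two small points the paper treats explicitly that your sketch glosses over: the normalization needs $W(\ae)>0$, which is proved as a separate lemma from the Standing Assumptions, and attainment of the supremum is not implied by the normalization constraint alone (it does not touch the variables $\theta(0,0,b)$); those are bounded through the relaxed equilibrium constraint together with $f(1)w(1)>0$.
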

The proof is based on the four steps previously discussed.

Given a distribution rule $f$, the solution to the previous program returns both the price of anarchy, and the corresponding worst case instance (encoded in $\theta(a,x,b)$, see the proof of the Step 4 in \cref{sec:proofsp2-part1}). 
Observe that the number of decision variables in \eqref{eq:primalvalue} is $|\mc{I}|={\frac{1}{2}\sum_{j=0}^n(j+2)(j+1)}-1=n(n+1)(n+2)/6-1\sim\mathcal{O}(n^3)$, while only two scalar constraints are present (neglecting the positivity constraint). The previous program can thus already be solved efficiently.  Nevertheless, we are only interested in the expression of $\poa(f)$ (i.e., ultimately in the \emph{value} of the program), and therefore consider the dual counterpart of \eqref{eq:primalvalue}  in the following.
\subsection{Dual formulation}
\label{subsec:duallp}
Thanks to strong duality, it suffices to solve the dual program of \eqref{eq:primalvalue} to compute the price of anarchy \eqref{eq:poadef}. While the dual program should feature two scalar decision variables and $\mathcal{O}(n^3)$ constraints, the following theorem shows how to reduce the number of constraints to only $|\Ir|=
{2(n^2+1)-1}\sim\mathcal{O}(n^2)$. The overarching goal is to progress towards an \emph{explicit expression} for $\poa(f)$.

\begin{theorem}[Dual reformulation of $\poa$]
\label{thm:dualpoa}
Given $f\in\mc{F}$, the price of anarchy \eqref{eq:poadef} is 
$
\poa(f) = 1/{W^\star}\,,
$	
where $W^\star$ is the value of the following (dual) program
\be
\begin{split}
W^\star &= \min_{\lambda\in\mb{R}_{\ge0},\,\mu\in\mb{R}}~ \mu \\[0.1cm]
&\,\text{s.t.} ~~\ones[\{b+x\ge1\}]w(b+x)
- \mu \ones[\{a+x\ge 1\}]w(a+x)+\\
&\quad~~+ \lambda[af(a+x)w(a+x)-bf(a+x+1)w(a+x+1)]
\le 0\\[0.1cm]
& \hspace*{60mm}\forall (a,x,b)\in\Ir
\label{eq:generalbound}
\end{split}
\ee
\end{theorem}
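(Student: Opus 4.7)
The plan is to apply linear programming duality to the primal in \cref{thm:primalpoa} and then reduce the resulting (polynomially many) dual constraints from the index set $\mc{I}$ to the smaller boundary set $\Ir$ by exploiting linearity in a single variable.

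\textbf{Step 1 (Write down the full dual).} I will associate a dual variable $\lambda\ge 0$ with the first (inequality) primal constraint and $\mu\in\mb{R}$ with the second (equality) primal constraint. Since the primal is a maximization with nonnegative variables $\theta(a,x,b)$, standard LP duality yields the dual
\begin{equation*}
\begin{aligned}
W^\star \;=\; &\min_{\lambda\ge0,\,\mu\in\mb{R}} \mu\\
\text{s.t.} \;&\ones[\{b+x\ge1\}]w(b+x) - \mu\ones[\{a+x\ge1\}]w(a+x)\\
&\quad + \lambda\bigl[af(a+x)w(a+x)-bf(a+x+1)w(a+x+1)\bigr]\le 0,\qquad \forall (a,x,b)\in\mc{I}.
\end{aligned}
\end{equation*}
Before invoking strong duality I will verify that the primal \eqref{eq:primalvalue} is feasible (take $\theta$ concentrated on a single point with $a=x=0$, $b=1$, suitably scaled) and bounded (the value is finite since $w$ and $f$ are fixed and nonnegative), so Slater/LP duality gives the equality of optimal values.

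\textbf{Step 2 (Reduce from $\mc{I}$ to $\Ir$).} This is the main step. For any fixed pair $(j,k)\in\intwithzero{n}^2$ with $j+k\ge1$, consider the line segment
\[
L_{j,k}\defeq \bigl\{(a,x,b)\in\mc{I}\;:\;a+x=j,\;b+x=k\bigr\}=\bigl\{(j-x,\,x,\,k-x)\;:\;x\in[x_{\min},x_{\max}]\bigr\},
\]
where $x_{\min}=\max\{0,j+k-n\}$ and $x_{\max}=\min\{j,k\}$. Substituting $a=j-x$ and $b=k-x$ in the dual constraint, the coefficients $w(j),w(k),w(j+1),f(j),f(j+1)$ are constant along $L_{j,k}$, so the constraint becomes an \emph{affine function of $x$}. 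Consequently, it holds for every $(a,x,b)\in L_{j,k}$ if and only if it holds at the two endpoints $x=x_{\min}$ and $x=x_{\max}$. A short case analysis shows each endpoint lies in $\Ir$: $x=0$ gives $x=0$; $x=j+k-n$ gives $a+x+b=n$; $x=j$ gives $a=0$; $x=k$ gives $b=0$. Since every point of $\mc{I}$ lies on exactly one segment $L_{j,k}$, the full dual and its restriction to $\Ir$ (as stated in \eqref{eq:generalbound}) have the same feasible set and hence the same optimal value.

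\textbf{Step 3 (Conclude).} Combining Steps 1--2 with \cref{thm:primalpoa} yields $\poa(f)=1/W^\star$ with $W^\star$ given by the minimization in \eqref{eq:generalbound}. The expected obstacle is Step~2: one must verify carefully that the affine reduction in $x$ does not lose any constraints, i.e., that every triple $(a,x,b)\in\mc{I}$ is sandwiched between two members of $\Ir$ on a segment along which the constraint's coefficients remain constant. The parametrization $(j,k)=(a+x,b+x)$ is precisely chosen so that both the indicator terms and the $w,f$ coefficients depend only on $(j,k)$, which makes this reduction work.
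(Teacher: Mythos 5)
Your proposal is correct and follows essentially the same route as the paper: write the LP dual of \eqref{eq:primalvalue} with multipliers $(\lambda,\mu)$, then eliminate the constraints indexed by $\mc{I}\setminus\Ir$ by noting that along each line of constant $(j,k)=(a+x,b+x)$ the constraint is affine in $x$ with coefficients depending only on $(j,k)$. Your variant of Step 2 (keeping \emph{both} endpoints $x_{\min}=\max\{0,j+k-n\}$ and $x_{\max}=\min\{j,k\}$, each of which lies in $\Ir$) is a slightly cleaner packaging than the paper's argument, which instead determines the single binding endpoint through a sign analysis of $f(j+1)w(j+1)-f(j)w(j)$; the two are equivalent. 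One small slip in Step 1: the feasibility witness "$a=x=0$, $b=1$, suitably scaled" does not work, because the indicator $\ones[\{a+x\ge1\}]$ vanishes there, so the normalization equality $\sum\ones[\{a+x\ge1\}]w(a+x)\theta(a,x,b)=1$ cannot be met (and the relaxed equilibrium constraint would force $\theta=0$ anyway); use instead, e.g., $\theta(0,1,0)=1/w(1)$ as in the paper, after which strong duality applies since the primal is feasible and its value is finite.
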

The proof of the previous theorem (see \cref{sec:proofsp2-part1}) suggests that a further simplification can be made when $f(j)w(j)$ is non-increasing for all $j$. In this case the number of constraints reduces to exactly $n^2$, as detailed in the following corollary.
\begin{corollary}
\label{cor:fwnonincreasingpoadual}
	\noindent Consider a given $f\in\mc{F}$.
	\begin{enumerate}
		\item Assume $f(j)w(j)$ non increasing for $j\in[n]$. Then $\poa=1/W^\star$, where
	\be
	\label{eq:formulacor1}
	\begin{split}
	W^\star &= \min_{\lambda\in\mb{R}_{\ge0},\,\mu\in\mb{R}}~ \mu  \\[0.1cm]
	&\,\text{s.t.}~ \,\mu w(j)\ge w(l)+\lambda [j f(j) w(j)-l f(j+1) w(j+1)]\\
	&~\hspace*{42mm} \forall j,l\in[0, n], \quad 1\le j+l\le n,\\[0.05cm]
	&\qquad \mu w(j)\ge w(l)+\lambda [(n-l) f(j) w(j)-(n-j) f(j+1) w(j+1)]\\
	&~\hspace*{42mm} \forall j,l\in[0, n], \quad~~~~~ j+l> n.\\
	\end{split}
	\ee
	\item If additionally $f(j)\ge \frac{1}{j}f(1)w(1) \min_{l\in[n]} \frac{l}{w(l)}$, then 
	\[
	\lambda^\star=\max_{l\in[n]}\frac{w(l)}{l}\frac{1}{f(1)w(1)}\,.\]
	\end{enumerate}	
\end{corollary}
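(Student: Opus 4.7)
The plan is to obtain the corollary as a specialization of Theorem \ref{thm:dualpoa}. The key change of variables is $j := a+x$ and $l := b+x$, since the welfare terms $w(a+x)$ and $w(b+x)$ in the dual constraints depend only on these two quantities. The index set $\Ir$ then decomposes into its four bounding faces $\{x=0\}$, $\{a=0\}$, $\{b=0\}$ and $\{a+x+b=n\}$, each of which translates into a family of constraints on $(j,l)$.

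For part (a), I would first observe that the face $\{x=0\}$ (with $a=j$, $b=l$, $1\le j+l\le n$) reproduces verbatim the first family in \eqref{eq:formulacor1}, while the face $\{a+x+b=n\}$ (with $a=n-l$, $b=n-j$, $j+l\ge n$) reproduces the second family; the two agree on the shared slice $j+l=n$, so no constraint is double-counted. To discard the remaining two faces I would compute the difference between each $\{b=0\}$ (resp.\ $\{a=0\}$) constraint and the corresponding $\{x=0\}$ constraint at the same $(j,l)$; after telescoping, the result equals $\lambda\, l\,[f(j)w(j)-f(j+1)w(j+1)]$ (resp.\ $\lambda\, j\,[f(j)w(j)-f(j+1)w(j+1)]$), which is non-negative under the monotonicity of $f\cdot w$. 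Hence the $\{a=0\}$ and $\{b=0\}$ constraints are implied by those from $\{x=0\}$, and \eqref{eq:formulacor1} follows.

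For part (b), two particular slices of the reduced LP pin down $\lambda^\star$. Taking $j=0$ with $l\in[n]$ in the first family (and using $w(0)=0$) yields $\lambda\,l\,f(1)w(1) \ge w(l)$, so any feasible $\lambda$ must satisfy $\lambda \ge \bar\lambda := \max_{l\in[n]} w(l)/(l\,f(1)w(1))$. Taking instead $l=0$ with $j\in[n]$ yields $\mu \ge \lambda\,jf(j)$, a bound that is linearly increasing in $\lambda$. The extra hypothesis in (b) rewrites as $jf(j)\bar\lambda \ge 1$ for every $j\in[n]$, using $\min_l l/w(l) = 1/(\bar\lambda\, f(1)w(1))$. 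The main obstacle is to argue that the dual optimum occurs exactly at $\lambda = \bar\lambda$ rather than at some strictly larger value. My plan is to exploit the piecewise-linear convexity of $\mu^\star(\lambda)$: the $j=0$ family forces $\lambda \ge \bar\lambda$, while the $l=0$ family supplies $\mu^\star(\lambda) \ge \lambda\max_j jf(j)$, a lower bound strictly increasing in $\lambda$. Combined with convexity, this places the minimizer at $\lambda = \bar\lambda$. Feasibility at this value would be verified by substituting back into \eqref{eq:formulacor1} and checking, using the monotonicity of $fw$ and $jf(j)\bar\lambda \ge 1$, that $\mu = \bar\lambda\max_j jf(j)$ satisfies the remaining constraints.
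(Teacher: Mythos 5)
Your part 1 is essentially the paper's argument: the paper simply re-invokes case a) of the second step of the proof of \cref{thm:dualpoa}, where, for fixed $(j,l)=(a+x,b+x)$, the right-hand side of the constraint is non-increasing in $x$ because the coefficient of $x$ is $\lambda\,[f(j+1)w(j+1)-f(j)w(j)]\le 0$; hence only the minimal-$x$ points $x=\max\{0,\,j+l-n\}$, i.e.\ the faces $x=0$ and $a+x+b=n$, can bind, which is exactly your telescoping comparison. One small slip: for a point on the $a=0$ (or $b=0$) face of $\Ir$ with $j+l>n$ there is no $x=0$ constraint at the same $(j,l)$ (it would require $a+b=j+l>n$, outside $\mc{I}$), so the comparison must be made against the second family, i.e.\ the point with $x=j+l-n$ on the plane $a+x+b=n$; the same monotonicity-in-$x$ observation covers this case, so the fix is immediate.

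Part 2, however, has a genuine gap. Your identification of $\bar\lambda=\lambda^\star$ from the $j=0$ constraints and of $\mu\ge\lambda\, jf(j)$ from the $l=0$ constraints is correct, but the closing step fails twice. First, convexity of $\mu^\star(\lambda)$ plus a strictly increasing lower bound does not force the minimizer to the left endpoint: a max of affine functions can contain negative-slope pieces and decrease before increasing while staying above the line $\lambda\max_j jf(j)$, so nothing so far excludes an optimum at $\lambda>\lambda^\star$. Second, the feasibility check you propose cannot succeed, because $\mu=\bar\lambda\max_j jf(j)$ is in general infeasible: take $w\equiv1$ and $f=\fsv$ (both hypotheses of the corollary hold, the second with equality); then $\lambda^\star=1$ and $\bar\lambda\max_j jf(j)=1$, yet the constraint with $(j,l)=(n-1,1)$ in the first family reads $\mu\ge 1+\lambda\,[(n-1)\fsv(n-1)-\fsv(n)]=1+\lambda(1-1/n)$, forcing $\mu^\star\ge 2-1/n>1$. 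What is actually needed — and what the paper proves — is a pointwise domination statement: for every feasible $\lambda\ge\lambda^\star$, any constraint $(j,l)$ with $j,l\neq0$ whose $\lambda$-coefficient is negative is implied by the constraint $(j,0)$, i.e.\ by $\mu\ge\lambda\, jf(j)$. This is precisely where the extra hypothesis enters (in the form $f(l)\ge 1/(l\lambda^\star)$), together with the fact that a negative coefficient and non-increasingness of $fw$ force $l\ge j+1$. Consequently every binding constraint has non-negative slope in $\lambda$, so $\mu^\star(\lambda)$ is non-decreasing on $[\lambda^\star,\infty)$ and the optimum is attained at $\lambda=\lambda^\star$; the optimal $\mu$ is then the maximum of all right-hand sides evaluated at $\lambda^\star$, as in \eqref{eq:mustar}, and not $\lambda^\star\max_j jf(j)$.
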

Mimicking the proof of the previous corollary, it is possible to obtain a similar result when $f(j)w(j)$ is instead non-decreasing. While the requirements on $f(j)w(j)$ being non increasing might seem restrictive at first, similar assumptions were made relative to a simpler class of problems in \cite{marden2014generalized,gairing2009covering}. We remark that this requirement is added to obtain an explicit expression for the price of anarchy. If this is not the goal, one can compute $\poa(f)$ using \cref{thm:dualpoa} without imposing any additional assumption.

\begin{remark}[Explicit expression of $\poa(f)$]
Observe that, if the optimal value $\lambda^\star$ is known a priori, as in the second statement from the previous corollary, the quantity $W^\star$ (and consequently the price of anarchy) can be computed \emph{explicitly} from \eqref{eq:formulacor1} as the maximum between $n^2$ real numbers depending on all the entries of $f$ and $w$. To see this, divide both sides of the constraints in \eqref{eq:formulacor1} by $w(j)$ for $1\le j\le n$, and observe that the solution $\mu^\star$ is then found as the maximum of the resulting right hand side. The corresponding value of $W^\star$ is given by the following expression. 
\begin{equation}
\label{eq:mustar} 	
W^\star =\max
{
\begin{cases}
	\medmath{\max_{\substack{j\neq 0 \\[0.5mm]1\le j+l\le n \\[0.5mm] j,l\in[0,n]}}} 
	{\frac{w(l)}{w(j)}+\lambda^\star[jf(j)-lf(j+1)\frac{w(j+1)}{w(j)}]}\\\\
	\medmath{\max_{\substack{j\neq 0 \\[0.5mm] j+l> n \\[0.5mm] j,l\in[0,n]}}}
	{\frac{w(l)}{w(j)}+\lambda^\star[(n-l)f(j)-(n-j)f(j+1)\frac{w(j+1)}{w(j)}]}
\end{cases}}
\end{equation}
Equation \eqref{eq:mustar} is reminiscent of the result obtained using a very different approach in \cite[Thm. 6]{marden2014generalized} (limited to Shapley value) and \cite[Thm. 3]{gairing2009covering} (limited to set covering problems and sub budget-balanced utilities). 

Finally, observe that for the case of \ac{mmc} problems discussed in \cref{sec:smooth} (it is $w(j)=1$ for all $j\in[n]$) the assumption required in the first statement of the previous corollary reduces to $f(j)$ non increasing. That is, the previous corollary gives us an expression for the $\poa(f)$ also for utilities that do not satisfy $\sum_{i\in\N}u_i(a)\le W(a)$, as instead required in \cite{gairing2009covering}. We discuss further connections with these works and others in \cref{ch:p2subsupercov}.
\end{remark}

\subsection{Related works}
The idea of using an auxiliary linear program to study the equilibrium efficiency has appeared in few works in the literature \cite{nadav2010limits,bilo2012unifying,kulkarni2014robust,thang2017game}. 
Note that, \emph{all} the aforementioned works assume the budget-balance condition to hold true.
In \cite{nadav2010limits}, the authors pose the problem in an abstract form and the corresponding linear program is used as a conceptual tool, rather than as a machinery to explicitly compute the price of anarchy. While \cite{bilo2012unifying} provides result for polynomial latency functions in weighted congestion games, the techniques proposed in \cite{bilo2012unifying,kulkarni2014robust,thang2017game} require an ad-hoc bound on the dual objective to obtain a bound on the price of anarchy. This is not the case with our approach.
Additionally, we note that the linear programming reformulations of \cite{nadav2010limits} capture the price of anarchy for a \emph{given} problem instance, while in this work we consider the \emph{worst case} instance over an admissible class of problems. This additional requirement complicates the analysis, but will produce algorithms that are provably robust to the presence of uncertainty, and are thus better suited for engineering implementation.
Finally, we observe that a direct transposition of the approach in, e.g., \cite{nadav2010limits} to our setting would produce a linear program whose size grows exponentially in the number of resources, making it impossible to solve for real world applications.

\section{Optimal utility design via linear programming}
\label{sec:optimalutilitydesign}
Given $w$ and a distribution rule $f$, \cref{thm:dualpoa} and \cref{cor:fwnonincreasingpoadual} have reduced the computation of the price of anarchy to the solution of a tractable linear program. Nevertheless, determining the distribution rule maximizing $\poa(f)$, i.e., giving the best performance guarantees, is \emph{also a tractable linear program}. The following theorem makes this clear.
\begin{theorem}[Optimizing $\poa(f)$ is a linear program]
	\label{thm:optimizepoa}
	For a given welfare basis $w$, the design problem
	\[
	\argmax_{f\in\mc{F}} \poa(f)
	\]
	is equivalent to the following \ac{lp} in $n+1$ scalar unknowns 
	\be
\label{eq:optf}
	\begin{split}
	f^\star &\in \argmin_{{f\in\mc{F},\,\mu\in\mb{R}}}
	~ \mu \\[0.1cm]
	&\,\text{s.t.} ~~\ones[\{b+x\ge1\}]w(b+x)
	- \mu \ones[\{a+x\ge 1\}]w(a+x)+\\
	&\quad~~+af(a+x)w(a+x)-bf(a+x+1)w(a+x+1)
	\le 0\\[0.1cm]
& \hspace*{62mm}\forall (a,x,b)\in\Ir
	\end{split}
\ee
The corresponding optimal price of anarchy is
\[
\poa(f^\star)=\frac{1}{\mu^\star}\,,
\]
where $\mu^\star$ is the value of the program \eqref{eq:optf}.
\end{theorem}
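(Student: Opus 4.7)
The plan is to leverage \cref{thm:dualpoa} to recast the utility design problem as a joint minimization over $(f,\lambda,\mu)$, and then to linearize the resulting bilinear program via the substitution $\tilde f := \lambda f$. By \cref{thm:dualpoa}, $\poa(f)=1/W^\star(f)$ where $W^\star(f)$ is the value of a linear program in $(\lambda,\mu)$, so
\[
\max_{f\in\mc{F}}\poa(f)=\frac{1}{\mu^{\star\star}},\qquad
\mu^{\star\star} := \min_{f\in\mc{F},\,\lambda\ge 0,\,\mu\in\mb{R}}\mu
\]
subject to the constraints \eqref{eq:generalbound}. This joint program is not itself an LP because its constraints contain the bilinear term $\lambda f$.

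To linearize, first I would introduce $\tilde f(j):=\lambda f(j)$: substituting into \eqref{eq:generalbound} produces exactly the constraints of \eqref{eq:optf}. The easy inclusion is immediate---any $(f,\mu)$ feasible for \eqref{eq:optf} gives a triple $(f,1,\mu)$ feasible for the joint program---so the optimum of \eqref{eq:optf} upper bounds $\mu^{\star\star}$. For the reverse inequality, I would start from an arbitrary feasible $(f,\lambda,\mu)$ of the joint program, define $\tilde f := \lambda f$, and verify that $(\tilde f,\mu)$ is feasible for \eqref{eq:optf}.

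The main obstacle is checking that $\tilde f \in \mc{F}$, i.e., $\tilde f(1) \ge 1$. This is exactly where the boundary structure of $\Ir$ helps: the tuple $(a,x,b)=(0,0,1)$ lies in $\Ir$, and its constraint in \eqref{eq:generalbound} reduces to $w(1) - \lambda f(1)\,w(1) \le 0$, yielding $\tilde f(1) = \lambda f(1)\ge 1$ (using $w(1) > 0$ from the Standing Assumptions); in particular, $\lambda > 0$, so the substitution is well-defined. Non-negativity $\tilde f \ge 0$ follows from $\lambda \ge 0$ and $f \ge 0$, while the remaining constraints of \eqref{eq:optf} hold with the same $\mu$ by construction of $\tilde f$. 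Combining both inclusions gives $\mu^\star = \mu^{\star\star}$, hence $\poa(f^\star) = 1/\mu^\star$. The distribution rule returned by \eqref{eq:optf} recovers an optimal rule up to the positive scalar $\lambda^\star$, which is inconsequential since $\poa(cf) = \poa(f)$ for every $c > 0$ (multiplying every utility by a positive constant leaves the Nash equilibrium set invariant); the $n+1$ scalar unknowns $f(1),\dots,f(n),\mu$ are then read off directly from \eqref{eq:optf}.
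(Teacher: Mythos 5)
Your proposal is correct and follows essentially the same route as the paper: substitute $\tilde f=\lambda f$ to linearize the joint minimization from \cref{thm:dualpoa}, use the constraint at $(a,x,b)=(0,0,1)$ to guarantee $\tilde f(1)=\lambda f(1)\ge 1$ (hence $\tilde f\in\mc{F}$ and $\lambda>0$), and invoke scale-invariance of the price of anarchy to conclude optimality of the rescaled rule. The only cosmetic difference is that the paper first establishes attainment of $\sup_{f\in\mc{F}}\poa(f)$ via a separate lemma, whereas your two-way feasibility argument delivers this implicitly through attainment of the LP optimum.
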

\begin{remark}
The importance of this results stems from its applicability for the \emph{game design procedure} outlined in \cref{ch:p2introduction}. More precisely, the previous theorem provides a solution to the utility design problem introduced in \cref{problem:utilitydesign}.  As a matter of fact, \cref{thm:optimizepoa} allows to compute the optimal distribution rule, for any given welfare basis function (satisfying the \hyperlink{sas}{Standing Assumptions}), and thus to solve the utility design problem. Applications of these results are presented in \cref{ch:p2subsupercov,,ch:p2applications}.
\end{remark}
\section{Appendix}
\label{sec:proofsp2-part1}
\subsection{Proofs of the results presented in \cref{sec:smooth}}
\subsubsection*{Proof of \cref{thm:smoothnottight}}
\begin{proof}
We prove the first and third claims only, as the second {statement is shown in \cite[Thm. 3]{gairing2009covering}.}
\begin{enumerate}
\item The claim in \cref{prop:poasmooth} requires $f(j)\le 1/j$, so that we need to restrict to this class of admissible utility functions to apply any smoothness argument. 
We proceed dividing the proof in two parts. First, we consider the valid distribution rule $\fsv$ defined for all $j\in [n]$ as $\fsv(j)\coloneqq 1/j$, and show that the best smoothness parameters are $(1,1-1/n)$ so that 
\be
\poas(\fsv)=\frac{1}{2-1/n}= b(n).
\ee
Second, we show that for any distribution with $f(j)\le \fsv(j)$ for all $j\in[n]$ it holds $\poas(f)\le \poas(\fsv)$.
From this, we conclude $\poas(f)\le b(n) = \frac{1}{2-1/n}$ for all admissible distribution rules.

Part 1: with the special choice of $\fsv$, the proof of \cite[Thm. 2]{gairing2009covering} shows that for any pair of feasible $a, a'$ and any $G\in\mc{G}_f$, it holds
\[
\sum_{i\in\N} u_i(a_i',a_{-i})\ge W(a')-\rchi_{\rm SV} W(a)\,,\]
\[\text{where}~~~\rchi_{\rm SV}=\max_{j\in[n-1]}\{j\fsv(j)-\fsv(j+1), (n-1)\fsv(n)\}\,,\]
from which $\rchi_{\rm SV}= 1 - 1/n$. Thus the game is $(1,1-1/n)$-smooth and it follows that $\poas(\fsv)\ge \frac{1}{2-1/n}$.
To show that there is no better pair $(\lambda,\mu)$ we show that the price of anarchy is exactly $\frac{1}{2-1/n}$. To do so, we consider the instance $G$ proposed in \cite[Fig. S2]{paccagnan2017arxiv} and observe that $W(\aopt) = 2-1/n$ while $W(\ae)=1$. Thus, $\poas(\fsv)\le\poa(\fsv)\le \frac{1}{2-1/n}$. 
Since the lower and upper bounds obtained for $\poas(\fsv)$ match, we conclude that $\poas(\fsv)=\frac{1}{2-1/n}$.

Part 2: Consider any distribution rule such that $f(j)\le \fsv(j)$ for all $j\in[n]$. Let us define the set
\[\begin{split}
A(f)\coloneqq\biggl\{&(\lambda,\mu)~\text{s.t. for all $a,a'\in \mc{A}$, for all $G\in\mc{G}_f$}\\
&\sum_{\substack{i\in N\\[0.5mm] r\in a_i}} v_r f(|(a'_i,a_{-i})|_r)w(|(a'_i,a_{-i})|_r) \ge  \lambda W(a')-\mu W(a)
\biggr\},
\end{split}
\] 
and analogously for $A(\fsv)$. With this notation, the claim we intend to prove reduces to
\begin{equation}
\sup_{(\lambda,\mu)\in A(f)} \frac{\lambda}{1+\mu}\le \sup_{(\lambda,\mu)\in A(\fsv)} \frac{\lambda}{1+\mu}\,.
\label{eq:prooflambdamu}
\end{equation}
To show the latter, we prove that $A(f)\subseteq A(\fsv)$. Consider a feasible tuple $(\lambda,\mu)\in A(f)$; by definition of $A(f)$ it is
\[
\sum_{\substack{i\in N\\[0.5mm] r\in a_i}} v_r f(|(a'_i,a_{-i})|_r)w(|(a'_i,a_{-i})|_r) \ge  \lambda W(a')-\mu W(a),
\]
$\forall a,a'\in \mc{A}$, $G\in\mc{G}_f$.
Since $\fsv(j)\ge f(j)$, it follows that
\[
\sum_{\substack{i\in N\\[0.5mm] r\in a_i}} v_r \fsv(|(a'_i,a_{-i})|_r)w(|(a'_i,a_{-i})|_r) \ge \lambda W(a')-\mu W(a),
\]
$\forall a,a'\in \mc{A}$, $G\in\mc{G}_f$.
Thus $(\lambda,\mu)\in A(\fsv)$ too, from which we conclude that $A(f)\subseteq A(\fsv)$ and \eqref{eq:prooflambdamu} must hold.

\setcounter{enumi}{2}
\item Follows from the previous claims upon noticing that $b(n)<\poa(\fgar)$ for $n>2$ (while $b(n)=\poa(\fgar)$ for $n=2$).
\end{enumerate}
\end{proof}
\subsection{Proofs of the results presented in \cref{sec:tightpoa}}
\subsubsection*{Proof of \cref{thm:primalpoa}}
\begin{proof}
The proof formalizes the steps introduced in \cref{subsec:informal}.\\
\begin{itemize}[leftmargin=23mm]
\item[\bf Step 1:] We intend to show that the price of anarchy computed over $G\in\mc{G}_f$ is the same of the price of anarchy computed over a reduced set of games.
Consider a game $G\in\mc{G}_f$ and denote with $\ae$ the corresponding  worst equilibrium (as measured by $W$) and with $\aopt$ an optimal allocation of $G$. 
For every such game $G$, we construct a new game $\hat G$, where $\hat G\coloneqq (\mc{R},\{v_r\},N,\{\hat{\mc{A}}_i\},f)$ and $\hat{\mc{A}}_i=\{\ae_i,\aopt_i\}$ for all $i\in\N$. That is, the feasible set of every player in $\hat G$ contains only two allocations: an optimal allocation, and the (worst) equilibrium of $G$. With slight abuse of notation we write $\hat G(G)$ to describe the game $\hat G$ constructed from $G$ as just discussed.
Observe that $G$ and $\hat G$ have the same price of anarchy, i.e.,
\[\frac{\min_{a\in \nashe{G}} W(a)}{\max_{a\in\mc{A}} W(a)} = 
\frac{\min_{a\in \nashe{\hat G}} W(a)}{\max_{a\in\hat {\mc{A}}} W(a)}\,.
\] 
Denote with $\hat{\mc{G}}_f$ the class of games $
\hat{\mc{G}}_f\coloneqq\{\hat G (G)~\forall G\in\mc{G}_f\}\,.$ Observe that $\hat{\mc{G}}_f\subseteq \mc{G}_f$ (by definition) and since for every game $G\in \mc{G}_f$, it is possible to construct a game $\hat G \in \hat{\mc{G}}_f$ with the same price of anarchy, it follows that \eqref{eq:poadef} can be computed as
\[
\poa(f)  =\inf_{\hat G\in \hat{\mc{G}}_f}\biggl(\frac{\min_{a\in \nashe{\hat G}} W(a)}{\max_{a\in\mc{A}} W(a)}\biggr)\,.
\]

\item[\bf Step 2:] 
 \Cref{lem:lemmapositivewelfare_ateq} ensures for any game $G$, every equilibrium configuration has strictly positive welfare. Thus, we assume without loss of generality that $W(\ae)=1$, where $\ae$ represents the worst equilibrium of $G$.\footnote{If, for a given game $G$, this is not the case, it is possible to construct a new game (by simply rescaling the value of the resources) such that $W(\ae)=1$. Note that the new game has the same game price of anarchy of $G$.} The price of anarchy reduces to
\[
\begin{split}
\poa(f)=&\inf_{\hat G\in \hat{\mc{G}}_f}\frac{1}{W(\aopt)}\,,\\
&\quad \text{s.t.}\quad  u_i(\ae)\ge u_i(\aopt_i,\ae_{-i})\quad \forall i\in\N\,,\\
&\qquad\quad~ W(\ae)=1\,.
\end{split}
\]
\item[\bf Steps 3, 4:] While in \cref{subsec:informal} these steps have been introduced separately for ease of exposition, their proof is presented jointly here. 
First observe, from the last equation, that $\poa(f)=1/W^\star$, where
\be
\label{eq:originalproofprimal}
\begin{split}
W^\star\coloneqq&\sup_{\hat G\in \hat{\mc{G}}_f}{W(\aopt)}\,,\\
&\quad \text{s.t.}\quad  u_i(\ae)\ge u_i(\aopt_i,\ae_{-i})\quad \forall i\in\N\,,\\
&\qquad\quad~ W(\ae)=1\,.
\end{split}
\ee
We relax the previous program as in the following
\be
\label{eq:relaxedproofprimal}
\begin{split}
V^\star\coloneqq &\sup_{\hat G\in \hat{\mc{G}}_f}{W(\aopt)}\,,\\
&\quad \text{s.t.}\quad  \sum_{i\in\N}u_i(\ae)- u_i(\aopt_i,\ae_{-i})\ge0\,,\\
&\qquad\quad~ W(\ae)=1\,,
\end{split}
\ee
where the $n$ equilibrium constraints (one per each player) have been substituted by their sum. Thus, $V^\star \ge W^\star$, but it also holds $V^\star\le W^\star$
as \cref{lemma:relaxedmatchesoriginal} proves, so that $V^\star = W^\star$.

In the following we show how to transform \eqref{eq:relaxedproofprimal} in \eqref{eq:primalvalue} by introducing the variables $\theta(a,x,b)$, $(a,x,b)\in\mc{I}$. This parametrization has been introduced to study covering problems in \cite{ward2012oblivious}, and will be used here to efficiently represent the quantities appearing in \eqref{eq:relaxedproofprimal}.
To begin with, recall that each feasible set is composed of only two allocations, that is $\hat{\mc{A}}_i=\{\ae_i,\aopt_i\}$.
For any given triplet $(a,x,b)$ in $\mc{I}$, we thus define $\theta(a,x,b)\in\mb{R}_{\ge0}$ as the total value of resources that belong to precisely $a+x$ of the sets $\ae_i$, $b+x$ of the sets $\aopt_j$, for which exactly $x$ sets have the same index (i.e., $i=j$). These $\mc{O}(n^3)$ variables suffice to fully describe the terms appearing in \eqref{eq:relaxedproofprimal}. Indeed, extending the formulation of \cite{ward2012oblivious} to the welfare defined in \eqref{eq:welfaredef} and the utilities defined in \eqref{eq:utilities}, we can write
\[
\begin{split}
W(\aopt) &=\sum_{(a,x,b)\in\mc{I}}\ones[\{b+x\ge 1\}]w(b+x)\theta(a,x,b)\,, \\
W(\ae) &= \sum_{(a,x,b)\in\mc{I}}\ones[\{a+x\ge 1\}]w(a+x)\theta(a,x,b)\,.\\
\end{split}
\]
The relaxed equilibrium constraint \[\sum_{i\in\N}u_i(\ae)- u_i(\aopt_i,\ae_{-i})\ge0\] reduces to
\[
\begin{split}
&~~\sum_{i\in\N}u_i(\ae)- u_i(\aopt_i,\ae_{-i})\\
=&\sum_{(a,x,b)\in\mc{I}}        [(a + x)f(a + x)w(a + x)  -  bf(a + x + 1)w(a + x + 1)  \\
&\hspace*{15mm}-  xf(a + x)w(a + x)]\theta(a,x,b) \\
 =&       \sum_{(a,x,b)\in\mc{I}}        [af(a + x)w(a + x) - bf(a + x + 1)w(a + x + 1)]\theta(a,x,b)\ge0\,.
\end{split}
\]
Substituting the latter expressions in \eqref{eq:relaxedproofprimal}, one gets 
\[
\begin{split}
W^\star &= \sup_{\theta(a,x,b)}\sum_{a,x,b}\ones[\{b+x\ge 1\}]w(b+x)\theta(a,x,b)\\
\text{s.t.}& \sum_{a,x,b}[af(a + x)w(a + x) - bf(a + x + 1)w(a + x + 1)]\theta(a,x,b) \ge 0 \\
		   &  \sum_{a,x,b}\ones[\{a+x\ge 1\}]w(a+x)\theta(a,x,b)=1\\
		   & \theta(a,x,b)\ge 0\quad\forall (a,x,b)\in\mc{I}\,.
\end{split}
\]
To transform the latter expression in \eqref{eq:primalvalue} (i.e., the desired result) it suffices to show that the supremum is attained. To see this observe that the decision variables $\theta(a,x,b)$ live in a compact space. Indeed $\theta(a,x,b)$ are constrained to the positive orthant for all $(a,x,b)\in\mc{I}$.
Additionally, the decision variables with $a+x\neq0$ must be bounded due to the constraint $W(\ae)=1$
\[
\sum_{\substack{(a,x,b)\in\mc{I}\\a+x\ge1}}w(a+x)\theta(a,x,b)=1\,,
\]
where $w(j)\neq0$ by assumption. Finally, the decision variables left, i.e., those of the form $\theta(0,0,b)$, $b\in[n]$ are bounded due to the equilibrium constraint, which can be rewritten as
\[\begin{split}
&~~\sum_{b\in[n]}bf(1)w(1)\theta(0,0,b)\le \\
&
\sum_{\substack{(a,x,b)\in\mc{I}\\a+x\ge1}}[af(a+x)w(a+x)-bf(a+x+1)w(a+x+1)]\theta(a,x,b),
\end{split}
\]
where $f(1)w(1)\neq0$ by assumption.
\end{itemize}
\end{proof}

\begin{lemma}
\label{lem:lemmapositivewelfare_ateq}
For any game $G\in\mc{G}_f$, it holds
\[
W(\ae)>0\text{~~for all~~} \ae\in\nashe{G}\,.
\]
\end{lemma}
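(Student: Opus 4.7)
The plan is to argue by contradiction: assume there is an equilibrium $\ae$ with $W(\ae)=0$ and derive a strictly profitable unilateral deviation, violating the equilibrium condition in \cref{def:nashequilibrium}.

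First, I would exploit the standing assumption $w(j)>0$ for all $j\in[n]$ together with the non-negativity of the values $v_r$ to conclude that if $W(\ae)=\sum_{r\in\cup_i \ae_i} v_r w(|\ae|_r)=0$, then every resource that is actually selected by some agent in $\ae$ must have value zero, i.e., $v_r=0$ for all $r\in\cup_{i\in N}\ae_i$. Consequently each individual utility $u_i(\ae)=\sum_{r\in\ae_i}v_r w(|\ae|_r)f(|\ae|_r)$ equals zero as well.

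Next, I would use the standing assumption guaranteeing the existence of an agent $i^\star\in N$ and an action $a_{i^\star}\in\mc{A}_{i^\star}$ containing a resource $r^\star$ with $v_{r^\star}>0$. Since $v_{r^\star}>0$, the previous step forces $r^\star\notin\ae_j$ for every $j\in N$, and in particular $r^\star\notin\ae_j$ for $j\neq i^\star$. Therefore, if agent $i^\star$ deviates to $a_{i^\star}$, the resulting congestion on $r^\star$ is exactly one, so using $f(1)\ge 1$ and $w(1)>0$,
\[
u_{i^\star}(a_{i^\star},\ae_{-i^\star})\;=\;\sum_{r\in a_{i^\star}} v_r\, w\!\left(|(a_{i^\star},\ae_{-i^\star})|_r\right) f\!\left(|(a_{i^\star},\ae_{-i^\star})|_r\right)\;\ge\; v_{r^\star}\,w(1)\,f(1)\;>\;0,
\]
where all other terms are non-negative. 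Combining this with $u_{i^\star}(\ae)=0$ contradicts the equilibrium inequality $u_{i^\star}(\ae)\ge u_{i^\star}(a_{i^\star},\ae_{-i^\star})$, proving $W(\ae)>0$.

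The argument is essentially a routine unwinding of the definitions; no step should pose a real obstacle, since all the ingredients (positivity of $w$, positivity of $f(1)$, existence of at least one valuable and reachable resource, non-negativity of $v_r$) are directly granted by the Standing Assumptions. The only point requiring a little care is to verify that in the deviation the congestion on $r^\star$ is precisely $1$, which is guaranteed because $W(\ae)=0$ rules out any other agent already occupying $r^\star$.
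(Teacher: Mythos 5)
Your proof is correct and follows essentially the same route as the paper's: argue by contradiction that $W(\ae)=0$ forces all selected resources to have zero value and hence all equilibrium utilities to vanish, then use the Standing Assumptions to exhibit a player who can deviate to a valuable, unoccupied resource and earn $v_{r^\star}w(1)f(1)>0$, contradicting the Nash condition. The only difference is presentational (you make explicit that the congestion on $r^\star$ after the deviation equals one), which the paper leaves implicit.
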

\begin{proof}
Let us consider a fixed game $G\in \mc{G}_f$.
By contradiction, let us assume that $W(\ae)=0$ for some $\ae\in\nashe{G}$.
It follows that all the players must have distributed themselves on resources that are either valued zero, or have selected the empty set allocation (since $w(j)>0$). Thus, their utility function must also evaluate to zero.
However, by \hyperlink{sas}{Standing Assumptions}, there exists a player $p$ and a resource $r\in a_p\in\mc{A}_{p}$ with $v_r>0$. Observe that no other player is currently selecting this resource, else $W(\ae)>0$. If player $p$ was to deviate and selected instead $a_p$, his utility would be strictly positive (since $f(1)>0$). Thus $\ae$ is not an equilibrium: a contradiction.
Repeating the same reasoning for all games $G\in \mc{G}_f$ yields the claim.
\end{proof}

\begin{lemma}
\label{lemma:relaxedmatchesoriginal}
Consider $W^\star$ and $V^\star$ defined respectively in \eqref{eq:originalproofprimal} and \eqref{eq:relaxedproofprimal}. It holds that $V^\star \le W^\star$.
\end{lemma}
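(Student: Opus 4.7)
The key observation is that, although relaxing $n$ separate equilibrium constraints to a single aggregate inequality manifestly enlarges the feasible set of instances (so $V^\star \ge W^\star$ is immediate), any feasible instance for the relaxation can be \emph{symmetrized} into a feasible instance of the original problem with identical objective value. The plan is therefore to produce, given an arbitrary $\hat G \in \hat{\mc{G}}_f$ that satisfies only the summed equilibrium constraint and achieves welfare $W(\aopt)$, a companion instance $\hat G^{\rm sym} \in \hat{\mc{G}}_f$ satisfying \emph{all} $n$ individual equilibrium inequalities with the same welfare $W(\aopt)$; this will show $W^\star \ge V^\star$ and hence $V^\star = W^\star$.

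The construction is a cyclic replication on the resource side. Let $\pi_k:N\to N$ be the cyclic shift $\pi_k(i)=((i+k-1)\bmod n)+1$, $k\in N$. In $\hat G^{\rm sym}$ keep the $n$ players, but replace $\mc{R}$ by $\mc{R}\times N$ with values $v_{(r,k)}\defeq v_r/n$, and set
\[
\ae'_i \defeq \{(r,k) : r\in \ae_{\pi_k(i)}\}, \qquad \aopt'_i \defeq \{(r,k) : r\in \aopt_{\pi_k(i)}\},
\]
with $\hat{\mc{A}}'_i=\{\ae'_i,\aopt'_i\}$. Because each $\pi_k$ is a bijection, one gets $|\ae'|_{(r,k)}=|\ae|_r$ and $|(\aopt'_i,\ae'_{-i})|_{(r,k)}=|(\aopt_{\pi_k(i)},\ae_{-\pi_k(i)})|_r$, so the preservation of welfare, $W(\ae')=W(\ae)=1$ and $W(\aopt')=W(\aopt)$, follows by a direct substitution using $v_{(r,k)}=v_r/n$. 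Moreover,
\[
u_i(\ae')-u_i(\aopt'_i,\ae'_{-i})=\frac{1}{n}\sum_{j\in N}\bigl[u_j(\ae)-u_j(\aopt_j,\ae_{-j})\bigr],
\]
since the outer sum over $k$ combined with the $1/n$ scaling exactly averages the per-player gaps of $\hat G$ and the same average appears for every $i$. The aggregate constraint for $\hat G$ therefore forces each individual constraint for $\hat G^{\rm sym}$, and $\hat G^{\rm sym}\in\hat{\mc{G}}_f$ is feasible for \eqref{eq:originalproofprimal} with objective $W(\aopt)$.

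Taking a maximizing sequence for \eqref{eq:relaxedproofprimal} and applying the symmetrization instance-wise yields a sequence feasible for \eqref{eq:originalproofprimal} with the same objective values, hence $W^\star\ge V^\star$; combined with the trivial inequality $V^\star\ge W^\star$ this gives $V^\star=W^\star$. The only delicate step is verifying that the averaging formula above holds verbatim at the deviated profile $(\aopt'_i,\ae'_{-i})$, which I expect to be the main bookkeeping obstacle: one has to track that, when player $i$ switches to $\aopt'_i$, on the $k$-th copy of the resources it is precisely the ``shadow player'' $\pi_k(i)$ who is swapped from $\ae_{\pi_k(i)}$ to $\aopt_{\pi_k(i)}$ in the original instance, so the count on copy $(r,k)$ matches $|(\aopt_{\pi_k(i)},\ae_{-\pi_k(i)})|_r$ exactly.
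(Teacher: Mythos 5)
Your proof is correct and follows essentially the same route as the paper: the paper also establishes $V^\star\le W^\star$ by an $n$-fold cyclic replication of resources with values scaled by $1/n$ (there applied to the $\theta(a,x,b)$-parametrized program \eqref{eq:primalvalue}, with the cyclic shifts encoded by $g(i,j)=(j-i)\bmod n$ and $h(i,j)=(j-i+1)\bmod n$), so that each player's unilateral deviation gap equals $\tfrac{1}{n}$ times the aggregate gap, the summed constraint implies every individual one, and $W(\ae)=1$, $W(\aopt)$ are preserved. The only cosmetic difference is that you symmetrize an arbitrary relaxed-feasible instance directly and verify the equilibrium via the utility-averaging identity, whereas the paper builds the instance from a feasible $\theta$ and checks the equilibrium condition through Rosenthal's potential.
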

\begin{proof}
	Since \eqref{eq:relaxedproofprimal} is equivalent to \eqref{eq:primalvalue} as shown in the proof of \cref{thm:primalpoa}, we will work with \eqref{eq:primalvalue} to prove $V^\star \le W^\star$. To do so, for any $\theta(a,x,b)$, $(a,x,b)\in \mc{I}$ feasible solution of \eqref{eq:primalvalue} with value $v$, we will construct an instance of game $\hat G$ satisfying the constraints of the original problem \eqref{eq:originalproofprimal} too. This allows to conclude that $V^\star\le W^\star$. To ease the notation we will use $\sum_{a,x,b}$ in place of $\sum_{(a,x,b)\in\mc{I}}$.
	
	Consider $\theta(a,x,b)$, $(a,x,b)\in \mc{I}$ a feasible point for \eqref{eq:primalvalue} with value $v$. For every $(a,x,b)\in \mc{I}$ and for each $i\in\N$ we create a resource $r(a,x,b,i)$ and assign to it the value of $\theta(a,x,b)/n$, i.e., $v_{r(a,x,b,i)}=\theta(a,x,b)/n$ $\forall i\in\N$. We then construct the game $\hat G$ by defining $\forall i \in\N$, $\hat{\mc{A}}_i=\{\ae_i,\aopt_i\}$ and assigning the resources as follows 
	\[
	\begin{split}
	\ae_i=&\cup_{j=1}^n \{r(a,x,b,j)~\text{s.t.}~a+x\ge 1+g(i,j)\}\,,\\
	\aopt_i=&\cup_{j=1}^n \{r(a,x,b,j)~\text{s.t.}~b+x\ge 1 + h(i,j)\}\,,
	\end{split}
	\]
	where 
	\[\begin{split}
	g(i,j)\coloneqq & (j-1+(n-1)(i-1)) \mod n\,,\\
	=&(j-i)\mod n\\
	h(i,j)\coloneqq & (j+(n-1)(i-1)) \mod n\\
	=&(j-i+1)\mod n
	\,.
	\end{split}
	\]
	We begin by showing $W(\ae)=1$ and $W(\aopt)=v$.
	Aside from the cumbersome definition of $g$ and $h$, it is not difficult to verify that for any fixed resource (i.e., for every fixed tuple $(a,x,b,j)$), there are exactly $a+x$ (resp. $b+x$) players selecting it while at the equilibrium (resp. optimum) allocation. It follows that
	\[\begin{split}
	W(\ae)&=\sum_{j\in[n]}\sum_{a+x>0} v_{r(a,x,b,j)}w(a+x)\\
	      &=\sum_{j\in[n]}\sum_{a+x>0} \frac{\theta(a,x,b)}{n}w(a+x)\\
	      &=\sum_{a,x,b}\ones[\{a+x\ge 1\}]w(a+x)\theta(a,x,b)=1\,,
	\end{split}
	\]
	With an identical reasoning, one shows that 
	\[
	\begin{split}
	W(\aopt)&=\sum_{j\in[n]}\sum_{b+x>0} v_{r(a,x,b,j)}w(b+x)\\
	      &=\sum_{a,x,b}\ones[\{b+x\ge 1\}]w(b+x)\theta(a,x,b)=v\,.
	\end{split}
	\]
Finally, we prove that $\ae$ is indeed an equilibrium, i.e., it satisfies 
$u_i(\ae)-u_i(\aopt_i,\ae_{-i})\ge 0$ for all $i\in\N$. Towards this goal, we recall that the game under consideration is a congestion game with potential $\varphi:\mc{A}\rightarrow\mb{R}_{\ge0}$
\[
\varphi(a)=\sum_{r\in\mc{R}}\sum_{j=1}^{|a|_r} v_r w(j)f(j)
\]
	It follows that $u_i(\ae)-u_i(\aopt_i,\ae_{-i})=\varphi(\ae)-\varphi(\aopt_i,\ae_{-i})$ and so we equivalently prove that 
	\[
	\varphi(\ae)-\varphi(\aopt_i,\ae_{-i})\ge 0\quad\forall i\in\N\,.
	\]
Thanks to the previous observation, according to which every resource $(a,x,b,j)$ is covered by exactly $a+x$ players at the equilibrium, we have
\[
\begin{split}
\varphi(\ae)=&\sum_{j\in [n]}\sum_{a,x,b}\frac{\theta(a,x,b)}{n}\sum_{j=1}^{a+x}w(j)f(j)\\
		 =&\frac{1}{n}\sum_{a,x,b}n\,{\theta(a,x,b)}\sum_{j=1}^{a+x}w(j)f(j)\,.
\end{split}
\]
Additionally, observe that there are $b$ resources selected by one extra agent and $a$ resources selected by one less agent when moving from $\ae$ to $(\aopt_i,\ae_{-i})$. The remaining resources are chosen by the same number of agents. 
It follows that
\[\begin{split}
&\varphi(\ae)-\varphi(\aopt_i,\ae_{-i})=\frac{1}{n}\sum_{a,x,b}n\,{\theta(a,x,b)}\sum_{j=1}^{a+x}w(j)f(j)\\
&~~~~~~-\frac{1}{n}\sum_{a,x,b}\,{\theta(a,x,b)}\left(
b\sum_{j=1}^{a+x+1}w(j)f(j)+a\sum_{j=1}^{a+x-1}w(j)f(j)+
(n-a-b)\sum_{j=1}^{a+x}w(j)f(j)
\right)\\
&=\frac{1}{n}\sum_{a,x,b}\theta(a,x,b)\left(a\,w(a+x)f(a+x)-b\,w(a+x+1)f(a+x+1)\right)\ge0\,,\\
\end{split}
\] 
where the inequality holds because $\theta(a,x,b)$ is assumed feasible for \eqref{eq:primalvalue}. This concludes the proof. 
\end{proof}
\subsubsection*{Proof of \cref{thm:dualpoa}}
\begin{proof}
	We divide the proof in two steps. In the first step we write the dual of the original program in \eqref{eq:primalvalue}. With the second step we show that only the constraints obtained for $(a,x,b)\in\Ir$ are binding.\\
	\noindent{\bf Step 1.} Upon stacking the decision variables $\theta(a,x,b)$ in the vector $y\in\mb{R}^{\ell}$, ${\ell=|\mc{I}|}$, and after properly defining the coefficients $c$, $d$, $e\in\mb{R}^{\ell}$, the program \eqref{eq:primalvalue} can be compactly written as 
\[
\begin{split}
W^\star = &\max_{y} \, c^\top y\\
&~\text{s.t.}\quad -e^\top y\le 0\,, \quad (\lambda)\\
&\qquad\, d^\top y -1 = 0\,, \quad (\mu)\\
&\qquad~~\,\quad -y\le 0\,. \quad \,(\nu)
\end{split}
\]
The Lagrangian function is defined for $\lambda\ge0$, $\nu\ge0$ as  $\mathcal{L}(y,\lambda,\mu,\nu)=c^\top y-\lambda(-e^\top y)-\mu(d^\top y -1)-\nu^\top(-y)=(c^\top+\lambda e^\top+\nu-\mu d^\top)y+\mu$, while the dual function reads as
\[
g(\lambda,\mu,\nu) = \mu \quad \text{if}\quad c^\top+\lambda e^\top+\nu^\top-\mu d^\top=0\,,
\]
and it is unbounded elsewhere.
Hence the dual program takes the form 
\[\begin{split}
&\min_{\lambda\in\mb{R}_\ge0,\,\mu\in\mb{R}}~ \mu \\
&~~~~~\text{s.t.}\quad c+\lambda e-\mu d\le0\,,
\end{split}
\]
which corresponds, in the original variables, to 
\be
\label{eq:proofdual}
\begin{split}
&\min_{\lambda\in\mb{R}_{\ge0},\,\mu\in\mb{R}}~ \mu \\[0.1cm]
&\,\text{s.t.} ~~\ones[\{b+x\ge1\}]w(b+x)
- \mu \ones[\{a+x\ge 1\}]w(a+x)+\\
&\quad~~~+\lambda[af(a+x)w(a+x)-bf(a+x+1)w(a+x+1)]
\le 0 \qquad \forall (a,x,b)\in\mc{I}\,.
\end{split}
\ee
By strong duality\footnote{The primal LP \eqref{eq:primalvalue} is always feasible, since $\theta(0,1,0)=1/w(1)$, $\theta(a,x,b)=0$ $\forall\,(a,x,b)\in\mc{I}\setminus(0,1,0)$ satisfies all the constraints in \eqref{eq:primalvalue}.}, the value of \eqref{eq:primalvalue} matches \eqref{eq:proofdual}.

\noindent{\bf Step 2.} In this step we show that only the constraints with $(a,x,b)\in\Ir$ are necessary in \eqref{eq:proofdual}, thus obtaining \eqref{eq:generalbound}. 

Observe that when $(a,x,b)\in\mc{I}$ and $a+x=0$, $b$ can take any value $1\le b\le n$, and these indices are already included in $\Ir$. 
Similarly for the indices $(a,x,b)\in\mc{I}$ with $b+x=0$. Thus, we focus on the remaining constraints, i.e., those with $a+x\neq 0$ and $b+x\neq 0$.
We change the coordinates from the original indices $(a,x,b)$ to $(j,x,l)$, $j\coloneqq a+x$, $l\coloneqq b+x$. 
The constraints in \eqref{eq:proofdual} now read as 
\be
\label{eq:reducetobound}
\begin{split}
\mu w(j)&\ge w(l)+\lambda[(j-x)f(j)w(j)-(l-x)f(j+1)w(j+1)]\,,\\
& =  w(l)+\lambda[jf(j)w(j)-lf(j+1)w(j+1)+{ x(f(j+1)w(j+1)-f(j)w(j))}]
\end{split}
\ee	
where $(j,x,l)\in \hat{\mc{I}}
$ and 
$
\hat{\mc{I}}=\{(j,x,l)\in \mathbb{N}_{\ge0}^3~\text{s.t.}~ 1\le j-x+l\le n,~ j\ge x,~ l\ge x,~ j,l\neq 0\}.
$
In the remaining of this proof we consider $j$ fixed, while $l,~x$ are free to move within $\hat{\mc{I}}$. This corresponds to moving the indices in the rectangular region defined by the blue and green patches in \cref{fig:pyramid_decreasing,,fig:pyramid_increasing}.
Observe that for $j=n$ it must be $l=x$ (since $-x+l\le 0$ and $l-x\ge 0$), i.e., in the original coordinates $b=0$, which represents the segment on the plane $b=0$ with $a+x=n$. These indices already belong to $\Ir$. Thus, we consider the case $j\ne n$ and divide the reasoning in two parts. 
\begin{itemize}
\item[a)]{Case of $f(j+1)w(j+1)\le f(j)w(j)$.}\\[0.1cm]
 The term $f(j+1)w(j+1)-f(j)w(j)$ is non-positive and so the most binding constraint in \eqref{eq:reducetobound} is obtained picking $x$ as small as possible. In the following we fix $l$ as well (recall that we have previously fixed $j$). This corresponds to considering points on a black line on the plane $j=$const in \cref{fig:pyramid_decreasing}). Since it must be $x\ge 0$ and $x\ge j+l-n$, for fixed $j$ and $l$ we set $x = \max\{0,j+l-n\}$. In the following we show that these constraints are already included in \eqref{eq:generalbound}.
\begin{itemize}
\item[-] If $j+l\le n$, i.e., if $a+b+2x\le n$, we set $x=0$. These indices correspond to points on the plane $x=0$, ($1\le a+b\le n$) bounding the pyramid and so they are already included in $\Ir$.
\item[-] If $j+l> n$, i.e., if $a+b+2x> n$, we set $x=j+l-n$, i.e., $a+b+x=n$. These indices correspond to points on the plane $a+b+x=n$, and so they are included in $\Ir$ too.
\end{itemize}
\begin{figure}[h!]
\centering
\includegraphics[width=0.5\textwidth]{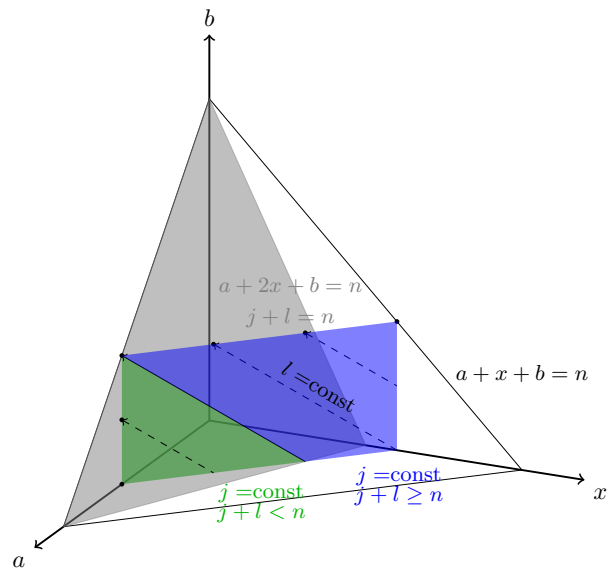}
\caption{Indices representation for case a).}
\label{fig:pyramid_decreasing}
\end{figure}

\item[ b)] {Case of $f(j+1)w(j+1)>f(j)w(j)$}. \\[0.1cm]
The term $f(j+1)w(j+1)-f(j)w(j)$ is positive and so the most binding constraint in \eqref{eq:reducetobound} is obtained picking $x$ as large as possible. In the following (after having fixed $j$) we fix $l$ as well (this means we are moving on a black line on the plane $j=$const in \cref{fig:pyramid_increasing}). Since it must be $x\le l$, $x\le j$ and $x\le j+l-1$, we set $x = \min\{j,l\}$. In the following we show that these constraints are already included in~\eqref{eq:generalbound}.
 
\begin{itemize}
\item[-] If $j\le l$, i.e., if $a\le b$, we set $x=j$, i.e., $a=0$. These indices correspond to points on the plane $a=0$, ($1\le x+b\le k$) and so they are included in $\Ir$.
\item[-] If $j>l$, i.e., if $a>b$, then we set $x=l$, i.e., $b=0$. These indices correspond to points on the plane $b=0$, ($1\le a+b\le k$) and so they are included in $\Ir$.
\end{itemize}
\begin{figure}[ht!]
\centering
\includegraphics[width=0.5\textwidth]{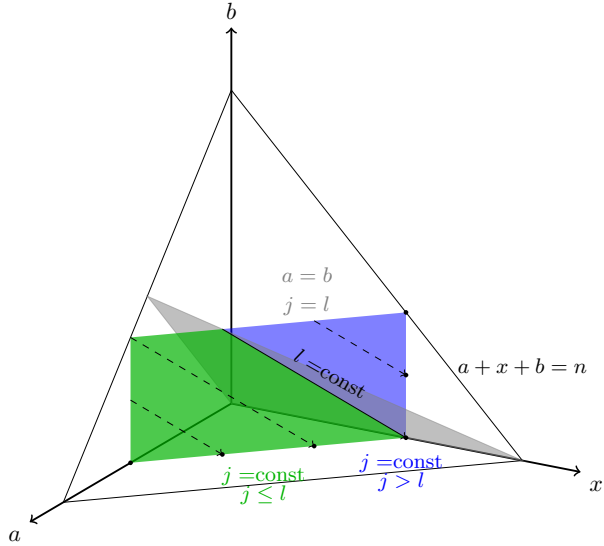}
\caption{Indices representation for case b).}
\label{fig:pyramid_increasing}
\end{figure}
\end{itemize}
\end{proof}

\subsubsection*{Proof of \cref{cor:fwnonincreasingpoadual}}
\begin{proof}
	\begin{enumerate}
	\item[]
		\item  Following the proof of \cref{thm:dualpoa} (second step, case a)),  we note that if $f(j)w(j)$ is non increasing for $j\in\N$, the only binding indices are those lying on the the two surfaces $x=0$, $1\le a+b\le n$ and $a+x+b\le n$. 
The surface $x=0$, $1\le a+b\le n$ gives 
\be
\mu w(j)\ge w(l)+\lambda [j f(j) w(j)-l f(j+1) w(j+1)]
\label{eq:part1reduced}
\ee
for $1\le j+l\le n$ and $j,l\in\intwithzero{n}$, where we used $j,l$ instead of $a,b$. The surface $a+x+b=n$ gives 
\[
\begin{split}
\mu w(n-b)=&\lambda[af(n-b)w(n-b)-bf(n-b+1)w(n-b+1)]+w(n-a)	
\end{split}
\]
which can be written as 
\be
\mu w(j)\ge w(l)+\lambda [(n-l) f(j) w(j)-(n-j) f(j+1) w(j+1)]
\label{eq:part2reduced}
\ee
for $j+l> n$ and $j,l\in\intwithzero{n}$, where we have used the same change of coordinates of the proof of \cref{thm:dualpoa}, i.e., $j=a+x=n-b$, $l=b+x=n-a$. Thus, we conclude that \eqref{eq:part1reduced} and \eqref{eq:part2reduced} are sufficient to describe the constraints in \eqref{eq:generalbound}, and the result follows.

\item
	First, observe that for $j=0$, it must be $l\in[n]$. Additionally, note that the second set of constraints (those with $j+l>n$) is empty. The first set of constraints yields $\lambda\ge \frac{w(l)}{l}\frac{1}{f(1)w(1)}$ for $l\in[n]$. Define 
	\[\lambda^\star =\max_{l\in[n]} \frac{w(l)}{l}\frac{1}{f(1)w(1)}\,,
	\] and observe that any feasible $\lambda$ must satisfy $\lambda\ge \lambda^\star$. 
	Second, observe that for $l=0$, it must be $j\in[n]$. Additionally, the second set of constraints (those with $j+l>n$) is empty. The first set of constraints yields $\mu\ge \lambda j f(j)$ for $j\in[n]$.
	
	In the following we show that the most binding constraint amongst all those in \eqref{eq:formulacor1} is of the form $\mu\ge \alpha \lambda +\beta$, with $\alpha\ge 0$ (i.e., the most binding constraint is a straight line in the $(\lambda,\mu)$ plane pointing north-east). Consequently, the best choice of $\lambda$ so as to satisfy the constraints and minimize $\mu$ is to select $\lambda$ as small as possible, i.e., $\lambda= \lambda^\star$. See \cref{fig:lambdamu} for an illustrative plot. 
	
\begin{figure}[h!]
\centering
\includegraphics[width=0.5\textwidth]{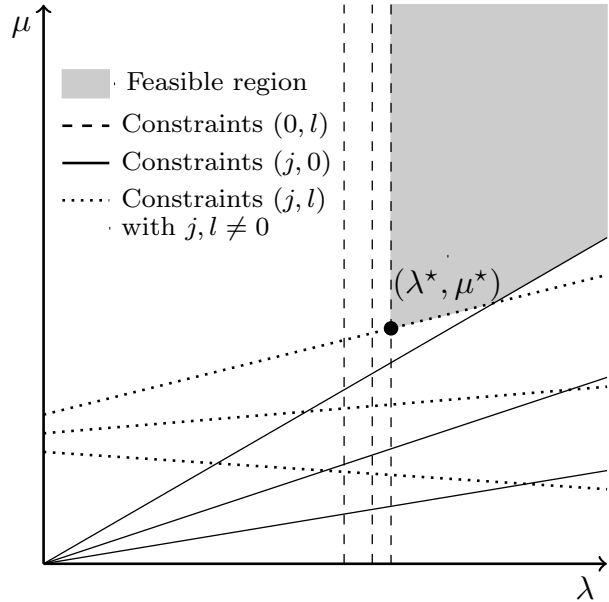}
\caption{Illustration of the three classes of constraints used in the proof of \cref{cor:fwnonincreasingpoadual}.}
\label{fig:lambdamu}
\end{figure}
	
	As shown previously, the constraints with $j=0$ are straight lines parallel to the $\mu$ axis, while the constraints with $l=0$ are straight line of the form $\mu\ge \lambda j f(j)$ (and thus point north-east in the $(\lambda,\mu)$ plane). We are thus left to check the constraints with $j\neq0$ and $l\neq 0$.
	
	To do so, we prove that if one such constraint (identified by the indices $(j,l)$) has negative slope, the constraint identified with $(j,0)$ is more binding. Since the constraint $(j,0)$ is of the form $\mu\ge j\lambda j f(j)$ (and thus has non-negative slope), this will conclude the proof.
	We split the reasoning depending on wether $1\le j+l\le n$ or $j+l> n$ as the constraints in \eqref{eq:formulacor1} have a different expression.
	\begin{itemize}
		\item[-]
		Case of $1\le j+l\le n$: to complete the reasoning, in the following we assume that 
    $jf(j)-lf(j+1)\frac{w(j+1)}{w(j)}<0$, and show that the constraint $(j,0)$ is more binding, i.e., that
	\[
	\lambda jf(j)\ge \frac{w(l)}{w(j)}+\lambda jf(j)-\lambda lf(j+1)\frac{w(j+1)}{w(j)}\,,
	\]	
which is equivalent to showing 
	\be
	\frac{w(l)}{w(j)}-\lambda lf(j+1)\frac{w(j+1)}{w(j)}\le 0\,.
	\label{eq:toshow1}
	\ee
	Since $jf(j)-lf(j+1)\frac{w(j+1)}{w(j)}<0$ it must be 
	\[
	l>j\frac{f(j)w(j)}{f(j+1)w(j+1)}\ge j\,,
	\]
	by non-increasingness of $f(j)w(j)$. Thus it must be \mbox{$l\ge j+1$}.
	Consequently, by non-increasingness of $f(j)w(j)$ it is $w(l)\le {f(j+1)w(j+1)}/{f(l)}$ and we can bound the left hand side of \eqref{eq:toshow1} as 
	\[
	\begin{split}
	&\frac{w(l)}{w(j)}-\lambda lf(j+1)\frac{w(j+1)}{w(j)}\\
	&\le\frac{f(j+1)w(j+1)}{w(j)f(l)}-\lambda l\frac{f(j+1)w(j+1)}{w(j)}\\
	&=\frac{f(j+1)w(j+1)}{f(j)w(j)}\left(\frac{1}{f(l)}-\lambda l \right) f(j)\,.
	\end{split}
	\]
 The claim \eqref{eq:toshow1} is shown upon noticing that $f(l)\ge \frac{1}{l}\min_{l\in[n]} \frac{l}{w(l)}f(1)w(1)=\frac{1}{l\lambda^\star }$ (by assumption), and thus 
	\[
	\frac{f(j+1)w(j+1)}{f(j)w(j)}\left(\frac{1}{f(l)}-\lambda l \right)f(j)\le \frac{f(j+1)w(j+1)}{f(j)w(j)}\left(\lambda^\star-\lambda  \right)l f(j)\le 0\,,
	\] 
	since we have already shown that $\lambda\ge \lambda^\star$ for every feasible~$\lambda$.
\item[-]
 Case of $j+l> n$: to complete the proof we proceed in a similar fashion to what seen in the previous case. In particular, we assume that $(n-l) f(j)-(n-j) f(j+1) \frac{w(j+1)}{w(j)}<0$, and show that the constraints $(j,0)$ is more binding, i.e., that 
	\be
	\frac{w(l)}{w(j)}+\lambda(n-l-j) f(j) -\lambda(n-j) f(j+1) \frac{w(j+1)}{w(j)})\le0\,.
	\label{eq:toshow2}
	\ee
	Since $(n-l) f(j)-(n-j) f(j+1) \frac{w(j+1)}{w(j)}<0$, it must be 
	\[
	n-j>(n-l)\frac{f(j)w(j)}{f(j+1)w(j+1)}\ge n-l
	\]
	by non-increasingness of $f(j)w(j)$. Thus it must be \mbox{$l\ge j+1$}.
	Consequently, by non-increasingness of $f(j)w(j)$ we can bound the left hand side of \eqref{eq:toshow2} as 
	\[
	\begin{split}
	&\frac{w(l)}{w(j)}+\lambda(n-l-j) f(j) -\lambda(n-j) f(j+1) \frac{w(j+1)}{w(j)}\\
	&\le \frac{f(j+1)w(j+1)}{w(j)f(l)}+\lambda(n-l-j) f(j) -\lambda(n-j) f(j+1) \frac{w(j+1)}{w(j)}\\
	&=\frac{f(j+1)w(j+1)}{w(j)f(j)}\left(\frac{1}{f(l)}-\lambda(n-j)\right)f(j)+\lambda(n-l-j) f(j)\\
	&\le \frac{f(j+1)w(j+1)}{w(j)f(j)}\left(\frac{1}{f(l)}-\lambda(n-j)+\lambda(n-l-j) \right)f(j)\\
	&\le  \frac{f(j+1)w(j+1)}{w(j)f(j)}\left(\frac{1}{f(l)}-\lambda l\right)f(j)\le 0\,,
	\end{split}
	\]
where the chain of inequality is proven similarly to what done in the case of $1\le j+l\le n$, using the non-decreasingness of $f(j)w(j)$ and the fact that $f(l)\ge\frac{1}{l\lambda^\star}$ by assumption.
\end{itemize}
\end{enumerate}
\end{proof}
\subsection{Proofs of the results presented in \cref{sec:optimalutilitydesign}}
\subsubsection*{Proof of \cref{thm:optimizepoa}}
\begin{proof}
We first observe that the problem $\argmax_{f\in\mc{F}} \poa(f)$ is well posed, in the sense that the supremum $\sup_{f\in\mc{F}}\poa(f)$ is attained for some $f\in\mc{F}$. A proof of this is reported in the following \cref{lem:finite}.

The (well posed) problem $\argmax_{f\in\mc{F}} \poa(f)$ is equivalent to finding the distribution rule that minimizes $W^\star$ given in \cref{thm:dualpoa}, i.e.,
\[
\begin{split}
	f^\star &\in \argmin_{f\in\mathcal{F}}\min_{\lambda\in\mb{R}_{\ge0},\,\mu\in\mb{R}}~ \mu \\[0.1cm]
	&\,\text{s.t.} ~\ones[\{b+x\ge1\}]w(b+x)
	- \mu \ones[\{a+x\ge 1\}]w(a+x)+\\
	&\quad~~+\lambda[af(a+x)w(a+x)-bf(a+x+1)w(a+x+1)]
	\le 0\qquad \forall (a,x,b)\in\Ir\,.
	\end{split}
\]
The previous program is non linear, but the decision variables $\lambda$ and $f$ always appear multiplied together. Thus, we define $\tilde f(j)\coloneqq\lambda f(j)$ for all $j\in \intwithzero{n+1}$ and observe that the constraint obtained in \eqref{eq:generalbound} for $(a,x,b)=(0,0,1)$ gives $
\tilde f(1)=\lambda f(1)\ge 1$, which also implies $\lambda \ge 1/f(1)>0$ since $f(1)>0$ (by assumption of $f\in\mc{F}$). Folding the $\min$ operators gives
\be
	\begin{split}
	&\tilde{f}^\star \in \argmin_{\substack{\tilde f\in\mb{R}^n_{\ge0} \\ \tilde f(1)\ge 1},\,\mu\in\mb{R}}~ \mu \\[0.1cm]
	&\,\text{s.t.} ~~\ones[\{b+x\ge1\}]w(b+x)
	- \mu \ones[\{a+x\ge 1\}]w(a+x)+\\
	&\quad~~~+a\tilde f(a+x)w(a+x)-b\tilde f(a+x+1)w(a+x+1)
	\le 0\qquad \forall (a,x,b)\in\Ir\,.
	\label{eq:generalbounddualproof}
	\end{split}
\ee
Finally, observe that $\tilde f^\star$ is feasible for the original program, since $\tilde f^\star \in\mc{F}$.
Additionally, we note that $\tilde f^\star$ and $f^\star$ give the same price of anarchy (since $\tilde f(j)=\lambda f(j)$, $\lambda>0$ and the equilibrium conditions are invariant to rescaling). Thus $\tilde{f}^\star$ solving \eqref{eq:generalbounddualproof} must be optimal. The optimal price of anarchy value follows. 
\end{proof}
\begin{lemma}
\label{lem:finite}
The supremum $\sup_{f\in\mc{F}} \poa(f)$ is attained in $\mc{F}$.	
\end{lemma}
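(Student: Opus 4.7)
The plan is to exploit the LP reformulation of \cref{thm:dualpoa} together with the scale invariance of the price of anarchy. I first observe that, for any $\alpha>0$, rescaling $f\mapsto\alpha f$ does not alter the equilibrium conditions induced by the utilities in \eqref{eq:utilities}, and therefore $\poa(\alpha f)=\poa(f)$ whenever both $\alpha f$ and $f$ lie in $\mc{F}$. By \cref{thm:dualpoa} I can then rewrite
\[
\sup_{f\in\mc{F}}\poa(f)=\frac{1}{\inf_{f\in\mc{F}}W^\star(f)},
\]
where $W^\star(f)$ denotes the value of the minimization LP \eqref{eq:generalbound}. Attainment of the supremum therefore reduces to attainment of the infimum on the right.

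Next I would fold the two nested infima into a single minimization over the joint variables $(f,\lambda,\mu)$. The resulting constraint set is bilinear because of the terms $\lambda f(\cdot)$, so---mimicking the substitution already used in the proof of \cref{thm:optimizepoa}---I would introduce $\tilde f(j):=\lambda f(j)$. This linearizes the problem and produces exactly the LP \eqref{eq:optf} in the $n+1$ real unknowns $(\tilde f,\mu)$ with the $|\Ir|\sim\mc{O}(n^2)$ linear constraints of \cref{thm:optimizepoa}. The substitution is invertible on the feasible set, since the constraint indexed by $(a,x,b)=(0,0,1)$ forces $\tilde f(1)\ge 1>0$, so $\lambda$ and $f$ are uniquely recovered from $\tilde f$.

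I would then invoke the fundamental theorem of linear programming. The LP is easily seen to be feasible (for instance, by taking all entries of $\tilde f$ large enough to dominate the finitely many $w$-terms and then choosing a sufficiently large $\mu$), while the constraint indexed by $(a,x,b)=(0,1,0)$ reduces to $w(1)(1-\mu)\le 0$, forcing $\mu\ge 1$ and so a finite infimum. Consequently the minimum is attained at some $(\tilde f^\star,\mu^\star)$. Setting $\lambda^\star:=\tilde f^\star(1)$ and $f^\star(j):=\tilde f^\star(j)/\lambda^\star$ gives $f^\star\in\mc{F}$; undoing the substitution shows that $(\lambda^\star,\mu^\star)$ is feasible for \eqref{eq:generalbound} at $f^\star$, whereas every feasible triple $(f,\lambda,\mu)$ for \eqref{eq:generalbound} maps to a feasible $(\lambda f,\mu)$ for \eqref{eq:optf}. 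These two inclusions yield $W^\star(f^\star)=\mu^\star=\inf_{f\in\mc{F}}W^\star(f)$, and hence $\poa(f^\star)=\sup_{f\in\mc{F}}\poa(f)$.

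The main obstacle will not be any single estimate but the bookkeeping around scale invariance: $\mc{F}$ is closed yet unbounded, and $W^\star(\cdot)$ is not coercive on $\mc{F}$, so a naive compactness argument on $\mc{F}$ itself fails. The substitution $\tilde f=\lambda f$ is precisely what converts the otherwise bilinear problem into a finite-dimensional linear program, after which existence of a minimizer follows from standard polyhedral theory.
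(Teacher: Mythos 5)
Your proposal is correct, but it proves attainment by a genuinely different route than the paper. The paper's own proof never touches the design LP: for a fixed $f\in\mc{F}$ it perturbs a single component to a large value $M$, uses the dual constraints of \cref{thm:dualpoa} (namely $\lambda\ge 1/f_M(1)$ and $\mu\ge\lambda jf_M(j)=\lambda jM$) to force $\poa(f_M)\le f_M(1)/(jM)\to 0$, and uses compactness of the primal feasible set in \cref{thm:primalpoa} to show $\poa(f)>0$; hence any distribution with a huge component is strictly dominated and the search can be confined to a bounded box. You instead fold $\inf_{f\in\mc{F}}W^\star(f)$ into the single polyhedral program \eqref{eq:optf} via $\tilde f=\lambda f$, check feasibility and the lower bound $\mu\ge 1$ from the $(0,1,0)$ constraint, and read off attainment from the fundamental theorem of linear programming, then transfer the minimizer back through the two feasibility maps between \eqref{eq:generalbound} and \eqref{eq:optf}; this is not circular, since it only relies on \cref{thm:dualpoa}, and as a by-product it re-derives the substance of \cref{thm:optimizepoa} (including the explicit optimal $f^\star$), whereas the paper's argument is more elementary and self-contained but only localizes the supremum to a compact set, leaving the final attainment step implicit. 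Two small points in your write-up are loose but harmless: the pair $(\lambda,f)$ is not \emph{uniquely} recovered from $\tilde f$ (only up to the scaling you fix by normalizing $f^\star(1)=1$, or one can simply take $f^\star=\tilde f^\star$ with $\lambda=1$), and in the feasibility construction only $\tilde f(1)$ needs to be taken large (to handle the $(0,0,b)$ constraints, which do not involve $\mu$), with $\mu$ chosen afterwards to absorb everything else.
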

\begin{proof}
Recall that $\mc{F}$ is defined as follows 
\[
\mc{F}\coloneqq\{f:[n]\rightarrow \mb{R}_{\ge0}~\text{s.t.}~f(1)\ge1,~f(j)\ge 0~\forall j\in[n]\}.
\]
To conclude, we show that any distribution $f^\star$ achieving a performance equal to \[\sup_{f\in\mc{F}}\poa(f)\] is bounded (i.e., all the components are bounded), so that it must be $f^\star\in\mc{F}$. To do so, consider a fixed distribution $f\in\mc{F}$, and construct from it $f_M$. The distribution $f_M$ is defined as follows: $f_M (j)=M$, with $M\in\mb{R}_{\ge0}$ for some fixed $j \in [n]$, while it exactly matches $f$ for the remaining components. In the following we show that there exists $\hat M\in\mb{R}_{\ge0}$ such that $\poa(f_M)<\poa(f)$ for all $M\ge \hat M$. Thus $f_M$ can not attain $\sup_{f\in\mc{F}}\poa(f)$ for $M\ge\hat M$ as the corresponding $f$ would give a better price of anarchy. Repeating this reasoning for any $f\in\mc{F}$, one concludes that the distribution rule achieving $\sup_{f\in\mc{F}}\poa(f)$ must be bounded along the $j$-th component. Repeating the reasoning for all possible $j\in[n]$, one obtains the claim.

To conclude we are left to show that $\exists \hat M\in\mb{R}_{\ge0}$ such that $\poa(f_M)<\poa(f)$ for all $M\ge \hat M$. To do so, observe that the price of anarchy of $f\in\mc{F}$ can be computed as $\poa(f)=1/W^\star$, where $W^\star$ is the solution to the primal problem in \eqref{eq:primalvalue}. Since the decision variables of \eqref{eq:primalvalue} live in a compact space (and the primal is feasible, see the footnote in the proof of \cref{thm:dualpoa}), we have $W^\star <+\infty$ and so $\poa(f)>0$, i.e., $\poa(f)$ is bounded away from zero. On the other hand, thanks to \cref{thm:dualpoa}, the price of anarchy of $f_M$ can be computed for any $M$ as $\poa(f_M)=1/W_M^\star$, where 
\[
\begin{split}
W_M^\star &= \min_{\lambda\in\mb{R}_{\ge0},\,\mu\in\mb{R}}~ \mu \\[0.1cm]
&\,\text{s.t.} ~~\ones[\{b+x\ge1\}]w(b+x)
- \mu \ones[\{a+x\ge 1\}]w(a+x)+\\
&\quad~~~+\lambda[af_M(a+x)w(a+x)-bf_M(a+x+1)w(a+x+1)]
\le 0\qquad \forall (a,x,b)\in\Ir
\end{split}
\]
First, observe that for any feasible $\lambda$, it must be ${\lambda\ge \frac{1}{f_M(1)}}$, else the constraints obtained form the previous linear program with $a=x=0$, $b=1$ would be infeasible. Further, consider the constraints with $b=0$, $x=0$, $a=j\ge1$. They amount to
\[
\mu\ge \lambda j f_M(j)\ge \frac{j}{f_M(1)} f_M(j) = \frac{jM}{f_M(1)}\,,
\]
where $f_M(1)>0$ by \hyperlink{sas}{Standing Assumptions} and $f_M\in\mc{F}$.
It follows that 
\[
\poa(f_M)=\frac{1}{W_M^\star} \le \frac{f_M(1)}{jM}\,.
\]
Thus, it is possible to make $\poa(f_M)$ arbitrarily close to zero, by selecting $M$ sufficiently large.
It follows that $\exists \hat M\in\mb{R}_{\ge0}$ such that 
$\poa(f_M)<\poa(f)$ for all $M\ge \hat M$, since $\poa(f)$ is bounded away from zero, as previously argued. This concludes the proof.
\end{proof}

\chapter[Submodular, supermodular, covering problems]{Submodular, supermodular,\\ covering problems}
\label{ch:p2subsupercov}
In the previous chapter we have addressed the problem of characterizing and optimizing the price of anarchy as a function of the chosen utilities. In this chapter we specialize the general result of \cref{thm:dualpoa,,thm:optimizepoa} to the case when $W$ is monotone submodular, supermodular, or a coverage function. We show how previously fragmented results from other authors can now be obtained as special case of the more general \cref{thm:dualpoa}. 

Relative to the submodular case, in \cref{sec:submod} we give an explicit expression for the price of anarchy (\cref{thm:wconcavefwdecreas}), and apply the result to obtain the efficiency of the Shapley value and marginal contribution distribution rule (\cref{cor:SVandMC}). This is, to the best of our knowledge, the first exact characterization of the price of anarchy in the submodular settings, and the first exact characterization of the performance associated to the Shapley value and marginal contribution distribution rule. Additionally, we show how the distribution rule designed maximizing the price of anarchy outperforms the very recent $1-c/e$ approximation of \cite{sviridenko2017optimal}, relative to submodular maximization problems.

In \cref{sec:cover} we consider the special case of \ac{mmc} problems (see \cref{sec:problemstatement} for their definition) and obtain a tight expression (\cref{thm:poageneralcovering}) for the price of anarchy relying solely on the \hyperlink{sas}{Standing Assumptions}. We further show how the expression subsumes previous results obtained under the additional assumptions therein required (\cref{cor:backtogair}). The distribution rule designed to maximize the price of anarchy achieves a $1-1/e$ approximation.

In \cref{sec:supermod} we consider the case when $W$ is supermodular and obtain an explicit expression for the price of anarchy (\cref{thm:convex}), extending previous results. Finally, we show that the Shapley value distribution rule is optimal, but observe that the utility design approach provides very poor approximation guarantees limitedly to this case.

Throughout this chapter we assume that the \hyperlink{sas}{Standing Assumptions} introduced in \cref{ch:p2utilitydesign} continue to hold.
All the proofs are reported in the Appendix (\cref{sec:proofsp2-part2}). The results presented in this chapter have been published in \cite{part1Paccagnan2018,,part2Paccagnan2018}.
\section{The case of submodular welfare function}
\label{sec:submod}
In this section we focus on the case when the welfare basis function $w$ is non-decreasing and concave (in the discrete sense). This results in the welfare function in \eqref{eq:welfaredef} being monotone submodular. Submodular functions model problems with diminishing returns and are used to describe a wide range of engineering applications such as satellite assignment problems \cite{qu2015distributed}, Adwords for e-commerce \cite{devanur2012online}, and combinatorial auctions \cite{lehmann2006combinatorial}, among others. For the considered class of problems, we show (\cref{thm:wconcavefwdecreas}) that characterizing the price of anarchy reduces to computing the maximum between $n(n+1)/2\sim\mc{O}(n^2)$ numbers. Using this result, we give an explicit expression of the price of anarchy for the well known Shapley value and marginal contribution distribution rule (\cref{cor:SVandMC}). We then show how to design $f$ so as to maximize the performance measured by $\poa(f)$. 
Finally, we compare our performance certificates with existing approximation results.

We begin by formally introducing two distribution rules that have attracted the researchers' attention due to their simple interpretation and to their special properties: the Shapley value distribution rule and the marginal contribution distribution rule \cite{von2013optimal}.
\begin{definition}
\label{def:svmc}
	The Shapley value and marginal contribution distribution rules are identified with $\fsv$ an $\fmc$, respectively.
	For $j\in[n]$, they are given by
	\begin{align}
			\fsv(j) &= \frac{1}{j}\,, \label{eq:SVdef} \\
			\fmc(j) &= 1-\frac{w(j-1)}{w(j)} \label{eq:MCdef}\,.
	\end{align}		
\end{definition}
Observe that the Shapley value distribution rule is the only distribution rule for which
the sum of all the players utility exactly matches the total welfare. 
The marginal contribution distribution rule takes its name from the observation that \eqref{eq:utilities} reduces to
\[
\begin{split}
 u_i(a) & =\sum_{r\in a_i}v_r w(|a|_r)\fmc(|a|_r) \\
 &= \sum_{r\in a_i}v_r (w(|a|_r)-w(|a|_r-1))
 = W(a)-W(\emptyset,a_{-i})\,,
\end{split}
 \]
i.e., player's $i$ utility function represent its marginal contribution to the total welfare, that is the difference between $W(a)$ and the welfare generated when player $i$ is removed from the game.

\begin{assumption}
\label{ass:sub}
Throughout this section we assume that the function $w$ is non-decreasing and concave, in the following sense 
\[
\begin{split}
w(j+1)&\ge w(j)\,,\\
w(j+1)-w(j)&\le w(j)-w(j-1)\quad \forall j\in [n-1]\,.
\end{split}
\]	
Further we assume that $w(1)=1$.
\end{assumption}
The requirement $w(1)=1$ is without loss of generality. Indeed, If $w(1)\neq 1$, it is possible to normalize its value and reduce to the case $w(1)=1$ (since $w(1)>0$ by \hyperlink{sas}{Standing Assumptions}).
As a consequence of \cref{ass:sub}, the function $W(a)$ is monotone and submodular, i.e., it satisfies the following:\\[0.3cm]
\emph{Monotonicity:}
\[
\forall ~ a,b\in\mc{A} \text{~s.t.~} a_i\subseteq b_i ~ \forall i\in\N \implies W(a)\le W(b)\,.
\]     
\emph{Submodularity:}
\[
	\begin{split}
	&\forall ~ a,b\in\mc{A} ~\text{~s.t.~} a_i\subseteq b_i ~ \forall i\in\N,\\
	&\forall~c \in {2^{\mc{R} n}} \text{~s.t.~} a_i' \coloneqq a_i\cup c_i\in\mc{A}_i,~ b_i' \coloneqq b_i\cup c_i \in\mc{A}_i~ \forall i\in\N,\\
	&\qquad\quad \implies W(a')-W(a)\ge W(b')- W(b)\,.
	\end{split}
\]

While \vref{thm:dualpoa} gives a general answer on how to determine the price of anarchy, it is possible to exploit the additional properties given by \cref{ass:sub} to obtain an explicit expression of $\poa(f)$. 
\begin{theorem}[$\poa$ for submodular welfare]
\label{thm:wconcavefwdecreas}
Consider $f$ a distribution rule such that $f(j)w(j)$ is non increasing and $f(j)\ge \fmc(j)$ for all $j\in[n]$. Then, $\poa(f)=1/W^\star$,
\be
\label{eq:poasubmodular}
	\begin{split}
	W^\star = \max_{l\le j \in [n]}\biggl\{&
	\frac{w(l)}{w(j)}+\min(j,n-l)f(j)-\min(l,n-j)f(j+1)\frac{w(j+1)}{w(j)}\biggl
	\},
	\end{split}
\ee
or equivalently
\be
	\label{eq:poasubmodularlp}
	\begin{split}
	W^\star=&\min_{\mu\in\mb{R}} \mu  \\[0.1cm]
	&\,\text{s.t.}~ \,\mu w(j)\ge w(l)+j f(j) w(j) - l f(j + 1) w(j + 1)\\
	&~\hspace*{25mm} \forall j,l\in\intwithzero{n}~{s.t.}~ j\ge l ~~\text{and}~~ 1\le j+l\le n,\\[0.15cm]
	& \qquad \mu w(j) \ge  w(l) + (n - l) f(j) w(j) - (n - j) f(j + 1) w(j + 1)\\
	&~\hspace*{25mm} \forall j,l\in\intwithzero{n}~{s.t.}~ j\ge l ~~\text{and}~~~~~~~ j+l\ge n\,.
	\end{split}
\ee
\end{theorem}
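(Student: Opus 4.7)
The strategy is to leverage the LP characterization of the price of anarchy given in \cref{cor:fwnonincreasingpoadual}: since $f(j)w(j)$ is non-increasing by hypothesis, part 1 of that corollary applies and expresses $\poa(f) = 1/W^\star$ where $W^\star$ equals the minimum of $\mu$ over $(\lambda,\mu)\in\mb{R}_{\ge0}\times\mb{R}$ satisfying two families of linear constraints indexed by $(j,l)\in\intwithzero{n}^2$, one for $j+l\le n$ and one for $j+l>n$. WLOG assume $f(1)=1$ by rescaling, as this does not alter the equilibria. The remaining work then splits into three pieces: (a) show that the optimal dual variable is $\lambda^\star=1$; (b) show that only constraints with $l\le j$ are binding; and (c) rewrite the resulting LP in the compact max form \eqref{eq:poasubmodular}.

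For step (a), the constraint from \eqref{eq:formulacor1} at $(j,l)=(0,1)$ reads $0\ge w(1)-\lambda f(1)w(1)=1-\lambda$, forcing $\lambda\ge1$. Since $\mu^\star(\lambda)$ is convex as the pointwise maximum of affine functions of $\lambda$, it suffices to prove that $\mu^\star$ is non-decreasing on $[1,\infty)$. The constraint at $(j,l)=(1,0)$ reads $\mu\ge\lambda f(1)=\lambda$: it is active at $\lambda=1$ and has strictly positive slope, preventing $\mu^\star$ from decreasing past $1$. A careful case analysis — combining non-increasingness of $f(j)w(j)$ with $f(j)\ge f_{\textup{MC}}(j)$ and concavity of $w$ — shows that no negative-slope constraint can override this, yielding $\lambda^\star=1$.

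For step (b), substituting $\lambda=1$ into the LP of \cref{cor:fwnonincreasingpoadual} part 1 produces exactly the constraints of \eqref{eq:poasubmodularlp} provided that pairs with $l>j$ are dominated. Using concavity of $w$ together with $f(j)\ge f_{\textup{MC}}(j)$, any constraint at $(j,l)$ with $l>j$ is implied by a constraint at some $(j',l')$ with $l'\le j'$, corresponding intuitively to swapping the roles of equilibrium and optimal allocation and applying the marginal-contribution bound. For step (c), observe that $W^\star=\max_{(j,l)}\,\text{RHS}/w(j)$, and the two cases $j+l\le n$ and $j+l>n$ consolidate into a single expression via the identities $\min(j,n-l)=j$, $\min(l,n-j)=l$ on the first region and $\min(j,n-l)=n-l$, $\min(l,n-j)=n-j$ on the second, both yielding $w(l)/w(j)+\min(j,n-l)f(j)-\min(l,n-j)f(j+1)w(j+1)/w(j)$, which establishes \eqref{eq:poasubmodular}.

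The main obstacle is step (a), i.e., pinning down $\lambda^\star=1$. One cannot simply invoke \cref{cor:fwnonincreasingpoadual} part 2, because under concavity of $w$ and $w(1)=1$ one has $\max_l w(l)/l=1$ and $\min_l l/w(l)=1$, so the corollary's hypothesis collapses to $f(j)\ge 1/j$, which is strictly stronger than the theorem's assumption $f(j)\ge f_{\textup{MC}}(j)$ (indeed $f_{\textup{MC}}(j)\le 1/j$ by concavity). Hence a tailored argument, tracking which constraints remain active and exploiting the marginal-contribution bound together with concavity, is needed to control the slope of $\mu^\star(\lambda)$ at $\lambda=1$. The reduction in step (b) from all $(j,l)$ to $l\le j$ is a secondary but non-trivial use of concavity.
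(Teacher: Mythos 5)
Your overall route coincides with the paper's: invoke part 1 of \cref{cor:fwnonincreasingpoadual} (legitimate since $f(j)w(j)$ is non increasing), show that the off-diagonal constraints $l>j$ can be discarded, show $\lambda^\star=1$, and then consolidate the two index regions via $\min(j,n-l)$, $\min(l,n-j)$ exactly as in \eqref{eq:poasubmodular}. Your observation that part 2 of the corollary is unavailable (since concavity gives $\fmc(j)\le 1/j$, so $f\ge\fmc$ is weaker than $f\ge 1/j$) is correct and matches why the paper runs a tailored argument. However, there is a genuine gap: the two steps that carry all the mathematical content are asserted, not proved. The elimination of constraints with $l>j$ and the control of the $\lambda$-slopes are precisely where concavity of $w$ and the bound $f(j+1)w(j+1)\ge w(j+1)-w(j)$ must be used, and your proposal replaces them with ``a careful case analysis shows'' and ``is implied by a constraint with $l'\le j'$''. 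Concretely, what has to be verified is: (i) for $l>j$ and $1\le j+l\le n$, $w(l)-w(j)+\lambda(j-l)f(j+1)w(j+1)\le 0$ for every $\lambda\ge 1$, which follows by bounding $w(l)\le w(j)+\lambda\bigl(w(j+1)-w(j)\bigr)(l-j)$ (concavity plus $\lambda\ge1$) and then using $f(j+1)w(j+1)\ge w(j+1)-w(j)$; (ii) the analogous inequality $w(l)-w(j)+\lambda(j-l)f(j)w(j)\le 0$ on the region $j+l>n$, using $f(j)w(j)\ge w(j)-w(j-1)$; and (iii) for the surviving indices $j\ge l$, $j\ge 1$, the coefficient of $\lambda$ is nonnegative, e.g.\ $jf(j)w(j)-lf(j+1)w(j+1)\ge (j-l)f(j)w(j)\ge 0$ by non increasingness, which together with the $j=0$ constraints $\lambda\ge w(l)/l$ (most binding at $l=1$, giving $\lambda\ge1$) is what actually forces $\lambda^\star=1$. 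None of these appear in your write-up, so the proof is a plan rather than a proof.

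A second, smaller problem is that the one explicit argument you do give for $\lambda^\star=1$ is not sound: you claim the constraint at $(j,l)=(1,0)$, namely $\mu\ge\lambda$, ``is active at $\lambda=1$''. In general $\mu^\star(1)=W^\star>1$ (already for $n\ge2$ with, say, $f=\fsv$), so that constraint need not be active, and its mere presence does not prevent $\mu^\star(\lambda)$ from decreasing near $\lambda=1$; ruling that out requires exactly the sign analysis in (i)--(iii) above. Note also that your ordering (first $\lambda^\star=1$, then discard $l>j$) cannot be completed independently of the domination step: to argue monotonicity of $\mu^\star$ you must control the negative-slope constraints, which are precisely the $l>j$ ones, so the two steps collapse into the same pair of inequality chains that the paper carries out for arbitrary $\lambda\ge1$ before setting $\lambda^\star=1$. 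Finally, be careful with ``WLOG $f(1)=1$ by rescaling'': rescaling $f$ leaves $\poa(f)$ unchanged but changes the value of the expression in \eqref{eq:poasubmodular} and can destroy the hypothesis $f\ge\fmc$, so this normalization should be justified (or simply avoided by tracking $f(1)$), rather than invoked as harmless.
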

The proof amounts to showing that $\lambda$ appearing in \vref{cor:fwnonincreasingpoadual} can be computed a priori, and takes the value $\lambda^\star=1$.
The requirements on $f(j)w(j)$ being non increasing and $f(j)\ge\fmc(j)$ might seem restrictive at first. Nevertheless, similar assumptions where made in \cite{marden2014generalized,gairing2009covering}  relative to a simpler class of problems.
Additionally, the Shapley value and marginal contribution distribution rules (and many others) satisfy these assumptions. Thus, a direct application of  \cref{thm:wconcavefwdecreas} returns the exact price of anarchy of $\fsv$ and $\fmc$, as detailed next.
\begin{corollary}[Tight $\poa$ for $\fsv$ and $\fmc$]
\label{cor:SVandMC} 
\begin{enumerate}
\item[]
	\item The $\poa$ for the Shapley value distribution rule is $\poa({\fsv})=1/W_{\rm SV}^\star$, where
\be
\label{eq:poafsv}
	W^\star_{\rm SV} = \max_{l\le j \in [n]} \biggl\{ 
	\frac{w(l)}{w(j)}+\min(j,n-l)\frac{1}{j}
	-\min(l,n-j)\frac{w(j+1)}{(j+1)w(j)}\biggl
	\}.	
\ee
\item
The $\poa$ for the marginal contribution distribution rule is $\poa({\fmc})=1/W^\star_{\rm MC}$, where 
\be
\label{eq:poafmc}
		W_{\rm MC}^\star = 1 + \max_{j \in [n]}\biggl\{ \frac{1}{w(j)}\min(j,n - j)[2w(j) - w(j - 1) - w(j + 1)] \biggl\}
\ee
\end{enumerate} 
\end{corollary}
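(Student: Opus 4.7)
The plan is to derive both parts of the corollary as direct specializations of Theorem~\ref{thm:wconcavefwdecreas}, so most of the work reduces to (i) verifying the two hypotheses of that theorem for $\fsv$ and $\fmc$ and (ii) simplifying the explicit maximum in~\eqref{eq:poasubmodular} after substitution. The Shapley value case will be essentially a substitution; the marginal contribution case requires an extra optimization step which I identify as the main obstacle.

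First I would check the hypotheses $f(j)w(j)$ non-increasing and $f(j)\ge\fmc(j)$. For $\fmc$, the first condition is immediate since $\fmc(j)w(j)=w(j)-w(j-1)$ is non-increasing by concavity of $w$, and the second is trivial. For $\fsv$, $\fsv(j)w(j)=w(j)/j$ equals the Cesàro average $\tfrac{1}{j}\sum_{k=1}^{j}\bigl(w(k)-w(k-1)\bigr)$ of a non-increasing sequence (using $w(0)=0$), hence is non-increasing in $j$; and $\fsv(j)\ge\fmc(j)$ reduces to $w(j)\ge j\bigl(w(j)-w(j-1)\bigr)$, which follows by bounding each increment $w(k)-w(k-1)$ for $k\le j$ from below by $w(j)-w(j-1)$ and summing from $k=1$ to $j$.

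For part (a), plugging $\fsv(j)=1/j$ and $\fsv(j+1)=1/(j+1)$ into~\eqref{eq:poasubmodular} yields~\eqref{eq:poafsv} verbatim, with no further manipulation. For part (b), substituting $\fmc(j)w(j)=w(j)-w(j-1)$ and $\fmc(j+1)w(j+1)=w(j+1)-w(j)$ into the LP form~\eqref{eq:poasubmodularlp} produces the family of constraints
\[
\mu\,w(j)\ge w(l)+m_1\bigl(w(j)-w(j-1)\bigr)-m_2\bigl(w(j+1)-w(j)\bigr),
\]
where $(m_1,m_2)=(j,l)$ when $j+l\le n$ and $(m_1,m_2)=(n-l,n-j)$ when $j+l\ge n$. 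Specializing to $l=j$ in the appropriate regime (Case~1 if $j\le n-j$, Case~2 if $j\ge n-j$) immediately produces the lower bound $\mu-1\ge\min(j,n-j)\bigl[2w(j)-w(j-1)-w(j+1)\bigr]/w(j)$, hence one direction of~\eqref{eq:poafmc}.

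The main obstacle is the reverse direction: showing that, for each fixed $j$, no choice of $l<j$ yields a tighter constraint than $l=j$. I plan to dispatch this by a telescoping-plus-concavity argument. Writing $w(j)-w(l)=\sum_{k=l+1}^{j}\bigl(w(k)-w(k-1)\bigr)$ and using that the increments $w(k)-w(k-1)$ are non-increasing, I obtain the two inequalities $w(j)-w(l)\ge(j-l)\bigl(w(j)-w(j-1)\bigr)$ and $w(j)-w(l)\ge(j-l)\bigl(w(j+1)-w(j)\bigr)$. Substituting the latter into the Case~1 constraint and the former into the Case~2 constraint shows that in both regimes the right-hand side is maximized at $l=j$ (over $l\le j$ feasible in that regime). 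Combined with the easy lower bound above, this pins down the optimum of~\eqref{eq:poasubmodularlp} to exactly $1+\max_{j\in[n]}\frac{1}{w(j)}\min(j,n-j)\bigl[2w(j)-w(j-1)-w(j+1)\bigr]$, which is~\eqref{eq:poafmc}.
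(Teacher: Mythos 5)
Your proposal is correct and follows essentially the same route as the paper's own proof: verify the two hypotheses of \cref{thm:wconcavefwdecreas} for $\fsv$ and $\fmc$ (non-increasingness of $f(j)w(j)$ and $f(j)\ge\fmc(j)$, both from concavity of $w$), substitute directly for the Shapley value, and for the marginal contribution rule use the telescoping/concavity bounds $w(j)-w(l)\ge (j-l)\bigl(w(j+1)-w(j)\bigr)$ and $w(j)-w(l)\ge (j-l)\bigl(w(j)-w(j-1)\bigr)$ to show that only the diagonal $l=j$ constraints bind, exactly as the paper does. The one fine point — for regime-1 pairs with $2j>n$ the comparison value ``at $l=j$'' carries coefficient $j$ rather than $\min(j,n-j)$, since $l=j$ is infeasible in that regime — is glossed over in the paper's proof in precisely the same way, and is closed by instead comparing such terms to the regime-2 diagonal via the $w(j)-w(j-1)$ bound, so it does not affect the verdict.
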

The previous Corollary shows that the price of anarchy of the Shapley value and marginal contribution distribution rule can be computed as the maximum of $n(n+1)/2$ and $n$ numbers, respectively. 
\begin{remark}[Connection with \cite{marden2014generalized}]
\label{rmk:Rough}
The quantity \eqref{eq:poafsv} can be equivalently written as 
\be
\label{eq:poafsvremanaged}
{W}^\star_{\rm SV} = 1+\max_{l\le j\in [n]}\biggl\{
\frac{w(l)}{w(j)}-\frac{1}{j}[\max\{j+l-n,0\}+\min\{l,n-j\}\beta(j)]
\biggr\}	,
\ee
\[
\text{where}\quad \beta(j)\coloneqq\frac{j}{j+1}\frac{w(j+1)}{w(j)}\,.
\]
The previous expression partially matches the result in \cite[Thm. 6]{marden2014generalized}, where the authors used a different approach to obtain a bound on the price of anarchy for the larger class of coarse correlated equilibria, but limitedly to $\fsv$ and singleton problems. More precisely, \cite[Thm. 6]{marden2014generalized} provides a bound of the price of anarchy relative to $\fsv$, as the minimum between two expression. While their first expression exactly matches \eqref{eq:poafsvremanaged}, the second one is not present here. 
Nevertheless, it is possible to show that such additional expression is redundant, as the first one is always the most constraining.\footnote{This statement is not formally shown here, in the interest of space. Its proof amounts to showing that the second expression appearing in \cite[Thm. 6]{marden2014generalized} is always upper bounded by \eqref{eq:poafsvremanaged}, thanks to the concavity of $w$.}
This allows us to conclude that the bound obtained in \cite[Thm. 6]{marden2014generalized} precisely matches the one in \eqref{eq:poafsvremanaged}. Additionally, since our result is provably tight for the class of Nash equilibria, and the result in  \cite{marden2014generalized} provides a lower bound for \ac{cce}, such bound is tight as well (in the set of \ac{cce}) and the worst performing coarse correlated equilibrium is, simply, a pure Nash equilibrium.
\end{remark}
 
For the submodular welfare case considered here, it is still possible to determine the distribution rule $f^\star$ that maximizes $\poa(f)$ as the solution of a tractable linear program either directly employing the more general result in \vref{thm:optimizepoa}  or using the following linear program derived from \eqref{eq:poasubmodularlp}, which additionally constrains the admissible distributions $f$ to satisfy $f(j)\ge \fmc(j)$ and $f(j)w(j)$ to be non increasing,
\be
	\label{eq:optimalfconcave}
	\begin{split}
	&f^\star \in \argmin_{f\in{\mc{F}_s},\, \mu\in\mb{R}} \mu  \\[0.1cm]
	&\,\text{s.t.}~ \,\mu w(j) \ge  w(l)+j f(j) w(j) - l f(j + 1) w(j + 1)\\
	&~\hspace*{25mm} \forall j,l\in\intwithzero{n}~{s.t.}~ j\ge l ~~\text{and}~~ 1\le j+l\le n,\\[0.15cm]
	& \qquad \mu w(j) \ge  w(l) + (n - l) f(j) w(j) - (n - j) f(j + 1) w(j + 1)\\
	&~\hspace*{25mm} \forall j,l\in\intwithzero{n}~{s.t.}~ j\ge l ~~\text{and}~~~~~~~ j+l\ge n\,,
	\end{split}
\ee
	where 
	$\mc{F}_s = \{f \in\mc{F}\,|\,f(j) \ge \fmc(j),~ f(j+1)w(j+1) \le  f(j)w(j)~\forall j\in[n]\}$. Extensive numerical simulations have shown that both these approaches return the same optimal value, so that the additional constraints $f\in\mc{F}_s$ required in \eqref{eq:optimalfconcave} do not rule out the optimal distribution derived solving the linear program in \vref{thm:optimizepoa}. This statement can be formally proved, for example, by showing that any distribution rule satisfying the \ac{kkt} system of the \ac{lp} in \eqref{eq:optimalfconcave} is also a solution to the \ac{kkt} system of the \ac{lp} in \cref{thm:optimizepoa}. We do not further purse this direction here.

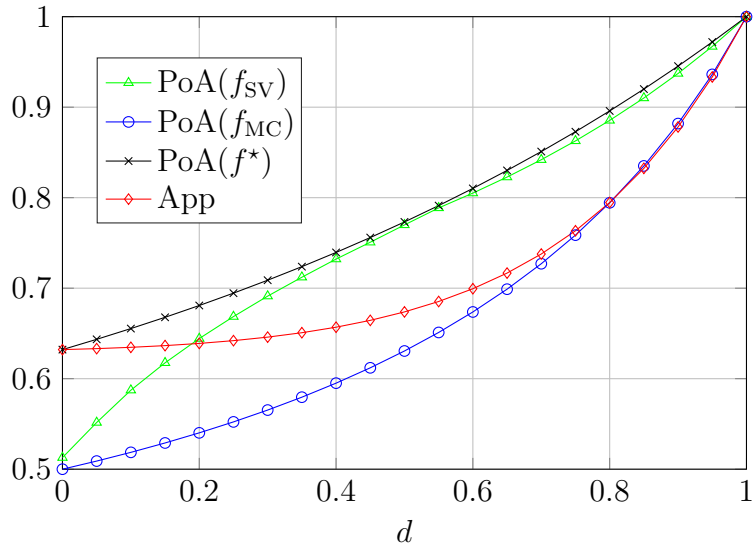
\begin{figure}[ht!] 
\begin{center}
\setlength\figureheight{6cm} 
\setlength\figurewidth{9cm} 
%
%
\begin{tikzpicture}

\begin{axis}[%
width=\figurewidth,
height=\figureheight,,
at={(1.011in,0.642in)},
scale only axis,
xmin=0,
xmax=1,
xlabel={$d$},
ymin=0.5,
ymax=1,
ytick={0.5, 0.6, 0.7, 0.8, 0.9 ,1},
xtick={0, 0.2, 0.4, 0.6, 0.8, 1},
grid=both,
legend style={at={(0.05,0.91)}, anchor=north west, draw=white!15!black},
legend cell align={left}
]

\addplot[color=green,mark=triangle,mark options={solid}]
  table[row sep=crcr]{%
0	0.512820512820513\\
0.05	0.551579892207097\\
0.1	0.587291533788722\\
0.15	0.617541516463445\\
0.2	0.644361679553719\\
0.25	0.668613433410656\\
0.3	0.691233805537062\\
0.35	0.71209950483051\\
0.4	0.732154497890012\\
0.45	0.750757656686259\\
0.5	0.76990681212978\\
0.55	0.788670183689739\\
0.6	0.805061118646457\\
0.65	0.822763630350338\\
0.7	0.841929713797954\\
0.75	0.862735663283182\\
0.8	0.885387134211467\\
0.85	0.910125525400614\\
0.9	0.937236102023765\\
0.95	0.967058439409503\\
1	1\\
};
\addlegendentry{$\poa(\fsv)$};
\addplot [color=blue,mark=o,mark options={solid}]
  table[row sep=crcr]{%
0	0.5\\
0.05	0.508974473014022\\
0.1	0.518611263028954\\
0.15	0.528980031386739\\
0.2	0.540160487999998\\
0.25	0.55224427281837\\
0.3	0.565337276567052\\
0.35	0.579562525251956\\
0.4	0.595063794950633\\
0.45	0.612010182095219\\
0.5	0.630601937481871\\
0.55	0.651077991369887\\
0.6	0.673725770588819\\
0.65	0.698894165659515\\
0.7	0.727010893750765\\
0.75	0.758606100120772\\
0.8	0.794344979667087\\
0.85	0.835073714094604\\
0.9	0.881885528675272\\
0.95	0.936217958793979\\
1	1\\
};
\addlegendentry{$\poa(\fmc)$\hspace*{-1mm}};

\addplot[color=black,solid,mark=x,mark options={solid}]
  table[row sep=crcr]{%
0	0.632120555132191\\
0.05	0.643482913435981\\
0.1	0.655375384527601\\
0.15	0.667825056206542\\
0.2	0.680860546884625\\
0.25	0.694512001740175\\
0.3	0.708811156686717\\
0.35	0.723791477212326\\
0.4	0.739487634182099\\
0.45	0.755936488911006\\
0.5	0.773180596528451\\
0.55	0.791262194488063\\
0.6	0.810212760997551\\
0.65	0.830092814964609\\
0.7	0.850955470654624\\
0.75	0.872854275447214\\
0.8	0.895845961995925\\
0.85	0.919990663553415\\
0.9	0.945352121634846\\
0.95	0.971997956707326\\
1	1\\
};
\addlegendentry{$\poa(f^\star)$};

\addplot[color=red,solid,mark=diamond,mark options={solid}]
  table[row sep=crcr]{%
0      0.632120558828558\\
0.05   0.633215096779674\\
0.10   0.634660102977066\\
0.15   0.636539744118561\\
0.20   0.638956154576726\\
0.25   0.642033032461418\\
0.30   0.645919926631880\\
0.35   0.650797343733750\\
0.40   0.656882827957390\\
0.45   0.664438194090168\\
0.50   0.673778127323984\\
0.55   0.685280402070890\\
0.60   0.699398017781229\\
0.65   0.716673603681631\\
0.70   0.737756507901600\\
0.75   0.763423061338450\\
0.80   0.794600594819830\\
0.85   0.832395892009683\\
0.90   0.878128882826179\\
0.95   0.933372526139581\\
1.00   1.000000000000000\\
};
\addlegendentry{${\rm App}$};

\end{axis}
\end{tikzpicture}%
\caption{Comparison between the  approximation ratio \eqref{eq:approx} and the price of anarchy of the optimal distribution rule $f^\star$ (determined as the solution of the \ac{lp} in \cref{thm:optimizepoa}),
Shapley value $\fsv$ and marginal contribution $\fmc$ distribution rules.
The problems considered features $|N|\le n= 20$ agents and a welfare basis of the form $w(j)=j^d$ with $d\in[0,1]$ represented over the $x$-axis.
} 
\label{fig:comparisonp2}
\end{center}
\end{figure}

\cref{fig:comparisonp2} compares the price of anarchy (and thus the approximation ratio of any algorithm capable of computing a Nash equilibrium) of the Shapley value, marginal contribution and optimal distribution rule $f^\star$, in the case when $w(j)=j^d$ with $d\in[0, 1]$, $|N|\le 20$. They have been computed using respectively \eqref{eq:poafsv}, \eqref{eq:poafmc}, where $f^\star$ has been determined as the solution to the \ac{lp} appearing in \cref{thm:optimizepoa}.
For values of $d\in[0.5, 1]$ the Shapley value distribution rule performs close to the optimal, but its performance degrades for $d\in[0, 0.5]$ and for $d=0$ it reaches the lower bound of $1/2$, as predicted for the class of valid utility games defined in \cite[Thm. 5]{vetta2002nash}. The marginal contribution rule instead, performs the worst amongst the considered distribution rules. While $f^\star$ will always perform better or equal than any other distribution, it is unclear if, and to what extent, $\fsv$ outperforms $\fmc$ in the general settings. The expressions in \eqref{eq:poafsv} and \eqref{eq:poafmc} can nevertheless be used to provide an answer to this question.

\subsection[Improved approximation and comparison with existing result]{Improved approximation and comparison with\\ existing result}
\label{subsec:approxcomparison}
In this section we compare the approximation guarantees offered by the utility design approach with other recent result in the maximization of submodular functions.

A monotone submodular maximization problem is defined as follows. We are given a set $X$, and a collection of subsets $S\subseteq 2^{X}$. Given a monotone and submodular set function $g:2^X\rightarrow \mb{R}_{\ge0}$, the objective is to find a set $s\in S$ maximizing $g$.
 If the collection of subsets $S$ is a matroid, we term the problem a monotone submodular maximization problem subject to matroid constraints. For the latter class of problems, the best approximation ratio achievable in polynomial time has been very recently shown to be \cite{sviridenko2017optimal} 
\be
1-\frac{c}{e}
\label{eq:1-ce}\,,
\ee
where $c$ represents the curvature of the welfare function and $e$ the Euler's number. The curvature is formally defined as \cite{conforti1984submodular}
\be
\label{eq:curvature}
c\coloneqq 1 - \min_{\substack{e\in X\\ g(\{e\})-g(\emptyset) \neq 0}}
\frac{g(X)-g(X\setminus \{e\})}{g(\{e\})-g(\emptyset)}
\ee
 For this class of problems, no polynomial time algorithm can do better than \eqref{eq:1-ce} on all instances, even if the matroid is the uniform matroid, i.e., in the case of cardinality constraints \cite{sviridenko2017optimal}.
The \ac{gmmc} problems studied here differs from the problem of maximizing a submodular function subject to matroid constraints, in that we are given not one, but $n$ collections of sets. Thus, to compare the approximation results, in the following we restrict to \ac{gmmc} problems where $\mc{A}_i=\mc{A}_j=\bar{\mc{A}}\subseteq 2^{\mc{R}}$. We allow for some set to appear multiple times in $\bar{\mc{A}}$ so as to cover the case when different agents select the same set. The objective is to select $n$ subsets from $\bar{\mc{A}}$ so as to maximize $W$ as defined in \eqref{eq:welfaredef}. The problem can be transformed in a monotone submodular maximization problem subject to cardinality constraints. To do so, let us enumerate all the subsets as in $\bar{\mc{A}}=\{A_1,\dots,A_k\}$. We set $X\coloneqq[k]$, $S=2^{[k]}$ and identify with $s=(s_1,\dots,s_l)\in [k]^l$ an element of $S$ (note that $l\le k$). We define $g:2^X \rightarrow  \mb{R}_{\ge 0}$ for any $s\in S$ as
\be
\label{eq:fdefinition}
g(s)\coloneqq \sum_{r\in(\cup _j A_{s_j})} v_r w(|s|_r)\,,
\ee
where $|s|_r=|\{i~\text{s.t.}~r\in A_{s_i}\}|$. Selecting $n$ subsets ($n\le k$) from $\bar{\mc{A}}$  to maximize $W(a)$ is then equivalent to solving 
\be
\label{eq:monotonesub}
\max_{s\in S,\, |s|\le n} g(s)\,.
\ee
The problem in \eqref{eq:monotonesub} belongs to the class of monotone submodular maximization subject to cardinality constraints. Indeed, $g(s)$ is monotone and submodular due to \cref{ass:submodd}. Additionally the number of elements in $s$ is constrained to be less or equal to $n$. Thus, the approximation ratio \eqref{eq:1-ce} holds for \eqref{eq:monotonesub}. The curvature can be determined using \eqref{eq:curvature}, and amounts to $c=1+w(n-1)-w(n)$.\footnote{This value of the curvature constitutes the worst case value amongst all possible \ac{gmmc} problems introduced in \cref{sec:problemstatement}. The reason to consider the worst case curvature over all problem instances (i.e., over all possible choices of $\bar{\mc{A}}$ and $\{v_r\}_{r\in\mc{R}}$) is that we wish to compare the approximation ratio \eqref{eq:1-ce}  with the game theoretic approximation, and the latter gives a certificate over \emph{all possible instances}.} 
In \cref{fig:comparisonp2} we plot the approximation ratio \eqref{eq:1-ce} for the class of problems considered here, with the choice of $w(j)=j^d$, i.e., we plot (red curve) the quantity
\be
{\rm App}=1-\frac{1+w(n-1)-w(n)}{e}\,,
\label{eq:approx}
\ee
for $d\in[0, 1]$.
 We observe that the optimal distribution rule $f^\star$ outperforms \eqref{eq:approx} for different values of $d$, so that, when there exists an algorithm capable of computing a Nash equilibrium in polynomial time (see \cref{prop:poly}), the approach presented here gives improved guarantees compared to \eqref{eq:1-ce}. 
\begin{remark}
It is important to note that this is not in contradiction with the inapproximability result presented in \cite{sviridenko2017optimal}, as we are not solving a general submodular maximization problem, but the welfare function in \eqref{eq:fdefinition} has a \emph{special form}. 	
\end{remark}

\section{Covering problems}
\label{sec:cover}
In this section we specialize the previous results to the case of multiagent weighted maximum coverage (\ac{mmc}) problems introduced in \cref{sec:p2relatedwork}, a generalization of weighted maximum coverage problems. In a \ac{mmc} problem we are given a ground set of elements $\mc{R}$ and $n$ collections of subsets of the ground sets: $\mc{A}_i$ for $i\in \N$.
The goal is to select $n$ subsets, one from each collection, so as to maximize the \emph{total value} of covered elements.
The corresponding welfare is
\[
W(a)=\sum_{r\in\cup_{i\in N} a_i} v_r\,,
\]
which is obtained with the choice of $w(j)=1$ for all $j$ in \eqref{eq:welfaredef} and \eqref{eq:utilities}. \ac{mmc} problems are a subclass of \ac{gmmc} submodular problems (they satisfy \cref{ass:sub}), 
and are used to model engineering problems such as vehicle-target assignment \cite{arslan2007autonomous} and sensor allocation problems \cite{marden2008distributed}. Due to their importance in the applications, we treat their study separately. 

Relative to \ac{mmc} problems, we provide a general expression for the price of anarchy as a function of $f$ (\cref{thm:poageneralcovering}) and show how this reduces to the results obtained in \cite{gairing2009covering, paccagnan2017arxiv}, under the additional assumptions therein required.
 
\begin{theorem}[$\poa$ for multiagent maximum coverage]
\label{thm:poageneralcovering}
Consider \ac{mmc} problems, i.e., fix $w(j)=1$ $\forall j \in [n]$. The price of anarchy is
$\poa(f)=1/ W^\star$ where
\be
\label{eq:Wstarsetcoveringg}
W^\star = 1+\max_{j\in [n-1]}\{(j+1)f(j+1)-1, jf(j)-f(j+1),jf(j+1)\},
\ee
or equivalently
\be
\label{eq:WstarsetcoveringgLP}
\begin{split}
W^\star =& \min_{\mu\in\mb{R}}\mu\\
&\text{s.t.}~ \mu \ge (j+1)f(j+1)	\\
&\hspace*{7mm}\mu \ge 1 + jf(j)- f(j+1)\\
&\hspace*{7mm}\mu \ge 1+jf(j+1)\qquad\qquad\qquad\forall j\in[n-1].
\end{split}
\ee
\end{theorem}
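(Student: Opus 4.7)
The plan is to specialize the dual formulation of Theorem~\ref{thm:dualpoa} to the case $w\equiv 1$ and then eliminate the multiplier $\lambda$. Since $\poa(f)$ is invariant under positive scaling of $f$, I would first normalize $f(1)=1$ so that the Standing Assumption $f(1)\ge 1$ is tight. Substituting $w\equiv 1$ into \eqref{eq:generalbound}, each constraint indexed by $(a,x,b)\in\Ir$ becomes $\ones[\{a+x\ge 1\}]\mu \ge \ones[\{b+x\ge 1\}] + \lambda[af(a+x)-bf(a+x+1)]$. The indices with $a+x=0$ force $a=x=0$ and $b\ge 1$, and the tightest such condition $\lambda f(1)\ge 1$ yields the lower bound $\lambda\ge 1$.

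I would then argue that $\lambda^\star=1$ is optimal. On every index with $a+x\ge 1$, the slope in $\lambda$ of the right-hand side equals $af(a+x)-bf(a+x+1)$; whenever this slope is negative the constraint is weaker than $\mu\ge 1$, which is itself implied (for instance by $(a,x,b)=(1,0,0)$, which yields $\mu\ge f(1)=1$), and hence redundant. Consequently the effective $\mu^\star(\lambda)$ is non-decreasing in $\lambda$, so the minimum is attained at the smallest admissible value, $\lambda=1$. With $\lambda=1$ fixed, the bulk of the work is to show that the $\mathcal{O}(n^2)$ indices in $\Ir$ collapse to three binding families: $(a,x,b)=(j,0,0)$ gives $\mu\ge jf(j)$ (family~1, after relabelling $k=j-1$); $(a,x,b)=(j,0,1)$ gives $\mu\ge 1+jf(j)-f(j+1)$ (family~2); and $(a,x,b)=(j,1,0)$ gives $\mu\ge 1+jf(j+1)$ (family~3).

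The redundancy of every other index is established by inspecting the four faces of $\Ir$ and exploiting $f\ge 0$. On $\{x=0\}$ the right-hand side is decreasing in $b$, so $b\in\{0,1\}$ dominate and produce families~1 and~2; on $\{b=0\}$ the right-hand side is decreasing in $x$, so $x\in\{0,1\}$ dominate and produce families~1 and~3; on $\{a=0\}$ the constraint reduces to $\mu\ge 1-bf(x+1)\le 1$ and is implied by the trivial bound $\mu\ge 1$; on $\{a+x+b=n\}$ with $j=a+x<n$, the tightest instance is $1+jf(j)-(n-j)f(j+1)$, which is dominated by family~2 since $n-j\ge 1$ and $f(j+1)\ge 0$. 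The equivalence between the explicit formula \eqref{eq:Wstarsetcoveringg} and the LP \eqref{eq:WstarsetcoveringgLP} is then immediate because $\min\mu$ subject to a finite list of lower bounds equals the maximum of those bounds, once the common constant $1$ is factored out of families~2 and~3. The main obstacle is the face-by-face bookkeeping: each sub-case is elementary, but one must carefully track indices lying on multiple faces of $\Ir$ and verify that the trivial bound $\mu\ge 1$ truly subsumes the vanishing face $\{a=0\}$ so that no binding instance is overlooked.
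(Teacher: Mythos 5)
Your route is essentially the paper's own: specialize the dual LP of \cref{thm:dualpoa} to $w\equiv 1$, observe that the indices with $a+x=0$ force $\lambda\ge 1$, argue that the binding constraints have non-negative slope in $\lambda$ so that $\lambda^\star=1$, and then collapse the remaining indices of $\Ir$ to the three families of \eqref{eq:WstarsetcoveringgLP}. Your treatment of the face $\{x=0\}$ (the right-hand side is genuinely decreasing in $b$ since $b$ does not enter the argument of $f$), of the face $\{a=0\}$ (right-hand side at most $1$, implied by $\mu\ge\lambda f(1)\ge 1$), and of the face $\{a+x+b=n\}$ (dominated by family~2 because $n-j\ge 1$ and $f\ge 0$) all check out and mirror the paper's case split.

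The one step whose justification fails as written is the face $\{b=0\}$ with $x\ge 1$. You claim the right-hand side is decreasing in $x$, but at fixed $a$ it equals $1+\lambda\,a f(a+x)$, and since this theorem (unlike \cref{cor:backtogair}) makes no monotonicity assumption on $f$, the value $f(a+x)$ may well increase with $x$: if $f(2)$ is small and $f(3)$ is large, the index $(a,x,b)=(1,2,0)$ is strictly tighter than $(1,1,0)$. The conclusion that only $x\in\{0,1\}$ matter is still correct, but the dominance has to be taken at fixed $j=a+x$ rather than at fixed $a$: rewriting the constraint as $\mu\ge 1+\lambda (j-x)f(j)$, it is decreasing in $x$ for fixed $j$, so the tightest index is $(j-1,1,0)$, giving $\mu\ge 1+\lambda(j-1)f(j)$ — exactly the change of variables the paper performs. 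With that one-line repair (and a sentence noting that the discarded $j=1$ instance of family~1, namely $\mu\ge f(1)=1$, is implied by family~3 since $f\ge 0$, which your final relabelling glosses over), your argument is complete and coincides with the paper's proof.
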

The previous theorem gives a simple and explicit way to compute the price of anarchy \eqref{eq:poadef} as the maximum between $3(n-1)$ numbers. %
Observe that no assumptions are required other than the \hyperlink{sas}{Standing Assumptions}. \cref{thm:poageneralcovering} thus extends the previous bounds derived in \cite{gairing2009covering, paccagnan2017arxiv}. In the latter works, the authors required the distribution rules to be non increasing and sub budget-balanced\mbox{, i.e., $jf(j)\le 1$ for all $j\in [n]$.}

The next corollary shows how the result in the previous theorem matches the results in \cite{gairing2009covering, paccagnan2017arxiv}, simply requiring $f$ to be non increasing (this is a less restrictive assumption than what asked for in \cite{gairing2009covering, paccagnan2017arxiv}).

\begin{corollary}
\label{cor:backtogair}
Consider $f$ a non increasing distribution rule.
The value of \eqref{eq:Wstarsetcoveringg} is given by
\be
\label{eq:Wstarcoveringgair}
W^\star = 1 +\max_{j\in [n-1]}\{jf(j)-f(j+1),(n-1)f(n)\}\,.
\ee	
\end{corollary}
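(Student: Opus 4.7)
The plan is to work directly from Theorem~\ref{thm:poageneralcovering} and show that, under non-increasingness, the three-family max collapses to a two-family max by exhibiting, for each of the "extra" terms $(j+1)f(j+1)-1$ and $jf(j+1)$, a dominating term that already appears in \eqref{eq:Wstarcoveringgair}. Since \eqref{eq:Wstarcoveringgair} is obtained by discarding terms from \eqref{eq:Wstarsetcoveringg}, one inequality ($W^\star$ of \eqref{eq:Wstarsetcoveringg} $\geq$ RHS of \eqref{eq:Wstarcoveringgair}) is immediate; all the work goes into the reverse inequality.

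Before carrying out the case analysis, I would invoke the standard normalization $f(1)=1$ (always available since rescaling the values $\{v_r\}$ leaves the equilibrium set unchanged and merely rescales $W$), so that non-increasingness gives $f(j)\leq 1$ for every $j\in[n]$. This little reduction is the real obstacle: without it one can exhibit non-increasing $f$ (e.g.\ $f=(3,2)$ with $n=2$) for which $(j+1)f(j+1)-1$ exceeds every candidate in the reduced list, and the corollary fails. Flagging this normalization is, I believe, the one delicate point; once it is in place, everything else is bookkeeping.

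With $f(j)\leq 1$ in hand, I would split the domination check into four subcases. For the family $(j+1)f(j+1)-1$: when $j=n-1$, the term is $nf(n)-1=(n-1)f(n)+(f(n)-1)\leq (n-1)f(n)$, which is in \eqref{eq:Wstarcoveringgair}; when $j\in[n-2]$, rewrite the term as $[(j+1)f(j+1)-f(j+2)] + (f(j+2)-1)\leq (j+1)f(j+1)-f(j+2)$, which is the term $j'f(j')-f(j'+1)$ at $j'=j+1\in[n-1]$ and is already present. For the family $jf(j+1)$: when $j=n-1$ the term equals $(n-1)f(n)$ verbatim; when $j\in[n-2]$, write $jf(j+1)=(j+1)f(j+1)-f(j+1)\leq (j+1)f(j+1)-f(j+2)$ using non-increasingness, again reducing to $j'f(j')-f(j'+1)$ at $j'=j+1$.

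Combining the two directions then yields equality of the two max expressions and hence of the two values of $W^\star$. I would close by remarking that this recovers the expressions appearing in \cite{gairing2009covering,paccagnan2017arxiv}, and that the present hypotheses are strictly weaker than the sub budget-balance condition $jf(j)\leq 1$ imposed there, which gives $f(j)\leq 1/j\leq 1$ automatically and therefore fits inside the normalization framework used above.
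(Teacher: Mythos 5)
Your proof is correct and follows essentially the same route as the paper: a term-by-term domination argument showing that, under non-increasingness, the extra terms $(j+1)f(j+1)-1$ and $jf(j+1)$ are each bounded by a term already present in \eqref{eq:Wstarcoveringgair} (the paper chains $(j+1)f(j+1)\le 1+jf(j+1)$ and $1+jf(j)-f(j+1)\ge 1+(j-1)f(j)$, whereas you shift the index and compare directly to $(j+1)f(j+1)-f(j+2)$ — the same bookkeeping). Your flag about needing $f(1)=1$ is apt: the paper's step $f(1)+jf(j+1)=1+jf(j+1)$ silently uses exactly the normalization you invoke, consistent with the reduction to $f(1)=1$ stated after the Standing Assumptions.
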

In \cite[Thm. 2]{gairing2009covering} the author provides a bound matching the expression in \eqref{eq:Wstarcoveringgair}. Tightness of the previous bound is shown in \cite[Thm. 1]{paccagnan2017arxiv}. Additionally, \cite[Eq. 5]{gairing2009covering} also determines the distribution rule maximizing the price of anarchy \eqref{eq:Wstarcoveringgair}. The optimal distribution, denoted with $\fgar$, has already been introduced in \eqref{eq:fgar} and is reported in the following for completeness
\be
\fgar(j)=(j-1)!\frac
{\frac{1}{(n-1)(n-1)!} +\sum_{i=j}^{n-1}\frac{1}{i!}}
{\frac{1}{(n-1)(n-1)!} +\sum_{i=1}^{n-1}\frac{1}{i!}},\quad j\in[n]\,.
\label{eq:fstargair}
\ee
In all the above mentioned results the feasible set of distribution rules is limited to $jf(j)\le 1$ and $f$ non increasing. Using the result provided here in \cref{thm:poageneralcovering} it is possible to determine the optimal distribution  without imposing these additional constraints on $f$ by solving the following \ac{lp} derived from \eqref{eq:WstarsetcoveringgLP}
\be
\label{eq:optimalfcovering}
\begin{split}
& \argmin_{f\in\mc{F},\,\mu\in\mb{R}}\mu\\
&\text{s.t.}~ \mu \ge (j+1)f(j+1)	\\
&\hspace*{7mm}\mu \ge 1 + jf(j)- f(j+1)\\
&\hspace*{7mm}\mu \ge 1+jf(j+1)\qquad\qquad\qquad\forall j\in[n-1].
\end{split}
\ee
Numerical simulations have shown that the optimal distribution rule obtained optimizing \eqref{eq:Wstarsetcoveringg} precisely matches the one derived in \cite{gairing2009covering}, so that removing the additional assumption required therein does not improve the best achievable price of anarchy.\footnote{This statement con be proved, by showing that the distribution $\fgar$ solves the \ac{kkt} system of~\eqref{eq:optimalfcovering}.}
\begin{remark}[Matching the $1-1/e$ of \cite{nemhauser1978analysis}]
\label{rmk:1-1/ecomparison}
Relative to \ac{mmc} problems, \cite{gairing2009covering} explicitly determines the value of the price of anarchy for the optimal distribution $\fgar$. It's value amounts to (see \vref{thm:smoothnottight})
\be
\poa(\fgar)=1-\frac{1}{\frac{1}{(n-1)(n-1)!}+\sum_{i=0}^n \frac{1}{i!}}~~\xrightarrow[]{n \to \infty}~ 1-\frac{1}{e}\,.
\ee
This shows that for \ac{mmc} problems (a generalization of weighted maximum coverage problems) one can obtain the same approximation guarantee achievable for weighted maximum coverage problems and first shown in \cite{nemhauser1978analysis}.
\end{remark}

\section{The case of supermodular welfare function}
\label{sec:supermod}
In this section we consider welfare basis functions that are non-decreasing and convex, resulting in a monotone and supermodular total welfare $W(a)$. Applications featuring this property include clustering and image segmentation \cite{stobbe2010efficient}, power allocation in multiuser networks \cite{yassin2017centralized}. 
In the following we explicitly characterize the price of anarchy for the class of supermodular resource allocation problems as a function of $f$ (\cref{thm:convex}), extending \cite{jensen2018, phillips2017design}. Additionally, we show that the Shapley value distribution rule maximizes this measure of efficiency (recovering the result in \cite{jensen2018, phillips2017design}), but \emph{is not the only one}. 
\begin{assumption}
\label{ass:submodd}
Throughout this section we assume that $f(1)=w(1)=1$ and that $w$ is a non-decreasing and convex function, i.e.,
\[
\begin{split}
w(j+1)&\ge w(j)\,,\\
w(j+1)-w(j)&\ge w(j)-w(j-1)\quad \forall j\in [n-1]\,.
\end{split}
\]	
\end{assumption}

\begin{theorem}[$ \poa $ for supermodular welfare]
\label{thm:convex}
Consider a distribution rule $f$ such that $f(j)w(j)\ge 1$ $\forall j \in[n]$. It holds 
\[
\poa{(f)}= \frac{n}{w(n)} \frac{1}{\max_{j\in[n]} j f(j)}\,.
\]
Additionally, $\fsv$ is optimal amongst $f\in\mc{F}$ and achieves
\[
\poa{({\fsv})}= \frac{n}{w(n)}\,.
\]
\end{theorem}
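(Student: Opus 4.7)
The plan is to exploit the dual linear program of Theorem~\ref{thm:dualpoa}, namely $\poa(f)=1/W^\star$ with
\[
W^\star=\min_{\lambda\ge 0,\,\mu\in\mathbb{R}}\mu\quad\text{s.t.}\quad \mathbf{1}_{\{b+x\ge1\}}w(b+x)-\mu\,\mathbf{1}_{\{a+x\ge1\}}w(a+x)+\lambda\bigl[af(a+x)w(a+x)-bf(a+x+1)w(a+x+1)\bigr]\le 0
\]
for all $(a,x,b)\in\mathcal{I}_R$, and produce a closed-form optimal dual pair. My candidate is
\[
\lambda^\star=\frac{w(n)}{n},\qquad \mu^\star=\frac{w(n)}{n}\max_{j\in[n]}jf(j).
\]

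First I would derive the lower bound $W^\star\ge\mu^\star$. Evaluating the dual constraint at $(a,x,b)=(0,0,n)$ and using $f(1)=w(1)=1$ yields $\lambda\ge w(n)/n$, while evaluating at $(j,0,0)$ for $j\in[n]$ gives $\mu\ge\lambda\,jf(j)$. Taking the maximum over $j$ and combining the two bounds shows $\mu\ge\mu^\star$ for any feasible $(\lambda,\mu)$, so $W^\star\ge \mu^\star$.

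The main step, which is the bulk of the work, is to verify that $(\lambda^\star,\mu^\star)$ is feasible for \emph{every} $(a,x,b)\in\mathcal{I}_R$. I will exploit two structural facts: (i) $f(j)w(j)\ge 1$ for every $j\in[n]$ by hypothesis; and (ii) $w$ convex in the discrete sense with $w(0)=0$, $w(1)=1$ implies that $k\mapsto w(k)/k$ is non-decreasing on $[n]$, so $w(k)/k\le w(n)/n$ for all $k\le n$. Writing out the four classes of constraints in $\mathcal{I}_R$ (namely $a=0$, $b=0$, $x=0$, or $a+x+b=n$), each constraint reduces, after bounding $\max_j jf(j)$ below by the local $jf(j)$ that appears, and after using $f(j)w(j)\ge 1$, to the single inequality $\frac{w(n)}{n}\cdot k\ge w(k)$ for some $k\le n$, which is exactly (ii). For instance, for $(0,x,b)$ with $x,b\ge 1$ and $x+b\le n$ the constraint rearranges to
\[
\tfrac{w(n)}{n}\bigl[w(x)\max_j jf(j)+bf(x+1)w(x+1)\bigr]\ge w(x+b),
\]
which by $\max_j jf(j)\ge xf(x)$ and $f(j)w(j)\ge 1$ is implied by $\tfrac{w(n)}{n}(x+b)\ge w(x+b)$; the remaining cases (including $(a,x,0)$, $(a,0,b)$, and $(a,x,b)$ with $a+x+b=n$) follow from the same reduction after the change of variables $j=a+x$, $l=b+x$. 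This case analysis is routine but tedious and constitutes the main obstacle; the key bookkeeping fact is that in every case the ``deficit'' telescopes to $\tfrac{1}{w(a+x)}\bigl[\tfrac{w(n)}{n}l-w(l)\bigr]\ge 0$ for some $l\le n$. Strong duality then gives $W^\star=\mu^\star$, hence $\poa(f)=\frac{n}{w(n)}\cdot\frac{1}{\max_j jf(j)}$.

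For the optimality of $\fsv$, I would first check that $\fsv$ satisfies the hypothesis: $\fsv(j)w(j)=w(j)/j\ge w(1)/1=1$ by the monotonicity of $w(j)/j$ just used, and $j\fsv(j)=1$ gives $\poa(\fsv)=n/w(n)$ by the formula just proved. For the upper bound valid over the \emph{entire} class $\mathcal{F}$, I would not assume $f(j)w(j)\ge1$: combining the dual constraints at $(0,0,n)$ and $(1,0,0)$ directly yields $\mu\ge\lambda f(1)\ge \tfrac{w(n)}{n f(1)}\cdot f(1)=w(n)/n$, so $W^\star\ge w(n)/n$ and thus $\poa(f)\le n/w(n)$ for every $f\in\mathcal{F}$. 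This completes the proof.
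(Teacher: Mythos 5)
Your proposal is correct, and for the main formula it follows essentially the same route as the paper: the paper also works with the dual LP of \cref{thm:dualpoa}, splits the constraints into the cases $a+x=0$ (giving $\lambda\ge w(b)/b$, most binding at $b=n$), $b+x=0$ (giving $\mu\ge\lambda j f(j)$), and the mixed case, and then shows the mixed constraints are implied by the first two using exactly your two structural facts, $f(j)w(j)\ge f(1)w(1)=1$ and $w(k)\le \tfrac{w(n)}{n}k$ from discrete convexity; it then reads off $\lambda^\star=w(n)/n$, $\mu^\star=\lambda^\star\max_j jf(j)$. Your variant (guess the dual pair, verify feasibility, and match it with the lower bound from the constraints at $(0,0,n)$ and $(j,0,0)$) is the same computation packaged as a verification rather than a reduction of the LP, and your worked case and the telescoping identity $\mu^\star w(a+x)+\lambda^\star b f(a+x+1)w(a+x+1)-\lambda^\star af(a+x)w(a+x)\ge \tfrac{w(n)}{n}(x+b)\ge w(x+b)$ is precisely the paper's chain of inequalities. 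The genuine difference is in the optimality of $\fsv$ over all of $\mc{F}$: the paper obtains the universal upper bound $\poa(f)\le n/w(n)$ by citing an explicit worst-case instance constructed in the reference \cite{jensen2018}, whereas you derive it internally by combining the dual constraints at $(0,0,n)$ and $(1,0,0)$ to get $\mu\ge\lambda f(1)\ge w(n)/n$ for every feasible dual point, hence $W^\star\ge w(n)/n$ for every $f\in\mc{F}$. Your route is self-contained within the paper's own LP framework (and needs only $f(1)>0$, not $f(1)=1$), which is arguably cleaner; the paper's citation instead exhibits a concrete game instance attaining the bound, which conveys where the worst case comes from. Both establish the claim, since $\fsv$ attains $n/w(n)$ by the formula already proved.
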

Observe that the Shapley value and all the distribution rules for which $jf(j)\ge 1$ satisfy the conditions of \cref{thm:convex}. Indeed $f(j)w(j) \ge jf(j)\ge 1$ by convexity and \hyperlink{sas}{Standing Assumptions}. 
Further note that the Shapley value distribution rule is \emph{not} the unique maximizer of $\poa(f)$. Indeed, all the distribution rules with $1/w(j)\le f(j)\le 1/j$ are optimal, as the previous theorem applies and they achieve a price of anarchy of $n/w(n)$ since it is $\max_{j\in[n]}jf(j)=1$  (due to $f(1)=1$ and  $f(j)\le 1/j$).
\cref{fig:comparisonsuper} compares the price of anarchy of the Shapley value, marginal contribution and optimal distribution rule, in the case when $w(j)=j^d$ with $d\in[1, 2]$, $|N|\le 20$. First, we observe that any optimal distribution rule and $\fsv$ give the same performance, as predicted from the previous theorem. Additionally, we observe that the quality of the approximation quickly degrades as the welfare basis $w$ gets steeper ($d$ gets larger). This is due to the fact that if $w(n)$ grows much faster than $n$, the quantity $n/w(n)$ quickly decreases.

\begin{figure}[ht!] 
\begin{center}
\setlength\figureheight{6cm} 
\setlength\figurewidth{9cm} 
%
%
\begin{tikzpicture}

\begin{axis}[%
width=\figurewidth,
height=\figureheight,,
at={(1.011in,0.642in)},
scale only axis,
xmin=1,
xmax=2,
xlabel={$d$},
xtick={1, 1.2, 1.4, 1.6, 1.8, 2},
ymin=0,
ymax=1,
grid=both,
legend style={at={(0.95,0.91)}, anchor=north east, draw=white!15!black},
legend cell align={left}
]

\addplot[color=green,mark=triangle,mark options={solid}]
  table[row sep=crcr]{%
1	1\\
1.05	0.860891659331735\\
1.1	0.741134449106947\\
1.15	0.638036465679592\\
1.2	0.549280271653059\\
1.25	0.472870804501588\\
1.3	0.407090531536904\\
1.35	0.350460843193043\\
1.4	0.301708816827258\\
1.45	0.259738603953433\\
1.5	0.223606797749979\\
1.55	0.192501227152835\\
1.6	0.165722700866999\\
1.65	0.142669290938328\\
1.7	0.122822802611579\\
1.75	0.105737126344056\\
1.8	0.091028210151304\\
1.85	0.078365426883154\\
1.9	0.0674641423836782\\
1.95	0.0580793174820771\\
2	0.05\\
};
\addlegendentry{$\poa(\fsv)$};
\addplot [color=blue,mark=o,mark options={solid}]
  table[row sep=crcr]{%
1	1\\
1.05	0.820939648857225\\
1.1	0.675473223027811\\
1.15	0.556933108559613\\
1.2	0.46006526711858\\
1.25	0.380706470829292\\
1.3	0.315541335661298\\
1.35	0.261917758663104\\
1.4	0.217705571084728\\
1.45	0.181187487420064\\
1.5	0.150974407561143\\
1.55	0.125939233377645\\
1.6	0.105164866104713\\
1.65	0.0879031394436211\\
1.7	0.0735422385807872\\
1.75	0.0615807418696769\\
1.8	0.0516068582576294\\
1.85	0.043281760749507\\
1.9	0.0363261634019941\\
1.95	0.0305094773888512\\
2	0.0256410256410256\\};
\addlegendentry{$\poa(\fmc)$};

\addplot[color=black,solid,mark=x,mark options={solid}]
  table[row sep=crcr]{%
1	1\\
1.05	0.860891659331735\\
1.1	0.741134449106947\\
1.15	0.638036465679592\\
1.2	0.549280271653059\\
1.25	0.472870804501588\\
1.3	0.407090531536904\\
1.35	0.350460843193043\\
1.4	0.301708816827258\\
1.45	0.259738603953433\\
1.5	0.223606797749979\\
1.55	0.192501227152835\\
1.6	0.165722700866999\\
1.65	0.142669290938328\\
1.7	0.122822802611579\\
1.75	0.105737126344056\\
1.8	0.091028210151304\\
1.85	0.078365426883154\\
1.9	0.0674641423836782\\
1.95	0.0580793174820771\\
2	0.05\\
};
\addlegendentry{$\poa(f^\star)$};

\end{axis}
\end{tikzpicture}%
\caption{Price of anarchy comparison between 
the optimal distribution rule $f^\star$ determined as the solution of the \ac{lp} in \cref{thm:optimizepoa},
Shapley value $\fsv$ and marginal contribution $\fmc$ distribution rules.
The problems considered features $|N|\le 20$ agents and a welfare basis of the form $w(j)=j^d$ with $d\in[1,2]$ represented over the $x$-axis.
} 
\label{fig:comparisonsuper}
\end{center}
\end{figure}
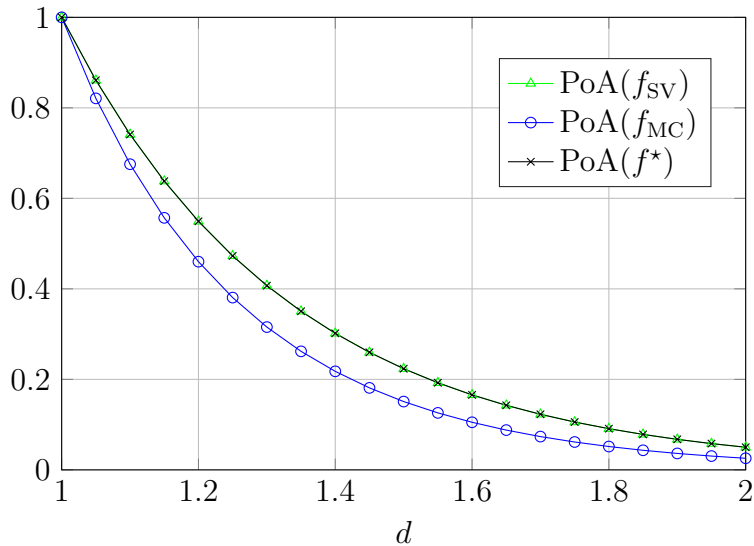
\section{Appendix}
\label{sec:proofsp2-part2}

\subsection{Proofs of the results presented in \cref{sec:submod}}
\subsubsection*{Proof of  \cref{thm:wconcavefwdecreas}}
\label{appendixb}
\begin{proof}
Observe that the value of $W^\star$ in \eqref{eq:poasubmodular} can be equivalently reformulated as in the following program, upon observing that for $j+l\le n$ it holds $\min(j,n-l)=j$ and $\min(l,n-j)=l$, while for $j+l> n$ it holds $\min(j,n-l)=n-l$ and $\min(l,n-j)=n-j$,
\[
	\begin{split}
	W^\star &= \min_{\mu\in\mb{R}}~ \mu  \\[0.1cm]
	&\,\text{s.t.}~ \,\mu w(j)\ge w(l)+j f(j) w(j)-l f(j+1) w(j+1)\\	&~\hspace*{25mm} \forall j,l\in\intwithzero{n}~{s.t.}~ j\ge l ~\text{and}~ 1\le j+l\le n,\\[0.15cm]
	&\qquad \mu w(j)\ge w(l)+(n-l) f(j) w(j)-(n-j) f(j+1) w(j+1)\\
	&~\hspace*{25mm} \forall j,l\in\intwithzero{n}~{s.t.}~ j\ge l ~\text{and}~~~~~~\, j+l> n\,.
	\end{split}
\]
In the following we prove that the latter program follows from \cref{cor:fwnonincreasingpoadual} by showing that only the constraints with $l\le j$ are required, and that the decision variable $\lambda$ in \eqref{eq:formulacor1} takes the value $\lambda^\star=1$.
First, notice that $f(j)w(j)$ is assumed to be non increasing, and so $W^\star$ can be correctly computed using \cref{cor:fwnonincreasingpoadual}.	
For $j=0$, the constraints in \eqref{eq:formulacor1} read as 
\[\lambda\ge\frac{w(l)}{l}\quad\forall \,l\in [n]\,,
\]	
and the most binding amounts to $\lambda\ge 1$, due the to concavity of $w$. 
For $j\neq 0$, we intend to show that the constraints with $l>j$ appearing in \eqref{eq:formulacor1} are not required since
those with $j=l$ are more binding. 
The following figure explains this more clearly.
\begin{figure}[h!]
\begin{center}
\includegraphics[scale=1.3]{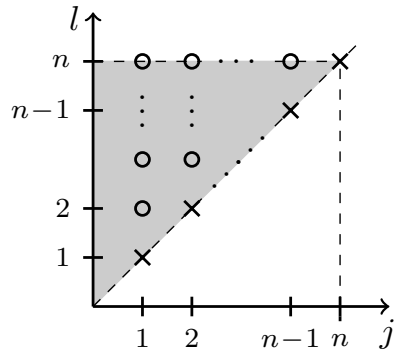}	\caption{The proof amounts to showing that for any constraint identified with the indices $(j,l)$ and $l>j$ (circles), the constraint $(j,j)$ is more binding (crosses).}
\end{center}	
\end{figure}

To do so, we divide the discussion in two cases: $l+j\le n$ and $l+j>n$.\\
{\bf Case 1.}
When $1\le j+l\le n$ we want to show that for any $l>j$ and $\lambda\ge 1$
\[
\begin{split}
&1+\lambda \frac{j}{w(j)}[f(j)w(j)-f(j+1)w(j+1)]\ge\\
& \frac{w(l)}{w(j)}+\lambda \biggl[\frac{j}{w(j)}f(j)w(j)-\frac{l}{w(j)}f(j+1)w(j+1)\biggr]\,,
\end{split}
\]
where the left hand side is obtained setting $l=j$.
This is equivalent to showing
\begin{equation}
\label{eq:first}
w(l)-w(j)+\lambda (j-l)f(j+1)w(j+1)\le 0\,.
\end{equation}
By concavity of $w$ and  $l>j$, one observes that 
\[\begin{split}
w(l)&\le w(j+1)+(w(j+1)-w(j))(l-j-1)=w(j)+(w(j+1)-w(j))(l-j)
\end{split}
\]
and since $l-j>0$, $w(j+1)-w(j)\ge 0$, $\lambda\ge 1$, it holds
\be
w(l)\le w(j)+\lambda (w(j+1)-w(j))(l-j).
\label{eq:ineqproofT4}
\ee
Using inequality \eqref{eq:ineqproofT4}, one can show that \eqref{eq:first} has to hold
\[
\begin{split}
&w(l)-w(j)+\lambda (j-l)f(j+1)w(j+1)\\
&\le w(j)+\lambda (w(j+1)-w(j))(l-j)-w(j)+\lambda (j-l)f(j+1)w(j+1)\
\\
&=\lambda(l-j)(w(j+1)-w(j)-f(j+1)w(j+1))\le 0\,,	
\end{split}
\]
where the last inequality holds because $f(j+1)w(j+1)\ge w(j+1)-w(j)$ (by assumption) and $l>j$. Observe that the previous inequality is never evaluated for $j=n$, as there is no $l\in[n]$ with $l>j=n$.\\
{\bf Case 2.}
We now consider the case $j+l>n$. Here we intend to prove that  for any $l>j$ and $\lambda \ge 1$
\[\begin{split}
&1+\lambda \frac{n-j}{w(j)}[f(j)w(j)-f(j+1)w(j+1)]\ge\\
& \frac{w(l)}{w(j)}+\lambda \biggl[\frac{n-l}{w(j)}f(j)w(j)-\frac{n-j}{w(j)}f(j+1)w(j+1)\biggr],
\end{split}
\]
where the left hand side is obtained setting $l=j$.
The latter is equivalent to
\[
w(l)-w(j)+\lambda(j-l)f(j)w(j)\le 0.
\]
Similarly to \eqref{eq:ineqproofT4}, one can show that
\[
w(l)\le w(j)+\lambda (w(j)-w(j-1))(l-j),
\]
 and get the desired result as follows
\[\begin{split}
&w(l)-w(j)+\lambda(j-l)f(j)w(j)\\
&\le
w(j)+\lambda (w(j)-w(j-1))(l-j)-w(j)+\lambda(j-l)f(j)w(j)\\
&=\lambda(l-j)(w(j)-w(j-1)-f(j)w(j))\le 0\,,
\end{split}
\]
where the last inequality holds because $f(j)w(j)\ge w(j)-w(j-1)$ \mbox{(by assumption) and $l>j$.}

The two cases just discussed showed that $W^\star$ in \eqref{eq:formulacor1} can be computed as
\[
	\begin{split}
	W^\star &= \min_{\lambda\in\mb{R}_{\ge0},\,\mu\in\mb{R}}~ \mu  \\[0.1cm]
	&\,\text{s.t.}~ \,\mu w(j)\ge w(l)+\lambda[j f(j) w(j)-l f(j+1) w(j+1)]\\
	&~\hspace*{25mm} \forall j,l\in\intwithzero{n}~{s.t.}~ j\ge l ~\text{and}~ 1\le j+l\le n,\\[0.15cm]
	&\qquad \mu w(j)\ge w(l)+\lambda[(n-l) f(j) w(j)-(n-j) f(j+1) w(j+1)]\\
	&~\hspace*{25mm} \forall j,l\in\intwithzero{n}~{s.t.}~ j\ge l ~\text{and}~~~~~~\, j+l\ge n,
	\end{split}
\]
Every constraint appearing in the previous program is indexed by $(j,l)$ and can be compactly written as $\mu w(j)\ge b_{jl}+ a_{jl}\lambda$, upon defining $b_{jl}\coloneqq w(l)$ and consequently 
\[
a_{jl}\coloneqq
\begin{cases}
j f(j) w(j)-l f(j+1) w(j+1)\hspace*{18mm} 1\le j+l\le n,\\
(n-l) f(j) w(j)-(n-j) f(j+1) w(j+1)\quad j+l\ge n.
\end{cases}
\]
Consequently $W^\star$ can be computed as
\[
	\begin{split}
	W^\star &= \min_{\lambda\in\mb{R}_{\ge0},\,\mu\in\mb{R}}~ \mu  \\[0.1cm]
	&\,\text{s.t.}~ \,\mu w(j)\ge b_{jl}+ a_{jl}\lambda\qquad\forall j,l\in\intwithzero{n},~\text{s.t.}~j\ge l,~j+l\ge 1\,.
	\end{split}
\]
As previously seen, for $j=0$ the most binding constraint is $\lambda\ge 1$. Observe that, when $j\ge 1$ and $j\ge l$, it holds $a_{jl} \ge 0$. 
Indeed, since $f(j) w(j)$ is non increasing, for $1\le j+l \le n$ one has $a_{jl}=j f(j) w(j)-l f(j+1) w(j+1)\ge (j-l)f(j)w(j)\ge 0$. Similarly for $j+l\ge n$. 
Thus, the optimal choice is to pick $\lambda$ as small \mbox{as possible, i.e., $\lambda^\star=1$.}
\end{proof}
\subsubsection*{Proof of \cref{cor:SVandMC}}
\begin{proof}
	The proof is an application of \cref{thm:wconcavefwdecreas}.
	\begin{enumerate}
	\item Observe that $\fsv$ satisfies the assumptions of \cref{thm:wconcavefwdecreas} in that $f(j)w(j)={w(j)}/{j}$ is non increasing (due to concavity of $w$) and $\fsv(j)={1}/{j}\ge 1-{w(j)}/{w(j-1)}$ $\iff$ ${w(j-1)+j(w(j)-w(j-1))}\ge 0$ (due to positivity and non-decreasingness of $w$). Hence the result of \cref{thm:wconcavefwdecreas} applies and substituting $f(j)=1/j$ gives $W^\star_{\rm SV}$ as in the claim.
	\item
	Observe that $\fmc$ satisfies the assumption of \cref{thm:wconcavefwdecreas} in that $f(j)w(j)=w(j)-w(j-1)$ is non increasing (due to concavity of $w$) and $f(j)=1-\frac{w(j-1)}{w(j)}$ so the second condition is satisfied too.
	
	We conclude by proving that the constraints indexed with $l< j\in[n]$ are not needed and it is enough to consider $j=l\in[n]$, so that $W^\star_{\rm MC}$ is as given \eqref{eq:poafmc}. To do so, we show that for any constraint with $l<j$ the constraint with $l=j$ is more binding.
	
	For $l<j$ and $j+l\le n$ we want to prove that
	\[
\begin{split}
&1+\lambda \frac{j}{w(j)}[f(j)w(j)-f(j+1)w(j+1)]\ge\\
& \frac{w(l)}{w(j)}+\lambda \biggl[\frac{j}{w(j)}f(j)w(j)-\frac{l}{w(j)}f(j+1)w(j+1)\biggr]\,,
\end{split}
\]
where the left hand side is obtained setting $l=j$.
The previous is equivalent to 
	\[
	w(l)-w(j)+\lambda (j-l)f(j+1)w(j+1)\le 0\,,
	\]
	and since $f(j+1)w(j+1)=w(j+1)-w(j)$, it reduces to
	\be
	w(l)-w(j)+ (j-l)(w(j+1)-w(j))\le 0\,.
	\label{eq:proofformula2}
	\ee
	By concavity of $w$ and $l<j$, it holds that $w(j)\ge w(l)+(j-l)(w(j+1)-w(j))$ and thus \eqref{eq:proofformula2} follows.

	In the case of $l<j$ and $j+l> n$ we intend to show
	\[\begin{split}
&1+\lambda \frac{n-j}{w(j)}[f(j)w(j)-f(j+1)w(j+1)]\ge\\
& \frac{w(l)}{w(j)}+\lambda \biggl[\frac{n-l}{w(j)}f(j)w(j)-\frac{n-j}{w(j)}f(j+1)w(j+1)\biggr],
\end{split}
\]
which reduces to
	\[
	w(l)-w(j)+(j-l)(w(j)-w(j-1))\,.
	\]
	The latter follows by concavity of $w$.
	Hence, the price of anarchy of $\fmc$ is governed by $W^\star$ as in \cref{thm:wconcavefwdecreas}, where we set $f=\fmc$ and fix $j=l$. This gives the following expression
\[
	W^\star_{\rm MC} = 1 + \max_{j\in [n]}\left\{\min(j,n-j)\left[\fmc(j)-\fmc(j+1)\frac{w(j+1)}{w(j)}\right]\right
	\}\,,
\]
	which reduced to the expression for $W^\star_{\rm MC}$ in the claim, upon substituting $\fmc$ with its definition.
\end{enumerate}
\end{proof}
\subsection{Proofs of the results presented in \cref{sec:cover}}
\subsubsection*{Proof of \cref{thm:poageneralcovering}}
\begin{proof} The proof is a specialization of the general result obtained in \cref{thm:dualpoa} to the case of set covering problems.
We divide the study in three distinct cases, as in the following
\[ 
\cdue: \begin{cases}
a+x=0\\
	b+x\neq 0
\end{cases}
\quad
\ctre: \begin{cases}
a+x\neq0\\
	b+x= 0
\end{cases}
\quad
\cquattro: \begin{cases}
a+x\neq0\\
	b+x\neq 0
\end{cases}
\]
In case $\cdue$ it must be $a=x=0$, $b\neq 0$ and the constraints read as 
\[
\lambda\ge \frac{1}{b}\,.
\]
The most binding one is obtained for $b=1$, i.e., it suffices to have $\lambda\ge 1$ in order to guarantee $\lambda\ge 1/b$. In case $\ctre$ it must be $b=x=0$, $a\neq 0$. The constraints read as 
\[
\mu\ge \lambda af(a)\quad\forall~a\in[n].\]
In case $\cquattro$, since $a+x\neq 0 $ and $b+x\neq 0$, the constraints become 
\[
\mu\ge 1+\lambda[af(a+x)-bf(a+x+1)]\,.
\]
If $x=0$, then $a,\,b>0$ and the previous inequality reads
\[
\mu\ge 1+\lambda[af(a)-bf(a+1)]\quad a+b\in[n],.
\]
The most constraining inequality is obtained for $b$ taking the smallest possible value, that is $b=1$. Thus $0< a\le n-1$. Consequently when $x=0$, it suffices to have 
\[
\mu\ge 1+\lambda[af(a)-f(a+1)]\quad\forall a\in[n-1]\,.
\]
If $x\neq0$, the most binding constraint is obtained for $b=0$. In such case, $0< a+x\le n$  and the constraints read as 
\[
\mu\ge 1+\lambda af(a+x)\quad\forall a\in[n]\,.
\]
For ease of readability, we introduce the variable $j\coloneqq a+x$ and use $j$ and $x$ instead of $a$ and $x$. With this new system of indices the feasible region becomes $0< j \le n$ and $j-x\ge0$, $x> 0$. The latter set of  constraints read as
\[
\mu\ge 1+\lambda (j-x)f(j)
\]
and the most binding is trivially obtained for $x=1$, reducing the previous to
\[
\mu\ge 1+\lambda (j-1)f(j)\quad\forall ~j\in[n]\,.
\]

This guarantees that the program in \eqref{eq:generalbound} is equivalent to
\[
\begin{split}
W^\star=&\min_{\lambda\in\mb{R}_{\ge0},\,\mu\in\mb{R}}~ \mu \\
&\,\text{s.t.}\quad\!\lambda \ge {1}\\
& \qquad~ \mu \ge \lambda jf(j)\quad j\in[n]\\
&\qquad~\mu\ge1+ \lambda(jf(j)-f(j+1))\quad j\in[n-1]\\
&~\qquad \mu\ge1+ \lambda (j-1)f(j)\quad j\in[n]\,.
\end{split}
\]
Amongst the last three sets of constraints, the tightest constraint always features a positive coefficient multiplying $\lambda$. Indeed the only term multiplying $\lambda$ that could take negative values is $jf(j)-f(j+1)$, but every time this is negative, the constraints $\mu\ge1+ \lambda (j-1)f(j)$ are tighter. It follows that the solution consists in picking $\lambda$ as small as possible, that is in choosing $\lambda^\star = 1$. The program becomes 
\[
\begin{split}
W^\star=&\min_{\mu\in\mb{R}}~ \mu \\
&\,\text{s.t.}\quad\! \mu \ge  jf(j)\quad j\in[n]\\
&\qquad~\mu\ge1+ jf(j)-f(j+1) \quad j\in[n-1]\\
&~\qquad \mu\ge1+ (j-1)f(j)\quad j\in[n]\,.
\end{split}
\]
We conclude with a little of cosmetics: the first and third set of inequalities run over $j\in[n]$, while the second one has $j\in[n-1]$. Observe that the first and the third condition evaluated at $j=1$ read both as $\mu\ge 1$. This condition is implied by the last set of condition with $j=2$, indeed it reads as $\mu\ge 1+f(2)\ge1$ since we assumed $f$ non negative. Thus the first and third conditions can be reduced to $j\in\{2,\dots, n\}$. Shifting the indices down by one, we get 
\[
\begin{split}
W^\star=&\min_{\mu\in\mb{R}}~ \mu \\
&\,\text{s.t.}\quad\! \mu \ge  (j+1)f(j+1)\quad j\in[n-1]\\
&\qquad~\mu\ge1+ jf(j)-f(j+1) \quad j\in[n-1]\\
&~\qquad \mu\ge1+ jf(j+1)\quad j\in[n-1]\,,
\end{split}
\]
from which we get the analytic expression in \eqref{eq:Wstarsetcoveringg}, i.e.,
\[
W^\star = 1+\max_{j\in [n-1]}\{(j+1)f(j+1)-1, jf(j)-f(j+1),jf(j+1)\}\,.
\]
\end{proof}
\subsubsection*{Proof of \cref{cor:backtogair}}
\begin{proof}
Thanks to \cref{thm:poageneralcovering}, the value $W^\star$ and consequently the price of anarchy can be computed as
\[
W^\star = \max_{j\in [n-1]}\{(j+1)f(j+1),\,1+ jf(j)-f(j+1),\,1+jf(j+1)\}\,.\]
We will show that when $f$ is non-increasing, fewer constraints are required, producing exactly \eqref{eq:Wstarcoveringgair}.

First observe that $f$ being non-increasing implies $(j+1)f(j+1)= f(j+1)+j f(j+1)\le f(1)+jf(j+1)= 1+ j f(j+1)$, so that the first set of conditions is implied by the third. Hence
\[
W^\star = 1+\max_{j\in [n-1]}\{jf(j)-f(j+1),\,jf(j+1)\}\,.\]
We now verify that the first set of remaining conditions implies all the conditions in the second set, but not the last one:
\[
\mu\ge 1+jf(j)-f(j+1)\ge 1+jf(j)-f(j)=1+(j-1)f(j)\,,
\]
$\forall j\in[n-1]$ that is, all conditions $\mu \ge j f(j+1)$ are satisfied for $j\in [n-2]$. 
Thus, it suffices to require $\mu-1\ge  jf(j)-f(j+1)$ and $\mu-1\ge (n-1)f(n) $ for all $j\in [n]$ and the result in \eqref{eq:Wstarcoveringgair} follows.
\end{proof}

\subsection{Proofs of the results presented in \cref{sec:supermod}}
\subsubsection*{Proof of \cref{thm:convex}}
\begin{proof}
The proof is a specialization of the general result obtain in \cref{thm:dualpoa}. We divide the study in the same three cases used for the proof of \cref{thm:poageneralcovering}. 

\noindent In case $\cdue$, the constraints read as 
\[
w(b)-\lambda b\le 0 \iff \lambda \ge \frac{w(b)}{b},
\] 
the most constraining of which is given for $b=n$ as $w(b)$ is convex. Thus it must be 
\[
\lambda\ge \frac{w(n)}{n}\,.\]

\noindent In case $\ctre$, the constraints read as
\[
\lambda a f(a)w(a)\le \mu w(a) \iff \mu\ge \lambda a f(a).
\]
In case $\cquattro$, the constraints read as
\[
\mu\ge \frac{w(b+x)}{w(a+x)}+\lambda \biggl[a f(a+x)-bf(a+x+1)\frac{w(a+x+1)}{w(a+x)} \biggr ]\,.
\]
In order to conclude, we will show that the constraints obtained from $\cdue$ and $\ctre$ imply all the conditions stemming from $\cquattro$.
To do so observe that 
\[
\begin{split}
  &\frac{w(b+x)}{w(a+x)}+\lambda \biggl[af(a+x)-bf(a+x+1)\frac{w(a+x+1)}{w(a+x)}\biggr ] \\
= &\frac{1}{w(a+x)}\biggl[w(b+x)-\lambda bw(a+x+1)f(a+x+1)\biggr ]+\lambda af(a+x)\\
\le &\frac{1}{w(a+x)}\biggl[\lambda (b+x)-\lambda b\cdot f(1)w(1)\biggr ] +\lambda af(a+x)=\frac{1}{w(a+x)}x\lambda +\lambda af(a+x)\\
 \le &\lambda x f(a+x) + \lambda a f(a+x) = \lambda (a+x)f(a+x)
\end{split}
\]
From first to second line is rearrangement. From second to third is due to $f(a+x+1)w(a+x+1)\ge w(1)f(1)=1$ and to $w(b+x)\le \frac{w(n)}{n}(b+x)\le \lambda (b+x)$  where the first inequality holds because of convexity of $w$ and the second inequality follows from $\cdue$, i.e., from $\lambda \ge \frac{w(n)}{n}$. From third to fourth is rearrangement. From fourth to fifth is due to $w(a+x)f(a+x)\ge f(1)w(1)=1~\implies f(a+x)\ge \frac{ f(1)w(1)}{w(a+x)}$.

 The previous series of inequalities have demonstrated that if $\mu\ge \lambda a f(a)$ as required by condition $\ctre$, and if $\lambda\ge  \frac{w(n)}{n}$ as required by condition $\cdue$, then $\mu \ge \lambda (a+x)f(a+x)\ge\frac{w(b+x)}{w(a+x)}+\lambda \biggl[a f(a+x)-bf(a+x+1)\frac{w(a+x+1)}{w(a+x)} \biggr ]$, i.e., conditions $\cquattro$ are all satisfied.
 
It follows that $W^\star$ and consequently the price of anarchy is easily obtained as 
\be
\begin{split}
&W^\star = \min_{\lambda\in\mb{R}_{\ge0},\,\mu\in\mb{R}}~ \mu\\
&\text{s.t.~}\mu\ge  \lambda j f(j)\quad\forall j \in[n]\\
&~~~~\,\lambda\ge \frac{w(n)}{n}\,.
\end{split}
\ee
The solution is given by $\lambda^\star = \frac{w(n)}{n}$, $\mu^\star = \lambda^\star \max_{j\in [n]}j f(j)$, which gives a price of anarchy of 
\[
\poa{(f)}= \frac{n}{w(n)} \frac{1}{\max_{j\in[n]} j \cdot f(j)}\,.
\]
Amongst all the distribution rules satisfying $f(j)w(j)\ge 1$, the distribution $\fsv$ is optimal. This follows from the fact that $\max_{j\in[n]} j \cdot \fsv(j)=1$ is the smallest achievable value since $f(1)=1$. 
To conclude that $\fsv$ is optimal not only over all distributions with $f(j)w(j)\ge 1$ but also over all distributions $f\in\mc{F}$ it suffices to observe that \cite[Lem. 7.2]{jensen2018} constructs an instance showing that $n/w(n)$ is the best attainable price of anarchy independently of what $f$ is used. 
\end{proof}
\chapter{Applications}
\label{ch:p2applications}
As set forward in the introduction of \cref{ch:p2introduction}, our objective was to obtain efficient and distributed algorithms for the solution of \ac{gmmc} problems.
We decided to follow a game theoretic approach and studied the utility design problem in \cref{ch:p2utilitydesign} and \cref{ch:p2subsupercov}. More precisely we have developed a general theory to compute and optimize the price of anarchy as a function of the chosen utility functions. 
In the following we \emph{do not} tackle the algorithm design component (the second component of the game design approach of \cref{fig:gamedesign}), as there are readily available algorithms capable of determining a Nash equilibrium in a distributed fashion (see \cref{sec:potentialgames} for the best-response algorithm, and its complexity).
In this chapter we demonstrate the applicability of our results to the vehicle target allocation problem (\cref{sec:vehicletarget}), and to the problem of distributed caching in mobile networks (\cref{sec:distributedcaching}). We provide thorough simulation results and show the theoretical and numerical advantages of our approach. The results presented in this chapter have been published in \cite{part2Paccagnan2018}.

\section{The vehicle target allocation problem}
\label{sec:vehicletarget}

In this section we consider the vehicle target assignment problem introduced in \cite{murphey2000target} and studied, e.g., in \cite{arslan2007autonomous, marden2014generalized}. 
We are given a finite set of targets $\mc{R}$, and for each target $r\in\mc{R}$ its relative importance $v_r\ge 0$. Additionally, we are given a finite set of vehicles $N=\{1,\dots,n\}$, and for each vehicle a set of feasible target assignments $\mc{A}_i\in 2^{\mc{R}}$. The goal is to distributedly compute a feasible allocation $a\in\mc{A}$ so as to maximize the joint probability of successfully destroying the selected targets, \mbox{expressed as} 
\[
W(a)=\sum_{r\in\cup_{i\in N} a_i}v_r (1-(1-p)^{|a|_r}),
\]
where $(1-(1-p)^{|a|_r})$ is the probability that $|a|_r$ vehicles eliminate the target $v_r$ and the scalar quantity $0<p\le 1$ is a parameter representing the probability that a vehicle will successfully destroy a target. In the forthcoming presentation, it is assumed that the success probability $p$ is the same for all vehicles, else one would have to define a different $p_i$ for every vehicle $i\in N$. Observe that the welfare considered here has the form \eqref{eq:welfaredef} with welfare basis $(1-(1-p)^{|a|_r})$. We normalize this quantity (without affecting the problem's solution) so that $w(1)=1$ and thus define 
\be
w(j)=\frac{ 1-(1-p)^{j}}{1-(1-p)}\,.
\label{eq:wassignment}
\ee
Observe that \eqref{eq:wassignment} satisfies the \hyperlink{sas}{Standing Assumptions}, and \vref{ass:submodd} in that $w(j)>0$ and $w(j)$ is increasing and concave. Thus, it is possible to compute the performance of any set of utility functions of the form \eqref{eq:utilities} using \cref{thm:dualpoa}, and to further determine the optimal distribution rule $f^\star\in\mc{F}$ \mbox{by solving a corresponding linear program.} 

\cref{fig:comparison_sensoralloc} shows the achievable approximation ratios for the Shapley value, marginal contribution, optimal distribution, as well as the approximation bound in \eqref{eq:approx}. We observe that the optimal distribution rule significantly outperforms all the others as well as the bound \eqref{eq:approx} for non trivial values of $p$. For the extreme case of $p=1$, $f^\star$ matches \eqref{eq:approx}, while for small $p$ all the design methodologies offer a similarly high performance guarantee.  \cref{fig:optf_assignment} shows the distribution rules $\fsv$, $\fmc$ and $f^\star$ for the choice of $p=0.5$.

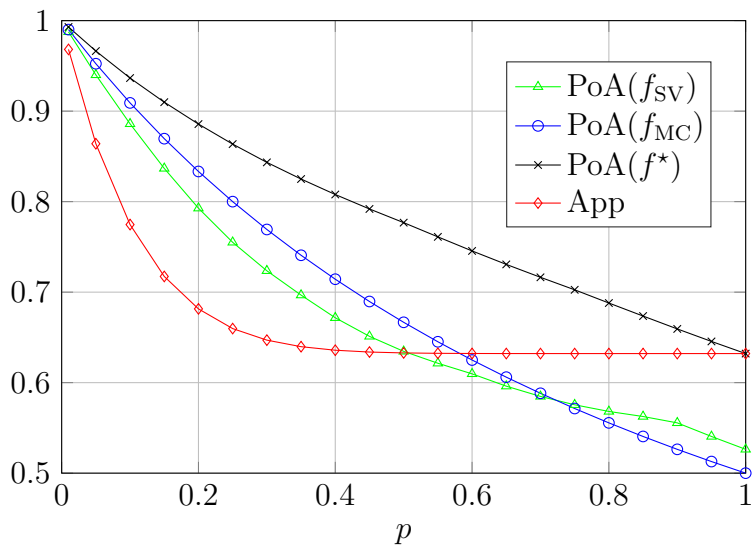
\begin{figure}[ht!] 
\begin{center}
\setlength\figureheight{6cm} 
\setlength\figurewidth{9cm} 
%
%
\begin{tikzpicture}

\begin{axis}[%
width=\figurewidth,
height=\figureheight,,
at={(1.011in,0.642in)},
scale only axis,
xmin=0,
xmax=1,
xlabel={$p$},
ymin=0.5,
ymax=1,
ytick={0.5, 0.6, 0.7, 0.8, 0.9 ,1},
xtick={0, 0.2, 0.4, 0.6, 0.8 ,1},
grid=both,
legend style={at={(0.95,0.91)}, anchor=north east, draw=white!15!black},
legend cell align={left}
]
\addplot[color=green,mark=triangle,mark options={solid}]
  table[row sep=crcr]{%
0.01 0.987595526822133\\
0.05	0.94005506248334\\
0.1	0.885945929385683\\
0.15	0.836703243672895\\
0.2	0.792757768782388\\
0.25	0.755066591317223\\
0.3	0.723594624162156\\
0.35	0.696731173009963\\
0.4	0.671695703060633\\
0.45	0.651151084727883\\
0.5	0.634576018857391\\
0.55	0.621432749556367\\
0.6	0.609723347378961\\
0.65	0.596116584115093\\
0.7	0.584793483986921\\
0.75	0.575539272188375\\
0.8	0.568181786446279\\
0.85	0.562587902575993\\
0.9	0.555555555305556\\
0.95	0.540540540540053\\
1	0.526315789473684\\
};
\addlegendentry{$\poa(\fsv)$};
\addplot [color=blue,mark=o,mark options={solid}]
  table[row sep=crcr]{%
0.01    0.990099009900991\\
0.05	0.952380952380951\\
0.1	0.909090909090909\\
0.15	0.869565217391305\\
0.2	0.833333333333333\\
0.25	0.8\\
0.3	0.769230769230769\\
0.35	0.740740740740741\\
0.4	0.714285714285714\\
0.45	0.689655172413793\\
0.5	0.666666666666667\\
0.55	0.645161290322581\\
0.6	0.625\\
0.65	0.606060606060606\\
0.7	0.588235294117647\\
0.75	0.571428571428571\\
0.8	0.555555555555556\\
0.85	0.54054054054054\\
0.9	0.526315789473684\\
0.95	0.512820512820513\\
1	0.5\\
};
\addlegendentry{$\poa(\fmc)$\hspace*{-1mm}};

\addplot[color=black,solid,mark=x,mark options={solid}]
  table[row sep=crcr]{%
0.01  0.992953665000470\\
0.05	0.966389234043331\\
0.1	0.936548216525508\\
0.15	0.909899317621329\\
0.2	0.885696582179183\\
0.25	0.8635731105029\\
0.3	0.84338976834284\\
0.35	0.824982401255014\\
0.4	0.807881209929548\\
0.45	0.791907322734352\\
0.5	0.776788977492372\\
0.55	0.76108086993257\\
0.6	0.745541787689185\\
0.65	0.730655009452087\\
0.7	0.71639163972289\\
0.75	0.702577389606777\\
0.8	0.687968462507212\\
0.85	0.673586478787689\\
0.9	0.65936662214631\\
0.95	0.645535765279937\\
1	0.632120559276055\\
};
\addlegendentry{$\poa(f^\star)$};

\addplot[color=red,solid,mark=diamond,mark options={solid}]
  table[row sep=crcr]{%
  0.01 0.968184773333313\\
0.05	0.863976359476678\\
0.1	0.774644591820245\\
0.15	0.717327671593058\\
0.2	0.681496501600498\\
0.25	0.659742671257519\\
0.3	0.64696582122097\\
0.35	0.639740045748953\\
0.4	0.635827936001333\\
0.45	0.633814778513864\\
0.5	0.632839073362096\\
0.55	0.632398926080495\\
0.6	0.632216996216784\\
0.65	0.632149553481668\\
0.7	0.632127799799598\\
0.75	0.632121962177256\\
0.8	0.632120747182832\\
0.85	0.632120572971079\\
0.9	0.632120559196437\\
0.95	0.632120558829276\\
1	0.632120558828558\\
};
\addlegendentry{${\rm App}$};

\end{axis}
\end{tikzpicture}%
\caption{Price of anarchy and approximation ratio comparison between 
the optimal distribution rule $f^\star$, the Shapley value distribution rule $\fsv$, the marginal contribution distribution rule $\fmc$, and \eqref{eq:approx}.
The problems considered feature $|N|\le n= 10$ vehicles and  $w(j)=\frac{1-(1-p)^j}{1-(1-p)}$ with $0<p\le 1$ represented over the $x$-axis.
} 
\label{fig:comparison_sensoralloc}
\end{center}
\end{figure}

\begin{figure}[h!] 
\begin{center}
\hspace*{-5mm} 
\setlength\figureheight{6cm} 
\setlength\figurewidth{9cm} 
%
%
\definecolor{mycolor1}{rgb}{0.85000,0.32500,0.09800}%
\begin{tikzpicture}

\begin{axis}[%
width=\figurewidth,
height=\figureheight,
at={(1.011111in,0.641667in)},
scale only axis,
xmin=1,
xmax=10,
xlabel={$j$},
ymin=0,
ymax=1,
ylabel={$f(j)$},
ylabel style = {rotate=-90},
grid = both,
ticklabel style = {font=\large},
legend style={at={(0.95,0.91)}, anchor=north east, draw=white!15!black},
legend cell align={left}
]
\addplot [color=green,mark=triangle,mark options={solid}]
  table[row sep=crcr]{%
1	1.000000000000000\\
2   0.500000000000000\\
3   0.333333333333333\\
4   0.250000000000000\\
5   0.200000000000000\\
6   0.166666666666667\\
7   0.142857142857143\\
8   0.125000000000000\\
9   0.111111111111111\\
10  0.100000000000000\\
};
\addlegendentry{~$\fsv$};
\addplot [color=blue,mark=o,mark options={solid}]
  table[row sep=crcr]{%
 1  1\\
 2  0.333333333333333\\
 3  0.142857142857143\\
 4  0.066666666666667\\
 5  0.032258064516129\\
 6  0.015873015873016\\
 7  0.007874015748031\\
 8  0.003921568627451\\
 9  0.001956947162427\\
 10  0.000977517106549\\
};
\addlegendentry{~$\fmc$};

\addplot [color=black,solid,mark=x,mark options={solid}]
  table[row sep=crcr]{%
1	1\\
2	0.475099390803924\\
3	0.284077657634457\\
4	0.196944960917654\\
5	0.145368837037017\\
6	0.11242636416\\
7	0.0913073915141783\\
8	0.0775913856320315\\
9	0.0669787482107186\\
10	0.058155748584454\\
};
\addlegendentry{~$f^\star$};
\end{axis}
\end{tikzpicture}%
\caption{Distributions $\fsv$, $\fmc$ and optimal distribution $f^\star$ obtained solving the \ac{lp} in \cref{thm:optimizepoa} for the specific choice of $w(j)$ in \eqref{eq:wassignment} with $|N|\le n=10$ and $p=0.5$.} 
\label{fig:optf_assignment}
\end{center}
\end{figure}
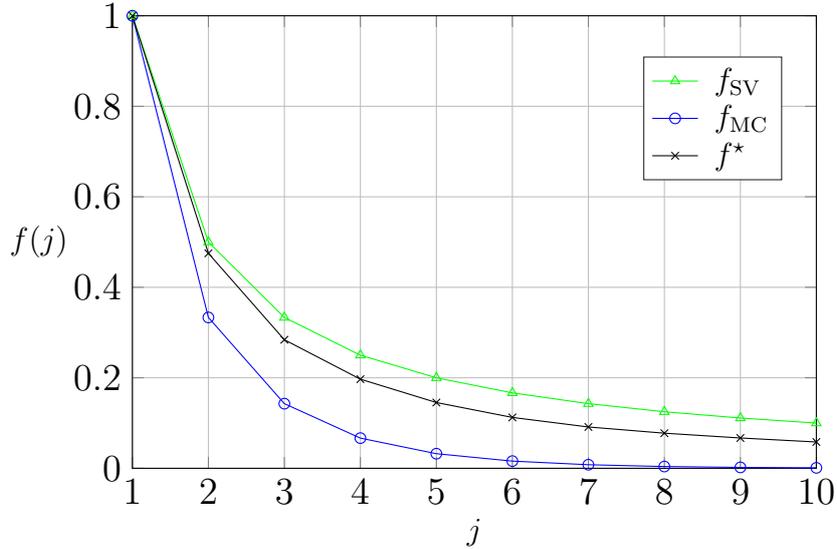

In both \cref{fig:comparison_sensoralloc,,fig:optf_assignment} we have set the number of agents to be relatively small\footnote{{Similar trends and conclusions can be obtained with larger values of $n$.}}, i.e., $|N|\le n =10$. This choice was purely made so as to perform an exhaustive search simulation in order to test the provided bounds displayed in \cref{fig:comparison_sensoralloc}. More specifically, we considered $10^5$ random instances of the vehicle target assignment problem. Each instance features $n=10$ agents, $n+1$ resources and fixed $p=0.8$. Each agent is equipped with an action set with only two allocations, whose elements are singletons, i.e., $|a_i|=1$. We believe this is not restrictive in assessing the performance, as the structure of some worst case instances is of this form \cite{paccagnan2017arxiv}. 

Observe that any constraint set $\mc{A}_i$ where feasible allocations are singletons is the bases of a uniform matroid of rank one, see \vref{example:matroids}. Further note that computing a single best response is a polynomial operation in the number of resources. Thus, the best response algorithm will converge polynomially to a Nash equilibrium (see \cref{prop:poly}) and so the performance guarantees offered by $\poa$ are \emph{easy} to achieve. 

 The structure of the constraints sets $\mc{A}_i$ and the values of the resources are randomly generated, the latter with uniform distribution in the interval $[0,1]$. For this class of problems considered, the theoretical worst case performance is $\poa(\fsv) \approx 0.568 $, $\poa(\fmc)\approx  0.556$, $\poa(f^\star)\approx 0.688$ (see \cref{fig:comparison_sensoralloc} with $p=0.8$).
For each instance $G$ generated, we performed an exhaustive search so as to compute the welfare at the worst equilibrium $\min_{a\in\nashe{G}}W(a)$ and the value $W(\aopt)$. The ratio between these quantities (their empirical cumulative distribution) is plotted across the $10^5$ samples in \cref{fig:CDF_comparison}, for $\fsv$, $\fmc$, $f^\star$. In the same figure the vertical dashed lines represent the theoretical bound on the price of anarchy, while the markers represent the worst case performance occurred during the simulations.

\begin{figure}[ht!] 
\begin{center}
\setlength\figureheight{6cm} 
\setlength\figurewidth{9cm} 
%
%
\begin{tikzpicture}

\begin{axis}[%
width=\figurewidth,
height=\figureheight,,
at={(1.011in,0.642in)},
scale only axis,
xmin=0.54,
xmax=1,
xlabel={$\frac{\min_{a\in\nashe{G}}W(a)}{W(\aopt)}$},
ymin=-0.05,
ymax=1,
unbounded coords=jump,
ytick={0, 0.25, 0.5, 0.75, 1},
ylabel={CDF},
ylabel style={rotate=-90},
grid=both,
legend style={at={(0.93,0.92)}, anchor=north east, draw=white!15!black},
legend cell align={left}
]
\addplot[color=green, dashed , line width = 1pt, forget plot]
table[row sep=crcr]
  {0.568 	-0.1\\
   0.568 	1\\};
   \addplot[color=blue, dashed , line width = 1pt, forget plot]
table[row sep=crcr]
  {0.556 	-0.1\\
   0.556 	1\\};
   \addplot[color=black, dashed , line width = 1pt, forget plot]
table[row sep=crcr]
  {0.688 	-0.1\\
   0.688 	1\\};
%
\addplot[color=green, mark = triangle, mark size = 4pt, mark options={solid}, line width = 1pt]
  table[row sep=crcr]
  {0.74459	1.25e-05\\};
\addlegendentry{$\fsv$\hspace*{-1mm}};
\addplot [color=blue, mark = o, mark size = 3pt, line width = 1pt]
  table[row sep=crcr]
  {%
0.71459	2.5e-05\\};
\addlegendentry{$\fmc$\hspace*{-1mm}};
\addplot[color=black, mark = x, mark size = 4pt, line width = 1pt]
  table[row sep=crcr]{%
0.802	3.75e-05\\};
\addlegendentry{$f^\star$};


\addplot [color=blue, line width = 1pt, forget plot]
  table[row sep=crcr]
  {%
0.71459	2.5e-05\\
0.74718	2.5e-05\\
0.74977	2.5e-05\\
0.75236	2.5e-05\\
0.75495	2.5e-05\\
0.75754	2.5e-05\\
0.76013	2.5e-05\\
0.76272	2.5e-05\\
0.76531	3.75e-05\\
0.7679	6.25e-05\\
0.77049	7.5e-05\\
0.77308	8.75e-05\\
0.77567	0.0001\\
0.77826	0.0001\\
0.78085	0.0001125\\
0.78344	0.000175\\
0.78603	0.000225\\
0.78862	0.0002625\\
0.79121	0.000325\\
0.7938	0.000375\\
0.79639	0.000425\\
0.79898	0.0004625\\
0.80157	0.0005\\
0.80416	0.0006125\\
0.80675	0.0007\\
0.80934	0.000875\\
0.81193	0.0009625\\
0.81452	0.0010625\\
0.81711	0.001225\\
0.8197	0.0014625\\
0.82229	0.001625\\
0.82488	0.001875\\
0.82747	0.0021\\
0.83006	0.0024875\\
0.83265	0.0029\\
0.83524	0.0034125\\
0.83783	0.0039375\\
0.84042	0.00455\\
0.84301	0.0050375\\
0.8456	0.0058\\
0.84819	0.006675\\
0.85078	0.0078125\\
0.85337	0.00915\\
0.85596	0.01055\\
0.85855	0.011775\\
0.86114	0.0136625\\
0.86373	0.0153125\\
0.86632	0.0172375\\
0.86891	0.0192875\\
0.8715	0.0215125\\
0.87409	0.0242\\
0.87668	0.0273875\\
0.87927	0.0306\\
0.88186	0.0342\\
0.88445	0.0379625\\
0.88704	0.0424\\
0.88963	0.0465625\\
0.89222	0.0516625\\
0.89481	0.0563375\\
0.8974	0.0619625\\
0.89999	0.067875\\
0.90258	0.0747875\\
0.90517	0.082175\\
0.90776	0.0900375\\
0.91035	0.0976\\
0.91294	0.1063875\\
0.91553	0.11485\\
0.91812	0.1245875\\
0.92071	0.1340625\\
0.9233	0.1443875\\
0.92589	0.1547125\\
0.92848	0.1658\\
0.93107	0.177275\\
0.93366	0.1895125\\
0.93625	0.2012125\\
0.93884	0.213525\\
0.94143	0.2259875\\
0.94402	0.23905\\
0.94661	0.2526625\\
0.9492	0.2658375\\
0.95179	0.2797\\
0.95438	0.29335\\
0.95697	0.3081875\\
0.95956	0.3231875\\
0.96215	0.3397\\
0.96474	0.3573875\\
0.96733	0.3775625\\
0.96992	0.400475\\
0.97251	0.4269375\\
0.9751	0.4575625\\
0.97769	0.493025\\
0.98028	0.5322125\\
0.98287	0.575875\\
0.98546	0.6218625\\
0.98805	0.670175\\
0.99064	0.7180875\\
0.99323	0.765075\\
0.99582	0.8113375\\
0.99841	0.853225\\
1.001	1\\
};

\addplot[color=green,mark options={solid}, line width = 1pt, forget plot]
  table[row sep=crcr]
  {%
0.74459	1.25e-05\\
0.80008	1.25e-05\\
0.80212	3.75e-05\\
0.80416	3.75e-05\\
0.8062	6.25e-05\\
0.80824	0.0001125\\
0.81028	0.0001375\\
0.81232	0.00015\\
0.81436	0.0001875\\
0.8164	0.0002375\\
0.81844	0.000275\\
0.82048	0.00035\\
0.82252	0.00045\\
0.82456	0.0005125\\
0.8266	0.0006375\\
0.82864	0.0008125\\
0.83068	0.0009625\\
0.83272	0.00115\\
0.83476	0.0012875\\
0.8368	0.001475\\
0.83884	0.00165\\
0.84088	0.0019125\\
0.84292	0.002275\\
0.84496	0.002675\\
0.847	0.003125\\
0.84904	0.0036125\\
0.85108	0.0041625\\
0.85312	0.0048625\\
0.85516	0.00545\\
0.8572	0.0060875\\
0.85924	0.007025\\
0.86128	0.0080125\\
0.86332	0.00915\\
0.86536	0.0102375\\
0.8674	0.0114375\\
0.86944	0.0127125\\
0.87148	0.0141875\\
0.87352	0.015875\\
0.87556	0.0175625\\
0.8776	0.019575\\
0.87964	0.0217625\\
0.88168	0.0241875\\
0.88372	0.0266\\
0.88576	0.029375\\
0.8878	0.032625\\
0.88984	0.03595\\
0.89188	0.0397125\\
0.89392	0.0439125\\
0.89596	0.048225\\
0.898	0.0526375\\
0.90004	0.05815\\
0.90208	0.0635\\
0.90412	0.0689625\\
0.90616	0.0755\\
0.9082	0.0820125\\
0.91024	0.0895375\\
0.91228	0.097525\\
0.91432	0.1049125\\
0.91636	0.1134625\\
0.9184	0.1226125\\
0.92044	0.1315375\\
0.92248	0.1414125\\
0.92452	0.1520875\\
0.92656	0.1637875\\
0.9286	0.1751125\\
0.93064	0.187525\\
0.93268	0.200175\\
0.93472	0.213325\\
0.93676	0.2272625\\
0.9388	0.2422125\\
0.94084	0.2589\\
0.94288	0.27505\\
0.94492	0.291975\\
0.94696	0.309425\\
0.949	0.3276125\\
0.95104	0.34575\\
0.95308	0.364675\\
0.95512	0.384075\\
0.95716	0.4046125\\
0.9592	0.424275\\
0.96124	0.4445125\\
0.96328	0.4653\\
0.96532	0.4869875\\
0.96736	0.5073125\\
0.9694	0.528825\\
0.97144	0.5506875\\
0.97348	0.572625\\
0.97552	0.5948875\\
0.97756	0.6176625\\
0.9796	0.6416875\\
0.98164	0.6652\\
0.98368	0.6896125\\
0.98572	0.7141375\\
0.98776	0.7370625\\
0.9898	0.7595625\\
0.99184	0.781275\\
0.99388	0.80335\\
0.99592	0.825325\\
0.99796	0.848475\\
1	1\\
};
\addplot[color=black, line width = 1pt, forget plot]
  table[row sep=crcr]{%
0.802	3.75e-05\\
0.804	3.75e-05\\
0.806	3.75e-05\\
0.808	5e-05\\
0.81	6.25e-05\\
0.812	6.25e-05\\
0.814	6.25e-05\\
0.816	7.5e-05\\
0.818	7.5e-05\\
0.82	8.75e-05\\
0.822	0.0001125\\
0.824	0.000125\\
0.826	0.00015\\
0.828	0.0001875\\
0.83	0.00025\\
0.832	0.0003\\
0.834	0.000375\\
0.836	0.0004125\\
0.838	0.0005\\
0.84	0.0005625\\
0.842	0.0007125\\
0.844	0.0008875\\
0.846	0.0010625\\
0.848	0.001225\\
0.85	0.0013625\\
0.852	0.0016125\\
0.854	0.001975\\
0.856	0.00225\\
0.858	0.0026875\\
0.86	0.003275\\
0.862	0.0037625\\
0.864	0.00435\\
0.866	0.005\\
0.868	0.0057875\\
0.87	0.00645\\
0.872	0.007375\\
0.874	0.0083125\\
0.876	0.0093375\\
0.878	0.0104125\\
0.88	0.0116375\\
0.882	0.013\\
0.884	0.0146\\
0.886	0.0162\\
0.888	0.0180375\\
0.89	0.0203625\\
0.892	0.0227375\\
0.894	0.025225\\
0.896	0.0280625\\
0.898	0.0312\\
0.9	0.0347\\
0.902	0.038425\\
0.904	0.0422375\\
0.906	0.04655\\
0.908	0.0511875\\
0.91	0.056275\\
0.912	0.062225\\
0.914	0.0674875\\
0.916	0.0734625\\
0.918	0.080025\\
0.92	0.08695\\
0.922	0.0945125\\
0.924	0.1031\\
0.926	0.112\\
0.928	0.1215875\\
0.93	0.1314875\\
0.932	0.1424125\\
0.934	0.1535125\\
0.936	0.16525\\
0.938	0.1771625\\
0.94	0.1905625\\
0.942	0.2050125\\
0.944	0.2193625\\
0.946	0.23455\\
0.948	0.250725\\
0.95	0.267775\\
0.952	0.2849\\
0.954	0.3032625\\
0.956	0.32275\\
0.958	0.34365\\
0.96	0.36425\\
0.962	0.3854375\\
0.964	0.406925\\
0.966	0.4290375\\
0.968	0.4509375\\
0.97	0.47375\\
0.972	0.4972\\
0.974	0.519975\\
0.976	0.5430375\\
0.978	0.568075\\
0.98	0.594075\\
0.982	0.6195125\\
0.984	0.6453875\\
0.986	0.67195\\
0.988	0.6982375\\
0.99	0.7241625\\
0.992	0.74945\\
0.994	0.7751125\\
0.996	0.8014125\\
0.998	0.827725\\
1	1\\
};
\end{axis}
\draw[-stealth, line width=1pt] (6.1 ,2.5) -- (5.5, 2.5);
\node at (6.8, 2.5) {\footnotesize $\poa(f^\star)$};
\node at (4.2, 2.5) {\footnotesize $\poa(\fmc)$};
\draw[-stealth, line width=1pt] (3.4 ,2.5) -- (2.9, 2.5);
\node at (4.2, 3) {\footnotesize $\poa(\fsv)$};
\draw[-stealth, line width=1pt] (3.4 ,3) -- (3.12, 3);
\end{tikzpicture}%
\caption{Cumulative distribution of the ratio $\min_{a\in\nashe{G}}W(a)/W(\aopt)$ for 
$\fsv$, $\fmc$, $f^\star$ across $10^5$ samples. The dashed lines represent the theoretical value of $\poa(\fsv)$, $\poa(\fmc)$, $\poa(f^\star)$ while the corresponding markers identify the worst case performance encountered during the simulations.
} 
\label{fig:CDF_comparison}
\end{center}
\end{figure}
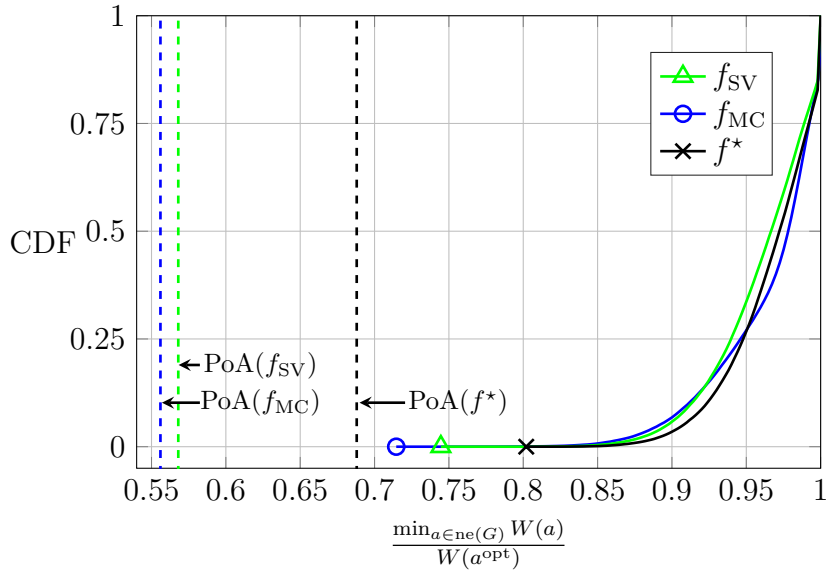

First, we observe that no instance has performed worse than the corresponding price of anarchy, as predicted by \vref{thm:dualpoa}. Second, we note that the worst case performance encountered in the simulation is circa $15\%$ better than the true worst case instance.\footnote{Recall that our results in \cref{thm:primalpoa,,thm:dualpoa} are provably tight: there exists at least one instance achieving exactly an efficiency equal to the price of anarchy.}
Further, the optimal distribution $f^\star$ has outperformed the others also in the simulations. Its worst case performance is indeed superior to the others (markers in \cref{fig:CDF_comparison}). Additionally, the cumulative distribution of $f^\star$ lies below the cumulative distributions of $\fsv$ and $\fmc$ (for abscissas smaller than $0.95$). This means that, for any given approximation ratio $r\in[0, 0.95]$, there is a smaller fraction of problems on which $f^\star$ performs worse or equal to $r$, compared to $\fsv$ and $\fmc$. Observe that this is not obvious a priori, as $f^\star$ is designed to maximize the worst case performance and not the average performance. 
\section{Distributed caching}
\label{sec:distributedcaching}
In this section we consider the problem of distributed data caching introduced in \cite{goemans2006market} as a technique to reduce peak traffic in mobile data networks. In order to alleviate the growing radio congestion caused by the recent surge of mobile data traffic, the latter work suggested to store popular and spectrum intensive items (such as movies or songs) in geographically distributed stations. 
The approach has the advantage of bringing the content closer to the customer, and to  avoid recurring transmission of large quantities of data.  
Similar offloading techniques, aiming at minimizing the peak traffic demand by storing popular items at local cells, have been recently proposed and studied in the context of modern 5G mobile networks \cite{andrews2013seven,de2017competitive}. The fundamental question we seek to answer in this section is how to  geographically distribute the popular items across the nodes of a network so as to maximize the total number of queries fulfilled.
In the following we borrow the model introduced in \cite{goemans2006market} and show how the utility design approach presented here yields improved theoretical and practical performances.

We consider a rectangular grid with $n_x\times n_y$ bins and a finite set $\mc{R}$ of data items. For each item $r\in\mc{R}$, we are given its query rate $q_r\ge0$ as well as its position in the grid $O_r$ and a radius $\rho_r$. A circle of radius $\rho_r$ centered in $O_r$ represents the region where the item $r$ is requested. Additionally we consider a set of geographically distributed nodes $N$ (the local cells), where each node $i\in N$ is assigned to a position in the grid $P_i$. A node is assigned a set of feasible allocations $\mc{A}_i$ according to the following rules:
\begin{itemize}
	\item[i)] $\mc{A}_i\subseteq 2^{\mc{R}_i}$, where $
	\mc{R}_i\coloneqq\{r\in\mc{R}~\text{s.t.}~||O_r- P_i||_2\le \rho_r\}$.
	That is, $r\in\mc{R}_i$ if the (euclidean) distance between the position of node $i$ and item $r$ is smaller equal to $\rho_r$.
	\item[ii)] $|\mc{A}_i|\le k_i$, for some integer $k_i\ge 1$.
\end{itemize}
In other words, node $i$ can include the resource $r$ in his allocation $a_i$ only if he is in the region where the item $r$ is requested (first rule), while we limit the number of stored items to $k_i$ for reasons of physical storage (second rule).\footnote{Similarly to what discussed for the application in \cref{sec:vehicletarget}, it is possible to reduce the problem to the case where $\mc{A}_i$ are the bases of a matroid $\mc{M}_i$, so that  \cref{prop:poly} applies here too. Once more computing the best response is a polynomial task (it amounts to sorting $q_r w(|a|_r)f(|a|_r)$ and picking the $k_i$ first items). Thus the best-response dynamics introduced in \cref{alg:BR} \mbox{converges in polynomial time.}} The situation is exemplified in \cref{fig:checkered}.
\begin{figure}[t]
\begin{center}
\includegraphics[scale=0.95]{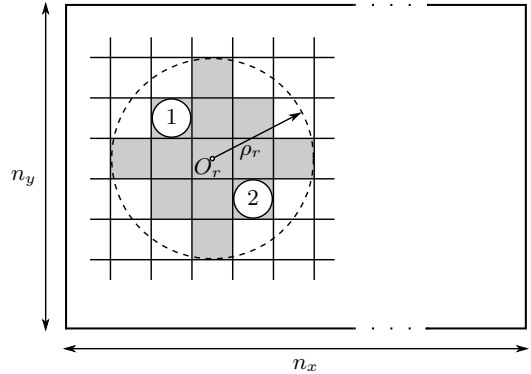}
\caption{The nodes $1$ and $2$ can include the item $r$ in any allocation, i.e., $r\in\mc{R}_1$ and $r\in\mc{R}_2$ since the distance from nodes $1$ and $2$ to $O_r$ is less than $\rho_r$.}
\label{fig:checkered}
\end{center}
\end{figure}

The objective is to select a feasible allocation for every node so as to jointly maximize the total amount of queries fulfilled
\[
\max_{a\in\mc{A}}\sum_{r\in\cup_{i\in N} a_i}q_r\,.
\]
In order to obtain a distributed algorithm, \cite{goemans2006market} proposes a game theoretic approach where each agent is given a Shapley value utility function, i.e., they assign to agents utilities of the form \eqref{eq:utilities}, where $f(j)=\fsv(j)=1/j$. 

In the following we compare the results of numerical simulations obtained using $\fsv$ or the optimal distribution $f^\star = \fgar$ defined in \eqref{eq:fgar}. The following parameters are employed.  
We choose $n_x=n_y=800$, $|N|=100$, $|\mc{R}|=1000$. The nodes and the data items are uniformly randomly placed in the grid. The query rate of data items is chosen according to a power law (Zipf distribution) $q_r=1/r^\alpha$ for $r\in [1000]$.\footnote{Typical query rate curves has been shown to follow this distribution, with $\alpha \in [0.6, 0.9]$, see \cite{breslau1999web}.} The radii of interests are set to be identical for all items $\rho_r=\rho=200$. We let $\alpha$ vary in $[0.7, 0.9]$. We consider $10^5$ instances of such problem, and for every instance compute a Nash equilibrium by means of the best response algorithm. Given the size of the problem, it is not possible to compute the optimal allocation and thus the price of anarchy. As a surrogate for the latter we use the ratio $W(\ae)/W_{\rm tot}$, where $\ae$ is the Nash equilibrium determined by the algorithm and
\[
W_{\rm tot}\coloneqq\sum_{r\in\mc{R}} q_r\]
 is the sum of all the query rates and thus is an upper bound for $W(\aopt)$. Observe that $W_{\rm tot}$ is a constant for all the simulations with fixed $\alpha$, indeed  $W_{\rm tot}= \sum_{r\le 1000}\frac{1}{r^\alpha}$ and thus serves as a mere scaling factor. The theoretical price of anarchy is $\poa(\fsv)=0.5$ (tight also when the query rates are Zipf distributed \cite{goemans2006market}) and $\poa(f^\star)= 1-1/e \approx 0.632$, see \vref{thm:smoothnottight}. 
 \cref{fig:Boxplot} compares the quantity $W(\ae)/W_{\rm tot}$ for the choice of $\fsv$ and $f^\star$, across different values of $\alpha$. First we observe that the worst cases encountered in the simulations are at least $10\%$ better than the theoretical counterparts. Further, for each fixed value of $\alpha$, there is a good separation between the performance of $\fsv$ and $f^\star$, in favor of the latter. This holds true, not only in the worst case sense (markers in \cref{fig:Boxplot}), but also on average. As $\alpha$ increases from $0.6$ to $0.9$, the worst case performance seems to degrade for both $\fsv$ and $f^\star$. Nevertheless, since we are using $W(\ae)/W_{\rm tot}$ as a surrogate for the true price of anarchy, it is unclear if the previous conclusion also holds for $W(\ae)/W(\aopt)$.
\cref{fig:PDF_comparison_content} presents a more detailed comparison between $\fsv$ and $f^\star$ for a fixed value of $\alpha=0.7$ over all the $10^5$ instances. Relative to this case, \cref{fig:BR_count} describes the (distribution of) number of best response rounds required for the algorithm to converge. 
Quick convergence is achieved, with a number of best response rounds equal to $11$ in the worst case.
Observe that in every best response round all players have a chance to update their decision variable, so that a total number of $n_{\rm BR}$ rounds amounts to $n\cdotshort n_{\rm BR}$ individual best responses.
 \begin{figure}[ht!] 
\begin{center}
\hspace*{-5mm} 
\setlength\figureheight{4.3cm} 
\setlength\figurewidth{9cm} 
\input{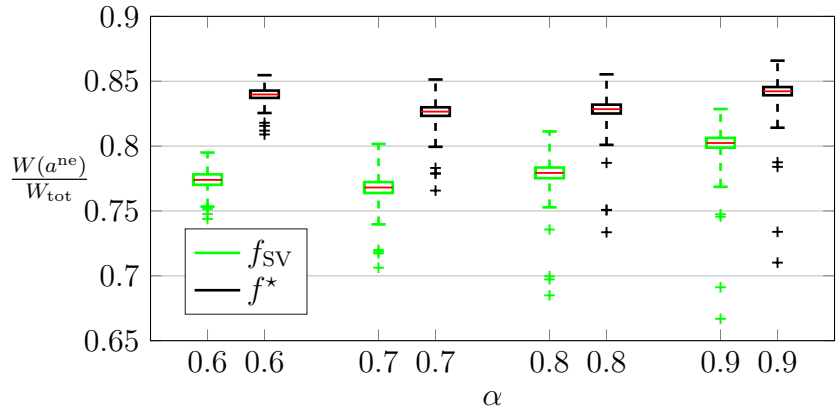}
\vspace*{-3mm}
\caption{Box plot comparing the performance of the best response algorithm on $10^5$ instances for the choice of distributions $\fsv$ and $f^\star$, across different values of $\alpha$. On each plot, the median is represented with a red line, and the corresponding box contains the 25th and 75th percentiles. The (four) worst cases are represented with crosses.} 
\label{fig:Boxplot}
\end{center}
\end{figure}
\begin{figure}[ht!] 
\vspace*{-4mm}
\begin{center}
\hspace*{-5mm} 
\setlength\figureheight{4.3cm} 
\setlength\figurewidth{9cm} 
%
%
\begin{tikzpicture}

\begin{axis}[%
width=\figurewidth,
height=\figureheight,,
at={(1.011in,0.642in)},
scale only axis,
xmin=0.7,
xmax=0.86,
xlabel={$W(\ae)/W_{\rm tot}$},
ymin=-300,
ymax=8000,
unbounded coords=jump,
scaled y ticks=base 10:-3,
ylabel={Count},
ylabel style={rotate=-90},
grid=both,
legend style={at={(0.05,0.91)}, anchor=north west, draw=white!15!black},
legend cell align={left}
]
\addplot[color=green, mark = triangle, mark size = 4pt, mark options={solid}, line width = 1pt]
  table[row sep=crcr]
  {0.706274043136323	0\\};
\addlegendentry{$\fsv$\hspace*{-1mm}};
%
%
\addplot[color=black, mark = x, mark size = 4pt, line width = 1pt]
  table[row sep=crcr]{%
0.765721086795839	0\\};
\addlegendentry{$f^\star$};

\addplot[ybar interval, fill=green, fill opacity=0.6, draw=black, area legend] table[row sep=crcr] {%
x	y\\
0.7056	1\\
0.706561	0\\
0.707522	0\\
0.708483	0\\
0.709444	0\\
0.710405	0\\
0.711366	0\\
0.712327	0\\
0.713288	0\\
0.714249	0\\
0.71521	0\\
0.716171	0\\
0.717132	1\\
0.718093	1\\
0.719054	1\\
0.720015	0\\
0.720976	0\\
0.721937	0\\
0.722898	0\\
0.723859	0\\
0.72482	0\\
0.725781	0\\
0.726742	0\\
0.727703	0\\
0.728664	0\\
0.729625	0\\
0.730586	0\\
0.731547	0\\
0.732508	0\\
0.733469	0\\
0.73443	0\\
0.735391	0\\
0.736352	0\\
0.737313	0\\
0.738274	0\\
0.739235	1\\
0.740196	0\\
0.741157	2\\
0.742118	0\\
0.743079	2\\
0.74404	1\\
0.745001	2\\
0.745962	5\\
0.746923	4\\
0.747884	18\\
0.748845	25\\
0.749806	49\\
0.750767	65\\
0.751728	117\\
0.752689	244\\
0.75365	439\\
0.754611	576\\
0.755572	632\\
0.756533	1081\\
0.757494	1588\\
0.758455	2313\\
0.759416	2802\\
0.760377	2760\\
0.761338	3551\\
0.762299	4660\\
0.76326	5513\\
0.764221	6133\\
0.765182	5555\\
0.766143	5651\\
0.767104	6105\\
0.768065	6289\\
0.769026	6288\\
0.769987	5789\\
0.770948	4872\\
0.771909	4449\\
0.77287	3998\\
0.773831	3595\\
0.774792	3164\\
0.775753	2585\\
0.776714	2019\\
0.777675	1593\\
0.778636	1355\\
0.779597	1052\\
0.780558	818\\
0.781519	591\\
0.78248	450\\
0.783441	323\\
0.784402	248\\
0.785363	194\\
0.786324	147\\
0.787285	92\\
0.788246	63\\
0.789207	42\\
0.790168	35\\
0.791129	13\\
0.79209	11\\
0.793051	9\\
0.794012	6\\
0.794973	1\\
0.795934	1\\
0.796895	3\\
0.797856	3\\
0.798817	1\\
0.799778	2\\
0.800739	1\\
0.8017	1\\
};
\addplot[ybar interval, fill=black, fill opacity=0.6, draw=black, area legend] table[row sep=crcr] {%
x	y\\
0.7656	1\\
0.766458	0\\
0.767316	0\\
0.768174	0\\
0.769032	0\\
0.76989	0\\
0.770748	0\\
0.771606	0\\
0.772464	0\\
0.773322	0\\
0.77418	0\\
0.775038	0\\
0.775896	0\\
0.776754	0\\
0.777612	0\\
0.77847	2\\
0.779328	0\\
0.780186	0\\
0.781044	0\\
0.781902	0\\
0.78276	1\\
0.783618	0\\
0.784476	0\\
0.785334	0\\
0.786192	0\\
0.78705	0\\
0.787908	0\\
0.788766	0\\
0.789624	0\\
0.790482	0\\
0.79134	0\\
0.792198	0\\
0.793056	0\\
0.793914	0\\
0.794772	0\\
0.79563	0\\
0.796488	1\\
0.797346	0\\
0.798204	0\\
0.799062	1\\
0.79992	1\\
0.800778	0\\
0.801636	0\\
0.802494	0\\
0.803352	1\\
0.80421	1\\
0.805068	2\\
0.805926	0\\
0.806784	3\\
0.807642	1\\
0.8085	5\\
0.809358	8\\
0.810216	8\\
0.811074	17\\
0.811932	40\\
0.81279	88\\
0.813648	129\\
0.814506	249\\
0.815364	506\\
0.816222	815\\
0.81708	1342\\
0.817938	1820\\
0.818796	2119\\
0.819654	2837\\
0.820512	3648\\
0.82137	4672\\
0.822228	5632\\
0.823086	6254\\
0.823944	6115\\
0.824802	6473\\
0.82566	6677\\
0.826518	6932\\
0.827376	6906\\
0.828234	6255\\
0.829092	5598\\
0.82995	4796\\
0.830808	3974\\
0.831666	3503\\
0.832524	3019\\
0.833382	2413\\
0.83424	1929\\
0.835098	1439\\
0.835956	1062\\
0.836814	752\\
0.837672	593\\
0.83853	431\\
0.839388	290\\
0.840246	206\\
0.841104	149\\
0.841962	101\\
0.84282	66\\
0.843678	36\\
0.844536	38\\
0.845394	17\\
0.846252	9\\
0.84711	3\\
0.847968	6\\
0.848826	2\\
0.849684	1\\
0.850542	5\\
0.8514	5\\
};
\end{axis}
\end{tikzpicture}%
\vspace*{-3mm}
\caption{Distribution of $W(\ae)/W_{\rm tot}$ on $10^5$ instances for fixed $\alpha=0.7$.} 
\label{fig:PDF_comparison_content}
\end{center}
\end{figure}
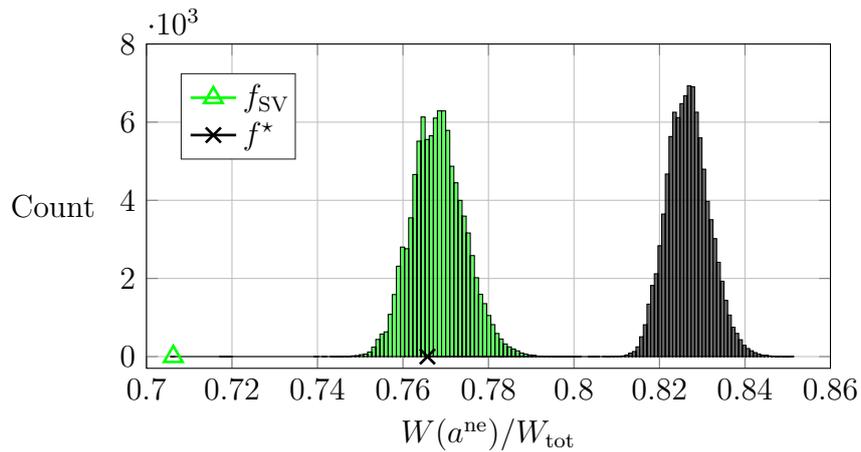
\begin{figure}[H] 
\vspace*{-4mm}
\begin{center}
\hspace*{-5mm} 
\setlength\figureheight{4.3cm} 
\setlength\figurewidth{9cm} 
%
%
\begin{tikzpicture}

\begin{axis}[%
width=\figurewidth,
height=\figureheight,,
at={(1.011in,0.642in)},
scale only axis,
xmin=1,
xmax=12,
xlabel={Number of best response rounds},
ymin=-3000,
ymax=60000,
unbounded coords=jump,
ylabel={Count},
ylabel style={rotate=-90},
grid=both,
legend style={at={(0.05,0.91)}, anchor=north west, draw=white!15!black},
legend cell align={left}
]
\addplot[color=green, line width = 1pt]
  table[row sep=crcr]
  {0.706274043136323	0\\};
\addlegendentry{$\fsv$\hspace*{-1mm}};
%
%
\addplot[color=black, line width = 1pt]
  table[row sep=crcr]{%
0.765721086795839	0\\};
\addlegendentry{$f^\star$};

\addplot[ybar interval, fill=green, fill opacity=0.6, draw=black, area legend] table[row sep=crcr] {%
x	y\\
3.5	5304\\
4.5	43853\\
5.5	38200\\
6.5	10781\\
7.5	1673\\
8.5	168\\
9.5	19\\
10.5	2\\
11.5	2\\
};
\addplot[ybar interval, fill=black, fill opacity=0.6, draw=black, area legend] table[row sep=crcr] {%
x	y\\
2.5	9\\
3.5	9715\\
4.5	50367\\
5.5	31468\\
6.5	7295\\
7.5	1018\\
8.5	119\\
9.5	8\\
10.5	1\\
11.5	1\\
};
\end{axis}
\end{tikzpicture}%
\vspace*{-3mm}
\caption{Distribution of the number of best response rounds required for convergence on $10^5$ instances, $\alpha=0.7$.} 
\label{fig:BR_count}
\end{center}
\end{figure}
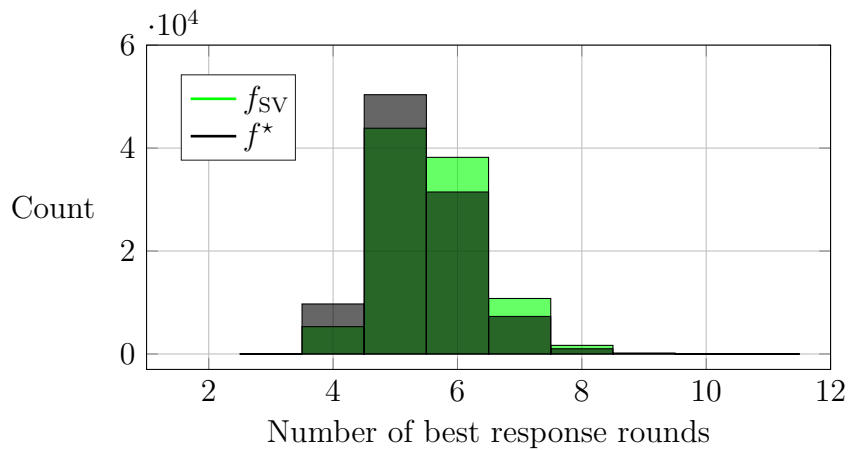

\part{Conclusion}
\chapter{Conclusions and outlook}
\section{Part I: strategic agents}
In the first part of the thesis we considered large scale systems composed of self-interested agents and modeled their strategic interaction using the language of game theory. Motivated by the special structure arising in different real-world applications, we focused on average aggregative games, i.e., games where the cost function of each agent depends solely on his decision and on the average population strategy. The setup considered allows for multidimensional decision variables, heterogenous private constraints, and global constraints coupling the decision variables of the entire population. 

Our research agenda was aimed at i) understanding the performance degradation due to selfish decision making, and ii) designing scalable algorithms to guide agents towards an equilibrium configuration. Towards these goals, we first exploited the theory of variational inequalities to reduce both the Nash equilibrium problem and the Wardrop equilibrium to common ground. This allowed to study the efficiency of a Nash equilibrium allocation through the analysis of the corresponding Wardrop equilibrium counterpart. In this respect, we provided conditions on the agents' cost functions that either guarantee the efficiency of the equilibria, or provide meaningful bounds on the efficiency loss. We concluded \cref{part:1} proposing two decentralized schemes to coordinate the agents towards a Nash or Wardrop equilibrium and discussed under which conditions their convergence is guaranteed. Our findings have been tested on a coordination problem arising in the charging of electric vehicles and on a selfish routing model used in road traffic network.

\subsection{Further research directions}
\subsubsection*{Non average aggregative games}
In \cref{part:1} of this thesis we focused on \emph{average} aggregative games. While this class of games has recently attracted the attention of the researchers, we believe that many problems within the general framework of (non average) aggregative games are still open. As an example, the problem of designing distributed algorithms for network aggregative games has been considered only very recently.\footnote{In a network aggregative game each agent is represented with a node on a graph, while his cost function is influenced by his decision and by a linear combination of the decision \mbox{variables of his neighbours.}} More broadly, it is unclear to what extent the aggregative structure helps in providing results such as existence and uniqueness of the equilibria under \emph{weaker assumptions} than what usually imposed on non aggregative games. There are few works addressing this question and their results are limited in their scope, for example to scalar valued aggregator functions \cite{Jensen2010}.

\subsubsection*{Uncertain games and receding-horizon implementations}
Within the framework studied in this thesis, we focused on the case of deterministic games. Nevertheless, there has been recent interest both in the areas of optimization and equilibrium theory to incorporate the effect of uncertainty. This desire stems from the observation that a large portion of nowadays decision making happens in face of uncertainty. As a concrete example, consider that of a car driver on a road network. While his goal might entail reaching the desired destination as swiftly as possible, his decisions are based on uncertain knowledge of the congestion he will encounter further ahead on the network. In this respect one can envision at least two \mbox{future research directions.} 

First, one could consider stochastic aggregative games where the aggregate function is subject to common uncertainty. The fundamental question one needs to ask is what it means to be an equilibrium configuration. In the simplest scenario, one can think of an equilibrium as a stable configuration of the game constructed with the \emph{expected} costs. Most of the results presented in connection with the variational reformulation of \cref{ch:p1equilibriaVIandSMON} hold with minor modifications, and one could use algorithms derived from the theory of stochastic variational inequalities \mbox{to compute one such equilibrium \cite{yousefian2017smoothing,rockafellar2017stochastic}.} 

As second research direction, one could consider receding-horizon implementations of the schemes proposed here. 
While some of the applications presented in this thesis were of dynamic nature (e.g., the charging coordination for a fleet of electric vehicles), we have been able to model them as single-stage decision problems. This has been possible due to the exact knowledge of the agents' dynamics. As this is hardly the case in a real world scenario, one might consider receding-horizon implementations of the single-stage problems considered here. This research direction follows the same spirit with which model predictive control is used in uncertain dynamic optimization problems \cite{garcia1989model}. 

\subsubsection*{Non monotone games}
Most of the results derived in the first part of the thesis were based on the assumptions of Lipschitzianity and monotonicity of the variational inequality operator (or variations thereof such as strong monotonicity, or co-coercivity see \cref{sec:operatorsprop}). In this regard, a long term research goal is that of weakening the monotonicity assumption. While this direction would have great impact (there are many situations in which the monotonicity property is not satisfied), there seem to be a fundamental roadblock that needs to be resolved or circumvented before embarking on this route. Indeed, as we have seen in \cref{ch:p1mathpreliminaries}, game theory is a generalization of single agent decision making and hence contains the field of optimization as a special case. Thus, the study of non monotone variational inequalities requires a better understanding of non convex optimization first. 
While there has been a recent surge of interest in other classes of continuous functions that produce tractable optimization problems (e.g., continuous submodular functions), we feel that this direction is currently underdeveloped. 

\section{Part II: programmable machines}
In the second part of the thesis we studied a class of combinatorial resource allocation problems arising in various applications connected to multiagent systems and machine learning. More precisely, we considered a setup where a large number of cooperative agents need to select a subset of resources from a common set, with the objective of jointly maximizing a given welfare function. In the considered setup, the welfare function was assumed to be additive over the resources and to be anonymous with respect to the agent identities. An example of problem satisfying these requirements is the well-known and studied weighted maximum coverage.

Since the class of problems investigated is computationally intractable ($\npclass$-hard), our goal was to derive distributed algorithms that run in polynomial time and achieve near-optimal performances. 
We approached the problem from a game-theoretic perspective and aimed at assigning a local utility function to each agent so that their selfish maximization recovers a large portion of the desired system level objective. Towards this goal, we presented a novel framework for the characterization of the equilibrium efficiency (price of anarchy). More precisely, for a given set of utilities, we showed that the problem of computing the worst-case equilibrium efficiency can be posed as a tractable linear problem. This result might be of independent interest to the community concerned with the study of the price of anarchy. We then leveraged the linear programming reformulation to resolve the question previously posed, i.e., to design local utilities that maximize such performance metric. The importance of this results stems from the observation that any algorithm capable of computing a Nash equilibrium would naturally inherit an approximation ratio matching the corresponding equilibrium efficiency.  Surprisingly, the optimal price of anarchy (the price of anarchy achieved by optimally designed utility functions)  matches or outperforms the guarantees available for many commonly used algorithms. We validate our results with two applications: the vehicle-target assignment problem and a coverage problem arising in distributed caching for mobile networks.

\subsection{Further research directions}
\subsubsection*{Different equilibrium notion}
As discussed in the introduction of \cref{ch:p2introduction}, the game design approach for the approximate solution of an optimization problem amounts to the design of three elements: equilibrium concept, agents' utilities and corresponding learning algorithm. While all the efficiency results presented in this thesis are limited to the notion of pure Nash equilibrium, one might be interested in using a different equilibrium concept. As a matter of fact, the choice of pure Nash equilibria originated from the fact that their efficiency is the highest possible. Unfortunately, pure Nash equilibria are intractable to compute in general (see \cref{fig:hierarchy} for the tradeoff between complexity and efficiency). The way we resolved this issue was by assuming that $\{\mc{A}_i\}_{i=1}^\N$ are the sets of bases for a matroid, so that the best-response algorithm converges in a polynomial number of steps (\cref{prop:poly}). Instead, coarse correlated equilibria are tractable to compute in general. Thus, an interesting research direction is to understand whether the efficiency bounds obtained for pure Nash Equilibria extend to coarse correlated equilibria. Nevertheless, the performance guarantees offered by coarse correlated equilibria are in expected value, and one would have to understand how to derandomize the corresponding solution efficiently (if at all possible). 

\subsubsection{Non-anonymous agents}
The results derived in this thesis are relative to welfare functions of the form \eqref{eq:welfaredef}
\[
W(a)=\sum_{r\in\cup a_i} v_r w(|a|_r)\,.
\]
We observe that the key ingredient that allowed to reduce the computation of the price of anarchy to a tractable linear program is the \emph{indistinguishability} of the agents (also called anonymity in the following), see the proof of \cref{thm:primalpoa}. Formally, the agents are anonymous if any allocation $a=(a_1,\dots,a_n)$ and any other allocation obtained as a permutation of the former have the same welfare. While it is very much unclear if and how to extend the current results to the case of non-anonymous agents, we remark that this will greatly expand the number of applications that could \mbox{benefit from this approach.}  

\subsubsection*{The tradeoff between anarchy and stability}
Throughout \cref{part:2} of this thesis, we assessed the quality of an algorithm with its worst case performance over a set of instances. This is a common approach to study the performance of an algorithm as it gives a bound that requires no information on the distribution of inputs and holds instance by instance. Nevertheless, an interesting and underdeveloped question is whether optimizing the worst-case performance comes at the cost of other performance metrics. In relation to the problem studied in this thesis, a different and more optimistic metric to quantify the equilibrium efficiency is known as price of stability. With the same notation previously used, the price of stability can be defined as 
\[
{\rm PoS}(f) \coloneqq \inf_{G\in \mc{G}_f}\biggl(\frac{\max_{a\in \nashe{G}} W(a)}{\max_{a\in\mc{A}} W(a)}\biggr)\,.
\]
Informally, the price of stability bounds the performance of the best equilibrium over all the possible instances in the set $\mc{G}_f$. While preliminary results have shown that there is a fundamental tradeoff between the price of anarchy and the price of stability in specific classes of problems \cite{paccagnan2017arxiv,filos2018pareto}, this research direction warrants further exploration as it would provide an additional guiding principle in the design of efficient algorithms.  

\backmatter
\addcontentsline{toc}{part}{Bibliography}
\printbibliography

\end{document}